\definecolor{rd}{rgb}{1,0.3,0.35}
\newcommand{\red}{}
\newcommand{\field}[1]{\mathbb{#1}} \newcommand{\rz}{\field{R}}
\newcommand{\cz}{\field{C}} \newcommand{\nz}{\field{N}}
\newcommand{\zz}{\field{Z}}
\newcommand{\Id}{{\textrm{Id}}}
\newcommand{\Imag}{{\textrm{Im\,}}}
\DeclareMathOperator\Real{\rz e\,}
\DeclareMathOperator\supp{\textrm{supp}}
\newcommand{\ccap}{\mathop{\cap}}
\newcommand{\ccup}{\mathop{\cup}}
\newcommand{\fkh}{\mathfrak{h}}
\newcommand{\fkf}{\mathfrak{f}}
\newcommand{\fkF}{\mathfrak{F}}
\newtheorem{theorem}{Theorem}[section]
\newtheorem{lemma}[theorem]{Lemma}
\newtheorem{proposition}[theorem]{Proposition}
\newtheorem{definition}[theorem]{Definition}
\newtheorem{remark}[theorem]{Remark}
\newtheorem{corollary}[theorem]{Corollary}
\def\dim{\textrm{dim\;}}
\title{Bismut  hypoelliptic Laplacians for  manifolds with boundaries.
}
\author{
  F.~Nier
\thanks{LAGA, Universit{\'e} de Sorbonne-Paris Nord (Paris XIII), 99 avenue
J.B.~Cl{\'e}ment, F-93430~Villetaneuse. nier@math.univ-paris13.fr}\\
S.~Shen \thanks{Institut de Math{\'e}matiques de Jussieu-Paris Rive
  Gauche, Sorbonne Universit{\'e}, Case Courrier 247, 4 place Jussieu,
  F-75252 Paris~Cedex~05, shu.shen@imj-prg.fr }
}
\begin{document}
\bibliographystyle{plain}
\maketitle{}
\begin{abstract}
Boundary conditions for Bismut's hypoelliptic Laplacian which
naturally correspond to Dirichlet and Neumann boundary conditions for
Hodge Laplacians are considered. Those are related with specific
boundary conditions for the differential and its various
adjoints. Once the closed realizations of those operators are well
understood, the commutation of the differential with the resolvent of
the hypoelliptic Laplacian is checked with other properties like the
PT-symmetry, which are important for the spectral analysis.
\end{abstract}
\noindent\textbf{MSC2020: 35G15, 35H10, 35H20, 35R01, 47D06, 58J32, 58J50, 58J65,
 60J65}\\
\noindent\textbf{Keywords:} 
Bismut's hypoelliptic Laplacian, boundary value problem, subelliptic
estimates, commutation of unbounded operators.
\\
\tableofcontents{}
\section{Introduction}
\label{sec:intro}
This text is devoted to boundary conditions which extend naturally to
Bismut's hypoelliptic Laplacians, Dirichlet and Neumann's boundary
conditions for Hodge and Witten Laplacians. Actually such boundary
conditions were proposed in \cite{Nie} and the functional analysis was
carried out, essentially relying on the scalar principal parts while
neglecting the complicated lower order terms related with curvature
tensors. While considering the commutation of the resolvent of this operator
with the differential, with the suitable boundary conditions,  a good understanding of the geometrical content of the whole
operators cannot be
skipped. This article answers one of the questions asked at the end
of \cite{Nie} about the proper boundary conditions for the
differential and Bismut's codifferential, which ensure the commutation
with the resolvent of Bismut's hypoelliptic Laplacian. A motivation
for this comes from the accurate spectral analysis of the low-lying
spectrum which is now well understood for Witten Laplacian in the low
temperature limits, for possibly non Morse potential functions. An
instrumental tool in \cite{LNV1}\cite{LNV2}, was the introduction of artificial Dirichlet and Neumann
realizations of Witten Laplacians, as a localization technique. 
Although it is a first step, ignoring for the moment the asymptotic
analysis issues, the results of this article pave
the way to such an analysis for Bismut's hypoelliptic Laplacian.\\
This introduction rapidly presents the objects, the main results, some
notations and conventions. It ends by pointing out the problems to be
solved and the followed strategy. The outline of the article is given
in this last paragraph.

\subsection{Bismut hypoelliptic Laplacian}
\label{sec:bismhyp}
When $Q$ is a closed (compact) riemannian manifold endowed with the metric
$g=g^{TQ}=g_{ij}(\underline{q})d\underline{q}^{i}d\underline{q}^{j}$\,,
the hypoelliptic Laplacian introduced by J-M.~Bismut in
\cite{Bis04}\cite{Bis05} is a Hodge type Laplacian defined on the
cotangent space $X=T^{*}Q$\,, $\pi_{X}:X=T^{*}Q\to Q$\,, of which we briefly
recall the construction here.
Details will be given in the text and
may be found in \cite{Bis04}\cite{Bis04-2}\cite{Bis05}\cite{BiLe}\cite{Leb1}\cite{Leb2}.\\
The Levi-Civita connection associated with $g^{TQ}$ gives rise to the
decomposition into horizontal and vertical space
$$
TX=\underbrace{TX^{H}}_{\simeq \pi_{X}^{*}(TQ)}\oplus
\underbrace{TX^{V}}_{\simeq \pi_{X}^{*}(T^{*}Q)}\,,
$$
and by duality to 
$$
T^{*}X=\underbrace{T^{*}X^{H}}_{\simeq \pi_{X}^{*}( 
T^{*}Q)}\oplus \underbrace{T^{*}X^{V}}_{\pi_{X}^{*}(TQ)}\,.
$$
By using the dual metric
$g^{T^{*}Q}=g^{ij}(\underline{q})\frac{\partial}{\partial
  \underline{q}^{i}}\frac{\partial}{\partial\underline{q}^{j}}$\,,
one may define metrics on $TX$ and $T^{*}X$ which make the
horizontal vertical decomposition orthogonal. 
By tensorization this
provides metrics on the vector bundles $E'=\Lambda TX$ and $E=\Lambda
T^{*}X$\,.
 It is convenient to introduce from the begining a
$p$-dependent weight where an element of
$X=T^{*}Q$ is locally written $x=(q,p)$\,, by setting
$$
|p|_{q}=\sqrt{g_{q}^{T^{*}Q}(p,p)}=\sqrt{g^{ij}(q)p_{i}p_{j}}\quad,\quad
\langle p\rangle_{q}=\sqrt{1+g_{q}^{T^{*}Q}(p,p)}=\sqrt{1+g^{ij}(q)p_{i}p_{j}}\,.
$$
The metric $g^{E}$ and $g^{E'}$\,, dual to each other, are given by
$$
g^{E}=\langle p\rangle_{q}^{N_{V}-N_{H}}\pi_{X}^{*}(g^{\Lambda
T^{*}Q}\otimes g^{\Lambda TQ})\quad,\quad g^{E'}=\langle p\rangle_{q}^{N_{H}-N_{V}}\pi_{X}^{*}(g^{\Lambda
TQ}\otimes g^{\Lambda T^{*}Q})\,,
$$
where $N_{V}$ and $N_{H}$ are the vertical and horizontal number operators.
The volume associated with $g^{E}$ coincides with the symplectic volume on
$X$ and it is simply written $dv_{X}$\,.\\
Additionally we may add a hermitian vector bundle structure by
starting from  $(\fkf, \nabla^{\frak{f}}\,,
g^{\fkf})$ where $\pi_{\fkf}:\fkf\to Q$ is  a
complex vector bundle, endowed with a flat connection
$\nabla^{\fkf}$\,, and $g^{\fkf}$
is a hermitian metric. While identifying the anti-dual vector bundle
with $\fkf$ via the metric $g^{\fkf}$\,, the anti-dual metric remains
$g^{\fkf}$ but the anti-dual flat connection differ and will be
denoted by $\nabla^{\fkf'}$\,.
By setting
$$
\omega(\nabla^{\fkf},g^{\fkf})=(g^{\fkf})^{-1}\nabla^{\fkf}g^{\fkf}\in
\mathcal{C}^{\infty}(Q;T^{*}Q\otimes L(\fkf))
$$
we get
$$
\nabla^{\fkf'}=\nabla^{\fkf}+\omega(\nabla^{\fkf},g^{\fkf})
$$
and we can introduce the unitary connection on $(\fkf,g^{\fkf})$
$$
\nabla^{\fkf,u}=\nabla^{\fkf}+\frac{1}{2}\omega(\nabla^{\fkf},g^{\fkf})\,.
$$
Simple examples of such vector bundles on $Q$ are given by
\begin{itemize}
\item $\fkf=Q\times \cz$\,, $\nabla^{\fkf}$ is the trivial  connection
  $\nabla$\,,
$g^{\fkf}(z)=e^{-2V(q)}|z|^{2}$\,,
$\omega(\nabla^{\fkf},g^{\fkf})=-2dV(q)$\,, where $V\in
\mathcal{C}^{\infty}(Q;\rz)$ is a potential function,
\item the
orientation bundle $\fkf=(\mathrm{or}_{Q}\times\cz)/\zz_{2}$ where
$\mathrm{or}_{Q}\to Q$ is the orientation double cover of $Q$\,. The
trivial connection and trivial metric on $\mathrm{or}_{Q}\times \cz$
induce the connection $\nabla^{\fkf}$ and $g^{\fkf}$\,.
\end{itemize}
The total vector   bundles $F=E\otimes \pi_{X}^{*}(\fkf)
=\pi_{X}^{*}(\Lambda T^{*}Q\otimes \Lambda
TQ\otimes \fkf)$
(resp. $F'=E'\otimes\pi^{*}_{X}(\fkf)$) endowed with the metrics
\begin{eqnarray*}
  && g^{F}=g^{E}\otimes \pi_{X}^{*}g^{\fkf}
=\langle p\rangle_{q}^{N_{V}-N_{H}}\pi^{*}_{X}(g^{\Lambda
     T^{*}Q}\otimes g^{\Lambda TQ}\otimes g^{\fkf})\\
\text{resp.}&&
g^{F'}=g^{E'}\otimes \pi_{X}^{*}g^{\fkf}=
\langle p\rangle_{q}^{N_{H}-N_{V}}\pi^{*}_{X}(
g^{\Lambda  TQ}
\otimes g^{\Lambda T^{*}Q}\otimes g^{\fkf})\,.
\end{eqnarray*}
When $\nabla^{Q,g}$ is connection on $\Lambda
T^{*}Q\otimes\Lambda TQ$ induced by the Levi-Civita connection on $TQ$\,, and after the identification of 
$TX=TX^{H}\oplus TX^{V}$ with $\pi^{*}_{X}(TQ\oplus T^{*}Q)$\,, the
connection $\nabla^{E,g}$ on $E$ is nothing but
$\pi^{*}_{X}(\nabla^{Q,g})$\,. When $\nabla^{\fkf}$\,,
$\nabla^{\fkf'}=\nabla^{\fkf}+\omega(\nabla^{\fkf},g^{\fkf})$
and
$\nabla^{\fkf,u}=\nabla^{f}+\frac{1}{2}\omega(\nabla^{\fkf},g^{\fkf})$ 
are the three above connections on
$\fkf$\,, the connections on $F$ and $F'$ are given by
\begin{eqnarray*}
  && \nabla^{F,g}=\pi^{*}_{X}(\nabla^{Q,g}+\nabla^{\fkf})\\
&&\nabla^{F',g}=\pi^{*}_{X}(\nabla^{Q,g}+\nabla^{\fkf'})\\
&&\nabla^{F,g,u}=\pi^{*}_{X}(\nabla^{Q,g}+\nabla^{\fkf,u})\,.
\end{eqnarray*}
When there is no ambiguity with a fixed metric $g^{TQ}$\,, the exponent
${}^{g}$ will be dropped and the above connections will be simply
written $\nabla^{E}$\,, $\nabla^{F}$\,, $\nabla^{F'}$\,,
$\nabla^{F,u}$\,.\\
The $L^{2}(X;F)$  (resp. $L^{2}(X;F')$) space is the space of
$L^{2}$-sections for the Hilbert scalar product
\begin{eqnarray*}
  &&\langle s\,,\,s'\rangle_{L^{2}}=\int_{X}\langle s(x)\,,\,
     s'(x)\rangle_{g^{F}}~dv_{X}(x)\\
\text{resp.}&& \langle t\,,\,t'\rangle_{L^{2}}=\int_{X}\langle t(x)\,,\,t'(x)\rangle_{g^{F'}}~dv_{X}(x)\,.
\end{eqnarray*}
From those structures one could define a Hodge Laplacian for sections
of $F$\,. However this is not the way to introduce
Bismut's hypoelliptic Laplacian. Instead one works with the non
degenerate bilinear form $\phi_{b}$ on $TX$ combining the metric and
the symplectic form $\sigma=d\theta$\,, where $\theta=p_{i}dq^{i}$ is the
tautological one-form on $X$\,, with $X=T^{*}Q$\,. For $b\neq 0$\,, the isomorphism
$\phi_{b}:TX\to T^{*}X$ extended to $\phi_{b}:E'=\Lambda TX\to
E=\Lambda T^{*}X$ is the one given by the non degenerate form
$$
\eta_{\phi_{b}}(U,V)=g^{TQ}(\pi_{X,*}(U),\pi_{X,*}(V))+b\sigma(U,V)=U.\phi_{b}V\,,\quad
U,V\in TX\,.
$$
When $(\underline{e}_{1},\ldots,\underline{e}_{d})$ is a local  frame
of $TQ$\,, with the dual frame
$(\underline{e}^{1},\ldots,\underline{e}^{d})$ in $T^{*}Q$\,, we
denote by
$e_{i}\in TX^{H}$ and 
$\hat{e}^{j}\in TX^{V}$\,, the corresponding vectors obtained via
$TX^{H}\simeq \pi_{X}^{*}(TQ)$ and $TX^{V}\simeq \pi_{X}^{*}(X^{V})$
and we notice that
$(e_{i},\hat{e}^{j})$ is a symplectic frame of $TX$\,. The dual basis
is  $e^{i}\in T^{*}X^{H}$ and
$\hat{e}_{j}\in T^{*}X^{V}$\,. Then the
matrix of $\eta_{\phi_{b}}$ or $\phi_{b}:TX\to T^{*}X$ in those bases is
given by 
$$
\phi_{b}=
\begin{pmatrix}
 g^{TQ}&-b\Id\\
b\Id&0
\end{pmatrix}\,.
$$
The dual bilinear form on $\Lambda T^{*}X$ is then given by
$$
\eta_{\phi_{b}}^{*}(\omega,\theta)=(\phi^{-1}_{b}\omega).\theta\quad,\quad\omega,\theta\in
\Lambda T^{*}X\,.
$$
Where the matrix of $\eta_{\phi_{b}}^{*}\big|_{T^{*}X\times T^{*}X}$ or of
${}^{t}\phi_{b}^{-1}:T^{*}X\to TX$ equals
$$
{}^{t}\phi_{b}^{-1}=
\begin{pmatrix}
  0&-b^{-1}\Id\\
b^{-1}\Id&b^{-2}g^{TQ}
\end{pmatrix}
\,.
$$
The tensorization with $\fkf$ is done by writing
$\eta^{*}_{\phi_{b},\fkf}=\eta^{*}_{\phi_{b}}\otimes
\pi^{*}_{X}(g^{\fkf})$ on
$F=E\otimes\pi^{*}_{X}(\fkf)$ so that
the non-degenerate sesquilinear form on sections of
$F=E\otimes\pi^{*}_{X}(\fkf)$ is given by
$$
\langle s\,,\,s'\rangle_{\phi_{b}}=\int_{X}\eta^{*}_{\phi_{b},\fkf}(s(x),s'(x))~dv_{X}(x)\,.
$$
By
introducing the kinetic energy
$$
\fkh(q,p)=\frac{|p|_{q}^{2}}{2}=\frac{g^{ij}(q)p_{i}p_{j}}{2}
$$
and the deformed differential
$$
d_{\fkh}=e^{-\fkh}de^{\fkh}=d+d\fkh\wedge
$$
Bismut's codifferential $d^{\phi_{b}}_{\fkh}$ is the formal
adjoint of the differential $d_{\fkh}$ for the above  duality product
$\langle~,~\rangle_{\phi_{b}}$\,.
Bismut's hypoelliptic Laplacian is nothing but
$$
B^{\phi_{b}}_{\fkh}=\frac{1}{4}(d^{\phi_{b}}_{\fkh}+d_{\fkh})^{2}=\frac{1}{4}(d^{\phi_{b}}_{\fkh}d_{\fkh}+d_{\fkh}d^{\phi_{b}}_{\fkh})\,.
$$
In \cite{Bis05} Bismut proved the following Weitzenbock formula
\begin{eqnarray*}
B^{\phi_{b}}_{\fkh}&=&\frac{1}{4b^{2}}
   \left[-\Delta^{V}+|p|_{q}^{2}-\frac{1}{2}\langle
   R^{TQ}(e_{i},e_{j})e_{k}\,,\,e_{\ell}\rangle
                       e^{i}e^{j}\mathbf{i}_{\hat{e}^{k}\hat{e}^{\ell}}+2N_{V}-\mathrm{dim}~Q\right]\\
&&\hspace{2cm}
-\frac{1}{2b}\Big[
\mathcal{L}_{Y_{\fkh}}+\frac{1}{2}\omega(\nabla^{\fkf},g^{\fkf})(Y_{\fkh})+\frac{1}{2}e^{i}\mathbf{i}_{\hat{e}^{j}}\nabla^{F}_{e_{i}}\omega(\nabla^{\fkf},g^{\fkf})(e_{j})
\\
&&
\hspace{5cm}
+\frac{1}{2}\omega(\nabla^{\fkf},g^{\fkf})(e_{i})\nabla^{F}_{\hat{e}^{i}}
\Big]
\end{eqnarray*}
where $\Delta^{V}$ is the vertical Laplacian and
$\mathcal{O}=\frac{-\Delta^{V}+|p|_{q}^{2}}{2}$ the scalar harmonic
oscillator in the vertical direction while
$Y_{\fkh}=g^{ij}(q)p_{i}e_{i}$ is the Hamilton vector field for the
kinetic energy $\fkh=\frac{|p|_{q}^{2}}{2}$\,.
The other terms, which involve various curvatures, like 
$R^{TQ}$ the riemann curvature tensor of
$g^{TQ}$ pulled-back by $\pi^{*}_{X}$\,, and
$\omega(\nabla^{\fkf},g^{\fkf})$\,, are actually lower order terms
controlled in the analysis by the scalar principal part.\\
For the analysis of those operators G.~Lebeau introduced in
\cite{Leb1} the scale of Sobolev spaces
$\mathcal{W}^{\mu}(X;E)$\,, $\mu\in\rz$\,, 
modelled on the fact that
horizontal derivations $\nabla^{F,u}_{e_{i}}$
{\red\,,
$\frac{\partial}{\partial q^{i}}$\,, and weighted vertical derivations
$\langle p\rangle_{q} \nabla^{F,u}_{\hat{e}^{j}}$\,,
$\langle p\rangle_{q}\frac{\partial}{\partial p_{j}}$ are of order $1$\,,
while the  multilplications  $p_{j}\times$ and $\langle
p\rangle_{q}\times$ are of order
$\frac{1}{2}$\,, with $\mathcal{W}^{0}(X;F)=L^{2}(X;F)$\,.}
 They satisfy $\ccap_{\mu\in\rz}\mathcal{W}^{\mu}(X;F)=S(X;F)$ the space
 of $\mathcal{C}^{\infty}$ vertically rapidly decaying sections while
 $\ccup_{\mu\in\rz}\mathcal{W}^{\mu}(X;F)=S'(X;F)$ is the space of
 tempered distributional sections.
The maximal subelliptic estimates were proved in \cite{Leb2}. They
say the following things:
\begin{itemize}
\item Given $b\neq 0$\,, there exist a constant $C_{b}>0$ and for any $\mu\in \rz$ a
  constant $C_{b,\mu}>0$ such that
$$
\hspace{-1cm}\| \mathcal{O} s\|_{\mathcal{W}^{\mu}}+\|
\nabla^{F,u}_{Y_{\fkh}}s\|_{W^{\mu}}+\delta_{0,\mu}\langle
\lambda\rangle^{1/2}\|s\|_{\mathcal{W}^{\mu}}+\|s\|_{\mathcal{W}^{\mu+2/3}}\leq C_{b,\mu}\|(C_{b}+B^{\phi_{b}}_{\fkh}-i\delta_{0,\mu}\lambda)s\|_{\mathcal{W}^{\mu}}
$$
for all $s\in S'(X;F)$ and all $\lambda\in\rz$ such that
$(C_{b}+B^{\phi_{b}}_{\fkh}-i\delta_{0,\mu}\lambda)s\in \mathcal{W}^{\mu}(X;F)$\,.
\item The operator $C_{b}+B^{\phi_{b}}_{\fkh}$ with domain
  $D(B^{\phi_{b}}_{\fkh})=\left\{s\in L^{2}(X;F)\,,
    B^{\phi_{b}}_{\fkh}s\in L^{2}(X;F)\right\}$ is maximal accretive
  in $L^{2}(X;F)$\,.
\end{itemize}
The reason for the factor $\delta_{0,\mu}$ which says that the
$\lambda$-dependent case holds here only for $\mu=0$ is due to the
fact that the more general $\lambda$-dependent estimates of \cite{Leb2} require
$\lambda$-dependent $\mathcal{W}^{\mu}$-norms for $\mu\neq 0$ which
won't be  considered
in this text.
\subsection{Boundary conditions and results}
\label{sec:bdyres}
Let $\overline{Q}_{-}=Q_{-}\sqcup Q'$\,, $Q'=\partial Q_{-}$\,, be
a compact riemannian manifold with boundary.  The cotangent bundle is
a manifold with boundary:
$\overline{X}_{-}=X_{-}\sqcup X'$\,, $X_{-}=T^{*}Q_{-}$\,,
$X'=\partial X_{-}=T^{*}Q\big|_{\partial Q_{-}}$\,.
A collar neighborhood $Q_{(-\varepsilon,0]}$ of $Q'$ can be chosen such
that $Q_{(-\varepsilon,0]}\simeq (-\varepsilon,0]\times Q'$ and the
metric $g^{TQ}$ equals
$(d\underline{q}^{1})^{2}+m^{TQ'}(\underline{q}^{1},\underline{q'})$ with
$m^{TQ'}(\underline{q}^{1},\underline{q}')$ is a
$\underline{q}^{1}$-dependent metric on $Q'$\,.
Although the following constructions will be checked to make sense
geometrically, we follow here the shortest presentation in terms of
coordinates.
The decomposition
$$
TQ_{(-\varepsilon,0]}=\rz\frac{\partial}{\partial
  \underline{q}^{1}}\mathop{\oplus}^{\perp}TQ'
\quad 
T^{*}Q_{(-\varepsilon,0]}=\rz d\underline{q}_{1}\mathop{\oplus}^{\perp}T^{*}Q'\,,
$$
provides  coordinates $x=(q^{1},q',p_{1},p')$ where
$(q',p')=(q^{i'},p_{j'})_{2\leq i',j'\leq d}$ are local
coordinates on $T^{*}Q'$\,, $q^{1}=\underline{q}^{1}(\pi_{X}(x))$ and
$p_{1}=\frac{\partial}{\partial \underline{q}^{1}}.p$\,.
Take $\underline{e}_{i}=\frac{\partial}{\partial\underline{q}^{i}}$
and $\underline{e}^{i}=d\underline{q}^{i}$\,. The construction given
in Subsection~\ref{sec:bismhyp}, relying on $TX=TX^{H}\oplus TX^{V}\simeq
\pi_{X}^{*}(TQ\oplus T^{*}Q)$\,, provides the basis
$(e_{i},\hat{e}^{j})_{1\leq i,j\leq \dim Q}$ of $TX$ and the dual
basis 
$(e^{i},\hat{e}_{j})_{1\leq i,j\leq\dim Q}$ of $T^{*}X$\,.\\
A section $s$ of $E=\Lambda T^{*}X$ (or
$F=E\otimes\pi^{*}_{X}(\fkf)$) reads in those coordinates
$$
s=s_{I}^{J}(q^{1},q',p_{1},p')e^{I}\hat{e}_{J}\,.
$$
When $s$ is a regular enough section to admit the following traces we 
consider
\begin{eqnarray*}
  && s\big|_{X'}=s_{I}^{J}(0,q',p_{1},p')e^{I}\hat{e}_{J}\,,\\
&& \mathbf{i}_{e_{1}}e^{1}\wedge s\big|_{X'}=s_{I'}^{J}(0,q',p_{1},p')e^{I'}\hat{e}_{J}\,,\\
&& \hat{e}_{1}\wedge \mathbf{i}_{\hat{e}^{1}}s\big|_{X'}=
s_{I}^{\left\{1\right\}\cup
   J'}(0,q',p_{1},p')e^{I}\hat{e}_{\left\{1\right\}\cup J'}\,.
\end{eqnarray*}
Note that $s\big|_{X'}$ is not the classical pull back to $X'$\,.\\
When $\nu$ is a flat unitary involution of $\fkf\big|_{Q'}$\,,  $\nu\in \mathcal{C}^{\infty}(Q';L(\fkf\big|_{Q'}))$ 
such that
  its covariant derivative along $Q'$ vanishes
$\nabla^{L(\fkf|_{Q'})}\nu=0$ 
(the simplest and essential  example being $\nu=\pm
\Id_{\fkf}$)\,, the transformation $\hat{S}_{\nu}$ acting on sections of
$F\big|_{X'}$ is given by
$$
\hat{S}_{\nu}(s_{I}^{J}(0,q',p_{1},p')e^{I}\hat{e}_{J})=(-1)^{\left|\left\{1\right\}\cap
  I\right|+\left|\left\{1\right\}\cap J\right|}\nu
s_{I}^{J}(0,q^{1},-p_{1},p')e^{I}\hat{e}_{J}\,,
$$
where we use the same notation $\nu$ for the pulled-backed unitary map
$\pi^{*}_{X'}(\nu)$\,. Note that  $(\hat{S}_{\nu})^{2}=\mathrm{Id}$
and $\frac{1-\hat{S}_{\nu}}{2}$ is a projection.\\
We shall prove the following results for closed realizations of the
differential operators $P=d_{\fkh}$\,, $P=d^{\phi_{b}}_{\fkh}$ and $P=B^{\phi_{b}}_{\fkh}$\,, where an important step consists
in proving trace theorems for sections $s\in L^{2}(X_{-};F)$ such that
$Ps\in L^{2}(X_{-};F)$\,, so  that the definition of the domains makes
sense.
\begin{theorem}
\label{th:dg} The operator
$(\overline{d}_{g,\fkh},D(\overline{d}_{g,\fkh}))$ in $L^{2}(X_{-};F)$
defined by
\begin{eqnarray*}
  &&D(\overline{d}_{g,\fkh})=\left\{s\in L^{2}(X_{-};F)\,,\quad
     d_{\fkh}s\in L^{2}(X_{-};F)\,,\quad
     \frac{1-\hat{S}_{\nu}}{2}\mathbf{i}_{e_{1}}e^{1}\wedge
     s\big|_{X'}=0 \right\}\\
&& \forall s\in D(\overline{d}_{g,\fkh})\,,\quad \overline{d}_{g,\fkh}s=d_{\fkh}s\,,
\end{eqnarray*}
is closed and satisfies $\overline{d}_{g,\fkh}\circ
\overline{d}_{g,\fkh}=0$\,.\\
The set $\mathcal{C}^{\infty}_{0}(\overline{X}_{-};F)\cap
D(\overline{d}_{g,\fkh})$ is a core for $\overline{d}_{g,\fkh}$\,.
\end{theorem}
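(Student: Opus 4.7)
The plan is to localize the problem in a collar neighborhood of $X'$ by a partition of unity in $q^{1}$, handle the interior piece by standard mollification, and build the boundary analysis around a single trace theorem for sections satisfying $d_{\fkh}s\in L^{2}$. In collar coordinates $(q^{1},q',p_{1},p')$, decompose $s=s^{t}+e^{1}\wedge s^{n}$, where $s^{t}:=\mathbf{i}_{e_{1}}(e^{1}\wedge s)$ is exactly the object constrained by the boundary condition and $s^{n}:=\mathbf{i}_{e_{1}}s$. Writing $d_{\fkh}=d+d\fkh\wedge$ in the frame $(e^{i},\hat{e}_{j})$, the structural observation to establish is that the normal derivative $\partial_{q^{1}}$ enters $d_{\fkh}s$ only through the term $e^{1}\wedge\partial_{q^{1}}s^{t}$: the vertical part of $d$ uses only $\partial_{p_{j}}$, the tangential horizontal part only $\partial_{q^{i'}}$, $\partial_{q^{1}}$ applied to $e^{1}\wedge s^{n}$ is killed after wedging with $e^{1}$ (using that $\nabla^{TQ}_{\partial_{q^{1}}}dq^{1}=0$ in the geodesic collar of Subsection~\ref{sec:bismhyp}), while the remaining contributions are tangential derivatives together with zero-order multiplications by metric/curvature coefficients and by $d\fkh$. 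Consequently $s,\,d_{\fkh}s\in L^{2}$ gives uniform control of $\partial_{q^{1}}s^{t}$ in the ambient weighted $L^{2}$ of the collar, and a one-dimensional Sobolev embedding yields a continuous trace map $s\mapsto s^{t}\big|_{X'}=\mathbf{i}_{e_{1}}e^{1}\wedge s\big|_{X'}$ from the graph of the maximal $d_{\fkh}$ into a suitable weighted Sobolev space on $X'$.

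Closedness of $(\overline{d}_{g,\fkh},D(\overline{d}_{g,\fkh}))$ then reduces to standard facts. The maximal operator $\{s\in L^{2}:d_{\fkh}s\in L^{2}\}$ is closed in the graph norm, and $D(\overline{d}_{g,\fkh})$ is the kernel of the continuous linear map $s\mapsto\frac{1-\hat{S}_{\nu}}{2}s^{t}\big|_{X'}$, hence closed. For $\overline{d}_{g,\fkh}\circ\overline{d}_{g,\fkh}=0$, the identity $d_{\fkh}^{2}=e^{-\fkh}d^{2}e^{\fkh}=0$ ensures, for $s\in D(\overline{d}_{g,\fkh})$, that $d_{\fkh}s\in L^{2}$ and $d_{\fkh}(d_{\fkh}s)=0\in L^{2}$; only the propagation of the boundary condition to $d_{\fkh}s$ remains. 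The tangential trace of $d_{\fkh}s$ equals $D^{\mathrm{tan}}(s^{t}|_{X'})$, where $D^{\mathrm{tan}}$ is the tangential restriction of $d_{\fkh}$ at $\{q^{1}=0\}$ and involves only the derivatives $\partial_{q^{i'}},\partial_{p_{k}}$ and zero-order operators. Such an operator commutes with the $p_{1}$-reflection and, thanks to $\nabla^{L(\fkf|_{Q'})}\nu=0$, with $\nu$; hence with $\hat{S}_{\nu}$ and with the projection $\frac{1-\hat{S}_{\nu}}{2}$, so the boundary condition passes.

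For the core property a reflection trick is required. Given $s\in D(\overline{d}_{g,\fkh})$, extend it across $\{q^{1}=0\}$ by $\hat{S}_{\nu}$-symmetrizing $s^{t}$ (which is the exact content of the boundary condition: $s^{t}|_{X'}$ is fixed by $\hat{S}_{\nu}$, so the extension is continuous across the boundary) and by $\hat{S}_{\nu}$-antisymmetrizing $e^{1}\wedge s^{n}$ (the jump of $s^{n}$ is harmless because $\partial_{q^{1}}s^{n}$ does not appear in $d_{\fkh}s$). The extension $\tilde{s}$ then lies in the $\hat{S}_{\nu}$-equivariant maximal domain on the enlarged collar, with $d_{\fkh}\tilde{s}\in L^{2}$ and no boundary Dirac mass. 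A Friedrichs mollification whose kernel is symmetric in $(q^{1},p_{1})$ and scalar in $\fkf$ preserves $\hat{S}_{\nu}$-equivariance, hence the boundary condition upon restriction to $X_{-}$; compactly supported cutoffs in $(q,p)$ and the usual interior mollification on the complementary piece of the partition of unity produce the approximating sequence in $\mathcal{C}^{\infty}_{0}(\overline{X}_{-};F)\cap D(\overline{d}_{g,\fkh})$ converging to $s$ in the graph norm. The main obstacle throughout is the trace theorem of the first paragraph: at the regularity $s,\,d_{\fkh}s\in L^{2}$, without an elliptic estimate for $d_{\fkh}$ alone, the existence and continuity of $s^{t}|_{X'}$ must be extracted from the structural isolation of the unique normal derivative, while respecting the weight $\langle p\rangle_{q}^{N_{V}-N_{H}}$ on $F$ and the twist by $\fkf$; once this is in place, closedness, nilpotency and the core property follow from the commutation and reflection arguments sketched above.
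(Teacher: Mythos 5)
Your overall plan is sound, and the observation that the only normal derivative in $d_{\fkh}s$ sits in front of $e^{1}\wedge s^{t}$ (where $s^{t}=\mathbf{i}_{e_{1}}e^{1}\wedge s$) is exactly the right starting point for the trace theorem. However, two of your three steps have real problems, and the second one is a genuine gap.

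First, a minor overstatement: the trace $s^{t}\big|_{X'}$ that one obtains from $s,\, d_{\fkh}s\in L^{2}_{loc}$ alone lives only in $\mathcal{D}'(X';\Lambda T^{*}X'\otimes\fkf\big|_{X'})$, not in a weighted Sobolev space on $X'$. The paper stresses this explicitly after its trace lemma: the complex is elliptic but the single operator $d$ is not, so neither $L^{2}_{loc}$ nor $W^{1/2,2}_{loc}$ regularity holds for $j^{*}_{X'}s$ (the example $r^{-\alpha}dr$ in the plane illustrates the failure). Your argument is morally right — one only controls $\partial_{q^{1}}s^{t}$ in $L^{2}(W^{-1,2}_{loc})$ after moving all tangential derivatives (including the $\Gamma\, p\,\partial_{p}$ terms hiding inside $e_{i'}$) to a weaker space — so what you actually have is the distributional trace. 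That is all the theorem needs, but the claimed $L^{2}$-control is not available.

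The serious gap is the nilpotency argument. You assert that the tangential restriction $D^{\mathrm{tan}}$ of $d_{\fkh}$ at $X'$ commutes with $\hat{S}_{\nu}$ because it ``involves only $\partial_{q^{i'}},\,\partial_{p_{k}}$ and zero-order operators.'' This does not follow, and when the second fundamental form of $Q'$ is nonzero it is far from obvious that $D^{\mathrm{tan}}$ commutes with $\hat{S}_{\nu}$ at all in the $(q,p)$-frame $(e,\hat{e})$. Concretely: at $q^{1}=0$ the tangential horizontal vector field is
$e_{i'}\big|_{X'}=\partial_{q^{i'}}+\Gamma^{k'}_{i'j'}(0)p_{k'}\partial_{p_{j'}}+\Gamma^{k'}_{i'1}(0)p_{k'}\partial_{p_{1}}+\Gamma^{1}_{i'j'}(0)p_{1}\partial_{p_{j'}}$,
and the last two terms are odd under $p_{1}\mapsto -p_{1}$; similarly the connection coefficients of $\nabla^{F}_{e_{i'}}$ contain the couplings $\Gamma^{k'}_{i'1}$ and $\Gamma^{1}_{i'j'}$ which move the index $1$ in and out of $I$ or $J$, changing the sign factor $(-1)^{|\{1\}\cap I|+|\{1\}\cap J|}$ built into $\hat{S}_{\nu}$. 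Whether all these anticommuting contributions cancel is precisely the content of the identity $d'A=A d'$ discussed in Subsection~\ref{sec:commentsdomd} of the paper (the paragraph on the interface condition as a jump condition), where the authors explicitly remark that a direct verification ``is not so simple and may involve the differentiation of the Christoffel symbols $\Gamma^{k}_{ij}(q)$ which is irrelevant.'' The paper avoids this computation entirely by introducing the smooth glued manifold $M_{g}$ (Subsection~\ref{sec:diffhatE}): in the normal-exponential coordinates $(\tilde{q},\tilde{p})$ of Definition~\ref{de:tildeqp} and the gluing by $S_{1}:(\tilde{q}',\tilde{p}_{1},\tilde{p}')\mapsto(\tilde{q}',-\tilde{p}_{1},\tilde{p}')$, the symmetrized differential $\hat{d}_{g}$ becomes the ordinary smooth exterior covariant derivative $d^{\nabla^{\fkf}}_{M_{g}}$ on the closed $\mathcal{C}^{\infty}$-manifold $M_{g}$, and the reflection becomes the genuine diffeomorphism $\Sigma_{M_{g}}$; nilpotency, preservation of parity, and the trace theorem then all follow from standard facts. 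In other words, the cancellation you need is true, but you have not proved it, and the paper signals that proving it the way you propose is a nontrivial Christoffel bookkeeping exercise that it deliberately sidesteps.

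The same caveat applies to your reflection-and-mollification proof of the core property. A Friedrichs mollification ``symmetric in $(q^{1},p_{1})$'' is compatible with the $\hat{S}_{\nu}$-matching along $X'$ only if it is carried out in the $(\tilde{q},\tilde{p})$-coordinates and the associated smooth frame on $M_{g}$ (see Lemma~\ref{le:Mg}); in the symplectic $(q,p)$-coordinates the frame $(e,\hat{e})=1_{X_{\mp}}(e_{\mp},\hat{e}_{\mp})$ is only piecewise smooth across $X'$, so a $(q,p)$-mollification need not preserve the $\hat{F}_{g}$-continuity and hence need not return a section satisfying the boundary condition. The paper's proof of density (Proposition~\ref{pr:dod} and the $\fkh$-cutoff argument in Proposition~\ref{pr:domaindH}) works on $M_{g}$ precisely for this reason and then transports the result to $\overline{X}_{-}$ via the even/odd decomposition for $\Sigma_{\nu}$. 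You should make the change of coordinates explicit before asserting that the mollification preserves the boundary condition.
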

\begin{theorem}
\label{th:dphibg} The operator
$(\overline{d}^{\phi_{b}}_{g,\fkh},D(\overline{d}^{\phi_{b}}_{g,\fkh}))$ in $L^{2}(X_{-};F)$
defined by
\begin{eqnarray*}
  &&D(\overline{d}^{\phi_{b}}_{g,\fkh})=\left\{s\in L^{2}(X_{-};F)\,,\quad
     d^{\phi_{b}}_{\fkh}s\in L^{2}(X_{-};F)\,,\quad
     \frac{1-\hat{S}_{\nu}}{2}\hat{e}_{1}\wedge\mathbf{i}_{\hat{e}^{1}}
     s\big|_{X'}=0 \right\}\\
&& \forall s\in D(\overline{d}^{\phi_{b}}_{g,\fkh})\,,\quad \overline{d}^{\phi_{b}}_{g,\fkh}s=d^{\phi_{b}}_{\fkh}s\,,
\end{eqnarray*}
is closed and satisfies $\overline{d}^{\phi_{b}}_{g,\fkh}\circ
\overline{d}^{\phi_{b}}_{g,\fkh}=0$\,.\\
The set $\mathcal{C}^{\infty}_{0}(\overline{X}_{-};F)\cap
D(\overline{d}^{\phi_{b}}_{g,\fkh})$ is a core for $\overline{d}^{\phi_{b}}_{g,\fkh}$\,.
\end{theorem}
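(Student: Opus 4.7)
\textbf{Proof plan for Theorem~\ref{th:dphibg}.}

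I would mirror the strategy of Theorem~\ref{th:dg}, transferring the three assertions---closedness, $(\overline{d}^{\phi_{b}}_{g,\fkh})^{2}=0$, and the core property---by duality with respect to the bilinear form $\langle~,~\rangle_{\phi_{b}}$. In the coordinates $(q^{1},q',p_{1},p')$ near $X'$, the matrix of ${}^{t}\phi_{b}^{-1}$ makes the only derivative transverse to $X'$ appearing in $d^{\phi_{b}}_{\fkh}$, namely $\partial_{q^{1}}$, pair with $\mathbf{i}_{\hat{e}^{1}}$ instead of with the $e^{1}\wedge$ appearing in $d_{\fkh}$. This is exactly why the boundary projector $\frac{1-\hat{S}_{\nu}}{2}\mathbf{i}_{e_{1}}e^{1}\wedge$ of Theorem~\ref{th:dg} is replaced here by its Green dual $\frac{1-\hat{S}_{\nu}}{2}\hat{e}_{1}\wedge\mathbf{i}_{\hat{e}^{1}}$, the two being conjugate through integration by parts.

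The crucial analytic input is a trace theorem for the maximal realization $D_{\max}=\left\{s\in L^{2}(X_{-};F)\,,\,d^{\phi_{b}}_{\fkh}s\in L^{2}(X_{-};F)\right\}$. For $s,\varphi\in \mathcal{C}^{\infty}_{0}(\overline{X}_{-};F)$, integration by parts in $q^{1}$ produces a boundary term of the form
\[
\int_{X'}\!\bigl\langle B_{1}\,\hat{e}_{1}\wedge\mathbf{i}_{\hat{e}^{1}}s\big|_{X'}\,,\,\varphi\big|_{X'}\bigr\rangle_{g^{F}}~dv_{X'},
\]
with $B_{1}$ an invertible fiberwise algebraic operator inherited from the $b^{-1}$ off-diagonal block of ${}^{t}\phi_{b}^{-1}$. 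Controlling this expression by the graph norm of $s$ and by a vertically $\langle p\rangle_{q}$-weighted Sobolev norm of $\varphi|_{X'}$ identifies the trace $\hat{e}_{1}\wedge\mathbf{i}_{\hat{e}^{1}}s|_{X'}$ as a continuous map from $D_{\max}$ into an appropriate weighted negative-order distribution space on $X'$. Closedness of $(\overline{d}^{\phi_{b}}_{g,\fkh},D(\overline{d}^{\phi_{b}}_{g,\fkh}))$ is then immediate, since the domain is the intersection of the closed subspace $D_{\max}$ with the kernel of the composition of this trace with $\frac{1-\hat{S}_{\nu}}{2}$. The identity $(\overline{d}^{\phi_{b}}_{g,\fkh})^{2}=0$ splits into the algebraic $(d^{\phi_{b}}_{\fkh})^{2}=0$ (dual to $d_{\fkh}^{2}=0$) and the preservation of the boundary projector under $d^{\phi_{b}}_{\fkh}$; the latter follows from the cancellation $\mathbf{i}_{\hat{e}^{1}}\mathbf{i}_{\hat{e}^{1}}=0$, which kills the only transverse $\partial_{q^{1}}$ contribution in $\hat{e}_{1}\wedge\mathbf{i}_{\hat{e}^{1}}(d^{\phi_{b}}_{\fkh}s)|_{X'}$, combined with the algebraic commutation of $\hat{S}_{\nu}$ with $\hat{e}_{1}\wedge\mathbf{i}_{\hat{e}^{1}}$ and the parallelism $\nabla^{L(\fkf|_{Q'})}\nu=0$.

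For the core property, I would approximate $s\in D(\overline{d}^{\phi_{b}}_{g,\fkh})$ in graph norm by first truncating through a cut-off $\chi_{R}(|p|_{q})$ to reduce to bounded fiber support, then mollifying through a boundary extension using the reflection $p_{1}\mapsto -p_{1}$ dictated by $\hat{S}_{\nu}$, so that the approximants automatically inherit the projector condition. I expect this last step to be the main obstacle: the extension and the mollifier must be tuned so that the approximants converge in $L^{2}$ with their $d^{\phi_{b}}_{\fkh}$-images also converging in $L^{2}$, while interacting correctly with the lower-order curvature and $\omega(\nabla^{\fkf},g^{\fkf})$ contributions in the explicit expression of $d^{\phi_{b}}_{\fkh}$. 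This is the analogue of the corresponding technical step in Theorem~\ref{th:dg}, but with the horizontal/vertical roles swapped and the sign $p_{1}\mapsto -p_{1}$ coming from the $\phi_{b}$-duality.
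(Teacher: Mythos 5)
Your overall picture is right: the boundary condition $\frac{1-\hat{S}_{\nu}}{2}\hat{e}_{1}\wedge\mathbf{i}_{\hat{e}^{1}}s|_{X'}=0$ is the Green dual of $\frac{1-\hat{S}_{\nu}}{2}\mathbf{i}_{e_{1}}e^{1}\wedge s|_{X'}=0$, and $\phi_{b}$-duality is the organizing principle. But your implementation diverges from the paper in a way that creates real difficulties, and the paper's route avoids precisely the obstacle you single out as ``the main step.''

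The paper does \emph{not} establish a fresh trace theorem for the maximal realization of $d^{\phi_{b}}_{\fkh}$, nor does it approximate by mollification on $\overline{X}_{-}$. Instead, it defines $\overline{d}^{\phi_{b}}_{g,\fkh}$ \emph{as} the $\phi_{b}$-adjoint of $\overline{d}_{g,\fkh}$ (Definition~\ref{de:phiformadj}, Proposition~\ref{pr:adjlr}), so closedness is automatic for free: the adjoint of a densely defined operator is closed. The identification of the domain with the set you wrote, the square-zero identity, and the core statement are then obtained by a chain of conjugations $\tilde{d}_{g,\fkh}\to d^{\sigma}_{g,\fkh}\to d^{\phi_{b}}_{g,\fkh}$ via the bounded invertible maps $\sigma_{b}$ and $e^{\lambda_{0}/b}$ (Propositions~\ref{pr:domtilded}, \ref{pr:domdsigma}, \ref{pr:phiadj}), with the trace and density inputs taken once and for all from Proposition~\ref{pr:partialtrace2} for the \emph{interior} covariant derivative on $F'$. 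Crucially, the density result is obtained on the \emph{smooth} manifold $M_{g}$ of Subsection~\ref{sec:diffhatE}, where the codifferential has $\mathcal{C}^{\infty}$ coefficients and standard convolution works without any interaction with curvature or $\omega(\nabla^{\fkf},g^{\fkf})$ terms. The core $\mathcal{C}^{\infty}_{0}(\overline{X}_{-};F)\cap D(\overline{d}^{\phi_{b}}_{g,\fkh})$ then falls out from the transported core $e^{\frac{\lambda_{0}}{b}}\mathcal{D}_{g,\nabla^{\fkf'}}$.

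Where your plan has a genuine gap is exactly where you warn the reader: mollifying through a reflection $p_{1}\mapsto-p_{1}$ directly in $\overline{X}_{-}$. The reflected section $s_{ev}$ lives in the piecewise $\mathcal{C}^{\infty}$ bundle $\hat{F}_{g}$, whose transition along $X'$ is itself $g^{TQ}$-dependent and only continuous in general, and the coefficients of $d^{\phi_{b}}_{\hat{\fkh}}$ (the $R^{TQ}$-term and $\omega(\nabla^{\fkf},\hat{g}^{\fkf})$-terms, cf.\ Remark~\ref{re:sympladj}-e)) jump across $X'$. A naive mollifier in the $(q,p)$ coordinates will therefore not converge in graph norm. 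Making the approximation work requires identifying the correct smooth structure --- which is exactly the change to the $(\tilde{q},\tilde{p})$ coordinates and the manifold $M_{g}$, plus the observation in Proposition~\ref{pr:dualf} that the $F'$-side involves the antidual flat connection $\nabla^{\fkf'}$ and thus a \emph{different} $\mathcal{C}^{\infty}$-structure than the $F$-side. Your proposal implicitly treats the reflection and mollification as if the coefficients were symmetric under $\Sigma_{\nu}$, which they are not unless one has already passed to $M_{g}$. If you want to keep your direct route, you would need to prove a version of Proposition~\ref{pr:partialtrace2} and a density lemma adapted to $d^{\phi_{b}}_{\fkh}$ --- at which point you are essentially reconstructing the $M_{g}$ machinery, and the conjugation by $e^{\lambda_{0}/b}\sigma_{b}$ is both shorter and safer.
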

\begin{theorem}
\label{th:Bphibg} The operator $(\overline{B}^{\phi_{b}}_{g,\fkh},
D(\overline{B}^{\phi_{b}}_{g,\fkh}))$ defined in $L^{2}(X_{-};F)$ by
\begin{eqnarray*}
  && D(\overline{B}^{\phi_{b}}_{g,\fkh})=\left\{s\in
     L^{2}(X_{-};F)\,,\quad \nabla^{F}_{\frac{\partial}{\partial p}}s~\text{and}~B^{\phi_{b}}_{\fkh}s\in L^{2}(X_{-};F)\,,\quad \frac{1-\hat{S}_{\nu}}{2}s\big|_{X'}=0
\right\}\\
&&\forall s\in D(\overline{B}^{\phi_{b}}_{g,\fkh})\,,\quad \overline{B}^{\phi_{b}}_{g,\fkh}s=B^{\phi_{b}}_{\fkh}s\,,
\end{eqnarray*}
is closed and there exists a constant $C_{b}\in\rz$ such that
$C_{b}+\overline{B}^{\phi_{b}}_{g,\fkh}$ is maximal accretive.\\
The set $\mathcal{C}^{\infty}_{0}(\overline{X}_{-};F)\cap
D(\overline{B}^{\phi_{b}}_{g,\fkh})$ is a core for
$\overline{B}^{\phi_{b}}_{g,\fkf}$\,.\\
There exists a constant $C'_{b}>0$ such that the estimates
\begin{eqnarray*}
  &&
\left.
     \begin{array}[c]{l}
\|(1+\mathcal{O})^{1/2}s\|_{L^{2}}+\|s\|_{\mathcal{W}^{1/3}}\\
+\langle
     \lambda\rangle^{1/4}\|s\|_{L^{2}}+\|\langle
     p\rangle_{q}^{-1}s\big|_{X'}\|_{L^{2}(X',|p_{1}|dv_{X'})}
     \end{array}
\right\}
     \leq
     C'_{b}\|(1+C_{b}+\overline{B}^{\phi_{b}}_{g,\fkf}-i\lambda)s\|_{L^{2}}\,,\\
&& \|(1+\mathcal{O})^{1/2}s\|_{L^{2}}\leq C_{b}'\Real \langle s\,,\, (1+C_{b}+\overline{B}^{\phi_{b}}_{g,\fkf})s\rangle
\end{eqnarray*}
hold for all $s\in D(\overline{B}^{\phi_{b}}_{g,\fkf})$ and all
$\lambda\in\rz$\,.\\
The semigroup $(e^{-t\overline{B}^{\phi_{b}}_{g,\fkh}})_{t\geq 0}$
preserves  $D(\overline{d}_{g,\fkh})$ and
$D(\overline{d}^{\phi_{b}}_{g,\fkh})$ with
\begin{eqnarray*}
  &&\forall s\in D(\overline{d}_{g,\fkh})\,, \forall t\geq 0\,,\quad
\overline{d}_{g,\fkh}e^{-t\overline{B}^{\phi_{b}}_{g,\fkh}}s=e^{-t\overline{B}^{\phi_{b}}_{g,\fkh}}\overline{d}_{g,\fkh}s\,,\\
&&
\forall s\in D(\overline{d}^{\phi_{b}}_{g,\fkh})\,, \forall t\geq 0\,,\quad
\overline{d}^{\phi_{b}}_{g,\fkh}e^{-t\overline{B}^{\phi_{b}}_{g,\fkh}}s=e^{-t\overline{B}^{\phi_{b}}_{g,\fkh}}\overline{d}^{\phi_{b}}_{g,\fkh}s\,.
\end{eqnarray*}
For all $z\in \cz\setminus
\textrm{Spec}(\overline{B}^{\phi_{b}}_{g,\fkh})$ the resolvent 
$(z-\overline{B}^{\phi_{b}}_{g,\fkh})^{-1}$ preserves 
$D(\overline{d}_{g,\fkh})$ and $D(\overline{d}^{\phi_{b}}_{g,\fkh})$ with
\begin{eqnarray*}
  &&\forall s\in D(\overline{d}_{g,\fkh})\,, \quad
\overline{d}_{g,\fkh}(z-\overline{B}^{\phi_{b}}_{g,\fkh})^{-1}s=(z-\overline{B}^{\phi_{b}}_{g,\fkh})^{-1}\overline{d}_{g,\fkh}s\,,\\
&&
\forall s\in D(\overline{d}^{\phi_{b}}_{g,\fkh})\,, \quad
\overline{d}^{\phi_{b}}_{g,\fkh}(z-\overline{B}^{\phi_{b}}_{g,\fkh})^{-1}s=(z-\overline{B}^{\phi_{b}}_{g,\fkh})^{-1}\overline{d}^{\phi_{b}}_{g,\fkh}s\,.
\end{eqnarray*}
\end{theorem}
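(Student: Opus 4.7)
The plan is to trade the boundary problem on $\overline{X}_{-}$ for a global problem on the double $\tilde{X}=X_{-}\sqcup_{X'}X_{+}$, using the involution $\hat{S}_{\nu}$ together with the reflection $(q^{1},p_{1})\mapsto(-q^{1},-p_{1})$. A section $s\in L^{2}(X_{-};F)$ extends to a $\hat{S}_{\nu}$-invariant section $\tilde{s}\in L^{2}(\tilde{X};\tilde{F})$, and the Dirichlet-type condition $\frac{1-\hat{S}_{\nu}}{2}s|_{X'}=0$ is exactly the matching condition needed for $\tilde{s}$ to lie in the natural global domain of the symmetrized hypoelliptic operator on $\tilde{X}$. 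The estimates of Bismut and Lebeau recalled in Subsection~\ref{sec:bismhyp} then provide the interior control, leaving only boundary contributions to handle.

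The first technical step is a trace theorem: any $s\in L^{2}(X_{-};F)$ with $\nabla^{F}_{\partial/\partial p}s$ and $B^{\phi_{b}}_{\fkh}s$ in $L^{2}(X_{-};F)$ admits a trace $s|_{X'}$ in a suitable weighted negative-order space, so that the boundary condition is meaningful. This is obtained by localizing near $X'$ and exploiting the explicit scalar principal part, which behaves like a Kolmogorov-type operator with drift $\mathcal{L}_{Y_{\fkh}}$ whose normal component $g^{1j}p_{j}\partial_{q^{1}}$ is transverse to $X'$. Closedness of $\overline{B}^{\phi_{b}}_{g,\fkh}$ is then immediate, since the boundary condition is a continuous linear constraint on the trace. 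For the core property, given $s\in D(\overline{B}^{\phi_{b}}_{g,\fkh})$ I would symmetrize to $\tilde{s}$ on $\tilde{X}$, mollify by a $\hat{S}_{\nu}$-equivariant convolution adapted to the Sobolev scale $\mathcal{W}^{\mu}$, truncate vertically, and restrict back to $\overline{X}_{-}$; the boundary condition is preserved throughout and graph-norm convergence comes from Lebeau's subelliptic estimates applied on $\tilde{X}$.

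The subelliptic estimates with trace term then follow from taking the real part of $\langle s,(1+C_{b}+\overline{B}^{\phi_{b}}_{g,\fkh})s\rangle$. The interior part reproduces $\|(1+\mathcal{O})^{1/2}s\|_{L^{2}}^{2}$ as in Lebeau's bound, while integration by parts of the transport term $-\frac{1}{2b}\mathcal{L}_{Y_{\fkh}}$ over $X_{-}$ produces a boundary contribution proportional to $\int_{X'}p_{1}|s|_{X'}|_{g^{F}}^{2}\,dv_{X'}$. The $\hat{S}_{\nu}$-invariance of $s|_{X'}$ (which in particular relates values at $p_{1}$ and $-p_{1}$) symmetrizes this signed integral into the positive quantity $\|\langle p\rangle_{q}^{-1}s|_{X'}\|_{L^{2}(X',|p_{1}|dv_{X'})}^{2}$ up to controlled remainders absorbed by $\|(1+\mathcal{O})^{1/2}s\|_{L^{2}}^{2}$. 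The $\mathcal{W}^{1/3}$-gain and the $\langle\lambda\rangle^{1/4}$-resolvent bound are inherited from the corresponding $\mu=0$ estimates of Lebeau on $\tilde{X}$. Maximal accretivity of $C_{b}+\overline{B}^{\phi_{b}}_{g,\fkh}$ then follows from the dissipativity inequality and the range condition, solvability being transferred from the closed case on $\tilde{X}$ via the symmetric extension.

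For the commutation statements, the formal identities $[d_{\fkh},B^{\phi_{b}}_{\fkh}]=[d^{\phi_{b}}_{\fkh},B^{\phi_{b}}_{\fkh}]=0$ reduce everything to checking that the three domains $D(\overline{d}_{g,\fkh})$, $D(\overline{d}^{\phi_{b}}_{g,\fkh})$, $D(\overline{B}^{\phi_{b}}_{g,\fkh})$ of Theorems~\ref{th:dg}, \ref{th:dphibg} and \ref{th:Bphibg} are mutually compatible at the boundary. A direct computation using $\hat{S}_{\nu}\circ d_{\fkh}=d_{\fkh}\circ\hat{S}_{\nu}$ (and the analogous identity for $d^{\phi_{b}}_{\fkh}$), combined with the signs encoded in the definition of $\hat{S}_{\nu}$, shows that $\frac{1-\hat{S}_{\nu}}{2}s|_{X'}=0$ automatically forces both $\frac{1-\hat{S}_{\nu}}{2}\mathbf{i}_{e_{1}}e^{1}\wedge d_{\fkh}s|_{X'}=0$ and $\frac{1-\hat{S}_{\nu}}{2}\hat{e}_{1}\wedge\mathbf{i}_{\hat{e}^{1}}d^{\phi_{b}}_{\fkh}s|_{X'}=0$. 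Resolvent commutation is then verified first on the core $\mathcal{C}^{\infty}_{0}(\overline{X}_{-};F)\cap D(\overline{B}^{\phi_{b}}_{g,\fkh})$ by direct computation and extended by density using the graph-norm estimates above; the semigroup statement follows by Laplace inversion. The main obstacle is the trace theorem at the start: controlling $s|_{X'}$ from only $L^{2}$-information on $B^{\phi_{b}}_{\fkh}s$ and a single vertical derivative requires handling the degeneracy of the characteristic direction on $\{p_{1}=0\}\subset X'$, and it is precisely this degeneracy that dictates the weights $\langle p\rangle_{q}^{-1}$ and $|p_{1}|dv_{X'}$ appearing in the boundary estimate.
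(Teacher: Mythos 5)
Your doubling strategy, the trace theorem from Kolmogorov-type transversality, the dissipativity identity with boundary term, and the inheritance of accretivity from the doubled manifold all track the paper's approach in spirit. Two points, however, need to be corrected, one of which is a genuine gap.

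First, a cautionary note on the $\mathcal{W}^{1/3}$ and $\langle\lambda\rangle^{1/4}$ exponents: you claim these are ``inherited from the corresponding $\mu=0$ estimates of Lebeau on $\tilde{X}$,'' but Lebeau's estimate on a smooth closed manifold gives $\mathcal{W}^{2/3}$ and $\langle\lambda\rangle^{1/2}$. The doubled manifold carries only a piecewise $\mathcal{C}^\infty$ and continuous structure when the second fundamental form of $Q'$ does not vanish, and the curvature-dependent coefficients that are discontinuous along $X'$ degrade the gain to $\mathcal{W}^{1/3}$ and $\langle\lambda\rangle^{1/4}$; this weaker estimate has to be imported from the scalar boundary analysis of \cite{Nie}, not from Lebeau's closed-manifold theorem applied naively to $\tilde{X}$.

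The genuine gap is in your commutation argument. You propose to verify resolvent commutation on the smooth core and extend by density, reducing the question to ``checking that the three domains are mutually compatible at the boundary.'' This does not work. Commutation of an unbounded closed operator with a resolvent is not a density statement: knowing $\overline{d}_{g,\fkh}(z-\overline{B}^{\phi_b}_{g,\fkh})^{-1}s = (z-\overline{B}^{\phi_b}_{g,\fkh})^{-1}\overline{d}_{g,\fkh}s$ for $s$ in a core does not let you pass to general $s\in D(\overline{d}_{g,\fkh})$, because you have no a priori control on $\overline{d}_{g,\fkh}(z-\overline{B}^{\phi_b}_{g,\fkh})^{-1}s$ without first knowing that $(z-\overline{B}^{\phi_b}_{g,\fkh})^{-1}$ maps $D(\overline{d}_{g,\fkh})$ into itself — which is exactly the statement you are trying to prove. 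Moreover, the paper emphasizes that $\overline{B}^{\phi_b}_{g,\fkh}$ cannot be written as the square of $\tfrac{1}{2}(\overline{d}^{\phi_b}_{g,\fkh}+\overline{d}_{g,\fkh})$ with these boundary conditions, and that no single dense set of smooth sections is simultaneously a core sent into smooth sections by both $\overline{d}_{g,\fkh}$ and $\overline{d}^{\phi_b}_{g,\fkh}$ (the sets $\mathcal{D}_{g,\nabla^\fkf}$ and $e^{\lambda_0/b}\mathcal{D}_{g,\nabla^{\fkf'}}$ differ when the second fundamental form is nontrivial). The paper's actual argument, in the spirit of \cite{ABG}, is more involved: one establishes the $\lambda$-dependent resolvent estimate, deduces that $e^{-t\overline{B}^{\phi_b}_{g,\fkh}}$ maps $L^2$ into $D((\overline{B}^{\phi_b}_{g,\fkh})^n)$ for all $n$ and $t>0$, and then runs a bootstrap regularity argument (iterating Lebeau's maximal subelliptic estimate together with a weight lemma) to conclude that $(C+\overline{B}^{\phi_b}_{g,\fkh})^{-9}$ maps $L^2(X_-;F)$ into $\mathcal{W}^1(\overline{X}_-;F)\subset D(\overline{d}_{g,\fkh})\cap D(\overline{d}^{\phi_b}_{g,\fkh})$. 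This gives a well-defined bounded operator $\overline{d}_{g,\fkh}e^{-t\overline{B}^{\phi_b}_{g,\fkh}}$ for $t>0$, whose $t$-derivative one computes using the commutation on a carefully chosen core, yielding the semigroup intertwining and then the resolvent intertwining by Laplace inversion. Your outline omits both the bootstrap regularity step and the semigroup detour, and without them the argument does not close.
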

\subsection{Some notations and conventions}
\label{sec:notconv}
Although we already introduced some notations, let us fix some
conventions and notations used throughout the article.\\

\noindent\textbf{Coordinates:}
Local coordinates systems on $Q$ or $\overline{Q}_{-}$ will be
underlined and written
$(\underline{q}^{1},\ldots,\underline{q}^{d})$\,, $d=\dim Q$\,. While
working in a neighborhood of $Q'=\partial Q_{-}$\,, they will be
chosen such that $g^{TQ}=(d\underline{q}^{1})^{2}+m^{TQ'}(\underline{q}^{1})$\,.\\
Primed exponents (or indices) like in  $\underline{q}^{i'}$ or
$d\underline{q}^{I'}=d\underline{q}^{i_{1}'}\wedge\ldots\wedge
d\underline{q}^{i'_{p}}$\,, mean that the value $1$ is excluded,
$i'\neq 1$ or $1\not\in I'$\,.\\
On $X=T^{*}Q$ local coordinates will be denoted
$(q^{1},\ldots,q^{d},p_{1},\ldots, p_{d})$ with
$q^{i}=\underline{q}^{i}(\pi_{X}(x))$ and
$p_{i}=\frac{\partial}{\partial \underline{q}^{i}}.p$\,, with the same convention
for primed exponents and indices. Different local coordinate systems
on $X$,
which will be specified later, will be used and then they will be
written
$(\tilde{q}^{1},\ldots,\tilde{q}^{d},\tilde{p}_{1},\ldots,\tilde{p}_{d})$\,.\\

\noindent\textbf{Local frames:} We use
$\underline{e}_{i}=\frac{\partial}{\partial \underline{q}^{i}}$ and
$\underline{e}^{i}=d\underline{q}^{i}$
and  the notations
$(e_{i},\hat{e}^{j})$ and $(e^{i},\hat{e}_{j})$ refer to the
associated frames of
$TX=TX^{H}\oplus TX^{V}\simeq \pi^{*}_{X}(TQ\oplus T^{*}Q)$
 and $T^{*}X=T^{*}X^{H}\oplus TX^{V}\simeq \pi^{*}_{X}(T^{*}Q\oplus TQ)$\,.
Similar frames constructed near $X'$ for the metric
$g_{0}^{TQ}=(d\underline{q}^{1})^{2}+m^{TQ'}(0)$ will  be denoted by
$(f_{i},\hat{f}^{j})$ and $(f^{i},\hat{f}_{j})$\,.  Those frames will
be abreviated by $(e,\hat{e})$ or $(f,\hat{f})$\,. Finally while using
a symmetry argument on the double copy $Q=Q_{-}\sqcup Q'\sqcup Q_{+}$
we will use the frames $(e_{\mp},\hat{e}_{\mp})$ on
$X_{\mp}=T^{*}Q_{\mp}$\,. They will be glued in a suitable way along
$X'$ and we
will use the writing
$(e,\hat{e})=1_{X_{\mp}}(x)(e_{\mp},\hat{e}_{\mp})$ for their global
definition on $X=X_{+}\sqcup X'\sqcup X_{+}$\,.\\

\noindent\textbf{Fiber bundles, metrics and connections:} A general vector
 bundle being written
$\pi_{\fkF}:\fkF\to M$ with its natural projection
$\pi_{\fkF}$\,.
 The restricted fiber bundle to $M'\subset M$
will be written $\fkF\big|_{M'}$\,.\\
A metric on $\fkF$ will be written
$g^{\fkF}$\,. A connection will be written
$\nabla^{\fkF}$ and $\nabla^{\fkF}_{U}$ for $U\in TM$ will denote
the covariant derivative w.r.t $U$\,. One exception is the Levi-Civita connection
acting on tensors above the riemannian manifold $(M,g^{TM})$ which will
be denoted by $\nabla^{M}$ or $\nabla^{M,g^{TM}}$\,, the latter being
used when it is necessary to specify the metric dependence. Other
exponents or indices may be used for specifying the connection and we
already made the difference between the flat connection
$\nabla^{\fkf}$ on $\pi_{\fkf}:\fkf\to Q$ and the unitary connection
for the metric $g^{\fkf}$\,, $\nabla^{\fkf,u}$\,.\\
When the vector bundle $\pi_{\fkF}:\fkF\to M$ is endowed with the connection
$\nabla^{\fkF}$\,, the exterior
covariant derivative  acting on  $\mathcal{C}^{\infty}(M;\Lambda T^{*}M\otimes
\fkF)$\,, as an exterior derivative, will be denoted $d^{\nabla^{\fkF}}$\,,
instead of the sometimes used notation $\nabla^{\fkF}$\,.
We recall that $\nabla^{\fkF}$ is a flat connection when
$d^{\nabla^{\fkF}}\circ d^{\nabla^{\fkF}}=0$\,.\\
\noindent\textbf{Functional spaces:} We shall use the notation
$\mathcal{F}(M;\fkF)$ for sections of a vector bundle
$\pi_{\fkF}:\fkF\to M$ with the regularity specified
by $\mathcal{F}$\,. Example given: We may take
$\mathcal{F}=\mathcal{C}^{\infty}$\,, $\mathcal{F}=L^{2}_{loc}$\,,
$\mathcal{F}=L^{2}$ once a metric $g^{\fkF}$ is fixed,
$\mathcal{F}=\mathcal{C}^{\infty}_{0}$\,,
$\mathcal{F}=\mathcal{D}'$\,. In particular
$\mathcal{C}^{\infty}(M;\Lambda T^{*}M)$ stands for
$\Omega(M)$ usually denoting the set of smooth differential forms on
$M$\,.\\
The local  spaces $\mathcal{C}^{\infty}(M;\fkF)$\,, $\mathcal{C}^{\infty}_{0}(M;\fkF)$\,, $L^{2}_{loc}(M;\fkF)$ and
$L^{2}_{comp}(M,\fkF)$ do not depend on the chosen metric on
$\fkF$\,. When $\fkF'$ denotes the dual vector bundle
with a duality product denoted by $u.v$ (possibly right-antilinear and
left-linear for complex vector bundles),
remember the duality between $L^{2}_{loc~comp}(M;\fkF)$ and
$L^{2}_{comp~loc}(M;\fkF')$ given by
$$
\langle s\,,\,s'\rangle=\int_{M}s(x).s'(x)~dv_{M}(x)
$$
where $dv_{M}$ is any given smooth volume measure on $M$\,, which can be
specified in local charts.\\
Accordingly the local spaces
$W^{\mu,2}_{loc~comp}(M;\fkF)$\,, $\mu\in\rz$\,, saying that there
are $\mu$ derivatives in $L^{2}_{loc~comp}$ when $\mu\in \nz$\,, do
not depend on the chosen metric and the dual of
$W^{\mu,2}_{comp}(M;\fkF)$ is
$W^{-\mu,2}_{loc}(M;\fkF)$\,.\\
Once metrics are fixed on $TM$ and $\fkF$ the global Sobolev space is denoted
$W^{\mu,2}(M;\fkF)$ or $W^{\mu,2}(M;\fkF,g^{TM},g^{\fkF})$\,.\\
When $M=X$ and $\fkF=E$ or $\fkF=F$\,, the Sobolev scale
introduced by G.~Lebeau in \cite{Leb1} will be denoted by
$\mathcal{W}^{\mu}(X;E)$ or $\mathcal{W}^{\mu}(X;F)$\,.\\

\noindent\textbf{Manifolds with boundaries:} All the manifolds with
boundaries $\overline{M}=M\sqcup \partial M$\,, namely
$\overline{Q}_{\mp}$ or $\overline{X}_{\mp}$\,, will have a
$\mathcal{C}^{\infty}$ boundary. By following the general $\mathcal{C}^{\infty}$-reflection
principle (see \cite{ChPi}) modeled on half-space problems,
 $\mathcal{C}^{\infty}$ functions, vector
bundle structures, and sections of vector bundles are well defined on
$\overline{M}$ as restriction of $\mathcal{C}^{\infty}$ objects on an
extended neighborhood $\tilde{M}$ of $\overline{M}$\,. Accordingly
$\mathcal{C}^{\infty}_{0}(\overline{M};\fkF)$ will denote
the space of $\mathcal{C}^{\infty}$ sections of the vector bundle $\fkF$\,,
which have a compact support in $\overline{M}$\,. The same thing
applies to the local spaces
$L^{2}_{loc}(\overline{M};\fkF)$\,,
$L^{2}_{comp}(\overline{M};\fkF)$ which must not be confused
with $L^{2}_{loc}(M;\fkF)$ and
$L^{2}_{comp}(M;\fkF)$\,.\\
The definition of local Sobolev spaces
$W_{loc~comp}^{\mu,2}(\overline{M};\fkF)$\,, $\mu\in\rz$\,,
follows the presentation of \cite{ChPi}\,, as the set of restrictions
to $M$ of elements of
$W^{\mu,2}_{loc~comp}(\tilde{M};\fkF)$\,. When $\mu>1/2$\,, any element of
$W^{\mu,2}_{loc}(\overline{M};\fkF)$ admits a trace
in $W^{\mu-1/2,2}_{loc}(\partial M;\fkF\big|_{\partial M})$\,, while  $\mathcal{C}^{\infty}_{0}(M;\fkF)$ is dense
in $W^{\mu,2}_{loc~comp}(\overline{M};\fkF)$ iff
$\mu\leq 1/2$\,. Remember also that for $\mu\geq 0$\,,
 $W_{loc}^{-\mu,2}(\overline{M};\fkF)$
is the dual of
$W^{\mu,2}_{0,comp}(\overline{M};\fkF)=\overline{\mathcal{C}^{\infty}_{0}(M;\fkF)}^{W_{comp}^{\mu,2}}$\,.\\
The definition of the global Sobolev scale
introduced by Lebeau on the manifold with boundary 
$\overline{X}_{-}$\,,
$\mathcal{W}^{\mu}(\overline{X}_{\mp};F)$\,,   follows the same scheme
and we refer to Subsection~\ref{sec:globSob} for details.

\noindent\textbf{Operators:} On a $\mathcal{C}^{\infty}$ vector
 bundle
$\pi_{\fkF}:\fkF\to M$\,, on a closed manifold $M$\,,
and when $g^{\fkF}$ is a metric on $\fkF$\,,
a differential operator $P$ with
$\mathcal{C}^{\infty}(M;L(\fkF))$ will not be distinguished by
notations from its maximal closed realization with domain 
$$
D(P)=\left\{s\in
  L^{2}(M;\fkF)\,,\quad Ps\in
  L^{2}(M;\fkF)\right\}\,.
$$
This will be the case for the
the differentials $d$\,, $d_{\fkh}$\,, $d^{\phi_{b}}_{\fkh}$ and
Bismut's hypoelliptic Laplacian $B^{\phi_{b}}_{\fkh}$ acting on
sections of $F=E\otimes \pi^{*}_{X}(\fkf)$\,.\\
The situation is different on a manifold with boundary where closed
realizations are related with a choice of boundary conditions. Then we
will use the notation $\overline{P}_{\alpha}$ for the closed
realization where $\alpha$ is a parameter which specifies the boundary
conditions among an admissible family. In our case the parameter $\alpha$
will essentially be $g=g^{TQ}$\,.\\

\noindent\textbf{Matched piecewise $\mathcal{C}^{\infty}$ structures:}
While using symmetry arguments on the glued double copies $Q_{-}\sqcup
Q'\sqcup Q_{+}$ or $X_{-}\sqcup X'\sqcup X_{+}$\,,
$X_{\mp}=T^{*}Q_{\mp}$\,, we are led to use piecewise
$\mathcal{C}^{\infty}$ objects, continuous or not. In order to
remember the possible discontinuities we shall use the notation
$\widehat{\fkF}_{g}$ for matched fiber bundles, the index $_{g}$
recalling when it is the case,  that the matching along $X'$ depends
on the chosen metric $g=g^{TQ}$\,. Accordingly closed realizations of a
differential operators $P$ with piecewise $\mathcal{C}^{\infty}$
coefficients and interface conditions along $X'$ which may depend on $g^{TQ}$
will be denoted by $\widehat{P}_{g}$\,. Redundant $\widehat{~}$
notations will be avoided. Example given in
$\widehat{B}^{\phi_{b}}_{g,\fkh}$ will be used instead of
$\widehat{B}^{\hat{\phi}_{b}}_{\hat{g},\hat{\fkh}}$ despite
$\widehat{B}^{\phi_{b}}_{g,\fkh}$ is actually associated with the
piecewise $\mathcal{C}^{\infty}$-versions of
$\phi_{b}$\,, $g^{TQ}$ and $\fkh$\,.
\subsection{Issues and strategy}
\label{sec:issStrat}
As a first remark, Bismut's hypoelliptic Laplacian, the differential
$d_{\fkh}$ and Bismut's codifferential $d^{\phi_{b}}_{\fkh}$ are all
first order differential operators in the position variable
$q$\,. Boundary conditions must only involve first, and possibly
partial first, traces along the boundary $X'=\partial
X_{-}$\,. Although the general geometry of $Q'=\partial Q_{-}$ and
$X'=\partial X_{-}$ depends on curvature terms and in particular the
second fundamental form of $Q'\subset (Q,g^{TQ })$\,, those curvatures
should have a limited effect on the analysis of those operators. The analysis carried out in \cite{Nie} worked
directly on Geometric Kramers-Fokker-Planck operators as defined by
G.~Lebeau in \cite{Leb1}, which is a larger class of operators
including Bismut's hypoelliptic Laplacian and where lower order
curvature dependent terms can be neglected. Following the dyadic
partition unity in the vertical variable already used by G.~Lebeau in
\cite{Leb1}\cite{Leb2}\,, it was possible to consider terms like
$A^{ik}_{j}(q)p_{k}p_{i}\frac{\partial}{\partial p_{j}}$ as parameter
dependent perturbations of $\frac{-\Delta_{p}+|p|^{2}}{2}$ and to
absorb the large $p$ contribution of the second fundamental form of
$Q'$ in $(\overline{Q}_{-},g^{TQ})$\,. This led to subelliptic
estimates where the curvature of the boundary nevertheless
deteriorates the exponents (compare the maximal hypoellipticity result of
G.~Lebeau recalled at the end of Subsection~\ref{sec:bismhyp} with
Theorem~\ref{th:Bphibg}). It is not known for the
moment whether the subelliptic estimates of Theorem~\ref{th:Bphibg} which
are the same as the ones of \cite{Nie} are
optimal.\\
However while considering the exact commutation of
$\overline{d}_{g,\fkh}$\,, $\overline{d}^{\phi_{b}}_{g,\fkh}$ with
$(z-\overline{B}^{\phi_{b}}_{g,\fkh})^{-1}$ stated in
Theorem~\ref{th:Bphibg}, a careful treatment the geometry involved by all the terms of
$B^{\phi_{b}}_{\fkh}$\,, $d_{\fkh}$ and $d^{\phi_{g}}_{\fkh}$ cannot
be skipped.\\
The heuristic leading to the boundary conditions of
$\overline{d}_{g,\fkh}$\,, $\overline{d}^{\phi_{b}}_{g,\fkh}$ and
$\overline{B}^{\phi_{b}}_{g,\fkh}$ given in
Theorems~\ref{th:dg}-\ref{th:dphibg}-\ref{th:Bphibg}, relies on the 
doubling of the manifold $\overline{Q}_{-}$ into $Q_{-}\sqcup Q'\sqcup Q_{+}$
and then to associate Dirichlet (resp. Neumann) boundary condition to odd
(resp. even) sections in the
symmetry $(\underline{q}^{1},\underline{q}')\mapsto
(-\underline{q}^{1},\underline{q}')$ between $Q_{-}$ and $Q_{+}$\,. By working in $X=X_{-}\sqcup
X'\sqcup X_{+}$\,, with a totally geodesic boundary $Q'$\,, namely
when $g^{TQ}=(d\underline{q}^{1})^{2}+m^{TQ'}(0)$\,, this leads
naturally to the boundary conditions given in
Theorems~\ref{th:dg}-\ref{th:dphibg}-\ref{th:Bphibg}. The analysis
comes from a straightforward translation of G.~Lebeau's maximal
hypoelliptic results on a closed manifold, because the symmetrization
preserves in this case all the $\mathcal{C}^{\infty}$ structures. When
$Q'$ has a non trivial second fundamental form, this is no more
possible, e.g. the symmetrized metric
$(d\underline{q}^{1})^{2}+m^{TQ'}(|\underline{q}^{1}|)$ is  no more
$\mathcal{C}^{1}$ and only piecewise $\mathcal{C}^{\infty}$-structures
are preserved on $X$\,.\\
Another unusual thing comes from the fact that the boundary conditions
for $\overline{d}_{g,\fkh}$ in Theorem~\ref{th:dg} actually depend on
the chosen metric $g^{TQ}$ on $\overline{Q}_{-}$\,. This is not the
case in the elliptic framework of Hodge or Witten Laplacian and this
is again a side effect on the cotangent space $X=T^{*}Q$ of the non
trivial second fundamental form of $Q'\subset (Q,g^{TQ})$ which requires a $g^{TQ}$-dependent
matching along $X'$ in order to speak of continuity and traces along
$X'$\,.\\
However the followed strategy is reminiscent of what we learned from
the carefull analysis of Witten and Hodge~Laplacians: Avoid as  long
as possible the  complicated curvature terms, while focussing firstly
on the differential $d_{\fkh}$ and secondly translate the result on
codifferentials $d^{\phi_{b}}_{\fkh}$ by duality. This does not ends
the game because it is not possible  to write
$\overline{B}^{\phi_{b}}_{g,\fkh}$ as a square of
$\frac{1}{2}(\overline{d}^{\phi_{b}}_{g,\fkh}+\overline{d}_{g,\fkh})$\,. This
has to be combined with the results of \cite{Nie}\,, with specific trace
theorems for $B^{\phi_{b}}_{\fkh}$\,, and with an explicit commutation
result for a dense set of smooth sections, where the latter cannot be
the same for the commutations with  $\overline{d}_{g,\fkh}$ or with
$\overline{d}^{\phi_{b}}_{g,\fkh}$\,. A consequence of the 
pseudospectral subelliptic estimates (with
respect to the imaginary spectral parameter $i\lambda$)
 ensures
that
$(1+C_{b}+\overline{B}^{\phi_{b}}_{g,\fkh})^{n}e^{-t\overline{B}^{\phi_{b}}_{g,\fkh}}$
is bounded as soon as $t>0$ for any $n\in\nz$\,. A bootstrap
regularity argument where Lebeau's maximal subelliptic estimates play
again a crucial role, shows that taking $n\in\nz$ large enough implies
$(1+C_{b}+\overline{B}^{\phi_{b}}_{g,\fkh})^{-n}:L^{2}(X_{-};F)\to
\mathcal{W}^{1}(\overline{X}_{-};F)\cap
D(\overline{B}^{\phi_{b}}_{g,\fkh})\subset
D(\overline{d}_{g,\fkh})\cap D(\overline{d}^{\phi_{b}}_{g,\fkh})$\,.
Actually all this analysis, and especially the use of Lebeau's maximal
subelliptic estimates for closed manifold, 
is carried out on the symmetrized phase
space
$X=X_{-}\sqcup X'\sqcup
X_{+}$ but for  the piecewise $\mathcal{C}^{\infty}$ and continuous  vector bundles
$\hat{E}_{g}$ or $\hat{F}_{g}$\,.\\

Below is the outline of the article:
\begin{itemize}
\item Section~\ref{sec:geomcot} specifies the geometry of the
  cotangent bundle $X=T^{*}Q$ when $(Q,g^{TQ})$ is a riemannian
  manifold. Several aspects of the parallel transport for the
  Levi-Civita connections $\nabla^{Q,g}$ and the pulled-back
  connection $\pi_{X} ^{*}(\nabla^{Q,g})$ will be specified. This
  leads to a natural definition of the piecewise
  $\mathcal{C}^{\infty}$ and continuous vector bundles $\hat{E}_{g},\hat{E'}_{g},\hat{F}_{g},\hat{F'}_{g}$\,.
\item In Section~\ref{sec:funcsp}, details are given for various
  functional spaces. In particular the independence of Lebeau's spaces
  with respect to the chosen metric $g^{TQ}$ is recalled. Functional
  spaces on the piecewise $\mathcal{C}^{\infty}$
  vector bundles $\hat{E}_{g}, \hat{E}'_{g}, \hat{F}_{g}, \hat{F}'_{g}$ are
 specified with the
  help of parallel transport introduced in
  Section~\ref{sec:geomcot}. Isomorphisms  and invariances  of those functional spaces induced by vector bundle
  isomorphisms  are reviewed.
\item Section~\ref{sec:closeddiff} is devoted to the definition of
  $\overline{d}_{g,\fkh}$ and its symmetrized version
  $\hat{d}_{g,\fkh}$ after proving the suitable trace
  theorems. A specific paragraph is  devoted to checking
  $\hat{d}_{g,\fkh}\circ \hat{d}_{g,\fkh}=0$ coming from a
  $\mathcal{C}^{\infty}$-interpretation of $\hat{E}_{g},\hat{F}_{g}$\,.
\item After defining the $F'$ adjoint of
  $\overline{d}_{g,\fkh}$\,, and the $\widehat{F}'_{g}$ adjoint of
  $\hat{d}_{g,\fkh}$\,, Section~\ref{sec:adjdiff} 
specifies the symplectic
  codifferential $\overline{d}^{\sigma}_{g,\fkh}$ for $\phi=\sigma$
  and finally Bismut's codifferential
  $\overline{d}^{\phi_{b}}_{g,\fkh}$ for $\phi=\phi_{b}$\,. This follows the scheme of
  J.M.~Bismut in \cite{Bis05}. However for the boundary or interface
  value problem, the choice of coordinates or
  $\mathcal{C}^{\infty}$-structures differ for those three steps and
  can be put together only at the level of piecewise
  $\mathcal{C}^{\infty}$ and continuous vector bundles.
\item Section~\ref{sec:closedhypo} after recalling details about
  Bismut's hypoelliptic Laplacians and general Geometric
  Kramers-Fokker-Planck operators, provides a  trace theorem for
  local versions of
  $B^{\phi_{b}}_{g,\fkh}$\,. After the definition of
  $\overline{B}^{\phi_{b}}_{g,\fkh}$\,, Theorem~\ref{th:Bphibg} is
  proved with additional properties concerning bootraped regularity,
  for powers of the resolvent and the semigroup, and the (formal)
  PT-symmetry. Note that the commutation
  $(z-\overline{B}^{\phi_{b}}_{\fkh})^{-1}\overline{d}_{g,\fkh}=\overline{d}_{g,\fkh}(z-\overline{B}^{\phi_{b}}_{\fkh})^{-1}$
  is rather proved in the spirit of \cite{ABG} by making use of the
  semigroup with
  $e^{-t\overline{B}^{\phi_{b}}_{\fkh}}\overline{d}_{g,\fkh}=\overline{d}_{g,\fkh}e^{-t\overline{B}^{\phi_{b}}_{\fkh}}$
  for $t\geq 0$\,.
\end{itemize}

\section{Geometry of the cotangent bundle}
\label{sec:geomcot}
This section gathers all the geometric information concerned with : a)
the
decomposition~$TX=TX^{H}\oplus TX^{V}$ associated with $g=g^{TQ}$\,; b)
more generally parallel transport for $\nabla^{Q,g}$ and
$\pi_{X}^{*}(\nabla^{Q,g})$\,, $\nabla^{E}$\,, $\nabla^{F}$\,,
$\nabla^{E'}$\,, $\nabla^{F'}$\,; c) the doubled
manifolds $Q=Q_{-}\sqcup Q'\sqcup Q_{+}$\,, and  the doubled cotangent
$X=X_{-}\sqcup X'\sqcup X_{+}$\,. The piecewise $\mathcal{C}^{\infty}$
vector bundles $\hat{E}_{g}, \hat{E}'_{g}, \hat{F}_{g}, \hat{F'}_{g}$
are introduced and some specific vector bundle isomorphisms are
studied. All those presentations are done in a coordinate free way
and they ensure the independence w.r.t a choice of coordinates. The
reader willing to grasp a concrete realization, can first look at the
final paragraph where those constructions are expressed in terms of  local coordinates. 

\subsection{The cotangent bundle of a manifold without boundary} 
\label{sec:cot}
Let $Q$ be a smooth manifold (without boundary at the moment). Denote
by $X$  the total space of the cotangent bundle $T^{*}Q$ endowed with
the natural projection $\pi_{X}:X=T^{*}Q\to Q$\,.\\
The vertical subbundle of the tangent vector bundle on $X$\,, $\pi_{TX}:TX\to X$\,,
is nothing but
\begin{align}\label{eqTVX}
	TX^{V}=\pi_{X}^{*}(T^{*}Q)\,. 
\end{align} 
It is a subbundle of $TX$ with the exact
sequence of smooth vector bundles on $X$
\begin{align}\label{eqTVH}
	0\to TX^{V}\to TX\to \pi_{X}^{*}(TQ)\to 0\,. 
\end{align} 
By duality $T_{x}^{*}X^{H}=\left\{\alpha\in T_{x}^{*}X\,, \forall t\in
  T_{x}X^{V}\,, \quad \alpha.t=0\right\}$ identifies $T^{*}X^{H}$ as the
subbundle
\begin{align}
  \label{eq:TsHX}
T^{*}X^{H}=\pi_{X}^{*}(T^{*}Q)\,,
\end{align}
with the exact sequence of smooth vector bundles on $X$
\begin{align}
  \label{eq:TsHV}
  0\to T^{*}X^{H}\to T^{*}X\to \pi_{X}^{*}(TQ)\to 0\,.
\end{align}
Those constructions do not involve any metric.\\
Now when $g=g^{TQ}$ is a riemannian metric on $Q$\,, the Levi-Civita
connection  $\nabla^{TQ,g}$ induces a connection on  tensor algebras
written simply $\nabla^{Q,g}$\,, in particular on $X=T^{*}Q$\,.  This
defines a horizontal vector subbundle of $TX$
\begin{align}\label{eqTHX}
	TX^{H}\simeq \pi_{X}^{*}(TQ)\,, 
\end{align} 
with the $g$-dependent direct sum decomposition
\begin{align}\label{eqTX1}
	TX\stackrel{g}{=} TX^{H}\oplus TX^{V}\stackrel{g}{=} \pi_{X}^{*}(TQ\oplus T^{*}Q). 
\end{align} 
The duality defines
\begin{eqnarray}
 \label{eqTsXV} &&T^{*}X^{V}\simeq \pi_{X}^{*}(TQ)\\
\label{eqTsX1}&&
T^{*}X\stackrel{g}{=}T^{*}X^{H}\oplus T^{*}X^{V}\stackrel{g}{=}\pi_{X}^{*}(T^{*}Q\oplus TQ)\,.
\end{eqnarray}
Let $\pi_{\fkf}:\fkf \to Q$ be a smooth vector bundle on $Q$ endowed
with a flat connection $\nabla^{\fkf}$ and a smooth  hermitian metric
$g^{\fkf}$\,. It is identified via the metric  with the antidual flat connection
$\nabla^{\fkf'}$\,. If
$\omega(\nabla^{\fkf},g^{\fkf})=(g^{\fkf})^{-1}\nabla^{\fkf}g^{\fkf}$
then $\nabla^{\fkf'}=\nabla^{\fkf}+\omega(\fkf,\nabla^{\fkf})$\,.
The vector bundles $\fkF=E,F,E',F'$\,, $\pi_{\fkF}:\fkF\to Q$\,,
 are defined by
\begin{eqnarray*}
  &&E=\Lambda T^{*}X\quad,\quad E'=\Lambda
     TX\\
\text{and}&&
F=E\otimes \pi^{*}_{X}(\fkf)\quad,\quad F'=E'\otimes\pi_{X}^{*}(\fkf)\,,
\end{eqnarray*}
with the $g$-dependent identifications
\begin{eqnarray*}
  && E\stackrel{g}{=}\pi_{X}^{*}(\Lambda T^{*}Q\otimes
     \Lambda TQ)\quad,\quad  
E'\stackrel{g}{=}\pi_{X}^{*}(\Lambda TQ\otimes \Lambda T^{*}Q)\\
&& F\stackrel{g}{=}\pi_{X}^{*}(\Lambda T^{*}Q\otimes
     \Lambda TQ\otimes \fkf)\quad,\quad 
F'\stackrel{g}{=}\pi_{X}^{*}(\Lambda TQ\otimes \Lambda
   T^{*}Q\otimes \fkf)
\end{eqnarray*}
The metrics on those vector bundles involve the weight
\begin{equation}
  \label{eq:lapq}
\langle p\rangle_{q}=\sqrt{1+g^{T^{*}Q}_{q}(p,p)}
\end{equation}
and are defined by
\begin{eqnarray}
\label{eq:gE}
&& g^{E}=\langle p\rangle_{q}^{-N_{H}+N_{V}}\pi_{X}^{*}(g^{\Lambda
     T^{*}Q}\otimes g^{\Lambda TQ})\,,\\
\label{eq:gEpr}
&&
g^{E'}=\langle p\rangle_{q}^{N_{H}-N_{V}}
\pi_{X}^{*}(g^{\Lambda TQ}\otimes g^{\Lambda T^{*}Q})\,,\\
\label{eq:gF}
&&g^{F}=\langle p\rangle_{q}^{-N_{H}+N_{V}}
\pi_{X}^{*}(g^{\Lambda T^{*}Q}\otimes g^{\Lambda TQ}\otimes g^{\fkf})\,,
\\
\label{eq:gFpr}
&&
g^{F'}=\langle p\rangle_{q}^{N_{H}-N_{V}}
\pi_{X}^{*}(
g^{\Lambda TQ}\otimes g^{\Lambda T^{*}Q}\otimes g^{\fkf})\,.
\end{eqnarray}
The Levi-Civita connection on $\Lambda TQ\otimes \Lambda T^{*}Q$
associated with $g=g^{TQ}$ being denoted by $\nabla^{Q,g}$ there is a
natural connection on $\fkF=E,F,E',F'$ simply given by 
\begin{eqnarray}
\label{eq:nabEg}
&&
\nabla^{E,g}=\pi_{X}^{*}(\nabla^{Q,g})\quad,\quad
\nabla^{E',g}=\pi_{X}^{*}(\nabla^{Q,g})\,,
\\
\label{eq:nabFg}
&&\nabla^{F,g}=\pi_{X}^{*}(\nabla^{Q,g}+\nabla^{\fkf})\quad,\quad
\nabla^{F',g}=\pi_{X}^{*}(\nabla^{Q,g}+\nabla^{\fkf'})\,.
\end{eqnarray}
\begin{remark}
Discerning what depends on the metric $g=g^{TQ}$ is of outmost
importance when boundary value problems are considered in particular
because $g^{TQ}$ does not have a product structure near the
boundary, in particular when the second fundamental form of the
boundary does not vanish.
\end{remark}

\subsection{Manifold with boundary} 
\label{sec:mfldbdy}
From now on, we will assume that $\overline{Q}_{-}=Q_{-}\sqcup Q'$\,,  is a compact manifold with 
boundary $Q'=\partial Q_{-}$\,. Before considering the metric aspects, 
$\overline{Q}_{-}$ can be considered as a domain of the doubled
manifold $Q=Q_{-}\sqcup Q'\sqcup Q_{+}$ where $Q_{+}$
(resp. $\overline{Q}_{+}=Q'\sqcup Q_{+}$) is a copy of $Q_{-}$
(resp. $\overline{Q}_{-}$) and the $\mathcal{C}^{\infty}$-structures
are matched along $Q'$\,. By following the
$\mathcal{C}^{\infty}$-reflection principle (see \cite{ChPi}-I-7), there is a
canonical $\mathcal{C}^{\infty}$ structure on $Q$ which is unique modulo
diffeomorphims preserving $Q'$\,. However its concrete realization may depend on the
choice of a normal bundle with its differential structure, which is equivalent to the choice of a
tubular neighborhood of $Q'=\partial Q_{-}$ in $\overline{Q}_{-}$ according to  \cite{Lan}-Chap~IV-6. This may lead
to different  realizations of the doubled manifold
$Q$ which are all diffeomorphic. Actually our analysis is done with a family of metrics for which the
normal bundle $N_{Q'}\overline{Q}_{-}$ is not changed. So both
approaches, starting from an abstract definition of $Q$ or from its
construction after fixing the normal bundle, are equivalent.
The metric $g_{-}=g^{TQ_{-}}\in \mathcal{C}^{\infty}(\overline{Q}_{-};
T^{*}Q_{-}\odot T^{*}Q_{-})$ can thus be thought as the restriction of a
$\mathcal{C}^{\infty}$ metric  $g=g^{TQ}$ on $Q$ (another
metric will be put on $Q$ in the next paragraph). All the objects,
smooth vector bundles and functional spaces (see the $\mathcal{C}^{\infty}$-reflection
principle in \cite{ChPi}-I-7)
which are related to the $\mathcal{C}^{\infty}$ structure of
$\overline{Q}_{-}$ can be thought as restrictions to
$\overline{Q}_{-}$ (or to $Q_{-}$) 
of objects on $Q$\,. Those  objects
 will be specified later when necessary.\\
Hence we can consider the case of a closed hypersurface 
$Q'\subset Q$\,, of the compact riemannian manifold $(Q,g^{TQ})$ \,, which admits a global unit normal 
vector $\underline{e}_{1}$:
$$
TQ\big|_{Q'}=TQ'\mathop{\oplus}^{\perp} \rz \underline{e}_{1}\,,\quad \underline{e}_{1}\in
\mathcal{C}^{\infty}(Q';N_{Q'}Q)\,,\quad g^{TQ}(\underline{e}_{1},\underline{e}_{1})=1\,,
$$
where $N_{Q'}Q$ is the normal vector bundle of $Q'\subset
(Q,g^{TQ})$\,. For the manifold $\overline{Q}_{-}$ with boundary
$Q'$\,, $\underline{e}_{1}$ is the outward unit normal vector.\\
For $\underline{q}'\in Q'$\,
, let $(\exp^{Q,g}_{\underline{q}'}(t\underline{e}_{1}))_{t\in (-\varepsilon,\varepsilon)}$ 
be the geodesic curve on $Q$ starting from $\underline{q}'$ in the direction
$\underline{e}_{1}$ which is  well defined for $t\in (-\varepsilon,\varepsilon)$\,, where
$\varepsilon>0$ can be chosen uniform w.r.t $\underline{q}'\in Q'$ by
compactness. This provides diffeomorphisms
\begin{eqnarray}
\label{eqQfQe1}
  &&\begin{array}[c]{rcl}
(-\varepsilon,\varepsilon)\times Q' &\to& \left\{q\in Q\,,
                                          d_{g}(q,Q')<\varepsilon\right\}=Q_{(-\varepsilon,\varepsilon)}\,,
\\
(\underline{q}^{1},\underline{q}')&\mapsto& \exp^{Q,g}_{\underline{q}'}(\underline{q}^{1}\underline{e}_{1})
\end{array}\\
\label{eqQfQe2}
&&
\begin{array}[c]{rcl}
(-\varepsilon,0]\times Q' &\to& \left\{q\in \overline{Q}_{-}\,,
                                          d_{g}(q,Q')<\varepsilon\right\}=Q_{(-\varepsilon,0]}\,,
\\
(\underline{q}^{1},\underline{q}')&\mapsto& \exp^{Q,g}_{\underline{q}'}(\underline{q}^{1}\underline{e}_{1})
\end{array}
\end{eqnarray}
In the sequel, we will not distinguish the global coordinate 
$(\underline{q}^{1},\underline{q}')$  with the natural projections 
\begin{align}
&	\underline{q}^{1}:Q_{I}\to I,&\underline{q}': 
Q_{I}\to Q'\quad\text{for}~I=(-\varepsilon,\varepsilon)~\text{or}~I=(-\varepsilon,0]\,. 
\end{align} 
By \eqref{eqQfQe1}\eqref{eqQfQe2}, we have 
\begin{align}\label{eqGaTQ}
	TQ_{I}\simeq \underline{q}^{1*}TI\oplus \underline{q}^{\prime *}TQ'. 
\end{align} 
Gauss Lemma, over 
$Q_{I}$\,, says
\begin{align}\label{eqgTQq}
	 	g^{TQ}=(d\underline{q}^{1})^{2}+m^{TQ'}(\underline{q}^{1}),
\end{align} 
where $m=m^{TQ'}$ is a $\underline{q}^{1}$-dependent metric on $Q'$\,.
\\
By following Bismut-Lebeau in \cite{BiLe91}-VIII, for 
$(\underline{q}^{1},\underline{q}')\in Q_{(-\epsilon,0]}$, we can identify $T_{\underline{q}'}Q'$ with 
$T_{(\underline{q}^{1},\underline{q}')}Q_{I}$ by the parallel transport with 
respect to the Levi-Civita connection $\nabla^{Q,g}$\,,
  along the geodesic 
$\exp^{Q,g}_{\underline{q}'}(t\underline{e}_{1})$ from  $t=0$ to $t=\underline{q}^{1}$\,. 
This gives  over 
$Q_{I}$, a smooth identification of vector bundles,  
\begin{align}\label{eqTQp}
TQ_{I}= \underline{q}^{\prime*}(\mathbb R \underline{e}_{1}\oplus  TQ').
\end{align}
Contrarily to \eqref{eqGaTQ} the latter decomposition defined as the
pull-back of an abstract vector bundle with
$(\underline{q}^{1},\underline{q}')\mapsto \underline{q}'$ does not
give rise in general to an integrable decomposition of
$TQ_{I}$\,. The extrinsic curvature of $Q'\subset (Q_{I},g)$ when
$\partial_{\underline{q}^{1}}m(0)\neq 0$ prevents from integrability. \\
Since the parallel transport is an isometry, via \eqref{eqTQp}, the
metric $g=g^{TQ}$ becomes
\begin{align}\label{eqgTQ0}
	g^{TQ}=(d\underline{q}^{1})^{2}+ \underline{q}^{\prime*} m(0). 
\end{align} 
Note that the identification \eqref{eqTQp} depends on $\underline{q}^{1}$, while 
the right hand side of \eqref{eqgTQ0} is independent of
$\underline{q}^{1}$\,. \\
More generally, if  $\pi_{\fkF}:\fkF\to Q$ is a vector bundle on $Q$ endowed with a 
 connection $\nabla^{\fkF}$\,, the fiber $\fkF_{\underline{q}'}$ above
 $\underline{q}'\in Q'$ can be identified  with 
 $\fkF_{(\underline{q}^{1},\underline{q}')}$\,, $\underline{q}^{1}\in (-\varepsilon,\varepsilon)$\,,  by using the parallel
 transport along
 $(\exp^{Q,g}_{\underline{q'}}(t\underline{e}_{1}))_{t\in
   (-\varepsilon,\varepsilon)}$ associated with  $\nabla^{\fkF}$\,.\\
 Hence over $Q_{(-\varepsilon,\varepsilon)}$\,, 
 \begin{align}\label{eqFqF}
	\fkF= \underline{q}^{\prime *}\fkF|_{Q'}. 
\end{align} 
 Under this identification, the covariant derivatives equals
 \begin{align}\label{idF}
 	\nabla^{\fkF}_{(U^{1}\underline{e}_{1}+U')}=U^{1}\frac{\partial}{\partial 
 	\underline{q}^{1}}+\nabla_{U'}^{\fkF\big|_{Q'},\underline{q}^{1}},
 \end{align} 
 where $\nabla^{\fkF_{|Q'},\underline{q}^{1}}$ is a $\underline{q}^{1}$-dependent connection on  
 $\fkF_{|_{Q'}}$\,. 
The exterior covariant derivative $d^{\nabla^{\fkF}}$ is then
\begin{equation}
  \label{eq:dFnorm}
  d^{\nabla^{\fkF}}=d\underline{q}^{1}\wedge \frac{\partial}{\partial \underline{q}^{1}}+d^{\nabla^{\fkF|_{Q'},\underline{q}^{1}}}\,.
\end{equation}
More precisely, for $\underline{q}^{1}\in (-\varepsilon,\varepsilon)$, there is a section  
 $A_{\underline{q}^{1}}\in C^{\infty}(Q',T^{*}Q'\otimes {\rm End}(\fkF\big|_{Q'}))$ which depends 
 smoothly  on $\underline{q}^{1}$\,, such that 
 \begin{align}
	\nabla^{\fkF\big|_{Q'},\underline{q}^{1}}=\nabla^{\fkF\big|_{Q'},0}+A_{\underline{q}^{1}}\,.
\end{align} 
In $A_{\underline{q}^{1}}$ there is no component of 
	$d\underline{q}^{1}$\,, because the identification 
	\eqref{idF} is obtained by parallel transport.\\
This general construction will be applied with the following vector
bundles:
\begin{itemize}
\item $\fkF=X=T^{*}Q$\,, $\nabla^{\fkF}=\nabla^{Q,g}$\, which is
  actually obtained by duality from the case
  $\fkF=TQ_{(-\varepsilon,\varepsilon)}$ treated above\,;
\item $\fkF=\fkf$\,, where $\fkf$ is
  endowed with  the hermitian metric
  $g^{\fkf}$\,,  the flat connection $\nabla^{\fkf}$\,, or its
  antidual flat connection $\nabla^{\fkf'}$\,, after identifying
  $\fkf$ with its antidual via the metric;
\item $\fkF=\Lambda TQ\otimes \Lambda T^{*}Q\otimes \fkf$\,,
  $\nabla^{\fkF}=\nabla^{Q,g}+\nabla^{\fkf}$ and $\fkF=\Lambda
  T^{*}Q\otimes \Lambda TQ\otimes \fkf$\,, $\nabla^{\fkF}=\nabla^{Q,g}+\nabla^{\fkf'}$\,.
\end{itemize}

\subsection{The doubled riemannian  manifold $(Q,\hat{g})$}
\label{sec:dbriem}
Like in the previous paragraph consider $\overline{Q}_{-}\subset
Q_{-}\sqcup Q'\sqcup Q_{+}$ and use the identification via the
exponential map for a smooth metric $g=g^{TQ}$\,, $g^{TQ}\big|_{\overline{Q}_{-}}=g_{-}$\,,  $Q_{I}\simeq I\times Q'$ for an
interval $I\subset (-\varepsilon,\varepsilon)$\,. \\
With this isomorphism the map $S_{Q}$
\begin{equation}
  \label{eq:RQ}
\begin{array}[c]{rrcl}
 S_{Q}:& Q_{(-\varepsilon,\varepsilon)}&\to &
                                         Q_{(-\varepsilon,\varepsilon)}\\
&(\underline{q}^{1},\underline{q}')&\mapsto& (-\underline{q}^{1},\underline{q}')
\end{array}
\end{equation}
is an involutive diffeomorphism. The push-forward and pull-back maps
coincide. They are
given by,
\begin{eqnarray}
\nonumber  &&
S_{Q,*}=S^{*}_{Q}:TQ_{(-\varepsilon,\varepsilon)}\to
TQ_{(-\varepsilon,\varepsilon)}\quad,\\
\label{eq:SgQ*1}&& S_{Q,*}(\alpha \underline{e}_{1}, t')=(-\alpha
   \underline{e}_{1},t')\in
   T_{(-\underline{q}^{1},\underline{q}')}Q\quad\text{for}~(\alpha\underline{e}_{1},
   t')\in T_{(\underline{q}^{1},\underline{q}')}Q\,,\\
\nonumber
&&S_{Q,*}=S^{*}_{Q}:T^{*}Q_{(-\varepsilon,\varepsilon)}\to
   T^{*}Q_{(-\varepsilon,\varepsilon)}\quad,\\
\label{eq:SgQ*2}
&&  S_{Q,*}(\alpha \underline{e}^{1},\theta')=(-\alpha
   \underline{e}^{1},\theta')\in
T^{*}_{(-\underline{q}^{1},\underline{q}')}Q\quad\text{for}~(\alpha\underline{e}^{1},
   \theta')\in T^{*}_{(\underline{q}^{1},\underline{q}')}Q\,,
\end{eqnarray}
and they
have a natural action on tensors.\\
In particular we can define the metric $g_{+}=S_{Q,*}g_{-}$ on
$TQ_{(-\varepsilon,\varepsilon)}$ with
$$
g_{+}(\underline{q}^{1},\underline{q}')=(d\underline{q}^{1})^{2}+m(-\underline{q}^{1},\underline{q}')\,.
$$
Because $g_{+}=g_{-}$ on $Q'$\,, we can define the continuous metric
$\hat{g}=1_{Q_{(-\varepsilon,0)}}g_{-}+1_{Q_{[0,\varepsilon)}}g_{+}$
 on $TQ_{(-\varepsilon,\varepsilon)}$ by
$$
\hat{g}(\underline{q}^{1},\underline{q'})=g_{-}(-|\underline{q}^{1}|,\underline{q}')=(d\underline{q}^{1})^{2}+m(-|\underline{q}^{1}|,
\underline{q}')\,.
$$
In general when $\partial_{\underline{q}^{1}}m(0,\underline{q}')\neq
0$\,, which corresponds to a non vanishing second fundamental form of
$Q'\subset (Q,g_{-})$\,, the metric $\hat{g}$ is only piecewise
$\mathcal{C}^{\infty}$ on $\overline{Q}_{-}$ and $\overline{Q}_{+}$
and continuous (with a discontinuous $\partial_{\underline{q}^{1}}$
derivative along $Q'$)\,.
As noticed before, $g=g_{-}$ and $g_{+}$ induce the same
identification \eqref{eqQfQe1}, the same involutions
\eqref{eq:RQ}\eqref{eq:SgQ*1}\eqref{eq:SgQ*2} and the same vector field
$\underline{e}_{1}$ obtained via \eqref{eqTQp}. Nevertheless   the idenfications of $\underline{q}^{\prime
  *}(TQ')$
in  \eqref{eqTQp}  depends  on the chosen metric $g=g^{TQ}$\,.

When $g=g_{-}=(d\underline{q}^{1})^{2}+m(0,\underline{q}')=g_{0}$ the metric
$\hat{g}_{0}=g_{0}$ is the initial smooth metric on
$Q_{(-\varepsilon,\varepsilon)}$\,.  
The identifications
$Q_{(-\varepsilon,\varepsilon)}=(-\varepsilon,\varepsilon)\times Q'$
and $TQ_{(-\varepsilon,\varepsilon)}=\underline{q}^{\prime *}(\rz
\underline{e}_{1}\oplus TQ')$\,,
made for the metric
$\hat{g}=(d\underline{q}^{1})^{2}+m(-|\underline{q}^{1}|;\underline{q}')$ and for the
metric $g_{0}=(d\underline{q}^{1})^{2}+m(0,\underline{q}')$ 
provide a piecewise
$\mathcal{C}^{\infty}$ diffeomorphims continously coinciding with
$\mathrm{Id}_{Q_{(-\varepsilon,\varepsilon)}}$ and a piecewise
$\mathcal{C}^{\infty}$  and continuous isometry $\widehat{\Psi}_{Q}^{g,g_{0}}$from
$(TQ_{(-\varepsilon,\varepsilon)},g_{0}^{TQ})$ to
$(TQ_{(-\varepsilon,\varepsilon)}, \hat{g}^{TQ})$ such that the
following diagram commutes
\begin{equation}
  \label{eq:psiQgg}
\xymatrix{
(TQ_{(-\varepsilon,\varepsilon)},
g_{0}^{TQ}) 
\ar[d]_{S_{Q,*}}
\ar[r]^{\widehat{\Psi}_{Q}^{g,g_{0}}} 
&
(TQ_{(-\varepsilon,\varepsilon)},\hat{g}^{TQ})
\ar[d]^{S_{Q,*}}\quad\\ 
(TQ_{(-\varepsilon,\varepsilon)},
g_{0}^{TQ}) 
\ar[u]\ar[d]_{\pi_{TQ}}
\ar[r]^{\widehat{\Psi}_{Q}^{g,g_{0}}} 
&
(TQ_{(-\varepsilon,\varepsilon)},\hat{g}^{TQ})
\ar[u]\ar[d]^{\pi_{TQ}}\quad\\ 
Q_{(-\varepsilon,\varepsilon)}
\ar[r]_{\mathrm{Id}_{Q_{(-\varepsilon,\varepsilon)}}}& 
Q_{(-\varepsilon,\varepsilon)}\quad\,
}
\end{equation}
with $\widehat{\Psi}_{Q}^{g,g_{0}}\big|_{Q'}=\Id_{TQ\big|_{Q'}}$\,. 
A similar result holds for $T^{*}Q_{(-\varepsilon,\varepsilon)}$
endowed with the dual metrics $g_{0}^{T^{*}Q}$ and $\hat{g}^{T^{*}Q}$\,, with natural tensorial extensions.
\\
The situation already encountered with a piecewise
$\mathcal{C}^{\infty}$-identification, above
$\overline{Q}_{(-\varepsilon,0]}$ and
$\overline{Q}_{[0,\varepsilon)}$\,,  
 of
$\pi_{TQ}:TQ_{(-\varepsilon,\varepsilon)}\to
Q_{(-\varepsilon,\varepsilon)}$ can be generalized for a general
vector bundle $\pi_{\fkF}:\fkF\to Q_{(-\varepsilon,\varepsilon)}$
endowed with a connection $\nabla^{\fkF}$\,.\\
When
$\pi_{\fkF}:\fkF\to Q$ is a vector bundle on $Q$ endowed with a 
 connection $\nabla^{\fkF}$\,, formula \eqref{eqFqF} remains valid
 \begin{equation}
\label{eq:FqFhg}
	\fkF= \underline{q}^{\prime *}\fkF|_{Q'}\,.
 \end{equation}
In order to complete the picture we spectify the double of 
the flat vector bundle  $\pi_{\fkf}:\fkf\to
\overline{Q}_{-}$ endowed with the flat connection
$\nabla^{\fkf}$ and the hermitian metric $g^{\fkf}$\,. 
Because it is
flat the parallel transport along $\exp_{\underline{q}'}^{Q,\hat{g}}$
is trivial but applications, in particular the treatment of Dirichlet
and Neumann boundary conditions, requires an additional modification
along $Q'$\,. As mentionned in the introduction $\nu\in \mathcal{C}^{\infty}(Q';L(\fkf\big|_{Q'}))$ is an involutive isometry of
$(\fkf\big|_{Q'}, g^{\fkf})$  such that the covariant derivative vanishes
$\nabla^{L(\fkf|_{Q'})}\nu=0$\,.\\
Actually this is equivalent to
$\fkf\big|_{Q'}=\fkf_{+}\big|_{Q'}\oplus^{\perp} \fkf_{-}$ with
$\nu\big|_{\fkf_{\pm}}=\pm \Id_{\fkf_{\pm}}$ and all the theory can be done by
assuming $\nu=\pm \Id_{\fkf\big|_{Q'}}$\,, which is also our main
concern.\\
\begin{definition}
  \label{de:doublef} The double, with respect to $\nu$\,, of  $\pi_{\fkf}:\fkf\to
  \overline{Q}_{-}$ endowed with the smooth flat connection
 $\nabla^{\fkf}\in
  \mathcal{C}^{\infty}(Q;T^{*}Q\otimes L(\fkf))$ and the
  metric $g^{\fkf}$\,, is the double copy, still denoted by,
  $\pi_{\fkf}:\fkf\to Q$ using $Q=Q_{-}\sqcup Q'\sqcup Q_{+}$\,,
  $\overline{Q}_{+}\simeq \overline{Q}_{-}$ endowed with the flat
  connection
  $\nabla^{\fkf}\big|_{\overline{Q}_{\mp}}=\nabla^{\fkf}_{{\overline{Q}}_{-}}$\,,
  with 
  the metric
  $\hat{g}^{\fkf}(\underline{q}^{1},\underline{q'})=g^{\fkf}(-|\underline{q}^{1}|,\underline{q}')$
  and the continuity condition
$$
\fkf_{(0^{+},\underline{q}')}\ni
(0^{+},\underline{q}',\nu v)=(0^{-},\underline{q}',v)\in \fkf_{(0^{-},\underline{q}')}\,.
$$
\end{definition}
Since $\nu$ is flat, 
when $(v_{1},\ldots,v_{d_{f}})$ is a local flat frame of
$\pi_{\fkf}:\fkf\to \overline{Q}_{-}$ around $q_{0}\in Q'$\,,
$[1_{(-\varepsilon,0]}(\underline{q}^{1})+1_{(0,\varepsilon)}(\underline{q}^{1})\nu]
v_{i}$\,, $i=1,\ldots,d_{f}$ is a local flat frame of
$\pi_{\fkf}:\fkf\to Q$\,. Thus $\pi_{\fkf}:\fkf \to Q$ has a natural
$\mathcal{C}^{\infty}$ structure associated with $\nabla^{\fkf}$\,.
The involution $S_{Q}$ lifts to $\fkf$ and therefore to $\Lambda
TQ\otimes \Lambda T^{*}Q\otimes \fkf$\,. 
This lifting on  $\fkf$ or $\Lambda
TQ\otimes \Lambda T^{*}Q\otimes \fkf$ will be denoted by
$S_{Q,\nu}$ in order to recall that the symetric extension of $\fkf$
to $Q$ depends on $\nu$\,. With the symmetric metric $\hat{g}^{\fkf}$\,,
$S_{Q,\nu}$ is an isometry of
$(\fkf,\hat{g}^{\fkf})$ and 
the diagram \eqref{eq:psiQgg} can be completed by replacing
$TQ_{(-\varepsilon,\varepsilon)}$ with 
$$
\fkF=\Lambda TQ_{(-\varepsilon,\varepsilon)}\otimes 
 \Lambda T^{*}Q_{(-\varepsilon,\varepsilon)}\otimes
 \fkf\,,
$$
\begin{equation}
 \label{eq:psiQgg2}
\xymatrix{
(\fkF,
 g_{0}^{\Lambda TQ}\otimes g_{0}^{\Lambda T^{*}Q}\otimes \hat{g}^{\fkf}) 
 \ar[d]_{S_{Q,\nu}}
 \ar[r]^{\widehat{\Psi}_{Q}^{g,g_{0}}} 
 &
 (\fkF,\hat{g}^{\Lambda TQ}\otimes \hat{g}^{\Lambda
   T^{*}Q}\otimes \hat{g}^{\fkf})
 \ar[d]^{S_{Q,\nu}}\quad\\ 
 (\fkF,
 g_{0}^{\Lambda TQ}\otimes g_{0}^{\Lambda T^{*}Q}\otimes \hat{g}^{\fkf}) 
 \ar[u]\ar[d]_{\pi_{\fkF}}
 \ar[r]^{\widehat{\Psi}_{Q}^{g,g_{0}}} 
 &
 (\fkF,\hat{g}^{\Lambda TQ}\otimes \hat{g}^{\Lambda
   T^{*}Q}\otimes \hat{g}^{\fkf})
 \ar[u]\ar[d]^{\pi_{\fkF}}\quad\\ 
 Q_{(-\varepsilon,\varepsilon)}
 \ar[r]_{\mathrm{Id}_{Q_{(-\varepsilon,\varepsilon)}}}& 
 Q_{(-\varepsilon,\varepsilon)}
 }
\end{equation}
Because the flat connection $\nabla^{\fkf}$ differs from the unitary
connection $\nabla^{\fkf,u}$ it is important to keep the
$\underline{q}^{1}$-dependent metric $g^{\fkf}$ in $g_{0}^{\Lambda
  TQ}\otimes g_{0}^{\Lambda T^{*}Q}\otimes g^{\fkf}$ while
$g_{0}^{TQ}=(d\underline{q}^{1})^{2}\oplus m(0,\underline{q'})$\,.\\
Since the metric $\hat{g}^{\fkf}$ is only piecewise
$\mathcal{C}^{\infty}$ and continuous, attention must be paid to  the
identification with $\fkf$ of the antidual $\fkf'$  via the metric. The double
$\pi_{\fkf'}:\fkf'\to Q$ can be thought as
$\mathcal{C}^{\infty}$-vector bundle on $Q$ with the flat connection
$\nabla^{\fkf'}$ antidual to $\nabla^{\fkf}$ but the identification with
$\fkf$ gives a new $\mathcal{C}^{\infty}$-structure on $\fkf$\,.
This construction yields the following statement.
\begin{proposition}
\label{pr:dualf} The identification of $\pi_{\fkf'}:\fkf'\to Q$ the
antidual to the $\mathcal{C}^{\infty}$ flat hermitian
 vector bundle $(\fkf,\nabla^{\fkf},\hat{g}^{\fkf})$ can be identified
 via the metric $\hat{g}^{\fkf}$ with $\pi_{\fkf}:\fkf\to Q$ in the
 class of piecewise $\mathcal{C}^{\infty}$ and continous vector
 bundle. The antidual flat connection $\nabla^{\fkf'}$ on $\pi_{\fkf}:\fkf\to Q$
 differs from $\nabla^{\fkf}$ in general and gives rise to a different
 $\mathcal{C}^{\infty}$ structure on $\fkf$ (remember
 $\nabla^{\fkf'}=\nabla^{\fkf}+\omega(\fkf,\hat{g}^{\fkf})=\nabla^{\fkf}+(\hat{g}^{\fkf})^{-1}\nabla^{\fkf}\hat{g}^{\fkf}$). 
\end{proposition}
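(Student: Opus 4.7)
The claim naturally splits into two parts, and I would handle each separately while working in the local coordinates $(\underline{q}^{1},\underline{q}')$ on $Q_{(-\varepsilon,\varepsilon)}$ fixed in Subsection~\ref{sec:dbriem}.

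For the identification of $\fkf'$ with $\fkf$ in the piecewise $\mathcal{C}^{\infty}$ and continuous category, the plan is to use Definition~\ref{de:doublef} together with the explicit flat frame $(v_{1},\ldots,v_{d_{f}})$ of $\pi_{\fkf}:\fkf\to Q$ constructed there via the involution $\nu$, which by flatness of $\nabla^{\fkf}$ is smooth for the $\mathcal{C}^{\infty}$-structure determined by $\nabla^{\fkf}$. In this frame the metric is represented by the matrix $\hat{g}^{\fkf}_{ij}(\underline{q}^{1},\underline{q}')=g^{\fkf}_{ij}(-|\underline{q}^{1}|,\underline{q}')$, which is smooth on each of $\overline{Q}_{-}$ and $\overline{Q}_{+}$ separately and continuous across $Q'$ by construction. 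The conjugate-linear musical isomorphism $\fkf\to\fkf'$, $v\mapsto \hat{g}^{\fkf}(v,\cdot)$, is given in this frame by the matrix $(\hat{g}^{\fkf}_{ij})$, hence it inherits exactly the piecewise $\mathcal{C}^{\infty}$ and continuous regularity. This is the desired identification.

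For the claim that the antidual flat connection $\nabla^{\fkf'}$ yields, in general, a different $\mathcal{C}^{\infty}$-structure on $\fkf$, the plan is to work on each open piece $Q_{\mp}$, where everything is smooth and the classical identity $\nabla^{\fkf'}=\nabla^{\fkf}+\omega(\nabla^{\fkf},\hat{g}^{\fkf})$ makes sense with $\omega(\nabla^{\fkf},\hat{g}^{\fkf})=(\hat{g}^{\fkf})^{-1}\nabla^{\fkf}\hat{g}^{\fkf}$. Then I would construct a local $\nabla^{\fkf'}$-flat frame $(w_{1},\ldots,w_{d_{f}})$ on each piece by solving $\nabla^{\fkf'}w=0$; this amounts to finding $w_{i}=M_{i}^{j}v_{j}$ with $dM+M\omega=0$, so that $M$ is essentially given by a piecewise constant multiple of $(\hat{g}^{\fkf})^{-1}$. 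By definition this frame $(w_{i})$ is smooth for the $\mathcal{C}^{\infty}$-structure attached to the flat connection $\nabla^{\fkf'}$.

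The comparison of the two structures then reduces to examining the transition matrix $M$ across $Q'$. When $(w_{i})$ is read in the $\nabla^{\fkf}$-smooth structure on $\fkf$, its regularity coincides with that of $(\hat{g}^{\fkf})^{-1}$, so $(w_{i})$ is only piecewise $\mathcal{C}^{\infty}$ and continuous, with a generically nonzero jump of $\partial_{\underline{q}^{1}}$ across $\underline{q}^{1}=0$ as soon as $\partial_{\underline{q}^{1}}g^{\fkf}(0,\underline{q}')\neq 0$. Symmetrically, a $\nabla^{\fkf}$-flat frame read in the $\nabla^{\fkf'}$-structure exhibits the same kind of defect. The main point requiring care is simply to articulate clearly that two smooth atlases on the set $\fkf$ coincide iff their flat frames are mutually smooth, and to exhibit a transverse direction along $Q'$ in which these frames are only continuous; this is the main (and essentially the only) obstacle, since once the local model is understood the conclusion is immediate from the doubling formula $\hat{g}^{\fkf}(\underline{q}^{1},\underline{q}')=g^{\fkf}(-|\underline{q}^{1}|,\underline{q}')$ in Definition~\ref{de:doublef}.
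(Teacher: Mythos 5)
Your proposal is correct and follows the paper's (largely implicit) argument: the proposition is stated there as an immediate consequence of the preceding construction of the $\nabla^{\fkf}$-flat frame and the observation that $\hat{g}^{\fkf}(\underline{q}^{1},\underline{q}')=g^{\fkf}(-|\underline{q}^{1}|,\underline{q}')$ is continuous and piecewise $\mathcal{C}^{\infty}$ with a generic jump of $\partial_{\underline{q}^{1}}$ across $Q'$, together with the explicit scalar computation of $\omega(\nabla^{\fkf},\hat{g}^{\fkf})$ in Subsection~\ref{sec:coornonsmooth}. One minor imprecision worth flagging: the solution $M$ of $dM+M\omega=0$ equals a (piecewise) constant multiple of $(\hat{g}^{\fkf})^{-1}$ only when $(\hat{g}^{\fkf})^{-1}$ and $d\hat{g}^{\fkf}$ commute (e.g. in the scalar case $\fkf=Q\times\cz$), but this does not affect the conclusion since the decisive fact is that the $\partial_{\underline{q}^{1}}$-component of $\omega$ jumps across $Q'$, so any solution of the transport ODE is only piecewise $\mathcal{C}^{\infty}$ with a kink at $\underline{q}^{1}=0$.
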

\begin{remark}
Since $Q$ is smooth or when $Q$ is endowed with the smooth metric $g$
such that $g\big|_{Q_{-}}=g_{-}$\,, all the differential geometric 
constructions make sense on $Q$\,. However, all the 
constructions which  involve the Riemannian structure with the
symmetric metric $\hat{g}$\,, will be only piecewise 
smooth, and even sometimes not continuous, extending the subtlety
already appearing in Proposition~\ref{pr:dualf} with the metric $\hat{g}^{\fkf}$\,. In particular 
 on  the total space $X=T^{*}Q$\,: the tangent and
cotangent space $TX$ and $T^{*}X$ are smooth vector bundles but the 
horizontal subbundle  $TX^{H}$  and vertical subbundle $TX^{V}$\,,
$TX^{V}$\,, which rely on the chosen metric $g^{TQ}$ will lead to piecewise
$\mathcal{C}^{\infty}$ and a priori discontinuous
 structures for the non smooth metric  $\hat{g}^{TQ}$\,. The
 continuity issue is discussed in the next Subsection.
\end{remark}

\subsection{The doubled cotangent and its vector bundles}
\label{sec:dbcot}
The manifold $X$ is the total space of the cotangent $T^{*}Q$\,,
$Q=Q_{-}\sqcup Q'\sqcup Q_{+}$ and $\overline{X}_{-}=X_{-}\sqcup X'$
is the boundary manifold
$\overline{X}_{-}=T^{*}Q\big|_{\overline{Q}_{-}}$ with boundary
$X'=T^{*}Q\big|_{Q'}$\,. So $X$ (resp. $\overline{X}_{-}$) can be considered as
$\mathcal{C}^{\infty}$-vector bundles on  $Q$
(resp. $\overline{Q}_{-}$) with projections $\pi_{X}:X\to Q$
(resp. $\pi_{\overline{X}_{-}}:\overline{X}_{-}\to
\overline{Q}_{-}$) and as a symplectic manifold (resp. the domain of
a symplectic manifold). We follow the two steps approach of
Subsections~\ref{sec:mfldbdy} and \ref{sec:dbriem} by first considering
the smooth case with a smooth metric $g_{-}=g=g^{TQ}$ and then the symmetric
non smooth metric $\hat{g}^{TQ}$on $Q_{(-\varepsilon,\varepsilon)}$\,,
with
$\hat{g}\big|_{Q_{(-\varepsilon,0]}}=g\big|_{Q_{(-\varepsilon,0]}}=g_{-}\big|_{Q_{(-\varepsilon,0]}}$
and $\hat{g}\big|_{Q_{[0,\varepsilon)}}=
g_{+}\big|_{Q_{[0,\varepsilon]}}$\,.
\begin{definition}
\label{de:SigmaXI} For an interval $I\subset
(-\varepsilon,\varepsilon)$\,, $X_{I}$ will denote
$X\big|_{Q_{I}}=T^{*}Q\big|_{Q_{I}}$\,.\\
  The map $S_{Q,*}:T^{*}Q_{(-\varepsilon,\varepsilon)}\to
  T^{*}Q_{(-\varepsilon,\varepsilon)}$ will be denoted
  $\Sigma:X_{(-\varepsilon,\varepsilon)}\to
  X_{(-\varepsilon,\varepsilon)}$ as a symplectic smooth involution of
  $X_{(-\varepsilon,\varepsilon)}$ with push-forward and pull-back
  $\Sigma_{*}=\Sigma^{*}$ acting on $\Lambda
  TX_{(-\varepsilon,\varepsilon)}$ and $\Lambda
  T^{*}X_{(-\varepsilon,\varepsilon)}$\,.\\
On $F=\Lambda T^{*}X\otimes \pi_{X}^{*}(\fkf)$ or
$F'=\Lambda TX\otimes \pi_{X}^{*}(\fkf)$\,, where $\pi_{\fkf}:\fkf\to
Q$ given of Definition~\ref{de:doublef} depends on the isometric
smooth involution $\nu$\,, the involution $\Sigma_{*}\otimes
\pi_{X}^{*}(S_{Q,\nu})$ will be denoted by $\Sigma_{\nu}$\,.
\end{definition}
By using the decomposition \eqref{eqGaTQ} we can write
\begin{eqnarray*}
  &&
X_{(-\varepsilon,\varepsilon)}=\underline{q}^{1,*}(\rz \underline{e}^{1})+\underline{q}^{*}(T^{*}Q')
\\
&& g^{T^{*}Q}=\underline{e}_{1}\otimes \underline{e}_{1}\oplus^{\perp}m^{T^{*}Q}(\underline{q}^{1})
\end{eqnarray*}
so that $X_{I}=T^{*}I\oplus T^{*}Q'$\,. 
Hence we can write $x\in
X_{(-\varepsilon,\varepsilon)}$ as $x=(q^{1},q',p_{1},p')$ with
$(q^{1},p_{1})\in I\times \rz=T^{*}I$ and $(q',p')\in T^{*}Q'$\,,
$(q^{1},q')=\pi_{X}(x)$ and
\begin{eqnarray*}
  && \Sigma(q^{1},q',p_{1},p')=(-q^{1},q',-p_{1},p')\\
\text{and}&&
2\fkh(x)=g^{T^{*}Q}_{q}(p,p)=p_{1}^{2}+m^{T^{*}Q'}(q^{1},q')(p',p')\,. 
\end{eqnarray*}
The domain
$X_{(-\varepsilon,0]}$ is a natural collar neighborhood of
$X'$ in $\overline{X}_{-}$\,.\\
Additionally, this shows that  the kinetic energy $\fkh$ is not
 invariant by $\Sigma$\,, in general.\\
The latter point 
 is solved by introducing the metric $\hat{g}^{TQ}$ of
Subsection~\ref{sec:dbriem} and the kinetic energy
$$
2\hat{\fkh}(x)=p_{1}^{2}+m^{T^{*}Q}(-|q^{1}|,q')(p',p')\,.
$$
But this leads to a discontinuous  Levi-Civita connection $\nabla^{Q,\hat{g}}$
and therefore to  discontinuous horizontal-vertical decomposition.
This discontinuity must be handled in the vector bundles $E=\Lambda
T^{*}X$\,, $E'=\Lambda TX$\,, $F=E\otimes \pi_{X}^{*}(\fkf)$ and
$F'=E'\otimes \pi_{X}^{*}(\fkf)$\,, where we recall that
$(\fkf,\nabla^{\fkf},\hat{g}^{\fkf})$ used for $F$ and
$(\fkf,\nabla^{\fkf'},\hat{g}^{\fkf'})$ used for $F'$ are two
$\mathcal{C}^{\infty}$-flat vector bundles on $Q$\,, with antidual flat
connections identified via $\hat{g}^{\fkf}$ and possibly different 
$\mathcal{C}^{\infty}$-structures (see
Definition~\ref{de:doublef} and Proposition~\ref{pr:dualf}).
\\
However such a discontinuity  as well as the isometry with the case
when $g_{-}=g_{0}$\,, $\hat{g}_{0}=g_{0}$\,, and all the constructions are smooth\,, can be solved by a  repeated application of
$\fkF=\underline{q}^{\prime *}\fkF\big|_{Q'}$ written in \eqref{eq:FqFhg}.\\
\textbf{A)}$ \boxed{\mathbf\fkF=X_{(-\varepsilon,\varepsilon)}=T^{*}Q_{(-\varepsilon,\varepsilon)}~:}$
With $\fkF=X_{(-\varepsilon,\varepsilon)}=T^{*}Q_{(-\varepsilon,\varepsilon)}$
\eqref{eq:FqFhg} provides the vector bundle isomorphism
$\widehat{\Psi}_{Q}^{g,g_{0}}$ (see diagrams \eqref{eq:psiQgg} and
\eqref{eq:psiQgg2}) which is piecewise $\mathcal{C}^{\infty}$ and
continuous. 
\begin{definition}
\label{de:tildeqp} 
On $X_{(-\varepsilon,\varepsilon)}=T^{*}Q_{(-\varepsilon,\varepsilon)}$ the piecewise
$\mathcal{C}^{\infty}$ and continuous vector bundle isomorphism
$(\widehat{\Psi}^{g,g_{0}}_{Q})$ will be denoted $\hat\varphi_{X}^{g,g_{0}}$ and
the coordinates
$(\tilde{q},\tilde{p})=(\tilde{q}^{1},\tilde{q}',\tilde{p}_{1},\tilde{p}')$
of $x\in TX_{(-\varepsilon,\varepsilon)}$
will be given by
$$
\tilde{q}=q\quad,\quad
\tilde{p}_{1}=p_{1}[(\hat\varphi_{X}^{g,g_{0}})^{-1}(x)]\quad,\quad \tilde{p}'=p'[(\varphi_{X}^{g,g_{0}})^{-1}(x)]\,.
$$
\end{definition}
With those new coordinates
$$
2\fkh(x)=\tilde{p}_{1}^{2}+m^{T^{*}Q'}(0,\tilde{q}')(\tilde{p}',\tilde{p}')\,,
$$
the parallel transport in $X_{(-\varepsilon,\varepsilon)}=T^{*}Q_{(-\varepsilon,\varepsilon)}$
along the geodesic $(\exp_{\underline{q}'}^{Q,\hat{g}}(te_{1}))_{t\in
  (-\varepsilon,\varepsilon))}$ in $Q_{(-\varepsilon,\varepsilon)}$ is
nothing but
$(t,\tilde{q}',\tilde{p}_{1},\tilde{p}')_{t\in
  (-\varepsilon,\varepsilon)}$ and
$e_{1}=\frac{\partial}{\partial
  \tilde{q}^{1}}\in TX^{H}\big|_{X_{-}\cup X_{+}}$\,. Finally the
diagrams \eqref{eq:psiQgg} and \eqref{eq:psiQgg2} and
Definition~\ref{de:SigmaXI} ensure 
\begin{equation}
  \label{eq:Sigmati}
  \Sigma(\tilde{q}^{1},\tilde{q}',\tilde{p}_{1},\tilde{p}')=(-\tilde{q}^{1},\tilde{q}',-\tilde{p}_{1},\tilde{p}')\,.
\end{equation}
However this change of variables does not preserve the symplectic form
$\sigma$ on $X_{(-\varepsilon,\varepsilon)}$\,. We won't use the coordinates $(\tilde{q},\tilde{p})$ when the
symplectic structure of $X$ is required.\\
\noindent\textbf{B)}$\boxed{\fkF=\Lambda
  T^{*}Q_{(-\varepsilon,\varepsilon)}\otimes \Lambda
  TQ_{(-\varepsilon,\varepsilon)}\otimes \fkf~:}$ We focus on
$F=\Lambda T^{*}X\otimes \pi_{X}^{*}(\fkf)$ but the
constructions have obvious translations in
 $F'=\Lambda TX\otimes \pi_{X}^{*}(\fkf)$\,.\\
Although the horizontal-vertical decomposition $T^{*}X=T^{*}X^{H}\oplus
T^{*}X^{V}$ made on $X_{-}$ with $g_{-}=\hat{g}\big|_{X_{-}}$ and on
$X_{+}$ with $g_{+}=\hat{g}\big|_{X_{+}}$ first appears discontinous,
with 
$$
F\big|_{\overline{X}_{-}}\stackrel{g_{-}}{\simeq}\pi_{X}^{*}(\fkF\big|_{\overline{Q}_{-}})\quad,\quad F\big|_{\overline{X}_{+}}\stackrel{g_{+}}{\simeq}\pi_{X}^{*}(\fkF\big|_{\overline{Q}_{+}})\,,
$$
we may define a continuous vector bundle after taking a quotient via
the map $\pi_{X,*}$\,.
\begin{definition}
\label{de:hatEF} The vector bundle $\hat{F}_{g}$ is defined 
is defined as the quotient vector bundle 
\begin{eqnarray*}
  &&
\hat{F}_{g}=\left(F\big|_{\overline{X}_{-}}\sqcup
     F\big|_{\overline{X}_{+}}\right)/\sim
\\
&& 
F\big|_{\overline{X}_{-}}\stackrel{g_{-}}{\simeq}\pi_{X}^{*}(\fkF\big|_{\overline{Q}_{-}})\quad,\quad
   F\big|_{\overline{X}_{+}}\stackrel{g_{+}}{\simeq}\pi_{X}^{*}(\fkF\big|_{\overline{Q}_{+}})\,,\quad
   \mathcal{F}=\Lambda T^{*}Q\otimes \Lambda TQ\otimes \fkf\,,\\
&&
\left(
   \begin{array}[c]{c}
     (x_{-},v_{-})\sim (x_{+},v_{+})\\
   (x_{\mp},v_{\mp})\in F\big|_{\overline{X}_{\mp}}
   \end{array}
\right)
\Leftrightarrow\left(
  \begin{array}[c]{c}
    x_{-}=x_{+}\in X'=\partial X_{\mp}\\
   \pi_{X,*}(v_{-})=\pi_{X,*}(v_{+}) \in \mathcal{F}_{\pi_{X}(x)}\,.
  \end{array}
\right)
\end{eqnarray*}
The same definition is used for the continuous vector bundles 
$\hat{E}_{g},\hat{E}'_{g},\hat{F}'_{g}$ by using respectively
$\fkF=\Lambda T^{*}Q\otimes \Lambda TQ$\,, $\Lambda TQ\otimes \Lambda
T^{*}Q$ and $\Lambda TQ\otimes \Lambda T^{*}Q\otimes\fkf$\,.
\end{definition}
By construction $\hat{E}_{g}$\,, $\hat{E}'_{g}$\,, $\hat{F}_{g}$ and
$\hat{F}'_{g}$ are piecewise $\mathcal{C}^{\infty}$ and continuous
vector bundles. Additionally because $g_{-}\big|_{Q'}=g_{+}\big|_{Q'}$
the horizontal vertical decomposition coincide in
$\hat{F}_{g}\big|_{X'}$ (resp. $\hat{E}_{g}\big|_{X'}$\,,
$\hat{E}'_{g}\big|_{X'}$\,, $\hat{F}'_{g}\big|_{X'}$) and the metric 
$g_{-}^{F}\big|_{X'}$ (resp. $g_{-}^{E}\big|_{X'}$\,,
$g_{-}^{E'}\big|_{X'}$\,, $g_{-}^{F'}\big|_{X'}$) and
$g_{+}^{F}\big|_{X'}$  (resp. $g_{+}^{E}\big|_{X'}$\,,
$g_{+}^{E'}\big|_{X'}$\,, $g_{+}^{F'}\big|_{X'}$) coincide. 
Therefore we can write
\begin{eqnarray}
\label{eq:hatehatg}
  &&\hat{E}_{g}\stackrel{\hat{g}}{\simeq}\pi_{X}^{*}(\Lambda T^{*}Q\otimes
     \Lambda TQ)\,,\\
\label{eq:hatfhatg}
&& \hat{F}_{g}\stackrel{\hat{g}}{\simeq}\pi_{X}^{*}(\Lambda T^{*}Q\otimes
     \Lambda TQ\otimes \fkf)\,,\\
 \label{eq:hatephatg}
 &&\hat{E}'_{g}\stackrel{\hat{g}}{\simeq}\pi_{X}^{*}(\Lambda TQ\otimes
     \Lambda T^{*}Q)\,,\\
\label{eq:hatfphatg}&& \hat{F}'_{g}\stackrel{\hat{g}}{\simeq}\pi_{X}^{*}(\Lambda TQ\otimes
     \Lambda T^{*}Q\otimes \fkf)\,,
\end{eqnarray}
where the identification holds in the class of piecewise
$\mathcal{C}^{\infty}$ and continuous vector bundles. Remember that the doubled fiber bundle
$\pi_{\fkf}:\fkf\to Q$ is the one of Definition~\ref{de:doublef} with
the flat connection $\nabla^{\fkf}$ for $\hat{F}_{g}$ while its antidual
version of Proposition~\ref{pr:dualf} is used for $\hat{F}'_{g}$\,.\\
 The complexification of  $\fkF=E,E'$ is of course a particular
case of $\fkF=F,F'$ with $\fkf=Q\times \cz$ endowed with the trivial
metric and flat connection and $\nu=1$\,. But it is convenient to have
a specific notation.\\
The metric
$\hat{g}^{F}=1_{\underline{X}_{-}}(x)g_{-}^{F}+1_{X_{+}}(x)g_{+}^{F}$ (resp.
$\hat{g}^{E,E',F'}=1_{\underline{X}_{-}}(x)g_{-}^{E,E',F'}+1_{X_{+}}(x)g_{+}^{E,E',F'}$)
is a piecewise $\mathcal{C}^{\infty}$ and continuous metric on
$\hat{F}_{g}$ (resp. $\hat{E}_{g}$\,, $\hat{E}'_{g}$\,,
$\hat{F}'_{g}$)\,.\\
When $g_{-}^{TQ}=g_{0}^{TQ}$\,, the quotient vector bundle
$\hat{F}_{g_{0}}$ is nothing but $F$\,.\\
The
diagram \eqref{eq:psiQgg2} is associated with
$\fkF=\underline{q}^{\prime *}(\fkF\big|_{Q'})$\,.  Actually this can be
lifted to $X$ as follows: 
\begin{itemize}
\item 
By A), the exponential map
$\exp_{\underline{q}'}^{Q,\hat{g}}(t\underline{e}_{1})$ is lifted to
$X_{(-\varepsilon,\varepsilon)}$ as
$(t,\tilde{q}',\tilde{p}_{1},\tilde{p}')$ and this defines the map
$\tilde{x}': X_{(-\varepsilon,\varepsilon)}\to X'$ with
$\tilde{x}'(\tilde{q}^{1},\tilde{q}',\tilde{p}_{1},\tilde{p})=(0,\tilde{q}',\tilde{p}_{1},\tilde{p}')$\,. 
Hence we get
$
\hat{F}_{g}=\tilde{x}^{\prime *}(\hat{F}_{g}\big|_{X'})$\,, 
where the parallel transport for the connection 
$\nabla^{Q,\hat{g}}+\nabla^{\fkf}$ along
$(\exp_{\underline{q}'}^{Q,\hat{g}}(t\underline{e}_{1}))_{t\in
  (-\varepsilon,\varepsilon)}$ is lifted to the parallel transport
along $t\mapsto (t,\tilde{x}')$ for $\nabla^{F,\hat{g}}$\,. 
When $\hat{F}_{g}\big|_{X_{(-\varepsilon,\varepsilon)}}$ is endowed
with the metric $\pi_{X}^{*}(g^{\Lambda T^{*}Q}\otimes g^{\Lambda
  T^{*}Q}\otimes g^{\fkf})$\,, pulling back \eqref{eq:psiQgg2} to
$X_{(-\varepsilon,\varepsilon)}$ says $\tilde{x}^{\prime
  *}(\hat{F}_{g}\big|_{X'})$ is isometric to
$F\big|_{X_{(-\varepsilon,\varepsilon)}}$ endowed with the metric
$\pi_{X}^{*}(g_{0}^{\Lambda T^{*}Q}\otimes g_{0}^{\Lambda TQ}\otimes g^{\fkf})$\,.
\item  The weight $\langle p\rangle_{q,\hat{g}}$ involved in the metrics
$\hat{g}^{E}$ and $g^{F}$ satisfies
$$
\langle
p\rangle_{\hat{g},q}=\sqrt{1+\tilde{p}_{1}^{2}+m^{i'j'}(0,\tilde{q}')\tilde{p}_{i'}\tilde{p}_{j'}}\,,
$$
and it is constant along the curve $t\mapsto (t,\tilde{x}')$\,.
With the identification
 $\hat{F}_{g}=\tilde{x}^{\prime *}(\hat{F}_{g}\big|_{X'})$
the weight $\langle p\rangle_{\hat{g},q}^{N_{H}-N_{V}}$ is thus sent to
$\langle p\rangle_{g_{0},q}^{N_{H}-N_{V}}$\,.
\item It will appear with the explicit coordinate writing
  \eqref{eq:Sigmae1}\eqref{eq:Sigmae2} or with the
  $\mathcal{C}^{\infty}$-structure associated with $\hat{F}_{g}$ in
  Subsection~\ref{sec:diffhatE} that the isometric involution on
  $\hat{F}_{g}\big|_{X\setminus X'}=F\big|_{X\setminus X'}$ is well
  defined on the quotient vector bundle $\hat{F}_{g}$\,.
\end{itemize}
\begin{definition}
\label{de:hPsigg0}
  The identification $\hat{F}_{g}=\tilde{x}^{\prime *}(F\big|_{X'})$ provides a
  piecewise $\mathcal{C}^{\infty}$ and continuous vector bundle
  isometry from
  $(F\big|_{X_{(-\varepsilon,\varepsilon)}},\hat{g}_{0}^{F}=g_{0}^{E}\otimes \hat{g}^{\fkf})$ to
  $(\hat{F}_{g}\big|_{X_{(-\varepsilon,\varepsilon)}},\hat{g}^{F})$ which is denoted by
  $\widehat{\Psi}_{X}^{g,g_{0}}$\,. The same notation is used for
  $\widehat{\Psi}_{X}^{g,g_{0}}:
  (F'\big|_{X_{(-\varepsilon,\varepsilon)}},\hat{g}_{0}^{F'}=g_{0}^{E'}\otimes
\hat{g}^{\fkf})\to
  (\hat{F}'_{g}\big|_{X_{(-\varepsilon,\varepsilon)}},\hat{g}^{F'})$ and when $F,F'$ are
  replaced by $E,E'$\,.
\end{definition}
The diagram \eqref{eq:psiQgg2} is now lifted to 
\begin{equation}
 \label{eq:psiXgg}
\xymatrix{
(F\big|_{X_{(-\varepsilon,\varepsilon)}},
 g_{0}^{E}\otimes \hat{g}^{\fkf}) 
 \ar[d]_{\Sigma_{0,\nu}}
 \ar[r]^{\widehat{\Psi}_{X}^{g,g_{0}}} 
 &
 (\hat{F}_{g}\big|_{X_{(-\varepsilon,\varepsilon)}},\hat{g}^{F})
 \ar[d]^{\Sigma_{\nu}}\quad\\ 
 (F\big|_{X_{(-\varepsilon,\varepsilon)}},
 g_{0}^{E}\otimes\hat{g}^{\fkf}) 
 \ar[u]\ar[d]_{\pi_{F}}
 \ar[r]^{\widehat{\Psi}_{X}^{g,g_{0}}} 
 &
 (\hat{F}_{g}\big|_{X_{(-\varepsilon,\varepsilon)}},
\hat{g}^{F})
 \ar[u]\ar[d]^{\pi_{F}}\quad\\ 
 X_{(-\varepsilon,\varepsilon)}
 \ar[r]_{\hat\varphi_{X}^{g,g_{0}}}& 
 X_{(-\varepsilon,\varepsilon)}
 \quad\\ 
 X'
 \ar[u]_{i}
 \ar[r]_{\mathrm{Id}_{X'}}& 
 X'\ar[u]_{i}
}
\end{equation}
with similar diagrams when $F$ is replaced by $E,E',F'$\,.
\begin{remark}
The $\mathcal{C}^{\infty}$-structure of $\hat{F}_{g}$ is based on the
non symplectic coordinates $(\tilde{q},\tilde{p})$ with the collar
neighborhood $\tilde{q}^{1}\in (-\varepsilon,0]$ and an additional
twist presented in Subsection~\ref{sec:diffhatE}. However many
different structures have to be considered in this analysis: the
differential structure, the symplectic structure and the riemannian
structure. The presentation of $\hat{F}_{g}$ as a piecewise
$\mathcal{C}^{\infty}$ and continuous vector bundle, defined as a
quotient, is actually the one where all those different aspects are
simply formulated.
\end{remark}

\subsection{The geometric constructions in local coordinates}
\label{sec:loccoor}
The previous sections ensure that the various changes of variables or
isomorphisms of vector bundles have a natural geometric meaning
independent of a coordinate system. However it is instructive for the
analysis to explain them in terms of some specific
 local coodinates systems. Let us first recall a few facts about the
 smooth case and then we will shift to the description of piecewise
 $\mathcal{C}^{\infty}$ isometries associated with the possibly non
 smooth metric $\hat{g}$\,.

\subsubsection{The smooth case}
\label{sec:coorsmooth}
Let us start with a smooth riemannian manifold $(Q,g)$ and a local
coordinate system $(\underline{q}^{1},\ldots,\underline{q}^{d})$ in an
open neighborhood $U$ of $q_{0}\in Q$\,. The Levi-Civita connection
$\nabla^{Q,g}$ associated with
$g=g^{TQ}=g_{ij}(\underline{q})d\underline{q}^{i}d\underline{q}^{j}$
and the dual metric
$g^{T^{*}Q}(p,p)=g^{ij}(\underline{q})p_{i}p_{j}$ can be specified
with the Christoffel symbols
\begin{equation}
  \label{eq:christof}
\Gamma_{ij}^{k}(\underline{q})=\frac{1}{2}g^{k\ell}\left[\frac{\partial
    g_{j\ell}}{\partial \underline{q}^{i}}+\frac{\partial
    g_{i\ell}}{\partial \underline{q}^{j}}-\frac{\partial
    g_{ij}}{\partial \underline{q}^{\ell}} \right]\,.
\end{equation}
It is given by
$$
\nabla^{Q,g}_{\frac{\partial}{\partial \underline{q}^{i}}}\frac{\partial}{\partial
  \underline{q}^{j}}=\Gamma_{ij}^{k}(\underline{q})\frac{\partial}{\partial \underline{q}^{k}}\quad,\quad
\nabla^{Q,g}_{\frac{\partial}{\partial \underline{q}^{i}}}d\underline{q}^{j}=-\Gamma_{ik}^{j}(\underline{q})d\underline{q}^{k}\,.
$$
Because it is torsion free we have the symmetry
$\Gamma_{ij}^{k}=\Gamma_{ji}^{k}$\,.\\
An horizontal curve $t\mapsto (q(t),p(t))$ on $X=T^{*}Q$ being
characterized by
$\nabla_{\dot{q}(t)}^{Q,g}p(t)=0$\,, a basis
of $TX^{H}$\,, $e_{i}\in TX^{H}$ such that
$\pi_{X,*}(e_{i})=\frac{\partial}{\partial q^{i}}$\,,  and
$\hat{e}^{i}\in TX^{V}$\,,
$\pi_{X,*}(\hat{e}^{i})=d\underline{q}^{i}$\,, is thus given by
\begin{equation}
  \label{eq:defeiej1}
e_{i}=\frac{\partial}{\partial
  q^{i}}+\Gamma_{ij}^{k}(q)p_{k}\frac{\partial}{\partial
  p_{j}}\in T_{(q,p)}X^{H}\quad,\quad
\hat{e}^{j}=\frac{\partial}{\partial p_{j}}\in T_{(q,p)}X^{V}\,.
\end{equation}
Its dual basis on $T^{*}X$ is
\begin{equation}
  \label{eq:defeiej2}
e^{i}=dq^{i}\in T^{*}_{(q,p)}X^{H}\quad,\quad
\hat{e}_{j}=dp_{j}-\Gamma_{ji}^{k}(q)p_{k}dq^{i}\in T^{*}_{(q,p)}X^{V}\,. 
\end{equation}
Due to the possible curvature of $(Q,g^{TQ})$ 
\begin{equation}
  \label{eq:curveiej}
[e_{i},e_{j}]=R_{ijk}^{TQ;\ell}(q)p_{\ell}\frac{\partial}{\partial
  p_{\ell}}\in TX^{V}
\end{equation}
where the Riemann curvature tensor $R^{TQ}=R^{TQ}_{ij}d\underline{q}^{i}\wedge d\underline{q}^{j}$ is the
$\mathrm{End}(TQ)$ valued two-form given
$$
R^{TQ}(S,T)=\nabla_{S}^{TQ,g}\nabla_{T}^{TQ,g}-\nabla_{T}^{TQ,g}\nabla_{S}^{TQ,g}-\nabla^{TQ,g}_{[S,T]}
$$
when $S=S^{i}(\underline{q})\frac{\partial}{\partial \underline{q}^{i}}$ and
$T(\underline{q})=T^{j}(\underline{q})\frac{\partial}{\partial
  \underline{q}^{j}}$\,.\\
However $(e_{i},\hat{e}^{j})$ is a symplectic basis of $TX=T(T^{*}Q)$
endowed with its canonical symplectic form
$\sigma\stackrel{loc}{=}dp_{i}\wedge dq^{i}$ and
$$
\sigma=dp_{i}\wedge dq^{i}=\hat{e}_{i}\wedge e^{i}\,.
$$
General elements of $E=\Lambda T^{*}X$  and $E'=\Lambda TX$  will be
written  locally
$$
\omega_{I}^{J}e^{I}\hat{e}_{J}\quad,\quad u^{I}_{J}e_{I}\hat{e}^{J}\,,
$$
with repeated summation convention w.r.t $I,J\subset
\left\{1,\ldots,d\right\}$\,.\\
The vertical, total and
horizontal number operators, $N_{V}, N_{H}$ and $N$\,, are given on 
$\Lambda T^{*}X$ and $\Lambda TX$ by
\begin{eqnarray*}
  &&
     N_{V}(\omega_{I}^{J}e^{I}\hat{e}_{J})=|J|\omega_{I}^{J}e^{I}\hat{e}_{J}\quad,\quad
     N(\omega_{I}^{J}e^{I}\hat{e}_{J})=(|I|+|J|)\omega_{I}^{J}e^{I}\hat{e}_{J}\quad,\quad
     N_{H}=N-N_{V}\\
&&N_{H}(u^{I}_{J}e_{I}\hat{e}^{J})=|I|u^{I}_{J}e_{I}\hat{e}^{J}\quad,\quad
   N(u^{I}_{J}e_{I}\hat{e}^{J})=(|I|+|J|)u^{I}_{J}e_{I}\hat{e}^{J}\quad,\quad N_{V}=N-N_{H}\,.
\end{eqnarray*}
Because the Levi-Civita connection preserves the metric on $X=T^{*}Q$
the horizontal vector fields are tangent to $|p|_{q}^{2}=Cte$~:
$$
e_{i}f(|p|_{q}^{2})=0\quad,\quad
|p|_{q}^{2}=g^{ij}(q)p_{i}p_{j}=2\fkh(q,p)\quad,\quad f\in
\mathcal{C}^{1}(\rz;\rz)\,. 
$$
When necessary we will specify the metric $g$ in the notation
with an additional index by writing $|p|_{q}^{2}=|p|^{2}_{g,q}$ and
$\langle p\rangle_{g,q}=\langle p\rangle_{q}$\,.\\
The metric $g^{E}=\langle p\rangle_{q}^{-N_{H}+N_{V}}\pi_{X}^{*}(g^{\Lambda T^{*}Q}\otimes g^{\Lambda
  TQ})$ and $g^{E'}=\langle
p\rangle_{q}^{N_{H}-N_{V}}\pi_{X}(g^{\Lambda TQ}\otimes g^{\Lambda
  T^{*}Q})$ on $E=\Lambda T^{*}X$ and $E'=\Lambda TX$ 
already
introduced in \eqref{eq:gE} and \eqref{eq:gEpr} are such that
\begin{eqnarray}
\label{eq:normei}
&& |e^{i}|_{(q,p)}=|dq^{i}|_{(q,p)}=\langle
   p\rangle_{q}^{-1/2}\sqrt{g^{ii}(q)}\,,\\
\label{eq:normehej}
&&|\hat{e}_{j}|_{(q,p)}=\left|dp_{j}-\Gamma_{ji}^{k}p_{k}dq^{i}\right|_{(q,p)}=\langle
   p\rangle_{q}^{1/2}\sqrt{g_{jj}(q)}=\mathcal{O}(\langle
   p\rangle_{q})|e^{i}|_{(q,p)}\,,\\
&& \langle e^{i},\hat{e}_{j}\rangle_{g^{E}}=0\,,\\
\label{eq:normefei}
&&    |\hat{e}^{j}|_{(q,p)}=\left|\frac{\partial}{\partial
     p_{j}}\right|_{(q,p)}=\langle
   p\rangle_{q}^{-1/2}\sqrt{g^{jj}(q)}\,,\\
\label{eq:normefhej}
  && |e_{i}|_{(q,p)}=\left|\frac{\partial}{\partial
     q^{i}}+\Gamma_{ij}^{k}p_{k}\frac{\partial}{\partial
     p_{j}}\right|_{(q,p)}=\langle
     p\rangle_{q}^{1/2}\sqrt{g_{ii}(q)}=\mathcal{O}(\langle
     p\rangle_{q})|\hat{e}^{j}|_{(q,p)}\,\\
&&\langle e_{i}\,,\, \hat{e}_{j}\rangle_{g^{E'}}=0\,.
\end{eqnarray}
With this choice the riemannian volume on $X$\,, $\mathrm{vol}_{g^{E'}}$\,, is
nothing but the symplectic volume
$$
d\mathrm{vol}_{g^{E'}}=|dqdp|=\frac{1}{d!}|\sigma^{d}|\quad,\quad d=\dim Q\,,
$$
and coincides with the standard Lebesgue measure in any symplectic
coordinates system. Note that $X$ is orientable with the non vanishing
volume form $\frac{1}{d!}\sigma^{d}$\,.\\
The connections $\nabla^{E}$ and $\nabla^{E'}$ introduced in
\eqref{eq:nabEg} satisfy
\begin{eqnarray*}
&&
\nabla_{e_{i}}^{E}e^{\ell}=-\Gamma_{ik}^{\ell}(q)e^{k}\quad,\quad
   \nabla_{e_{i}}^{E}\hat{e}_{j}=\Gamma_{ij}^{k}(q)\hat{e}_{k}\quad,\quad
   \nabla_{\hat{e}^{j}}^{E}e^{\ell}=\nabla^{E}_{\hat{e}_{j}}\hat{e}_{k}=0\,,\\
  &&
\nabla_{e_{i}}^{E'}e_{\ell}=\Gamma_{i\ell}^{k}(q)e_{k}\quad,\quad
\nabla^{E'}_{e_{i}}\hat{e}^{j}=-\Gamma_{ik}^{j}(q)\hat{e}^{k}\quad,\quad
\nabla_{\hat{e}^{j}}^{E'}e_{i}=\nabla_{\hat{e}_{j}}^{E'}\hat{e}^{\ell}=0\,.
\end{eqnarray*}
Remember that they are defined as pull-backed connections and do not
coincide exactly with the Levi-Civita connection associated with
$g^{E'}$ due to the weight $\langle p\rangle_{q}^{N_{H}-N_{V}}$\,. \\
Let us finish with the flat vector bundle $\fkf$ endowed with the flat
connection $\nabla^{\fkf}$ and the hermitian metric
$g^{\fkf}$\,. Locally above $U\ni q_{0}$\,, there is a frame
$(v^{1},\ldots,v^{d_{f}})$ of $\fkf$ such that
$\nabla_{\frac{\partial}{\partial \underline{q}^{i}}}^{\fkf}v^{k}=0$
and $(\fkf,\nabla^{\fkf})\simeq (U\times \cz^{d_{f}},\nabla)$ with the
trivial connection $\nabla$ and the covariant derivative
$\nabla_{\frac{\partial}{\partial
    \underline{q}_{i}}}=\frac{\partial}{\partial
  \underline{q}_{i}}$\,. The metric
is given by the matrix
$A^{ij}(\underline{q})=g^{\fkf}(v^{i},v^{j})$\,,
$A(\underline{q})=A(\underline{q})^{*}$ and
$(A(\underline{q})^{-1})_{ij}=A_{ij}(\underline{q})$\,. 
The antidual $(\fkf',\nabla^{\fkf'})$ is also isomorphic
to $ (U\times \cz^{d}, \nabla)$
but identifying $\fkf'$ with $\fkf$  via
$g^{\fkf}(v_{\ell}^{*},v^{k})=\delta_{\ell}^{k}$ says
$v_{\ell}^{*}=A^{-1}(\underline{q})v^{\ell}=A_{\ell,k}(\underline{q})v^{k}$\,. With
$$
0=\nabla^{\fkf'}_{\frac{\partial}{\partial
    \underline{q}^{i}}}v_{\ell}^{*}=
\nabla^{\fkf'}_{\frac{\partial}{\partial \underline{q}^{i}}}
[A^{-1}(\underline{q})v^{\ell}]
=(\partial_{\underline{q}^{i}}A^{-1}) v^{\ell}+A^{-1}\nabla^{\fkf'}_{\frac{\partial}{\partial
    \underline{q}^{i}}}v^{\ell}
$$
we deduce
$$
\nabla^{\fkf'}v^{\ell}-\underbrace{
\nabla^{\fkf}v^{\ell}}_{=0}=-A(dA^{-1})(\underline{q})v^{\ell}=(dA)A^{-1}(\underline{q})v^{\ell}\,.
$$
Hence we obtain
\begin{eqnarray*}
  &&
     \omega(g^{\fkf},\nabla^{\fkf})=\nabla^{\fkf'}-\nabla^{\fkf}=
     (dA)A^{-1}(\underline{q})\,.
\\
&&\nabla^{\fkf,u}=\nabla^{\fkf}+\frac{1}{2}(dA)A^{-1}(\underline{q})
\stackrel{\fkf\simeq
   I\times \cz^{d_{f}}}{=}
\nabla +\frac{1}{2}(dA)A^{-1}(\underline{q})\,.
\end{eqnarray*}
We are especially interested in the line bundle $\fkf=Q\times \cz$
with $\nabla^{\fkf}=\nabla$ and $g^{\fkf}(z)=e^{-2V(q)}|z|^{2}$\,. 
Then $\nabla^{\fkf'}=\nabla -2(dV(\underline{q}))$ and
$\nabla^{\fkf,u}=\nabla-dV(\underline{q})$\,.\\
By conjugating with $e^{-V(q)}$\,, we can actually consider
$\fkf=Q\times\cz$ with $g^{\fkf}(z')=|z'|^{2}$ with the flat
connection $\nabla^{\fkf}=\nabla+dV(q)$ and its dual flat connection $\nabla^{\fkf'}=\nabla-dV(q)$\,.
\subsubsection{The non smooth doubles with the metric $\hat{g}^{TQ}$}
\label{sec:coornonsmooth}
With $Q=Q_{-}\sqcup Q'\sqcup Q_{+}$ local coordinates in a
neighborhood $U$ of $q_{0}\in Q'$\,, ($U\subset Q_{(-\varepsilon,\varepsilon)}$)
are chosen such  that:
\begin{itemize}
\item
  $g_{-}^{TQ}=(d\underline{q}^{1})^{2}\oplus^{\perp}m^{i'j'}(\underline{q}^{1},\underline{q}')d\underline{q}^{i'}d\underline{q}^{j'}$\,.
\item 
$\hat{g}^{TQ}=(d\underline{q}^{1})^{2}\oplus^{\perp}m^{i'j'}(-|\underline{q}^{1}|,\underline{q}')d\underline{q}^{i'}d\underline{q}^{j'}$\,,
$\hat{m}=m^{TQ'}(-\underline{q}^{1},\underline{q}')$ and the
corresponding Christoffel symbols are denoted by
$\hat{\Gamma}_{ij}^{k}$\,. We will keep the notation $\Gamma_{ij}^{k}$
for $g^{TQ}=g^{TQ}_{-}$\,.
\item The associated symplectic coordinates on $\pi_{X}^{-1}(U)\simeq
  U\times\rz^{d}$ are written $(q,p)=(q^{i},p_{j})_{1\leq i,j\leq d}$ and
 $X'\cap \pi_{X}^{-1}(U)=\left\{(q^{1},q',p_{1},p')\in U\times \rz^{d}\,, q^{1}=0\right\}$\,.
\end{itemize}
Remember the convention that $i'$ (resp. $I'\subset \left\{1,\ldots
  d\right\}$) denotes an index $i'\neq 1$ (resp. $1\not\in I'$)\,.\\
Three things must be noticed with those coordinates
\begin{itemize}
\item From \eqref{eq:christof} a Christoffel symbol
  $\Gamma^{k}_{ij}(\underline{q})$ vanishes if $1$ appears more than
  once in $i,j,k$\,.
\item For the metric $\hat{g}$\,, the Christoffel symbols
  $\hat{\Gamma}_{i'j'}^{k'}(\underline{q})$ are continuous but not
  $\mathcal{C}^{1}$ on
  $Q_{(-\varepsilon,\varepsilon)}$ while the possibly non continuous Christoffel symbols
  $\hat{\Gamma}_{1i'}^{k'}=\hat{\Gamma}_{i'1}^{k'}=\frac{\partial
    \hat{m}_{i'k'}}{2\partial \underline{q}^{1}}$ and
  $\hat{\Gamma}^{1}_{i'j'}=-\frac{\partial\hat{m}_{i'j'}}{2\partial\underline{q}^{1}}$
  satisfy
  $\hat{\Gamma}_{1i'}^{k'}(0^{+},\underline{q}')=-\hat{\Gamma}_{1i'}^{k'}(0^{-},\underline{q}')$
  and
  $\hat{\Gamma}_{i'j'}^{1}(0^{+},\underline{q}')=-\hat{\Gamma}(0^{-},\underline{q}')$\,.
\item When
  $g^{TQ}=g_{0}^{TQ}=\hat{g}^{TQ}=(d\underline{q}^{1})^{2}+m^{TQ'}(0,\underline{q}')$
  everything is smooth and $\Gamma_{ij}^{k}=0$ when $1$ appears in $i,j,k$\,.
\end{itemize}
\begin{definition}
\label{de:frameef}
Work with the local coordinates $(q,p)=(q^{1},\ldots,q^{d},p_{1},\ldots, p_{d})$
in $\pi_{X}^{-1}(U)$\,, $q_{0}\in Q'\cap U\subset Q_{(-\varepsilon,\varepsilon)}$\,.\\
 The frame \eqref{eq:defeiej1} and \eqref{eq:defeiej2} associated with
 $g_{-}^{TQ}$ (resp. $g_{+}^{TQ}$) are  denoted
by $(e_{-,i},\hat{e}_{-}^{j})$ and
 $(e^{i}_{-},\hat{e}_{-,j})$
  (resp.   $(e_{-,i},\hat{e}_{-}^{j})$ and
  $(e^{i}_{-},\hat{e}_{-,j})$).\\
The abbreviated version is simply
 $(e_{\mp},\hat{e}_{\mp})$\,.\\
The notations $(e_{i},\hat{e}^{j})$ and $(e^{i},\hat{e}_{j})$ now
refer to the metric $\hat{g}$ with
$(e,\hat{e})=1_{Q_{-}}(q)(e_{-},\hat{e}_{-})+1_{Q_{+}}(q)(e_{+},\hat{e}_{+})$
on $\pi_{X}^{-1}(U)\setminus X'$\,, while working in $\hat{E}_{g}$\,,
$\hat{E}'_{g}$\,, $\hat{F}_{g}$\,, $\hat{F}'_{g}$  means that $(e_{-},\hat{e}_{-})$ and $(e_{+},\hat{e}_{+})$
are identified along $X'$ and $(e,\hat{e})$ makes sense on $\pi_{X}^{-1}(U)$\,.\\
When $g^{TQ}=g_{0}^{TQ}$ those frames are simply denoted
$(f_{i},\hat{f}^{j})$ and $(f^{i},\hat{f}_{j})$\,. 
\end{definition}
Below are the detailed expressions of those frames in the coordinates $(q,p)$:
\begin{eqnarray}
\label{eq:fv}
f_{1}&=&\frac{\partial}{\partial q^{1}}\quad,\quad
  f_{i'}=\frac{\partial}{\partial
  q^{i'}}+\Gamma_{i',j'}^{k'}(0,q')p_{k'}\frac{\partial}{\partial
  p_{j'}}\quad,\quad
\hat{f}^{j}=\frac{\partial}{\partial p_{j}}\,,\\
\label{eq:ff}
f^{i}&=&dq^{i}\quad,\quad \hat{f}_{1}=dp_{1}\quad,\quad
         \hat{f}_{j'}=dp_{j'}-\Gamma_{j'i'}^{k'}(0,q')p_{k'}dq^{i'}\,,\\
\label{eq:ev1}
e_{\mp, 1}&=&\frac{\partial}{\partial
   q^{1}}+\hat{\Gamma}_{1j'}^{k'}(q)p_{k'}\frac{\partial}{\partial
   p_{j'}}\stackrel{\text{on}~X'}{=}f_{1}\pm\Gamma_{1j'}^{k'}(0,q')p_{k'}\frac{\partial}{\partial
   p_{j'}}\quad,\\
\label{eq:ev}
e_{\mp, i'}&=&\frac{\partial}{\partial
   q^{i'}}+\hat{\Gamma}_{i'j'}^{k'}(q)p_{k'}\frac{\partial}{\partial
   p_{j'}}+\hat{\Gamma}_{i'1}^{k'}(q)p_{k'}\frac{\partial}{\partial
   p_{1}}+\hat{\Gamma}_{i'j'}^{1}(q)p_{1}\frac{\partial}{\partial
   p_{j'}}\\
\label{eq:ev0}
&\stackrel{\text{on}~X'}{=}&f_{i'}\pm \Gamma_{i'1}^{k'}(0,q')p_{k'}\frac{\partial}{\partial
   p_{1}}\pm \Gamma_{i'j'}^{1}(0,q')p_{1}\frac{\partial}{\partial
   p_{j'}}\quad,\\
\label{eq:hev}
\hat{e}_{\mp}^{j}&=&\frac{\partial}{\partial
                     p_{j}}=\hat{f}^{j}
\\
\label{eq:ef}
e_{\mp}^{i}&=&dq^{i}=f^{i}
\\
\label{eq:hef1}
\hat{e}_{\mp,1}&=&
   dp_{1}-\hat{\Gamma}_{1i'}^{k'}(q)p_{k'}dq^{i'}
\stackrel{\text{on}~X'}{=}\hat{f}_{1}\mp \Gamma_{1i'}^{k'}(0,q')p_{k'}dq^{i'}\quad,
\\
\label{eq:hef}
\hat{e}_{\mp, j'}&=&
   dp_{j'}-\hat{\Gamma}_{j'i'}^{k'}(q)p_{k'}dq^{i'}
-\hat{\Gamma}_{j'1}^{k'}(q)p_{k'}dq^{1}
-\hat{\Gamma}_{j'i'}^{1}(q)p_{1}dq^{i'}\\
\label{eq:hef0}
&\stackrel{\text{on}~X'}{=}&
f_{j'}\mp \Gamma_{j'1}^{k'}(0,q')p_{k'}dq^{1}
\mp \Gamma_{j'i'}^{1}(0,q')p_{1}dq^{i'}
\,.
\end{eqnarray}
We see in particular
on \eqref{eq:ev1}\eqref{eq:ev0}\eqref{eq:hev} for $E'=\Lambda TX$ and
on \eqref{eq:ef}\eqref{eq:hef1}\eqref{eq:hef0} for $E=\Lambda
T^{*}X$ that
\begin{eqnarray}
\label{eq:Sigmae1}
  &&
     \Sigma_{*}(e_{-,i})\big|_{X'}=(-1)^{\delta_{i1}}e_{+,i}\big|_{X'}\quad,\quad
     \Sigma_{*}(\hat{e}_{-}^{j})\big|_{X'}=(-1)^{\delta_{j1}}\hat{e}_{+}^{j}\big|_{X'}\\
\label{eq:Sigmae2}
&&
     \Sigma_{*}(e_{-}^{i})\big|_{X'}=(-1)^{\delta_{i1}}e_{+}^{i}\big|_{X'}\quad,\quad
     \Sigma_{*}(\hat{e}_{-,j})\big|_{X'}=(-1)^{\delta_{j1}}\hat{e}_{+,j}\big|_{X'}\,.
\end{eqnarray}
The coordinates $(\tilde{q},\tilde{p})$ introduced in
Definition~\ref{de:tildeqp}, and the expression of the frame
$(e,\hat{e})$ in those new coordinates can be specified locally. They are characterized by
$$
\tilde{q}=q\quad,\quad e_{1}\tilde{p}=0\quad,\quad \tilde{p}\big|_{X'}=p\,, 
$$ 
or
\begin{eqnarray*}
   &&
      \begin{pmatrix}
        \tilde{q}\\
 \tilde{p}_{1}\\
 \tilde{p}'
      \end{pmatrix}
=
\begin{pmatrix}
   \textrm{Id}&0&0\\
 0&1&0\\
 0&0&\psi(q^{1},q')  
\end{pmatrix}
 \begin{pmatrix}
        q\\p_{1}\\p'
\end{pmatrix}\\
\\
 \text{with}&&
 \left\{\begin{array}[c]{l}
 \frac{\partial}{\partial q^{1}}
\psi_{j'}^{k'}=-\hat{\Gamma}_{1,j'}^{\ell'}(q^{1},q')\psi_{\ell'}^{k'}
\\
 \psi_{j'}^{k'}(0,q')=\delta_{j'}^{k'}\,.                
 \end{array}
 \right.
\end{eqnarray*}
We deduce
\begin{eqnarray*}
  &&
\frac{\partial}{\partial \tilde{q}^{\ell}}=\frac{\partial}{\partial
q^{\ell}}-[\frac{\partial \psi}{\partial
q^{\ell}}\psi^{-1}]_{\ell'}^{k'}p_{k'}\frac{\partial}{\partial 
p_{\ell'}}
\quad,\quad
\frac{\partial}{\partial\tilde{p}_{1}}=\frac{\partial}{\partial
  p_{1}}\quad,\quad \frac{\partial}{\partial
  \tilde{p}_{j'}}=[\psi^{-1}]_{\ell'}^{j'}\frac{\partial}{\partial p_{\ell'}}
\\
&& d\tilde{q}^{\ell}=dq^{\ell}\quad,\quad
   d\tilde{p}_{1}=dp_{1}\quad,\quad
   d\tilde{p}_{j'}=[\psi(q)]_{j'}^{k'}dp_{k'}+[\frac{\partial \psi}{\partial
   q^{\ell}}]_{j'}^{k'}p_{k'}dq^{\ell}
\end{eqnarray*}
This leads to 
\begin{eqnarray}
\label{eq:tv1}
  &&
\frac{\partial}{\partial
  \tilde{q}^{1}}=\frac{\partial}{\partial q^{1}}+\hat{\Gamma}_{1j'}^{k'}p_{k'}\frac{\partial}{\partial
p_{j'}}=e_{1}\quad,\quad
     \hat{e}^{1}=\frac{\partial}{\partial\tilde{p}_{1}}\\
\label{eq:tvv}
&& \hat{e}^{j'}=\frac{\partial}{\partial
   p_{j'}}=\psi_{k'}^{j'}(q)\frac{\partial}{\partial \tilde{p}_{k'}}
\stackrel{on~X'}{=}\frac{\partial}{\partial \tilde{p}_{j'}}\,,\\
\nonumber
&&
e_{i'}=\frac{\partial}{\partial \tilde{q}^{i'}}+[\psi
   \frac{\partial \psi}{\partial
   q^{i}}]_{\ell'}^{k'}\tilde{p}_{k'}\frac{\partial}{\partial
   \tilde{p}_{\ell'}}
+[\psi\hat{\Gamma}_{i',.}^{.}\psi^{-1}]_{j'}^{k'}\tilde{p}_{k'}\frac{\partial}{\partial
   \tilde{p}_{j'}}\\
\label{eq:tvh}
&&\hspace{4cm}+[\hat{\Gamma}_{i',1}^{.}\psi^{-1}]^{\ell'}\tilde{p}_{\ell'}\frac{\partial}{\partial\tilde{p}_{1}}
+[\psi\hat{\Gamma}_{i',.}^{1}]_{j'}\tilde{p}_{1}\frac{\partial}{\partial
   \tilde{p}_{j'}}
\\
\label{eq:tvh0}
&&e_{\mp,
   i'}\stackrel{on~X'}{=}\frac{\partial}{\partial\tilde{q}^{i'}}+
   \Gamma_{i'j'}^{k'}(0,q')\tilde{p}_{k'}\frac{\partial}{\partial
   \tilde{p}_{j'}}\pm
   \Gamma_{i',1}^{k'}(0,q')\tilde{p}_{k}\frac{\partial}{\partial\tilde{p}_{1}}\pm
   \Gamma_{i'j'}^{1}(0,q')\tilde{p}_{1}\frac{\partial}{\partial \tilde{p}_{j'}}\,,
\end{eqnarray}
and to
\begin{eqnarray}
 \label{eq:tfh}
&&
 e^{i}=dq^{i}=d\tilde{q}^{i}
\\
\label{eq:tfv1}
&&
   \hat{e}_{1}=d\tilde{p}_{1}-[\hat{\Gamma}_{1j'}^{.}\psi^{-1}]^{k'}\tilde{p}_{k'}d\tilde{q}^{j'}\\
\label{eq:tfv10}
&&\hat{e}_{1}\stackrel{on~X'}{=}d\tilde{p}_{1}\mp
   \Gamma_{1j'}^{k'}(0,q')\tilde{p}_{k'}d\tilde{q}^{i'}\\
\nonumber
&&
   \hat{e}_{j'}=[\psi^{-1}]_{j'}^{k'}d\tilde{p}_{k'}-[\psi^{-1}\frac{\partial
   \psi}{\partial
   q^{\ell}}\psi^{-1}]_{j'}^{k'}\tilde{p}_{k'}d\tilde{q}^{\ell}
-[\hat{\Gamma}_{j'.}^{.}\psi^{-1}]_{i'}^{k'}\tilde{p}_{k'}d\tilde{q}^{i'}
\\
\label{eq:tfv}
&&\hspace{4cm}
-[\hat{\Gamma}_{j'1}^{.}\psi^{-1}]^{k'}\tilde{p}_{k'}d\tilde{q}^{1}-\hat{\Gamma}_{j'i'}^{1}\tilde{p}_{1}d\tilde{q}^{i'}
\\
\label{eq:tfv0}
&&
\hat{e}_{\mp,j'}\stackrel{on~X'}{=}d\tilde{p}_{j'}
  - \Gamma_{j'i'}^{k}(0,q')\tilde{p}_{k'}d\tilde{q}^{i'}\mp \Gamma^{1}_{j'i'}(0,q')\tilde{p}_{1}d\tilde{q}^{i'}\,.
\end{eqnarray}
Remember that the map $\Sigma:(q^{1},q',p_{1},p')\to
(-q^{1},q',-p_{1},p')$ of Definition~\ref{de:SigmaXI} keeps
 the same
form $\Sigma(\tilde{q}^{1},\tilde{q}',\tilde{p}_{1},\tilde{p'})=(-\tilde{q}^{1},\tilde{q}',-\tilde{p}_{1},\tilde{p}')$
in the new coordinates $(\tilde{q},\tilde{p})$\,. In particular
\eqref{eq:Sigmae1}\eqref{eq:Sigmae2} can be deduced from
\eqref{eq:tv1}\eqref{eq:tvv}\eqref{eq:tvh0} and \eqref{eq:tfh}\eqref{eq:tfv10}\eqref{eq:tfv0}.\\
The continuous matching along $X'$ vector bundles $\hat{E}_{g}$ and
$\hat{E}'_{g}$  simply says that
$(e_{-},\hat{e}_{-})\big|_{\partial X_{-}}$ and
$(e_{+},\hat{e}_{+})\big|_{\partial X_{+}}$ are identified.\\
The lifting to $X\cap U$ of the  geodesic curve
$\exp_{\underline{q}'}^{Q,\hat{g}}(t\underline{e}_{i})$ is nothing but
the curve $t\mapsto (t,\tilde{q}',\tilde{p})$ and the map
$\tilde{x}':X\times U\to X'$ is nothing but
$$
\tilde{x}'(\tilde{q}^{1},\tilde{q}^{i'},\tilde{p}_{j})=(0,\tilde{q}^{i'},\tilde{p}_{j})\,.
$$
Using the lifted connection $\nabla^{E,\hat{g}}$ and
$\nabla^{E',\hat{g}}$\,, the frames
$(e,\hat{e})\big|_{X'}=(e_{-},\hat{e}_{-})\big|_{\partial
  X_{-}}=(e_{+},\hat{e}_{+})\big|_{\partial X_{+}}$ are lifted to the
new frames $\tilde{x}^{\prime *}\left[(e,\hat{e})\big|_{X'}\right]$\,. 
The  piecewise $\mathcal{C}^{\infty}$ and continuous vector
bundle isometry  $\widehat{\Psi}_{X}^{g,g_{0}}$ of diagram
\eqref{eq:psiXgg} is nothing but
$$
\Psi_{X}^{g,g_{0}}:(q,p,f,\hat{f})\to (\tilde{q},\tilde{p}, \tilde{x}^{*}[(e,\hat{e})\big|_{X'}])\,.
$$ 
For the continuity properties in $\hat{E}_{g}$ and $\hat{E}'_{g}$ we
can work more simply with the frame
$(e,\hat{e})=1_{Q_{\mp}}(q)(e_{\mp},\hat{e}_{\mp})$
than with the frame $\tilde{x}^{\prime *}[(e,\hat{e})\big|_{X'}]$\,.\\
Let us conclude with the vector bundle $\pi_{\fkf}:\fkf\to
Q$ of Definition~\ref{de:doublef} endowed with the metric
$\hat{g}^{\fkf}$ and the two connections $\nabla^{\fkf}$ and
$\nabla^{\fkf'}$ (see also Proposition~\ref{pr:dualf}) with now:
$$
\nabla^{\fkf'}-\nabla^{\fkf}=\omega(\nabla^{\fkf},\hat{g}^{\fkf})\,.
$$
 With
the example of $\fkf=\overline{Q}_{-}\times\cz$ and with
$\nabla^{\fkf}=\nabla$ the
trivial connection, and $g^{\fkf}(z)=g_{-}^{\fkf}(z)=e^{-2V(\underline{q})}|z|^{2}$
we get
$\hat{g}^{\fkf}(z)=e^{-2\hat{V}(\underline{q})}|z|^{2}$ with 
$\hat{V}(\underline{q})=V(-|\underline{q}^{1}|,\underline{q}')$
and
$$
\omega(\nabla^{\fkf},\hat{g}^{\fkf})=-2d\hat{V}=-2\left[\textrm{sign}(-\underline{q}^{1})
    d_{\underline{q}^{1}}\wedge \frac{\partial V}{\partial \underline{q}^{1}}+d_{\underline{q}'}V\right](-|\underline{q}^{1}|,\underline{q}')
$$
with a discontinuity along $\underline{q}^{1}=0$ when
$\partial_{\underline{q}^{1}}V(0,\underline{q}')\neq 0$\,. Then $\nabla^{\fkf'}$ and
$\nabla^{\fkf ,u}$ become piecewise $\mathcal{C}^{\infty}$ and
not continuous for the initial $\mathcal{C}^{\infty}$-structure of
$Q\times \cz$\,.\\ 
Note that if we take $\fkf=\overline{Q}_{-}\times \cz$ with the metric
$g^{\fkf}(z)=|z|^{2}$ but with the connection 
$\nabla^{\fkf}=\nabla+dV(q)$\,, the connection on the doubled vector
bundle $\pi_{\fkf}:\fkf\to Q$ of Definition~\ref{de:doublef} is 
now $\nabla^{\fkf}=\nabla +d\hat{V}(q)$ and
while $\nabla^{\fkf'}=\nabla-d\hat{V}(q)$ and
$\nabla^{\fkf,u}=\nabla$\,. Remember also that the continuity in
$\pi_{\fkf}:\fkf\to Q$ means  a change of sign across $Q'$ when $\nu=-1$\,.
\begin{remark}
  The relations \eqref{eq:Sigmae1}\eqref{eq:Sigmae2} suggest another
  interpretation of $\hat{E}_{g}$ and $\hat{E}'_{g}$ as the exterior
  algebras of the cotangent and tangent bundle of
 a smooth manifold, $\overline{X}_{-}$ and
  $\overline{X}_{+}$ being glued by identifying
  $(0^{-},\tilde{q}',\tilde{p}_{1},\tilde{p})$ and
  $(0^{+},\tilde{q}',-\tilde{p}_{1},\tilde{p}')$\,. This will be used
  in Subsection~\ref{sec:diffhatE}. However for most of the analysis
  the above presentation of $\hat{E}_{g}$ and $\hat{E}'_{g}$ as
  piecewise $\mathcal{C}^{\infty}$ and continuous vector bundles on
  $X$ is safer and  more convenient.
\end{remark}

\section{Functional spaces and invariances}
\label{sec:funcsp}
We review the functional spaces that we will use. First we start with
local spaces in the smooth case, which do not depend on any chosen
riemannian metric, then we discuss the case of sections in
$\widehat{\fkF}_{g}$ for $\fkF=E,E',F,F'$\,, where the metric enters
in the game only in the continuity or jump condition along $X'$\,. 
Finally we study how global spaces depend on the chosen metric
$g^{TQ}$\,. In particular, global spaces of sections of
$\widehat{\fkF}_{g}$ are characterized after considering the
restrictions $s_{\mp}=s\big|_{\overline{X}_{\mp}}$ like in the smooth case with
a boundary and then possibly adding  the local continuity condition
$s_{-}\big|_{X'}=s_{+}\big|_{X'}$ in $\widehat{\fkF}_{g}\big|_{X'}$\,.
Invariances and isomorphisms of those functional spaces via the change
of variables or vector bundle isomorphisms introduced in
Section~\ref{sec:geomcot} are discussed.

\subsection{Local spaces for smooth vector bundles}
\label{sec:locspaces}

Let $M$ be a $\mathcal{C}^{\infty}$ manifold and $\pi_{\fkF}:\fkF\to
M$ a $\mathcal{C}^{\infty}$-vector bundle.  A smooth manifold with boundary
is denoted $\overline{M}=M\sqcup \partial M$ and accordingly
$\pi_{\fkF}:\fkF\to \overline{M}$ is a
$\mathcal{C}^{\infty}(\overline{M})$-vector bundle endowed with any
smooth connection. The cases that we
have in mind are $M=Q,X,Q_{-},Q_{+},X_{-},X_{+}$ and
$\overline{M}=\overline{Q}_{-},\overline{Q}_{+},\overline{X}_{-},\overline{X}_{+}$\,. 
By following \cite{ChPi} all the spaces $\mathcal{F}(M;\fkF)$ or
$\mathcal{F}(\overline{M};\fkF)$ are defined independently of any
riemannian structures or metric on $\fkF$~: $\mathcal{F}=\mathcal{C}^{\infty}$\,,
$\mathcal{F}=\mathcal{C}^{\infty}_{0}$\,, $\mathcal{F}=L^{2}_{loc}$\,,
$\mathcal{F}=L^{2}_{comp}$\,, $\mathcal{F}=W^{\mu,2}_{loc}$ and
$\mathcal{F}=W^{\mu,2}_{comp}$\,, $\mu\in \rz$\,, where ``$W^{\mu,2}$
counts $\mu$-derivatives in $L^{2}$'' when $\mu\in\nz$\,.\\

When $M$ is a smooth manifold (no boundary), taking $\fkF'$ the dual
bundle of $\fkF$ and
fixing any smooth volume element $dv_{M}$ on $M$ provide a duality product
$$
\langle s\,,\,s'\rangle=\int_{M}\langle s\,,\,s'\rangle_{\fkF',\fkF}(x)~dv_{M}(x)\,,
$$
and we will use without distinction real or sesquilinear (left-anti
linear and right $\cz$-linear) duality products.\\
The set of distributional sections $\mathcal{D}'(M;\fkF')$ is defined
as the dual of $\mathcal{C}^{\infty}_{0}(M;\fkF)$ and this duality
holds between $W^{\mu,2}_{loc~comp}(M;\fkF)$ and
$W_{comp~loc}^{-\mu,2}(M;\fkF')$ for $\mu\in\rz$\,.\\

A smooth manifold with boundary $\overline{M}$\,, can be considered as a domain of a smooth closed manifold $\tilde{M}$ and
$\fkF=\tilde{\fkF}\big|_{\overline{M}}$ where
$\pi_{\tilde{\fkF}}:\tilde{\fkF}\to \tilde{M}$ is a
$\mathcal{C}^{\infty}$-vector bundle.  
According to \cite{ChPi}, the above functional space
 $\mathcal{F}(\overline{M};\fkF)$ is defined as the set of
 restrictions to $M$ of elements of
 $\mathcal{F}(\tilde{M};\tilde{\fkF})$:
$$
\mathcal{F}(\overline{M};\fkF)=\left\{u\in
  \mathcal{D}'(M;\fkF)\,,\exists \tilde{u}\in
  \mathcal{F}(\tilde{M};\tilde{\fkF})\,, \quad u=\tilde{u}\big|_{M}\right\}
$$
endowed with the quotient topology.
On a manifold with boundary $\overline{M}=M\sqcup \partial M$\,,
compact sets of $M$ and $\overline{M}$ differ and the spaces 
$\mathcal{F}(M,\fkF)$ and $\mathcal{F}(\overline{M};\fkF)$ are
distinguished, the later specifying the information up to the boundary
$\partial M$\,. Finally when 
$dv_{M}$ a $\mathcal{C}^{\infty}(\overline{M})$ volume element, the
duality holds between $L^{2}_{loc~comp}(\overline{M};\fkF)$ and
$L^{2}_{comp~loc}(\overline{M};\fkF')$\,.\\
A section $s\in \mathcal{F}(M;\fkF)$ (resp. $s\in
\mathcal{F}(\overline{M};\fkF)$) if for any locally finite partition
of unity $\sum_{j\in J}\chi_{j}\equiv 1$\,, $\chi_{j}\in
\mathcal{C}^{\infty}_{0}(M;\rz)$ (resp. $\chi_{j}\in
\mathcal{C}^{\infty}_{0}(\overline{M};\rz)$) one has $\chi_{j}s\in
\mathcal{F}(M;\fkF)$ for all $j\in J$\,, the latter being checked in
any local coordinate system. Those spaces are invariant by
$\mathcal{C}^{\infty}$ diffeomorphisms on $M$ (resp $\overline{M}$)
and $\mathcal{C}^{\infty}$ vector bundle isomorphisms of $\fkF$\,. \\

When $M=X=T^{*}Q$ or $\overline{M}=\overline{X}_{\mp}$\,, any given
riemannian metric $g^{TQ}$ provides the function
$|p|_{q}^{2}=2\fkh(q,p)=g^{ij}(q)p_{i}p_{j}$ and $s\in
W^{\mu,2}_{loc}(X;\fkF)$ (resp. $s\in
W^{\mu,2}_{loc}(\overline{X}_{\mp};\fkF)$)  if and only if for any
$\chi\in \mathcal{C}^{\infty}_{0}(\rz;\rz)$\,, $\chi(|p|^{2}_{q})s\in
W^{\mu,2}_{comp}(M;\fkF)$ (resp. $\chi(|p|_{q}^{2})s\in
W^{\mu,2}_{comp}(\overline{M};\fkF)$)\,. Note also that on $X$ and
$\overline{X}_{\mp}$ the symplectic volume $dv_{X}=|dqdp|$ is fixed
independently of any chosen metric.\\
{\red $W^{(\mu_{1},\mu_{2}),2}$ removed}\\
We now define spaces associated with a continuous operator $P:\mathcal{D}'(M)\to
\mathcal{D}'(M)$\,, which will be in practice  a differential operator
with smooth coefficients.
\begin{definition}
  \label{de:calE} Let $\pi_{\fkF}:\fkF\to M$ be a smooth vector bundle on
  $M$  (resp $\overline{M}=M\sqcup \partial M$) 
and let $P$ be a differential
  operator with smooth coefficients $P:\mathcal{D}'(M;\fkF)\to
  \mathcal{D}'(M;\fkF)$\,. The spaces $\mathcal{E}_{loc}(P,\fkF)$ and
  $\mathcal{E}_{comp}(P,\fkF)$ are defined as
  \begin{eqnarray*}
    &&
\mathcal{E}_{\bullet}(P,\fkF)=\left\{ \omega\in
  L^{2}_{\bullet}(M;\fkF)\,,\quad P\omega\in
  L^{2}_{\bullet}(M;\fkF)\right\}\,,\quad \bullet={loc}~\text{or}~comp\,.\\
\text{resp.}
&&
\mathcal{E}_{\bullet}(P,\fkF)=\left\{ \omega\in
  L^{2}_{\bullet}(\overline{M};\fkF)\,,\quad P\omega\in
  L^{2}_{\bullet}(\overline{M};\fkF)\right\}\,,\quad \bullet={loc}~\text{or}~comp\,.
  \end{eqnarray*}
\end{definition}
\subsection{Local spaces for
  $\hat{E}_{g},\hat{E}'_{g},\hat{F}_{g},\hat{F}'_{g}$}
The vector bundles $\hat{E}_{g}$\,, $\hat{E}_{g}'$\,,
$\hat{F}_{g}=\hat{E}_{g}\otimes \pi_{X}^{-1}(\fkf)$ and
$\hat{F}'_{g}=\hat{E}_{g}\otimes \pi_{X}^{-1}(\fkf)$  of
Definition~\ref{de:hatEF} are defined as piecewise 
$\mathcal{C}^{\infty}$  vector bundles\,, with some 
matching conditions along $X'=\partial
X_{-}=\partial X_{+}$\,. Local functional spaces
$\mathcal{\mathcal{F}}(X;\widehat{\fkF}_{g})$\,, with
$\widehat{\fkF}_{g}=\hat{E}_{g},\hat{E}'_{g},\hat{F}_{g},\hat{F}'_{g}$
will be specified accordingly by using
$\mathcal{F}(\overline{X}_{\mp};\fkF)$ with the corresponding
continuity condition along $X'$ when it makes sense.  
Remember that $\pi_{\fkf}:\fkf\to Q$ may have different 
$\mathcal{C}^{\infty}$ structures according to
Proposition~\ref{pr:dualf} in $\fkF=E\otimes \pi_{X}^{*}(\fkf)$ and
$\fkF=E'\otimes \pi_{X}^{*}(\fkf)$\,.\\
The space $L^{2}_{loc~comp}$ is defined
piecewise and the distinction between $\widehat{\fkF}_{g}$ and $\fkF$ can
be forgotten according to 
$$
L^{2}_{loc}(X;\widehat{\fkF}_{g})
=L^{2}_{loc}(\overline{X}_{-};\fkF)\oplus L^{2}_{loc}(\overline{X}_{+};\fkF)=L^{2}_{loc}(X;\fkF)\,.
$$
\begin{definition}
\label{de:evodloc} 
When $\Sigma:X_{(-\varepsilon,\varepsilon)}\to
X_{(-\varepsilon,\varepsilon)}$  and $\Sigma_{\nu}$ are the maps of
Definition~\ref{de:SigmaXI} and $\fkF=E,E',F,F'$ with $\nu=\pm 1$ when
$\fkF=E,E'$,
the set of even and odd sections of
$L^{2}_{loc}(X_{(-\varepsilon,\varepsilon)};\fkF)$\,,
is defined by
\begin{equation}
  \label{eq:deL2evoddloc}
L_{ev~odd,loc}^{2}(X_{(-\varepsilon,\varepsilon)};\fkF)
=\left\{s\in
  L^{2}_{loc}(X_{(-\varepsilon,\varepsilon)};\fkF)\,,\quad
  \Sigma_{\nu}s=\pm s\right\}\,.
\end{equation}
For $s\in L^{2}_{loc}(X_{(-\varepsilon,0]};\fkF)$ we define
\begin{equation}
  \label{eq:desev}
s_{ev}=1_{X_{-}}(x)s+1_{X_{+}}(x) \Sigma_{\nu}s)\,.
\end{equation}
\end{definition}
Note that the spaces of even and odd sections are interchanged by a
simple change of the unitary flat involution $\nu:\fkf\big|_{Q'}\to \fkf\big|_{Q'}$ into
$-\nu$\,.\\
We will make an
extensive use of the set of  smooth compactly supported sections,
$\mathcal{C}_{0,g}(\widehat{\fkF}_{g})$ defined below.
\begin{definition}
\label{de:Cg} For $\fkF=E,E',F,F'$  possibly restricted to
$X_{(-\varepsilon,\varepsilon)}$\,, the space
$\mathcal{C}_{0,g}(\widehat{\fkF}_{g})$ is defined as
\begin{equation}
  \label{eq:deCghF}
\mathcal{C}_{0,g}(\widehat{\fkF}_{g})
=\left\{s\in \mathcal{C}^{0}(X;\widehat{\fkF}_{g})\,,\quad
  s\big|_{\overline{X}_{\mp}}\in \mathcal{C}^{\infty}_{0}(\overline{X}_{\mp};\fkF)\right\}\,.
\end{equation}
The set of even elements  is defined by
\begin{equation}
  \label{eq:deCgev}
\mathcal{C}_{0,g,ev}(\widehat{\fkF}_{g})=\mathcal{C}_{0,g}(\widehat{\fkF}_{g})\cap
L^{2}_{ev, comp}(X_{(-\varepsilon,\varepsilon)};\fkF)\,.
\end{equation}
Finally the space $\mathcal{C}_{0,g}(L(\widehat{\fkF}_{g}))$ is defined
like \eqref{eq:deCghF} by
$$
\mathcal{C}_{0,g}(L(\widehat{\fkF}_{g}))=\left\{s\in
  \mathcal{C}^{0}(X;L(\widehat{\fkF}_{g}))\,,\quad
  s\big|_{\overline{X}_{\mp}}\in \mathcal{C}^{\infty}(\overline{X}_{\mp};L(\fkF))\right\}\,.
$$ 
\end{definition}
Definition~\ref{de:hatEF} actually provides a $\mathcal{C}^{\infty}$
structure for  $\widehat{\fkF}_{g}$ by taking the one of the
right-hand side in the equalities
\eqref{eq:hatehatg}\eqref{eq:hatfhatg}\eqref{eq:hatephatg}\eqref{eq:hatfphatg},
depending on the case.  The local Sobolev spaces
$W^{\mu,2}_{loc}(X;\widehat{\fkF}_{g})$\,, $\mu\in \rz$\\
{\red $W^{(\mu_{1},\mu_{2}),2}$ removed}
 can be defined
for this $\mathcal{C}^{\infty}$-structure\,. We recall the standard result concerning the existence of
traces (see e.g. \cite{ChPi}):
 \begin{itemize}
 \item The trace map 
$\gamma:W^{\mu,2}_{loc}(\overline{X}_{\mp};\widehat{\fkF}_{g})=W^{\mu,2}_{loc}(\overline{X}_{\mp};\fkF)\to
W^{\mu-1/2,2}_{loc}(\partial X_{\mp};\fkF)$\,, $\gamma
s=s\big|_{\partial X_{\mp}}$\,,  is well defined for
$\mu>\frac{1}{2}$\,.
\item For $\mu\in [0,1/2[$\,,  $s\in
  W^{\mu,2}_{loc}(X;\widehat{\fkF}_{g})$ 
if and only if $s_{\mp}=s\big|_{X_{\mp}}\in
  W^{\mu,2}(\overline{X}_{\mp};\widehat{\fkF}_{g})=W^{\mu,2}(\overline{X}_{\mp};\fkF)$\,.
\item For $\mu\in ]1/2,3/2[$\,,  $s\in W^{\mu,2}(X;\widehat{\fkF}_{g})$
  if and only if  $s_{\mp}=\big|_{X_{\mp}}\in
  W^{\mu,2}_{loc}(\overline{X}_{\mp};\widehat{\fkF}_{g})=W^{\mu ,2}_{loc}(\overline{X}_{\mp};\fkF)$ and the traces
  along $X'=\partial X_{-}=\partial X_{+}$ coincide
  $s_{-}\big|_{\partial X_{-}}=s_{+}\big|_{\partial X_{+}}$ in $\widehat{\fkF}_{g}\big|_{X'}$\,.
\end{itemize}
In all the analysis we will avoid trace issues for half-integer 
exponents
$\mu=\frac{1}{2}+n$\,, $n\in \nz$\,, which as it is well known
(see \cite{LiMa}-Chap~11) is a subtle critical case.\\ 
The equality of traces in $\widehat{\fkF}_{g}\big|_{X'}$\,, for
$\mu_{1}\in ]1/2,3/2[$ means that
the frames $(e_{-},\hat{e}_{-})$ and $(e_{+},\hat{e}_{+})$ are
identified along $X'=\partial X_{-}=\partial X_{+}$ and this is
actually a jump condition with the usual
$\mathcal{C}^{\infty}$-structure of $E=\Lambda T^{*}X$ and $E'=\Lambda
TX$\,, which also corresponds to the case when
$g^{TQ}=g_{0}^{TQ}=(d\underline{q}^{1})^{2}+m^{TQ}(0,\underline{q}')$\,. For
such a metric $g_{0}^{TQ}$\,, we write simply
$W^{\mu,2}_{loc}(X;\widehat{\fkF}_{g_{0}})=W^{\mu,2}_{loc}(X;\fkF)$\,.
For a general metric $g^{TQ}$\,, the spaces
$W^{\mu,2}_{loc}(X;\widehat{\fkF}_{g})$ will be
considered with
$\mu=[0,1]\setminus\left\{1/2\right\}$\,. Since differential operators with 
possibly discontinuous coefficients along $X'$ and non obvious
effects on the jump condition will be studied, it is better to split the analysis
on $\overline{X}_{-}$ and $\overline{X}_{+}$ and check  separately the
matching condition along $X'$\,.\\
We keep  of course the notation $W^{\mu,2}_{loc}(X;\fkF)$
when $g^{TQ}=g_{0}^{TQ}$ for any $\mu \in \rz$ such that
but  for a
general $g^{TQ}$\,, we take the following definition equivalent to the
previous construction.
\begin{definition}
\red
 For $\fkF=E,E',F,F'$\,, $\mu\in [0,1]\setminus
   \left\{1/2\right\}$\,,  the space
 $W^{\mu,2}_{loc}(X;\widehat{\fkF}_{g})$ is defined
as the space of sections $s\in L^{2}_{loc}(X;\fkF)$ such that:
\begin{itemize}
\item $s_{\mp}=s\big|_{X_{\mp}}\in
  W^{\mu,2}_{loc}(\overline{X}_{\mp};\fkF)$\,;
\item if $\mu\in ]1/2,1]$\,, $s_{-}\big|_{\partial
    X_{-}}=s_{+}\big|_{\partial X_{+}}$ in $\widehat{\fkF}_{g}\big|_{X'}$\,.
\end{itemize}
The set of even sections of
$W^{\mu,2}_{loc}(X;\hat{E}_{g})$ is defined as
$$
W^{\mu,2}_{ev,
  loc}(X;\widehat{\fkF}_{g})=W^{\mu,2}_{loc}(X;\widehat{\fkF}_{g})\cap
L^{2}_{ev, loc}(X;\fkF)\,.
$$
\end{definition}
\begin{definition}
\label{de:S1Snu} In $X'=\partial X_{-}$ the map $S_{1}=\Sigma\big|_{X'}$
is given by 
\begin{equation}
  \label{eq:deS1}
S_{1}(0,q',p_{1},p')=(0,q',-p_{1},p')\,.
\end{equation}
For $\fkF=F$ (resp. $\fkF=F'$)
 the map $\hat{S}_{\nu}:\mathcal{D}'(X';\fkF\big|_{X'})\to
\mathcal{D}'(\fkF\big|_{X'})$ is given by
\begin{eqnarray}
\label{eq:dehSnu1}
&&
\hat{S}_{\nu}(\omega_{I}^{J}(x')e_{-}^{I}\hat{e}_{-,J})=\nu
(-1)^{|\left\{1\right\}\cap I|+|\left\{1\right\}\cap J|}\omega_{I}^{J}(S_{1}(x'))e_{-}^{I}\hat{e}_{-,J}
\\
\label{eq:dehSnu2}
\text{resp.}
&&
\hat{S}_{\nu}(u_{J}^{I}(x')e_{-,I}\hat{e}_{-}^{J})=\nu
(-1)^{|\left\{1\right\}\cap I|+|\left\{1\right\}\cap
   J|}u_{J}^{I}(S_{1}(x'))e_{-,I}\hat{e}_{-}^{J}\,.
\end{eqnarray}
\end{definition}
\begin{proposition}
\label{pr:partrace}
\red
For a section $s\in L^{2}_{loc }(\overline{X}_{-};\fkF)$ and
$\mu\in [0,1]\setminus\left\{1/2\right\}$\,,
and  $\fkF=F,F'$\,, there is an
equivalence between:
\begin{description}
\item[a)] $s_{ev}\in \mathcal{C}_{0,g}(\widehat{\fkF}_{g})$ 
(resp. $s_{ev}\in W^{\mu,2}_{loc}(X;\widehat{\fkF}_{g})$)\,;
\item[b)]  
  $s\in \mathcal{C}^{\infty}_{0}(\overline{X}_{-};\fkF)$ (resp. $s\in
  W^{\mu,2}_{loc}(\overline{X}_{-};\fkF)$) 
and  (resp. when $\mu>1/2$)
$$
\hat{S}_{\nu}s\big|_{\partial X_{-}}=s\big|_{\partial X_{-}}\,.
$$
\end{description}
\end{proposition}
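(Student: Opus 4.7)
The plan is to split the equivalence into two independent parts: the regularity of $s_{ev}$ on each side of $X'$, which is preserved by the smooth bundle automorphism $\Sigma_{\nu}$, and the continuity/trace condition along $X'$, which after unwinding the quotient description of $\widehat{\fkF}_{g}$ will become exactly $\hat{S}_{\nu}s|_{\partial X_{-}}=s|_{\partial X_{-}}$. I would first establish that $\Sigma_{\nu}=\Sigma_{*}\otimes\pi_{X}^{*}(S_{Q,\nu})$ is a genuine smooth vector bundle automorphism of $F$ (resp.\ $F'$) covering the smooth symplectic involution $\Sigma$ of $X_{(-\varepsilon,\varepsilon)}$: the $\nu$-twist in Definition~\ref{de:doublef} is precisely what endows the doubled bundle $\fkf\to Q$ with a global $\mathcal{C}^{\infty}$ structure for which $S_{Q,\nu}$ is smooth, and $\Sigma$ itself is smooth independently of the metric. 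Since $s_{ev}|_{\overline{X}_{+}}=\Sigma_{\nu}s$ by \eqref{eq:desev}, this immediately yields that $s_{ev}|_{\overline{X}_{+}}$ lies in $\mathcal{C}^{\infty}_{0}(\overline{X}_{+};\fkF)$ (resp.\ $W^{\mu,2}_{loc}(\overline{X}_{+};\fkF)$) if and only if $s=s_{ev}|_{\overline{X}_{-}}$ has the same regularity on $\overline{X}_{-}$.

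Next I would handle the matching along $X'$, which is the main technical step. In a collar neighborhood, using the coordinates and frames of Subsection~\ref{sec:loccoor}, I would read off from \eqref{eq:Sigmae1}-\eqref{eq:Sigmae2} that $\Sigma_{*}$ carries $(e_{-},\hat{e}_{-})$ at $S_{1}(x')$ to $(-1)^{\delta_{\bullet 1}}(e_{+},\hat{e}_{+})$ at $x'$, while the $\pi_{X}^{*}(\fkf)$-factor is multiplied by $\nu$. In the quotient of Definition~\ref{de:hatEF}, the frames $(e_{-},\hat{e}_{-})|_{X'}$ and $(e_{+},\hat{e}_{+})|_{X'}$ are identified because they both descend to the same frame of $\Lambda T^{*}Q\otimes\Lambda TQ\otimes\fkf$ over $Q'$. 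Matching coefficients and comparing with \eqref{eq:dehSnu1}-\eqref{eq:dehSnu2} then yields the pointwise identity
\begin{equation*}
  (\Sigma_{\nu}s)\big|_{\partial X_{+}} = \hat{S}_{\nu}\bigl(s\big|_{\partial X_{-}}\bigr) \quad\text{in } \widehat{\fkF}_{g}|_{X'}.
\end{equation*}
Consequently the continuity condition of $\widehat{\fkF}_{g}$ at $X'$, namely $s_{ev}|_{\partial X_{-}}=s_{ev}|_{\partial X_{+}}$ in $\widehat{\fkF}_{g}|_{X'}$, is exactly $s|_{\partial X_{-}}=\hat{S}_{\nu}s|_{\partial X_{-}}$.

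Putting the two steps together, I would conclude as follows. For $s_{ev}\in\mathcal{C}_{0,g}(\widehat{\fkF}_{g})$, Definition~\ref{de:Cg} requires piecewise smoothness on $\overline{X}_{\mp}$ plus continuity at $X'$, and both translate to (b) by the two previous points. For $s_{ev}\in W^{\mu,2}_{loc}(X;\widehat{\fkF}_{g})$ with $\mu\in[0,1]\setminus\{1/2\}$, I would invoke the trace theorem recalled after Definition~\ref{de:hatEF}: when $\mu\in[0,1/2[$ no trace condition is imposed, so (a)$\,\Leftrightarrow\,$(b) collapses to the regularity step; when $\mu\in\,]1/2,1]$, the traces $s_{ev}|_{\partial X_{\mp}}$ exist in $W^{\mu-1/2,2}_{loc}$ and the matching is again the boundary identity above, while the exclusion $\mu=1/2$ sidesteps the delicate critical case. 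The only subtle part of the argument is the sign and $\nu$-twist bookkeeping in the second step, but this is a direct local computation from \eqref{eq:Sigmae1}-\eqref{eq:Sigmae2} and is exactly the reason why $\hat{S}_{\nu}$ was defined with the sign factors $(-1)^{|\{1\}\cap I|+|\{1\}\cap J|}$ and the $p_{1}$-reflection in Definition~\ref{de:S1Snu}.
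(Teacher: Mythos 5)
Your proposal is correct and follows essentially the same route as the paper's own (much terser) proof: both reduce the equivalence to the three facts that $\Sigma|_{X'}=S_{1}$, that $\Sigma_{*}$ carries the frame $(e_{-}^{i},\hat{e}_{-,j})$ to $((-1)^{\delta_{i1}}e_{+}^{i},(-1)^{\delta_{j1}}\hat{e}_{+,j})$ tensorized with $\nu$, and that the frames are identified along $X'$ in $\widehat{\fkF}_{g}$, from which the condition $s_{ev}|_{\partial X_{-}}=s_{ev}|_{\partial X_{+}}$ is read off as $\hat{S}_{\nu}s|_{\partial X_{-}}=s|_{\partial X_{-}}$. Your pointwise identity $(\Sigma_{\nu}s)|_{\partial X_{+}}=\hat{S}_{\nu}(s|_{\partial X_{-}})$ and your treatment of the $\mu<1/2$ vs.\ $\mu>1/2$ cases are accurate and simply spell out what the paper leaves implicit.
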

\begin{proof}
  Simply write that $s_{ev}=1_{X_{-}}(x)s+1_{X_{+}}(\Sigma_{\nu}s)$  satisfies
  $s_{ev}\big|_{X_{-}}=s\big|_{X_{-}}$ and admits a trace  in $\mathcal{D}'(X';\widehat{\fkF}_{g}\big|_{X'})$ when
  $s_{ev}\in \mathcal{C}_{0,g}(X;\widehat{\fkF}_{g})$ or {\red$s_{ev}\in
  W^{\mu,2}_{loc}(X;\widehat{\fkF}_{g})$\,,
$\mu>1/2$\,, with:}
  \begin{itemize}
  \item 
 $\Sigma\big|_{X'}=S_{1}$\,,
  \item 
    $\Sigma_{*}(e_{-}^{i},\hat{e}_{-,j})=((-1)^{\delta_{i1}}e_{+}^{i},(-1)^{\delta_{j1}}\hat{e}_{+,j})$ tensorized
  with $\nu$ in $\pi_{\overline{X}_{-}}^{*}(\fkf)$\,,
\item and
  $(e_{-},\hat{e}_{-})\big|_{X'}=(e_{+},\hat{e}_{+})\big|_{X'}$
  in $\widehat{\fkF}_{g}\big|_{X'}$\,.
\end{itemize}
\end{proof}
\subsection{Global functional spaces}
\label{sec:globfunc}

\subsubsection{Global $L^{2}$-spaces, duality and adjoints}
\label{sec:globL2}

Like for 
$L^{2}_{loc}(X;\widehat{\fkF}_{g})=L^{2}_{loc}(X;\fkF)=L^{2}_{loc}(\overline{X}_{-};\fkF)\oplus
L^{2}_{loc}(\overline{X}_{+};\fkF)$ for
$\fkF=E,E',F,F'$ we can simply work with the vector bundles $E=\Lambda
T^{*}X$\,, $E'=\Lambda TX$\,, $F=E\otimes \pi_{X}^{*}(\fkf)$ and
$F'=E'\otimes \pi_{X}^{*}(\fkf)$ and forget the distinction between
$\hat{\fkF}_{g}$ and $\fkF$\,.\\
The cotangent space $X$ is endowed with the symplectic volume
$$
dv_{X}=|\frac{1}{d!}\sigma^{d}|=|\frac{1}{d!}(e^{i}\wedge\hat{e}_{j})^{d}|=|dqdp|
$$
where $|dqdp|$ stands for the Lebesgue
measure 
in the local coordinates
$(q,p)=(q^{1},\ldots,q^{d},p_{1},\ldots,p_{d})$\,.\\
The local coordinates
$(\tilde{q},\tilde{p})=(\tilde{q}^{1},\ldots,\tilde{q}^{d},\tilde{p}_{1},\ldots,\tilde{p}_{d})$
of Definition~\ref{de:tildeqp} are not symplectic coordinates and
\eqref{eq:tfh}\eqref{eq:tfv1}\eqref{eq:tfv}  lead to 
\begin{equation}
  \label{eq:dvxtilde}
dv_{X}=|\frac{1}{d!}\sigma^{d}|=|\frac{1}{d!}(e^{i}\wedge\hat{e}_{i})^{d}|=|\det
\psi^{-1}(\tilde{q})||d\tilde{q}d\tilde{p}|\,.
\end{equation}
Remember that the metrics $g^{E},g^{E'},g^{F},g^{F'}$ on $E,E',F,F'$ given by
\eqref{eq:gE}\eqref{eq:gEpr}\eqref{eq:gF}\eqref{eq:gFpr} include the
weight $\langle p\rangle_{q}^{\pm N_{H}\mp N_{V}}$ and the same is
done for $\hat{g}^{E},\hat{g}^{E'},\hat{g}^{F},\hat{g}^{F'}$with
$g^{TQ}$ replaced by $\hat{g}^{TQ}$\,.
\begin{definition}
\label{de:L2glob} Let $\fkF=E,E',F,F'$ be endowed with the metric
$\tilde{g}^{\fkF}=g^{\fkF}$ or $\tilde{g}^{\fkF}=\hat{g}^{\fkF}$\,. The space
$L^{2}(X;\fkF)$\,, or $L^{2}(X;\fkF,\tilde{g}^{\fkF})$ when we want to
specify the metric,  is
$$
L^{2}(X;\fkF)=\left\{s\in L^{2}_{loc}(X;\fkF)\,,\quad
  \int_{X}|s(q,p)|^{2}_{\tilde{g}^{\fkF}}~|dqdp|<+\infty\right\}\,.
$$
The scalar product and the duality product between $L^{2}(X;\fkF)$ and
$L^{2}(X;\fkF')$ are given by
\begin{eqnarray}
\label{eq:scalL2}
  &&\langle s\,,\,s'\rangle_{L^{2}}=\langle
     s\,,\,s'\rangle_{L^{2}(\tilde{g}^{\fkF})}=\int_{X}\langle
     s\,,\,s'\rangle_{\tilde{g}^{\fkF}_{(q,p)}}~|dqdp|=\overline{\langle
     s',s\rangle_{L^{2}}}\\
\label{eq:dualL2}
&&\langle t\,,\,s\rangle=\int_{X}\langle t\,,\,
   s\rangle_{\fkF'_{(q,p)},\fkF_{(q,p)}}~|dqdp|=\overline{\langle s\,,\,t\rangle}
\end{eqnarray}
for any $s,s'\in L^{2}(X;\fkF)$\,, $t\in L^{2}(X;\fkF')$\,.\\
Finally the set $L^{2}_{ev~odd}(X;\fkF,\hat{g}^{\fkF})$ equals
$$
L^{2}_{ev~odd}(X;\fkF,\hat{g}^{\fkF})=L^{2}(X;\fkF,\hat{g}^{\fkF})\cap L^{2}_{ev~odd,loc}(X;\fkF)\,.
$$
\end{definition}
\begin{proposition}
  \label{pr:L2glob} Let $\fkF=E,E',F,F'$ be endowed with the metric
  $\tilde{g}^{\fkF}=g^{\fkF}$ or $\hat{g}^{\fkF}$ and let
  $(\fkF',\tilde{g}^{\fkF'})$ its antidual, $\fkF'=E',E,F',F$ respectively.
  \begin{enumerate}
  \item[a)] With the duality product
\eqref{eq:dualL2}, the dual of  $L^{2}(X;\fkF,\tilde{g}^{\fkF})$
is nothing but
    $L^{2}(X;\fkF',\tilde{g}^{\fkF'})$\,.
\item[b)] When $\varphi:Q\to Q$ is a $\mathcal{C}^{\infty}$ (resp. 
  piecewise $\mathcal{C}^{\infty}$ on $\overline{Q}_{\mp}$)
  diffeomorphism, the 
  the push-forward
 $\psi=\varphi_{*}:X=T^{*}Q\to X$ viewed as (resp. piecewise) 
diffeomorphism in $X$ defines a continuous isomorphism
  $\psi_{*}:L^{2}(X;\fkF,\tilde{g}^{\fkF})\to
  L^{2}(X;\fkF,\tilde{g}^{\fkF})$\,.
\item[c)]
 If $\Psi:\Lambda TQ\otimes \Lambda T^{*}Q\to \Lambda TQ\otimes
 \Lambda T^{*}Q$ be a $\mathcal{C}^{\infty}$ (or piecewise
 $\mathcal{C}^{\infty}$ on $\overline{Q}_{\mp}$)  vector bundle
isomorphism\,, then
  $[\pi_{X}^{*}(\Psi)]_{*}:L^{2}(X;\fkF,\tilde{g}^{\fkF})\to
  L^{2}(X,\fkF,\tilde{g}^{\fkF})$  is a continuous isomorphism.
\item[d)] The vector bundle isomorphism $\Sigma_{*}:\fkF\to \fkF$ defines
  a unitary involution of $L^{2}(X;\fkF,\hat{g}^{\fkF})$ when $\fkF=E,E'$\,. The same
  holds for $\Sigma_{\nu}$ when $\fkF=F,F'$ and 
  $L^{2}(X;\fkF,\hat{g}^{\fkF})=L ^{2}_{ev}(X,\fkF;\hat{g}^{\fkF})\oplus^{\perp}L ^{2}_{odd}(X,\fkF;\hat{g}^{\fkF})$\,.
\item[e)] When $g^{TQ}_{0}=(d\underline{q}^{1})^{2}+m^{TQ'}(0,\underline{q}')$
  and
  $g^{TQ}=(d\underline{q}^{1})^{2}+m^{TQ'}(\underline{q}^{1},\underline{q}')$\,,
  the vector bundle isomorphim $\widehat{\Psi}_{X}^{g,g_{0}}$ of
  Definition~\ref{de:hPsigg0} defines a continuous isomorphism
  $(\widehat{\Psi}^{g,g_{0}}_{X})_{*}$  from $L^{2}(X_{(-\varepsilon,\varepsilon)};\fkF,\hat{g}_{0}^{\fkF})$ to
  $L^{2}(X_{(-\varepsilon,\varepsilon)};\fkF,\hat{g}^{\fkF})$\,, with 
  $\hat{g}_{0}^{\fkF}=g_{0}^{E}\otimes \hat{g}^{\fkf}$ when $\fkF=F$ and
  $\hat{g}_{0}^{\fkF}=g_{0}^{E'}\otimes \hat{g}^{\fkf}$ when $\fkF=F'$\,.
  \end{enumerate}
\end{proposition}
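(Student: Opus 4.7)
The proof of Proposition~\ref{pr:L2glob} breaks into five parts that all reduce to routine bookkeeping once the structure of the metrics $\tilde{g}^{\fkF}$ is unpacked; the non-trivial input is that the weight $\langle p\rangle_{q}^{\pm(N_{H}-N_{V})}$ appears in $g^{\fkF}$ and in $g^{\fkF'}$ with opposite signs, so the duality pairing \eqref{eq:dualL2} does not see it. My overall strategy is to reduce each statement to a fiberwise identity plus a bound on a Jacobian; since the base $Q$ is compact and the only possible irregularity is across $X'$, where only continuity is required, all Jacobians appearing are piecewise $\mathcal{C}^{\infty}$ and uniformly bounded above and below.

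For assertion (a), the fiber-wise antiduality identifies $\tilde{g}_{(q,p)}^{\fkF}$ with the inverse sesquilinear form on $\fkF'_{(q,p)}$, with the weight factor $\langle p\rangle_{q}^{N_{V}-N_{H}}$ in $\fkF$ matched by $\langle p\rangle_{q}^{N_{H}-N_{V}}$ in $\fkF'$; plugging this into \eqref{eq:scalL2}--\eqref{eq:dualL2} and applying Riesz's theorem fiberwise and then integrating against the symplectic volume identifies the dual space. Assertion (d) is equally short: by Definition~\ref{de:SigmaXI}, $\Sigma$ is a symplectic involution, hence $\Sigma^{*}dv_{X}=dv_{X}$; the construction of $\hat{g}^{TQ}$ in Subsection~\ref{sec:dbriem} was arranged precisely so that $\Sigma_{*}$ (and $\Sigma_{\nu}$, using the flat isometric involution $\nu$) is a fiberwise isometry of $(\fkF,\hat{g}^{\fkF})$, and since $(\Sigma_{\nu})^{2}=\Id$ the orthogonal even/odd decomposition follows from the spectral resolution of a unitary involution.

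For (b) the push-forward $\psi=\varphi_{*}:X\to X$ is the cotangent lift, which is a symplectomorphism, so $\psi^{*}dv_{X}=dv_{X}$ already at the level of smooth base, and piecewise across $X'$ in the piecewise $\mathcal{C}^{\infty}$ case. The induced map on $\fkF$ changes the metric $\tilde{g}^{\fkF}$ by a factor bounded above and below because $\varphi$ and $\varphi^{-1}$ are (piecewise) smooth on the compact manifold $Q$, so $\psi_{*}$ is bicontinuous on $L^{2}(X;\fkF,\tilde{g}^{\fkF})$. Assertion (c) is parallel: $\pi_{X}^{*}(\Psi)$ acts fiberwise on $\fkF$ above $X$ without moving the base, the symplectic volume is untouched, and the bounds on $\Psi,\Psi^{-1}$ coming from the (piecewise) smoothness on compact $Q$ transfer directly into bounds on the metric distortion, which is all that is needed.

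The only part deserving genuine attention is (e). Here $\widehat{\Psi}_{X}^{g,g_{0}}$ is fiberwise an isometry from $(F_{x},g_{0}^{E}\otimes\hat{g}^{\fkf})$ to $(\hat{F}_{g,\hat\varphi_{X}^{g,g_{0}}(x)},\hat{g}^{F})$ by construction of the parallel-transport identification in Definition~\ref{de:hPsigg0} and the commuting diagram \eqref{eq:psiXgg}; the same is true for $F',E,E'$ because the weight $\langle p\rangle_{\hat{g},q}^{N_{H}-N_{V}}$ is invariant along the geodesic lift $t\mapsto (t,\tilde{x}')$, as recorded just before the definition. The base map $\hat\varphi_{X}^{g,g_{0}}$ is not symplectic, so the change of variable picks up the Jacobian $|\det\psi^{-1}(\tilde{q})|$ appearing in \eqref{eq:dvxtilde}; however $\psi$ solves a linear ODE with coefficients $\hat{\Gamma}^{k'}_{1,j'}$ that are piecewise smooth and bounded on $\overline{Q}_{\mp}$, so $\psi$ and $\psi^{-1}$ are uniformly bounded on $X_{(-\varepsilon,\varepsilon)}$. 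This gives the two-sided comparison of the $L^{2}$ norms and yields the continuous isomorphism. The main obstacle, really the only one, is keeping track of which objects are intrinsic (the symplectic volume) versus metric-dependent (the weight and the horizontal-vertical splitting); once this bookkeeping is done the five statements follow without further analytic input.
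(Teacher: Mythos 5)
Your proposal follows the same general strategy as the paper's proof (fiberwise identities, symplectic volume invariance, isometry of $\widehat{\Psi}_X^{g,g_0}$ plus the Jacobian $|\det\psi(\tilde q)|$ in (e)), and the conclusions are all correct. Parts (a), (d), and (e) are essentially what the paper does.

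There is, however, a genuine gap in your justification of (b), and it propagates implicitly into the opening overview. You write that the push-forward $\psi_*$ ``changes the metric $\tilde g^{\fkF}$ by a factor bounded above and below because $\varphi$ and $\varphi^{-1}$ are (piecewise) smooth on the compact manifold $Q$.'' This is not the right reason, and the reasoning as stated would incorrectly conclude that the same push-forward is bounded on the $L^2$ space built from the \emph{unweighted} metric $\pi_X^*(g^{\Lambda T^*Q}\otimes g^{\Lambda TQ}\otimes g^{\fkf})$, which is false. The point you are missing is that the Jacobian of the cotangent lift, expressed in the frames $(e^i,\hat e_j)$ at the source and target, is \emph{not} uniformly bounded in $p$: since $\psi(q,p)=(\varphi(q),A(q)p)$, the differential of the $p$-component picks up a term $(\partial_q A)p\,dq$, and when one rewrites $dp_k$ in terms of $\hat e_k$ and $e^i$ using $\hat e_j=dp_j-\Gamma_{ji}^kp_kdq^i$ there is a lower-triangular block of size $\mathcal O(|p|)$ coupling the $e$- and $\hat e$-directions. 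Compactness of $Q$ only bounds the $q$-dependent coefficients; it does nothing about this linear growth in $p$. What makes the metric distortion bounded is the weight: $|e^i|_{g^E}\sim\langle p\rangle_q^{-1/2}$ while $|\hat e_j|_{g^E}\sim\langle p\rangle_q^{1/2}$, so a coefficient of size $|p|$ mapping an $e$-vector into a $\hat e$-vector produces a term of the same metric order, namely $\langle p\rangle_q\cdot\langle p\rangle_q^{-1/2}\sim\langle p\rangle_q^{1/2}$. This is exactly what the paper records when it writes the transformed frame and then states ``the weight $\langle p\rangle_q^{\pm N_H\mp N_V}\dots$ ensures'' the $L^2$ bound. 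You need to make this cancellation explicit; invoking compactness alone leaves a hole.

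The same caution applies to (c), although there it is a much smaller issue: $\pi_X^*(\Psi)$ has no $p$-growth, so the Jacobian really is bounded, but it does not automatically commute with the weight operator $\langle p\rangle_q^{\pm(N_H-N_V)}$ unless $\Psi$ respects the bigrading $N_H-N_V$; the paper is silent on this too, so I only flag it as something to keep in mind. Apart from the missing weight argument in (b), your route matches the paper's.
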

\begin{proof}
 \noindent\textbf{a)}It is simply the pointwise duality.\\
\noindent\textbf{b)} When $\varphi:Q\to Q$ is a (piecewise) $\mathcal{C}^{\infty}$-diffeomorphism
$$
(Q,P)=\psi(q,p)=\varphi_{*}(q,p)=(\varphi(q),
{}^{t}D\varphi_{q}^{-1}(q)p)=(\varphi(q), A(q)p)
$$
the frames $(e,\hat{e})$ are transformed according to 
$$
dQ^{i}=[D\varphi_{q}]^{i}_{j}dq^{j}+[DA_{q})^{k}_{ij}]p_{k}dq^{j}\quad,\quad dP_{j}=[A(q)]_{j}^{k}dp_{k}\,,
$$
where all the $q$-dependent coefficients are uniformly bounded.\\
The weight $\langle p\rangle_{q}^{\pm N_{H}\mp N_{V}}$ depending on
the case for $\fkF$\,, ensures 
$$
\|\psi^{*}s\|_{L^{2}(\tilde{g}^{\fkF})}\leq C\|s\|_{L^{2}(\tilde{g}^{\fkF})}
$$
and the same can be done for $\psi_{*}=(\psi^{-1})^{*}=(\varphi^{*})^{*}$\,.\\
\noindent\textbf{c)} When $\fkF=E,E'$ this comes from the identification of
$TX=TX^{H}\oplus^{\perp}TX^{V}=\pi_{X}^{*}(TQ\oplus^{\perp} T^{*}Q)$
and
$T^{*}X=TX^{V}\oplus^{\perp}T^{*}X^{V}=\pi_{X}^{*}(T^{*}Q\oplus^{\perp}TQ)$\,. When
$\fkF=F,F'$ we set $\varphi=\pi_{\Lambda TQ\otimes \Lambda T^{*}Q}$
and we first extend $\Psi$ to $(\Lambda TQ\otimes \Lambda
T^{*}Q)\otimes \fkf$ and to $(\Lambda T^{*}Q\otimes\Lambda TQ)\otimes
\fkf$ as $\Psi\otimes \varphi_{*}$ and then pull it back via $\pi^{*}_{X}$\,.\\
\noindent\textbf{d)}  $\Sigma_{\nu}:(\fkF,\hat{g}^{\fkF})\to
(\fkF,\hat{g}^{\fkF})$ is an isometry and $\Sigma:X\to X$ is a
symplectic map.\\
\noindent\textbf{e)} We already know that
$\widehat{\Psi}_{X}^{g,g_{0}}:(\fkF,\hat{g}_{0}^{\fkF})\to
(\widehat{\fkF}_{g},\hat{g}^{\fkF})$ is an isometry projected to
$\pi_{X}(\widehat{\Psi}_{X}^{g,g_{0}})=\hat{\varphi}^{g,g_{0}}_{X}$\,,
while the map $\hat{\varphi}^{g,g_{0}}_{X}$ is given locally by
$\hat{\varphi}_{X}^{g,g_{0}}(q,p)=x$ with $\tilde{q}(x)=q$ and
$\tilde{p}(x)=p$ according to the Definition~\ref{de:tildeqp} of the
coordinates $(\tilde{q},\tilde{p})$\,. With \eqref{eq:dvxtilde} the
map $|\det \psi(\tilde{q})|^{1/2}(\hat{\Psi}_{X}^{g,g_{0}})_{*}$ is unitary from
$L^{2}(X_{(-\varepsilon,\varepsilon)};\fkF,\hat{g}_{0}^{\fkF})$ to
$L^{2}(X_{(-\varepsilon,\varepsilon)};\fkF,\hat{g}^{\fkF})$ while the
multiplication by $|\det \psi(\tilde{q})|^{\pm 1/2}$ is an isomorphism.
\end{proof}
For traces along $X'=\partial X_{-}=\partial X_{+}$ we also need
global $L^{2}$-spaces. It actually suffices to specify the volume
element along $X'$
\begin{definition}
\label{de:volX'} On $X'$ the volume element denoted $dv_{X'}=|dq'dp|$ equals 
$$
dv_{X'}=\left|\mathbf{i}_{e_{1}}\frac{1}{d!}^{d}\sigma\right|
$$
The volume element $|p_{1}|dv_{X'}=|p_{1}||dp_{1}dq'dp'|$ equals
$|(\mathbf{i}_{Y_{\fkf}}\frac{\sigma^{d}}{d!})\big|_{TX'}|$ where $Y_{\fkh}$ is the
Hamiltonian vector field associated with $\fkh$\,.
\end{definition}
The above definition does not rely on a coordinate
system. However with the local coordinates $(q,p)$ and
$(\tilde{q},\tilde{p})$ with
$(\tilde{q},\tilde{p})\big|_{X'}=(q,p)\big|_{X'}$\,, we get
$$
dv_{X'}=|dq'dp|=|d\tilde{q}'d\tilde{p}|\quad\text{and}\quad |p_{1}|dv_{X'}=|p_{1}||dq'dp_{1}dp'|=|\tilde{p}_{1}||d\tilde{q}'d\tilde{p}_{1}d\tilde{p}'|\,.
$$
Because $L^{2}(X;\fkF')$ is the dual of $L^{2}(X;\fkF)$
via the duality product  \eqref{eq:dualL2},
the adjoints of operators acting in $L^{2}(X;\fkF)$ can be defined in $L^{2}(X;\fkF')$\,.
\begin{definition}
\label{de:formaladjoint}
The vector bundle $\fkF=E,E',F,F'$ with dual $\fkF'=E',E,F,F'$ is endowed with the metric
$\tilde{g}^{\fkF}=g^{\fkF}$ or $\tilde{g}^{\fkF}=\hat{g}^{\fkF}$\,.\\
Let $\Omega$ be an open set in $X$ and let
$(P,D(P))$ be a densely defined operator in
$L^{2}(\Omega;\fkF,\tilde{g}^{\fkF})$\,.
The adjoint
denoted $(\tilde{P},D(\tilde{P}))$ in $L^{2}(\Omega;\fkF',\tilde{g}^{\fkF'})$
is defined by
\begin{eqnarray*}
  && \left(t\in D(\tilde{P})\right)\Leftrightarrow
\left( \exists C_{t}\geq 0\,,\forall s'\in D(P)\,, |\langle
     t,Ps'\rangle|\leq C_{t}\|s'\|_{L^{2}}\right)\\
&& \forall t\in D(\tilde{P})\,, \forall s'\in D(P)\,, \quad 
\langle \tilde{P}t\,,\, s'\rangle=\langle t\,,\, Ps\rangle\,.
\end{eqnarray*}
\end{definition}
Since  \eqref{eq:scalL2} and \eqref{eq:dualL2} give a unitary mapping
$U:L^{2}(\Omega;\fkF',\tilde{g}^{\fkF'})\to L^{2}(\Omega;\fkF,\tilde{g}^{\fkF})$\,, there is a simple relation
with the usual adjoint $(P^{*},D(P^{*}))$ for the
$L^{2}(X;\fkF,\tilde{g}^{\fkF})$ scalar product,
$(\tilde{P},D(\tilde{P}))=(U^{-1}P^{*}U, U^{-1}D(P^{*}))$\,.\\
However Bismut constructions of adjoint in
$L^{2}(\Omega;\fkF,\tilde{g}^{\fkF})$ involves
 a non symmetric (resp.
non hermitian) non degenerate bilinear (resp. sesquilinear) on $E$
(resp. $F$)\,.
For an isomorphism  $\phi:E'\to E$ with adjoint ${}^{t}\phi:E'\to E$
we keep the same notation for $\phi=\phi\otimes
\mathrm{Id}_{\pi_{X}^{*}(\fkf)}:F'\to F$\,.
Let the vector bundle isomorphism  $\phi:E'\to E$  be such that
\begin{equation}
  \label{eq:estimunivphi}
\exists C>0\,, \forall x\in X\,, \forall t\in \fkF_{x}\,,\quad C^{-1}|\phi t|_{\tilde{g}^{\fkF}_{x}}\leq
|t|_{\tilde{g}^{\fkF'}_{x}}\leq C|\phi t|_{\tilde{g}^{\fkF}_{x}}\,,
\end{equation}
This gives  two sesquilinear forms on $\fkF'$ and $\fkF$ dual to each other
$$
\eta_{\phi}(U,V)=\langle U\,,\,\phi V\rangle_{\fkF',\fkF}\quad,\quad
\eta^{*}_{\phi}(\omega,\theta)=\langle (\phi^{-1}\omega)\,,\,\theta\rangle_{\fkF',\fkF}\,.
$$
For sections $s,s'\in L^{2}(X,\fkF)$ 
we  set
\begin{equation}
  \label{eq:bilphi}
\langle s\,,\,s'\rangle_{\phi}=\int_{X}\eta^{*}_{\phi}(s,s')~dv_{X}\,.
\end{equation}
Because $\phi$\,, $\eta_{\phi}$ and $\eta^{*}_{\phi}$ are
 not assumed to be  neither symmetric (or hermitian) nor
 anti-symmetric (or anti-hermitian),  left and
right adjoints must be distinguished.
\begin{definition}
 \label{de:phiformadj}
The vector bundle $\fkF=E,E',F,F'$ with dual $\fkF'=E',E,F,F'$ is endowed with the metric
$\tilde{g}^{\fkF}=g^{\fkF}$ or $\tilde{g}^{\fkF}=\hat{g}^{\fkF}$\,.\\
We assume that $\phi:F'\to F$ satisfies \eqref{eq:estimunivphi} and
$\Omega$ is an open subset of $X$\,.\\
The left $\phi$-adjoint of a densely defined operator $(P,D(P))$ in
$L^{2}(\Omega;\fkF)$\,, denoted by
$(P^{\phi},D(P^{\phi}))$
is defined by
\begin{eqnarray*}
  && \left(s\in D(P^{\phi})\right)\Leftrightarrow
\left( \exists C_{s}\geq 0\,,\forall s'\in D(P)\,, |\langle
     s\,,\,Ps'\rangle_{\phi}|\leq C_{s}\|s'\|_{L^{2}}\right)\\
&& \forall s\in D(P^{\phi})\,, \forall s'\in D(P)\,, \quad 
\langle P^{\phi}s\,,\, s'\rangle_{\phi}=\langle s\,,\, Ps\rangle_{\phi}\,.
\end{eqnarray*}
The right $\phi$-adjoint is defined similarly by considering the
continuity of 
$s'$-dependent anti-linear form $D(P)\ni s\mapsto
\langle
P s\,,\, s'\rangle_{\phi}$ \,.
\end{definition}
\begin{proposition}
\label{pr:adjlr}
The left $\phi$-adjoint $(P^{\phi},D(P^{\phi}))$ equals
$(\phi\tilde{P}\phi^{-1}, \phi D(\tilde{P}))$ while the right $\phi$-adjoint
equals
$(P^{{}^{t}\phi}, D(P^{{}^{t}\phi}))=
({}^{t}\phi \tilde{P}{}^{t}\phi^{-1}; {}^{t}\phi D(\tilde{P}))$\,.\\
As adjoints of densely defined operators, the operators $(P^{\phi},D(P^{\phi}))$ and
$(P^{{}^{t}\phi},D(P^{{}^{t}\phi}))$ are closed in
$L^{2}(\Omega,\fkF)$\,.
When $(P,D(P))$ is closed and densely defined then the same holds for
$(P^{\phi}, D(P^{\phi}))$ and $(P^{\phi})^{{}^{t}\phi}=P$\,.\\
When $(P,D(P))$ is densely defined and closable,
$(P^{\phi})^{{}^{t}\phi}=\overline{P}$ the closure of $P$\,.
\end{proposition}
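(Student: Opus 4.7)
The plan is to reduce everything to the ordinary adjoint $\tilde{P}$ by unfolding the bilinear pairing $\langle\cdot,\cdot\rangle_{\phi}$ in terms of the duality pairing between $L^{2}(\Omega;\fkF,\tilde{g}^{\fkF})$ and $L^{2}(\Omega;\fkF',\tilde{g}^{\fkF'})$. For $s\in L^{2}(\Omega;\fkF)$ and $s'\in D(P)$, the identity $\eta^{*}_{\phi}(\omega,\theta)=\langle \phi^{-1}\omega,\theta\rangle_{\fkF',\fkF}$ gives
$$
\langle s\,,\,Ps'\rangle_{\phi}=\int_{\Omega}\langle \phi^{-1}s\,,\,Ps'\rangle_{\fkF',\fkF}~dv_{X}=\langle \phi^{-1}s\,,\,Ps'\rangle,
$$
so by the estimate \eqref{eq:estimunivphi}, $\phi$ and $\phi^{-1}$ are bounded isomorphisms $L^{2}(\Omega;\fkF',\tilde{g}^{\fkF'})\leftrightarrow L^{2}(\Omega;\fkF,\tilde{g}^{\fkF})$, and the continuity condition $|\langle s\,,\,Ps'\rangle_{\phi}|\leq C_{s}\|s'\|_{L^{2}}$ of Definition~\ref{de:phiformadj} is equivalent to $\phi^{-1}s\in D(\tilde{P})$. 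This proves $D(P^{\phi})=\phi D(\tilde{P})$, and writing $s=\phi t$ with $t\in D(\tilde{P})$, one reads off $P^{\phi}s=\phi\tilde{P}\phi^{-1}s$ by matching the two pairings.

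For the right $\phi$-adjoint, I will use the analogous rewriting with $\eta^{*}_{\phi}(\omega,\theta)$ expressed with ${}^{t}\phi$ acting on the second slot: $\eta^{*}_{\phi}(Ps,s')=\langle Ps\,,\,({}^{t}\phi)^{-1}s'\rangle_{\fkF,\fkF'}$ (up to the sesquilinear conjugation convention fixed in Definition~\ref{de:L2glob}). The same reduction then yields $D(P^{{}^{t}\phi})={}^{t}\phi D(\tilde{P})$ and $P^{{}^{t}\phi}={}^{t}\phi \tilde{P}{}^{t}\phi^{-1}$. Closedness of $(P^{\phi},D(P^{\phi}))$ and $(P^{{}^{t}\phi},D(P^{{}^{t}\phi}))$ is then automatic: $\tilde{P}$ is closed as an adjoint of a densely defined operator, and conjugation by the bounded isomorphism $\phi$ (resp. ${}^{t}\phi$) preserves closedness with the explicit domain.

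For the biduality, I apply the first part twice. With $Q:=P^{\phi}=\phi\tilde{P}\phi^{-1}$, which is densely defined when $\tilde{P}$ is (equivalently, when $P$ is closable), the left formula applied with ${}^{t}\phi$ in place of $\phi$ gives $Q^{{}^{t}\phi}={}^{t}\phi\,\tilde{Q}\,{}^{t}\phi^{-1}$. A direct unbounded-operator computation, using that $\phi$ and $\phi^{-1}$ are bounded and that the $L^{2}$-adjoint of $\phi$ (resp. $\phi^{-1}$) is ${}^{t}\phi$ (resp. $({}^{t}\phi)^{-1}$), yields $\tilde{Q}=({}^{t}\phi)^{-1}\widetilde{\tilde{P}}\,{}^{t}\phi$. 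Combining,
$$
(P^{\phi})^{{}^{t}\phi}={}^{t}\phi ({}^{t}\phi)^{-1}\,\widetilde{\tilde{P}}\,{}^{t}\phi({}^{t}\phi)^{-1}=\widetilde{\tilde{P}}.
$$
Invoking $\widetilde{\tilde{P}}=P$ when $P$ is closed and $\widetilde{\tilde{P}}=\overline{P}$ when $P$ is merely closable then finishes the proof.

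The delicate step is the computation $\widetilde{\phi\tilde{P}\phi^{-1}}=({}^{t}\phi)^{-1}\widetilde{\tilde{P}}\,{}^{t}\phi$ at the level of unbounded operators with correct domains; it rests entirely on \eqref{eq:estimunivphi} which makes $\phi,\phi^{-1},{}^{t}\phi,({}^{t}\phi)^{-1}$ into bounded $L^{2}$-isomorphisms so that the standard rule $\widetilde{ABC}=\widetilde{C}\widetilde{B}\widetilde{A}$ applies with $\widetilde{\phi}={}^{t}\phi$, and the bookkeeping between the left-antilinear/right-linear conventions of Definition~\ref{de:L2glob} is consistent throughout.
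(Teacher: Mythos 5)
Your proposal is correct and follows essentially the same route as the paper: unfold $\langle\cdot,\cdot\rangle_{\phi}$ through $\phi^{-1}$ to reduce $P^{\phi}$ to $\phi\tilde{P}\phi^{-1}$, identify the right $\phi$-adjoint with the left ${}^{t}\phi$-adjoint, and combine the two formulas with $\widetilde{\widetilde{P}}=\overline{P}$ to obtain the biduality statement. The "delicate step" you flag, $\widetilde{\phi\tilde{P}\phi^{-1}}=({}^{t}\phi)^{-1}\widetilde{\widetilde{P}}\,{}^{t}\phi$, is precisely the algebraic identity the paper writes down, and the boundedness of $\phi,\phi^{-1},{}^{t}\phi,({}^{t}\phi)^{-1}$ under \eqref{eq:estimunivphi} justifies it exactly as you say.
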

\begin{proof}
  Just write
$$
\langle \phi^{-1}\tilde{P}^{\phi}s\,,\,s'\rangle=
\langle P^{\phi}s\,,\,
s'\rangle_{\phi}=\langle s\,,\, Ps'\rangle_{\phi}=
\langle \phi^{-1}s\,,\, Ps\rangle=\langle \tilde{P}\phi^{-1}s\,,\, s'\rangle
$$
and use the definitions to get $P^{\phi}=\phi \tilde{P}\phi^{-1}$\,.\\
With $\overline{\langle s\,,\, s'\rangle_{\phi}}
=\langle s'\,,\,
s\rangle_{{}^{t}\phi}$
write for the right $\phi$-adjoint $P^{r,\phi}$
$$
\langle Ps\,,\, s'\rangle_{\phi}=\langle s\,,\, P^{r,\phi}s'\rangle_{\phi}
$$
in the form
$$
\langle s'\,,\, Ps\rangle_{{}^{t}\phi}=\langle P^{r,\phi}s'\,, s'\rangle_{{}^{t}\phi}\,,
$$
so that  $P^{r,\phi}=P^{{}^{t}\phi}={}^{t}\phi
\tilde{P}{}^{t}\phi^{-1}$\,.\\
The relation $(P^{\phi})^{{}^{t}\phi}=\overline{P}$ comes from the
standard theory of adjoints in Hilbert (therefore reflexive Banach)
spaces, according to
$$
(P^{\phi})^{{}^{t}\phi}={}^{t}\phi
\widetilde{P^{\phi}}~{}^{t}\phi^{-1}\quad,\quad P^{\phi}=\phi
\tilde{P}~\phi^{-1}\quad,\quad  \widetilde{P^{\phi}}={}^{t}\phi^{-1}\tilde{\tilde{P}}~{}^{t}\phi={}^{t}\phi^{-1}P~{}^{t}\phi\,.
$$
\end{proof}
\subsubsection{Global Sobolev scale}
\label{sec:globSob}
In \cite{Leb1}, G.~Lebeau introduced the Sobolev scale
$\mathcal{W}^{\mu}(X;\Lambda T^{*}X\otimes \fkf)$\,, $\mu\in \rz$\,, when $(Q,g^{TQ})$ is a smooth closed compact
riemannian manifold, which is adapted to the geometry of $X=T^{*}Q$
and to the analysis of Bismut's hypoelliptic Laplacian.
We adapt those definitions to our case with our notations.
\begin{definition}
\label{de:Ws} Let $\fkF=E,E',F,F'$ be endowed with the smooth metric
$g^{\fkF}$\,.\\
 For $n\in \nz$\,, the space $\mathcal{W}^{n}(X;\fkF)$
is the set of sections $s\in L^{2}(X;\fkF,g^{\fkF})$ for which there
exists $C_{s}>0$ such that
$$
\red
\Big\|\langle
p\rangle_{q}^{2n_{3}}\big(\prod_{k=1}^{n_{1}}\nabla_{\tilde{U}_{k}}^{\fkF}\big)\big(\prod_{\ell=1}^{n_{2}}\langle
     p\rangle_{q}\nabla_{\tilde{V}^{\ell}}^{\fkF}\big)s\Big\|_{L^{2}(g^{\fkF})}\leq
     C_{s}\big(\prod_{k=1}^{n_{1}}\|U_{k}\|_{W^{n,\infty}(Q;TQ)}\big)
\big(\prod_{\ell=1}^{n_{2}}\|V^{\ell}\|_{W^{n,\infty}(Q;TQ)}\big)
$$
where
$$
\red
n_{1}+n_{2}+n_{3}\leq n\,,\quad U_{k}\in
              \mathcal{C}^{\infty}(Q;TQ)\,, V^{\ell}\in
              \mathcal{C}^{\infty}(Q;T^{*}Q)\,,
$$
and where $\tilde{U}_{k}$ (resp. $\tilde{V}^{\ell}$) is the horizontal
(resp. vertical) lift of $U_{k}$ (resp. $V^{\ell}$)\,.\\
The space $\mathcal{W}^{n}(X;\fkF)$ can be given a Hilbert space
structure (see \eqref{eq:normWn} below) and $\mathcal{W}^{\mu}(X;\fkF)$ for $\mu\in\rz$\,, is then defined by duality
and interpolation.\\
For $\Omega=X_{\mp}$\,, $\mathcal{W}^{\mu}(\overline{\Omega};\fkF)$\,,
$\mu\in\rz$\,, is defined as 
$$
\mathcal{W}^{\mu}(\overline{\Omega};\fkF)=\left\{u\in
  \mathcal{D}'(\Omega;\fkF)\,, \exists \tilde{u}\in
  \mathcal{W}^{\mu}(X;\fkF)\,, u=\tilde{u}\big|_{\Omega}\right\}\,.
$$
\end{definition}
Here are some explanations and we refer the reader to
\cite{Leb1}\cite{Leb2} for details.
For $\mu=n\in\nz$\,, $s\in \mathcal{W}^{n}(X;\fkF)$ can be checked by
introducing a partition of unity on $Q$\,,
$\sum_{m}\chi_{m}^{2}(\underline{q})=1$ subordinate to an atlas
and by taking the $U_{k}$\,, $V^{\ell}$ in the local frame
$(\frac{\partial}{\partial\underline{q}^{i}},d\underline{q}^{j})$
with 
\begin{eqnarray}
\label{eq:orderei}
  &&\tilde{U}_{k}=e_{i}=\overbrace{\frac{\partial}{\partial
     q^{i}}}^{order~1}+\Gamma_{ij}^{k}(q){\red\overbrace{p_{k}\frac{\partial}{\partial
     p_{j}}}^{order~1}}\\
\label{eq:orderej}&&
   \langle p\rangle_{q}\tilde{V}^{\ell}= {\red \langle p\rangle_{q}}\hat{e}^{j}=\red\overbrace{\langle p\rangle_{q} 
\frac{\partial}{\partial p_{j}}}^{order~1}\,.
\end{eqnarray}
By setting for $\alpha\in \nz^{d}$\,,
$\nabla^{\fkF, \alpha}_{e}=\nabla^{\fkF, \alpha_{1}}_{e_{1}}\ldots
\nabla_{e_{d}}^{\fkF,\alpha_{d}}$ and $\nabla^{\fkF, \alpha}_{\hat{e}}=
\nabla_{\hat{e}^{1}}^{\fkF,\alpha_{1}}\ldots
\nabla_{\hat{e}^{d}}^{\fkF,\alpha_{d}}$ a Hilbert norm on
$\mathcal{W}^{n}(X,\fkF)$ is given by
\begin{equation}
  \label{eq:normWn}
\|s\|_{\mathcal{W}^{n}}^{2}=
\sum_{m}\sum_{|\alpha|+|\beta|+n_{3}\leq n}
\|\langle
p\rangle^{2n_{3}+|\beta|}_{q}\nabla_{e}^{\fkF,\alpha}\nabla_{\hat{e}}^{\fkF,\beta}[\chi_{m}(q)s]\|_{L^{2}(g^{\fkF})}^{2}\,.
\end{equation}
Although the multiplication by $\langle p\rangle^{n_{3}}$ and the covariant
derivatives themselves do not commute, changing the order in the above expression
gives an equivalent norm because:
\begin{itemize}
\item $\nabla^{\fkF}$ is the pull-back of a connection of the
 fiber
  bundle $\Lambda TQ\otimes \Lambda T^{*}Q\otimes \fkf$ (resp. 
$\Lambda TQ\otimes \Lambda T^{*}Q\otimes \fkf$) on $Q$ with
  $\nabla^{\fkF}_{\tilde{U}_{1}}\nabla^{\fkF}_{\tilde{U}_{2}}-\nabla^{\fkF}_{\tilde{U}_{2}}\nabla^{\fkF}_{\tilde{U}_{1}}=\nabla^{\fkF}_{\widetilde{[U_{1},U_{2}]}}+\pi^{*}_{X}R(U_{1},U_{2})$
  where $R(U_{1},U_{2})$ is a smooth endomorphim valued section on $Q$\,,
therefore  independent of $p$\,.
\item The above additional term $\nabla^{\fkF}_{\widetilde{[U_{1},U_{2}]}}$ is
  estimated with \eqref{eq:curveiej}\,.
\item The covariant vertical derivatives are the trivial ones.
\item Changing the position of the weight multiplication brings lower order
  corrections owing to
$$
e_{i}(f(\fkh))=0\quad,\quad \langle
p\rangle_{q}^{t}\frac{\partial}{\partial p_{j}}\langle
p\rangle_{q}^{-t}=\frac{\partial}{\partial p_{j}}+\mathcal{O}(\langle p\rangle_{q}^{-1})
$$
for any $f\in \mathcal{C}^{1}(\rz;\rz)$\,, $\langle p\rangle_{q}=(1+2\fkh)^{1/2}$\,, and any $t\in \rz$\,.
\end{itemize}
The abstract definition of $\mathcal{W}^{\mu}(X;\fkF)$ by duality and
interpolation can be specified as follows (see \cite{Leb1}): Once the
localisation in $q$ is made, assume $s=\chi_{m}(q)s$\,, take a dyadic
partition of unity
$\theta^{2}_{0}(|p|_{q}^{2})+\sum_{m'=1}^{\infty}\theta_{1}^{2}(2^{m'}|p|_{q})=\sum_{m'=0}^{\infty}\tilde{\chi}_{m'}^{2}(|p|_{q})\equiv
1$
then $s\in \mathcal{W}^{\mu}(X;\fkF)$ can be replaced by
$2^{m'}$-dependent estimates of $2^{m'd/2}(\tilde{\chi}_{m'}s)(q,2^{m'}p)$ in
$\red W^{\mu,2}_{comp}(X,\fkF)$\,, with a fixed compact support in
$(q,p)\in \rz^{2d}$\,. And this can be characterized by standard
pseudodifferential calculus.

 Bismut and Lebeau in
\cite{Leb1}\cite{Leb2}\cite{BiLe} work actually 
with  the metrics
\begin{eqnarray*}
  && \tilde{g}^{E'}=\langle p\rangle_{q}^{-2N_{V}}\pi_{X}^{*}(g^{\Lambda TQ}\otimes
     g^{\Lambda T^{*}Q})=\langle
     p\rangle_{q}^{-N}g^{E'}\quad\text{on}~E'=\Lambda TX\,,\\
\text{and}&&
\tilde{g}^{E}=
\langle p\rangle_{q}^{2N_{V}}\pi_{X}^{*}(g^{\Lambda T^{*}Q}\otimes
     g^{\Lambda TQ})=\langle
     p\rangle_{q}^{N}g^{E}\quad\text{on}~E=\Lambda T^{*}X\,,
\end{eqnarray*}
with the corresponding metric $\tilde{g}^{F'}=\tilde{g}^{E'}\otimes
g^{\fkf}$ and $\tilde{g}^{F}=\tilde{g}^{E}\otimes g^{\fkf}$\,.  But
this is a particular case of the weighted $\mathcal{W}^{\mu}$-spaces
which is discussed below.
\begin{proposition}
\label{pr:indepWmu}
 Let $\fkF=E,E',F,F'$ be endowed with the smooth metric
$g^{\fkF}$\,.\\
The spaces $\mathcal{W}^{\mu}(X;\fkF)$\,, $\mu\in\rz$\,, have the
following properties:
\begin{description}
\item[a)] They do not depend on the chosen metric $g^{TQ}$\,.
\item[b)] When $\varphi:Q\to Q$ is a diffeomorphism and the vector bundle
  isomorphisms $\psi=\varphi_{*}:X=T^{*}Q\to X$ is viewed as as
  diffeomorphism of $X$\,, defines an isomorphism
  $\psi_{*}:\mathcal{W}^{\mu}(X;\fkF)\to \mathcal{W}^{\mu}(X;\fkF)$\,.
\item[c)] If $\Psi:\Lambda TQ\otimes \Lambda T^{*}Q\to \Lambda TQ\otimes
 \Lambda T^{*}Q$ be a $\mathcal{C}^{\infty}$ vector bundle
 isomorphism, then  $[\pi_{X}^{*}(\Psi)]_{*}:\mathcal{W}^{\mu}(X;\fkF)\to
 \mathcal{W}^{\mu}(X,\fkF)$ is a continuous isomorphism.
\item[d)] The space $\mathcal{W}^{\mu}_{loc}(X;\fkF)$ is nothing but
  $\red W^{\mu,2}_{loc}(X;\fkF)$\,. In particular the trace
  $s\big|_{X'}$ is well defined as soon as $s\in
  \mathcal{W}^{\mu}(X;\fkF)$ with $\mu>1/2$\,. Additionally for
  $\mu\in ]1/2,1]$\,, $s\in \mathcal{W}^{\mu}(X;\fkF)$ is equivalent
  to $s_{\mp}=s\big|_{X_{\mp}}\in
  \mathcal{W}^{\mu}(\overline{X}_{\mp};\fkF)$ and
  $s_{-}\big|_{\partial X_{-}}=s_{+}\big|_{\partial X_{+}}$\,, while
  the trace condition is dropped when $\mu\in[0,1/2[$\,.
\item[e)] Let $G$ be vector bundle isomorphism, $G\in \mathcal{C}^{\infty}(X;L(\fkF))$\,, such
  that 
  \begin{eqnarray*}
    &&
(\nabla_{e_{i}}^{L(\fkF)}G)G^{-1}\,,\,
G^{-1}(\nabla_{e_{i}}^{L(\fkF)}G)\in \mathcal{L}(\mathcal{W}^{\mu}(X;\fkF);\mathcal{W}^{\mu-1}(X;\fkF))
\\
&&
(\nabla_{\hat{e}^{j}}^{L(\fkF)}G)G^{-1}\,,\,
   G^{-1}(\nabla^{L(\fkF)}_{\hat{e}^{j}}G)\in \mathcal{L}(\mathcal{W}^{\mu}(X;\fkF);\mathcal{W}^{\mu-1/2}(X;\fkF))\,,
  \end{eqnarray*}
  then the norm of $G\mathcal{W}^{n}(X;\fkF)=\left\{s\in
    {\red W^{n,2}_{loc}(X;\fkF)}\,, G^{-1}s\in
    \mathcal{W}^{n}(X;\fkF) \right\}$ can be given by the same
  expression as \eqref{eq:normWn} where only the metric, $g^{\fkF}$\,, and
  $L^{2}$-norm,
  $\|~\|_{L^{2}(g^{\fkF})}$\,, are replaced 
  respectively by
  $\tilde{g}^{\fkF}(v,v)=g^{\fkF}(G^{-1}v,G^{-1}v)$ and $\|~\|_{L^{2}(\tilde{g}^{\fkF})}$\,. The weighted
  spaces $G\mathcal{W}^{\mu}(X;\fkF)$\,, $\mu\in \rz$\,, can thus be characterized
  without changing the connection $\nabla^{\fkF}$\,.
\end{description}
\end{proposition}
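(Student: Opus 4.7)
The plan is to handle the five items in the order (a), (d), (b)--(c), (e), using G.~Lebeau's dyadic localization in the vertical variable as the main tool for (a) and (d), and then reducing (b), (c), (e) to routine commutator computations.

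\textbf{Items (a) and (d).} I fix a locally finite partition of unity $\sum_m\chi_m^2(q)\equiv 1$ subordinate to coordinate charts, and a dyadic partition $\theta_0^2(|p|_q^2)+\sum_{m'\geq 1}\theta_1^2(2^{-2m'}|p|_q^2)=1$ as recalled after Definition~\ref{de:Ws}. For $s\in L^2_{loc}$, membership in $\mathcal{W}^\mu(X;\fkF)$ is equivalent to a uniform $m,m'$-indexed bound on the rescaled pieces $\chi_m(q)(\tilde\chi_{m'}s)(q,2^{m'}p)$ viewed in $W^{\mu,2}_{comp}(\rz^{2d};\fkF)$ with fixed compact support. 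On that fixed compact support every metric-dependent quantity (the weight $\langle p\rangle_q$, the Christoffel coefficients entering the horizontal lift $e_i^g=\partial_{q^i}+\Gamma^k_{ij}(q;g)p_k\partial_{p_j}$, and the bundle metric on $\fkF$) is uniformly bounded with bounded inverse, so changing $g^{TQ}$ produces an equivalent norm; this proves (a). Keeping only $m'=0$ one recovers the standard Sobolev norm on a bounded $p$-region, hence $\mathcal{W}^\mu_{loc}(X;\fkF)=W^{\mu,2}_{loc}(X;\fkF)$, and the trace and collar statements of (d) then reduce to the classical restriction/extension results for $W^{\mu,2}$ on $\overline{X}_\mp$, valid away from $\mu=1/2$.

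\textbf{Items (b) and (c).} For a diffeomorphism $\varphi:Q\to Q$ the pushforward $\psi=\varphi_*$ intertwines the $g^{TQ}$-horizontal/vertical splitting with the $(\varphi^*g^{TQ})$-horizontal/vertical splitting on $X$, and is a fiberwise isometry for the corresponding bundle metrics on $\fkF$; thus $\psi_*:\mathcal{W}^\mu(X;\fkF,\varphi^*g^{TQ})\to\mathcal{W}^\mu(X;\fkF,g^{TQ})$ is an isomorphism, which combined with (a) gives (b). For (c), $\pi_X^*(\Psi)$ is a smooth $p$-independent bundle automorphism, so it commutes with $\nabla^\fkF_{e_i}$ and $\nabla^\fkF_{\hat e^j}$ modulo smooth $p$-independent endomorphisms of $\fkF$, which are bounded multipliers on every integer level of \eqref{eq:normWn} and hence on every $\mathcal{W}^\mu$.

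\textbf{Item (e).} Using repeatedly $\nabla^\fkF_U(G^{-1}s)=G^{-1}\nabla^\fkF_U s+G^{-1}(\nabla^{L(\fkF)}_U G)G^{-1}s$, the definition $\|s\|_{G\mathcal{W}^n}:=\|G^{-1}s\|_{\mathcal{W}^n}$ becomes a sum of norms of $\nabla^{\fkF,\alpha}_e\nabla^{\fkF,\beta}_{\hat e}s$ measured in $\tilde g^\fkF(v,v)=g^\fkF(G^{-1}v,G^{-1}v)$, plus commutator terms $(G^{-1}\nabla^{L(\fkF)}_\bullet G)$ applied to lower-order expressions. The hypothesis on $G$ says exactly that the horizontal commutators are bounded $\mathcal{W}^\mu\to\mathcal{W}^{\mu-1}$ and the vertical ones $\mathcal{W}^\mu\to\mathcal{W}^{\mu-1/2}$, matching the order of the replaced derivative in \eqref{eq:normWn}, so both norms are equivalent and extend to all $\mu\in\rz$ by duality and interpolation. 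The main obstacle is (a): the difference $e_i^g-e_i^{g'}$ carries an explicit factor of $p$, and only the systematic use of the weights $\langle p\rangle_q$ in \eqref{eq:normWn} --- equivalently the dyadic rescaling --- turns this formally order $1$ discrepancy into an absorbable lower-order perturbation.
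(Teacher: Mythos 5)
Your proposal is correct, and for parts (c) and (e) it follows essentially the same route as the paper. For parts (a), (b), (d) there is a genuine difference of technique worth noting. You lean on the dyadic-in-$p$ rescaling characterization of $\mathcal{W}^{\mu}$ (the one the paper recalls after Definition~\ref{de:Ws} but does \emph{not} use in this proof), reducing (a) and (d) to fixed-compact-support $W^{\mu,2}$ estimates with uniform control in the dyadic index. The paper instead stays $p$-global for (a): it works locally in $q$ with a euclidean reference metric $g_e$, observes that the weight ratios $(\langle p\rangle_{g,q}/\langle p\rangle_{g_e,q})^{\pm 1}$ together with all their $(q,p)$-derivatives are uniformly bounded, and that $\nabla^{\fkF,g}_{e_i}-\nabla^{\fkF,g_e}_{\partial_{q^i}}$ differs from $0$ by a $\mathcal{C}^{\infty}$ endomorphism of $\Lambda TQ\otimes\Lambda T^{*}Q$ plus the curvature-type term $\Gamma^k_{ij}(q)p_k\partial_{p_j}$, all of which are controlled by the terms already present in \eqref{eq:normWn}; only in (d) does it localize in $p$ and then pass to general $\mu$ by duality and interpolation. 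Both routes are valid; yours gives one uniform frozen-coefficient template, while the paper's is a touch more elementary because (a) needs no dyadic decomposition at all. For (b), replacing the paper's explicit coordinate calculation by the naturality argument --- $\psi_*$ is an isometry $\mathcal{W}^{\mu}(X;\fkF,\varphi^*g^{TQ})\to\mathcal{W}^{\mu}(X;\fkF,g^{TQ})$, and (a) identifies source and target --- is cleaner than what the paper writes. One small imprecision to flag: in your closing remark you call $e_i^g-e_i^{g'}$ a ``formally order $1$ discrepancy'' turned ``lower-order'' by the weights, but $p_k\partial_{p_j}$ has Lebeau order exactly $1$, not lower. The correct statement is that this order-$1$ difference is spanned by the vertical derivatives $\langle p\rangle_q\partial_{p_j}$ already appearing in \eqref{eq:normWn} with order-zero symbol coefficients, so it changes the norm into an equivalent one; it is not asymptotically small, just absorbable. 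This does not affect the validity of the argument.
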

\begin{proof}
\noindent\textbf{a)}  It suffices to consider the case
$\mathcal{W}^{n}(X;\fkF)$ for $n\in\nz$\,, where the result is already
known for $n=0$\,, and to work locally with
the coordinates $(q,p)\in U\times \rz^{d}$\,, $U$ open set of $\rz^{d}$\,. We take the euclidean
metric $g_{e}=g^{TU}_{e}=\sum_{i=1}^{d}(d\underline{q}^{i})^{2}$ as a
reference metric for which the local frame in $TX$ and $T^{*}X$ are
simply $\frac{\partial}{\partial q^{i}}$\,, $\frac{\partial}{\partial
  p_{j}}$ and $dq^{i},dp_{j}$\,, while $\nabla^{\fkF,g_{e}}$ can be
chosen as the trivial connection. The weights which are powers of  $\langle p\rangle^{2}_{g,q}=(1+g^{ij}(q)p_{i}p_{j})$
and $\langle p\rangle^{2}_{g_{e},q}=1+\sum_{j}^{d}p_{j}^{2}$
are uniformly equivalent with all the derivatives
$$
\partial_{q}^{\alpha}\partial_{p}^{\beta}\left(\frac{\langle
    p\rangle_{g,q}}{\langle p\rangle_{g_{e},q}}\right)^{\pm 1}
$$
uniformly bounded. It thus suffices to compare the covariant
derivatives:
\begin{eqnarray*}
  && \nabla^{\fkF,g}_{e_{i}}-\nabla^{\fkF,g_{e}}_{\frac{\partial}{\partial
    q^{i}}}=
     (\nabla^{\fkF,g}-\nabla^{\fkF,g_{e}})_{e_{i}}+e_{i}-\frac{\partial}{\partial q^{i}}
=\pi_{X,*}(\nabla^{Q,g}-\nabla^{Q,g_{e}})_{e_{i}}+\Gamma_{ij}^{k}(q)\frac{\partial}{\partial
     p_{j}}\,,\\
&&\nabla_{\frac{\partial}{\partial
   p_{j}}}^{\fkF,g}=\nabla^{\fkF,g_{e}}_{\frac{\partial}{\partial
   p_{j}}}=\frac{\partial}{\partial p_{j}}\,.
\end{eqnarray*}
Since $\nabla^{Q,g}-\nabla^{Q,g_{e}}\in
\mathcal{C}^{\infty}(Q;L(\Lambda TQ\otimes \Lambda T^{*}Q))$\,, we
deduce that the local expression of the  norm of $\mathcal{W}^{n}(X;\fkF)$\,,
for $\chi_{m}s$ with the metric $g$ and $g_{e}$ a neighborhood of the support of
$\chi_{m}$ are uniformly equivalent. This provides the local result
for two different metric $g_{1}$\,, $g_{2}$ and taking the full finite
sum in \eqref{eq:normWn} ends the proof.\\

\noindent\textbf{b)} Again we can work locally and by \textbf{a)} we can take
the euclidean metric $g^{TQ}=g^{TQ}_{e}$
 on $U$ and $\varphi(U)$\,. Write $(Q,P)=\psi(q,p)=(\varphi(q),
 {}^{t}[D\varphi_{q}]^{-1}p)=(\varphi(q), A(q)p)$ and
 \begin{eqnarray*}
   && \overbrace{\frac{\partial}{\partial
      q^{i}}}^{order~1}=[D\varphi_{q}]_{i}^{j}\overbrace{\frac{\partial}{\partial
      Q^{j}}}^{order~1}+[DA_{q}]_{ij}^{k}
{\red\overbrace{P_{k}\frac{\partial}{\partial P_{j}}}^{order~1}}\\
&&\red \overbrace{\langle p\rangle\frac{\partial}{\partial
   p_{j}}}^{order~1}=A_{k}^{j}(q)\frac{\langle p\rangle}{\langle A(q)p\rangle}\overbrace{\langle P\rangle_{q}\frac{\partial}{\partial P_{k}}}^{order~1}\,. 
 \end{eqnarray*}
Since locally with $g=g_{e}$\,, the connection $\nabla^{\fkf,g_{e}}$
becomes a trivial one, the equivalence of the norm \eqref{eq:normWn}
of $\chi_{m}s$ and $\psi_{*}[\chi_{m}s]$ follows. By \textbf{a)} this
equivalence holds for the metric $g^{\fkF}$ put on $U\supset \supset
\mathrm{supp}\chi_{m}s$ and $\varphi(U)$\,, and we conclude by summing
with respect to $m$ in \eqref{eq:normWn}\,.\\
\noindent\textbf{c)} By \textbf{b)} the problem is reduced to the case when
$\varphi=\pi_{\Lambda TQ\otimes \Lambda T^{*}Q}(\Psi)=\Id_{Q}$  and
$\Psi\in \mathcal{C}^{\infty}(Q;L(\Lambda TQ\otimes \Lambda T^{*}Q))$
and the result comes from
$\nabla^{\fkF}=\pi_{X}^{*}(\nabla^{Q,g}\otimes
\nabla^{\fkf~or~\fkf'})$ while we already know the result for $n=0$ by
Proposition~\ref{pr:L2glob}-c).\\
\noindent\textbf{d)} Locally\,, that is while considering
$\chi s$ with $\chi\in \mathcal{C}^{\infty}_{0}(X;\rz)$\,,
 the weight $\red\langle p\rangle_{q}^{2n_{3}+|\beta|}$
can be forgotten and $\mathcal{W}^{n}_{loc}(X;\fkF)$ is nothing but
$\red W^{n,2}_{loc}(X;\fkF)$\,. Choosing $\chi$ with a small enough
support we can even consider the map $s\mapsto \chi s$ as a continuous
map from $\mathcal{W}^{n}(X;\fkF)$ to $\red W^{n,2}(\rz^{2d}; 
\cz^{N_{d,\fkf}})$ and the continuity from $\mathcal{W}^{\mu}(X;\fkF)$
to $\red W^{\mu,2}(\rz^{2d};\cz^{N_{d,\fkf}})$ for any $\mu\in
\rz$ holds true by duality and interpolation. This proves
$\mathcal{W}^{\mu}_{loc}(X;\fkF)=\red W^{\mu,2}_{loc}(X;\fkF)$ for
all $\mu\in \rz$\,. \\
\noindent\textbf{e)}The norm of $s\in G\mathcal{W}^{n}(X,\fkF)$ equals
$\|s\|_{G\mathcal{W}^{n}}=\|G^{-1}s\|_{\mathcal{W}^{n}}$ while $G^{-1}\langle
p\rangle_{q}^{n_{3}}=\langle p\rangle_{q}^{n_{3}}G^{-1}$\,. 
The expression \eqref{eq:normWn} gives
$$
\red
\|s\|_{G\mathcal{W}^{n}}^{2}=
\sum_{m}\sum_{|\alpha|+|\beta|+n_{3}\leq n}
\|\langle
p\rangle^{2n_{3}+|\beta|}_{q}\tilde{\nabla}_{e}^{\fkF,\alpha}\tilde{\nabla}_{\hat{e}}^{\fkF,\beta}[\chi_{m}(q)s]\|_{L^{2}(\tilde{g}^{\fkF})}^{2}\,,
$$
with 
\begin{eqnarray*}
  && \tilde{\nabla}^{\fkF}=G\nabla^{\fkF} G^{-1}=
\nabla^{\fkF} +G\nabla^{L(\fkF)}G^{-1}=\nabla^{\fkF}-(\nabla^{L(\fkF)} G)G^{-1}\\
&& \nabla^{\fkF}=G^{-1}\tilde{\nabla}^{\fkF}G=\tilde{\nabla}^{\fkF}+G^{-1}(\nabla^{L(\fkF)} G)\,,\\
\text{and}&& \tilde{g}^{\fkF}(v,v)=g^{\fkF}(G^{-1}v,G^{-1}v)\,.
\end{eqnarray*}
The assumptions are exactly the ones which ensure the equivalence with
the squared norm 
$$
\red
\sum_{m}\sum_{|\alpha|+|\beta|+n_{3}\leq n}
\|\langle
p\rangle^{n_{3}+|\beta|}_{q}
\nabla_{e}^{\fkF,\alpha}
\nabla_{\hat{e}}^{\fkF,\beta}
[\chi_{m}(q)s]
\|_{L^{2}(\tilde{g}^{\fkF})}^{2}\,,
$$
where the initial connection $\nabla^{\fkF}$ is used.
\end{proof}
The result \textbf{e)} will be used with two types of weights.\\
\noindent
$\boxed{G=\langle p\rangle_{q}^{\pm\frac{N_{V}+N_{H}}{2}}}$~:
The sign depends on the case $\fkF=E'$ or
$\fkF=E$:
 \begin{itemize}
 \item It changes the
  metric $g^{E'}=\langle
p\rangle_{q}^{N_{H}-N_{V}}\pi_{X}^{*}(g^{\Lambda TQ}\otimes g^{\Lambda T^{*}Q})$ into
$\tilde{g}^{E'}=\langle
p\rangle_{q}^{-2N_{V}}\pi_{X}^{*}(g^{\Lambda TQ}\otimes g^{\Lambda
  T^{*}Q})$
\item It changes the metric $g^{E}=
\langle
p\rangle_{q}^{-N_{H}+N_{V}}\pi^{*}_{X}(g^{\Lambda T^{*}Q}\otimes
g^{\Lambda TQ})
$ into $\tilde{g}^{E}=\langle
p\rangle_{q}^{2N_{V}}\pi^{*}_{X}(g^{\Lambda T^{*}Q}\otimes g^{\Lambda TQ})$\,.
 \end{itemize}
Proposition~\ref{pr:indepWmu}-\textbf{e)} applies because
$$
(\partial_{T}G) G^{-1}\,,\, G^{-1}(\partial_{T}G)\in
\mathcal{L}(\mathcal{W}^{\mu}(X;E))
$$
for $T=e_{i}$ and $T=\hat{e}^{j}$ and for any $\mu\in\nz$ (and
therefore for any $\mu\in \rz$) because $\partial_{q^{i}}(\langle
p\rangle_{q}^{t})=\mathcal{O}(\langle p\rangle_{q}^{t})$ and
$\partial_{p_{j}}(\langle p\rangle_{q}^{t})=\mathcal{O}(\langle
p\rangle_{q}^{t-1})$ for $t\in \rz$\,.\\
This choice  allows to transfer at once the estimates of
\cite{Leb1}\cite{Leb2}\cite{BiLe} where the metrics $\tilde{g}^{E}$
and $\tilde{g}^{E'}$ were chosen. The advantage
of our choice is that the tensorized
 map $\phi=\sigma:TX\to T^{*}X$  sends isometrically $(E',g^{E'})$ to
 $(E,g^{E})$ while it sends isometrically $(E',\tilde{g}^{E'})$ to
 $(E, \langle p\rangle_{q}^{-2N}\tilde{g}^{E})$\,.\\
\noindent
$\boxed{G=e^{\pm (\fkh(q,p)+V(q)}}$~:
 In
\cite{BiLe}\cite{Bis05} the $L^{2}$-norm on $\tilde{F}=E=\Lambda T^{*}X\otimes
\pi_{X}^{*}(\tilde{\fkf})$\,, is given by
$$
\int_{X}|s(q,p)|_{g^{\tilde{F}}}^{2}e^{-2\fkh(q,p)}~|dqdp|=
\|s\|^{2}_{e^{\fkh}L^{2}(g^{\tilde{F}})}\,.
$$
Additionally the metric on $\tilde{\fkf}=Q\times \cz$ is given by
$g^{\tilde{\fkf}}(z)=e^{-2V(q)}|z|^{2}$ while the flat connection is the
trivial one $\nabla^{\fkf}=\nabla$\,. Taking $z'=e^{-V(q)}z$ gives
$z=e^{V(q)}z'$\,. It is thus the same as choosing
 $\fkf=Q\times \cz$\,,
$\nabla^{\fkf}=\nabla+dV(q)$ and $g^{\fkf}(z')=|z'|^{2}$ and the above
squared norm equals
$$
\|s\|^{2}_{e^{\fkh}L^{2}(g^{\tilde{F}})}=\|s\|^{2}_{e^{\fkh +V}L^{2}(g^{F})}\,.
$$
while its dual norm satisfies
$$
\|t\|^{2}_{e^{-\fkh}L^{2}(g^{\tilde{F}'})}=\|s\|^{2}_{e^{-(\fkh+V)}L^{2}(g^{F'})}\,,
$$
with $\fkf'=Q\times \cz=\fkf$\,, $g^{\fkf'}=g^{\fkf}$\,, but
$\nabla^{\fkf'}=\nabla-dV(q)$\,.\\
With
  \begin{eqnarray*}
    &&
(\partial_{q^{i}}G) G^{-1}=G^{-1}(\partial_{q^{i}}G)=\pm \frac{\partial
 (\fkh+V(q))}{\partial q^{i}}(q,p)\in \mathcal{L}(\mathcal{W}^{\mu}(X;E);
\mathcal{W}^{\mu-1}(X;E))
\\
\text{and}&&
(\partial_{p_{j}}G) G^{-1}=G^{-1}(\partial_{p_{j}}G)
=
\pm 
\frac{\partial (\fkh +V(q))}{\partial p_{j}}(q,p)\in
             \mathcal{L}(\mathcal{W}^{\mu}(X;E);\mathcal{W}^{\mu-1/2}(X;E))\,.
\end{eqnarray*}
the result of  Proposition~\ref{pr:indepWmu}-\textbf{e)} ensures that the
regularity estimates are equivalent after simply applying the weight
to the $L^{2}$-space.
\begin{definition}
  When $\mu\in [0,1]\setminus\left\{\frac{1}{2}\right\}$ we define
$\mathcal{W}^{\mu}(X;\widehat{\fkF}_{g})$ as the set of sections $s\in
L^{2}(X;\fkF)$ such that
\begin{itemize}
\item $s_{\mp}=s\big|_{X_{\mp}}$ belongs to
  $\mathcal{W}^{\mu}(\overline{X}_{\mp};\fkF)$\,,\\
\item if $\mu\in ]1/2,1]$\,, $s_{-}\big|_{\partial
    X'}=s_{+}\big|_{\partial X'}$ in $\widehat{\fkF}_{g}\big|_{X'}$\,.
\end{itemize}
Finally for $\mu\in[0,1]$\,,
$\mathcal{W}^{\mu}_{ev}(X;\widehat{\fkF}_{g})=\mathcal{W}^{\mu}(X;\widehat{\fkF}_{g})\cap
L^{2}_{ev}(X;\fkF)$\,.
\end{definition}
  We already noticed that
  $\red W^{\mu,2}_{loc}(X;\widehat{\fkF}_{g_{0}})=W^{\mu,2}_{loc}(X;\fkF)$\,. Since
  the definition of $\mathcal{W}^{\mu}(X;\widehat{\fkF}_{g_{0}})$
  simply adds the global estimates which can be checked separated on
  both sides we get $\mathcal{W}^{\mu}(X;\widehat{\fkF}_{g_{0}})=\mathcal{W}^{\mu}(X;\fkF)$\,.
\begin{proposition}
\label{pr:hatPsiW} 
The isomorphism
$(\widehat{\Psi}^{g,g_{0}}_{X})_{*}:
L^{2}(X_{(-\varepsilon,\varepsilon)};\fkF, \hat{g}_{0}^{\fkF})\to 
L^{2}(X_{(-\varepsilon,\varepsilon)};\fkF,\hat{g}^{\fkF})$\,, 
of Definition~\ref{de:hPsigg0} and
Proposition~\ref{pr:L2glob}-\textbf{e)}, is actually an continuous isomorphism from
$\mathcal{W}^{\mu}(X_{(-\varepsilon,\varepsilon)};\fkF)$ to
$\mathcal{W}^{\mu}(X_{(-\varepsilon,\varepsilon)};\widehat{\fkF}_{g})$ for $\mu\in [0,1]$\,.
\end{proposition}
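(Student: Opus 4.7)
The plan is to reduce to the smooth case on each side and then verify that the interface condition along $X'$ is preserved.

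First, I would unpack the definition: a section $s$ belongs to $\mathcal{W}^{\mu}(X_{(-\varepsilon,\varepsilon)};\widehat{\fkF}_{g})$ if and only if its restrictions $s_{\mp}=s|_{X_{(-\varepsilon,\varepsilon)}\cap \overline{X}_{\mp}}$ lie in $\mathcal{W}^{\mu}(\overline{X}_{\mp}\cap X_{(-\varepsilon,\varepsilon)};\fkF)$, together with the trace matching $s_{-}|_{X'}=s_{+}|_{X'}$ in $\widehat{\fkF}_{g}|_{X'}$ when $\mu\in {]1/2,1]}$. The corresponding piecewise description applies to $\mathcal{W}^{\mu}(X_{(-\varepsilon,\varepsilon)};\fkF)$ (where the interface identification is the one of $\fkF=F$ or $\fkF=F'$, i.e., the case $g=g_{0}$). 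So it suffices to prove (i) continuity of $(\widehat{\Psi}^{g,g_{0}}_{X})_{*}$ between the corresponding piecewise spaces on each closed side $\overline{X}_{\mp}\cap X_{(-\varepsilon,\varepsilon)}$, and (ii) that the interface condition is transported correctly.

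For (i), recall from Definition~\ref{de:hPsigg0} and diagram~\eqref{eq:psiXgg} that on each side $\overline{X}_{\mp}\cap X_{(-\varepsilon,\varepsilon)}$ the map $\widehat{\Psi}^{g,g_{0}}_{X}$ is a genuine $\mathcal{C}^{\infty}$ vector bundle isomorphism covering the diffeomorphism $\hat\varphi_{X}^{g,g_{0}}$, built from the parallel transport for the Levi-Civita connection $\nabla^{Q,g_{\mp}}$ in the $\underline{e}_{1}$ direction (smooth on each side since $g_{\mp}$ is smooth there). By the $\mathcal{C}^{\infty}$-reflection principle I extend the side under consideration to a slightly larger open manifold of $Q$ on which $g_{\mp}$ and the bundle data remain smooth, and then extend $\widehat{\Psi}^{g,g_{0}}_{X}$ smoothly to an open neighborhood of $\overline{X}_{\mp}\cap X_{(-\varepsilon,\varepsilon)}$ in that extended cotangent bundle. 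I can then write $\widehat{\Psi}^{g,g_{0}}_{X}=\pi_{X}^{*}(\Psi)\circ \psi$ with $\psi=\varphi_{*}$ for some diffeomorphism $\varphi$ of the base and a smooth vector bundle endomorphism $\Psi$ over $Q$ (this is the content of (A)–(B) in Subsection~\ref{sec:dbcot}: $\varphi$ corrects the coordinates to $(\tilde q,\tilde p)$ while $\Psi$ implements the parallel transport identification $\tilde x^{\prime *}(\hat F_{g}|_{X'})$). Proposition~\ref{pr:indepWmu}\,(b)–(c) then yields continuity on $\mathcal{W}^{\mu}$ for all $\mu\in\rz$ in the extended setting, and restriction gives the desired continuity between $\mathcal{W}^{\mu}(\overline{X}_{\mp}\cap X_{(-\varepsilon,\varepsilon)};\fkF)$ spaces for $\mu\in[0,1]$. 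Applying the same argument to the inverse $(\widehat{\Psi}^{g,g_{0}}_{X})^{-1}$ (which is piecewise smooth by the same construction) gives the isomorphism property on each side.

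For (ii), I use the crucial fact, already built into Definition~\ref{de:hPsigg0} and diagram~\eqref{eq:psiXgg}, that $\widehat{\Psi}^{g,g_{0}}_{X}|_{X'}=\mathrm{Id}_{F|_{X'}}$ (equivalently, $\hat\varphi_{X}^{g,g_{0}}|_{X'}=\mathrm{Id}_{X'}$ and the bundle part is the identity over $X'$, because parallel transport along zero length is trivial and $g_{-}|_{Q'}=g_{+}|_{Q'}$). Consequently, for $\mu\in {]1/2,1]}$ the trace maps $\gamma_{\mp}:\mathcal{W}^{\mu}(\overline{X}_{\mp}\cap X_{(-\varepsilon,\varepsilon)};\fkF)\to W^{\mu-1/2,2}_{loc}(X';\fkF|_{X'})$ from Proposition~\ref{pr:indepWmu}\,(d) commute with $(\widehat{\Psi}^{g,g_{0}}_{X})_{*}$ up to the identity on $X'$. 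Hence the matching $\gamma_{-}s_{-}=\gamma_{+}s_{+}$ in $F|_{X'}$ transfers into the matching of traces in $\widehat{\fkF}_{g}|_{X'}$, and vice versa. Combining (i) and (ii) yields the claimed continuous isomorphism. The only mild subtlety is to avoid the critical exponent $\mu=1/2$ for the trace step, which is built into the hypothesis $\mu\in[0,1]\setminus\{1/2\}$ in the definition of $\mathcal{W}^{\mu}(X;\widehat{\fkF}_{g})$; at $\mu\in[0,1/2[$ no interface condition is imposed and (i) alone suffices.
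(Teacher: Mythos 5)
Your approach is fundamentally the same as the paper's (use Proposition~\ref{pr:indepWmu}~\textbf{b)}\textbf{c)} on each smooth side $\overline{X}_{\mp}$ after reflection, check that the interface matching along $X'$ is transported because $\widehat{\Psi}_{X}^{g,g_{0}}|_{X'}=\mathrm{Id}$), but there is one real difference that causes a gap. The paper only proves the two endpoints $\mu=0$ (already Proposition~\ref{pr:L2glob}-\textbf{e)}) and $\mu=1$ (where the matching is a genuine continuity condition), and then concludes for all $\mu\in[0,1]$ by interpolation. You instead try to argue directly at each $\mu$ via the trace characterization, which only applies for $\mu\in[0,1]\setminus\{1/2\}$. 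The statement is for $\mu\in[0,1]$; the value $\mu=1/2$ is critical for the trace map and is precisely what interpolation is designed to absorb, since at $\mu=1/2$ the space $\mathcal{W}^{1/2}$ is defined by interpolation rather than by a piecewise-plus-matching description. Your closing sentence acknowledges the issue but essentially excludes $\mu=1/2$ instead of proving it, so the full range of the proposition is not obtained.

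A minor secondary inaccuracy: the factorization $\widehat{\Psi}_{X}^{g,g_{0}}=\pi_{X}^{*}(\Psi)\circ\varphi_{*}$ with $\varphi$ a base diffeomorphism is not exact. From Definition~\ref{de:tildeqp}, the projected map $\hat\varphi_{X}^{g,g_{0}}$ fixes $q$ and acts on $p$ by a $q$-dependent linear map $\psi(q)$; this is a vector bundle automorphism of $T^{*}Q$ over the identity, not a pushforward $\varphi_{*}$. In particular it is not literally covered by the hypotheses of Proposition~\ref{pr:indepWmu}-\textbf{b)} nor \textbf{c)}. The same local computation used to prove \textbf{b)}\textbf{c)} does go through for such an affine-in-$p$ map (the weight $\langle p\rangle_{q}$ is uniformly equivalent to $\langle\psi(q)p\rangle_{q}$ and the orders of derivations are preserved), so the continuity on each side is still correct, but the factorization as stated is wrong and should not be cited as the reason. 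The clean statement is simply that on each side $(\widehat{\Psi}_{X}^{g_{\mp},g_{0}})_{*}$ is a smooth bundle isomorphism over a smooth fiberwise-linear automorphism of $X$, and the proof of Proposition~\ref{pr:indepWmu}-\textbf{b)}\textbf{c)} adapts verbatim.
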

\begin{proof}
For $\mu=0$\,, $(\Psi^{g,g_{0}}_{X})_{*}:L^{2}(X;\fkF,g_{0}^{\fkF})\to
L^{2}(X;\fkF,g^{\fkF})$ is an isomorphism.\\
Consider now $\mu=1$\,.
The vector bundle morphism $(\Psi^{g,g_{0}}_{X})_{*}$ is a piecewise
$\mathcal{C}^{\infty}$ vector bundle isomorphism from $\fkF$ to
$\hat{\fkF}_{g}$ which transforms the continuity condition
$s_{-}\big|_{X'}=s_{+}\big|_{X'}$ in $\fkF\big|_{X'}$ into the same
continuity condition in $\widehat{\fkF}_{g}\big|_{X'}$\,. Additionally
by Proposition~\ref{pr:indepWmu}-\textbf{b)}\textbf{c)} applied on
both sides (or more exactly for the $\mathcal{C}^{\infty}$-metrics
$g_{-}^{TQ}$ and $g_{+}^{TQ}$ and then restricted to $X_{\mp}$)\,, we
obtain 
$$
C^{-1}\|s_{\pm}\|_{\mathcal{W}^{1}(\overline{X}_{\mp})}\leq \|
(\Psi^{g_{\mp},g_{0}}_{X})_{*}s\|_{\mathcal{W}^{1}(\overline{X}_{\mp})}
\leq C\|s_{\pm}\|_{\mathcal{W}^{1}(\overline{X}_{\mp})}\,.
$$
This proves that
$(\widehat\Psi^{g,g_{0}}_{X})_{*}:\mathcal{W}^{1}(X;\fkF)\to
\mathcal{W}^{1}(X;\widehat{\fkF}_{g})$ is an
isomorphism. Interpolation yields the result 
for $\mu\in [0,1]$\,.
\end{proof}
Finally, Proposition~\ref{pr:indepWmu}-\textbf{e)} works for
$\hat{G}\mathcal{W}^{\mu}(X;\widehat{\fkF}_{g})$\,, $\mu\in [0,1]$\,, with the weights $\hat{G}$
described in the three examples, after replacing the smooth metric
$g^{TQ}$ by $\hat{g}^{TQ}$\,, namely
$$
\hat{G}=\langle p\rangle_{\hat{g},q}^{\pm\frac{N_{H}+N_{V}}{2}}
\quad
\text{and}\quad
\hat{G}=e^{\pm (\hat{\fkh}(q,p)+\hat{V}(q))}\,,
$$
because those weights are continuous w.r.t $q^{1}$\,.

\section{Closed realizations of the differential}
\label{sec:closeddiff}

In this section trace theorems and boundary conditions for the
differential, more generally the exterior covariant derivative, are
considered. No riemannian metric is really required here and all the
analysis is made by using the proper $\mathcal{C}^{\infty}$ structure
of manifolds made of the two pieces $\overline{X}_{-}$ and
$\overline{X}_{+}$ glued in the proper way. In particular, the new
manifold $M_{g}$ is introduced in
Subsection~\ref{sec:diffhatE}. Its construction relies on the coordinates
$(\tilde{q},\tilde{p})$ related with the parallel transport in
$X=T^{*}Q$ for the Levi-Civita connection associated with
$\hat{g}^{TQ}$\,. It depends on the metric $g_{-}^{TQ}$ initially chosen
on $\overline{Q}_{-}$ and accordingly the boundary conditions for the
differential finally depend on $g_{-}^{TQ}$\,.

\subsection{General partial trace results}
\label{sec:gentrace}
Let $M$ (resp. $\overline{M}=M\sqcup M'$) be a smooth oriented manifold
(resp. with boundary $\partial M=M'$) and let $\pi_{\fkF}:\fkF\to M$
(resp. $\pi_{\fkF}:\fkF\to \overline{M}$)
be a $\mathcal{C}^{\infty}$ vector bundle on $M$
(resp. on $\overline{M}$) endowed with the non necessarily smooth
connection
\begin{eqnarray}
  \label{eq:nonsmoothconn}
  &&\nabla^{\fkF}: \mathcal{C}^{\infty}(M;\fkF)\to L^{\infty}_{loc}(M;
  T^{*}M\otimes \fkF)\\
  \label{eq:nonsmoothconn2}
\text{resp.} &&\nabla^{\fkF}:
  \mathcal{C}^{\infty}(\overline{M};\fkF)\to
                L^{\infty}_{loc}(\overline{M}; T^{*}M\otimes \fkF)\,.
\end{eqnarray}
 The exterior covariant derivatives   $d^{\nabla^{\fkF}}$ acting on
 sections of
 $\Lambda T^{*}M\otimes \fkF$  is written in
 local coordinates
$$
d^{\nabla^{\fkF}}=(dx^{i}\wedge) \frac{\partial}{\partial
  x^{i}}\otimes \mathrm{Id_{\fkF}}+
(dx^{i}\wedge)\otimes
 \nabla^{\fkF}_{\frac{\partial}{\partial x^{i}}}\,.
$$
 Remember the Definition~\ref{de:calE} of
 $\mathcal{E}_{loc~comp}(d^{\nabla^{\fkF}},\Lambda T^{*}M\otimes \fkF)$
 in the two cases $M'=\emptyset$ and
 $M'\neq \emptyset$\,.
\begin{proposition}
\label{pr:partialtrace}
\noindent\textbf{a)} If $\fkF$ is a smooth vector bundle on $M$  (resp. the manifold
with boundary $\overline{M}=M\sqcup M'$) and $\nabla^{\fkF}_{1}$\,,
$\nabla^{\fkF}_{2}$ are two connections on $\fkF$ which fulfill
\eqref{eq:nonsmoothconn} (resp. \eqref{eq:nonsmoothconn})\,, then 
$$
\mathcal{E}_{\bullet}(d^{\nabla_{1}^{\fkF}}, \Lambda T^{*}M\otimes
     \fkF)=\mathcal{E}_{\bullet}(d^{\nabla_{2}^{\fkF}}, \Lambda
     T^{*}M\otimes \fkF)\quad \bullet =loc~\text{or}~comp.
$$
\noindent\textbf{b)}When $M'\subset M$ a smooth hypersurface of $M$ (resp. a manifold
with boundary $\overline{M}=M\sqcup M'$) with the natural embedding
$j_{M'}:M'\to M$\,, the tangential trace map $s\mapsto j_{M'}^{*}s$ is well
defined an continuous from $\mathcal{E}_{loc}(d^{\nabla^{\fkF}},\Lambda
T^{*}M\otimes \fkF)$ to
$\mathcal{D}'(M';(\Lambda
T^{*}M'\otimes \fkF\big|_{M'})$\,.\\
\noindent\textbf{c)} The space $\mathcal{C}^{\infty}_{0}(M;\Lambda T^{*}M\otimes \fkF)$
(resp. $\mathcal{C}^{\infty}_{0}(\overline{M};\Lambda T^{*}M\otimes
\fkF)$) is dense in the two spaces $\mathcal{E}_{comp}(d^{\nabla^{\fkF}},\Lambda T^{*}M\otimes \fkF)$
and $\mathcal{E}_{loc}(d^{\nabla^{\fkF}},\Lambda T^{*}M\otimes \fkF)$\,.\\
\noindent\textbf{d)} In the case $\overline{M}=M\sqcup M'$ and
$\fkF=\overline{M}\times\cz$ with the trivial connection\,,
 Stokes formula 
$$
\int_{M}d\overline{s}\wedge s'
+(-1)^{\mathrm{deg}s}\overline{s}\wedge
(ds')=\int_{M'}\overline{j_{M'}^{*}s}\wedge j_{M'}^{*}s'\,,
$$
holds
$s\in \mathcal{E}_{loc}(d,\Lambda T^{*}M\otimes \cz)$ and all 
$s'\in
\mathcal{E}_{comp}(d,\Lambda T^{*}M\otimes \cz)$\,, where the
right-hand side is the unique  sesquilinear continuous extension from
$\mathcal{C}^{\infty}_{0}(\overline{M};\Lambda T^{*}M\otimes \cz)$\,. \\
\noindent\textbf{e)} When $M=M_{-}\sqcup M'\sqcup M_{+}$ and $\overline{M}_{\pm}$
are smooth domains of $M$\,, $s\in \mathcal{E}_{loc}(d^{\nabla^{\fkF}},\Lambda
T^{*}M\otimes\fkF)$ iff $s_{\mp}=s\big|_{M_{\mp}}$ belongs to
$\mathcal{E}_{loc}(d^{\nabla^{\fkF}}, (\Lambda T^{*}M\otimes \fkF)\big|_{\overline{M}_{\mp}})$ and
$$
j^{*}_{M'}s_{-}=j^{*}_{M'}s_{+}\quad \text{in}~\mathcal{D}'(M';\Lambda
T^{*}M'\otimes \fkF\big|_{M'})
$$
\end{proposition}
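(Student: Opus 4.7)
The plan is to establish \textbf{a)} first, then reduce the remaining statements to the classical scalar case on a half-space via local smooth trivialization, so that the non-smoothness of $\nabla^{\fkF}$ is circumvented at the outset.

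For \textbf{a)}, write $A=\nabla^{\fkF}_{1}-\nabla^{\fkF}_{2}\in L^{\infty}_{loc}(M;T^{*}M\otimes L(\fkF))$. Then $d^{\nabla^{\fkF}_{1}}-d^{\nabla^{\fkF}_{2}}$ is the zeroth-order operator of exterior multiplication by $A$, which sends $L^{2}_{loc}$ (resp.\ $L^{2}_{comp}$) into itself; hence the two sets $\mathcal{E}_{\bullet}(d^{\nabla^{\fkF}_{j}},\Lambda T^{*}M\otimes \fkF)$ coincide.

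By \textbf{a)}, from now on we fix a smooth connection on $\fkF$. Using a locally finite atlas of charts which are either disjoint from $M'$ or straightened half-space charts across $M'$, with smooth bundle trivializations, a subordinate partition of unity, and noting that multiplication by a cutoff $\chi\in \mathcal{C}^{\infty}_{0}$ preserves the $\mathcal{E}$-class (the commutator with $d^{\nabla^{\fkF}}$ being the bounded multiplication operator $(d\chi)\wedge\cdot$), the statements \textbf{b)}, \textbf{c)}, \textbf{d)} reduce to the scalar case on $U\subset\rz^{n}$ open or on a half-space. Density \textbf{c)} follows by Friedrichs regularization: if $s\in L^{2}_{comp}(\overline{U})$ with $ds\in L^{2}_{comp}(\overline{U})$, translate $s$ slightly into $U$ and convolve with a smooth mollifier; because $d$ commutes with convolution and translation, both the approximant and its exterior derivative converge in $L^{2}$. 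The tangential trace in \textbf{b)} is then defined as the unique $j_{M'}^{*}s\in\mathcal{D}'(M';\Lambda T^{*}M'\otimes \fkF|_{M'})$ satisfying
\[
\int_{M'}\overline{j_{M'}^{*}s}\wedge j_{M'}^{*}\Phi=\int_{M}d\overline{s}\wedge\Phi+(-1)^{\deg s}\int_{M}\overline{s}\wedge d\Phi
\]
for every $\Phi\in\mathcal{C}^{\infty}_{0}(\overline{M};\Lambda T^{*}M)$ of complementary degree, the right-hand side depending continuously on $(s,ds)\in L^{2}_{loc}\times L^{2}_{loc}$. The Stokes formula \textbf{d)} is precisely this identity, which holds classically for smooth $s,s'$ and extends to $\mathcal{E}_{loc}\times\mathcal{E}_{comp}$ by \textbf{c)} and continuity.

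For \textbf{e)}, the direction ``$\Rightarrow$'' is immediate. For ``$\Leftarrow$'', localization, smooth trivialization, and \textbf{a)} reduce the claim to the scalar case of a form $s\in L^{2}_{loc}(M;\Lambda T^{*}M)$ whose restrictions $s_{\mp}$ satisfy $ds_{\mp}\in L^{2}_{loc}$ and $j_{M'}^{*}s_{-}=j_{M'}^{*}s_{+}$. Testing the distribution $ds$ against $\Phi\in\mathcal{C}^{\infty}_{0}(M;\Lambda T^{*}M)$ of complementary degree, splitting $\int_{M}\overline{s}\wedge d\Phi=\int_{M_{-}}+\int_{M_{+}}$ and applying \textbf{d)} on each closed half produces a boundary term proportional to $j_{M'}^{*}(s_{+}-s_{-})\wedge j_{M'}^{*}\Phi$, which vanishes by hypothesis; therefore $ds$ is the $L^{2}_{loc}$ section $(ds_{-})\mathbf{1}_{M_{-}}+(ds_{+})\mathbf{1}_{M_{+}}$. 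The main obstacle is handling the merely $L^{\infty}$ connection in \textbf{b)}, which is entirely bypassed by \textbf{a)}: once a smooth connection and smooth trivialization are in place, \textbf{b)}--\textbf{e)} are instances of the classical $H(d)$-theory on Lipschitz domains.
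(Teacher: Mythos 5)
Your overall strategy---use a) to reduce to a smooth trivial connection in local charts and then invoke scalar $H(d)$-theory on a box---is close in spirit to the paper's, but your proof of c) in the boundary case is wrong in the translation direction, and since you have reordered the argument so that b), d), e) all rest on c), the error propagates. ``Translating $s$ slightly into $U$'' moves the support inward, so the translated form $s_{\delta}(x)=s(x+\delta e_{1})$ is only defined on $\{x^{1}\le -\delta\}$; any extension across $\{x^{1}=-\delta\}$ (say by zero, which is what taking distributional derivatives on all of $M$ amounts to) creates a jump whose mollification has derivative of $L^{2}$-norm $\sim\eta^{-1/2}$. Concretely, on $\overline{M}=(-1,0]$ take $s\equiv 1$, so $ds=0\in L^{2}$; inward translation with zero extension gives $\mathbf{1}_{\{x\le-\delta\}}$, and $\|\partial_{x}(\varphi_{\eta}*\mathbf{1}_{\{x\le-\delta\}})\|_{L^{2}(\overline{M})}=\|\varphi_{\eta}\|_{L^{2}}\to\infty$ as $\eta\to 0$, so the approximants diverge in $\mathcal{E}_{comp}$. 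The correct move is to translate \emph{outward}: $T_{-\delta}s(x)=s(x^{1}-\delta,x')$ is defined without any cutoff on a collar $\{x^{1}\le\delta\}\supset\overline{M}$, with $d(T_{-\delta}s)=T_{-\delta}(ds)$ and no jump; mollification with $\eta<\delta$ then produces forms smooth up to $M'$ and converging to $(s,ds)$ in $L^{2}_{loc}\times L^{2}_{loc}$.

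With c) corrected, your weak construction of the trace in b) is recoverable, but you skip two steps: the functional $\Phi\mapsto\int_{M}d\overline{s}\wedge\Phi+(-1)^{\deg s}\int_{M}\overline{s}\wedge d\Phi$ must be shown to vanish whenever $j_{M'}^{*}\Phi=0$ (this needs the density c) together with the classical Stokes formula for smooth $s$), and to obtain an honest distribution on $M'$ rather than merely a bounded linear functional on $j_{M'}^{*}\mathcal{C}^{\infty}_{0}(\overline{M})$ you should fix a collar lift $\Phi=\rho(x^{1})\psi$ so that $(\Phi,d\Phi)$ depends continuously on $(\psi,d\psi)$. The paper avoids this dependence entirely by proving b) first and directly: from $ds-\sum_{i'\ne 1}\partial_{x^{i'}}s_{I}\,dx^{i'}\wedge dx^{I}=\partial_{x^{1}}s_{I'}\,dx^{1}\wedge dx^{I'}\in L^{2}((-\varepsilon,\varepsilon);W^{-1,2})$ one gets that each $s_{I'}$ with $1\notin I'$ is $W^{1,2}$ in $x^{1}$ with values in $W^{-1,2}$, hence has a trace, with no appeal to c). That order keeps b), c), d), e) more independent of one another and is worth adopting.
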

\begin{proof}
We work with a complex vector bundle $\fkF$\,. It does not change anything
here.\\
\noindent\textbf{a)}  The equality is due to
$\nabla_{1}^{\fkF}-\nabla_{2}^{\fkF}\in
L^{\infty}_{loc}(M;T^{*}M\otimes L(\fkF))$ and 
$$
d^{\nabla_{1}^{\fkF}}-d^{\nabla_{2}^{\fkF}}=dx^{i}\wedge
[\nabla^{\fkF}_{1,\frac{\partial}{\partial
    x^{i}}}-\nabla^{\fkF}_{2,\frac{\partial}{\partial x^{i}}}]\,,
$$
in local coordinates.
The result is then a consequence of the equivalences
 for  $s\in L^{2}_{loc}(M;\Lambda T^{*}M\otimes \fkF)$ (resp. $s\in
L^{2}_{loc}(\overline{M};\Lambda T^{*}M\otimes\fkF)$):
\begin{eqnarray*}
  && \left(d^{\nabla^{\fkF}_{1}}s\in L^{2}_{loc}(M;\Lambda
     T^{*}M\otimes \fkF)\right)
\Leftrightarrow
\left(
d^{\nabla^{\fkF}_{2}}s\in L^{2}_{loc}(M;\Lambda T^{*}M\otimes \fkF)
\right)\,,
\\
\text{resp.}&&
\left(d^{\nabla^{\fkF}_{1}}s\in L^{2}_{loc}(\overline{M};\Lambda
     T^{*}M\otimes \fkF)\right)
\Leftrightarrow
\left(
d^{\nabla^{\fkF}_{2}}s\in L^{2}_{loc}(\overline{M};\Lambda T^{*}M\otimes \fkF)
\right)\,.
\end{eqnarray*}

\noindent\textbf{b)} For the existence of a trace, the case with a boundary
$\pi_{\fkF}:\fkF\to \overline{M}$ is contained in the case without
boundary with $M'\subset M$\,, by writing $\overline{M}$ as a smooth
domain of the smooth manifold 
$\tilde{M}$ and $\fkF=\tilde{\fkF}\big|_{\overline{M}}$\,.\\
Because $d^{\nabla^{\fkF}}\chi=\chi d^{\nabla^{\fkF}}+d\chi\wedge$ for $\chi\in
\mathcal{C}^{\infty}_{0}(M;\rz)$\,, $s\in
\mathcal{E}_{loc}(d^{\nabla^{\fkF}},\Lambda T^{*}M\otimes \fkF)$
is equivalent to $\chi_{j}s\in \mathcal{E}_{comp}(d^{\nabla^{\fkF}},\Lambda
T^{*}M\otimes \fkF\big|_{U_{j}})$ for all $j$\,, when
$\sum_{j}\chi_{j}\equiv 1$ is a locally finite partition of unity
subordinate to a trivializing  atlas $M=\cup_{j}U_{j}$  for $\fkF$\,,
$\fkF\big|_{U_{j}}\simeq U_{j}\times \cz^{d_{f}}$\,. With \textbf{a)},
the connection $\nabla^{\fkF}$ can be replaced by the trivial
connection on $U_{j}\times\cz^{d_{f}}$\,. By possibly refining the
atlas we can assume $U_{j}=(-\varepsilon,\varepsilon)^{m}$ in a local
coordinate system $(x^{1},\ldots,x^{m})$ such that $U_{j}\cap
M'=\left\{0\right\}\times (-\varepsilon,\varepsilon)^{m-1}$ for
$U_{j}\cap M'\neq \emptyset$\,.
We now have to verify that
$$
\chi_{j}s\in \mathcal{E}_{comp}(d;\Lambda T^{*}(-\varepsilon,\varepsilon)^{m}\otimes
\cz^{d_{f}})
$$
has a trace along $\left\{x^{1}=0\right\}$\,.
Finally the local $L^{2}$-estimates of 
 $\chi_{j}s\in \mathcal{E}_{comp}(d;\Lambda
T^{*}U_{j}\otimes\cz^{d_{f}})$ can be expressed with the
euclidean metric on $(-\varepsilon,\varepsilon)^{m}$\,.
Working separtely on components in $\cz^{d_{f}}$ reduces the problem
to the scalar case.\\
In $U_{j}=(-\varepsilon,\varepsilon)^{m}$ with the euclidean metric,
$\chi_{j}s=s_{I}dx^{I}\in \mathcal{E}_{comp}(d; \Lambda T^{*}(-\varepsilon,\varepsilon)^{m}\otimes \cz)$ means
$$
\chi_{j}s=s_{I}(x)dx^{I}\in L^{2}_{comp}((-\varepsilon,\varepsilon)^{m};
\Lambda \cz^{m}) \quad\text{and}\quad
d\chi_{j}s\in L^{2}_{comp}(d; \Lambda \cz^{m} )
$$
gives
$$
\frac{\partial s_{I'}}{\partial x^{1}}dx^{1}\wedge dx^{I'}
=ds-\frac{\partial s_{I}}{\partial x^{i'}}dx^{i'}\wedge dx^{I} \in
L^{2}((-\varepsilon,\varepsilon);
W^{-1,2}((-\varepsilon,\varepsilon)^{m-1};\Lambda \cz^{m}))\,.
$$
Therefore every $s_{I'}$\,, $1\not\in I'$\,,
belongs to
$W^{1,2}((-\varepsilon,\varepsilon);W^{-1,2}((-\varepsilon,\varepsilon)^{m-1};\Lambda
\cz^{m-1})$ and admits a trace in
$$
W^{-1,2}((-\varepsilon,\varepsilon)^{m-1};\Lambda \cz^{m-1}
)\subset 
\mathcal{D}'((-\varepsilon,\varepsilon)^{m-1}; \Lambda \cz^{m-1}         )\,.
$$
Hence, $j^{*}_{M'}(s_{I}dx^{I})=s_{I'}dx^{I'}$ is well defined in
$\mathcal{D}'((-\varepsilon,\varepsilon)^{m-1}; \Lambda \rz^{m-1}\otimes \cz)$\,.\\
By summing the locally finite different pieces of
$s=\sum_{j}\chi_{j}s$ where all the
$\chi_{j}s$ belong to 
$\mathcal{E}_{comp}(d^{\nabla^{\fkF}}, (\Lambda
T^{*}M\otimes\fkF)\big|_{U_{j}})$\,, we
conclude  that 
$$
j_{M'}^{*}:\mathcal{E}_{loc}(d^{\nabla^{\fkF}}, \Lambda T^{*}M\otimes \fkF))\to
\mathcal{D}'(M';\Lambda T^{*}M'otimes\fkF\big|_{M'})
$$
is well defined and continuous.\\
\noindent\textbf{c)} For the density and with the local reduction to  $U_{j}=(-\varepsilon,\varepsilon)^{m}$ used
in \textbf{b)}, it
suffices to approximate $\chi_{j}s\in
\mathcal{E}_{comp}(d;\Lambda
T^{*}(-\varepsilon,\varepsilon)^{m}\otimes \cz^{d_{f}})$ by 
$\varphi_{\eta}*(\chi_{j}s)$ as $\eta\to 0^{+}$\,, with
$\varphi_{\eta}(x)=\eta^{-m}\varphi_{1}(\eta^{-1}x)$\,,
$\varphi_{1}\in \mathcal{C}^{\infty}_{0}(\rz^{m})$\,,
$\int_{\rz^{m}}\varphi_{1}=1$\,. From
$d[\varphi_{\eta}*(\chi_{j}s)]=\varphi_{\eta}*d(\chi_{j}s)$
we deduce 
$$
\lim_{\eta\to 0^{+}}\|\chi_{j}s-\varphi_{\eta}*(\chi_{j}s)\|_{L^{2}}+\|d[\chi_{j}s-\varphi_{\eta}*(\chi_{j}s)]\|_{L^{2}}=0\,,
$$
while $\varphi_{\eta}*(\chi_{j}s)\in
\mathcal{C}^{\infty}_{0}((-\varepsilon,\varepsilon)^{m};\Lambda
\rz^{m}\otimes \cz^{d_{f}})$\,. On a fixed compact set $K\subset M$\,, only a
finite number  of $j$'s in $\sum_{j}\chi_{j}s$ have to be considered
and this proves the density of $\mathcal{C}^{\infty}_{0}(M;\Lambda
T^{*}M\otimes \fkF)$ in $\mathcal{E}_{loc~comp}(d^{\nabla^{\fkF}}, \Lambda
T^{*}M\otimes \fkF)$\,.\\
\noindent\textbf{d)} Consider the case $\overline{M}=M\sqcup M'$ and 
$\fkF=\overline{M}\times\cz$\,.
When $s\in \mathcal{E}_{comp}(d,\Lambda T^{*}M\otimes \fkF)$
and $s'\in \mathcal{E}_{loc}(d,\Lambda T^{*}M\otimes \fkF)$
there exists $\chi\in \mathcal{C}^{\infty}_{0}(\overline{M};\rz)$ such
that  $s\wedge s'=\chi s\wedge \chi s'$\,. We thus assume $s, s'\in
\mathcal{E}_{comp}(d;\Lambda T^{*}M\otimes \fkF)$\,.\\
But the sesquilinear map
$$
(s,s')\in \mathcal{E}_{comp}(d,\Lambda T^{*}M\otimes \fkF) \to
\underbrace{d\overline{s}\wedge s'+(-1)^{\mathrm{deg}~s}\overline{s}\wedge
ds'}_{=d(\overline{s}\wedge s')} \in
L^{1}_{comp}(M;\Lambda T^{*}M\otimes \fkF)
$$
is continuous. 
By \noindent\textbf{c)}, for any $s,s'\in
\mathcal{E}_{comp}(d,\Lambda T^{*}M\otimes \fkF)$ there exists
two sequences $(\omega_{n})_{n\in\nz}$ and $(\theta_{n})_{n\in \nz}$
in $\mathcal{C}^{\infty}_{0}(\overline{M};\Lambda T^{*}M\otimes \fkF)$
which converge respectively to $s$ and $s'$ in
$\mathcal{E}_{comp}(d,\Lambda T^{*}M\otimes \fkF)$\,. 
For any such sequence $\overline{\omega}_{n}\wedge \theta_{n}\in
\mathcal{C}^{\infty}_{0}(\overline{M};\Lambda T^{*}M\otimes \fkF)$ and
Stokes formula says
$$
\int_{M}d[\overline{\omega}_{n}\wedge
\theta_{n}]=\int_{M'}j^{*}_{M'}(\overline{\omega}_{n}\wedge
\theta_{n})=\int_{M'}(j_{M'}^{*}\overline{\omega}_{n})\wedge (j_{M'}^{*}\theta_{n})\,.
$$
The left-hand  side converges to 
$\int_{M}d\overline{s}\wedge s'+(-1)^{\mathrm{deg}~s}\overline{s}\wedge
ds'$ which is a continuous sesquilinear form on
$\mathcal{E}_{comp}(d,\Lambda T^{*}M\otimes \fkF)$ and this ends the
proof of the extended Stokes formula.\\
\noindent\textbf{e)} One implication is trivial by restriction to
$\overline{M}_{\mp}$\,.\\
So  assume $s_{\mp}\in
\mathcal{E}_{loc}(d^{\nabla^{\fkF}},(\Lambda T^{*}M\otimes \fkF)\big|_{\overline{M}_{\mp}})$ and
$j^{*}_{M'}s_{-}=
j^{*}_{M'}s_{+}$ in $\mathcal{D}'(M';(\Lambda
T^{*}M\otimes\fkF)\big|_{M'})$\,. With a locally finite partition of
unity $\sum_{j}\chi_{j}\equiv 1$ in $M$\,,  we want to prove $\chi_{j}s\in
\mathcal{E}_{comp}(d^{\nabla^{\fkF}}, \Lambda T^{*}U_{j}\otimes \fkF\big|_{U_{j}})$ for
all $j$\,. By following the scheme of \textbf{b)} it suffices to
consider $U_{j}=(-\varepsilon,\varepsilon)^{m}$ and
$\fkF\big|_{U_{j}}=U_{j}\times \cz$ endowed with the trivial
connection $\nabla$ and $d^{\nabla}=d$\,.
The Stokes formula of \textbf{d)} is applied with
$\chi_{j}s\big|_{\overline{M}_{\mp}}$ and $s'\in
\mathcal{C}^{\infty}_{0}((-\varepsilon,\varepsilon)^{m};\Lambda \cz^{m})$:
\begin{equation*}
\left.  \begin{array}[c]{l}
\int_{(-\varepsilon,0]\times (-\varepsilon,\varepsilon)^{m-1}} d\overline{\chi_{j}s}\wedge s'
+(-1)^{\mathrm{deg}s}\overline{\chi_{j}s}\wedge
(ds')
+\\
\int_{[0,\varepsilon)\times (-\varepsilon,\varepsilon)^{m-1}} d\overline{\chi_{j}s}\wedge s'
+(-1)^{\mathrm{deg}s}\overline{\chi_{j}s}\wedge
(ds')
\end{array}
\right\}
        =\int_{(-\varepsilon,\varepsilon)^{m-1}}j_{M'}^{*}\overline{[\chi_{j}s_{+}-\chi_{j}s_{-}]}\wedge
j_{M'}^{*}s'
\end{equation*}
where the right-hand is $0$ for all $s'\in
\mathcal{C}^{\infty}_{0}((-\varepsilon,\varepsilon)^{m};\Lambda
\cz^{m})$\,. This implies the
existence of a constant $C_{s}$ such that
$$
\forall s'\in \mathcal{C}^{\infty}_{0}((-\varepsilon,\varepsilon)^{m};
\Lambda \cz^{m})\,,
\left|\int_{(-\varepsilon,\varepsilon)^{m}}\overline{\chi_{j}s}\wedge
  ds'\right|
\leq C_{s}\|s'\|_{L^{2}}\,.
$$
But the linear form
$$
s'\in
\mathcal{C}^{\infty}_{0}((-\varepsilon,\varepsilon)^{m};\Lambda \cz^{m})
\mapsto -(-1)^{\deg~{s}}\int_{(-\varepsilon,\varepsilon)^{m}}\overline{\chi_{j}s}\wedge
ds'
$$
is the definition of $d(\chi_{j}s)$ as a current, i.e. an element of
$\mathcal{D}'((-\varepsilon,\varepsilon)^{m}, \Lambda  \cz^{m})$\,, which therefore
belongs to $L^{2}_{comp}((-\varepsilon,\varepsilon)^{m};\Lambda T^{*}(-\varepsilon,\varepsilon)^{m}\otimes\cz)$\,.\\ 
Doing this for all components in $\cz^{d_{f}}$\,, 
 proves $\chi_{j}s \in \mathcal{E}_{comp}(d^{\nabla^{\fkF}}, (\Lambda
 T^{*}M\otimes \fkF)\big|_{U_{j}})$ and therefore $s\in
 \mathcal{E}_{loc}(d^{\nabla^{\fkF}},\Lambda T^{*}M\otimes \fkF)$\,.
\end{proof}
\begin{remark}
\label{re:j*}
  Locally with coordinates such that $M'=\left\{(x^{1},x')\in M\,,
    x^{1}=0\right\}$ the partial trace $j_{M'}^{*}s$ can be replaced
  by $\mathbf{i}_{\frac{\partial}{\partial x^{1}}}dx^{1}\wedge
  s\big|_{M'}=s_{I'}dx^{I'}\big|_{M'}$ when $s=s_{I}dx^{I}$\,.
\end{remark}
\begin{remark} Although the differential $d$ defines an elliptic
  complex (see \cite{ChPi}), the operator $d$ is not elliptic. In
  particular the partial trace defined in $\mathcal{D}'(M';\Lambda T^{*}M)$
  does not have neither the $\mathcal{W}^{1/2,2}_{loc}$ nor the $L^{2}_{loc}$ regularity associated
  with order 1 elliptic differential operators, as shows the example
  $r^{-\alpha}dr=d\frac{r^{1-\alpha}}{1-\alpha}$\,, $\alpha\in ]1/2,1[$\,, $M'=\rz\times\left\{0\right\}$\,, in $\rz^{2}$ with polar
  coordinates $(r,\theta)$\,.
\end{remark}
For Proposition~\ref{pr:partialtrace} we used the  (local) duality
between $\mathcal{C}^{\infty}_{0}(M;\Lambda^{p} T^{*}M)$ and
$\mathcal{D}'(M;\Lambda^{\dim M-p} T^{*}M)$ and made integration by
parts via Stokes theorem. We may instead use the duality between
$\mathcal{C}^{\infty}_{0}(M;\Lambda T^{*}M)$ and
$\mathcal{D}'(M;\Lambda TM)$ given by the natural duality between $\Lambda TM$
and $\Lambda T^{*}M$\,. It is not necessary to assume $M$ oriented
here but let us keep this assumption which is fulfilled in our applications. We put a volume element $dv_{M}$  and by
assuming that the hypersurface $M'$ admits a global defining function
$x^{1}\in \mathcal{C}^{\infty}(M;\rz)$\,,
$M'=(x^{1})^{-1}(\left\{0\right\})$\,, $dx^{1}\big|_{M'}\neq 0$\,,
this defines a volume element $dv_{M'}$ on $M'$ by writing
$dv_{M}(x)=|dx^{1}|dv_{M'}(x')$ with local coordinates
$x=(x^{1},x')$\,. 
Let $\fkF'$ be the anti-dual $\mathcal{C}^{\infty}$ vector bundle and
let $\nabla^{\fkF'}$ be the anti-dual connection of $\nabla^{\fkF}$ 
characterized by
\begin{equation*}
\frac{\partial}{\partial
  x^{i}}(t.s)=(\nabla^{\fkF'}_{\frac{\partial}{\partial
    x^{i}}}t).s+t.(\nabla^{\fkF}_{\frac{\partial}{\partial x^{i}}}s)
\quad,\quad t\in \mathcal{C}^{\infty}(M;\fkF')\,,\, s\in
     \mathcal{C}^{\infty}(M,\fkF)\,,
\end{equation*}
where $t.s(x)$ stands for the natural $\fkF'_{x}-\fkF_{x}$ duality\,.
It satisfies
\begin{eqnarray}
  \label{eq:nonsmoothconnd}
  &&\nabla^{\fkF'}: \mathcal{C}^{\infty}(M;\fkF')\to L^{\infty}_{loc}(M;
  T^{*}M\otimes \fkF')\\
  \label{eq:nonsmoothconnd2}
\text{resp.} &&\nabla^{\fkF'}:
  \mathcal{C}^{\infty}(\overline{M};\fkF')\to
                L^{\infty}_{loc}(\overline{M}; T^{*}M\otimes \fkF')\,.
\end{eqnarray}
The  interior covariant derivative
 $\tilde{d}^{\nabla^{\fkF'}}$ acting on sections of
$\Lambda TM\otimes \fkF'$ is written in
local coordinates
$$
\tilde{d}^{\nabla^{\fkF'}}=-\mathbf{i}_{dx^{i}}\frac{\partial}{\partial
              x^{i}}\otimes \mathrm{Id}_{\fkF'}
 -\mathbf{i}_{dx^{i}}\otimes \nabla^{\fkF'}_{\frac{\partial}{\partial x^{i}}}\,.
$$
and when $dv_{M}(x)=\lambda(x)|dx|$ 
$$
\tilde{d}^{\nabla^{\fkF'},v_{M}}=-\mathbf{i}_{dx^{i}}\frac{\partial}{\partial
              x^{i}}\otimes\mathrm{Id}_{\fkF'}
 -\mathbf{i}_{dx^{i}}\otimes \nabla^{\fkF'}_{\frac{\partial}{\partial
     x^{i}}}-\mathbf{i}_{dx^{i}}\frac{\partial \lambda}{\partial x^{i}}\lambda^{-1}\otimes\mathrm{Id}_{\fkF'}
$$
The operator $\tilde{d}^{\nabla^{\fkF'},v_{M}}:\mathcal{D}'(M;\Lambda
TM\otimes \fkF')\to \mathcal{D}'(M;\Lambda
TM\otimes \fkF')$ is characterized by
$$
\forall s\in \mathcal{C}^{\infty}_{0}(M;\Lambda T^{*}M\otimes
\fkF)\,,\quad \int_{M}(\tilde{d}^{\nabla^{\fkF'},v_{M}}t).s~dv_{M}=
\int_{M}t.(d^{\nabla^{\fkF}}s)~dv_{M}\,.
$$
The following result will be used in Section~\ref{sec:adjdiff}.
\begin{proposition}
\label{pr:partialtrace2}
\noindent\textbf{a)} If $\fkF$ is a smooth vector bundle on $M$  (resp. the manifold
with boundary $\overline{M}=M\sqcup M'$) and $\nabla^{\fkF}_{1}$\,,
$\nabla^{\fkF}_{2}$ are two connections on $\fkF$ which fulfill
\eqref{eq:nonsmoothconn} (resp. \eqref{eq:nonsmoothconn} with antidual versions
$\fkF'$\,, $\nabla^{\fkF'_{1}}$ and $\nabla^{\fkF'_{2}}$ and if
$dv_{M,1}$ and $dv_{M,2}$ are two Lipschitz continuous volume elements\,, then 
$$
\mathcal{E}_{\bullet}(d^{\nabla_{1}^{\fkF'}, v_{M,1}}, \Lambda TM\otimes
     \fkF')=\mathcal{E}_{\bullet}(d^{\nabla_{2}^{\fkF'}, v_{M,2}}, \Lambda
     TM\otimes \fkF')\quad \bullet =loc~\text{or}~comp.
$$
\noindent\textbf{b)}When $M'\subset M$ a smooth hypersurface of $M$ (resp. a manifold
with boundary $\overline{M}=M\sqcup M'$) with a global defining
function $x^{1}$\,, 
 the partial trace map $t\mapsto \mathbf{i}_{dx^{1}}t\big|_{M'}$ is well
defined an continuous from $\mathcal{E}_{loc}(d^{\nabla^{\fkF'},v_{M}},\Lambda
TM\otimes \fkF')$ to
$\mathcal{D}'(M';\Lambda
TM'\otimes \fkF'\big|_{M'})$\,.\\
\noindent\textbf{c)} The space $\mathcal{C}^{\infty}_{0}(M;\Lambda TM\otimes \fkF')$
(resp. $\mathcal{C}^{\infty}_{0}(\overline{M};\Lambda TM\otimes
\fkF')$) is dense in the two spaces $\mathcal{E}_{comp}(d^{\nabla^{\fkF',v_{M}}},\Lambda TM\otimes \fkF')$
and $\mathcal{E}_{loc}(d^{\nabla^{\fkF'},v_{M}},\Lambda TM\otimes \fkF')$\,.\\
\noindent\textbf{d)} In the case $\overline{M}=M\sqcup M'$ and
$x^{1}<0$ in $M$\,, the
integration by parts
$$
\int_{M}t.(d^{\nabla^{\fkF}}s)~dv_{M}-
\int_{M}(\tilde{d}^{\nabla^{\fkF'},v_{M}}t).s~dv_{M}
=\int_{M'}(\mathbf{i}_{dx^{1}}t).s~dv_{M'}
$$
holds for all
$t\in \mathcal{E}_{loc}(d,\Lambda TM\otimes \fkF')$ and all 
$\in
\mathcal{E}_{comp}(d,\Lambda T^{*}M\otimes \fkF)$\,, where the
right-hand side is the unique  sesquilinear continuous extension from
$\mathcal{C}^{\infty}_{0}(\overline{M};\Lambda TM\otimes \fkF')\times
\mathcal{C}^{\infty}_{0}(\overline{M};\Lambda T^{*}M\otimes \fkF)$\,. \\
\noindent\textbf{e)} When $M=M_{-}\sqcup M'\sqcup M_{+}$ and $\overline{M}_{\pm}$
are smooth domains of $M$\,, $t\in \mathcal{E}_{loc}(d^{\nabla^{\fkF',v_{M}}},\Lambda
TM\otimes\fkF')$ iff $t_{\mp}=t\big|_{M_{\mp}}$ belongs to
$\mathcal{E}_{loc}(d^{\nabla^{\fkF'},v_{M}}, (\Lambda TM\otimes \fkF')\big|_{\overline{M}_{\mp}})$ and
$$
\mathbf{i}_{dx^{1}}t_{-}\big|_{M'}=\mathbf{i}_{dx^{1}}t_{+}\quad \text{in}~\mathcal{D}'(M';\Lambda
TM'\otimes \fkF'\big|_{M'})\,.
$$
\end{proposition}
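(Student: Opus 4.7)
The plan is to mimic the proof of Proposition~\ref{pr:partialtrace} step by step, exploiting the formal symmetry between the exterior covariant derivative $d^{\nabla^{\fkF}}$ on $\Lambda T^{*}M\otimes \fkF$ and the interior covariant derivative $\tilde d^{\nabla^{\fkF'},v_{M}}$ on $\Lambda TM\otimes \fkF'$. In local coordinates $(x^{1},\ldots,x^{n})$ with $dv_{M}=\lambda(x)|dx|$, $\lambda\in W^{1,\infty}_{loc}$ bounded below, the operator reads
$$
\tilde d^{\nabla^{\fkF'},v_{M}}=-\mathbf{i}_{dx^{i}}\Big(\tfrac{\partial}{\partial x^{i}}+\nabla^{\fkF'}_{\partial_{x^{i}}}+\lambda^{-1}\tfrac{\partial \lambda}{\partial x^{i}}\Big),
$$
so that each $\partial_{x^{i}}$-derivative is coupled to the interior product $\mathbf{i}_{dx^{i}}$ rather than the exterior product $dx^{i}\wedge$, and the Lipschitz volume contributes only a zero-order multiplier. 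All of the structural ingredients used in Proposition~\ref{pr:partialtrace} transfer verbatim: the operator is first order, its order-zero part has $L^{\infty}_{loc}$ coefficients, and its principal part only involves one $\partial_{x^{i}}$ per factor.

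For \textbf{a)}, the difference $\tilde d^{\nabla^{\fkF'}_{1},v_{M,1}}-\tilde d^{\nabla^{\fkF'}_{2},v_{M,2}}$ is a zero-order operator whose coefficients are bounded sums of $(\nabla^{\fkF'}_{1}-\nabla^{\fkF'}_{2})$ and $(\lambda_{1}^{-1}\partial\lambda_{1}-\lambda_{2}^{-1}\partial\lambda_{2})$, both in $L^{\infty}_{loc}$, so multiplication preserves $L^{2}_{loc~comp}$. For \textbf{b)}, I follow the same local reduction to $U_{j}=(-\varepsilon,\varepsilon)^{m}$ with trivial connection, Lebesgue volume, and scalar fiber. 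Writing $t=\partial_{x^{1}}\wedge t_{1}+t_{0}$ with $t_{1},t_{0}$ not involving $\partial_{x^{1}}$, one has $\mathbf{i}_{dx^{1}}t=t_{1}$; isolating the $\partial_{x^{1}}$-contribution in $\tilde d\,t$ gives
$$
\tfrac{\partial t_{1}}{\partial x^{1}}=\pm \tilde d\,t\mp \mathbf{i}_{dx^{i'}}\tfrac{\partial t}{\partial x^{i'}}\in L^{2}\bigl((-\varepsilon,\varepsilon);W^{-1,2}_{comp}\bigr),
$$
so $t_{1}\in W^{1,2}\bigl((-\varepsilon,\varepsilon);W^{-1,2}_{comp}\bigr)$ admits a trace at $x^{1}=0$ in $W^{-1,2}_{loc}$, which is precisely $\mathbf{i}_{dx^{1}}t|_{M'}$. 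Summing over a locally finite partition of unity yields continuity of $t\mapsto \mathbf{i}_{dx^{1}}t|_{M'}$ from $\mathcal{E}_{loc}$ into $\mathcal{D}'(M';\Lambda TM'\otimes \fkF'|_{M'})$. Point \textbf{c)} is proved by mollification: if $\varphi_{\eta}$ is a mollifier on the local chart, then $\varphi_{\eta}* t\to t$ in $L^{2}$ while $\tilde d(\varphi_{\eta}* t)-\varphi_{\eta}*(\tilde d\,t)$ is the commutator of $\tilde d$ with $\varphi_{\eta}*\,\cdot$, whose coefficients are $L^{\infty}_{loc}$ (zero-order part only), so the commutator tends to $0$ in $L^{2}_{loc}$.

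Point \textbf{d)} follows from the defining identity of $\tilde d^{\nabla^{\fkF'},v_{M}}$: for $t\in \mathcal{C}^{\infty}_{0}(\overline{M};\Lambda TM\otimes \fkF')$ and $s\in \mathcal{C}^{\infty}_{0}(\overline{M};\Lambda T^{*}M\otimes \fkF)$, Stokes' theorem applied to the scalar form $t.s\,dv_{M}$ on the half-space $\{x^{1}<0\}$ (with the orientation induced by $M$) produces the boundary term $\int_{M'}(\mathbf{i}_{dx^{1}}t).s\,dv_{M'}$. The identity then extends by density from \textbf{c)} and continuity of the sesquilinear map
$$
(t,s)\mapsto \int_{M}t.(d^{\nabla^{\fkF}}s)\,dv_{M}-\int_{M}(\tilde d^{\nabla^{\fkF'},v_{M}}t).s\,dv_{M}
$$
on $\mathcal{E}_{loc}\times \mathcal{E}_{comp}$, the boundary integral being the unique continuous sesquilinear extension via the trace of \textbf{b)}. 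Finally \textbf{e)} follows from \textbf{d)} by the same duality argument as in Proposition~\ref{pr:partialtrace}\textbf{-e)}: if $t_{\mp}=t|_{M_{\mp}}\in \mathcal{E}_{loc}$ have matching partial traces $\mathbf{i}_{dx^{1}}t_{-}=\mathbf{i}_{dx^{1}}t_{+}$, then the two Stokes identities on $\overline{M}_{\mp}$ cancel along $M'$ when tested against any $s\in \mathcal{C}^{\infty}_{0}(U_{j};\Lambda T^{*}U_{j}\otimes \fkF)$, giving $|\int_{M}(\chi_{j}t).d^{\nabla^{\fkF}}s\,dv_{M}|\leq C_{t}\|s\|_{L^{2}}$, which identifies $\tilde d^{\nabla^{\fkF'},v_{M}}(\chi_{j}t)$ as an element of $L^{2}_{comp}$ and yields $t\in \mathcal{E}_{loc}(\tilde d^{\nabla^{\fkF'},v_{M}},\Lambda TM\otimes \fkF')$.

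The only non-routine point is bookkeeping the sign conventions in the interior-product version of Stokes (part \textbf{d)}), since $\mathbf{i}_{dx^{1}}$ and $dx^{1}\wedge$ carry opposite parity; once checked on the model half-space $\{x^{1}<0\}$ with Euclidean data, the rest of the argument is a verbatim transcription of the proof of Proposition~\ref{pr:partialtrace}.
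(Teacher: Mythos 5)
Your proposal is correct and takes essentially the same approach as the paper: parts a)\,b)\,c) transfer verbatim from Proposition~\ref{pr:partialtrace}, part d) is a local computation near $x^{1}=0$, and part e) is obtained from d) by the same duality argument. One small slip in the wording of d): the boundary term does not come from applying Stokes to the top-degree density $t.s\,dv_{M}$ (whose exterior derivative vanishes), but from recognizing the integrand difference $t.(d^{\nabla^{\fkF}}s)-(\tilde{d}^{\nabla^{\fkF'},v_{M}}t).s$ as the divergence $\lambda^{-1}\partial_{x^{i}}\bigl[(\mathbf{i}_{dx^{i}}t).s\cdot\lambda\bigr]$ and then integrating in $x^{1}$, which is exactly the paper's displayed identity.
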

\begin{proof}
  The proofs of a)b)c) are essentially the same as for a)b)c) in
  Proposition~\ref{pr:partialtrace}. The statement e) is a consequence
  of d). The proof of d) simply relies on the following computation
  for $t\in \mathcal{C}^{\infty}_{0}(\overline{M}; \Lambda TM\otimes
  \fkF')$ and $s\in \mathcal{C}^{\infty}_{0}(\overline{M};\Lambda
  T^{*}M\otimes \fkF)$ supported in a chart open domain, $x^{i'}\in
  (-\varepsilon,\varepsilon)$\,, $x^{1}\in (-\varepsilon,0]$~:
  \begin{eqnarray*}
    \int_{M}(t.(d^{\nabla^{\fkF}}s)-(\tilde{d}^{\nabla^{\fkF'}}t).s)~\lambda(x)|dx|&=&\int_{(-\varepsilon,0]\times
    (-\varepsilon,\varepsilon)^{d-1}} \frac{\partial}{\partial
    x^{i}}[((\mathbf{i}_{dx^{i}}t).s)\times \lambda]~|dx|
\\
&=&
\int_{(-\varepsilon,\varepsilon)^{d-1}}(\mathbf{i}_{dx^{1}}t).s~
\lambda(0,x')|dx'|=\int_{M'}(\mathbf{i}_{dx^{1}}t).s~dv_{M'}\,.
  \end{eqnarray*}
The sign of the final right-hand side is changed if we assume $x^{1}>0$ in $M$\,.
\end{proof}
\subsection{The differential structure of $\hat{E}_{g}$ and $\hat{F}_{g}$} 
\label{sec:diffhatE}
The vector bundle $\hat{E}_{g}$ was introduced in
Definition~\ref{de:hatEF} as a piecewise $\mathcal{C}^{\infty}$
and continuous vector bundle above $X=T^{*}Q$ where
$\hat{E}_{g}\big|_{\overline{X}_{\mp}}=\Lambda
T^{*}X\big|_{\overline{X}_{\mp}}\stackrel{g_{\mp}}{=}\pi_{X}^{*}(\Lambda T^{*}Q\otimes
\Lambda TQ\big|_{\overline{Q}_{\mp}})$ with the matching condition
$$
e^{i}_{-}\big|_{\partial X_{-}}=e^{i}_{+}\big|_{\partial
  X_{+}}\quad,\quad
\hat{e}_{-,j}\big|_{\partial X_{-}}=\hat{e}_{+,j}\big|_{\partial
  X_{+}}\quad \partial X_{-}=\partial X_{+}=X'\,.
$$
We used the frame $(e_{\mp},\hat{e}_{\mp})$ of Definition~\ref{de:frameef}.\\
It can be given another interpretation. In the manifold
$\overline{X}_{-}=X_{-}\cup X'$\,, the coordinates
$(\tilde{q},\tilde{p})=(\tilde{q}^{1},\tilde{x}')$ of Definition~\ref{de:tildeqp} identify
$X_{(-\varepsilon,0]}$ as the tubular neighborhood
$(-\varepsilon,0]\times X'$\,. Meanwhile $S_{1}:X'\to X'$ of Definition~\ref{de:S1Snu}, 
$$
S_{1}(0,q',p_{1},p')=(0,q',-p_{1},p')\quad (0,q',p_{1},p')=(0,\tilde{q}',\tilde{p}_{1},\tilde{p}')\,,
$$
is a diffeomorphism of $X'$\,. By following Milnor in
\cite{Mil}-Theorem~1.4 there is a  $\mathcal{C}^{\infty}$-manifold,
unique modulo diffeomorphism,
\begin{eqnarray}
  \label{eq:Mg1}
  &&M_{g}=X_{-}\cup X'\cup X_{+}\quad\text{such~that}\\
\label{eq:Mg2}
&& \hspace{-1cm}\left((0^{-},\tilde{q}',\tilde{p}_{1},\tilde{p}')=(0^{+},\tilde{Q}',\tilde{P}_{1},\tilde{P}')\right)\Leftrightarrow ((\tilde{Q}',\tilde{P}_{1},\tilde{P}')=S_{1}(\tilde{q}',\tilde{p}_{1},\tilde{p}')=(\tilde{q}',-\tilde{p}_{1},\tilde{p}'))\,.
\end{eqnarray}
The subscripted notation $M_{g}$ keeps track of the fact that the
construction of the coordinates $(\tilde{q},\tilde{p})$ actually
depend on the chosen metric $g_{-}=g=g^{TQ}$ on $\overline{Q}_{-}$\,,
symmetrized as $\hat{g}^{TQ}$\,.\\
We recall the Definition~\ref{de:doublef} of the double copy  $\pi_{\fkf}:\fkf\to Q$\,, when
$Q=Q_{-}\sqcup Q'\sqcup Q_{+}$\,, given with
$(0^{+},\underline{q}',\nu v)=(0^{-},\underline{q}',v)$ and
$\pi_{M_{g}}^{*}(\fkf)$ is a flat
$\mathcal{C}^{\infty}$ vector bundle on the $\mathcal{C}^{\infty}$
manifold $M_{g}$\,. The exterior covariant derivative denoted by
$d^{\nabla^{\fkf}}_{M_{g}}$ satisfies:
\begin{itemize}
\item $d^{\nabla^{\fkf}}_{M_{g}}:\mathcal{F}(M_{g};\Lambda
  T^{*}M_{g}\otimes \pi_{M_{g}}^{*}(\fkf))\to \mathcal{F}(M_{g};\Lambda
  T^{*}M_{g}\otimes \pi_{M_{g}}^{*}(\fkf))$
for $\mathcal{F}=\mathcal{C}^{\infty}_{0}$ and for $\mathcal{F}=\mathcal{D}'$\,;
\item $d_{M_{g}}^{\nabla^{\fkf}}\circ d_{M_{g}}^{\nabla^{\fkf}}=0$ in
  $\mathcal{C}^{\infty}_{0}(M_{g};\Lambda T^{*}M_{g}\otimes
  \pi_{X}^{*}(\fkf))$ and in $\mathcal{D}'(M_{g};\Lambda
  T^{*}M_{g}\otimes \pi_{X}^{*}(\fkf))$\,;
\item  in particular
  $d_{M_{g}}^{\nabla^{\fkf}}:\mathcal{E}_{\bullet}(d_{M_{g}}^{\nabla^{\fkf}},
  \Lambda T^{*}M_{g}\otimes \pi_{M_{g}}^{*}(\fkf))\to \mathcal{E}_{\bullet}(d_{M_{g}}^{\nabla^{\fkf}},
  \Lambda T^{*}M_{g}\otimes \pi_{M_{g}}^{*}(\fkf))$ respectively for $\bullet=loc$ or
  $comp$\,;
\item the trace map $s\mapsto j_{X'}s$ or $s\mapsto \mathbf{i}_{e_{1}} e^{1}\wedge
  s\big|_{X'}$ after Remark~\ref{re:j*}, where we recall
  $e_{1}=\frac{\partial}{\partial\tilde{q}^{1}}$ and $e^{1}=d\tilde{q}^{1}$\,, is well defined and continuous from 
 $\mathcal{E}_{loc}(d_{M_{g}}^{\nabla^{\fkf}},
  \Lambda T^{*}M_{g}\otimes \pi_{M_{g}}^{*}(\fkf))$ to
  $\mathcal{D}'(X';\Lambda T^{*}M_{g}\otimes
  \pi_{M_{g}}^{*}(\fkf)\big|_{X'})$\,;
\item  
$\mathcal{C}^{\infty}_{0}(M_{g};\Lambda T^{*}M_{g}\otimes
  \pi_{M_{g}}^{*}(\fkf))$ is dense in
  $\mathcal{E}_{loc~comp}(d^{\nabla^{\fkf}}_{M_{g}},\Lambda
  T^{*}M_{g}\otimes \pi_{M_{g}}^{*}(\fkf))$\,;
\item if $\Sigma_{M_{g}}:M_{g}\to M_{g}$ is the natural symmetry
  specified locally by
  $\Sigma_{M_{g}}(\tilde{q}^{1},\tilde{q}',\tilde{p})=(-\tilde{q}_{1},\tilde{q}',\tilde{p})$
 with its push-forward $\Sigma_{M_{g},*}$ acting on
 $\mathcal{D}'(M_{g};\Lambda T^{*}M_{g}\otimes \pi_{M_{g}}^{*}(\fkf))$\,,
 then
 $d^{\nabla^{\fkf}}\Sigma_{M_{g},*}=\Sigma_{M_{g},*}d^{\nabla^{\fkf}}$
 and $d^{\nabla^{\fkf}}$ preserves the parity with respect to $\Sigma_{M_{g},*}$\,.
\end{itemize}
For the smooth manifold $M_{g}$ it is convenient to introduce the
following notations which have already introduced 
counterparts on $\hat{E}_{g}$ and
$\hat{F}_{g}$\,.
\begin{definition}
\label{de:symMg}
Let $M_{g}$ be the manifold defined by \eqref{eq:Mg1}\eqref{eq:Mg2}\,.
The manifold $M_{g,(-\varepsilon,\varepsilon)}$ is the open domain
characterized by $|\tilde{q}^{1}|<\varepsilon$\,.\\
On $M_{g,(-\varepsilon,\varepsilon)}$ the symmetry $\Sigma_{M_{g}}$ is
simply given by
$\Sigma_{M_{g}}(\tilde{q}^{1},\tilde{x}')=(-\tilde{q}^{1},\tilde{x}')$\,.\\
The set $L^{2}_{loc,ev}(M_{g,(-\varepsilon,\varepsilon)}; \Lambda
T^{*}M\otimes \pi_{M_{g}}^{*}(\fkf))$ is defined by
$$
L^{2}_{loc,ev}(M_{g,(-\varepsilon,\varepsilon)}; \Lambda
T^{*}M\otimes \pi_{M_{g}}^{*}(\fkf))
=\left\{s\in L^{2}_{loc}(M_{g,(-\varepsilon,\varepsilon)}; \Lambda
T^{*}M\otimes \pi_{M_{g}}^{*}(\fkf))\,, \quad \Sigma_{M_{g},*}s=\nu s\right\}\,,
$$
and $L^{2}_{loc,odd}$ has the same definition with the condition
$\Sigma_{M_{g},*}s=-\nu s$\,.\\
For $\fkF= \Lambda
T^{*}M_{g}\otimes
\pi_{M_{g}}^{*}(\fkf)\big|_{M_{g,(-\varepsilon,\varepsilon)}}$, the set $\mathcal{C}_{0}(\fkF)$ equals 
$$
\mathcal{C}^{\infty}_{0}(M_{g,(-\varepsilon,0]};\fkF)\cap \mathcal{C}^{\infty}_{0}(M_{g,[0,\varepsilon)};\fkF)\cap
\mathcal{C}^{0}(M_{g,(-\varepsilon,\varepsilon)};\fkF)\quad
\text{with}\quad\fkF= \Lambda
T^{*}M_{g}\otimes \pi_{M_{g}}^{*}(\fkf)\,,
$$
and
$\mathcal{C}_{0,ev~odd}(\fkF)=\mathcal{C}_{0}(\fkF)\cap L^{2}_{loc,
  ev~odd}(M_{g,(-\varepsilon,\varepsilon)};\fkF)$\,.
\end{definition}
\begin{lemma}
\label{le:Mg} Let $M_{g,(-\varepsilon,\varepsilon)}$  be the
neighboorhood of $X'$ given by $|\tilde{q}^{1}|<\varepsilon$ and let
the map $\tilde{S}_{1}:M_{g,(-\varepsilon,\varepsilon)}\to
X_{(-\varepsilon,\varepsilon)}$ given by
$$
\tilde{S}_{1}(\tilde{q}^{1},\tilde{q}',\tilde{p}_{1},\tilde{p}')=
\left\{
  \begin{array}[c]{ll}
    (\tilde{q}^{1},\tilde{q}',\tilde{p}_{1},\tilde{p}')&\text{if}~\tilde{q}^{1}\leq
                                                         0\,,\\
(\tilde{q}^{1},\tilde{q}',-\tilde{p}_{1},\tilde{p}')&\text{if}~\tilde{q}^{1}>0\,.
  \end{array}
\right.
$$
When $\Sigma_{M_{g}}(\tilde{q}^{1},\tilde{x}')=(-\tilde{q}^{1},x')$ on
$M_{g,(-\varepsilon,\varepsilon)}$ we get $\tilde{S}_{1}\circ
\Sigma_{M_{g}}=\Sigma
\circ\tilde{S}_{1}:M_{g,(-\varepsilon,\varepsilon)}\to
X_{(-\varepsilon,\varepsilon)}$\,.\\
Moreover $\tilde{S}_{1,*}:  \Lambda
T^{*}M_{g}\otimes
\pi_{M_{g}}^{*}(\fkf)\big|_{M_{g,(-\varepsilon,\varepsilon)}}\to
\hat{F}_{g}\big|_{X_{(-\varepsilon,\varepsilon)}}$ is a piecewise
$\mathcal{C}^{\infty}$ and continuous vector bundle isomorphism  such
that $\tilde{S}_{1,*}$ sends
$L^{2}_{loc}(M_{g,(-\varepsilon,\varepsilon)};\Lambda T^{*}M\otimes
\pi_{M_{g}}^{*}(\fkf))$ into
$L^{2}_{loc}(X_{(-\varepsilon,\varepsilon)};\hat{F}_{g})$\,,
$\mathcal{C}_{0}(\Lambda T^{*}M_{g}\otimes \pi_{M_{g}}^{*}(\fkf)\big|_{M_{g,(-\varepsilon,\varepsilon)}})$ into
$\mathcal{C}_{0}(\hat{F}_{g}\big|_{X_{(-\varepsilon,\varepsilon)}})$
of Definition~\ref{de:Cg}
  and transforms the
parity with respect to $\Sigma_{M_{g},*}\otimes \nu$ into the parity with
respect to $\Sigma_{\nu}$\,.
\end{lemma}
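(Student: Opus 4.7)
The plan is to check each claim by working locally in the coordinates $(\tilde{q}^{1},\tilde{q}',\tilde{p}_{1},\tilde{p}')$ that were used to define $M_{g}$, and comparing them with the piecewise $\mathcal{C}^{\infty}$ and continuous structure of $\hat{F}_{g}$ via the quotient $\pi_{X,*}$-identification of Definition~\ref{de:hatEF}. Throughout, the essential observation is that the gluing rule \eqref{eq:Mg2} flips the sign of $\tilde{p}_{1}$ at $\tilde{q}^{1}=0$, while $\tilde{S}_{1}$ is precisely the piecewise map that ``undoes'' this flip on the $X_{+}$ side.

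First I would verify that $\tilde{S}_{1}$ is well defined and continuous as a map $M_{g,(-\varepsilon,\varepsilon)}\to X_{(-\varepsilon,\varepsilon)}$. A point approached from $X_{-}$ with coordinates $(0^{-},\tilde{q}',\tilde{p}_{1},\tilde{p}')$ equals, by \eqref{eq:Mg2}, the same point approached from $X_{+}$ with coordinates $(0^{+},\tilde{q}',-\tilde{p}_{1},\tilde{p}')$; applying $\tilde{S}_{1}$ yields $(0,\tilde{q}',\tilde{p}_{1},\tilde{p}')$ in both cases, so the two branches match. The identity $\tilde{S}_{1}\circ\Sigma_{M_{g}}=\Sigma\circ\tilde{S}_{1}$ is then a direct case-by-case check: if $\tilde{q}^{1}\leq 0$ then $\Sigma_{M_{g}}$ sends the point to the $X_{+}$ chart where $\tilde{S}_{1}$ flips $\tilde{p}_{1}$, and conversely $\tilde{S}_{1}$ acts trivially first and $\Sigma$ flips both $\tilde{q}^{1}$ and $\tilde{p}_{1}$, yielding the same result.

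Next, for the push-forward, since $\tilde{S}_{1}$ restricts to a $\mathcal{C}^{\infty}$ diffeomorphism on each side $\tilde{q}^{1}<0$ and $\tilde{q}^{1}>0$, the map $\tilde{S}_{1,*}$ is a $\mathcal{C}^{\infty}$ vector bundle isomorphism separately on each side. Continuity at $X'$ then amounts to checking that the frames on both sides get identified in $\hat{F}_{g}\big|_{X'}$. Using the explicit expressions \eqref{eq:tv1}--\eqref{eq:tfv0} for $(e_{\mp},\hat{e}_{\mp})$ and the formulas \eqref{eq:Sigmae1}--\eqref{eq:Sigmae2} for the action of $\Sigma_{*}$, one sees that the sign flips on the $\tilde{p}_{1}$-components induced by $\tilde{S}_{1,*}$ compensate the $(-1)^{\delta_{i1}}$ factors coming from the $M_{g}$-identification. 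After projecting by $\pi_{X,*}$ on $\Lambda T^{*}Q\otimes \Lambda TQ\otimes \fkf$, both branches give the same element of the quotient defining $\hat{F}_{g}$; tensorization with $\pi_{M_{g}}^{*}(\fkf)$ is straightforward since $\fkf$ is matched by $\nu$ and $\tilde{S}_{1,*}$ acts as identity in the fiber of $\fkf$.

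Once $\tilde{S}_{1,*}$ is known to be a piecewise $\mathcal{C}^{\infty}$ and continuous vector bundle isomorphism with Jacobian of modulus one in local coordinates, preservation of $L^{2}_{loc}$ is immediate from the change of variables formula. Preservation of $\mathcal{C}_{0}(\hat{F}_{g})$ follows because $\tilde{S}_{1}$ sends a piecewise smooth compactly supported section continuous across the interface on $M_{g}$ to the analogous object on $X$, matching Definition~\ref{de:Cg}. Finally the parity statement is a formal consequence of the intertwining $\tilde{S}_{1,*}\circ \Sigma_{M_{g},*}=\Sigma_{*}\circ\tilde{S}_{1,*}$ obtained by differentiating step~1, tensored with $\nu$ in the $\fkf$-factor. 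The main technical obstacle is the middle step, namely writing out carefully in the frame expressions that the flip $\tilde{p}_{1}\to -\tilde{p}_{1}$ on the $X_{+}$ side cancels against the $M_{g}$-gluing, so that both sides define the same element of the quotient vector bundle $\hat{F}_{g}$; this is where the computations \eqref{eq:tv1}--\eqref{eq:tfv0} and \eqref{eq:Sigmae1}--\eqref{eq:Sigmae2} do the real work.
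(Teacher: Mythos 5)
Your proposal is correct and follows essentially the same route as the paper's proof: check the case-by-case identity $\tilde{S}_{1}\circ\Sigma_{M_{g}}=\Sigma\circ\tilde{S}_{1}$, observe that $\tilde{S}_{1,*}$ is separately $\mathcal{C}^{\infty}$ on each side, reduce continuity at $X'$ to a frame comparison using \eqref{eq:tfh}--\eqref{eq:tfv0}, and deduce the remaining properties ($L^{2}_{loc}$, $\mathcal{C}_{0}$, parity) from the intertwining relation. The one place the paper is more explicit and you are slightly hand-wavy is the middle step: rather than just asserting that ``the sign flips compensate,'' the paper introduces the smooth local frame $(\tilde{e}^{i},\tilde{e}_{j})$ of $T^{*}M_{g}$ extending $(e^{i}_{-},\hat{e}_{j,-})$ across $\tilde{q}^{1}=0$, then uses the relation $\Sigma_{*}\tilde{S}_{1,*}\Sigma_{M_{g},*}=\tilde{S}_{1,*}$ together with $\tilde{S}_{1,*}\big|_{\tilde{q}^{1}\leq 0}=\mathrm{Id}$ to compute $\tilde{S}_{1,*}(\tilde{e}^{i},\tilde{e}_{j})\big|_{\tilde{q}^{1}=0^{\pm}}=(e^{i}_{\mp},\hat{e}_{j,\mp})$, which is precisely the continuous frame defining $\hat{E}_{g}$. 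If you wrote up your argument in full, making this frame identification explicit would close the gap; as it stands, the computation you gesture at is exactly the one the paper performs.
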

\begin{proof}
  We can forget the vector bundle $\pi_{M_{g}}^{*}(\fkf)$\,. We focus on
  $\Lambda T^{*}M_{g}\big|_{M_{g,(-\varepsilon,\varepsilon)}}$ and
  $\hat{E}_{g}\big|_{X_{(-\varepsilon,\varepsilon)}}$\,.
The equality $\tilde{S}_{1}\circ \Sigma_{M_{g}}=\Sigma\circ
\tilde{S}_{1}$ is obvious. Meanwhile the push-forward
$\tilde{S}_{1,*}$\,, $\Sigma_{M_{g},*}$ and $\Sigma_{*}$ are
$\mathcal{C}^{\infty}$ vector bundle isomorphisms when $\tilde{q}^{1}$
is restricted to $(-\varepsilon,0]$ or $[0,+\infty)$\,. It thus
suffices to check that
$\tilde{S}_{1,*}:T^{*}M_{g}\big|_{M_{g,(-\varepsilon,\varepsilon)}}\to
T^{*}X_{(-\varepsilon,\varepsilon)}$ is continuous along
$X'$\,. Restricted to $M_{g,(-\varepsilon,0]}=X_{(-\varepsilon,0]}$\,, $\tilde{S}_{1,*}$
is the identity and a smooth local frame of
$T^{*}M_{g,(-\varepsilon,0]}$ is given by
$(e^{i}_{-},\hat{e}_{j,-})$\,. On the $\mathcal{C}^{\infty}$ manifold
$M_{g,(-\varepsilon,\varepsilon)}$\,, it is the restriction of a
smooth frame $(\tilde{e}^{i},\tilde{e}_{j})$ such that
$\tilde{e}^{i},\tilde{e}_{j})\big|_{\tilde{q}^{1}=0^{+}}=(e^{i}_{-},\hat{e}_{j,-})\big|_{\tilde{q}^{1}=0^{-}}$\,.
By using 
$$
\tilde{S}_{1,*}\big|_{\tilde{q}^{1}\leq 0}=\mathrm{Id}\quad,\quad
\Sigma_{*}\tilde{S}_{1,*}\Sigma_{M_{g},*}=\tilde{S}_{1,*}
$$
we deduce from \eqref{eq:tfh}\eqref{eq:tfv10}\eqref{eq:tfv0}
\begin{eqnarray*}
  &&
     \tilde{S}_{1,*}(\tilde{e}^{i},\tilde{e}_{j})\big|_{\tilde{q}^{1}=0^{-}}=(e^{i}_{-},\hat{e}_{j,-})\big|_{\tilde{q}^{1}=0^{-}}\\
&& \tilde{S}_{1,*}(\tilde{e}^{i},\tilde{e}_{j})\big|_{\tilde{q}^{1}=0^{+}}=(e^{i}_{+},\hat{e}_{j,+})\big|_{\tilde{q}^{1}=0^{+}}\,,
\end{eqnarray*}
and the continuous local frame of $(\tilde{e}^{i},\tilde{e}_{j})$ of
$T^{*}M_{g}\big|_{M_{g,(-\varepsilon,\varepsilon)}}$ is sent to the
continuous local frame
$(e^{i},\hat{e}_{j})=1_{\rz_{\mp}}(\tilde{q}^{1})(e^{i}_{\mp},\hat{e}_{j,\mp})$
of $\hat{E}_{g}\big|_{X_{(-\varepsilon,\varepsilon)}}$\,.\\
The parity properties then come from
$\Sigma_{*}\circ\tilde{S}_{1,*}=\tilde{S}_{1,*}\circ\Sigma_{M_{g},*}$\,.
\end{proof}
The trace properties of Proposition~\ref{pr:partialtrace} have been used
with the closed manifold $M_{g}\supset X'$ to review the properties of
the exterior covariant derivative $d^{\nabla^{\fkf}}_{M_{g}}$\,. 
A separate use in the
manifolds with boundaries $\overline{X}_{\mp}=X_{\mp}\sqcup X'$
provides a definition of the differential acting on sections of
$\hat{E}_{g}$ and $\hat{F}_{g}$\,.
\begin{definition}
\label{de:dhatE}
Let $E=\Lambda T^{*}X$ and $F=E\otimes \pi_{X}^{*}(\fkf)$ and let $d^{\nabla^{\fkf}}$
be the covariant exterior derivative for the flat vector bundle
$(\pi_{X}^{*}(\fkf), \pi_{X}^{*}(\nabla^{\fkf}))$\,.
With the notation
$(e,\hat{e})=1_{\rz_{\mp}}(q^{1})(e_{\mp},\hat{e}_{\mp})$ of
Definition~\ref{de:frameef}\,,
 a section $\omega=\omega_{I}^{J}e^{I}\hat{e}_{J}\in
  L^{2}_{loc}(X;\hat{F}_{g})$ belongs to
  $\mathcal{E}_{loc}(\hat{d}_{g},\hat{F}_{g})$ (resp. $\mathcal{E}_{comp}(\hat{d}_{g},\hat{F}_{g})$) if its restrictions
  $\omega_{\mp}=\omega\big|_{X_{\mp}}=\omega_{I}^{J}e_{\mp}^{I}\hat{e}_{\mp,J}$
  belong to
    $\mathcal{E}_{loc}(d^{\nabla^{\fkf}}, F\big|_{\overline{X}_{\mp}})$
    (resp. $\mathcal{E}_{comp}(d^{\nabla^{\fkf}}, F\big|_{\overline{X}_{\mp}})$)
    with
    \begin{equation}
      \label{eq:bcdhat}
\mathbf{i}_{e_{+,1}}e_{+}^{1}\wedge\omega_{+}\big|_{\partial X_{+}}=
\mathbf{i}_{e_{-,1}}e_{-,1}\wedge \omega_{-}\big|_{\partial
  X_{-}}\quad\text{in}~\mathcal{D}'(X';\hat{F }_{g}\big|_{X'})\,,
\end{equation}
or
$$
\omega_{I'}^{J}(0^{+},.)=\nu \omega_{I'}^{J}(0^{-},.)\quad\text{in}~~
\mathcal{D}'(X',\pi_{X'}^{*}(\fkf\big|_{Q'}))
$$
for all $I',J\subset\left\{1,\ldots,d\right\}$\,, $1\not\in I'$\,. The
differential $\hat{d}_{g}$ with domain $\mathcal{E}_{loc}(\hat{d}_{g};\hat{F}_{g})$ is
then defined by $\hat{d}_{}\omega\big|_{X_{\mp}}=d^{\nabla^{\fkf}}\omega_{\mp}$\,.
\end{definition}
The properties of $\hat{d}_{g}$ and $\mathcal{E}_{loc}(\hat{d}_{g},\hat{F}_{g})$ are
deduced from the one of $d^{\nabla^{\fkf}}_{M_{g}}$ and
$\mathcal{E}_{loc}(d^{\nabla^{\fkf}}_{M_{g}},\Lambda T^{*}M_{g}\otimes
\pi_{M_{g}}^{*}(\fkf))$\,.
\begin{proposition}
\label{pr:dod}
The differential $\hat{d}_{g}$ defined on $\mathcal{E}_{loc}(\hat{d}_{g},\hat{F}_{g})$ satisfies
$\hat{d}_{g}\mathcal{E}_{loc}(\hat{d}_{g},\hat{F}_{g})\subset
\mathcal{E}_{loc}(\hat{d}_{g},\hat{F}_{g})$ and
$\hat{d}_{g}\circ \hat{d}_{g}=0$\,.\\
The map $\Sigma_{\nu}:L^{2}_{loc}(X;\hat{F}_{g})\to
L^{2}_{loc}(X;\hat{F}_{g})$ preserves 
$\mathcal{E}_{loc}(\hat{d}_{g},\hat{F}_{g})$ and $\hat{d}_{g}\Sigma_{\nu}=\Sigma_{\nu}\hat{d}_{g}$ so
that $\hat{d}_{g}$ preserves the parity with respect to $\Sigma_{\nu}$\,.\\
 The space $\mathcal{C}_{0,g}(\hat{F}_{g})$  of Definition~\ref{de:Cg} is densely and
continuously embedded  in
$\mathcal{E}_{loc}(\hat{d}_{g},\hat{F}_{g})$ and
$\mathcal{E}_{comp}(\hat{d}_{g},\hat{F}_{g})$\,.\\
Moreover the exists a dense set
$\hat{\mathcal{D}}_{g,\nabla^{\fkf}}$ of $\mathcal{C}_{0,g}(\hat{F}_{g})$\,,
such that $\hat{d}_{g}\mathcal{D}_{g,\nabla^{\fkf}}\subset \mathcal{C}_{0,g}(\hat{F}_{g})$\,.
\end{proposition}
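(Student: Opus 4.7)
The plan is to transport the entire problem to the smooth manifold $M_g$ constructed in \eqref{eq:Mg1}--\eqref{eq:Mg2} via the piecewise $\mathcal{C}^\infty$ and continuous vector bundle isomorphism $\tilde{S}_{1,*}$ of Lemma~\ref{le:Mg}. On $M_g$, the vector bundle $\pi_{M_g}^*(\fkf)$ is smooth and flat, and the exterior covariant derivative $d^{\nabla^{\fkf}}_{M_g}$ enjoys the six properties listed as bullet points after the construction of $M_g$, each being an immediate application of Proposition~\ref{pr:partialtrace} on the smooth manifold $M_g$.

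The central step is to check that under $\tilde{S}_{1,*}$, the space $\mathcal{E}_{loc}(d^{\nabla^{\fkf}}_{M_g},\Lambda T^*M_g\otimes\pi_{M_g}^*(\fkf))$ corresponds exactly to $\mathcal{E}_{loc}(\hat{d}_g,\hat{F}_g)$. Away from $X'$ this is tautological because $\tilde{S}_{1,*}$ reduces there to the identity on $X_-$ and to the smooth diffeomorphism $\tilde{p}_1\mapsto -\tilde{p}_1$ on $X_+$. Near $X'$, Proposition~\ref{pr:partialtrace}(e) characterizes membership in $\mathcal{E}_{loc}(d^{\nabla^{\fkf}}_{M_g},\Lambda T^*M_g\otimes\pi_{M_g}^*(\fkf))$ by the piecewise $L^2_{loc}$ regularity of the restrictions $\tilde{\omega}_\mp$ together with the tangential trace equality $j^*_{X'}\tilde{\omega}_-=j^*_{X'}\tilde{\omega}_+$. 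In the coordinates $(\tilde{q},\tilde{p})$ one has $e^1=d\tilde{q}^1$ and, by Remark~\ref{re:j*}, the partial trace $j^*_{X'}$ is precisely the operator $\omega_I^J e^I\hat{e}_J\mapsto \omega_{I'}^J e^{I'}\hat{e}_J\big|_{X'}$; combining this with the behavior of $\tilde{S}_{1,*}$ on the frames recorded in \eqref{eq:fv}--\eqref{eq:hef0} shows that this trace equality is equivalent to the matching condition \eqref{eq:bcdhat} in $\hat{F}_g\big|_{X'}$. With this identification, the closure property $\hat{d}_g\mathcal{E}_{loc}(\hat{d}_g,\hat{F}_g)\subset\mathcal{E}_{loc}(\hat{d}_g,\hat{F}_g)$ and the nilpotency $\hat{d}_g\circ\hat{d}_g=0$ transfer directly; and since $\Sigma_{M_g}$ is a smooth diffeomorphism of $M_g$ preserving $\nabla^{\fkf}$, $d^{\nabla^{\fkf}}_{M_g}$ commutes with $\Sigma_{M_g,*}\otimes\nu$, which combined with the intertwining property of $\tilde{S}_{1,*}$ in Lemma~\ref{le:Mg} yields $\hat{d}_g\Sigma_\nu=\Sigma_\nu\hat{d}_g$ and the preservation of parity.

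For the density, the natural candidate is
\[
\hat{\mathcal{D}}_{g,\nabla^{\fkf}}:=\tilde{S}_{1,*}\bigl(\mathcal{C}^\infty_0(M_g;\Lambda T^*M_g\otimes\pi_{M_g}^*(\fkf))\bigr).
\]
Lemma~\ref{le:Mg} gives $\hat{\mathcal{D}}_{g,\nabla^{\fkf}}\subset\mathcal{C}_{0,g}(\hat{F}_g)$, and since $d^{\nabla^{\fkf}}_{M_g}$ preserves $\mathcal{C}^\infty_0(M_g;\Lambda T^*M_g\otimes\pi_{M_g}^*(\fkf))$, one has $\hat{d}_g\hat{\mathcal{D}}_{g,\nabla^{\fkf}}\subset\hat{\mathcal{D}}_{g,\nabla^{\fkf}}\subset\mathcal{C}_{0,g}(\hat{F}_g)$. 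Proposition~\ref{pr:partialtrace}(c) yields density of $\mathcal{C}^\infty_0(M_g;\Lambda T^*M_g\otimes\pi_{M_g}^*(\fkf))$ in $\mathcal{E}_{loc~comp}(d^{\nabla^{\fkf}}_{M_g},\Lambda T^*M_g\otimes\pi_{M_g}^*(\fkf))$; transporting by $\tilde{S}_{1,*}$ gives density of $\hat{\mathcal{D}}_{g,\nabla^{\fkf}}$, and a fortiori of $\mathcal{C}_{0,g}(\hat{F}_g)$, in $\mathcal{E}_{loc~comp}(\hat{d}_g,\hat{F}_g)$. The continuity of the embedding $\mathcal{C}_{0,g}(\hat{F}_g)\hookrightarrow\mathcal{E}_{loc~comp}(\hat{d}_g,\hat{F}_g)$ follows because full continuity in $\hat{F}_g$ along $X'$ forces matching of all frame components, which is strictly stronger than the tangential matching \eqref{eq:bcdhat}. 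The main technical obstacle is the verification of the frame-level correspondence in the second paragraph: one must carefully track how the $\tilde{p}_1$-flip built into $\tilde{S}_{1,*}$ on $X_+$ interacts with the decomposition $\omega=\omega_I^J e^I\hat{e}_J$ in order to confirm that the $M_g$ tangential trace equality translates exactly into \eqref{eq:bcdhat} on $\hat{F}_g$.
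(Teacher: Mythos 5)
Your proposal follows essentially the same route as the paper's proof: both transport the problem to the smooth manifold $M_g$ via $\tilde{S}_{1,*}$, establish $\tilde{S}_{1,*}\mathcal{E}_{loc}(d^{\nabla^{\fkf}}_{M_g},\Lambda T^*M_g\otimes\pi_{M_g}^*(\fkf))=\mathcal{E}_{loc}(\hat{d}_g,\hat{F}_g)$ by matching the differential and the tangential trace condition, transfer nilpotency and $\Sigma_\nu$-equivariance from the smooth setting, and take $\hat{\mathcal{D}}_{g,\nabla^{\fkf}}=\tilde{S}_{1,*}\mathcal{C}^\infty_0(M_g;\Lambda T^*M_g\otimes\pi_{M_g}^*(\fkf))$. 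The "frame-level correspondence" you flag as the main technical obstacle is in fact already the content of Lemma~\ref{le:Mg}, which was proved precisely by tracking the $\tilde{p}_1$-flip on the frames via \eqref{eq:tfh}\eqref{eq:tfv10}\eqref{eq:tfv0}, so no additional work is needed there.
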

\begin{proof}
  With Lemma~\ref{le:Mg} and with
  $j_{X'}^{*}s$ written according to Remark~\ref{re:j*} as $\mathbf{i}_{e_{1}}e^{1}\wedge
  s\big|_{X'}=\mathbf{i}_{\frac{\partial}{\partial \tilde
      q^{1}}}d\tilde{q}^{1}\wedge s\big|_{X'}$\,,   the map
  $\tilde{S}_{1,*}$ sends $L^{2}_{loc}(M_{g}; \Lambda T^{*}M\otimes
  \pi_{M_{g}}^{*}(\fkf))$ to $L^{2}_{loc}(X;\hat{F}_{g})$ with
  $d^{\nabla^{\fkf}}\tilde{S}_{1,*}\big|_{M_{g}\setminus
    X'}=\tilde{S}_{1,*}d^{\nabla^{\fkf}}_{M_{g}}\big|_{M_{g}\setminus X'}$ while the trace condition
  $j_{X'}s\big|_{\partial X_{-}}=j_{X'}^{*}s\big|_{\partial X_{+}}$ in
  $\mathcal{D}'(X';\Lambda T^{*}X'\otimes
  \pi_{X}^{*}(\fkf)\big|_{X'})$  is transformed into
  \eqref{eq:bcdhat}.\\
This proves
  $\tilde{S}_{1,*}\mathcal{E}_{loc}(d_{M_{g}}^{\nabla^{\fkf}},\Lambda
  T^{*}M_{g}\otimes
  \pi_{M_{g}}^{*}(\fkf))=\mathcal{E}_{loc}(d,\hat{F}_{g})$ and
  $\hat{d}_{g}=\tilde{S}_{1,*}d_{M_{g}}^{\nabla^{\fkf}}\tilde{S}_{1}^{*}$\,.\\
The property $\hat{d}_{g}\circ \hat{d}_{g}=0$ is thus the consequence of
$d^{\nabla^{\fkf}}_{M_{g}}\circ d^{\nabla^{\fkf}}_{M_{g}}$ on the
$\mathcal{C}^{\infty}$ manifold $M_{g}$ endowed with the flat exterior
covariant derivative $d_{M_{g}}^{\nabla^{\fkf}}$\,.\\
The dense an continuous embeddings
$$ \mathcal{C}^{\infty}_{0}(M_{g};\Lambda T^{*}M_{g}\otimes
\pi_{M_{g}}^{*}(\fkf))\subset \mathcal{C}_{0}(\Lambda T^{*}M_{g}\otimes
\pi_{M_{g}}^{*}(\fkf))\subset \mathcal{E}_{loc~comp}(d^{\nabla^{\fkf}}_{M_{g}},\Lambda
T^{*}M_{g}\otimes \pi_{M_{g}}^{*}(\fkf))$$
with
$$ d_{M_{g}}^{\nabla^{\fkf}}\mathcal{C}^{\infty}_{0}(M_{g};\Lambda T^{*}M_{g}\otimes
\pi_{M_{g}}^{*}(\fkf))\subset \mathcal{C}^{\infty}_{0}(M_{g};\Lambda T^{*}M_{g}\otimes
\pi_{M_{g}}^{*}(\fkf))\subset  \mathcal{C}_{0}(\Lambda T^{*}M_{g}\otimes
\pi_{M_{g}}^{*}(\fkf))\,,
$$
combined with
$$
\tilde{S}_{1,*} \mathcal{C}_{0}(\Lambda T^{*}M_{g}\otimes
\pi_{M_{g}}^{*}(\fkf))=\mathcal{C}_{0,g}(\hat{F}_{g})\,,
$$
provides the density results by taking
$\hat{\mathcal{D}}_{g,\nabla^{\fkf}}=\tilde{S}_{1,*}\mathcal{C}^{\infty}_{0}(M_{g};\Lambda
T^{*}M_{g}\otimes \pi_{M_{g}}^{*}(\fkf))$\,.
\end{proof}

\subsection{Boundary conditions for $d_{b'\fkh}$ and properties of the
  associated closed operator}
\label{sec:BCd}
Remember 
$$
s_{ev}(x)=1_{X_{-}}(x)s(x)+1_{X_{+}}(x)\Sigma_{\nu}s(x)\quad\text{when}~s\in L^{2}_{loc}(X_{-};F)\,,
$$
and
$$
\hat{\fkh}=\frac{|p|_{q}^{2}}{2}=\frac{p_{1}^{2}+m^{i'j'}(-|q^{1}|,q')p_{i'}p_{j'}}{2}=\frac{\tilde{p}_{1}^{2}+m^{i'j'}(0,\tilde{q}')\tilde{p}_{i'}\tilde{p}_{j'}}{2}\,.
$$
\begin{definition}
\label{de:domaind}
When $\hat{d}_{g}$ denotes the differential on
$\mathcal{E}_{loc}(\hat{d}_{g},\hat{F}_{g})$ introduced in
Definition~\ref{de:dhatE} and $b'\geq 0$\,,
 the differential $\hat{d}_{g,b'\fkh}$ equals
$$
\hat{d}_{g,b'\fkh}=e^{-b'\hat{\fkh}}\hat{d}_{g}e^{b'\hat{\fkh}}
=\hat{d}_{g}+b'd\hat{\fkh}\wedge\,.
$$
We keep the same notation for the operator 
in $L^{2}(X;F)$ defined by
\begin{eqnarray*}
&&  D(\hat{d}_{g,b'\fkh})=
\left\{s \in L^{2}(X;F)\cap \mathcal{E}_{loc}(\hat{d}_{g},\hat{F}_{g})\,,\, \hat{d}_{g,b'\fkh}s\in
  L^{2}(X;F)\right\}\\
&& \forall s\in D(\hat{d}_{g})\,,\quad
   \hat{d}_{g,b'\fkh}s=(d^{\nabla^{\fkf}}+b'd\hat{\fkh}\wedge)(s\big|_{X_{-}})+
  (d^{\nabla^{\fkf}}+b'd\hat{\fkh}\wedge) (s\big|_{X_{+}})\,.
\end{eqnarray*}
The operator $\overline{d}_{g,b'\fkh}$ on $\overline{X}_{-}$ is given by
\begin{eqnarray*}
  &&
D(\overline{d}_{g,b'\fkh})=\left\{s\in L^{2}(X_{-},F)\,,\quad s_{ev}\in D(\hat{d}_{g,b'\fkh})\right\}\,,\\
&& \forall s\in D(\overline{d}_{g,b'\fkh})\,,\quad
   \overline{d}_{g,b'\fkh}s=d^{\nabla^{\fkf}}s+b'd\fkh\wedge
   s\,.
\end{eqnarray*}
\end{definition}
\begin{proposition}
\label{pr:domaindH}
The operator $\hat{d}_{g,b'\fkh}$ given in $L^{2}(X;F)$ 
by Definition~\ref{de:domaind} 
is closed, satisfies $\hat{d}_{g,b' \fkh}\circ
\hat{d}_{g,b'\fkh}=0$\,, $\hat{d}_{g,b'\fkh}\circ
\Sigma_{\nu}=\Sigma_{\nu}\circ\hat{d}_{g,b'\fkh}$\,.
In particular, $\hat{d}_{g,b'\fkh}$ preserves the parity:
\begin{eqnarray*}
&&  D(\hat{d}_{g,b'\fkh})=D(\hat{d}_{g,b'\fkh})\cap
  L^{2}_{ev}(X;F)\oplus D(\hat{d}_{g,b'\fkh})\cap
  L^{2}_{odd}(X;F)\,,\\
\text{with}&&
\hat{d}_{g,b'\fkh}: D(\hat{d}_{g,b'\fkh})\cap L^{2}_{ev~odd}(X;F)\to L^{2}_{ev~odd}(X;F)\,.
\end{eqnarray*}
The subset $\mathcal{C}_{0,g}(\hat{F}_{g})$ of Definition~\ref{de:Cg}
 is dense in $D(\hat{d}_{g, b'\fkh})$\,. Additionally the dense subset
$\hat{\mathcal{D}}_{g,\nabla^{\fkf}}$ of $\mathcal{C}_{0,g}(\hat{F}_{g})$
given in  Proposition~\ref{pr:dod} satisfies
$\hat{d}_{g,b'\fkh}\mathcal{D}_{g,\nabla^{\fkf}}\subset
\mathcal{C}_{0,g}(\hat{F}_{g})\subset D(\hat{d}_{g,b'\fkh})$\,.\\
The domain of the operator
$\overline{d}_{g,b'\fkh}$ given in $L^{2}(X_{-};F)$ by
Definition~\ref{de:domaind} equals
$$
D(\overline{d}_{g,b'\fkh})=\left\{s\in L^{2}(X_{-}; F)\,,\quad 
d^{\nabla\fkf}_{b'\fkh}s\in L^{2}(X_{-};F)\,,\quad
  \frac{1-\hat{S}_{\nu}}{2}\mathbf{i}_{e_{1}}e^{1}\wedge 
s\big|_{X'}=0\right\}\,.
$$
The operator $\overline{d}_{g,b'\fkh}$ with this domain 
is closed and satisfies $\overline{d}_{g,b'\fkh}\circ
\overline{d}_{g,b'\fkh}=0$\,.\\
 The spaces
$\mathcal{C}^{\infty}_{0}(\overline{X}_{-};F)\cap D(\overline{d}_{g,b'\fkh})$\,,
$\mathcal{C}_{g}=\left\{s\in L^{2}(X_{-};F)\,,
  s_{ev}\in \mathcal{C}_{0,g}(\hat{F}_{g})\right\}$ 
and $\mathcal{D}_{g,\nabla^{\fkf}}=\left\{s\in
  L^{2}(X_{-};F)\,, s_{ev}\in
  \hat{\mathcal{D}}_{g,\nabla^{\fkf}}\right\}$ are dense in
$D(\overline{d}_{g,b'\fkh})$ with
$\overline{d}_{g,b'\fkh}\mathcal{D}_{g,\nabla^{\fkf}}\subset \mathcal{C}_{g}$\,.
\end{proposition}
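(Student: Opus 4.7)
The strategy is to use the identification provided by $\tilde S_{1,*}$ in Lemma~\ref{le:Mg}, under which $\hat d_g$ corresponds to the genuine covariant exterior derivative $d^{\nabla^{\fkf}}_{M_{g}}$ on the smooth manifold $M_g$ (Proposition~\ref{pr:dod}) and the continuous function $\hat{\fkh}$ pulls back to the smooth function $\tilde{\fkh}(\tilde q,\tilde p)=\tfrac12(\tilde p_1^{2}+m^{i'j'}(0,\tilde q')\tilde p_{i'}\tilde p_{j'})$ on $M_g$, so that $\hat d_{g,b'\fkh}$ corresponds to the smooth tensorial operator $d^{\nabla^{\fkf}}_{M_{g}}+b'd\tilde{\fkh}\wedge$. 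Closedness of $\hat d_{g,b'\fkh}$ is a standard distributional argument: if $s_n\to s$ and $\hat d_{g,b'\fkh}s_n\to t$ in $L^{2}(X;F)$, then since $\hat d_g$ and the $L^{\infty}_{loc}$-multiplier $b'd\hat{\fkh}\wedge$ are both continuous on $\mathcal{D}'(X;F)$, we obtain $\hat d_{g,b'\fkh}s=t\in L^{2}$ and $s\in\mathcal{E}_{loc}(\hat d_g,\hat F_g)$ (the matching condition being closed in $L^{2}_{loc}$). Nilpotency follows by expanding: $(\hat d_g+b'd\hat{\fkh}\wedge)^{2}=\hat d_g\circ\hat d_g+b'\hat d_g(d\hat{\fkh})\wedge+b'^{2}d\hat{\fkh}\wedge d\hat{\fkh}\wedge$, where the first term vanishes by Proposition~\ref{pr:dod}, the third by antisymmetry of the wedge, and the middle one because $d^{\nabla^{\fkf}}_{M_{g}}(d\tilde{\fkh})=0$ for the smooth function $\tilde{\fkh}$ on $M_g$, transferred back by $\tilde S_{1,*}$. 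Commutation with $\Sigma_\nu$ comes from $\Sigma_\nu\hat d_g=\hat d_g\Sigma_\nu$ (Proposition~\ref{pr:dod}) together with $\Sigma^{*}\hat{\fkh}=\hat{\fkh}$ (immediate from \eqref{eq:Sigmati}), whence parity preservation.

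For density of $\mathcal{C}_{0,g}(\hat F_g)$ in $D(\hat d_{g,b'\fkh})$ with the graph norm, first truncate a given $s$ by $\chi_{R}\in\mathcal{C}^{\infty}_{0}(X;\rz)$ with $\chi_{R}\to 1$ and $|d\chi_{R}|\leq C/R$: the Leibniz identity $\hat d_{g,b'\fkh}(\chi_{R}s)=d\chi_{R}\wedge s+\chi_{R}\hat d_{g,b'\fkh}s$ shows $\chi_{R}s\in D(\hat d_{g,b'\fkh})$ with compact support, and $\chi_{R}s\to s$, $\hat d_{g,b'\fkh}(\chi_{R}s)\to\hat d_{g,b'\fkh}s$ in $L^{2}(X;F)$ by dominated convergence. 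Transferring $\chi_{R}s$ to $M_g$ via $\tilde S_{1}^{*}$, the mollification argument behind Proposition~\ref{pr:partialtrace}-c) applied to the smooth operator $d^{\nabla^{\fkf}}_{M_{g}}+b'd\tilde{\fkh}\wedge$ yields approximations in $\mathcal{C}^{\infty}_{0}(M_{g};\Lambda T^{*}M_{g}\otimes\pi^{*}_{M_{g}}(\fkf))$ in the graph norm; pushing forward by $\tilde S_{1,*}$ returns elements of $\mathcal{C}_{0,g}(\hat F_g)$. The inclusion $\hat d_{g,b'\fkh}\hat{\mathcal D}_{g,\nabla^{\fkf}}\subset\mathcal{C}_{0,g}(\hat F_g)$ is verified directly: for $s\in\hat{\mathcal D}_{g,\nabla^{\fkf}}$, already $\hat d_g s\in\mathcal{C}_{0,g}(\hat F_g)$ by Proposition~\ref{pr:dod}, and $d\hat{\fkh}\wedge s$ lies in $\mathcal{C}_{0,g}(\hat F_g)$ because on $M_g$ both $d\tilde{\fkh}$ and the pre-image of $s$ are smooth and compactly supported.

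The passage to $\overline d_{g,b'\fkh}$ on $X_{-}$ reduces to decoding Definition~\ref{de:domaind}: $s\in D(\overline d_{g,b'\fkh})$ iff $s_{ev}\in D(\hat d_{g,b'\fkh})$. This splits into (i) $s\in L^{2}(X_{-};F)$ with $s\in\mathcal{E}_{loc}(d^{\nabla^{\fkf}},F|_{\overline X_{-}})$; (ii) the matching condition for $s_{ev}=1_{X_{-}}s+1_{X_{+}}\Sigma_\nu s$ along $X'$; and (iii) $d^{\nabla^{\fkf}}s+b'd\fkh\wedge s\in L^{2}(X_{-};F)$, using $\hat{\fkh}|_{X_{-}}=\fkh$ and evenness of $s_{ev}$. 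Condition (ii), namely $\mathbf{i}_{e_{+,1}}e^{1}_{+}\wedge(\Sigma_\nu s)|_{\partial X_{+}}=\mathbf{i}_{e_{-,1}}e^{1}_{-}\wedge s|_{\partial X_{-}}$ in $\hat F_g|_{X'}$, unfolds via the local formulas \eqref{eq:Sigmae1}\eqref{eq:Sigmae2} and Definition~\ref{de:S1Snu} into $(1-\hat S_\nu)\mathbf{i}_{e_{1}}e^{1}\wedge s|_{X'}=0$. Closedness of $\overline d_{g,b'\fkh}$ and the identity $\overline d_{g,b'\fkh}\circ\overline d_{g,b'\fkh}=0$ follow from the corresponding statements for $\hat d_{g,b'\fkh}$ by restriction to $L^{2}_{ev}(X;F)$ (which is invariant by the parity preservation proved above). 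Density of $\mathcal{D}_{g,\nabla^{\fkf}}$, $\mathcal{C}_g$ and $\mathcal{C}^{\infty}_{0}(\overline X_{-};F)\cap D(\overline d_{g,b'\fkh})$ is obtained by symmetrizing $(\tilde s_n+\Sigma_\nu\tilde s_n)/2$ the approximating sequences of the previous paragraph, which preserves evenness and the $\mathcal{C}_{0,g}(\hat F_g)$ structure, and then restricting to $\overline X_{-}$. The main technical obstacle is point (ii): the careful verification that the interface matching condition on $s_{ev}$ in $\hat F_g|_{X'}$ coincides precisely with the symmetry constraint $\hat S_\nu\mathbf{i}_{e_{1}}e^{1}\wedge s|_{X'}=\mathbf{i}_{e_{1}}e^{1}\wedge s|_{X'}$, which requires careful tracking of signs across $X'$ using \eqref{eq:Sigmae1}\eqref{eq:Sigmae2} and the $\nu$-twist in the flat vector bundle $\pi_{\fkf}:\fkf\to Q$.
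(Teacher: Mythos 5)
Your proposal follows essentially the same route as the paper: closedness via trace continuity along $X'$, nilpotency and $\Sigma_\nu$-commutation by transferring to the smooth exterior derivative on $M_g$ (or, as you do, by direct expansion using $dd\tilde{\fkh}=0$, which is equivalent), a cut-off-then-mollify density argument, and the parity/restriction argument to pass from $\hat d_{g,b'\fkh}$ to $\overline d_{g,b'\fkh}$ with the boundary condition unfolding via Proposition~\ref{pr:partrace}. One small but genuine imprecision is in your truncation step: the assertion ``$|d\chi_{R}|\leq C/R$'' is not the pertinent estimate, since what one must control is $\|d\chi_R\wedge s\|_{L^2(g^F)}$ and the weighted metric $g^E=\langle p\rangle_q^{-N_H+N_V}\pi_X^*(\cdots)$ makes $|d\chi_R|_{g^E}$ scale differently from the Euclidean gradient; the paper chooses $\chi_n=\chi(\hat{\fkh}/(n+1))$, observes $|d\hat{\fkh}|_{\hat g^E}=\mathcal{O}(\hat{\fkh}^{3/4})$, and derives the correct (slower) rate $\|d\chi_n\wedge s_\mp\|_{L^2}\leq C(n+1)^{-1/4}\|s_\mp\|_{L^2}$ — the conclusion you reach still holds, but you should track the weighted norm explicitly to see it.
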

\begin{proof}
 Let us first consider the operator
$\hat{d}_{g,b'\fkh}$\,. For a sequence  $(u_{n})_{n\in\nz}$ of
$D(\hat{d}_{g,b'\fkh})$ such that
$\lim_{n\to\infty}u_{n}=u$ and
$\lim_{n\to \infty}\hat{d}_{g,b'\fkh}u_{n}=v$ in $L^{2}(X;F)$\,, 
the convergence of $u_{n}\big|_{X_{\mp}}$ to 
$u\big|_{X_{\mp}}$
holds in  $\mathcal{E}_{loc}(d^{\nabla^{\fkf}},F\big|_{\overline{X}_{\mp}})$\,. The
  continuity of the trace map $j_{X'}^{*}$ of
  Proposition~\ref{pr:partialtrace}-b) implies $u\in
  \mathcal{E}_{loc}(\hat{d}_{g},\hat{F}_{g})$ and the identification
  $\hat{d}_{g,b'\fkh}u=v$ in $L^{2}_{loc}(X;F)$\,. Therefore
$(\hat{d}_{g,b'\fkh},D(\hat{d}_{g,b'\fkh})$ is closed. The property
$\hat{d}_{g,\fkh}\circ{\hat{d}_{g,b'\fkh}}$ and $\hat{d}_{g,b'\fkh}\circ
\Sigma_{\nu}=\Sigma_{\nu}\circ \hat{d}_{g,b'\fkh}$ were already
 proved in
Proposition~\ref{pr:dod}.\\
For the density, 
consider the cut-off function
$\chi_{n}=\chi\left(\frac{\hat{\fkh}}{n+1}\right)$ with
  $\chi\in \mathcal{C}^{\infty}_{0}(\rz;[0,1])$ equal to $1$ in a
  neighborhood of $0$\,. For $s\in D(\hat{d}_{g,b'\fkh})$\,, notice
  $\chi_{n}s\in
  \mathcal{E}_{comp}(\hat{d}_{g},\hat{F}_{g})\subset
  D(\hat{d}_{g,b'\fkh})$ and write with
    $s_{\mp}=s\big|_{X_{\mp}}$\,, 
$$
d^{\nabla^{\fkf}}_{b'\fkh}
[\chi_{n}s_{\mp}]=\chi_{n}(d^{\nabla^{\fkf}}_{b'\fkh}s_{\mp})+[d\chi_{n}\wedge
s_{\mp}]=\chi_{n}(d^{\nabla^{\fkf}}_{b'\fkh}s_{\mp})
+\frac{1}{n+1}[\chi'\left(\frac{\hat{\fkh}}{n+1}\right)(d\hat{\fkh} \wedge
s_{\mp})]\,. 
$$
With the coordinates $(\tilde{q},\tilde{p})$ and the metric
$\hat{g}^{E}$\,, we know $|d\tilde{q}^{i}|_{\hat{g}^{E}}=\mathcal{O}(\langle \tilde{p}\rangle^{-1/2})$\,,
 $|d\tilde{p}_{j}|_{\hat{g}^{E}}=\mathcal{O}(\langle \tilde{p}\rangle^{1/2})$ and
\begin{eqnarray*}
  &&
d\hat{\fkh}=\tilde{p}_{1}d\tilde{p}_{1}+m^{i'j'}(0,\tilde{q}')\tilde{p}_{i'}d\tilde{p}_{j'}
+\frac{1}{2}\tilde{p}_{i'}\tilde{p}_{j'}d_{\tilde{q}'}[m^{i'j'}(0,\tilde{q}')]\quad,\\
&&
|d\hat{\fkh}|_{\hat{g}^{E}}=\mathcal{O}(\langle p\rangle^{3/2})=\mathcal{O}(\hat{\fkh}^{3/4})\,.
\end{eqnarray*}
We deduce 
$$
\|d\chi_{n}\wedge s_{\mp}\|_{L^{2}}\leq \frac{C}{(n+1)^{1/4}}\|s_{\mp}\|_{L^{2}}\,
$$
and
$$
\lim_{n\to\infty}\|s-\chi_{n}s\|_{L^{2}}+\|\hat{d}_{g,b'\fkh}(s-\chi_{n}s)\|_{L^{2}}=0\,.
$$
We have just proved that $\mathcal{E}_{comp}(\hat{d}_{g},\hat{F}_{g})$ is
dense  (and continuously embedded) in
$D(\hat{d}_{g,b'\fkh})$\,. In Proposition~\ref{pr:dod} we proved that
 $\mathcal{C}_{0,g}(\hat{F}_{g})$ and $\hat{\mathcal{D}}_{g,\nabla^{\fkf}}$
is dense in $\mathcal{E}_{comp}(\hat{d}_{g},\hat{F}_{g})$ and that
$\hat{d}_{g}\hat{D}_{g,\nabla^{\fkf}}\subset
\mathcal{C}_{0,g}(\hat{F}_{g})$\,. 
By going back to the $\mathcal{C}^{\infty}$ structure $\Lambda
T^{*}M_{g}\otimes\pi_{M_{g}}^{*}(\fkf)$ of $\hat{F}_{g}$\,, it suffices to notice that
$\hat{h}=\frac{\tilde{p}_{1}^{2}+m^{i'j'}(0,\tilde{q}')\tilde{p}_{i'}\tilde{p}_{j'}}{2}$
is actually a $\mathcal{C}^{\infty}$ function on $M_{g}$ preserved by
$\tilde{S}_{1}$ to see that the multiplication by $e^{\pm b' \fkh}$ preserves
$\mathcal{C}_{0,g}(\hat{F}_{g})=S_{1,*}\mathcal{C}_{0}(\Lambda
T^{*}M_{g}\otimes \pi_{M_{g}}^{*}(\fkf))$ and
$\hat{D}_{g,\nabla^{\fkf}}=S_{1,*}\mathcal{C}^{\infty}_{0}(M_{g};\Lambda
T^{*}M_{g}\otimes \pi_{M_{g}}^{*}(\fkf))$\,.\\
Finally the properties of $(\overline{d}_{g,b'\fkh},
D(\overline{d}_{g,b'\fkh}))$ are obvious translations of the
properties of $(\hat{d}_{g,b'\fkh},D(\hat{d}_{g,b'\fkh}))$ because 
$\hat{d}_{g,b'\fkh}$ preserves the parity with respect to $\Sigma_{\nu}$
and $s\mapsto \frac{s_{ev}}{\sqrt{2}}$ is a unitary map from
$L^{2}(X_{-};F)$ to $L^{2}_{ev}(X;\hat{F}_{g})$\,. The condition
$\frac{1-\hat{S}_{\nu}}{2}\mathbf{i}_{e_{1}}e^{1}\wedge
s\big|_{X'}=0$ is simply the partial trace version in
$\mathcal{D}'(X';F\big|_{X'})$ of Proposition~\ref{pr:partrace}-b).
\end{proof}

\subsection{Comments}
\label{sec:commentsdomd}
The results of this section requires some specifications and
explanations.\\

\subsubsection{Dependence of the boundary condition with respect
  to $g^{TQ}$}
 All the analysis was carried out without using
any reference to a riemannian metric as it should be when one studies
the differential. The weight $\fkh$ could be replaced by another
function on $X$ with the suitable assumptions.  The map
$\hat{S}_{\nu}$ defined on $X'$ actually simply depends only  on the
identification of a normal vector to $Q'$\,. However the boundary
condtions for $\overline{d}_{g,b'\fkh}$ depend on the chosen metric
$g^{TQ}$ because the tangential trace is written
$\mathbf{i}_{e_{1}}e^{1}\wedge s\big|_{X'}$\,. Accordingly
the continuity condition in the vector bundles $\hat{E}_{g}$\,,
$\hat{F}_{g}$ or the differential $\mathcal{C}^{\infty}$ manifold
$M_{g}$ introduced in Subsection~\ref{sec:diffhatE} really depend on
the chosen metric $g^{TQ}$\,. Below is the verification that they must
not be confused with more usual boundary conditions for the
differential which would correspond to the metric $g_{0}^{TQ}$ where 
$\hat{E}_{g_{0}}=E=\Lambda T^{*}X$\,.
Let  $g^{TQ}=(dq^{1})^{2}\oplus^{\perp}m(q^{1})$ and
$g_{0}^{TQ}=(dq^{1})^{2}\oplus^{\perp}m(0)$ and use the frames
$(e^{i},\hat{e}_{j})$ for the first case and the frame
$(f^{i},\hat{f}_{j})$ for the second case. We forget the vector bundle $\fkf$ or simply
take $\nu=1$\,.
Writing a section $\omega\in \mathcal{C}^{\infty}_{0}(\overline{X}_{-};T^{*}X)$ in those two local frames gives
\begin{eqnarray*}
\omega&=&\omega_{i}e^{i}+\omega^{j}\hat{e}_{j}\\
&=&
\left[\omega_{1}-\Gamma_{j'1}^{k'}p_{k'}\omega^{j'}\right]f^{1}
+\left[\omega_{i'}-\Gamma_{i'1}^{k'}p_{k'}\omega^{1}-\Gamma_{i'j'}^{1}p_{1}\omega^{j'}\right]f^{i'}+\omega^{j}\hat{f}_{j}\,.
\end{eqnarray*}
The boundary condition for $g^{TQ}$ is
$$
\omega_{i'}(0,x')=\omega_{i'}(0,S_{1}(x'))\quad,\quad
\omega^{j}(0,x')=(-1)^{\delta_{1j}}\omega^{j}(0,S_{1}(x'))
$$
while for $g_{0}^{TQ}$ it is
\begin{eqnarray*}
  && 
\left[\omega_{i'}-\Gamma_{i'1}^{k'}p_{k'}\omega^{1}-\Gamma_{i'j'}^{1}p_{1}\omega^{j'}\right]=\left[\omega_{i'}-\Gamma_{i'1}^{k'}p_{k'}\omega^{1}-\Gamma_{i'j'}^{1}p_{1}\omega^{j'}\right](0,S_{1}(x'))\\
&&
\omega^{j}(0,x')=(-1)^{\delta_{j1}}\omega^{j}(0,S_{1}(x'))\,.
\end{eqnarray*}
They coincide iff 
$$
\forall i'\in \left\{2,\ldots,d\right\}\,,\quad
\Gamma_{i'1}^{k'}p_{k'}[\omega^{1}(0,x')+\omega^{1}(0,S_{1}(x'))]+2\Gamma_{i'j'}^{1}p_{1}
\omega^{j'}(0,x')=0\,,
$$
which is not true in general.

\subsubsection{The flat vector bundle $(\fkf,\nabla^{\fkf})$ and the
  spaces $\hat{\mathcal{D}}_{g,\nabla^{\fkf}}$\,,
  $\mathcal{D}_{g,\nabla^{\fkf}}$}
\label{sec:rqflat}
Once the flat vector bundle $\pi_{X}^{*}(\fkf)$ is made from the  two
pieces $\fkf\big|_{\overline{Q}_{-}}$ and
$\fkf\big|_{\overline{Q}_{+}}$ according to
Definition~\ref{de:doublef}, the vector bundle $\Lambda
T^{*}M_{g}\otimes \pi_{M_{g}}^{*}(\fkf)$ of Subsection~\ref{sec:diffhatE} can be considered as a
$\mathcal{C}^{\infty}$ flat vector bundle on $M_{g}$\,. However the
$\mathcal{C}^{\infty}$ structure depends on $\nabla^{\fkf}$ and this
why we keep track of the $\nabla^{\fkf}$-dependence in the subset
$\hat{\mathcal{D}}_{g,\nabla^{\fkf}}$ and
$\mathcal{D}_{g,\nabla^{\fkf}}$\,.  Let us consider the simple example
where a potential is added to the energy $\fkh$\,, with $\nu=1$\,. According to the
end of Subsection~\ref{sec:coornonsmooth} it can be formulated by
starting from $\fkf=\overline{Q}_{-}\otimes \cz$ with the trivial
connection $\nabla^{\fkf}=\nabla$ and the metric $g^{\fkf}(z)=
e^{-2b'V(q)}|z|^{2}$ or equivalently with the metric $g^{\fkf}(z)=|z|^{2}$ and the
connection $\nabla^{\fkf}=\nabla +b'dV(q)=e^{-b'V(q)}\nabla
e^{b'V(q)}$\,. Take the second choice. The corresponding differential
$d^{\nabla^{\fkf}}_{g,b'\fkh}$ on $X_{-}$ will be
$$
d^{\nabla^{\fkf}}_{g,b'\fkh}=d+b'd(\fkh+V)\wedge
$$
which is what we expect.
Smooth sections 
$\tilde{s}\in \mathcal{C}^{\infty}(M_{g};\Lambda T^{*}M_{g}\otimes
\pi_{M_{g}}^{*}(\fkf))$ are actually sections of $\Lambda T^{*}M$ such
that $e^{b'\hat{V}(q)}\tilde{s}\in \mathcal{C}^{\infty}(M_{g};\Lambda
T^{*}M_{g}\otimes \cz)$ with
$\hat{V}(\tilde{q})=V(-|\tilde{q}^{1}|,\tilde{q}')$\,. The set
$\hat{\mathcal{D}}_{g,\nabla^{\fkf}}$ was defined as the image of
$\mathcal{C}^{\infty}_{0}(M_{g};\Lambda T^{*}M_{g}\otimes
\pi_{M_{g}}^{*}(\fkf))$\,. The set of its even element is the image of the
even elements of $\mathcal{C}^{\infty}_{0}(M_{g};\Lambda
T^{*}M_{g}\otimes \pi_{M_{g}}^{*}(\fkf))$ with respect to
$\Sigma_{M_{g},*}$ that is via the symmetry
$(\tilde{q}^{1},\tilde{x}')\mapsto (-\tilde{q}^{1},\tilde{x}')$\,. If
we simply take a function $\tilde{s}\in
\mathcal{C}^{\infty}_{0}(M_{g};\cz)$ such that
$\tilde{s}(-\tilde{q}^{1},\tilde{x}')=\tilde{s}(\tilde{q}^{1},\tilde{x}')$
it must satisfy 
$\frac{\partial}{\partial
  \tilde{q}^{1}}(e^{b'\hat{V}}\tilde{s})(0,\tilde{q}')=0$\,. Written
in $\tilde{q}^{1}=0^{-}$ it means $\frac{\partial \tilde{s}}{\partial
  \tilde{q}^{1}}(0^{-},\tilde{x}')+b'\frac{\partial
  V}{\partial\tilde{q}^{1}}s(0^{-},\tilde{x}')=0$\,.
The corresponding section $s\in
\mathcal{D}_{g,\nabla^{\fkf}}$ belongs to
$\mathcal{C}^{\infty}_{0}(\overline{X}_{-};\cz)$ and satisfies
$$
s(0,\tilde{q}',\tilde{p}_{1},\tilde{p}')=s(0,\tilde{q}',-\tilde{p}_{1},\tilde{p}')\quad,\quad
\frac{\partial \tilde{s}}{\partial
  \tilde{q}^{1}}(0,\tilde{x}')+b'\frac{\partial
  V}{\partial\tilde{q}^{1}}s(0,\tilde{x}')=0\quad,\quad \tilde{x}'=(\tilde{q}',\tilde{p}_{1},\tilde{p}')\,.
$$
It clearly depends on the flat connection $\nabla^{\fkf}$\,.\\
While considering adjoints, like in Proposition~\ref{pr:partialtrace2}
we must use the anti-adjoint flat connection $\nabla^{\fkf'}$\,,
equal to $\nabla^{\fkf'}=\nabla-b'dV\wedge$ here, and 
the sign in front of
$\frac{\partial V}{\partial \tilde{q}^{1}}$ in the above condition is
changed. So the corresponding subset $\mathcal{D}_{g,\nabla^{\fkf'}}$
differs from $\mathcal{D}_{g,\nabla^{\fkf}}$\,. A similar subtlety
must be watched when we go further in the analysis and play with the
extrinsic curvature related with $\frac{\partial
  m}{\partial\underline{q}^{1}}(0,\underline{q}')$\,.\\
From this point of view working with the vector bundle $\hat{E}_{g}$
and $\hat{F}_{g}$ where only the continuity is considered along $X'$\,,
not only simplifies the correspondance $(0^{+},x')=(0^{-},y')$ into
$y'=x'$\,, but also 
prevents from mistakes or confusions.

\subsubsection{The interface condition of $\hat{d}_{g,b'\fkh}$ as
a jump condition}
Again, we forget the vector bundle $\pi_{X}^{*}(\fkf)$ or take simply
$\fkf=\cz$ with $\nu=1$\,.\\
The continuity condition for $s=s_{I}^{J}e^{I}\hat{e}_{J}\in
\mathcal{E}_{loc}(\hat{d}_{g},\hat{E}_{g})$ written:
$$
s_{I'}^{J}(0^{+},x')=s_{I'}^{J}(0^{-},x')
$$
can be written 
$$
j_{X'}^{*}(A_{+}s)=j_{X'}(A_{-}s)
$$ 
where $A_{\mp}$ is the vector bundle isomorphism sending the frame
$(e^{i}_{\mp},\hat{e}_{\mp,j})$ associated with $g_{\mp}^{TQ}$
to the frame $(f^{i},\hat{f}_{j})$ associated with $g_{0}^{TQ}$\,,
with $j_{X'}^{*}\tilde{s}$ written as $\mathbf{i}_{f^{1}}f^{1}\wedge
s\big|_{X'}=\mathbf{i}_{\frac{\partial}{\partial
    q^{1}}}(dq^{1}\wedge s)\big|_{X'}$\,. With basic linear algebra, it can also be written
$$
\mathbf{i}_{\frac{\partial}{\partial q^{1}}}(dq^{1}\wedge s)\big|_{\partial X_{+}}=
A\mathbf{i}_{\frac{\partial}{\partial q^{1}}} (dq^{1}\wedge
s)\big|_{\partial X_{-}}\,.
$$
By using the $\mathcal{C}^{\infty}$ natural structure of $E=\Lambda
T^{*}X$ the continuity conditions for $s\in
\mathcal{E}_{loc}(\hat{d}_{g},\hat{E}_{g})$ thus appears as a
jump condition in $E$\,.  In $\mathcal{D}'(X;\Lambda T^{*}X)$ an
element $s\in L^{2}_{loc}(X;\Lambda T^{*}X)$ belongs to
$\mathcal{E}_{loc}(\hat{d}_{g},\hat{E}_{g})$ iff
$$
ds=(ds_{-})+ (ds_{+})+\delta_{0}(q^{1})dq^{1}\wedge [(A-\Id)j_{X'}{*}s_{-}]\quad\text{in}~\mathcal{D}'(X;\Lambda T^{*}X)
$$
with $s_{\mp}=s\big|_{X_{\mp}}$\,, $ds_{\mp}\in
L^{2}_{loc}(\overline{X}_{\mp};\Lambda T^{*}X)$\,.
Checking $\hat{d}_{g}\mathcal{E}_{loc}(\hat{d}_{g},\hat{E}_{g})\subset\mathcal{E}_{loc}(\hat{d}_{g},\hat{E}_{g})
$ with $\hat{d}_{g}\circ\hat{d}_{g}=0$ means
 that the right-hand side of 
$$
d(\hat{d}_{g}s)=0+0
+\delta(q^{1})dq^{1}\wedge[j_{X'}(ds_{+})-j_{X'}(ds_{-})]=\delta(q^{1})dq^{1}\wedge
d'[j_{X'}(A-\Id)s_{-}]
$$
can be written $\delta(q^{1})\wedge
dq^{1}[(A-\Id)(d'j_{X'}s_{})]$\,. A sufficient condition is: the
vector bundle morphism $A$ satisfies $d'A=Ad'$ in
$\mathcal{D}'(X';\Lambda TX')$\,.\\
This can be checked by computations in terms of local coordinates by
using the expressions
\eqref{eq:fv}\eqref{eq:ff}\eqref{eq:ef}\eqref{eq:hef1}\eqref{eq:hef}
of the frames $(e^{i}_{\mp},\hat{e}_{\mp,j})$ and $(f^{i},\hat{f}_{j})$\,. But
this is not so simple and may involve the differentiation of the
Christoffel symbols $\Gamma_{ij}^{k}(q)$ which is
irrelevant. From this point of view introducing the proper
$\mathcal{C}^{\infty}$ structure of the  manifold $M_{g}$ in
Subsection~\ref{sec:diffhatE} is much more effective.\\
However note that it was introduced with the non
symplectic coordinates $(\tilde{q},\tilde{p})$\,. It is easier to
work with the differential structure of $X$\,, the coordinates
$(q,p)$\,,  and the piecewise $\mathcal{C}^{\infty}$ and continuous vector bundle
$\hat{E}_{g}$ when the symplectic structure is required.

\section{Symplectic and Bismut codifferential}
\label{sec:adjdiff}
Now that we have a good definition and properties of the closed
operator $\overline{d}_{g,\fkh}$ in $L^{2}(X_{-}; F)$\,, the codifferential can be defined as the adjoint operator,
for various duality products. Bismut's codifferential involves a
non-degenerate but non symmetric form, which makes a mixture of
symplectic and riemannian Hodge duality. It is therefore simpler to
start first from the duality between $F=\Lambda T^{*}X\otimes
\pi_{X}^{*}(\fkf)$
 and $F'=\Lambda TX\otimes \pi_{X}^{*}(\fkf)$\,, $\fkf\simeq \fkf'$ via
 the metric $g^{\fkf}$ or $\hat{g}^{\fkf}$\,,
which does not require any additional information than the symplectic volume
measure $dv_{X}$ on $X$ and then to transfer the information by various
isomorphisms from $F'$ to $F$\,. 
\subsection{Adjoints for the $(F',F)$ dual pair}
\label{sec:adjdualpair}
The volume measure on $X$ is the symplectic volume
$$
dv_{X}=\left|\frac{1}{d!}\sigma^{d}\right|=|dqdp|
$$
and coincides with the Lebesgue measure in any symplectic coordinate
system $(q^{i},p_{j})$\,.  However here it is more convenient to work
with the non symplectic coordinates $(\tilde{q},\tilde{p})$ of
Definition~\ref{de:tildeqp} where $dv_{X}$ does not have such a simple
form according to \eqref{eq:dvxtilde}.  However the trace results and
the integration by parts of Proposition~\ref{pr:partialtrace2} only
requires the volume form $dv_{X}$ and the defining function of the
boundary $x^{1}$ which is here $\tilde{q}^{1}=q^{1}$ in
$X_{(-\varepsilon,\varepsilon)}$ with $d\tilde{q}^{1}=dq^{1}=e^{1}$\,. Therefore the volume form
occuring in the boundary integrals is $dv_{X'}=|dq'dp|$\,.\\
Although the dual of $F=\Lambda T^{*}X \otimes
\pi_{X}^{*}(\fkf)$ can be identified with $F'=\Lambda T^{*}X\otimes
\pi_{X}^{*}(\fkf)$ via the metric $g^{\fkf}$ or $\hat{g}^{\fkf}$\,,
which provides the identification at the level of $L^{2}$-spaces, the
flat $\mathcal{C}^{\infty}$ connections $\pi_{X}^{*}(\nabla^{\fkf})$
and $\pi_{X}^{*}(\nabla^{\fkf'})$ differ and give rise to different
$\mathcal{C}^{\infty}$-structures of vector bundles, especially when
the metric $\hat{g}^{f}$ is used. An example is the case
$\nabla^{\fkf}=\nabla +b'd\hat{V}$ and $\nabla^{\fkf'}=\nabla -b'd\hat{V}$
where $\hat{V}(q^{1},q')=V(-|q^{1}|,q')$ already discussed in
Subsection~\ref{sec:rqflat}.\\
Because the operators $\overline{d}_{g,b'\fkh}$ and
$\hat{d}_{g,b'\fkh}$ are densely defined operators respectively in
$L^{2}(X_{-};F)$ and $L^{2}(X;F)$\,, we can consider their adjoints
via the duality product
$$
\langle t\,,\,s\rangle=\int_{X}\langle t\,,\,s\rangle_{F'_{x},F_{x}}~dv_{X}(x)\,,
$$
where the duality between $F'=\Lambda TX\otimes \pi_{X}^{*}(\fkf)$ and
$F=\Lambda T^{*}X\otimes \pi_{X}^{*}(\fkf)$ is made via the metric
$g^{\fkf}$ (or $\hat{g}^{\fkf}$) and 
for which $L^{2}(X_{-},F)'=L^{2}(X_{-},F')$
(resp. $L^{2}(X;F)'=L^{2}(X;F')$)\,.\\
When we work on $X_{-}$ or  $X_{-}\cup X_{+}$ with the symplectic
coordinates $(q,p)$\,, $dv_{X}=|dqdp|$ and without considering
boundary or interface conditions, the formal adjoint of
$d_{b'\fkh}=d^{\nabla^{\fkf}}_{b'\fkh}= d^{\nabla^{\fkf}}+b'd\fkh\wedge$ is
nothing but
\begin{equation}
  \label{eq:dtilde}
\tilde{d}_{b'\fkh}=
\tilde{d}^{\nabla^{\fkf'}}_{b'\fkh}=\tilde{d}^{\nabla^{\fkf'}}-b'\mathbf{i}_{d\fkh}
=-\mathbf{i}_{dq^{i}}\nabla^{\fkf'}_{\frac{\partial}{\partial
    q^{i}}}-\mathbf{i}_{dp_{j}}\frac{\partial}{\partial p_{j}}-b'\mathbf{i}_{d\fkh}\,.
\end{equation}
The notation $\tilde{\hat{d}}_{b'\fkh}$ on $X_{-}\cup X_{+}$ refers to the
case when $\fkh$ is replaced by the piecewise $\mathcal{C}^{\infty}$
and continuous function $\hat{\fkh}$\,. According to this, the
trace results of Proposition~\ref{pr:partialtrace2} lead to the
following definition.

\begin{definition}
\label{de:Elthatd}
 The space
$\mathcal{E}_{loc}(\tilde{\hat{d}}_{g,b'\fkh},
\hat{F}'_{g})=\mathcal{E}_{loc}(\tilde{\hat{d}}_{g,0}=\tilde{\hat{d}}_{g},
\hat{F}'_{g})$ is the set of sections in $t=L^{2}_{loc}(X;F')$\,, $t_{\mp}=t\big|_{X_{\mp}}$ such that 
\begin{eqnarray*}
  && \tilde{d}^{\nabla^{\fkf'}}t_{\mp} \in L^{2}_{loc}(X_{\mp};F')\,,
\\
&&
\mathbf{i}_{e^{1}}t_{-}\big|_{\partial
   X_{-}}=\mathbf{i}_{e^{1}}t_{+}\big|_{\partial X_{+}}\quad
   \text{in}~\mathcal{D}'(X';\hat{F}'_{g}\big|_{X'})\\
\text{or}&&
e_{1}\wedge \mathbf{i}_{e^{1}}t_{-}\big|_{\partial X_{-}}=
e_{1}\wedge \mathbf{i}_{e^{1}}t_{+}\big|_{\partial X_{+}}\quad
            \text{in}~\mathcal{D}'(X';\hat{F}'_{g}\big|_{X'})\,,\\
\text{and}&&
\tilde{\hat{d}}_{g,b'\fkh}t=(\tilde{d}^{\nabla^{\fkf'}}_{b'\hat{\fkh}}t_{-})+
(\tilde{d}^{\nabla^{\fkf'}}_{b'\hat{\fkh}}t_{+})\,.
\end{eqnarray*}
\end{definition}
\begin{proposition}
\label{pr:domtilded} The adjoint $\tilde{\hat{d}}_{g,b'\fkh}$ of
$(\hat{d}_{g,b'\fkh}, D(\hat{d}_{g,\fkh}))$ of
Definition~\ref{de:domaind} is closed and densely defined as
\begin{eqnarray*}
  &&D(\tilde{\hat{d}}_{g,b'\fkh})=\left\{t\in L^{2}(X;F')\cap
     \mathcal{E}_{loc}(\tilde{\hat{d}}_{g},\hat{F}'_{g})\,,\quad
     \tilde{\hat{d}}_{g,b'\fkh}t\in L^{2}(X;F')\right\}\\
&&\forall t\in D(\tilde{\hat{d}}_{g,b'\fkh})\,,\quad
\tilde{\hat{d}}_{g,b'\fkh}t= (\tilde{d}^{\nabla^{\fkf'}}_{b'\hat{\fkh}}t_{-})+
(\tilde{d}^{\nabla^{\fkf'}}_{b'\hat{\fkh}}t_{+})\quad,\quad t_{\mp}=t\big|_{X_{\mp}}\,.
\end{eqnarray*}
It satisfies $\tilde{\hat{d}}_{g,b'\fkh}\circ
\tilde{\hat{d}}_{g,b'\fkh}=0$ and $\tilde{\hat{d}}_{g,b'\fkh}\circ
\Sigma_{\nu}=\Sigma_{\nu}\tilde{\hat{d}}_{g,b'\fkh}$\,. In particular, $\tilde{\hat{d}}_{g,b'\fkh}$ preserves the parity:
\begin{eqnarray*}
&&  D(\tilde{\hat{d}}_{g,b'\fkh})=D(\tilde{\hat{d}}_{g,b'\fkh})\cap
  L^{2}_{ev}(X;F')\oplus D(\tilde{\hat{d}}_{g,b'\fkh})\cap
  L^{2}_{odd}(X;F')\,,\\
\text{with}&&
\tilde{\hat{d}}_{g,b'\fkh}: D(\tilde{\hat{d}}_{g,b'\fkh})\cap L^{2}_{ev~odd}(X;F')\to L^{2}_{ev~odd}(X;F')\,.
\end{eqnarray*}
The subset $\mathcal{C}_{0,g}(\hat{F}'_{g})$ of Definition~\ref{de:Cg}
 is dense in $D(\tilde{\hat{d}}_{g,b' \fkh})$\,. Additionally there exists a  dense subset
$\hat{\mathcal{D}}'_{g,\nabla^{\fkf'}}$ of $\mathcal{C}_{0,g}(\hat{F}'_{g})$
such that 
$\tilde{\hat{d}}_{g,b'\fkh}\hat{\mathcal{D}}'_{g,\nabla^{\fkf'}}\subset
\mathcal{C}_{0,g}(\hat{F}'_{g})\subset
D(\tilde{\hat{d}}_{g,b'\fkh})$\,.\\
The adjoint $\tilde{d}_{g,b'\fkh}$ of $(\overline{d}_{g,\fkh},
D(\overline{d}_{g,\fkh}))$ is densely defined and closed with
$$
D(\tilde{d}_{g,\fkh})=\left\{t\in L^{2}(X_{-},F')\,,\quad
  d^{\nabla^{\fkf'}}_{b'\fkh}t\in L^{2}(X_{-},F')\,,\quad
  \frac{1-\hat{S}_{\nu}}{2}e_{1}\wedge \mathbf{i}_{e^{1}}t\big|_{X'}=0\right\}\,.
$$
This adjoint operator $(\tilde{d}_{g,b'\fkh},D(\tilde{d}_{g,b'\fkh}))$
satisfies $\tilde{d}_{g,b'\fkh}\circ \tilde{d}_{g,b'\fkh}=0$\,.\\
The spaces $\mathcal{C}^{\infty}_{0}(\overline{X}_{-};F')\cap
D(\tilde{d}_{g,b'\fkh})$\,, $\mathcal{C}'_{g}=\left\{t\in
  L^{2}(X_{-},F')\,, t_{ev}\in \mathcal{C}_{0,g}(\hat{F}'_{g})\right\}$
and $\mathcal{D}'_{g,\nabla^{\fkf'}}=\left\{t\in L^{2}(X_{-};F)\,,
  t_{ev}\in \hat{\mathcal{D}}'_{g,\nabla^{\fkf'}}\right\}$ are dense in
$D(\tilde{d}_{g,b'\fkh})$ with
$\tilde{d}_{g,b'\fkh}\mathcal{D}'_{g,\nabla^{\fkf'}}\subset \mathcal{C}'_{g}$\,.
\end{proposition}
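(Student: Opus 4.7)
The plan is to mirror the strategy used for $\hat{d}_{g,b'\fkh}$ in Proposition~\ref{pr:domaindH}, but now on the dual side, replacing the use of Proposition~\ref{pr:partialtrace} by its interior-covariant-derivative counterpart Proposition~\ref{pr:partialtrace2}. First, I will use that the adjoint of a densely defined operator is automatically closed, so closedness of $\tilde{\hat{d}}_{g,b'\fkh}$ is free once the domain is identified. To identify the domain, I will write the defining relation $\langle t,\hat{d}_{g,b'\fkh}s\rangle=\langle \tilde{\hat{d}}_{g,b'\fkh}t,s\rangle$ for $s\in D(\hat{d}_{g,b'\fkh})$ by integration by parts on $X_-$ and $X_+$ separately, the interior term on each side producing $\tilde{d}^{\nabla^{\fkf'}}_{b'\hat{\fkh}}t_{\mp}$ and each boundary integral, by Proposition~\ref{pr:partialtrace2}-d), being $\int_{X'}(\mathbf{i}_{e^{1}}t_{\mp}).s_{\mp}\,dv_{X'}$. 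Testing first against all $s\in\mathcal{C}^{\infty}_{0}(X_{-}\sqcup X_{+};F)$ forces $\tilde{d}^{\nabla^{\fkf'}}_{b'\hat{\fkh}}t_{\mp}\in L^{2}(X_{\mp};F')$; then testing against the dense set $\mathcal{C}_{0,g}(\hat{F}_{g})\subset D(\hat{d}_{g,b'\fkh})$ (which only imposes continuity $\mathbf{i}_{e_{1}}e^{1}\wedge s_{-}\big|_{X'}=\mathbf{i}_{e_{1}}e^{1}\wedge s_{+}\big|_{X'}$) forces the sum of the two boundary terms to vanish for all such $s$, which by the pairing identity $(\mathbf{i}_{e^{1}}t).s=(\mathbf{i}_{e^{1}}t).(\mathbf{i}_{e_{1}}e^{1}\wedge s)$ is equivalent to $\mathbf{i}_{e^{1}}t_{-}\big|_{X'}=\mathbf{i}_{e^{1}}t_{+}\big|_{X'}$ in $\hat{F}'_{g}\big|_{X'}$, equivalently (after wedging with $e_{1}$) the interface condition stated in Definition~\ref{de:Elthatd}. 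This produces exactly the domain $D(\tilde{\hat{d}}_{g,b'\fkh})$ written in the statement.

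Next, the algebraic properties follow by transposition: $\tilde{\hat{d}}_{g,b'\fkh}\circ\tilde{\hat{d}}_{g,b'\fkh}=0$ is obtained by taking adjoints in $\hat{d}_{g,b'\fkh}\circ\hat{d}_{g,b'\fkh}=0$, which, because both factors are densely defined and closed, transfers to the composition of adjoints; the commutation $\tilde{\hat{d}}_{g,b'\fkh}\Sigma_{\nu}=\Sigma_{\nu}\tilde{\hat{d}}_{g,b'\fkh}$ comes from $\Sigma_{\nu}^{2}=\Id$ and $\Sigma_{\nu}$-unitarity on $L^{2}(X;F')$ together with Proposition~\ref{pr:domaindH}. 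The preservation of parity is then automatic. For the density statements I will mimic the cut-off argument in the proof of Proposition~\ref{pr:domaindH}: the multiplication by $\chi(\hat{\fkh}/(n+1))$ brings any $t\in D(\tilde{\hat{d}}_{g,b'\fkh})$ into $\mathcal{E}_{comp}(\tilde{\hat{d}}_{g},\hat{F}'_{g})$, using that $|d\hat{\fkh}|_{\hat{g}^{E'}}=O(\hat{\fkh}^{3/4})$ exactly as before. The set $\hat{\mathcal{D}}'_{g,\nabla^{\fkf'}}$ is defined as the image under $\tilde{S}_{1,*}$ of $\mathcal{C}^{\infty}_{0}(M_{g};\Lambda TM_{g}\otimes\pi_{M_{g}}^{*}(\fkf))$, but now where the $\mathcal{C}^{\infty}$-structure on $\pi_{M_{g}}^{*}(\fkf)$ is the one built from the antidual flat connection $\nabla^{\fkf'}$ according to Proposition~\ref{pr:dualf}; the analogue of the calculation in Lemma~\ref{le:Mg} gives $\tilde{S}_{1,*}\mathcal{C}_{0}(\Lambda TM_{g}\otimes\pi_{M_{g}}^{*}(\fkf))=\mathcal{C}_{0,g}(\hat{F}'_{g})$ and the inclusion $\tilde{\hat{d}}_{g,b'\fkh}\hat{\mathcal{D}}'_{g,\nabla^{\fkf'}}\subset\mathcal{C}_{0,g}(\hat{F}'_{g})$.

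Finally, the statement about $\overline{d}_{g,b'\fkh}$ is obtained by transferring everything through the isometric identification $s\mapsto s_{ev}/\sqrt{2}$ between $L^{2}(X_{-};F)$ and $L^{2}_{ev}(X;\hat{F}_{g})$ (resp. between $L^{2}(X_{-};F')$ and $L^{2}_{ev}(X;\hat{F}'_{g})$): since $\hat{d}_{g,b'\fkh}$ preserves parity, its restriction to the even subspace is unitarily equivalent to $\overline{d}_{g,b'\fkh}$, and its adjoint is therefore the restriction of $\tilde{\hat{d}}_{g,b'\fkh}$ to the even subspace, pulled back to $X_{-}$. The even symmetry condition then combines with the interface condition $\mathbf{i}_{e^{1}}t_{-}\big|_{X'}=\mathbf{i}_{e^{1}}t_{+}\big|_{X'}$ to yield precisely the boundary condition $\frac{1-\hat{S}_{\nu}}{2}e_{1}\wedge\mathbf{i}_{e^{1}}t\big|_{X'}=0$, the projector $\frac{1-\hat{S}_{\nu}}{2}$ reflecting the fact that $\hat{S}_{\nu}$ acts with an extra sign on the $e_{1}$-factor compared with $\hat{S}_{\nu}$ acting on tangential forms. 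I expect the main technical obstacle to be precisely this last algebraic bookkeeping: checking that the action of $\hat{S}_{\nu}$ on $e_{1}\wedge\mathbf{i}_{e^{1}}t$ is consistent with the interface continuity once written in the even/odd decomposition, i.e.\ that the two trace conditions on the $X_{\pm}$ sides glue into the single projector identity stated; this is a direct but delicate calculation from \eqref{eq:dehSnu1}--\eqref{eq:dehSnu2} combined with the parity signs in \eqref{eq:Sigmae1}--\eqref{eq:Sigmae2}. Once this is in hand, $\tilde{d}_{g,b'\fkh}\circ\tilde{d}_{g,b'\fkh}=0$ and the density assertions for $\mathcal{C}^{\infty}_{0}(\overline{X}_{-};F')\cap D(\tilde{d}_{g,b'\fkh})$, $\mathcal{C}'_{g}$, and $\mathcal{D}'_{g,\nabla^{\fkf'}}$ are immediate consequences of the corresponding symmetric statements already obtained for $\tilde{\hat{d}}_{g,b'\fkh}$.
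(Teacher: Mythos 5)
Your argument is correct, but for the core of the proposition you take a genuinely different route from the paper. The paper's proof is essentially a one‑liner: via $\tilde S_{1,*}$ the operator $\hat d_{g,b'\fkh}$ has already been identified with $e^{-b'\hat\fkh}d_{M_g}^{\nabla^{\fkf}}e^{b'\hat\fkh}$ on the smooth closed manifold $M_g$ (Proposition~\ref{pr:dod}), so its $(F',F)$-adjoint is by Proposition~\ref{pr:partialtrace2} nothing but $e^{b'\hat\fkh}\tilde d_{M_g}^{\nabla^{\fkf'}}e^{-b'\hat\fkh}$, and closedness, the identification of $D(\tilde{\hat d}_{g,b'\fkh})$, the relations $\tilde{\hat d}\circ\tilde{\hat d}=0$ and $\tilde{\hat d}\Sigma_\nu=\Sigma_\nu\tilde{\hat d}$, and the density of $\mathcal{C}_{0,g}(\hat F'_g)$ and $\hat{\mathcal{D}}'_{g,\nabla^{\fkf'}}$ all descend immediately from the theory of the interior covariant derivative on a closed manifold without boundary, transported back through $\tilde S_{1,*}$. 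You instead compute the adjoint directly on $X$: integrating by parts separately on $X_-$ and $X_+$ with Proposition~\ref{pr:partialtrace2}-d), testing against $\mathcal{C}^\infty_0(X_-\sqcup X_+;F)$ to get the interior equation and against the core $\mathcal{C}_{0,g}(\hat F_g)$ to extract the interface condition $\mathbf{i}_{e^1}t_-\big|_{X'}=\mathbf{i}_{e^1}t_+\big|_{X'}$, then reproving $\tilde{\hat d}\circ\tilde{\hat d}=0$ by the abstract $AB=0\Rightarrow B^*A^*=0$ argument, the $\Sigma_\nu$-commutation via unitarity, and density via the cutoff $\chi(\hat\fkh/(n+1))$ — finally still invoking $M_g$ for the set $\hat{\mathcal D}'_{g,\nabla^{\fkf'}}$. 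Both routes are sound; the paper's buys all the structural properties at once by working on the smooth manifold, whereas your version is more elementary in spirit but needs several ad‑hoc verifications. Two small remarks: for the abstract adjoint step, it is worth saying explicitly that $\hat d_{g,b'\fkh}\circ\hat d_{g,b'\fkh}=0$ includes $\mathrm{Ran}(\hat d_{g,b'\fkh})\subset D(\hat d_{g,b'\fkh})$, which is what makes $\langle \tilde{\hat d}t,\hat d s\rangle=\langle t,\hat d\hat d s\rangle=0$ legitimate. And the ``delicate bookkeeping'' you flag at the end in fact works out cleanly and is the exact analogue of Proposition~\ref{pr:partrace}: writing $t=t^I_Je_{-,I}\hat e^J_-$ and using \eqref{eq:dehSnu2}, the equality $\mathbf{i}_{e^1}(t_{ev})_-\big|_{\partial X_-}=\mathbf{i}_{e^1}(t_{ev})_+\big|_{\partial X_+}$ with $(t_{ev})_+=\Sigma_\nu t$ unwinds precisely to $\hat S_\nu(e_1\wedge\mathbf{i}_{e^1}t)\big|_{X'}=e_1\wedge\mathbf{i}_{e^1}t\big|_{X'}$, i.e. $\tfrac{1-\hat S_\nu}{2}e_1\wedge\mathbf{i}_{e^1}t\big|_{X'}=0$, because $e_1\wedge\mathbf{i}_{e^1}$ selects exactly the indices with $1\in I$, producing the sign $(-1)^{1+|\{1\}\cap J|}$ on both sides.
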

\begin{proof}
  For $\tilde{\hat{d}}_{g,b'\fkh}$ we use the manifold
  $M_{g}=X_{-}\cup X'\cup X_{+}$ introduced in
  subsection~\ref{sec:diffhatE}\,. By Lemma~\ref{le:Mg} the map
  $\tilde{S}_{1,*}: \Lambda T^{*}M_{g}\otimes
  \pi_{M_{g}}^{*}(\fkf)\big|_{M_{g,(-\varepsilon,\varepsilon)}}\to
  \hat{F}_{g}\big|_{X_{(-\varepsilon,\varepsilon)}}$  provides the
 transpose map $\tilde{S}_{1}^{*}:
 \hat{F}'_{g}\big|_{X_{(-\varepsilon,\varepsilon)}}\to \Lambda
 TM_{g,(-\varepsilon,\varepsilon)}\otimes \pi_{M_{g}}^{*}(\fkf)$ and 
$TM_{g,(-\varepsilon,\varepsilon)}\otimes
\pi_{M_{g}}^{*}(\fkf')=TM_{g,(-\varepsilon,\varepsilon)}\otimes \pi_{M_{g}}^{*}(\fkf)$ is a
$\mathcal{C}^{\infty}$-vector bundle when the flat connection
$\pi_{M_{g}}^{*}(\nabla^{\fkf'})$ is used. The operator
 $\hat{d}_{g,b'\fkh}$ was identified
with $e^{-b'\hat{\fkh}}d^{\nabla^{\fkf}}_{M_{g}}e^{b'\hat{\fkh}}$
acting on the smooth vector bundle $\Lambda T^{*}M_{g}\otimes
\pi_{M_{g}}^{*}(\fkf)$ and where $\hat{\fkh}$ is a smooth function on
$M_{g}$\,. We are thus led to consider the adjoint of the differential
on the smooth manifold $M_{g}$ without boundary, $\tilde{\hat{d}}_{g,b'\hat{\fkh}}$ is identified
with $e^{b'\hat{\fkh }}\tilde{d}^{\nabla^{\fkf'}}_{M_{g}}e^{-b'\hat{\fkh}}$ and all
the properties follow. In particular
$\hat{\mathcal{D}}_{g,\nabla^{\fkf'}}'$ is nothing but the image of
$\mathcal{C}^{\infty}_{0}(M_{g};\Lambda TM_{g}\otimes
\pi_{M_{g}}^{*}(\fkf'))$\,, where we write $\fkf'$ to remind that the
$\mathcal{C}^{\infty}$-structure is the one given by $\nabla^{\fkf'}$\,, by $\tilde{S}_{1,*}:\Lambda M_{g}\otimes
\pi_{M_{g}}^{*}(\fkf)\to \hat{F}'_{g}$\,.\\
The study of $(\tilde{d}_{g,b'\fkh}, D(\tilde{d}_{g,b'\fkh}))$ then
relies on parity arguments with respect to $\Sigma_{\nu}$ on
$\hat{F}_{g}'$ (or with respect to $\Sigma_{M_{g},*}$ on $\Lambda
T^{*}M_{g}\otimes \pi_{M_{g}}^{*}(\fkf')$)\,, as we did for
$(\overline{d}_{g,b'\fkh}, D(\overline{d}_{g,b'\fkh}))$\,.
\end{proof}

\subsection{Symplectic codifferential}
\label{sec:sympladj}
As a non degenerate $2$-form the symplectic form $\sigma$ on $TX$\,,  defines a morphism
 $\sigma: TX\to T^{*}X$ by writing $\sigma(S,T)=S.(\sigma
T)$ for $S,T\in TX$\,. By tensorization, this defines a morphism still
denoted $\sigma:E'=\Lambda TX\to E=\Lambda T^{*}X$ and $\sigma:
F'=\Lambda TX\otimes \pi_{X}^{*}(\fkf)\to F=\Lambda T^{*}X\otimes
\pi_{X}^{*}(\fkf)$ which fulfills the condition
\eqref{eq:estimunivphi} (see below for details). We can thus consider the $\sigma$-adjoint of
densely defined operators in $L^{2}(X;F)$ as closed operators in
$L^{2}(X;F)$\,, according to Definition~\ref{de:phiformadj} and deduce their properties from the $(F',F)$-adjoint
according to Proposition~\ref{pr:adjlr}. Because the symplectic form
is anti-symmetric the left and right adjoints are equal.
\begin{definition}
\label{de:sympl}
The operators 
$\hat{d}^{\sigma}_{g,b'\fkh}$ and $d^{\sigma}_{g,b'\fkh}$ are the
symplectic adjoints, that is the $\phi$-adjoint Definition~\ref{de:phiformadj} for
$\phi=\sigma: TX\to T^{*}X$\,, of  the
operators  $\hat{d}_{g, b'\fkh}$ and $\overline{d}_{g,b'\fkh}$ defined
in Definition~\ref{de:domaind} and characterized in
Proposition~\ref{pr:domaindH}.
\end{definition}
Before giving the properties of $\hat{d}^{\sigma}_{g,b'\fkh}$ and
$d^{\sigma}_{g,b'\fkh}$ let us specify some formulas.
\begin{lemma}
\label{le:sympladjextint} For $t\in L^{\infty}_{loc}(X;TX)$ and
$\omega\in L^{\infty}_{loc}(X;T^{*}X)$\,,  the symplectic adjoint of
$\mathbf{i}_{t}$ (resp. $\omega\wedge$) equals $d(\sigma t)\wedge$
(resp. $-\mathbf{i}_{\sigma^{-1}\omega}$)\,. 
\end{lemma}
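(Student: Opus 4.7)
The plan is to reduce to Proposition~\ref{pr:adjlr}, which gives $P^{\sigma}=\sigma\tilde P\sigma^{-1}$, where $\tilde P$ denotes the adjoint for the $(F',F)$-duality product of \eqref{eq:dualL2}. Since $t$ and $\omega$ are in $L^{\infty}_{loc}$, the multiplications $\mathbf{i}_{t}$ and $\omega\wedge$ act by bounded $L^{2}_{loc}$-multipliers on $L^{2}(X;F)$, so they are densely defined and Definition~\ref{de:phiformadj} applies without domain subtlety. The tensorization by $\pi_{X}^{*}(\fkf)$ in $F=E\otimes\pi_{X}^{*}(\fkf)$ and $F'=E'\otimes\pi_{X}^{*}(\fkf)$ plays no active role here, so the computation effectively takes place on $E=\Lambda T^{*}X$ versus $E'=\Lambda TX$. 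Moreover, since $\sigma$ is antisymmetric as a bilinear form, one has ${}^{t}\sigma=-\sigma$, which forces the left and right $\sigma$-adjoints to coincide, so no distinction is needed.

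The first step is to identify the two $(F',F)$-adjoints from the pointwise duality between $\Lambda TX$ and $\Lambda T^{*}X$. For $u\in\Lambda^{p-1}TX$ and $\alpha\in\Lambda^{p}T^{*}X$ the elementary identity $u.(\mathbf{i}_{t}\alpha)=(t\wedge u).\alpha$ yields $\widetilde{\mathbf{i}_{t}}=t\wedge\cdot$, and the symmetric identity $(\mathbf{i}_{\omega}v).\beta=v.(\omega\wedge\beta)$ for $v\in\Lambda^{p}TX$, $\beta\in\Lambda^{p-1}T^{*}X$ yields $\widetilde{(\omega\wedge\cdot)}=\mathbf{i}_{\omega}$, where in this last formula $\mathbf{i}_{\omega}$ denotes contraction of a multivector against the $1$-form $\omega$. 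The first claim of the lemma then follows at once from the fact that the exterior extension $\sigma:\Lambda TX\to\Lambda T^{*}X$ of the linear isomorphism $\sigma:TX\to T^{*}X$ is a graded algebra morphism: applied to $u=\sigma^{-1}v'$ with $v'\in\Lambda T^{*}X$,
\begin{equation*}
(\mathbf{i}_{t})^{\sigma}v'=\sigma\bigl(t\wedge\sigma^{-1}v'\bigr)=(\sigma t)\wedge v'.
\end{equation*}

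The second claim is the step I expect to carry the real content, since the sign is not apparent from the algebra-morphism property and is tied to the antisymmetry of $\sigma$. It reduces to the intertwining $\sigma\circ\mathbf{i}_{\omega}=-\mathbf{i}_{\sigma^{-1}\omega}\circ\sigma$ on $\Lambda TX$, which I would verify by a direct multilinear computation. On a decomposable $v=v_{1}\wedge\cdots\wedge v_{k}$ with $v_{i}\in TX$ one has
\begin{equation*}
\sigma(\mathbf{i}_{\omega}v)=\sum_{j=1}^{k}(-1)^{j-1}\omega(v_{j})\,(\sigma v_{1})\wedge\cdots\widehat{(\sigma v_{j})}\cdots\wedge(\sigma v_{k}),
\end{equation*}
while from $(\sigma V)(U)=\sigma(U,V)$ and $\sigma(U,V)=-\sigma(V,U)$,
\begin{equation*}
\omega(v_{j})=\sigma(v_{j},\sigma^{-1}\omega)=-\sigma(\sigma^{-1}\omega,v_{j})=-(\sigma v_{j})(\sigma^{-1}\omega).
\end{equation*}
Substituting the second formula into the first reproduces exactly $-\mathbf{i}_{\sigma^{-1}\omega}(\sigma v)$, and combined with $\widetilde{(\omega\wedge\cdot)}=\mathbf{i}_{\omega}$ and Proposition~\ref{pr:adjlr} this gives $(\omega\wedge\cdot)^{\sigma}=\sigma\mathbf{i}_{\omega}\sigma^{-1}=-\mathbf{i}_{\sigma^{-1}\omega}$. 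A sanity check in a local symplectic frame $(e_{i},\hat{e}^{j})$ uses the values $\sigma e_{j}=\hat e_{j}$ and $\sigma\hat e^{j}=-e^{j}$ read off the matrix of $\phi_{b}$ with $g=0$ and $b=1$.
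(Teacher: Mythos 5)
Your proof is correct and follows essentially the same route as the paper: apply Proposition~\ref{pr:adjlr} to write $P^{\sigma}=\sigma\tilde P\sigma^{-1}$, identify the $(F',F)$-adjoints $\widetilde{\mathbf{i}_{t}}=t\wedge$ and $\widetilde{(\omega\wedge)}=\mathbf{i}_{\omega}$ from the pointwise duality, and then conjugate by the graded extension of $\sigma$. The only difference is that the paper concludes the second conjugation by quoting the general transpose identity $\sigma\mathbf{i}_{\omega}\sigma^{-1}=\mathbf{i}_{{}^{t}\sigma^{-1}\omega}$ together with ${}^{t}\sigma=-\sigma$, whereas you re-derive that intertwining by a direct computation on decomposable multivectors; this is a more explicit rendering of the same step, not a different argument.
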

\begin{proof}
  The general formula of Proposition~\ref{pr:adjlr} says
  $P^{\sigma}=\sigma\tilde{P}\sigma^{-1}={}^{t}\sigma\tilde{P}{}^{t}\sigma$ with ${}^{t}\sigma=-\sigma:TX\to
T^{*}X$\,. Therefore
\begin{eqnarray*}
&&
(\mathbf{i}_{t})^{\sigma}=\sigma
   \widetilde{(\mathbf{i}_{t})}\sigma^{-1}=\sigma(t\wedge)\sigma^{-1}=(\sigma t)\wedge\,,
\\
  && (\omega\wedge)^{\sigma}=\sigma \widetilde{(\omega\wedge)}\sigma^{-1}
     =\sigma
     \mathbf{i}_{\omega}\sigma^{-1}=\mathbf{i}_{{}^{t}\sigma^{-1}\omega}=\mathbf{i}_{-\sigma^{-1}\omega}=-\mathbf{i}_{\sigma^{-1}\omega}\,.
\end{eqnarray*}
\end{proof}
In particular when $\varphi$ is a locally Lipschitz continuous
function the $\sigma$-adjoint of $d\varphi\wedge$ is 
$$
(d\varphi\wedge)^{\sigma}=-\mathbf{i}_{\sigma^{-1}d\varphi}=-\mathbf{i}_{Y_{\varphi}}
$$
where $Y_{\varphi}$ is the Hamiltonian vector field characterized by
$\sigma(t,Y_{\varphi})=t.\sigma Y_{\varphi}=d\varphi(t)$ or
$Y_{\varphi}=\sigma^{-1}d\varphi$\,.\\
When we use the symplectic coordinates $(q,p)$ with $dv_{X}=|dqdp|$
the adjoint of the covariant derivative $\nabla^{\fkf}_{T}$ is
$-\nabla^{\fkf'}_{T}$ and the formal symplectic adjoint of
$$
d_{b'\fkh}=dq^{i}\wedge
\nabla^{\fkf}_{\frac{\partial}{\partial q^{i}}}+dp_{j}\wedge
\frac{\partial}{\partial p_{j}} +b'd\fkh\,,
$$
 with $\sigma(\frac{\partial}{\partial q^{i}})=dp_{i}$\,,
 $\sigma(\frac{\partial}{\partial p_{j}})=-dq^{j}$\,, equals
 \begin{equation}
   \label{eq:dsigma}
d^{\sigma}_{b'\fkh}= -\mathbf{i}_{\frac{\partial}{\partial
    p_{i}}}\nabla^{\fkf'}_{\frac{\partial}{\partial
    q^{i}}}+\mathbf{i}_{\frac{\partial}{\partial
    q^{j}}}\frac{\partial}{\partial p_{j}}-b'\mathbf{i}_{Y_{\fkh}}
=-\mathbf{i}_{\frac{\partial}{\partial
    p_{i}}}\nabla^{\fkf}_{\frac{\partial}{\partial
    q^{i}}}+\mathbf{i}_{\frac{\partial}{\partial
    q^{j}}}\frac{\partial}{\partial p_{j}}
-\mathbf{i}_{\frac{\partial}{\partial
    p_{i}}}\omega(\nabla^{\fkf},g^{\fkf})\left(\frac{\partial}{\partial
  q^{i}}\right)
-b'\mathbf{i}_{Y_{\fkh}}\,.
\end{equation}
The same formula holds on both sides $X_{-}$ and $X_{+}$\,, when $g^{\fkf}$ and $\fkh$ are replaced by
$\hat{g}^{\fkf}$ and $\hat{\fkh}$\,.\\
The boundary conditions for $\hat{d}_{g,b'\fkh}$\,,
$\overline{d}_{g,b'\fkh}$\,, $\tilde{\hat{d}}_{g,b'\fkh}$ and
$\tilde{d}_{g,b'\fkh}$ were studied with the non symplectic
coordinates $(\tilde{q},\tilde{p})$ but were finally formulated with
$e^{1}=dq^{1}$ and $e_{1}=\frac{\partial}{\partial
  \tilde{q}^{1}}$\,. Because $(e_{i},\hat{e}^{j})$ is a symplectic
basis with dual basis $(e^{i},\hat{e}_{j})$\,,  we can use
\begin{eqnarray*}
  &&
\sigma(e_{i})=\hat{e}_{i}\quad,\quad
\sigma(\hat{e}^{j})=-e^{j}\quad,\quad
\sigma^{-1}(\hat{e}_{j})=e_{j}\quad,\quad \sigma^{-1}(e^{i})=-\hat{e}^{i}\,,\\
\text{and}&&
\sigma \mathbf{i}_{e^{i}}\sigma^{-1}=\mathbf{i}_{-\hat{e}^{i}}\quad
\sigma\mathbf{i}_{\hat{e}_{j}}\sigma^{-1}=\mathbf{i}_{e_{j}}\quad
\sigma^{-1}\mathbf{i}_{e_{i}}\sigma=\mathbf{i}_{\hat{e}_{i}}\quad
\sigma^{-1}\mathbf{i}_{\hat{e}^{j}}\sigma=-\mathbf{i}_{e^{j}}\,,
\end{eqnarray*}
without referring to coordinates. 
It implies
$$
|\sigma(e_{i})|_{g^{E}}=|\hat{e}^{i}|_{g^{E}}=\langle
p\rangle_{q}^{1/2}=|e_{i}|_{g^{E'}}\quad,\quad
|\sigma(\hat{e}^{j})|_{g^{E}}=|e^{j}|_{g^{E}}=\langle p\rangle^{-1/2}_{q}=|\hat{e}^{j}|_{g^{E'}}\,,
$$
and the condition \eqref{eq:estimunivphi} is satisfied.\\
The operator
$\hat{S}_{\nu}$ of Definition~\ref{de:S1Snu} and involved in the
boundary conditions for $\overline{d}_{g,b'\fkh}$ and $\tilde{d}_{g,b'\fkh}$ satisfies 
$$
\sigma \hat{S}_{\nu}\sigma^{-1}=\hat{S}_{\nu}\,.
$$
Finally $\sigma$ belongs to $\mathcal{C}^{\infty}(\overline{X}_{-};
L(E',E))\cap\mathcal{C}^{\infty}(\overline{X}_{+}; L(E',E))\cap
\mathcal{C}^{0}(X;L(\hat{E}'_{g},\hat{E}_{g})) $\,, sends
$\mathcal{C}^{0}_{g}(\hat{F}'_{g})$ to
$\mathcal{C}^{0}_{g}(\hat{F}_{g})$ and preserves the parity with
respect to $\Sigma_{\nu}$\,.
\begin{proposition}
\label{pr:domdsigma}
The $\sigma$-adjoint of $(\hat{d}_{g,b'\fkh},D(\hat{d}_{g,b'\fkh}))$  equals
\begin{eqnarray*}
&&
D(\hat{d}^{\sigma}_{g,b'\fkh})=
\left\{s\in L^{2}(X;F)\cap \sigma\mathcal{E}_{loc}(\tilde{d},\hat{F}'_{g})\,,
  \quad \hat{d}^{\sigma}_{g,b'\fkh}s\in L^{2}(X;F)\right\}\,,\\
  &&
\forall s\in D(\hat{d}^{\sigma}_{g,b'\fkh})\,,\quad
     \hat{d}^{\sigma}_{g,b'\fkh}s=
     (\hat{d}^{\sigma}_{g,b'\fkh}s_{-})+(\hat{d}^{\sigma}_{b'\fkh}s_{+})\quad,\quad s_{\mp}=s\big|_{X_{\mp}}\,,\\
&&  \hat{d}^{\sigma}_{g,b'\fkh}s_{\mp}=-\mathbf{i}_{\frac{\partial}{\partial
    p_{i}}}\nabla^{\fkf}_{\frac{\partial}{\partial
    q^{i}}}s_{\mp}+\mathbf{i}_{\frac{\partial}{\partial
    q^{j}}}\frac{\partial s_{\mp}}{\partial p_{j}}
-\mathbf{i}_{\frac{\partial}{\partial
    p_{i}}}\omega(\nabla^{\fkf},\hat{g}^{\fkf})\left(\frac{\partial}{\partial
  q^{i}}\right)s_{\mp}
-b'\mathbf{i}_{Y_{\hat{\fkh}}}s_{\mp}\,.
\end{eqnarray*}
It satisfies $\hat{d}^{\sigma}_{g,b'\fkh}\circ
\hat{d}^{\sigma}_{g,b'\fkh}=0$ and $\hat{d}^{\sigma}_{g,b'\fkh}\circ
\Sigma_{\nu}=\Sigma_{\nu}\circ\hat{d}^{\sigma}_{g,b'\fkh}$\,. In particular, $\hat{d}^{\sigma}_{g,b'\fkh}$ preserves the parity:
\begin{eqnarray*}
&&  D(\hat{d}^{\sigma}_{g,b'\fkh})=D(\hat{d}^{\sigma}_{g,b'\fkh})\cap
  L^{2}_{ev}(X;F)\oplus D(\hat{d}^{\sigma}_{g,b'\fkh})\cap
  L^{2}_{odd}(X;F)\,,\\
\text{with}&&
\hat{d}^{\sigma}_{g,b'\fkh}: D(\hat{d}^{\sigma}_{g,b'\fkh})\cap L^{2}_{ev~odd}(X;F)\to L^{2}_{ev~odd}(X;F)\,.
\end{eqnarray*}
The subset $\mathcal{C}_{0,g}(\hat{F}_{g})$ of Definition~\ref{de:Cg}
 is dense in $D(\hat{d}^{\sigma}_{g,b' \fkh})$\,. Additionally there exists a  dense subset
$\sigma\hat{\mathcal{D}}'_{g,\nabla^{\fkf'}}$ of $\mathcal{C}_{0,g}(\hat{F}_{g})$
such that 
$\hat{d}^{\sigma}_{g,b'\fkh}(\sigma\hat{\mathcal{D}}'_{g,\nabla^{\fkf'}})\subset
\mathcal{C}_{0,g}(\hat{F}_{g})\subset
D(\hat{d}^{\sigma}_{g,b'\fkh})$\,.\\
The adjoint $d^{\sigma}_{g,b'\fkh}$ of $(\overline{d}_{g,b'\fkh},
D(\overline{d}_{g,b'\fkh}))$ is densely defined and closed with
\begin{eqnarray*}
  &&
D(d^{\sigma}_{g,b'\fkh})=\left\{s\in L^{2}(X_{-},F)\,,\quad
  d^{\sigma}_{b'\fkh}s\in L^{2}(X_{-},F)\,,\quad
  \frac{1-\hat{S}_{\nu}}{2}\hat{e}_{1}\wedge \mathbf{i}_{\hat{e}^{1}}s\big|_{X'}=0\right\}\,.
\\
&&
\forall s\in D(d^{\sigma}_{g,b'\fkh})\,,\quad
d^{\sigma}_{g,b'\fkh}s=
-\mathbf{i}_{\frac{\partial}{\partial
    p_{i}}}\nabla^{\fkf}_{\frac{\partial}{\partial
    q^{i}}}s+\mathbf{i}_{\frac{\partial}{\partial
    q^{j}}}\frac{\partial s}{\partial p_{j}}
-\mathbf{i}_{\frac{\partial}{\partial
    p_{i}}}\omega(\nabla^{\fkf},g^{\fkf})\left(\frac{\partial}{\partial
  q^{i}}\right)s
-b'\mathbf{i}_{Y_{\fkh}}s\,.
\end{eqnarray*}
This adjoint operator $(d^{\sigma}_{g,b'\fkh},D(d^{\sigma}_{g,b'\fkh}))$
satisfies $d^{\sigma}_{g,b'\fkh}\circ d^{\sigma}_{g,b'\fkh}=0$\,.\\
The spaces $\mathcal{C}^{\infty}_{0}(\overline{X}_{-};F)\cap
D(d^{\sigma}_{g,b'\fkh})$\,, $\mathcal{C}_{g}=\left\{s\in
  L^{2}(X_{-},F)\,, s_{ev}\in \mathcal{C}_{0,g}(\hat{F}_{g})\right\}$
and $\mathcal{D}_{g,\nabla^{\fkf'}}=\left\{s\in L^{2}(X_{-};F)\,,
  s_{ev}\in \sigma\hat{\mathcal{D}}'_{g,\nabla^{\fkf'}}\right\}$ are dense in
$D(d^{\sigma}_{g,b'\fkh})$ with
$d^{\sigma}_{g,b'\fkh}\mathcal{D}_{g,\nabla^{\fkf'}}\subset
\mathcal{C}_{g}$\,.
\end{proposition}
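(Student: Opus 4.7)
The plan is to deduce every assertion from Proposition~\ref{pr:domtilded} via the conjugation identity $P^{\sigma}=\sigma\tilde{P}\sigma^{-1}$ of Proposition~\ref{pr:adjlr}. Because the symplectic form is anti-symmetric, ${}^{t}\sigma=-\sigma$ and the left and right $\sigma$-adjoints coincide, so I will identify $(\hat{d}^{\sigma}_{g,b'\fkh},D(\hat{d}^{\sigma}_{g,b'\fkh}))=(\sigma\tilde{\hat{d}}_{g,b'\fkh}\sigma^{-1},\sigma D(\tilde{\hat{d}}_{g,b'\fkh}))$ and transport every item of Proposition~\ref{pr:domtilded} through this conjugation.

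The first step is to check that $\sigma:F'\to F$ fulfills \eqref{eq:estimunivphi} so that Definition~\ref{de:phiformadj} and Proposition~\ref{pr:adjlr} apply. From $\sigma(e_{i})=\hat{e}_{i}$ and $\sigma(\hat{e}^{j})=-e^{j}$ one reads off $|\sigma(e_{i})|_{\hat{g}^{E}}=\langle p\rangle_{q}^{1/2}=|e_{i}|_{\hat{g}^{E'}}$ and $|\sigma(\hat{e}^{j})|_{\hat{g}^{E}}=\langle p\rangle_{q}^{-1/2}=|\hat{e}^{j}|_{\hat{g}^{E'}}$; tensorizing with the self-dual identification of $\pi_{X}^{*}(\fkf)$ via $\hat{g}^{\fkf}$ gives the uniform bound. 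In particular $\sigma$ is simultaneously a topological isomorphism $L^{2}(X;F',\hat{g}^{F'})\to L^{2}(X;F,\hat{g}^{F})$ and a piecewise $\mathcal{C}^{\infty}$ and continuous vector bundle isomorphism $\hat{F}'_{g}\to\hat{F}_{g}$, sending $\mathcal{C}_{0,g}(\hat{F}'_{g})$ bijectively onto $\mathcal{C}_{0,g}(\hat{F}_{g})$ and $\hat{\mathcal{D}}'_{g,\nabla^{\fkf'}}$ into $\mathcal{C}_{0,g}(\hat{F}_{g})$.

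Then closedness of $\hat{d}^{\sigma}_{g,b'\fkh}$, the identity $\hat{d}^{\sigma}_{g,b'\fkh}\circ\hat{d}^{\sigma}_{g,b'\fkh}=0$, the commutation with $\Sigma_{\nu}$ (which uses $\sigma\Sigma_{\nu}=\Sigma_{\nu}\sigma$ since $\Sigma$ is symplectic and $\nu$ lifts trivially), the parity decomposition and the density results follow mechanically from Proposition~\ref{pr:domtilded} by conjugating with $\sigma$. The explicit formula for $\hat{d}^{\sigma}_{g,b'\fkh}$ is obtained by applying Lemma~\ref{le:sympladjextint} termwise to \eqref{eq:dtilde}: the symplectic adjoints $(dq^{i}\wedge)^{\sigma}=-\mathbf{i}_{\partial/\partial p_{i}}$, $(dp_{j}\wedge)^{\sigma}=\mathbf{i}_{\partial/\partial q^{j}}$ and $(d\hat{\fkh}\wedge)^{\sigma}=-\mathbf{i}_{Y_{\hat{\fkh}}}$, together with the anti-duality transformation $\nabla^{\fkf'}\mapsto\nabla^{\fkf}+\omega(\nabla^{\fkf},\hat{g}^{\fkf})$ coming from the metric identification $\fkf'\simeq\fkf$, reproduce exactly the asserted expression on each side of $X'$, in accordance with \eqref{eq:dsigma}.

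Finally, the statement for $d^{\sigma}_{g,b'\fkh}$ on $X_{-}$ is obtained by the parity mechanism used in Proposition~\ref{pr:domaindH}: the unitary map $s\mapsto s_{ev}/\sqrt{2}$ identifies $L^{2}(X_{-};F)$ with $L^{2}_{ev}(X;\hat{F}_{g})$ and $d^{\sigma}_{g,b'\fkh}$ is the restriction of $\hat{d}^{\sigma}_{g,b'\fkh}$ to the even subspace, which preserves parity. The main technical obstacle, and the only step that is not a formal conjugation, is to track the interface condition through $\sigma$: the trace identity $\mathbf{i}_{e^{1}}t_{-}|_{\partial X_{-}}=\mathbf{i}_{e^{1}}t_{+}|_{\partial X_{+}}$ of Definition~\ref{de:Elthatd} must be verified to transform under $\sigma$ into $\hat{e}_{1}\wedge\mathbf{i}_{\hat{e}^{1}}s_{-}|_{\partial X_{-}}=\hat{e}_{1}\wedge\mathbf{i}_{\hat{e}^{1}}s_{+}|_{\partial X_{+}}$ in $\hat{F}_{g}|_{X'}$ for $s=\sigma t$, using $\sigma(e_{1})=\hat{e}_{1}$ and the intertwining of $\mathbf{i}_{e^{1}}$ on $F'$ with $\mathbf{i}_{\hat{e}^{1}}$ on $F$. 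Combined with $\sigma\hat{S}_{\nu}\sigma^{-1}=\hat{S}_{\nu}$ this yields the boundary condition $\frac{1-\hat{S}_{\nu}}{2}\hat{e}_{1}\wedge\mathbf{i}_{\hat{e}^{1}}s|_{X'}=0$ on the even part, and $d^{\sigma}\circ d^{\sigma}=0$ then follows by restriction.
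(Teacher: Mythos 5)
Your proposal is correct and follows essentially the same route as the paper: apply $(P^{\sigma},D(P^{\sigma}))=(\sigma\tilde{P}\sigma^{-1},\sigma D(\tilde{P}))$ from Proposition~\ref{pr:adjlr} and transport each item of Proposition~\ref{pr:domtilded} through the isomorphism $\sigma:\hat{F}'_{g}\to\hat{F}_{g}$. Where the paper's proof is terse and says it is ``a translation of Proposition~\ref{pr:domtilded} combined with the previous formulas and observations,'' you have unpacked exactly those observations (verification of \eqref{eq:estimunivphi} for $\sigma$, the frame formulas $\sigma(e_{1})=\hat{e}_{1}$ and $\sigma\mathbf{i}_{e^{1}}\sigma^{-1}=-\mathbf{i}_{\hat{e}^{1}}$, $\sigma\hat{S}_{\nu}\sigma^{-1}=\hat{S}_{\nu}$, commutation with $\Sigma_{\nu}$, the parity restriction to $X_{-}$), so the two arguments are the same in substance.
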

\begin{proof}
  It is a straightforward application of the general formula of Proposition~\ref{pr:adjlr}
  $$
(P^{\sigma},D(P^{\sigma}))=(\sigma \tilde{P}\sigma^{-1},
  \sigma D(\tilde{P}))\,.
$$
All the properties of $P^{\sigma}$\,, $P=\hat{d}_{g,b'\fkh}$ or
$P=\overline{d}_{g,b'\fkh}$\,,
 are obtained by conjugating
with $\sigma^{-1}$ the ones of $\tilde{P}$ and the properties of
$D(P^{\sigma})$ are obtained by transporting via $\sigma$ the ones of
$D(\tilde{P})$\,. Thus, it is just a translation of
Proposition~\ref{pr:domtilded} combined with the previous formulas and observations.
\end{proof}
\begin{remark}
\label{re:sympladj}
Below are some detailed explanations of the previous result:
\begin{description}
\item[a)] The sets $\mathcal{C}_{0,g}(\hat{F}_{g})$ and
  $\mathcal{C}_{g}$\,, respectively dense in
  $D(\hat{d}^{\sigma}_{g,b'\fkh})$ and $D(d^{\sigma}_{g,b'\fkh})$\,,
  are the same as in Proposition~\ref{pr:domaindH} where they are
  shown to be dense respectively in $D(\hat{d}_{g,b'\fkh})$ and
  $D(\overline{d}_{g,b'\fkh})$\,.
\item[b)] The term $-\mathbf{i}_{\frac{\partial}{\partial
    p_{i}}}\omega(\nabla^{\fkf},\hat{g}^{\fkf})\left(\frac{\partial}{\partial
  q^{i}}\right)$ (resp. $-\mathbf{i}_{\frac{\partial}{\partial
    p_{i}}}\omega(\nabla^{\fkf},g^{\fkf})
\left(\frac{\partial}{\partial  q^{i}}\right)$)   in the expression of $\hat{d}^{\sigma}_{g,b'\fkh}$
(resp. $d^{\sigma}_{g,b'\fkh}$) comes from the comparison between
$\nabla^{\fkf'}$\,, used for  the analysis in $\hat{F}_{g}'$
(resp. $F'\big|_{\overline{X}_{-}}$)\,, and the initial connection $\nabla^{\fkf}$ on
$\hat{F}_{g}$ (resp. $F\big|_{\overline{X}_{-}}$)\,.
\item[c)] The sets $\sigma \hat{\mathcal{D}}'_{g,\nabla^{\fkf'}}$ and
  $\mathcal{D}_{g,\nabla^{\fkf'}}$ differ from the sets
  $\hat{\mathcal{D}}_{g,\nabla^{\fkf}}$ and
  $\mathcal{D}_{g,\nabla^{\fkf}}$ of Proposition~\ref{pr:domaindH}\,, mainly because the two flat
  connections $\nabla^{\fkf}$
  and $\nabla^{\fkf'}$ are related with different
  $\mathcal{C}^{\infty}$ structures of $\pi_{\fkf}:\fkf\to Q$ when
  $\hat{g}^{\fkf}$ is only piecewise $\mathcal{C}^{\infty}$ and
  continuous.
\item[d)] While working with the symplectic structure the symplectic
  coordinates $(q,p)$ are more natural than the coordinates
  $(\tilde{q},\tilde{p})$ which were used in particular in
  Subsection~\ref{sec:diffhatE} for the
  $\mathcal{C}^{\infty}$-structure of $\hat{E}_{g}$ via the manifold
  $M_{g}$\,. When one uses the coordinates $(\tilde{q},\tilde{p})$ on
  $X$\,,  the symplectic form does not have a better regularity that
  the continuity at the interface. An example is given by the disc
  $\overline{Q}_{-}=\overline{D}(0,r_{0})$ in $\rz^{2}$ where the
  metric can be written
  $g_{-}^{TQ}=d(\underline{q}^{1})^{2}+(r_{0}+\underline{q}^{1})^{2}d\underline{q}^{2}$\,,
  where $\underline{q}^{1}$ is the radial coordinate and
  $\underline{q}^{2}$ the angular coordinate. Then the coordinates
  $(\tilde{q},\tilde{p})$ are given by
  $(\tilde{q},\tilde{p}_{1})=(q,p_{1})$ and
  $\tilde{p}_{2}=\frac{r_{0}}{r_{0}+q^{1}}p_{2}$ with
  $d\tilde{p}_{2}=-\frac{r_{0}p_{2}}{(r_{0}+q^{1})^{2}}dq^{1}+\frac{r_{0}}{r_{0}+q^{1}}dp_{2}$\,. For
  the metric $g_{+}^{TQ}$ simply replace $(r_{0}+q^{1})$ by
  $(r_{0}-q^{1})$\,. When the coordinates $(\tilde{q},\tilde{p})$ are
  constructed for
  $\hat{g}^{TQ}=1_{\overline{Q}_{-}}(\underline{q})g_{-}^{TQ}+1_{\overline{Q}_{+}}(\underline{q})g_{+}^{TQ}$\,,
  the symplectic volume $dv_{X}=|dqdp|$ equals
  $|\frac{r_{0}-|\tilde{q}^{1}|}{r_{0}}||d\tilde{q}d\tilde{p}|$
  which is clearly only piecewise $\mathcal{C}^{\infty}$ and
  continuous in those coordinates. Introducing the symplectic form
  breaks the $\mathcal{C}^{\infty}$ structure inherited from
  $M_{g}$\,. Again, this is a reason why we prefer to work with
  piecewise $\mathcal{C}^{\infty}$ and continuous vector bundles: This
  is the right framework were all the structures can be put together.
\item[e)] It is possible to express the symplectic codifferential with
  the frame $(e,\hat{e})$ and in terms of connections. If we work only
  with $E$\,, or equivalently with $\fkf=Q\times \cz$\,, $\nu=1$\,,
  and with $b'=0$\,,
  Bismut in \cite{Bis05} wrote
\begin{eqnarray*}
  &&d=e^{i}\wedge \nabla_{e_{i}}^{E}+ \hat{e}_{j}\wedge
     \nabla_{\hat{e}^{j}}^{X}+ \mathbf{i}_{R^{TQ}p}\,,\\
&& d^{\sigma}=-\mathbf{i}_{\hat{e}^{i}}\nabla^{E}_{e_{i}}+\mathbf{i}_{e_{j}}\nabla^{X}_{\hat{e}^{j}}+R^{TQ}p\wedge\,.
\end{eqnarray*}
When we work with the non smooth metric $\hat{g}^{TQ}$\,, the
zeroth order  term related with the curvature tensor $R^{TQ}$ is not
continuous along $X'$ and rather complicated. It is not obvious to
check $d\circ d=0$ and $d^{\sigma}\circ d^{\sigma}=0$ or to identify
easily dense domains of smooth sections. From this point of view, the
coordinates $(\tilde{q},\tilde{p})$ for the
$\mathcal{C}^{\infty}$-structure of the manifold $M_{g}$ and then the
symplectic coordinates $(q,p)$ make things more obvious.
\end{description}
\end{remark}

\subsection{Bismut codifferential}
\label{sec:biscod}
In the previous section we introduced the parameter $b'\geq 0$ in
front of $\fkh$ in order to make the comparison of local properties in
the case $b'=0$ and $b'=1$ self-contained. We now fix $b'=1$ and will
use the scaling of \cite{Bis05} recalled in the introduction where the
bilinear form $\eta_{\phi_{b}}$ on $TX$ is defined with a parameter
$b\in \rz^{*}$ as 
\begin{equation}
  \label{eq:etaphib}
\eta_{\phi_{b}}(U,V)=g^{TQ}(\pi_{X,*}(U),\pi_{X,*}(V))+b\sigma(U,V)=U.\phi_{b}V\,,\quad
U,V\in TX\,.
\end{equation}
The bilinear form $\phi_{b}$ is 
 associated with the map $\phi_{b}:TX\to T^{*}X$ written, by
taking a symplectic basis compatible with the horizontal-vertical
decompositions of $TX$ (like $(e_{i},\hat{e}^{j})$) and
$T^{*}X$ (like $(e^{i},\hat{e}_{j})$), as
\begin{equation}
  \label{eq:phib}
\phi_{b}=
\begin{pmatrix}
 g^{TQ}&-b\mathrm{Id}\\
b\mathrm{Id}&0
\end{pmatrix}\,,\quad b\neq 0\,.
\end{equation}
The dual bilinear form on $T^{*}X$
 is denoted $\eta^{*}_{\phi}$\,:
$$
\eta_{\phi_{b}}^{*}(\omega,\theta)=(\phi_{b}^{-1}\omega).\theta\quad,\quad
\phi_{b}^{-1}=\frac{1}{b^{2}}
\begin{pmatrix}
  0&b\mathrm{Id}\\
-b\mathrm{Id}&g^{TQ}
\end{pmatrix}\,.
$$
With the local bases $(e,\hat{e})$\,, the map $\phi_{b}$ can be 
be simply related with the operator $\sigma:TX\to T^{*}X$ associated
with the symplectic form.
With 
\begin{equation}
  \label{eq:lambda0}
\lambda_{0}=g^{TQ}_{ij}(q)e^{i}\wedge \mathbf{i}_{\hat{e}^{i}}: T^{*}X\to T^{*}X
\end{equation}
and  by assuming $(\pi_{X,*}e_{i}=\frac{\partial}{\partial \underline{q}^{i}})_{i=1,\ldots d}$
orthogonal along
$X_{\underline{q}_{0}}=T^{*}_{\underline{q}_{0}}Q$\,, writing
$$
\begin{pmatrix}
  1&-b\\
b&0
\end{pmatrix}=
\begin{pmatrix}
  1&\frac{1}{b}\\0&1
\end{pmatrix}
\begin{pmatrix}
  0&-b\\
b&0
\end{pmatrix}
=\exp\left[
\begin{pmatrix}
  0&\frac{1}{b}\\
0&0
\end{pmatrix}\right]
\begin{pmatrix}
  0&-b\\
b&0
\end{pmatrix}
$$
shows that $\phi_{b}=e^{\frac{\lambda_{0}}{b}}b\sigma$\,. In the decomposition
$$
{}^{t}\phi_{b}=b{}^{t}\sigma e^{\frac{{}^{t}\lambda_{0}}{b}}
={}^{t}\sigma e^{\frac{{}^{t}\lambda_{0}}{b}}({}^{t}\sigma^{-1})(b{}^{t}\sigma): TX\to T^{*}X
$$
the factor ${}^{t}\sigma e^{\frac{{}^{t}\lambda_{0}}{b}}{}^{t}\sigma^{-1}: T^{*}X\to
T^{*}X$ can also computed with the bases according to
$$
\begin{pmatrix}
  0&1\\
-1&0
\end{pmatrix}
\begin{pmatrix}
  0&0\\
\frac{1}{b}&0
\end{pmatrix}
\begin{pmatrix}
  0&-1\\
1&0
\end{pmatrix}
=
\begin{pmatrix}
  0&-\frac{1}{b}\\
0&0
\end{pmatrix}\,,
$$
which leads to
${}^{t}\phi_{b}=e^{-\frac{\lambda_{0}}{b}}({}^{t}b\sigma)=-e^{-\frac{\lambda_{0}}{b}}b\sigma$\,.\\
Here attention must be paid on the scaling with respect to $b\in
\rz^{*}$ while tensorizing $b\sigma$\,. Actually the multiplication by
$b$ on $TX$ or $T^{*}X$ is tensorized into the multiplication by
$b^{p}$ on $\Lambda^{p}TX$ or $\Lambda^{p}T^{*}X$\,. Therefore it is
better to use the notation
\begin{eqnarray*}
  &&
\sigma_{b}=(\otimes_{p=0}^{d}b^{p})\sigma: \Lambda TX\to \Lambda
     T^{*}X\\
&& \phi_{b}=e^{\frac{\lambda_{0}}{b}}\sigma_{b}: \Lambda TX\to \Lambda
   T^{*}X\,.
\end{eqnarray*}
\begin{definition}
  \label{de:phiBis} The linear maps $\phi_{b}$ and $\lambda_{0}$ are
  respectively given by \eqref{eq:phib}, extended by tensorization as
  a map $\phi_{b}=e^{\frac{\lambda_{0}}{b}}\sigma_{b}:\Lambda TX\to \Lambda T^{*}X$\,, and
  \eqref{eq:lambda0}. The same notation is used for
  $\phi_{b}=\phi_{b}\otimes \mathrm{Id}_{\fkf}$ and
  $\lambda_{0}=\lambda_{0}\otimes \mathrm{Id}_{\fkf}$ when $E=\Lambda
  T^{*}X$ or $E'=\Lambda TX$ are replaced by $F=E\otimes
  \pi_{X}^{*}(\fkf)$ and $F'=E'\otimes \pi_{X}^{*}(\fkf)$\,.\\
Accordingly the sesquilinear forms $\eta_{\phi_{b}}$ on $F'$ and
$\eta_{\phi_{b}}^{*}$ on $F$ are defined by
$$
\eta_{\phi_{b},\fkf}(U,V)=g^{\fkf}(U,\phi_{b}V)\quad,\quad 
\eta_{\phi_{b},\fkf}^{*}(\omega,\theta)=g^{\fkf}(\phi_{b}^{-1}\omega,\theta)\,.
$$
Finally the same notations $\phi_{b}$ and $\lambda_{0}$ are 
used with
$g=g^{TQ}(\underline{q}^{1},\underline{q}')$ replaced by
$\hat{g}=g^{TQ}(-|\underline{q}^{1}|,\underline{q}')$ and $E=\Lambda
T^{*}X,E'=\Lambda TX , F , F'$ replaced by the piecewise
$\mathcal{C}^{\infty}$ and continuous vector bundles
$\hat{E}_{g},\hat{E}'_{g},\hat{F}_{g},\hat{F}'_{g}$ of Definition~\ref{de:hatEF}
\end{definition}
The following lemma gather simple elementary properties of those maps
$\lambda_{0}$ and $\phi_{b}$\,.
\begin{lemma}
\label{le:phibdd}
On $\hat{F}_{g}$ the map $\lambda_{0}$ belongs to
$\mathcal{C}^{\infty}(\overline{X}_{-};L(F))\cap
\mathcal{C}^{\infty}(\overline{X}_{+};L(F))\cap
\mathcal{C}^{0}(X;L(F))$ with
$\Sigma_{\nu}\lambda_{0}=\lambda_{0}\Sigma_{\nu}$\,. In particular it is a
continuous endomorphism of $\mathcal{C}_{0,g}(\hat{F}_{g})$ and
$\mathcal{C}_{0,g,ev}(\hat{F}_{g})$\,.\\
The maps $\phi_{b}:F'\big|_{X_{\mp}}\to F\big|_{X_{\mp}}$ and
$\phi_{b}:\hat{F}'_{g}\to \hat{F}_{g}$ fulfill the condition
\eqref{eq:estimunivphi} with
$\phi_{b}=e^{\frac{\lambda_{0}}{b}}\sigma_{b}$ and
${}^{t}\phi_{b}=-e^{-\frac{\lambda_{0}}{b}}\sigma_{b}=\phi_{-b}$\,.\\
When $(P,D(P))$ is a densely defined operator in
$L^{2}(X;\hat{F}_{g})$ (or $L^{2}(X_{\mp};F)$) with a symplectic
adjoint $(P^{\sigma_{b}},D(P^{\sigma_{b}}))$\,, its left  and right $\phi_{b}$-adjoints equal
\begin{eqnarray*}
  &&P^{\phi_{b}}=\phi_{b}\tilde{P}\phi_{b}^{-1}=e^{\frac{\lambda_{0}}{b}}\sigma_{b}
\tilde{P}\sigma_{b}^{-1}e^{-\frac{\lambda_{0}}{b}}=
e^{\frac{\lambda_{0}}{b}}P^{\sigma_{b}}e^{-\frac{\lambda_{0}}{b}}\\
&&
   P^{{}^{t}\phi_{b}}=P^{\phi_{-b}}=e^{-\frac{\lambda_{0}}{b}}P^{\sigma_{b}}e^{\frac{\lambda_{0}}{b}}\\
\text{with}&&
              D(P^{\phi_{b}})=e^{-\frac{\lambda_{0}}{b}}D(P^{\sigma_{b}})\quad,\quad D(P^{{}^{t}\phi_{b}})=e^{\frac{\lambda_{0}}{b}}D(P^{\sigma_{b}})\,.
\end{eqnarray*}
\end{lemma}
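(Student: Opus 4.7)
The proof splits naturally along the three assertions, all of which are essentially algebraic once the geometric bookkeeping is settled.

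\medskip

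For the first assertion on $\lambda_0$, the plan is to exploit the coordinates adapted to the boundary. In the chart around $X'$ chosen in Subsection~\ref{sec:dbriem} one has $\hat{g}^{TQ}_{11}\equiv 1$, $\hat{g}^{TQ}_{1i'}\equiv 0$, and $\hat{g}^{TQ}_{i'j'}(\underline{q}^1,\underline{q}')=m_{i'j'}(-|\underline{q}^1|,\underline{q}')$, which is continuous across $X'$ and smooth on each of $\overline{Q}_-$, $\overline{Q}_+$. The local frames $(e^i,\hat e_j)$ of $\hat{E}_g$ are by definition piecewise $\mathcal{C}^\infty$ and continuous (see Definition~\ref{de:frameef} and Lemma~\ref{le:Mg}), so $\lambda_0=\hat{g}^{TQ}_{ij}(q)\,e^i\wedge\mathbf{i}_{\hat e^j}$ belongs to $\mathcal{C}^\infty(\overline{X}_\mp;L(F))\cap\mathcal{C}^0(X;L(\hat F_g))$. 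For the commutation with $\Sigma_\nu$, I would apply the sign rules \eqref{eq:Sigmae1}--\eqref{eq:Sigmae2}: the block $i=j=1$ contributes $e^1\wedge\mathbf{i}_{\hat e^1}$ whose two sign changes cancel; the cross block $g_{1i'}$ vanishes; the tangential block $\hat{g}^{TQ}_{i'j'}\,e^{i'}\wedge\mathbf{i}_{\hat e^{j'}}$ is invariant since $\hat g^{TQ}_{i'j'}$ is even in $\underline{q}^1$. Preservation of $\mathcal{C}_{0,g}(\hat F_g)$ and of its even subspace is then immediate because $\lambda_0$ is a bounded piecewise smooth continuous endomorphism commuting with $\Sigma_\nu$.

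\medskip

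For the second assertion, I would first check that $\sigma:TX\to T^*X$ extends to a pointwise isometry $\sigma:\hat F'_g\to\hat F_g$ with $g^{\hat F}, g^{\hat F'}$ as prescribed in \eqref{eq:gFpr}--\eqref{eq:gFpr}: using $\sigma(e_i)=\hat e_i$ and $\sigma(\hat e^j)=-e^j$, the relations $|e_i|_{g^{E'}}=|\hat e_i|_{g^E}=\langle p\rangle^{1/2}_q$ and $|\hat e^j|_{g^{E'}}=|e^j|_{g^E}=\langle p\rangle^{-1/2}_q$ show that the weight $\langle p\rangle^{\pm(N_H-N_V)}$ is tailored so that $\sigma$ (hence $\sigma_b$, up to the constants $b^p$) is a uniform fiberwise isometry. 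Since $\lambda_0$ is bounded and continuous by part 1, $e^{\pm\lambda_0/b}$ is a bounded piecewise smooth continuous endomorphism with bounded inverse, and the composition $\phi_b=e^{\lambda_0/b}\sigma_b$ fulfils \eqref{eq:estimunivphi} with a uniform constant. The identity ${}^t\phi_b=\phi_{-b}$ is then obtained from the elementary matrix computation already written out just before the statement of the lemma: in the symplectic basis $(e_i,\hat e^j)$ the transpose of $\phi_b=\bigl(\begin{smallmatrix}g^{TQ}&-b\\b&0\end{smallmatrix}\bigr)$ is $\bigl(\begin{smallmatrix}g^{TQ}&b\\-b&0\end{smallmatrix}\bigr)=\phi_{-b}$, and the same exponential-symplectic factorisation gives $\phi_{-b}=-e^{-\lambda_0/b}\sigma_b$.

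\medskip

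For the third assertion, the formulas follow from Proposition~\ref{pr:adjlr} applied with $\phi=\phi_b$ and $\phi={}^t\phi_b=\phi_{-b}$ combined with the factorisation obtained in part~2. Indeed $P^{\phi_b}=\phi_b\tilde P\phi_b^{-1}=e^{\lambda_0/b}\sigma_b\tilde P\sigma_b^{-1}e^{-\lambda_0/b}=e^{\lambda_0/b}P^{\sigma_b}e^{-\lambda_0/b}$, and likewise $P^{{}^t\phi_b}=e^{-\lambda_0/b}P^{\sigma_b}e^{\lambda_0/b}$; the domains are then read off as the images of $D(P^{\sigma_b})$ under the bounded invertible operator $e^{\pm\lambda_0/b}$ using $D(P^{\sigma_b})=\sigma_b D(\tilde P)$. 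No analytic content beyond part~2 and Proposition~\ref{pr:adjlr} is required here. The main obstacle in the whole proof is really packaged in part~1: one has to be careful that $\lambda_0$, built from the metric coefficients $\hat g^{TQ}_{ij}$ and from the frames $(e^i,\hat e_j)$, truly lives in $\mathcal{C}^0(X;L(\hat F_g))$ — this uses crucially that in the boundary-adapted coordinates the off-diagonal metric components vanish identically and that the tangential components depend only on $|\underline{q}^1|$, so that no jump of $\partial_{\underline{q}^1}\hat g^{TQ}$ appears in $\lambda_0$ itself.
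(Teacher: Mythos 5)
Your plan follows the paper's proof closely for the first and third assertions, and the ingredients you cite (the weight relations $|e_i|_{g^{E'}}=|\hat e_i|_{g^{E}}=\langle p\rangle_q^{1/2}$, $|\hat e^j|_{g^{E'}}=|e^j|_{g^{E}}=\langle p\rangle_q^{-1/2}$, the sign rules \eqref{eq:Sigmae1}--\eqref{eq:Sigmae2}, Proposition~\ref{pr:adjlr}) are the right ones. The treatment of $\Sigma_\nu\lambda_0=\lambda_0\Sigma_\nu$ via the vanishing of $\hat g^{TQ}_{1i'}$ and the evenness of the tangential block is the same argument the paper makes.

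However, in the second part, the sentence \emph{``Since $\lambda_0$ is bounded and continuous by part~1, $e^{\pm\lambda_0/b}$ is a bounded \dots endomorphism with bounded inverse''} has a real gap. Part~1 only puts $\lambda_0$ in $\mathcal{C}^\infty(\overline{X}_\mp;L(F))\cap\mathcal{C}^0(X;L(\hat F_g))$, i.e.\ it is a regular endomorphism-valued section; this does \emph{not} give a uniform operator-norm bound on $(F,g^F)$, because $X$ is non-compact in the fiber $p$ and the metric $g^F$ carries the $p$-dependent weight $\langle p\rangle_q^{N_V-N_H}$. What actually makes $e^{\pm\lambda_0/b}$ uniformly bounded with uniformly bounded inverse is that $\lambda_0$ raises $N_H$ by one and lowers $N_V$ by one, so the weight grants it a factor $\langle p\rangle_q^{-1}$: this is precisely what the paper verifies explicitly in \eqref{eq:equivb}, by decomposing $\omega=(\omega^H,\omega^V)$ in an orthonormal frame and computing $|e^{\pm\lambda_0/b}\omega|^2_{g^F_x}=\langle p\rangle^{-1}|\omega^H\pm\tfrac1b\omega^V|^2+\langle p\rangle|\omega^V|^2\leq\max(2,1+\tfrac2{b^2})|\omega|^2_{g^F_x}$, then feeding the reverse inequality through $\omega=e^{\mp\lambda_0/b}\eta$. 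You already wrote down exactly the norm relations needed to do this; you just applied them to $\sigma$ and not to $e^{\lambda_0/b}$. Replacing the appeal to ``part~1'' by this two-line computation closes the gap.
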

\begin{proof}
  The first properties come from the definition of
$$
\lambda_{0}=e^{i}\wedge
\mathbf{i}_{\hat{e}^{i}}=e^{1}\wedge\mathbf{i}_{\hat{e}^{1}}+m_{ij}(-|q^{1}|,q')e^{i'}\wedge \mathbf{i}_{\hat{e}^{j'}}\,,
$$
where we use
$(e,\hat{e})=1_{Q_{-}}(q)(e_{-},\hat{e}_{-})+1_{Q_{+}}(q)(e_{+},\hat{e}_{+})$\,.
The regularity with respect to $x=(q,p)$ of $\lambda_{0}$ is inherited
from the one of $(e,\hat{e})$\,. The commutation with $\Sigma_{\nu}$
comes from
$$
\Sigma_{\nu}\left[s_{I}^{J}(q^{1},q',p_{1},p')e^{I}\hat{e}_{J}\right]=(-1)^{|I\cap
  \left\{1\right\}|+|J\cap\left\{1\right\}|}\nu s_{I}^{J}(-q^{1},q',-p_{1},p')e^{I}\hat{e}_{J}\,.
$$
We know that $\sigma$\,, and therefore $\sigma_{b}$ when $b\neq 0$\,, fulfills
the condition \eqref{eq:estimunivphi}. It thus suffices to check the
equivalence
\begin{equation}
  \label{eq:equivb}
\exists C_{b}>0\,, \forall x\in X\,, \forall \omega \in F_{x}\,,\quad
C_{b}^{-1}|e^{\frac{\lambda_{0}}{b}}\omega |_{g^{F}_{x}}\leq
|\omega|_{g^{F}_{x}}\leq C_{b}|e^{\frac{\lambda_{0}}{b}}\omega|_{g^{F}_{x}}\,.
\end{equation}
With coordinates such that $(\frac{\partial}{\partial
  \underline{q}^{1}},\ldots, \frac{\partial}{\partial
  \underline{q}^{d}})$ is orthonormal above a fixed $q_{0}\in
Q$\,, $g^{TQ}_{ij}(q_{0})=\delta_{ij}$\,, decompose $\omega\in
F_{(q_{0},p)}$ as $\omega=\omega^{H}\oplus^{\perp}\omega^{V}
\in  F_{x}=(T^{*}_{x}X^{H}\otimes
\pi_{X}^{*}(\fkf))\oplus^{\perp}(T^{*}_{x}X^{V}\otimes
\pi_{X}^{*}(\fkf))$  with $x=(q_{0},p)$\,.
Write simply $\omega=
\begin{pmatrix}
  \omega^{H}\\
\omega^{V}
\end{pmatrix}
$ and  $e^{\pm \lambda_{0}}\omega=
\begin{pmatrix}
  \omega^{H}\pm \frac{1}{b}\omega^{V}\\
\omega^{V}
\end{pmatrix}
$\,. The  $g^{F}_{x}$ norm of $\omega$ and $e^{\pm \lambda_{0}}\omega$
satisfy
\begin{eqnarray*}
  &&
|\omega|^{2}_{g^{F}_{x}}=\langle
p\rangle^{-1}|\omega^{H}|_{g^{\fkf}_{q_{0}}}^{2}+\langle p\rangle |\omega^{V}|_{g^{\fkf}_{q_{0}}}^{2}\\
&&
|e^{\pm \frac{\lambda_{0}}{b}}\omega|_{g^{F}_{x}}^{2}
=\langle
p\rangle^{-1}|\omega^{H}\pm \frac{1}{b}\omega^{V}|_{g^{\fkf}_{q_{0}}}^{2}+\langle
   p\rangle|\omega^{V}|_{g^{\fkf}_{q_{0}}}^{2}\\
&&
|e^{\pm \frac{\lambda_{0}}{b}}\omega|^{2}_{g^{F}_{x}}\leq
   2\langle p\rangle^{-1}|\omega^{H}|_{g^{\fkf}_{q_{0}}}^{2}+\langle
   p\rangle(1+\frac{2}{\langle p\rangle_{q}^{2}b^{2}})|\omega^{V}|_{g^{\fkf}_{q_{0}}}^{2}
\leq \max(2, 1+\frac{2}{b^{2}})|\omega|_{g^{F}_{x}}^{2}\,,
\,.
\end{eqnarray*}
Applying the last inequality with
$\omega=e^{\mp\frac{\lambda_{0}}{b}}\eta$ provides the reverse
inequality
$$
|\eta|_{g^{F}_{x}}^{2}\leq \max(2,1+\frac{2}{b^{2}})|e^{\mp\frac{\lambda_{0}}{b}}\eta|_{g^{F}_{x}}\,.
$$
The equivalence \eqref{eq:equivb} is thus proved with $C_{b}=\max(2,1+\frac{2}{b^{2}})$\,.
\end{proof}
\begin{proposition}
\label{pr:phiadj}
Take $b'=1$ and $b\neq 0$\,.\\
The $\phi_{b}$ left-adjoint of $(\hat{d}_{g,\fkh},D(\hat{d}_{g,\fkh}))$  equals
\begin{eqnarray*}
&&
D(\hat{d}^{\phi_{b}}_{g,b'\fkh})=
\left\{s\in L^{2}(X;F)\cap e^{\frac{\lambda_{0}}{b}}\sigma_{b}\mathcal{E}_{loc}(\tilde{d},\hat{F}'_{g})\,,
  \quad \hat{d}^{\phi_{b}}_{g,\fkh}s\in L^{2}(X;F)\right\}\,,\\
  &&
\forall s\in D(\hat{d}^{\phi_{b}}_{g,\fkh})\,,\quad
     \hat{d}^{\phi_{b}}_{g,\fkh}s=
     (\hat{d}^{\phi_{b}}_{g,\fkh}s_{-})+(\hat{d}^{\phi_{b}}_{\fkh}s_{+})\quad,\quad s_{\mp}=s\big|_{X_{\mp}}\,,\\
&&  \hat{d}^{\phi_{b}}_{g,\fkh}s_{\mp}=e^{-\frac{\lambda_{0}}{b}}d^{\sigma_{b}}_{\hat{\fkh}}e^{\frac{\lambda_{0}}{b}}s_{\mp}=\frac{1}{b}
e^{-\frac{\lambda_{0}}{b}}d^{\sigma_{b}}_{\hat{\fkh}}e^{\frac{\lambda_{0}}{b}}s_{\mp}\,.
\end{eqnarray*}
It satisfies $\hat{d}^{\phi_{b}}_{g,\fkh}\circ
\hat{d}^{\phi_{b}}_{g,\fkh}=0$ and $\hat{d}^{\phi_{b}}_{g,\fkh}\circ
\Sigma_{\nu}=\Sigma_{\nu}\circ\hat{d}^{\phi_{b}}_{g,\fkh}$\,. In particular, $\hat{d}^{\phi_{b}}_{g,\fkh}$ preserves the parity:
\begin{eqnarray*}
&&  D(\hat{d}^{\phi_{b}}_{g,\fkh})=D(\hat{d}^{\phi_{b}}_{g,\fkh})\cap
  L^{2}_{ev}(X;F)\oplus D(\hat{d}^{\phi_{b}}_{g,\fkh})\cap
  L^{2}_{odd}(X;F)\,,\\
\text{with}&&
\hat{d}^{\phi_{b}}_{g,\fkh}: D(\hat{d}^{\phi_{b}}_{g,\fkh})\cap L^{2}_{ev~odd}(X;F)\to L^{2}_{ev~odd}(X;F)\,.
\end{eqnarray*}
The subset $\mathcal{C}_{0,g}(\hat{F}_{g})$ of Definition~\ref{de:Cg}
 is dense in $D(\hat{d}^{\phi_{b}}_{g,\fkh})$\,. Additionally there exists a  dense subset
$e^{\frac{\lambda_{0}}{b}}\sigma_{b}\hat{\mathcal{D}}'_{g,\nabla^{\fkf'}}$ of $\mathcal{C}_{0,g}(\hat{F}_{g})$
such that 
$\hat{d}^{\phi_{b}}_{g,\fkh}(e^{\frac{\lambda_{0}}{b}}\sigma_{b} \hat{\mathcal{D}}'_{g,\nabla^{\fkf'}})\subset
\mathcal{C}_{0,g}(\hat{F}_{g})\subset
D(\hat{d}^{\phi_{b}}_{g,\fkh})$\,.\\
The adjoint $d^{\phi_{b}}_{g,\fkh}$ of $(\overline{d}_{g,\fkh},
D(\overline{d}_{g,\fkh}))$ is densely defined and closed with
\begin{eqnarray*}
  &&
D(d^{\phi_{b}}_{g,\fkh})=\left\{s\in L^{2}(X_{-},F)\,,\quad
  d^{\phi_{b}}_{\fkh}s\in L^{2}(X_{-},F)\,,\quad
  \frac{1-\hat{S}_{\nu}}{2}\hat{e}_{1}\wedge \mathbf{i}_{\hat{e}^{1}}s\big|_{X'}=0\right\}\,.
\\
&&
\forall s\in D(d^{\sigma_{b}}_{g,b'\fkh})\,,\quad
d^{\phi_{b}}_{g,\fkh}s=e^{-\frac{\lambda_{0}}{b}}d^{\sigma_{b}}_{g,\fkh}e^{\lambda_{0}}s=\frac{1}{b}e^{-\frac{\lambda_{0}}{b}}d^{\sigma_{b}}_{g,\fkh}e^{\lambda_{0}}s\,.
\end{eqnarray*}
This adjoint operator $(d^{\phi_{b}}_{g,\fkh},D(d^{\phi_{b}}_{g,\fkh}))$
satisfies $d^{\phi_{b}}_{g,\fkh}\circ d^{\phi_{b}}_{g,\fkh}=0$\,.\\
The spaces $\mathcal{C}^{\infty}_{0}(\overline{X}_{-};F)\cap
D(d^{\phi_{b}}_{g,\fkh})$\,, $\mathcal{C}_{g}=\left\{s\in
  L^{2}(X_{-},F)\,, s_{ev}\in \mathcal{C}_{0,g}(\hat{F}_{g})\right\}$
and $e^{\frac{\lambda_{0}}{b}}\mathcal{D}_{g,\nabla^{\fkf'}}=\left\{s\in L^{2}(X_{-};F)\,,
  s_{ev}\in e^{\frac{\lambda_{0}}{b}}\sigma_{b}\hat{\mathcal{D}}'_{g,\nabla^{\fkf'}}\right\}$ are dense in
$D(d^{\phi_{b}}_{g,\fkh})$ with
$d^{\phi_{b}}_{g,\fkh}e^{\frac{\lambda_{0}}{b}}\mathcal{D}_{g,\nabla^{\fkf'}}\subset
\mathcal{C}_{g}$\,.\\
Finally the $\phi_{b}$ right-adjoint is simply  the $\phi_{-b}$
left-adjoint.
\end{proposition}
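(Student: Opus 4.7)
The plan is to derive Proposition~\ref{pr:phiadj} from Proposition~\ref{pr:domdsigma} through the factorization $\phi_{b}=e^{\lambda_{0}/b}\sigma_{b}$ of Definition~\ref{de:phiBis} and the conjugation formula $P^{\phi_{b}}=e^{\lambda_{0}/b}P^{\sigma_{b}}e^{-\lambda_{0}/b}$ recorded in Lemma~\ref{le:phibdd}. Since $\sigma_{b}$ differs from $\sigma$ only by multiplication by $b^{p}$ on the degree-$p$ component, a direct degree computation yields $d^{\sigma_{b}}_{\hat{\fkh}}=(1/b)d^{\sigma}_{\hat{\fkh}}$ with identical domains, so Proposition~\ref{pr:domdsigma} applies verbatim to the $\sigma_{b}$-adjoints. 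The formula $\hat{d}^{\phi_{b}}_{g,\fkh}s_{\mp}=e^{-\lambda_{0}/b}d^{\sigma_{b}}_{\hat{\fkh}}e^{\lambda_{0}/b}s_{\mp}$ and the description of $D(\hat{d}^{\phi_{b}}_{g,\fkh})$ then follow, as does the identification of the candidate dense subset $e^{\lambda_{0}/b}\sigma_{b}\hat{\mathcal{D}}'_{g,\nabla^{\fkf'}}=\phi_{b}\hat{\mathcal{D}}'_{g,\nabla^{\fkf'}}$.

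The main obstacle is to justify that the boundary condition for $s\in D(d^{\phi_{b}}_{g,\fkh})$ can be written simply as $\tfrac{1-\hat{S}_{\nu}}{2}\hat{e}_{1}\wedge\mathbf{i}_{\hat{e}^{1}}s\big|_{X'}=0$, rather than in the a~priori form $\tfrac{1-\hat{S}_{\nu}}{2}\hat{e}_{1}\wedge\mathbf{i}_{\hat{e}^{1}}(e^{-\lambda_{0}/b}s)\big|_{X'}=0$ inherited from the $\sigma_{b}$-case. The idea is to split $\lambda_{0}\big|_{X'}=e^{1}\wedge\mathbf{i}_{\hat{e}^{1}}+\lambda'_{0}$ with $\lambda'_{0}=m_{i'j'}(0,q')e^{i'}\wedge\mathbf{i}_{\hat{e}^{j'}}$ involving only tangential indices, and to establish four elementary Koszul-style identities: $(e^{1}\wedge\mathbf{i}_{\hat{e}^{1}})^{2}=0$, $[e^{1}\wedge\mathbf{i}_{\hat{e}^{1}},\lambda'_{0}]=0$, $(\hat{e}_{1}\wedge\mathbf{i}_{\hat{e}^{1}})(e^{1}\wedge\mathbf{i}_{\hat{e}^{1}})=0$, and $[\hat{e}_{1}\wedge\mathbf{i}_{\hat{e}^{1}},\lambda'_{0}]=0$. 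The first two allow writing $e^{-\lambda_{0}/b}=(1-\tfrac{1}{b}e^{1}\wedge\mathbf{i}_{\hat{e}^{1}})e^{-\lambda'_{0}/b}$ along $X'$; the last two then yield
\[
\hat{e}_{1}\wedge\mathbf{i}_{\hat{e}^{1}}\,e^{-\lambda_{0}/b}s\big|_{X'}=e^{-\lambda'_{0}/b}\,\hat{e}_{1}\wedge\mathbf{i}_{\hat{e}^{1}}s\big|_{X'}.
\]
Because $\hat{S}_{\nu}$ and $\lambda'_{0}$ commute (the coefficients $m_{i'j'}(0,q')$ are $p$-independent and $\lambda'_{0}$ does not alter the parities $|I\cap\{1\}|$, $|J\cap\{1\}|$), and $e^{-\lambda'_{0}/b}$ is invertible, the two boundary conditions become equivalent.

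The remaining properties transfer through the bounded isomorphism $e^{\lambda_{0}/b}$ of $L^{2}(X;F)$ (Lemma~\ref{le:phibdd}): the nilpotency $\hat{d}^{\phi_{b}}_{g,\fkh}\circ\hat{d}^{\phi_{b}}_{g,\fkh}=0$ comes from $\hat{d}^{\sigma_{b}}_{g,\hat{\fkh}}\circ\hat{d}^{\sigma_{b}}_{g,\hat{\fkh}}=0$; the commutation $\Sigma_{\nu}\hat{d}^{\phi_{b}}_{g,\fkh}=\hat{d}^{\phi_{b}}_{g,\fkh}\Sigma_{\nu}$ combines $\Sigma_{\nu}\lambda_{0}=\lambda_{0}\Sigma_{\nu}$ from Lemma~\ref{le:phibdd} with the analogous commutation for $\hat{d}^{\sigma_{b}}_{g,\hat{\fkh}}$; and density of $\mathcal{C}_{0,g}(\hat{F}_{g})$ in $D(\hat{d}^{\phi_{b}}_{g,\fkh})$ follows from its preservation by $e^{\pm\lambda_{0}/b}$. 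The corresponding statements for $d^{\phi_{b}}_{g,\fkh}$ on $X_{-}$ then come from the parity argument already used in the proof of Proposition~\ref{pr:domaindH}. Finally, the identification of the right $\phi_{b}$-adjoint with the left $\phi_{-b}$-adjoint follows from ${}^{t}\phi_{b}=\phi_{-b}$ (Lemma~\ref{le:phibdd}), allowing verbatim repetition of the above argument with $b$ replaced by $-b$.
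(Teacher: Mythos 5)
Your proposal is correct and follows essentially the same route as the paper: conjugate the $\sigma_{b}$-adjoint of Proposition~\ref{pr:domdsigma} by $e^{\lambda_{0}/b}$ via Lemma~\ref{le:phibdd}, then simplify the boundary condition through the decomposition $\lambda_{0}\big|_{X'}=\lambda_{0}^{1}+\lambda_{0}'$ with $\lambda_{0}^{1}=e^{1}\wedge\mathbf{i}_{\hat{e}^{1}}$. The only cosmetic difference is that the paper does not explicitly invoke $(\lambda_{0}^{1})^{2}=0$ to expand $e^{-\lambda_{0}^{1}/b}=1-\tfrac{1}{b}\lambda_{0}^{1}$, using instead only $(\hat{e}_{1}\wedge\mathbf{i}_{\hat{e}^{1}})\lambda_{0}^{1}=0$ and the commutation with $\lambda_{0}'$, but the two formulations are equivalent.
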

\begin{proof}
  Most of the properties are derived from the properties of the symplectic
  adjoints by conjugation with $e^{-\frac{\lambda_{0}}{b}}$\,,
  according to Lemma~\ref{le:phibdd}\\
One thing to be checked is the simplified writing of the boundary
conditions for $s\in D(d^{\phi_{b}}_{g,\fkh})$\,. Actually $s\in D(d^{\phi_{b}}_{g,\fkh})=
e^{\frac{\lambda_{0}}{b}}D(d^{\sigma_{b}}_{g,\fkh})$ contains the boundary
condition
\begin{equation}
  \label{eq:bclambda0}
\frac{1-\hat{S}_{\nu}}{2}\hat{e}_{1}\wedge \mathbf{i}_{\hat{e}^{1}}e^{-\frac{\lambda_{0}}{b}}s\big|_{X'}=0\,.
\end{equation}
With
$$
\lambda_{0}=
\underbrace{e^{1}\wedge
  \mathbf{i}_{\hat{e}^{1}}}_{=\lambda_{0}^{1}}+\underbrace{m_{i'j'}(-|q^{1}|,q')e^{i'}\wedge
  \mathbf{i}_{\hat{e}^{j'}}}_{=\lambda_{0}'}\quad\text{with}\quad \lambda_{0}^{1}\lambda_{0}'=\lambda_{0}'\lambda_{0}^{1}
$$
and $\hat{e}_{1}\wedge
\mathbf{i}_{\hat{e}^{1}}\lambda_{0}'=\lambda_{0}' \hat{e}_{1}\wedge
\mathbf{i}_{\hat{e}^{1}}$ and $\hat{e}_{1}\wedge
\mathbf{i}_{\hat{e}^{1}}\lambda_{0}^{1}=0$ we obtain:
$$
\hat{e}_{1}\wedge
\mathbf{i}_{\hat{e}^{1}}e^{-\frac{\lambda_{0}}{b}}s=\hat{e}_{1}\wedge\mathbf{i}_{\hat{e}^{1}}e^{-\frac{\lambda_{0}^{1}}{b}}e^{-\frac{\lambda_{0}'}{b}}s=e^{-\frac{\lambda_{0}'}{b}}\hat{e}^{1}\wedge\mathbf{i}_{\hat{e}^{1}}s\,.
$$
Since $\hat{S}_{\nu}$ commutes with  $e^{-\frac{\lambda_{0}'}{b}}$ the
boundary condition \eqref{eq:bclambda0} is equivalent to 
$$
\frac{1-\hat{S}_{\nu}}{2}\hat{e}_{1}\wedge \mathbf{i}_{\hat{e}^{1}}s\big|_{X'}=0\,.
$$
\end{proof}
\begin{remark}
  \begin{description}
  \item[a)] No explicit expression was given for the differential
    operator $d^{\phi_{g}}_{\fkh}$\,. It is not really necessary and
    actually more confusing when properties along the boundaries are considered. Such an expression may be found in
    \cite{Bis05}:
    \begin{eqnarray*}
&& d^{\phi_{b}}_{\fkh}=\frac{1}{b}(d^{\sigma}_{0}-\mathbf{i}_{Y_{\fkh}})-\frac{1}{b^{2}}[d^{\sigma}_{0}-\mathbf{i}_{Y_{\fkh}},\lambda_{0}]
\\
&&\quad=-\frac{1}{b}\mathbf{i}_{\hat{e}^{i}}
    \big(\nabla^{F,g}_{e_{i}}+\omega(\fkf,g^{\fkf})(e_{i})-\underbrace{\nabla_{e_{i}}\fkh}_{{=0}}\big)
-\frac{1}{b}\mathbf{i}_{e_{i}}(\frac{\partial}{\partial p_{i}}-g^{ik}(q)p_{k})
-\frac{1}{b}R^{TQ}p\wedge\\
&&\hspace{4cm}
-\frac{1}{b^{2}}\mathbf{i}_{\hat{e}^{i}}(\frac{\partial}{\partial p_{i}}-g^{ik}(q)p_{k})\,.
    \end{eqnarray*}
\item[b)] Note that the set
  $e^{\frac{\lambda_{0}}{b}}\mathcal{D}_{g,\nabla^{\fkf'}}$ for the $\phi_{b}$
  left-adjoint and
  $e^{-\frac{\lambda_{0}}{b}}\mathcal{D}_{g,\nabla^{\fkf'}}$
  for the $\phi_{b}$ right-adjoint differ. In general, it is not possible to find a same core
  of smooth sections, which is sent simultaneously to
  $\mathcal{C}_{g}$ by $d^{\phi_{b}}_{g,\fkh}$ and
  $d^{\phi_{-b}}_{g,\fkh}$\,, especially when the second fundamental
  form of $Q'\subset (Q,g^{TQ})$ does not vanish. This is a curvature
  problem actually similar to the distinction between
  $\mathcal{D}_{g,\nabla^{\fkf}}$ and
  $\mathcal{D}_{g,\nabla^{\fkf'}}$\,.
\item[c)] However $\mathcal{D}_{g,\nabla^{\fkf}}$ is dense in
  $\mathcal{C}_{g}$ and therefore a core for all the operators
  $\overline{d}_{g,\fkh}$\,, $d^{\sigma_{b}}_{g,\fkh}$ and
  $d^{\phi_{b}}_{g,\fkh}$ with 
$$
\overline{d}_{g,\fkh}\mathcal{D}_{g,\nabla^{\fkf}}\subset
\mathcal{C}_{g}\subset D(\overline{d}_{g,\fkh})\cap
D(\overline{d}^{\sigma_{b}}_{g,\fkh})\cap D(\overline{d}^{\phi_{b}}_{g,\fkh})\,.
$$
Symmetric versions with $(\overline{d}_{g,\fkh},
\mathcal{D}_{g,\nabla^{\fkf}})$ replaced by
$(d^{\phi_{b}}_{g,\fkh},e^{\frac{\lambda_{0}}{b}}\mathcal{D}_{g,\nabla^{\fkf'}})$
for the $\phi_{b}$ left-adjoint or
$(d^{\phi_{-b}}_{g,\fkh},e^{-\frac{\lambda_{0}}{b}}\mathcal{D}_{g,\nabla^{\fkf'}})$
for the $\phi_{b}$ right-adjoint
hold true.
  \end{description}
\end{remark}

\section{Closed realizations of the hypoelliptic Laplacian}
\label{sec:closedhypo}
This section is split in several parts which follow the
general scheme for the analysis of the differential and its adjoints.
\begin{itemize}
\item 
In Subsection~\ref{sec:hyposmooth} we review the known results of
\cite{Bis05}\cite{BiLe}\cite{Leb1}\cite{Leb2} for the hypoelliptic Laplacian when
$(Q,g^{TQ})$ is a smooth compact riemannian manifold. In particular we
recall the class of Geometric Kramers-Fokker-Planck operators
introduced in \cite{Leb1}\cite{Leb2}.
\item The Subsection~\ref{sec:tracelocalB} focuses on trace theorem
  local forms of Geometric Kramers-Fokker-Planck operators.
\item The definitions of closed realizations of the hypoelliptic
  Laplacian acting on sections of $\hat{F}_{g}$ or of $F\big|_{X_{-}}$
  with boundary conditions are given in
  Subsection~\ref{sec:domainB}. Global subelliptic estimates derived
  from the one of the scalar case in \cite{Nie} are reviewed.
\item In Subsection~\ref{sec:bootstrap} improved global estimates are
  given for powers of the resolvent  and the semigroup associated with
  the maximal accretive closed realizations of the
  hypoelliptic Laplacian.
\item The commutation of the resolvent of the closed maximal accretive
  realizations of the hypoelliptic Laplacian, with the differential
  and Bismut's codifferential are proved in
  Subsection~\ref{sec:commutppty}. Because it concerns commutation of
  closed unbounded operators it is better to adapt the strategy of
  \cite{ABG} where instead of a $\mathcal{C}_{0}$-group  the closed
  realizations of the hypoelliptic Laplacian generate ``cuspidal''
  semigroups.
\item Finally Subsection~\ref{sec:PTsymm} is concerned with
  PT-symmetry which implies that the spectrum of the closed
  realizations of the hypoelliptic Laplacian is symmetric with respect
  to the real axis. This property, which actually holds only on dense
  set of the domain or at the formal level,  is crucial when the  asymptotic spectral  analysis in the specific regimes is considered.
\end{itemize}
\subsection{The hypoelliptic Laplacian in the smooth case}
\label{sec:hyposmooth}
We review here definitions and properties useful for the analysis of
Bismut's hypoelliptic Laplacian when $X=T^{*}Q$ and $(Q,g^{TQ})$ is a
smooth closed compact riemannian manifold. The vector bundle $F$
equals $\Lambda T^{*}X\otimes \pi_{X}^{*}(\fkf)$ where
$(\fkf,\nabla^{\fkf},g^{\fkf})$ is a smooth vector bundle on $Q$
endowed with the smooth hermitian metric $g^{\fkf}$\,, $\nabla^{\fkf}$
is a flat connection, $\fkf$ is identified with its antidual via the
hermitian metric and $\nabla^{\fkf'}$ denotes the antidual flat connection.
Bismut's hypoelliptic Laplacian is defined as the differential operator 
\begin{eqnarray*}
  &&
B^{\phi_{b}}_{\fkh}=\frac{1}{4}(d^{\phi_{b}}_{\fkh}+d_{\fkh})^{2}=\frac{1}{4}(d^{\phi_{b}}_{\fkh}d_{\fkh}+d_{\fkh}d^{\phi_{b}}_{\fkh})
\\
\text{with}&&
\fkh(q,p)=\frac{g^{TQ,ij}(q)p_{i}p_{j}}{2}\,,\\
&& \phi_{b}=e^{\frac{\lambda_{0}}{b}}\sigma_{b}\,,\quad b\in \rz^{*}\,,
\end{eqnarray*}
where $\phi_{b}$ and $\lambda_{0}$ were given in the Introduction and
in Definition~\ref{de:phiBis}.\\
It is a differential operator with $\mathcal{C}^{\infty}(X;L(F))$
coefficients, acts naturally on $\mathcal{C}^{\infty}_{0}(X;F)$ and
$\mathcal{D}'(X;F)$ and satisfies
$$
B^{\phi_{b}}_{\fkh}d_{\fkh}=d_{\fkh}B^{\phi_{b}}_{\fkh}\quad,\quad
B^{\phi_{b}}_{\fkh}d^{\phi_{b}}_{\fkh}=d^{\phi_{b}}_{\fkh}
B^{\phi_{b}}_{\fkh}d_{\fkh}\,.
$$
In \cite{BiLe}\cite{Leb1}\cite{Leb2}  Bismut and Lebeau developed the
functional and spectral analysis of this differential operator in the  $L^{2}$-space associated with the metric
$\tilde{g}^{F}=\langle p\rangle^{N_{H}+N_{V}}g^{F}$ and
$L^{2}(X;F,g^{F})=\langle
p\rangle_{q}^{\frac{N_{H}+N_{V}}{2}}L^{2}(X;F,\tilde{g}^{F})$\,. With
our choice of metric and the unitary equivalence $\langle
p\rangle_{q}^{\frac{N_{H}+N_{V}}{2}}:L^{2}(X;F,\tilde{g}^{F})\to
L^{2}(X;F,g^{F})$ their results concerns
$$
\langle p\rangle_{q}^{\frac{N_{H}+N_{V}}{2}}B^{\phi_{b}}_{\fkh}\langle
p\rangle_{q}^{-\frac{N_{H}+N_{V}}{2}}=B^{\phi_{b}}_{\fkh}+\langle
p\rangle_{q}^{\frac{N_{H}+N_{V}}{2}}[B^{\phi_{b}}_{\fkh}, \langle p\rangle_{q}^{-\frac{N_{H}+N_{V}}{2}}]\,.
$$
We checked in Proposition~\ref{pr:indepWmu}-e) and the following
discussion that $\langle
p\rangle_{q}^{\frac{N_{H}+N_{V}}{2}}:\mathcal{W}^{\mu}(X;F,\tilde{g}^{F})\to
\mathcal{W}^{\mu}(X;F,g^{F})$ is an isomorphism. The results of
\cite{Leb1}\cite{Leb2} allow to absorb error terms due to the
conjugation via $\langle p\rangle_{q}^{\pm\frac{N_{V}+N_{H}}{2}}$
because they are concerned with the following general class of
operators.
\begin{definition}
\label{de:GKFP} 
The metric $\gamma=g^{F}$\,, $\gamma=\tilde{g}^{F}=\langle
p\rangle_{q}^{N_{H}+N_{V}}g^{F}$ induces a norm $|~|_{\gamma}$ on
$L(F,F)$\,. The
associated class of $\gamma$-symbols of order $m\in \rz$ is defined as
the set of functions $M\in \mathcal{C}^{\infty}(X;L(F,F))$ such that 
\begin{eqnarray*}
&&\forall \alpha,\beta\in \nz^{d}\,, \exists C_{\alpha,\beta}>0\,,
   \forall x=(q,p)\in X\,,\quad |(\nabla^{F}_{e})^{\alpha}(\nabla^{F}_{\hat{e}})^{\beta}M(x)|_{\gamma}\leq
C_{\alpha,\beta}\langle p\rangle_{q}^{m-|\beta|}\,,
\\
\text{with}&&
(\nabla^{F}_{e})^{\alpha}(\nabla^{F})^{\beta}_{\hat{e}}=(\nabla_{e_{1}}^{F})^{\alpha_{1}}\ldots(\nabla_{e_{d}}^{F})^{\alpha_{d}}(\nabla_{\hat{e}^{1}}^{F})^{\beta_{1}}\ldots(\nabla_{\hat{e}^{d}}^{F})^{\beta_{d}}\,.
\end{eqnarray*}
A geometric Kramers-Fokker-Planck (GKFP) operator for the metric $\gamma$ is a differential
operator acting on $\mathcal{C}^{\infty}(X;E)$ and $\mathcal{D}'(X;E)$
of the following form:
\begin{eqnarray*}
&&\mathcal{A}_{\alpha,\mathcal{M}}=\mathcal{O}+\nabla^{F}_{\alpha Y_{\fkh}}+\mathcal{M}\,,
\\
\text{with}&&
              \mathcal{O}=\frac{-\Delta_{p}+|p|_{q}^{2}+2N_{V}-d}{2}=\frac{-g_{ij}(q)\nabla^{F}_{\frac{\partial}{\partial
              p_{i}}}\nabla^{F}_{\frac{\partial}{\partial p_{j}}}+g^{ij}(q)p_{i}p_{j}+2N_{V}-d}{2}\\
&&
  \fkh(q,p)=\frac{|p|_{q}^{2}}{2}=\frac{g^{ij}(q)p_{i}p_{j}}{2}\,,\quad
   \alpha\in \rz^{*}\,,\\
&& \nabla^{F}_{\alpha Y_{\fkh}}=\alpha g^{ij}(q)p_{j}\nabla^{F}_{e_{i}}\\
&&\mathcal{M}=\mathcal{M}_{0,j}\nabla^{F}_{\frac{\partial}{\partial
   p_{j}}}+\mathcal{M}_{0}^{j}p_{j}+\mathcal{M}_{0}\,,
\end{eqnarray*}
where $\mathcal{M}_{0,j}, \mathcal{M}_{0}^{j},\mathcal{M}_{0}$ are
$\gamma$-symbols of order $0$\,.
\end{definition}
Let us first consider the case $\fkf=Q\times \cz$  with the trivial
connection which, as we will
see below, is not a restriction. Because $\nabla_{e_{i}}^{F}\langle
p\rangle_{q}=0$ and $\nabla^{F}_{\hat{e}^{j }}\langle
p\rangle_{q}^{s}=\mathcal{O}(\langle p\rangle_{q}^{s-1})$ for
$s\in\rz$\,, the same discussion as the one following
Proposition~\ref{pr:indepWmu} about the equivariance of
$\mathcal{W}^{\mu}$-spaces, shows that conjugating by $G=\langle
p\rangle_{q}^{\pm\frac{N_{H}+N_{V}}{2}}$  transfers GKFP operators for
$g^{F}$ to  GKFP operators to $\tilde{g}^{F}$\,. Results of
\cite{BiLe}\cite{Leb1}\cite{Leb2} are formulated with the metric
$\tilde{g}^{F}$\,.  By working with those weighted metrics $g^{F}$ and
$\tilde{g}^{F}$\,, the definition of GKFP operators is the same when
$\nabla^{F}_{\alpha Y_{\fkh}}$ is replaced by the Lie derivative
$\mathcal{L}_{Y_{\alpha\fkh}}$ and we refer the reader to \cite{Leb1}-formula~(20) or
\cite{Nie}-(113)(114).
The term
$\mathcal{M}$ in $\mathcal{A}_{\alpha,\mathcal{M}}$ is actually a
perturbation which can be absorbed by the regularity estimates for
$\mathcal{A}_{\alpha,0}$\,. Additionally to the change of metric,
error terms due to partitions of unity which allow to localize the
analysis also appear as type $\mathcal{M}$ corrections:
\begin{itemize}
\item \textbf{Partition of unity in the $q$-variable:}  For a
  partition in unity in $q\in Q$\,, $Q$ is a closed compact manifold,
$\sum_{n=1}^{N}\chi_{n}(q)\equiv 1$\,, the comparison is given by
$$
\mathcal{A}_{\alpha,\mathcal{M}}-\sum_{n=1}^{N}\mathcal{A}_{\alpha,\mathcal{M}}\chi_{n}=\mathcal{M}_{\alpha,\mathcal{M},\chi}\,.
$$
So the analysis can be localized in a ball  $B(q_{0},\varrho)$ where
the coordinates $(q,p)$ can be used. Additionally changing
$\fkf\big|_{B(q_{0},\varrho)}$ with the connection $\nabla^{\fkf}$ by
$Q\times \cz^{d_{\fkf}}$ with the trivial connection and replacing
$g^{TQ}$ by the euclidean metric in $B(q_{0},\varrho)$\,, simply  adds a
term $\mathcal{M}_{g,\fkf}$\,. For $L^{2}$ or $\mathcal{W}^{\mu}$
estimates one rather uses a partition of unity
$\sum_{n=1}^{N}\chi_{n}^{2}(q)=1$ while comparing
$$
\|\mathcal{A}_{\alpha,\mathcal{M}}\omega\|_{\mathcal{W}^{\mu}}^{2}-\sum_{n=1}^{N}\|\mathcal{A}_{\alpha,\mathcal{M}}\chi_{n}\omega\|^{2}_{\mathcal{W}^{\mu}}
$$
but the idea is the same.
\item\textbf{Dyadic partition of unity in the $p$-variable:} After the
  localization in the $q$-variable and the reduction to the scalar
  case, a dyadic
partition of unity
$\sum_{j=0}^{\infty}\theta_{j}(p)=\chi_{0}(|p|_{q})+\sum_{j=1}^{\infty}\chi_{1}\left(2^{-j}|p|_{q}\right)\equiv
1$ is used with 
$$
\mathcal{A}_{\alpha,\mathcal{M}}-\sum_{j=0}^{\infty}\mathcal{A}_{\alpha,\mathcal{M}}\theta_{j}(p)=\mathcal{M}'_{\alpha,\mathcal{M},\chi}\,.
$$
Meanwhile using
$\sum_{j=0}^{\infty}\theta_{j}^{2}(p)=\chi_{0}^{2}(|p|_{q})+\sum_{j=1}^{\infty}\chi_{1}^{2}(2^{-j}|p|_{q})\equiv
1$\,, lead to  comparable error terms  for
$$
\|\mathcal{A}_{\alpha,\mathcal{M}}u\|^{2}_{\mathcal{W}^{\mu}}-\sum_{j=0}^{\infty}\|\mathcal{A}_{\alpha,\mathcal{M}}\theta_{j}u\|_{\mathcal{W}^{\mu}}^{2}\,.
$$
This allowed Lebeau to reduced the global subellipticity estimates to
parameter dependent local in a fixed ball or shell, and rather standard,
subelliptic estimates uniform with respect to  the small parameter
$h=2^{-j}\to 0$\,. 
\end{itemize}
Those use of partition of unity are actually the same as  the one
used for characterizing the spaces $\mathcal{W}^{\mu}(X;F)$ in terms
of standard  parameter dependent usual pseudodifferential calculus.\\
This led Lebeau in \cite{Leb2} to the following optimal results, that
we translate with our metric $g^{F}$ and our spaces
$L^{2}(X;F)$\,, $\mathcal{W}^{\mu}(X;F)$ for a GKFP operator
$\hat{A}_{\alpha,\mathcal{M}}$\,, $\alpha\neq 0$:
\begin{itemize}
\item  For $\mu\in \rz$  there exist
  two  constants $C_{\mu,\alpha,\mathcal{M}}>0$ and $C_{\alpha,\mathcal{M}}>0$\,, the latter
  independent of $\mu$\,, such that 
 the estimate
  \begin{multline}
\label{eq:maxhyp}
 \|\mathcal{O}s\|_{\mathcal{W}^{\mu}}+\|\nabla^{F}_{Y_{\fkh}}s\|_{\mathcal{W}^{\mu}}+\|s\|_{\mathcal{W}^{\mu+2/3}}+\delta_{0,\mu}\langle
 \lambda\rangle^{1/2}\|s\|_{\mathcal{W}^{\mu}}
\\
\leq C_{\mu,\alpha,\mathcal{M}}\|(C_{\alpha,\mathcal{M}}+\mathcal{A}_{\alpha,\mathcal{M}}-i\delta_{0,\mu}\lambda)s\|_{\mathcal{W}^{\mu}}\,,
  \end{multline}
holds true  for all $\lambda\in\rz$ and all $s\in
  \mathcal{S}'(X;F)$\,, satifying
  $(C_{\alpha,\mathcal{M}}+\mathcal{A}_{\alpha,\mathcal{M}}-i\delta_{0,\mu}\lambda)s
  \in \mathcal{W}^{\mu}(X;F)$\,. Remember that we do not use the
  $\lambda$-dependent $\mathcal{W}^{\mu}$-norms of \cite{Leb2} here.
\item The above constant  $C_{\alpha,\mathcal{M}}>0$ can be chosen
  such that $C_{\alpha,\mathcal{M}}\geq C_{0,\alpha,\mathcal{M}}$ and
  $C_{\alpha,\mathcal{M}}+\mathcal{A}_{\alpha,\mathcal{M}}$  with the domain 
  $D(\mathcal{A}_{\alpha,\mathcal{M}})=\left\{s\in
    L^{2}(X;F)\,,~\mathcal{A}_{\alpha,\mathcal{M}}s\in
    L^{2}(X;F)\right\}$ is maximally
  accretive  in $L^{2}(X;F)$\, with
$$
\forall s\in D(\mathcal{A}_{\alpha,\mathcal{M}})\,,\quad
\|\nabla_{p}^{F}s\|_{L^{2}}^{2}+\||p|_{q}s\|_{L^{2}}^{2}+\|u\|_{L^{2}}^{2}\leq
C_{\alpha,\mathcal{M}}\Real \langle s\,,\, (C_{\alpha,\mathcal{M}}+\mathcal{A}_{\alpha,\mathcal{M}})s\rangle\,.
$$
\item The subspaces $\mathcal{S}(X;E)$ and
  $\mathcal{C}^{\infty}_{0}(X;E)$ are dense in
  $D(\mathcal{A}_{\alpha,\mathcal{M}})$ endowed with its graph norm.
\item The adjoint $\mathcal{A}_{\alpha,\mathcal{M}}^{*}$\,, for the
  $L^{2}(X;F)$-scalar product, of
  $(\mathcal{A}_{\alpha,\mathcal{M}},D(\mathcal{A}_{\alpha,\mathcal{M}}))$ is a GKFP operator of
  the form $\mathcal{A}_{-\alpha,\mathcal{M}'}$
and has the same properties as $\mathcal{A}_{\alpha,\mathcal{M}}$\,.  
\end{itemize}
As a consequence of the maximal accretivity of
$(C_{\alpha,\mathcal{M}}+\mathcal{A}_{\alpha,\mathcal{M}},
D(\mathcal{A}_{\alpha,\mathcal{M}}))$ in $L^{2}(X;F)$ and the lower
bound
$C_{\alpha,\mathcal{M}}\|(C_{\alpha,\mathcal{M}}+\mathcal{A}_{\alpha,\mathcal{M}}-i\lambda)s\|_{L^{2}}\geq
\langle \lambda\rangle^{r}\|s\|_{L^{2}}$\,, $0<r<1$\, contained in
\eqref{eq:maxhyp} with $\mu=0$ and $r=\frac{1}{2}$ actually implies 
$$
\mathrm{Spec}(C_{\alpha,\mathcal{M}}+\mathcal{A}_{\alpha,\mathcal{M}})\subset
\left\{z\in \cz\,, \quad \Real z \geq C'_{\alpha,\mathcal{M}}|\Imag z|^{r}\right\}
$$
and the representation formula of the semigroup
$$
e^{-t\mathcal{A}_{\alpha,\mathcal{M}}}=\frac{1}{2i\pi}\int_{\gamma_{\alpha,\mathcal{M}}}\frac{e^{-tz}}{z-\mathcal{A}_{\alpha,\mathcal{M}}}~dz\,,\quad t>0\,,
$$
with $\gamma_{\alpha,\mathcal{M}}$ oriented from $+i\infty$ to
$-i\infty$ and given by
$$
\gamma_{\alpha,\mathcal{M}}=\left\{z\in \cz\,,\;\Real
  z+C_{\alpha,\mathcal{M}}\geq
  C'_{\alpha,\mathcal{M}}|\mathrm{Imag}~z|^{r}\right\}\,.
$$
This enters the class of  ``cuspidal semigroups'' for which the
 relationships
with subelliptic estimates and various
functional analytic  characterizations have been explained in
\cite{HerNi}, \cite{HeNi} and \cite{Nie}.\\
The fact that Bismut's hypoelliptic Laplacian $B^{\phi_{b}}_{\fkh}$\,,
or more exactly $2b^{2}B^{\phi_{b}}_{\fkh}=\mathcal{A}_{-b,\mathcal{M}}$\,, is
a GKFP operator actually comes from the Weitzenbock type formula given
in \cite{Bis05}
\begin{eqnarray}
\nonumber
 &&B^{\phi_{b}}_{\fkh}=\frac{1}{4b^{2}}
   \left[-\Delta_{p}+|p|_{q}^{2}-\frac{1}{2}\langle
   R^{TQ}(e_{i},e_{j})e_{k}\,,\,e_{\ell}\rangle
    e^{i}e^{j}\mathbf{i}_{\hat{e}^{k}\hat{e}^{\ell}}+2N_{V}-\dim Q\right]\\
\nonumber
&&\hspace{2cm}
-\frac{1}{2b}\Big[
\mathcal{L}_{Y_{\fkh}}+\frac{1}{2}\omega(\nabla^{\fkf},g^{\fkf})(Y_{\fkh})+\frac{1}{2}e^{i}\mathbf{i}_{\hat{e}^{j}}\nabla_{e_{i}}^{F}\omega(\nabla^{\fkf},g^{\fkf})(e_{j})
\\
\label{eq:weitzenbock}
&&
\hspace{5cm}
+\frac{1}{2}\omega(\nabla^{\fkf},g^{\fkf})(e_{i})\nabla^{F}_{\hat{e}^{i}}
\Big]\,.
\end{eqnarray}
We refer the reader to \cite{Leb1} for the detailed verification that
it is a GKFP and simply recall that the weighted metric
($\tilde{g}^{F}$ or $g^{F}$) is  convenient in the verification 
that $\mathcal{L}_{Y_{\fkh}}-\nabla^{F}_{Y_{\fkh}}$ enters in the
perturbation term like $\mathcal{M}$ in Definition~\ref{de:GKFP}.\\
Although it is simpler to work with the adjoint associated with the
usual $L^{2}(X;F)$ scalar product, example given  when the maximal accretivity is
considered, calculations which involve $d_{\fkh}$\,,
$d^{\phi_{b}}_{\fkh}$ and $B^{\phi_{b}}_{\fkh}$ are easier by using the
$\phi_{b}$ left or right-adjoint.
 The comparison of the standard
adjoint $P^{*}$ and the adjoints
$\tilde{P}$\,, $P^{\phi}$ and $P^{{}^{t}\phi}$ of a densely defined operator $(P,D(P))$ in
$L^{2}(X;F)$ was explained in Subsection~\ref{sec:globL2}. Remember
also that the $\phi_{b}$ right-adjoint is nothing but
$P^{{}^{t}\phi_{b}}=P^{\phi_{-b}}$\,, which is the $\phi_{-b}$
left-adjoint.\\
In particular the relation
$$
\forall s,s'\in \mathcal{S}(X;F)\,,\quad \langle
d^{\phi_{b}}_{\fkh}s\,,\, s'\rangle_{\phi_{b}}=\langle s\,,\, d_{\fkh}s'\rangle_{\phi_{b}}\,,
$$
leads to 
$$
\forall s,s'\in \mathcal{S}(X;F)\,,\quad
\langle B^{\phi_{b}}_{\fkh}s\,,\,s'\rangle_{\phi_{b}}
=\langle (d^{\phi_{b}}_{\fkh}d_{\fkh}+d_{\fkh}d^{\phi_{b}}_{\fkh})s\,,\,s'\rangle_{\phi_{b}}=
\langle s\,,\, B^{\phi_{-b}}_{\fkh}s'\rangle_{\phi_{b}}
$$
or
$$
(B^{\phi_{b}}_{\fkh})^{\phi_{-b}}=(B^{\phi_{b}}_{\fkh})^{{}^{t}\phi_{b}}=B^{\phi_{-b}}_{\fkh}\quad,\quad (B^{\phi_{-b}}_{\fkh})^{\phi_{b}}=B^{\phi_{b}}_{\fkh}\,.
$$
\subsection{Trace properties for local geometric Kramers-Fokker-Planck
operators}
\label{sec:tracelocalB}
Studying the existence of a trace along $X'$ is a local problem. In
order to take advantage of the local flexibility, we introduce a wider
class of GKFP-operators which have a good local behaviour. We consider
firstly the action of those operators on sections of the smooth vector
bundle $\pi_{F}:F\to X$ and we will in a second step consider their
properties when acting on sections of the restricted vector bundles
$F\big|_{\overline{X}_{\mp}}$ and sections of the piecewise
$\mathcal{C}^{\infty}$ and continuous vector bundle
$\hat{F}_{g}$\,. For the latter, a limited regularity of the
coefficients is required. 
\begin{definition}
\label{de:LGKFP} Consider the case where the metric $g=g^{TQ}$ is a
$\mathcal{C}^{\infty}$ metric {\red while the metric $g^{\fkf}$ on the flat vector bundle
$(\fkf,\nabla^{\fkf})$ piecewise $\mathcal{C}^{\infty}$ and continuous
like the metric $\hat{g}^{\fkf}$ of Definition~\ref{de:doublef}.}
A local geometric Kramers-Fokker-Planck (shortly
LGKFP) operator for the  metric $g=g^{TQ}$ is a differential 
operator  which can
be written locally above any local chart on $Q$ as:
\begin{eqnarray*}
  \mathcal{A}^{g,\kappa,\gamma}_{\alpha,\mathcal{M}}&=&(g^{-1}\kappa)^{ij}(q)p_{j}\nabla^{F}_{e_{i}}-\frac{\gamma_{ij}(q)\nabla^{F}_{\frac{\partial}{\partial
     p_{i}}}\nabla^{F}_{\frac{\partial}{\partial p_{j}}}}{2}+\mathcal{M}
\\
&=&\mathcal{A}^{g,\mathrm{Id},g}_{\alpha,0}+\mathcal{M}+\frac{\Delta_{p}^{g}-\Delta_{p}^{\gamma}}{2}+\alpha
    (g^{-1}(\kappa-\mathrm{Id}))^{ij}(q)p_{j}\nabla^{F}_{e_{i}}\,,
\\
\mathcal{A}^{g,\mathrm{Id},g}_{\alpha,0}&=&\alpha
  \nabla^{F}_{Y_{\fkh}}-\frac{\Delta_{p}^{g}}{2}=\alpha g^{ij}(q)p_{j}\nabla^{F}_{e_{i}}-\frac{g_{ij}(q)\nabla^{F}_{\frac{\partial}{\partial
  p_{i}}}\nabla^{F}_{\frac{\partial}{\partial
  p_{j}}}}{2}\,,
\\
\text{with}&&
\alpha\in\rz^{*}\quad,\quad
\mathcal{M}=\mathcal{M}_{j}(q,p)\nabla^{F}_{\frac{\partial}{\partial
    p_{j}}}+\mathcal{M}_{0}(q,p)\,,\\
\text{and}&& \mathcal{M}_{j}\,, \nabla^{F}_{\frac{\partial}{\partial
   p_{k}}}\mathcal{M}_{j}\,,  \mathcal{M}_{0}\in L^{\infty}_{loc}(X;L(F,F))\,.
\end{eqnarray*}
An element $\mathcal{M}$ like above will be called a locally admissible
perturbation. Admissible metrics $\gamma$ are Lipschitz continuous
metric $\gamma\in W^{1,\infty}(Q;T^{*}Q\odot T^{*}Q)$
 and admissible factors $\kappa$ belong to
 $\mathcal{C}^{\infty}(\overline{Q}_{-};L(T^{*}Q))\cap
 \mathcal{C}^{\infty}(\overline{Q}_{+};L(T^{*}Q))\cap
 \mathcal{C}^{0}(Q;L(T^{*}Q))$\,.
Both satisfy  $\|\gamma\|_{W^{1,\infty}}+\|\kappa\|_{W^{1,\infty}}\leq
R$ and
$\|\gamma-g\|_{L^{\infty}}+\|\kappa-\mathrm{Id}\|_{L^{\infty}}<\delta_{R,\alpha,g}$
with $\delta_{R,\alpha,g}>0$ small enough.
\end{definition}
{\red
The condition 
$\nabla^{F}_{\frac{\partial}{\partial p_{k}}}\mathcal{M}_{j}\in L^{\infty}_{loc}(X;L(F,F))$\,, implies
$$
\mathcal{M}=\mathcal{M}_{j}(q,p)\nabla^{F}_{\frac{\partial}{\partial
    p_{j}}}+\mathcal{M}_{0}(q,p)=
\nabla^{F}_{\frac{\partial}{\partial
    p_{j}}}\circ(\mathcal{M}_{j}(q,p)\times)
+\underbrace{\mathcal{M}_{0}(q,p)+(\nabla^{F}_{\frac{\partial}{\partial
      p_{j}}}\mathcal{M}_{j})(q,p)}_{\in L^{\infty}_{loc}(X;L(F,F))}\,.
$$
}
For any open set $U\subset X$\,, a LGKFP operator
$\mathcal{A}^{g,\kappa,\gamma}_{\alpha,\mathcal{M}}:\mathcal{C}^{\infty}_{0}(U;F)\to
L^{2}_{comp}(U;F)\subset \mathcal{D}'(U;F)$ has a formal
adjoint from $\mathcal{C}_{0}^{\infty}(U;F)\to
\mathcal{D}'(U;F)$  for the usual  $L^{2}(U;F)$ scalar product,
which is itself a LGKFP operator $\mathcal{A}^{g,\kappa',\gamma}_{-\alpha,\mathcal{M}'}$ with
$\alpha$ changed into $-\alpha$\,.  The map $\kappa'$ is nothing but
$\red{}^{t}(g^{-1}\kappa g)$\,. The difference between
$\mathcal{M}'$ and the formal adjoint $\mathcal{M}^{*}$ is due to:
\begin{itemize}
\item $\nabla^{F'}_{Y}=\nabla^{F}_{Y}-\omega(\fkf,g^{\fkf})(\pi_{X,*}Y)$
  where the terms $\omega(\fkf,g^{\fkf})(\frac{\partial}{\partial
    q^{i}})$ belong to $L^{\infty}(Q;\rz)$\,;
\item the
fact that $\nabla^{E}$ is not exactly the Levi-Civita connection for
the metric $g^{E}$ which includes the weight $\langle
p\rangle_{q}^{N_{V}-N_{H}}\pi_{X}^{*}(g^{\Lambda T^{*}Q}\otimes
g^{\Lambda TQ})$ (remember $e_{i}\langle p\rangle_{q}=0$ and
$\hat{e}^{j}\langle p\rangle_{q}^{r}=\mathcal{O}(\langle
p\rangle_{q}^{r-1})$)\,;
\item the derivatives $\nabla^{F}_{\frac{\partial}{\partial
      p_{j}}}\mathcal{M}_{j}\in L^{\infty}_{loc}(X;L(F,F))$\,;
\item{\red  the derivatives with respecto to $q^{i}$ of $\kappa$ which belong to
  $L^{\infty}(Q;\rz)$\,.}
\end{itemize}
{\red When the connection $\nabla^{E}$ (locally the flat vector bundle $\pi_{X}^{*}(\fkf\big|_{U})$
can be trivialized as $\pi_{X}^{-1}(U)\times \cz^{d_{f}}$) is replaced by a
 smooth connection $\tilde{\nabla}$ given by
$$
\tilde{\nabla}_{e_{i}}e^{\ell}=-\tilde{\Gamma}^{\ell}_{ik}(q)e^{k}\quad,\quad
\tilde{\nabla}_{e_{i}}\hat{e}_{j}=\tilde{\Gamma}^{\ell}_{ij}(q)\hat{e}_{k}\quad \tilde{\nabla}_{\hat{e}^{j}}e^{\ell}=\tilde{\nabla}_{\hat{e}^{j}}\hat{e}_{k}=0\,,
$$
with $\tilde{\Gamma}_{ik}^{\ell}\in L^{\infty}(Q;\rz)$\,,
then 
\begin{equation}
  \label{eq:LGKFPtriv}
\alpha (g\kappa)^{ij}(q)p_{j}\tilde{\nabla}_{e_{i}}-
\frac{\gamma_{ij}(q)
\tilde{\nabla}_{\frac{\partial}{\partial p_{i}}}
\tilde{\nabla}_{\frac{\partial}{\partial p_{j}}}}{2}
\end{equation}
is again a LGKFP operator for the metric $g^{TQ}$\,.
}
\begin{definition}
  \label{de:ElocB} Let  $\alpha\in \rz^{*}$ and
$\Omega=X$ or $\Omega=X_{\mp}$\,.
Let $\mathcal{A}^{g,\kappa,\gamma}_{\alpha,\mathcal{M}}$ be a LGKFP operator for the
metric $g^{TQ}$\,. The space
  $\mathcal{E}_{loc}(\mathcal{A}^{g,\kappa,\gamma}_{\alpha,\mathcal{M}},F\big|_{\overline{\Omega}})$ is defined according
  to Definition~\ref{de:calE} by
$$
\mathcal{E}_{loc}(\mathcal{A}^{g,\kappa,\gamma}_{\alpha,\mathcal{M}},F\big|_{\overline{\Omega}})=\left\{s\in
  L^{2}_{loc}(\overline{\Omega};F)\,, \quad \mathcal{A}^{g,\kappa,\gamma}_{\alpha,\mathcal{M}}s\in L^{2}_{loc}(\overline{\Omega};F)\right\}\,.
$$
\end{definition}
The topology of those spaces
$\mathcal{E}_{loc}(\mathcal{A}^{g,\kappa,\gamma}_{\alpha,\mathcal{M}};F\big|_{\overline{\Omega}})$ can be
given by the seminorms
$p_{\chi}(s)=\|\chi(\fkh)s\|_{L^{2}}+\|\chi(\fkh)\mathcal{A}^{g,\kappa,\gamma}_{\alpha,\mathcal{M}}s\|_{L^{2}}$\,.
\\
{\red
Those spaces are local on $\pi_{X}(\overline{\Omega})=Q$ or
$\overline{Q}_{\mp}$ according to the next lemma. This justifies the
writing of $\mathcal{A}_{\alpha,\mathcal{M}}^{g,\kappa,\gamma}$ with
the local frame of vector fields
$(e_{i},\hat{e}^{j}=\frac{\partial}{\partial p_{j}})$ in
Definition~\ref{de:LGKFP}.
\begin{lemma}
\label{le:ElocAloc} For any finite smooth partition of unity
$\sum_{j=1}^{J}\chi_{j}(q)\equiv 1$\,, $s$ belongs to
$\mathcal{E}_{loc}(\mathcal{A}^{g,\kappa,\gamma}_{\alpha,\mathcal{M}},F\big|_{\overline{\Omega}})$
if and only if $\chi_{j}s\in
\mathcal{E}_{loc}(\mathcal{A}^{g,\kappa,\gamma}_{\alpha,\mathcal{M}},F\big|_{\overline{\Omega}})$
for all $j=1,\ldots,I$\,.
\end{lemma}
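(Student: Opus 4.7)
The ``if'' direction is immediate: since $\sum_{j=1}^{J}\chi_{j}\equiv 1$ is a finite partition of unity, $s=\sum_{j=1}^{J}\chi_{j}s$ and $\mathcal{A}^{g,\kappa,\gamma}_{\alpha,\mathcal{M}}s=\sum_{j=1}^{J}\mathcal{A}^{g,\kappa,\gamma}_{\alpha,\mathcal{M}}(\chi_{j}s)$, so each hypothesis $\mathcal{A}^{g,\kappa,\gamma}_{\alpha,\mathcal{M}}(\chi_{j}s)\in L^{2}_{loc}(\overline{\Omega};F)$ sums to $\mathcal{A}^{g,\kappa,\gamma}_{\alpha,\mathcal{M}}s\in L^{2}_{loc}(\overline{\Omega};F)$.

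For the ``only if'' direction, fix $j$ and compute the commutator $[\mathcal{A}^{g,\kappa,\gamma}_{\alpha,\mathcal{M}},\chi_{j}(q)]$. Because $\chi_{j}$ depends only on $q$, both $\nabla^{F}_{\partial/\partial p_{i}}$ (which is just the trivial vertical derivative in the chosen frame) and the perturbation $\mathcal{M}=\mathcal{M}_{k}\nabla^{F}_{\partial/\partial p_{k}}+\mathcal{M}_{0}$ commute with the multiplication by $\chi_{j}(q)$, so only the horizontal drift term contributes. Using $e_{i}=\frac{\partial}{\partial q^{i}}+\Gamma^{k}_{ij}(q)p_{k}\frac{\partial}{\partial p_{j}}$ and the Leibniz rule for $\nabla^{F}_{e_{i}}$, one obtains $[\nabla^{F}_{e_{i}},\chi_{j}(q)]=e_{i}(\chi_{j})=\frac{\partial\chi_{j}}{\partial q^{i}}(q)$ (the vertical part of $e_{i}$ annihilates the $q$-dependent function), and therefore
\[
[\mathcal{A}^{g,\kappa,\gamma}_{\alpha,\mathcal{M}},\chi_{j}(q)]=\alpha(g^{-1}\kappa)^{ij}(q)p_{j}\frac{\partial\chi_{j}}{\partial q^{i}}(q),
\]
viewed as a zeroth-order multiplication operator with coefficient bounded by $C\langle p\rangle_{q}$ on $\overline{\Omega}$.

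From the identity $\mathcal{A}^{g,\kappa,\gamma}_{\alpha,\mathcal{M}}(\chi_{j}s)=\chi_{j}\mathcal{A}^{g,\kappa,\gamma}_{\alpha,\mathcal{M}}s+[\mathcal{A}^{g,\kappa,\gamma}_{\alpha,\mathcal{M}},\chi_{j}]s$, both summands lie in $L^{2}_{loc}(\overline{\Omega};F)$: the first because $\mathcal{A}^{g,\kappa,\gamma}_{\alpha,\mathcal{M}}s\in L^{2}_{loc}$ by assumption and $\chi_{j}\in\mathcal{C}^{\infty}_{0}$, and the second because the multiplication coefficient is locally bounded on $\overline{\Omega}\subset X$ (on any compact subset of $X$ the variable $p$ is bounded, so $p_{j}\frac{\partial\chi_{j}}{\partial q^{i}}\in L^{\infty}_{loc}$) while $s\in L^{2}_{loc}(\overline{\Omega};F)$. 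This gives $\chi_{j}s\in\mathcal{E}_{loc}(\mathcal{A}^{g,\kappa,\gamma}_{\alpha,\mathcal{M}},F|_{\overline{\Omega}})$ for every $j$, which is the only non-trivial step. No obstacle is expected: the whole argument reduces to checking that $\chi_{j}(q)$ does not interact with the genuinely second-order or perturbative parts of $\mathcal{A}^{g,\kappa,\gamma}_{\alpha,\mathcal{M}}$, which was precisely ensured by writing the operator in a local frame in which the vertical derivatives are trivial.
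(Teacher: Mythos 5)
Your proof is correct and follows essentially the same route as the paper: the whole lemma reduces to observing that the commutator $[\mathcal{A}^{g,\kappa,\gamma}_{\alpha,\mathcal{M}},\chi_{j}(q)]$ picks up only the horizontal drift term, giving a multiplication operator with coefficient linear in $p$ and hence bounded on compact subsets of $\overline{\Omega}\subset X$, so multiplication by $\chi_{j}$ maps $\mathcal{E}_{loc}$ into itself. The only blemish is the index clash in your displayed commutator formula, where $j$ is used simultaneously as the partition index on $\chi_{j}$ and as a dummy summation index on $p_{j}$ and $(g^{-1}\kappa)^{ij}$; the intended expression is $\alpha(g^{-1}\kappa)^{ik}(q)\,p_{k}\,(\partial_{q^{i}}\chi_{j})(q)$.
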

\begin{proof}
When $\sum_{j=1}^{J}\chi_{j}(q)\equiv 1$ the difference
$
\mathcal{A}^{g,\kappa,\gamma}_{\alpha,\mathcal{M}}-\sum_{j=1}^{J}\mathcal{A}^{g,\kappa,\gamma}_{\alpha,\mathcal{M}}\chi_{j}(q)$
written locally as $\sum_{j=1}^{J}(g\kappa)^{ik}(q)p_{k}(\partial_{q^{i}}\chi_{j})(q)$
is a continuous endomorphism of $L^{2}_{loc}(\overline{\Omega};F\big|_{\overline{\Omega}})$\,. This yields the
equivalence
$$
\left(s\in \mathcal{E}_{loc}(\mathcal{A}^{g,\kappa,\gamma}_{\alpha,\mathcal{M}},F\big|_{\overline{\Omega}})\right)
\Leftrightarrow
\left(\forall j\in \left\{1,\ldots,J\right\}\,, \chi_{j}s\in \mathcal{E}_{loc}(\mathcal{A}^{g,\kappa,\gamma}_{\alpha,\mathcal{M}},F\big|_{\overline{\Omega}})\right)\,.
$$ 
\end{proof}
Other properties of the spaces
$\mathcal{E}_{loc}(\mathcal{A}^{g,\kappa,\gamma}_{\alpha,\mathcal{M}};F\big|_{\Omega})$
will be given in various cases.\\}
We first consider the smooth case without boundary $\Omega=X$\,.\\
{\red
We start with a lemma about  perturbed GKFP operators, which are of
course specific Local Geometric Kramers-Fokker-Planck operators.
\begin{lemma}
\label{le:pertGKFP} Consider the operator
$$
\mathcal{A}^{g,\mathrm{Id},\gamma}_{\alpha,\frac{|p|_{q}^{2}}{2}}=\mathcal{A}_{\alpha,0}-N_{V}+d/2+\frac{\Delta_{p}^{g}-\Delta_{p}^{\gamma}}{2}=\mathcal{A}_{\alpha,0}-N_{V}+d/2
+\frac{(g^{ij}(q)-\gamma^{ij}(q))\partial_{p_{i}}\partial_{p_{j}}}{2}
$$
according to Definition~\ref{de:LGKFP} and Definition~\ref{de:GKFP}
with $\alpha\neq 0$\,,  $\|\gamma-g\|_{L^{\infty}}\leq
\delta_{g,\alpha}$\,.\\
For $\delta_{g,\alpha}>0$ small enough,
$\mathcal{A}^{g,\mathrm{Id},\gamma}_{\alpha, \frac{|p|_{q}^{2}}{2}}$
(resp. its formal adjoint) is a relatively bounded perturbation in
$L^{2}(X;F)$ of
$\mathcal{A}_{\alpha,0}$ with domain
$D(\mathcal{A}_{\alpha,0})=\left\{s\in L^{2}(X;F),\;
  \mathcal{A}_{\alpha,0}s\in L^{2}(X;F)\right\}$ (resp of the adjoint
$\mathcal{A}_{\alpha,0}^{*}=\mathcal{A}_{-\alpha,\mathcal{M}'}$)\,. 
This operator with domain
$$
D(\mathcal{A}_{\alpha,\frac{|p|^{2}}{2}}^{g,\mathrm{Id},\gamma})
=D(\mathcal{A}_{\alpha,0})=\left\{s\in L^{2}(X;F),\;
  \mathcal{A}_{\alpha,\frac{|p|_{q}^{2}}{2}}^{g,\mathrm{Id},\gamma}s\in
L^{2}(X;F)\right\}\subset \mathcal{W}^{2/3}(X;F)
$$
is closed. Its adjoint has the domain
$$
D(\mathcal{A}_{-\alpha,0})=\left\{s\in L^{2}(X;F),\;  (\mathcal{A}_{\alpha,\frac{|p|_{q}^{2}}{2}}^{g,\mathrm{Id},\gamma})^{*}s\in
L^{2}(X;F)\right\}\subset \mathcal{W}^{2/3}(X;F)\,.
$$ 
By writing shortly
 $\mathcal{A}=\mathcal{A}_{\alpha,\frac{|p|_{q}^{2}}{2}}$
 or $\mathcal{A}=(\mathcal{A}_{\alpha,\frac{|p|_{q}^{2}}{2}}^{g,\mathrm{Id},\gamma})^{*}$
 and by recalling
 $\mathcal{O}=\frac{-\Delta_{p}+|p|_{q}^{2}+2N_{V}-d}{2}$\,, there
 exists a constant $C_{\alpha,g}>0$ such that
$(C_{\alpha,g}+\mathcal{A}):D(\mathcal{A})\to L^{2}(X;F)$ 
is invertible and
$$
(1+\mathcal{O})^{t}(C_{\alpha,g}+\mathcal{A})^{-1}(1+\mathcal{O})^{1-t}\in \mathcal{L}(L^{2}(X;F))
$$
is bounded in $L^{2}(X;F)$ for all
for all $t\in [0,1]$\,.
\end{lemma}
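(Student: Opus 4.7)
The plan is to view $\mathcal{A}=\mathcal{A}^{g,\mathrm{Id},\gamma}_{\alpha,\frac{|p|_q^2}{2}}$ as $\mathcal{A}_{\alpha,0}+\mathcal{P}$, where
$$
\mathcal{P}=-N_V+d/2+\tfrac{1}{2}\bigl(g^{ij}(q)-\gamma^{ij}(q)\bigr)\nabla^F_{\frac{\partial}{\partial p_i}}\nabla^F_{\frac{\partial}{\partial p_j}},
$$
is a small perturbation of the genuine GKFP operator $\mathcal{A}_{\alpha,0}$ of Definition~\ref{de:GKFP}, and then to read off the claimed properties from the maximal hypoelliptic estimate \eqref{eq:maxhyp} applied to $\mathcal{A}_{\alpha,0}$, complemented by a duality plus Stein-interpolation argument for the weighted resolvent bound.

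The first step is to show that $\mathcal{P}$ is $\mathcal{A}_{\alpha,0}$-bounded with arbitrarily small relative bound once $\|\gamma-g\|_{L^\infty}$ is small enough. Since $N_V\in L(F)$ is globally bounded, the term $-N_V+d/2$ contributes only an $L^2$-bounded operator. The standard harmonic-oscillator-in-$p$ inequality gives $\|\nabla^F_{\frac{\partial}{\partial p_i}}\nabla^F_{\frac{\partial}{\partial p_j}} s\|_{L^2}\leq C(\|\mathcal{O}s\|_{L^2}+\|s\|_{L^2})$, which combined with \eqref{eq:maxhyp} for $\mathcal{A}_{\alpha,0}$ at $\mu=0$ yields
$$
\|\mathcal{P}s\|_{L^2}\leq C_{\alpha,g}\|\gamma-g\|_{L^\infty}\,\|(C_{\alpha,0}+\mathcal{A}_{\alpha,0})s\|_{L^2}+C'\|s\|_{L^2},
$$
for every $s\in D(\mathcal{A}_{\alpha,0})$. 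Picking $\delta_{g,\alpha}$ small enough makes the relative bound strictly less than $1$. By the classical perturbation theorem for closed operators, $\mathcal{A}_{\alpha,0}+\mathcal{P}$ is closed on $D(\mathcal{A}_{\alpha,0})$ with equivalent graph norm; adding a sufficiently large constant $C_{\alpha,g}$ restores accretivity, so $C_{\alpha,g}+\mathcal{A}$ is maximal accretive and invertible from $D(\mathcal{A}_{\alpha,0})$ onto $L^2(X;F)$. The $\mathcal{W}^{2/3}(X;F)$ inclusion is then immediate from \eqref{eq:maxhyp}. The identical argument for the formal adjoint, which by Definition~\ref{de:LGKFP} is a LGKFP with $\alpha$ replaced by $-\alpha$ and a locally admissible perturbation $\mathcal{M}'$, gives the analogous statements for the adjoint and identifies its domain as $D(\mathcal{A}_{-\alpha,0})$.

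For the weighted resolvent bound, the endpoint $t=1$ is a reformulation of \eqref{eq:maxhyp} for the perturbed operator: with $s=(C_{\alpha,g}+\mathcal{A})^{-1}f$ one has $\|\mathcal{O}s\|_{L^2}\lesssim\|f\|_{L^2}$, hence $(1+\mathcal{O})(C_{\alpha,g}+\mathcal{A})^{-1}\in\mathcal{L}(L^2(X;F))$. The endpoint $t=0$ is obtained by duality: the operator $(C_{\alpha,g}+\mathcal{A})^{-1}(1+\mathcal{O})$, initially defined on the dense subspace $\mathcal{S}(X;F)$, is the $L^2$-adjoint of $(1+\mathcal{O})(\overline{C_{\alpha,g}}+\mathcal{A}^*)^{-1}$, which is exactly the $t=1$ bound for the adjoint. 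Since $\mathcal{O}$ is self-adjoint and strictly positive, the complex powers $(1+\mathcal{O})^z$ are well defined via spectral calculus for $\Real z\in[0,1]$, and the analytic family
$$
z\mapsto (1+\mathcal{O})^{z}(C_{\alpha,g}+\mathcal{A})^{-1}(1+\mathcal{O})^{1-z}
$$
is bounded on the boundary lines $\Real z=0,1$ of the strip. Stein's three-line lemma interpolates these into $\mathcal{L}(L^2(X;F))$ bounds for every $t\in[0,1]$.

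The main obstacle is to ensure that the smallness threshold $\delta_{g,\alpha}$ is compatible with the constants hidden in \eqref{eq:maxhyp}, and to justify rigorously the duality identification at $t=0$: one must first establish the identity $\langle(C_{\alpha,g}+\mathcal{A})^{-1}(1+\mathcal{O})u,v\rangle=\langle u,(1+\mathcal{O})(\overline{C_{\alpha,g}}+\mathcal{A}^*)^{-1}v\rangle$ on the dense core $u,v\in\mathcal{S}(X;F)$, using that $\mathcal{S}(X;F)$ is simultaneously a core for $\mathcal{A}$, $\mathcal{A}^*$, and $\mathcal{O}$, and only then extend by density; the rest is a routine consequence of Lebeau's estimate once the relative boundedness has been checked.
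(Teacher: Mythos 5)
Your proposal is correct and follows essentially the same route as the paper: treat $\mathcal{A}^{g,\mathrm{Id},\gamma}_{\alpha,\frac{|p|_q^2}{2}}$ as a relatively bounded perturbation of the GKFP operator $\mathcal{A}_{\alpha,0}$ (with the smallness of $\|\gamma-g\|_{L^\infty}$ controlling the relative bound via Lebeau's estimate \eqref{eq:maxhyp}), deduce closedness, domain characterization, and invertibility of $C_{\alpha,g}+\mathcal{A}$ for both the operator and its $L^2$-adjoint, then establish the endpoints $t=0$ and $t=1$ of the weighted resolvent bound by the a priori estimate and duality and interpolate. The one place to be a bit careful is your remark that adding a constant ``restores accretivity'': what one actually needs (and what both arguments in fact deliver) is invertibility of $C_{\alpha,g}+\mathcal{A}$, which follows from the a priori estimate holding simultaneously for the perturbed operator and its adjoint (closed range plus dense range), without asserting m-accretivity of the perturbed operator.
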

\begin{proof}
Because
$\mathcal{A}_{\alpha,0}$ is a GKFP operator, the global subelliptic
estimate \eqref{eq:maxhyp} says
$$
\forall s\in D(\mathcal{A}_{\alpha,0})\,,\quad
\|\mathcal{O}s\|_{L^{2}}+\|s\|_{\mathcal{W}^{2/3}}\leq C_{1,\alpha,g}\|(C_{1,\alpha,g}+\mathcal{A}_{\alpha,0})s\|_{L^{2}}
$$
with 
$$
D(\mathcal{A}_{\alpha,0})=\left\{s\in L^{2}(X;F)\,,\quad
  \mathcal{A}_{\alpha,0}s\in L^{2}(X;F)\right\}\,.
$$
The same holds for its  adjoint
$\mathcal{A}_{-\alpha,\mathcal{M}'}$ with the same constant
$C_{1,\alpha,g}$ and $\mathcal{C}^{\infty}_{0}(X;F)$ is a core for
both closed operators.\\
Because 
$$
\|\frac{\Delta_{p}^{g}-\Delta_{p}^{\gamma}}{2}s\|_{L^{2}}\leq \delta_{\alpha,g}\|\mathcal{O}s\|_{L^{2}}\,,
$$
it suffices to choose $\delta_{\alpha,g}>0$ small enough such that
$\mathcal{A}_{\alpha,0}^{g,\mathrm{Id},\gamma}$ and its formal adjoint
are respectively 
bounded perturbations of $(\mathcal{A}_{\alpha,0},D(\mathcal{A}_{\alpha,0})$
and its adjoint with bound $\leq \frac{1}{2}$\,. This ensures that
$(\mathcal{A}_{\alpha,0}^{g,\mathrm{Id},\gamma},D(\mathcal{A}_{\alpha,0}))$
and
$((\mathcal{A}_{\alpha,0}^{g,\mathrm{Id},\gamma}),D(\mathcal{A}_{\alpha,0}^{*}))$
are closed with the core $\mathcal{C}^{\infty}_{0}(X;F)$\,. They are
ajoint to each other. This yiels the characterization of 
$D(\mathcal{A})$ as $\left\{s\in L^{2}(X;F),\; \mathcal{A}s\in
  L^{2}(X;F)\right\}$ for
$\mathcal{A}=\mathcal{A}_{\alpha,0}^{g,\mathrm{Id},\gamma}$ and  $\mathcal{A}=\mathcal{A}_{\alpha,0}^{g,\mathrm{Id},\gamma}$\,.\\
Additionally there exists a constant $C_{\alpha,g}\geq 0$ such that
$$
\|\mathcal{O}s\|_{L^{2}}+\|s\|_{\mathcal{W}^{2/3}}\leq C_{\alpha,g}\|(C_{\alpha,g}+\mathcal{A})s\|_{L^{2}}\,.
$$
Hence
$(1+\mathcal{O})(C_{\alpha,g}+\mathcal{A})^{-1}$ and its adjoint
$(1+\mathcal{A}^{*})^{-1}(1+\mathcal{O})$ are bounded operators in $L^{2}(X;F)$\,. The
general result for $t\in[0,1]$ follows by interpolation.
\end{proof}
\begin{lemma}
  \label{le:kappachi} When $\kappa$ is the map of
  Definition~\ref{de:LGKFP} and $\sum_{j=1}^{J}\chi_{j}(q)\equiv 1$ is
  a smooth partition of unity on $Q$ subordinate to a chart atlas, the linear
  map $\kappa_{\chi}$ defined by
$$
\kappa_{\chi}(\sum_{j=1}^{J}\chi_{j}(q)s_{I}^{J}(q,p)e^{I}\hat{e}_{J})=
\sum_{j=1}^{J}\chi_{j}(q)s_{I}^{J}(q,\kappa(q)p) e^{I}\hat{e}_{J}
$$
is a continuous automorphism of $W^{\mu,2}_{loc}(X;F)$ for any $\mu\in[-1,1]$\,, of
$\mathcal{C}^{\infty}(\overline{X}_{-};F)\cap
\mathcal{C}^{\infty}(\overline{X}_{+};F)\cap \mathcal{C}^{0}(X;F)$ and
of $\mathcal{E}_{loc}(\Delta_{p}^{g};F)=\left\{s\in
  L^{2}_{loc}(X;F)\,,\; \Delta_{p}^{g}s\in
  L^{2}_{loc}(X;F)\right\}$\,.
It is also a continuous automorphism of the global space
$\mathcal{W}^{\mu}(X;F)$ for all $\mu\in [-1,1]$\,.
\end{lemma}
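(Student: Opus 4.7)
The plan is to reduce the statement to the chart-wise substitution $\Psi(q,p)=(q,\kappa(q)p)$, observe that $\Psi$ is a fiber-linear piecewise $\mathcal{C}^\infty$ continuous bijection whose Jacobian has determinant $\det\kappa(q)$, uniformly bounded above and below thanks to $\|\kappa-\mathrm{Id}\|_{L^\infty}<\delta_{R,\alpha,g}$, and then check each space in turn. Since the partition of unity $\sum_j\chi_j(q)\equiv 1$ is subordinate to a trivializing atlas, in each chart $\kappa_\chi(\chi_j s)=\chi_j\,(s\circ\Psi)$ at the level of the local coefficients $s_I^J(q,p)$, while the frames $e^I\hat{e}_J$ are left untouched. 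Globality then follows by summation over $j$, and the inverse is the same construction for $\kappa^{-1}$, which is again in $\mathcal{C}^\infty(\overline{Q}_-;L(T^*Q))\cap\mathcal{C}^\infty(\overline{Q}_+;L(T^*Q))\cap\mathcal{C}^0(Q;L(T^*Q))$ and close to $\mathrm{Id}$.

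The $L^2_{loc}$ case is a bounded change of variables in the $p$-fiber. For $W^{1,2}_{loc}$, I would compute
$$\partial_{p_j}[s_I^J(q,\kappa(q)p)]=\kappa_j^k(q)(\partial_{p'_k}s_I^J)(q,\kappa(q)p),\quad \partial_{q^i}[s_I^J(q,\kappa(q)p)]=(\partial_{q^i}s_I^J)\circ\Psi+(\partial_{q^i}\kappa)_j^k p_k(\partial_{p'_j}s_I^J)\circ\Psi,$$
and observe that the coefficients involved, namely $\kappa_j^k$ and $(\partial_{q^i}\kappa)_j^k$, belong to $L^\infty_{loc}$, so $\kappa_\chi$ maps $W^{1,2}_{loc}$ to $W^{1,2}_{loc}$ continuously. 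By duality (the transpose is, up to multiplication by $|\det\kappa|$, the map associated with $\kappa^{-1}$ and so of the same type), $\kappa_\chi$ preserves $W^{-1,2}_{loc}$; interpolation covers $\mu\in[-1,1]$. The invariance of $\mathcal{C}^\infty(\overline{X}_-;F)\cap\mathcal{C}^\infty(\overline{X}_+;F)\cap\mathcal{C}^0(X;F)$ is a direct consequence of the piecewise smoothness and continuity of $\kappa$ and of $\Psi$.

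For $\mathcal{E}_{loc}(\Delta_p^g;F)$, only $p$-derivatives come in, so the non-smooth dependence of $\kappa$ in $q$ is irrelevant. A direct computation yields
$$\Delta_p^g(\kappa_\chi s)=\Psi^*\bigl[\Delta_p^{\tilde g}s\bigr],\qquad (\tilde g^{TQ})^{-1}={}^{t}\kappa\,(g^{TQ})^{-1}\kappa,$$
where $\tilde g$ is a metric Lipschitz continuous in $q$ and uniformly close to $g$ on $Q$. So the point reduces to the equality $\mathcal{E}_{loc}(\Delta_p^g;F)=\mathcal{E}_{loc}(\Delta_p^{\tilde g};F)$, which in turn follows from the uniform ellipticity of $\Delta_p^g$ in the $p$-variable: freezing $q$ and using the standard elliptic $L^2$-estimate $\|\partial^2_p s(q,\cdot)\|_{L^2_{loc}}\lesssim \|\Delta_p^g s(q,\cdot)\|_{L^2_{loc}}+\|s(q,\cdot)\|_{L^2_{loc}}$ with constants uniform in $q$, one sees that either $L^2_{loc}$-condition on $\Delta_p^{\,\cdot\,} s$ is equivalent to all pure second $p$-derivatives of $s$ being $L^2_{loc}$.

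Finally for the global $\mathcal{W}^\mu(X;F)$, $\mu\in[-1,1]$, one uses $\langle\kappa(q)p\rangle_q\sim\langle p\rangle_q$ so the weight is preserved, together with the fact, already used in Proposition~\ref{pr:indepWmu}\,b)-c), that a piecewise smooth fiber-linear bundle isomorphism close to $\mathrm{Id}$ acts continuously on $\mathcal{W}^\mu(X;F)$; the computations of the first paragraph transport the norm \eqref{eq:normWn} into an equivalent one for $\mu\in\{0,1\}$, duality gives $\mu=-1$, and interpolation the remaining range. The main difficulty is checking the $\mathcal{E}_{loc}(\Delta_p^g;F)$ case: one has to recognize that the apparent lack of $q$-smoothness of $\kappa$ plays no role because $\Delta_p^g$ is purely vertical, and that the modified metric $\tilde g$ stays in the ellipticity class defining the same space.
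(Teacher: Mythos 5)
Your proof is correct and follows essentially the same route as the paper's: a partition of unity subordinate to a trivializing atlas reduces every assertion to the scalar components $s_I^J$ in a chart, after which the point is that $W^{1,\infty}$-regularity of $\kappa$ makes the fiberwise substitution $p\mapsto\kappa(q)p$ a continuous automorphism of each scalar space, with $\mu=-1$ by duality and the intermediate range by interpolation, and with $\langle\kappa(q)p\rangle_q\sim\langle p\rangle_q$ providing the global weight comparison for $\mathcal{W}^\mu$. The paper is terser — it simply asserts that the conditions are preserved because $\kappa\in W^{1,\infty}$ — and you flesh out the explicit derivative computations and, for $\mathcal{E}_{loc}(\Delta_p^g;F)$, the identification $\Delta_p^g(\kappa_\chi s)=\kappa_\chi(\Delta_p^{\tilde g}s)$ and the equality $\mathcal{E}_{loc}(\Delta_p^g;F)=\mathcal{E}_{loc}(\Delta_p^{\tilde g};F)$ by vertical ellipticity, which is indeed the implicit content of the paper's claim.
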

\begin{proof}
All the considered spaces are local spaces on $Q$\,. When $U$ is a
chart open set of  $Q$ we can assume $\fkf\big|_{U}=U\times
\cz^{d_{f}}$\,, 
section $s$ of $F$ supported in $U$ can be written
$s=s_{I}^{J}(q,p)e^{I}\hat{e}_{J}$ with $s_{I}^{J}(q,p)\in
\cz^{d_{f}}$\,. With the $\mathcal{C}^{\infty}$ frame
$(e^{i},\hat{e}_{j})$\,, $s\in
\mathcal{W}^{\mu}_{loc}(\pi_{X}^{-1}(U);F\big|_{U})$ (resp. 
$s\in \mathcal{C}^{\infty}(\overline{X}_{-}\cap \pi_{X}^{-1}(U);F)\cap
\mathcal{C}^{\infty}(\overline{X}_{+}\cap \pi_{X}^{-1}(U);F)\cap
\mathcal{C}^{0}(\pi_{X}^{-1}(U);F)$, resp. $s\in
\mathcal{E}_{loc}(\Delta_{p}^{g};F\big|_{\pi_{X}^{-1}(U)})$ )  if and only
if for all $I,J\subset \left\{1,\ldots,d\right\}$
\begin{eqnarray*}
  && s_{I}^{J}\in W^{\mu}_{loc}(\pi_{X}^{-1}(U);\cz^{d_{f}})\\
\text{resp.}&& s_{I}^{J}\in \mathcal{C}^{\infty}(\overline{X}_{-}\cap \pi_{X}^{-1}(U);\cz^{d_{f}})\cap
\mathcal{C}^{\infty}(\overline{X}_{+}\cap \pi_{X}^{-1}(U);\cz^{d_{f}})\cap
\mathcal{C}^{0}(\pi_{X}^{-1}(U);\cz^{d_{f}})\,,\\
\text{resp.}&& s_{I}^{J}\in \mathcal{E}_{loc}(\Delta_{p}^{g}; \cz^{d_{f}})\,.
\end{eqnarray*}
Because $\kappa\in
\mathcal{C}^{\infty}(\overline{Q}_{-};L(T^{*}Q))\cap
\mathcal{C}^{\infty}(\overline{Q}_{+};L(T^{*}Q))\cap
\mathcal{C}^{0}(Q;L(T^{*}Q))\subset W^{1,\infty}(Q;L(T^{*}Q))$\,,
those conditions are preserved by $s_{I}^{J}\mapsto
s_{I}^{J}(q,\kappa(q)p)$ for $\mu\in [-1,1]$ for the first space. The
same works for $\kappa_{\chi}^{-1}$\,.\\
For the global $\mathcal{W}^{\mu}(X;F)$ space for $\mu\in[-1,1]$\,, it
suffices to consider the cases $\mu=0$ and $\mu=1$ and to conclude by
duality and interpolation. For $\mu=0$\,, we recall that
$\|s\|_{L^{2}}^{2}$ is equivalent to
$$
N_{0}(s)^{2}=
\sum_{j,I,J}\int _{X}|\chi_{j}(q)s_{I}^{J}(q,p)|^{2}\langle p\rangle^{-|I|+|J|}~|dqdp|
$$
The estimate $N_{0}(\kappa_{\chi}s)\leq C_{\kappa,\chi}N_{0}(s)$ is then
obvious.\\
For $\mu=1$ the $\mathcal{W}^{1}(X;F)$\,,
$\|s\|_{\mathcal{W}^{1}}^{2}$ is equivalent to 
\begin{multline*}
N_{1}(s)^{2}=N_{0}(\langle p\rangle_{q}^{2}s)^{2}+
\\
\sum_{j,i_{1}, j_{1}I,J}\int_{X}\left[
|\chi_{j}(q)\partial_{q^{i_{1}}}s_{I}^{J}(q,p)|^{2}+
|\chi_{j}(q)\langle p\rangle_{q}\partial_{p_{j_{1}}}s_{I}^{J}(q,p)|^{2}
\right]\langle p\rangle_{q}^{-|I|+|J|}~|dqdp|
\end{multline*}
which leads to $N_{1}(\kappa_{\chi}s)\leq C_{\kappa,\chi}N_{1}(s)$\,.
\end{proof}
}
\begin{remark}
\red
  We do not use the natural push forward or pull back on sections of
  $F=\Lambda T^{*}X\otimes \pi_{X}^{*}(\fkf)$ of $(q,p)\mapsto
  (q,\kappa(q)p)$ because it involves its differential acting e.g on
  $e^{i}$ and $\hat{e}_{j}$ which is only $L^{\infty}_{loc}$ and not
  Lipschitz continous. Therefore $\kappa_{*}$ preserves the
 $W^{\mu}_{loc}$ regularity for $\mu\in
  [-1,1]$ only when it acts on functions. 
\end{remark}

\begin{lemma}
\label{le:ElocBX}
{\red Let us work in the framework of Definition~\ref{de:LGKFP} and
Definition~\ref{de:ElocB} with $\Omega=X$ while $g=g^{TQ}$ is a smooth
metric on $Q$\,, $\|\kappa\|_{W^{1,\infty}}+\|\gamma\|_{W^{1,\infty}}\leq R$ and
$\|\kappa-\mathrm{Id}\|_{L^{\infty}}+\|\gamma-g\|_{L^{\infty}}<\delta_{R,\alpha,g}$ with
$\delta_{\alpha,g}>0$ small enough. For a given partition of unity
$\sum_{j=1}^{J}\chi_{j}(q)\equiv 1$ subordinate to a chart atlas on
$Q$\,, $\kappa_{\chi}$ is the map defined in Lemma~\ref{le:kappachi}.
 The spaces
$\mathcal{E}_{loc}(\mathcal{A}^{g,\kappa,\gamma}_{\alpha,\mathcal{M}},F)$
satisfy the following properties:}
\begin{description}
\item[i)]
  $\mathcal{E}_{loc}(\mathcal{A}^{g,\kappa,\gamma}_{\alpha,\mathcal{M}},F)=\red
  \kappa_{\chi}\mathcal{E}_{loc}(\mathcal{A}^{g,\mathrm{Id},g}_{\alpha,0},F)\subset
  W^{2/3,2}_{loc}(X;F)\cap \mathcal{E}_{loc}(\Delta_{p}^{g};F)$
 with a continuous embedding. In
  particular the trace map $s\mapsto s\big|_{X'}$ is well-defined and
  continuous from $\mathcal{E}_{loc}(\mathcal{A}^{g,\kappa,\gamma}_{\alpha,\mathcal{M}},F)$ to
  $L^{2}_{loc}(X';F)$\,.
\item[ii)] A section $s$  belongs to
  $\mathcal{E}_{loc}(\mathcal{A}^{g,\kappa,\gamma}_{\alpha,\mathcal{M}},E)$ iff for any $\chi\in
  \mathcal{C}^{\infty}_{0}(\rz;\rz)$\,, {\red
    $\chi(\fkh)\kappa_{\chi}^{-1}s$ belongs to
    $D(\mathcal{A}^{g,\mathrm{Id},g}_{\alpha,\frac{|p|^{2}}{2}})$\,.  As a consequence
  $\mathcal{C}^{\infty}_{0}(\overline{X}_{-};F)\cap
  \mathcal{C}^{\infty}_{0}(\overline{X}_{+};F)\cap
  \mathcal{C}^{0}(X;F)$ is dense in $\mathcal{E}_{loc}(\mathcal{A}^{g,\kappa,\gamma}_{\alpha,\mathcal{M}};F)$\,.}
\item[iii)] {\red The equality
$
\mathcal{E}_{loc}(\mathcal{A}^{g,\kappa,\gamma}_{\alpha,\mathcal{M}},F)=\mathcal{E}_{loc}(\mathcal{A}^{g,\kappa,\gamma}_{\alpha_{1},\mathcal{M}},F)$
holds for any other choice of  $\alpha_{1}\in \rz^{*}$ as soon as
$$
\|\kappa-\mathrm{Id}\|_{L^{\infty}}+\|\gamma-g\|_{L^{\infty}}<\min(\delta_{R,\alpha,g},\delta_{R,\alpha_{1},g})\,.
$$
}
\item[iv)] Let $\mathcal{A}^{g,\kappa',\gamma}_{-\alpha,\mathcal{M}'}$ be the formal adjoint of
  $\mathcal{A}^{g,\kappa,\gamma}_{\alpha,\mathcal{M}}$\,,
 a section $s\in L^{2}_{loc}(X;F)$ belongs to
  $\mathcal{E}_{loc}(\mathcal{A}^{g,\kappa,\gamma}_{\alpha,\mathcal{M}},F)$ 
iff, for any compact subset
  $K\subset X$\,, there exists a constant $C_{K}>0$ such that
$$
\red \forall s'\in \mathcal{C}^{\infty}_{0}(\overline{X}_{-};F)\cap
\mathcal{C}^{\infty}_{0}(\overline{X}_{+};F)\cap \mathcal{C}^{0}(X;F)\,,~\supp s'\subset
K\,,
|\langle \mathcal{A}^{g,\kappa'\gamma}_{-\alpha,\mathcal{M}'}s'\,,\, s\rangle|\leq C_{K}\|s'\|_{L^{2}}\,.
$$
Finally the equality $\langle s'\,,\,
\mathcal{A}^{g,\kappa,\gamma}_{\alpha,\mathcal{M}}s'\rangle=\langle
\mathcal{A}^{g,\kappa',\gamma}_{-\alpha,\mathcal{M}'}s'\,,\,s\rangle$
holds for any compactly supported  $s'\in
\mathcal{E}_{loc}(\mathcal{A}^{g,\kappa',\gamma}_{-\alpha,\mathcal{M}'};F)$
when $s\in \mathcal{E}_{loc}(\mathcal{A}^{g,\kappa,\gamma}_{\alpha,\mathcal{M}})$\,.
\item[{\red v)}] When $\supp s\subset \pi_{X}^{-1}(U)$\,, where $U$ is a chart open
  set on $Q$ and $s=s_{I}^{J}(q,p)e^{I}\hat{e}_{J}$ on which
  $\fkf\big|_{U}\simeq U\times \cz^{d_{\fkf}}$\,, $s$ belongs to
  $\mathcal{E}_{loc}(\mathcal{A}^{g,\kappa,\gamma}_{\alpha,\mathcal{M}},F)$
  iff 
\red
$$
s_{I}^{J} (q,\kappa(q)^{-1}p)\quad\text{and}\quad \left(\alpha
g^{ij}(q)p_{j}e_{i}-\frac{g_{ij}(q)\partial_{p_{i}}\partial_{p_{j}}}{2}\right)[s_{I}^{J}(q,\kappa(q)^{-1}p)]
$$
belongs to $L^{2}_{loc}(\pi_{X}^{-1}(U);\cz^{d_{\fkf}})$ for all
$I,J\subset \left\{1,\ldots,d\right\}$
\end{description}
\end{lemma}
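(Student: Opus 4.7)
The overall strategy is to reduce every item to Lebeau's maximal subelliptic estimates for the reference GKFP operator $\mathcal{A}_{\alpha,0}$ and to the relatively bounded perturbation $\mathcal{A}^{g,\mathrm{Id},\gamma}_{\alpha,\frac{|p|_q^2}{2}}$ handled in Lemma~\ref{le:pertGKFP}. The algebraic backbone is that conjugation by the piecewise-smooth and continuous automorphism $\kappa_\chi$ of Lemma~\ref{le:kappachi} transforms an LGKFP of type $(\kappa,\gamma,\mathcal{M})$ into one of type $(\mathrm{Id},\tilde\gamma,\tilde{\mathcal{M}})$, with $\tilde\gamma = \kappa^{-1}\gamma\,{}^{t}\kappa^{-1}$ still close to $g$ in $L^\infty$ (by the smallness of $\|\kappa-\mathrm{Id}\|_{L^\infty}$ and $\|\gamma-g\|_{L^\infty}$) and $\tilde{\mathcal{M}}$ still a locally admissible perturbation. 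This is a local calculation in a chart: $\kappa_\chi^{-1}p_j\kappa_\chi=(\kappa^{-1})^\ell_j p_\ell$ and $\kappa_\chi^{-1}\partial_{p_k}\kappa_\chi=(\kappa^{-1})^k_\ell\partial_{p_\ell}$, so the horizontal symbol $\alpha(g^{-1}\kappa)^{ij}(q)p_j$ becomes $\alpha g^{i\ell}(q)p_\ell$ after conjugation; the cross terms produced when $\partial_{q^i}$ hits $\kappa^{-1}$ are bounded first-order vertical corrections (the Lipschitz character of $\kappa$ is used here) absorbed in $\tilde{\mathcal{M}}$; and $\kappa_\chi^{-1}\Delta_p^\gamma\kappa_\chi$ becomes $-\tilde\gamma_{ij}\partial_{p_i}\partial_{p_j}$ by the chain rule. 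By Lemma~\ref{le:kappachi}, $\kappa_\chi$ is an automorphism of $W^{\mu,2}_{loc}(X;F)$ for $|\mu|\leq 1$, of $\mathcal{E}_{loc}(\Delta_p^g;F)$, and of the piecewise-smooth continuous class of sections.

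For \textbf{(i)}, after the reduction to $\kappa=\mathrm{Id}$, I localize with $\chi\in\mathcal{C}^\infty_0(X;\mathbb{R})$: the commutator $[\mathcal{A}^{g,\mathrm{Id},\tilde\gamma}_{\alpha,\tilde{\mathcal{M}}},\chi]$ is a first-order operator with smooth compactly supported coefficients, so $(C_{\alpha,g}+\mathcal{A}^{g,\mathrm{Id},\tilde\gamma}_{\alpha,\tilde{\mathcal{M}}})(\chi\kappa_\chi^{-1}s)\in L^2(X;F)$. Lemma~\ref{le:pertGKFP} then gives $\chi\kappa_\chi^{-1}s\in D(\mathcal{A}^{g,\mathrm{Id},g}_{\alpha,\frac{|p|^2}{2}})\subset\mathcal{W}^{2/3}(X;F)$ together with $\mathcal{O}(\chi\kappa_\chi^{-1}s)\in L^2(X;F)$, hence $\Delta_p^g(\chi\kappa_\chi^{-1}s)\in L^2$. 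Applying $\kappa_\chi$ and invoking Lemma~\ref{le:kappachi} yields $\chi s\in W^{2/3,2}_{loc}(X;F)\cap\mathcal{E}_{loc}(\Delta_p^g;F)$; exhausting $X$ by such $\chi$ proves the global local inclusion, and the classical trace theorem on a hypersurface for $W^{2/3,2}_{loc}$ (using $2/3>1/2$) delivers $s|_{X'}\in W^{1/6,2}_{loc}(X';F)\subset L^2_{loc}(X';F)$.

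For \textbf{(ii)}, when $\chi\in\mathcal{C}^\infty_0(\mathbb{R};\mathbb{R})$ the cutoff $\chi(\mathfrak{h})$ confines $\chi(\mathfrak{h})\kappa_\chi^{-1}s$ to a set where $|p|_q$ is bounded, so the zero-order term $\frac{|p|_q^2}{2}$ of Lemma~\ref{le:pertGKFP}, the commutator $[\mathcal{A}^{g,\mathrm{Id},g}_{\alpha,0},\chi(\mathfrak{h})]$ (a first-order operator with $L^\infty$-coefficients on $\supp\chi(\mathfrak{h})$), and the admissible perturbations combine to give $(1+\mathcal{A}^{g,\mathrm{Id},g}_{\alpha,\frac{|p|^2}{2}})[\chi(\mathfrak{h})\kappa_\chi^{-1}s]\in L^2(X;F)$; hence $\chi(\mathfrak{h})\kappa_\chi^{-1}s\in D(\mathcal{A}^{g,\mathrm{Id},g}_{\alpha,\frac{|p|^2}{2}})$. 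Conversely, gathering these local memberships for a partition of unity in $\mathfrak{h}$ recovers $\mathcal{A}^{g,\kappa,\gamma}_{\alpha,\mathcal{M}}s\in L^2_{loc}$. Density of $\mathcal{C}^\infty_0(\overline{X}_-;F)\cap\mathcal{C}^\infty_0(\overline{X}_+;F)\cap\mathcal{C}^0(X;F)$ in $\mathcal{E}_{loc}$ then uses that $\mathcal{C}^\infty_0(X;F)$ is a core for $\mathcal{A}^{g,\mathrm{Id},g}_{\alpha,\frac{|p|^2}{2}}$ (inherited from Lemma~\ref{le:pertGKFP}): approximating $\chi(\mathfrak{h})\kappa_\chi^{-1}s$ by smooth compactly supported sections and pushing forward by $\kappa_\chi$ produces approximants in exactly the piecewise-smooth continuous class, because $\kappa_\chi$ itself belongs to this class by Lemma~\ref{le:kappachi}; a diagonal argument then removes the cutoff.

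Item \textbf{(iii)} follows because the full maximal hypoellipticity inherent to Lemma~\ref{le:pertGKFP} controls $\|\nabla^F_{Y_{\fkh}}(\chi\kappa_\chi^{-1}s)\|_{L^2}$ by $\|(C+\mathcal{A}^{g,\mathrm{Id},\tilde\gamma}_{\alpha,\tilde{\mathcal{M}}})(\chi\kappa_\chi^{-1}s)\|_{L^2}$, so the extra term $(\alpha-\alpha_1)(g^{-1}\kappa)^{ij}(q)p_j\nabla^F_{e_i}$ distinguishing $\mathcal{A}^{g,\kappa,\gamma}_{\alpha,\mathcal{M}}$ from $\mathcal{A}^{g,\kappa,\gamma}_{\alpha_1,\mathcal{M}}$ is harmless for membership in $\mathcal{E}_{loc}$. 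Item \textbf{(iv)} is the standard distributional characterization: if $s\in\mathcal{E}_{loc}$, integration by parts against $s'$ (legitimized by the density in \textbf{(ii)}) yields the bound and the duality identity; conversely, the bound identifies $\mathcal{A}^{g,\kappa,\gamma}_{\alpha,\mathcal{M}}s\in L^2_{loc}$ as the Riesz representative of the continuous antilinear form on $L^2_K$, and the identity extends to compactly supported elements of $\mathcal{E}_{loc}(\mathcal{A}^{g,\kappa',\gamma}_{-\alpha,\mathcal{M}'};F)$ by continuity. Item \textbf{(v)} is the chart version of \textbf{(i)}: once $\fkf|_U\simeq U\times\mathbb{C}^{d_\fkf}$, the connection coefficients produce $L^\infty_{loc}$ terms of the form \eqref{eq:LGKFPtriv} absorbed into an admissible $\mathcal{M}'$, and the stated scalar conditions on $s_I^J(q,\kappa(q)^{-1}p)$ follow from \textbf{(i)} applied componentwise. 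The main technical point throughout is the density in \textbf{(ii)}: checking that smooth compactly supported approximants in $D(\mathcal{A}^{g,\mathrm{Id},g}_{\alpha,\frac{|p|^2}{2}})$ can be arranged so that their pushforwards by $\kappa_\chi$ retain the required matching continuity across $X'$; all other items reduce, once this is settled, to standard perturbation and interpolation arguments combined with the explicit conjugation identities above.
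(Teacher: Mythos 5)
Your proposal follows essentially the same route as the paper: reduce to the case $\kappa=\mathrm{Id}$ via the conjugation by $\kappa_\chi$ (established chart by chart), cut off with a compactly supported function, apply the perturbed-GKFP domain characterization of Lemma~\ref{le:pertGKFP} to import Lebeau's maximal hypoellipticity, and then derive items \textbf{(ii)}--\textbf{(iv)} from the resulting identification $\mathcal{E}_{loc}(\mathcal{A}^{g,\kappa,\gamma}_{\alpha,\mathcal{M}};F)=\kappa_\chi\mathcal{E}_{loc}(\mathcal{A}^{g,\mathrm{Id},g}_{\alpha,0};F)$. A minor point of precision: the cross terms produced by $\partial_{q^i}$ hitting $\kappa^{-1}(q)$ are linear in $p$ rather than bounded (they are $L^\infty_{loc}$, which is all Definition~\ref{de:LGKFP} requires), and the $\alpha$-independence in \textbf{(iii)} rests on the Lebeau estimate \eqref{eq:maxhyp} with $\mu=0$ (controlling $\nabla^F_{Y_\fkh}$) rather than on Lemma~\ref{le:pertGKFP} directly; neither affects the correctness of the argument.
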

\begin{proof}
\red
The last statement \textbf{v)} is a technical point which will not be used
afterwards. We will first prove \textbf{i)} when $\kappa=\mathrm{Id}$ then
\textbf{v)} and only after the full result for~\textbf{i)}\,. The other
statements \textbf{ii) iii) iv)} will follow.\\
\noindent\textbf{i) when $\kappa=\mathrm{Id}$:}
When $s\in
\mathcal{E}_{loc}(\mathcal{A}^{g,\mathrm{Id},\gamma}_{\alpha,\mathcal{M}};F)$
and $\chi\in \mathcal{C}^{\infty}_{0}(\rz;\rz)$\,, the section
$\chi(\fkh)s$ satisfies
\begin{equation}
  \label{eq:btA}
\left[
C_{\alpha,g}+\mathcal{A}^{g,\mathrm{Id},\gamma}_{\alpha,\frac{|p|^{2}}{2}}\right]\chi(\fkh)s
=\chi(\fkh)\mathcal{A}^{g,\mathrm{Id},\gamma}_{\alpha,\mathcal{M}}s
+\left(C_{\alpha,g}+\frac{|p|^{2}}{2}-\mathcal{M}\right)\chi(\fkh)s
+\left[\mathcal{M}+\frac{\Delta_{p}^{\gamma}}{2}\,, \chi(\fkh)\right]s
\end{equation}
where our assumptions ensure that the right-hand side belong to
$(1+\mathcal{O})^{1/2}L^{2}(X;F)$\,. But Lemma~\ref{le:pertGKFP}
tells us that for $\delta_{R,\alpha}>0$ small enough and
$C_{\alpha,g}$ large enough, $\chi(\fkh)s$ belongs to
$(1+\mathcal{O})^{1/2}L^{2}(X;F)$\,. This holds for any $\chi\in
\mathcal{C}^{\infty}_{0}(\rz;\rz)$ and therefore $s$ and
$\nabla_{\frac{\partial}{\partial p_{j}}}^{F}s$ belong to
$L^{2}_{loc}(X;F)$\,.
This has two consequences:
\begin{itemize}
\item Equation \eqref{eq:btA} implies that $\chi(\fkh)s$ belongs to
  $D(\mathcal{A}^{g,\mathrm{Id},\gamma}_{\alpha,\frac{|p|^{2}}{2}})=D(\mathcal{A}^{g,\mathrm{Id},g}_{\alpha,\frac{|p|^{2}}{2}})$
  for $\delta_{\alpha,R}\leq \delta_{\alpha,g}$\,, according to
  Lemma~\ref{le:pertGKFP}. With   
$[\mathcal{A}^{g,\mathrm{Id},g},\chi(\fkh)]s=-\left[\frac{\Delta_{p}^{g}}{2}\,,
 \, \chi(\fkh)\right]s\in L^{2}(X;F)$\,, this implies $s\in
\mathcal{E}_{loc}(\mathcal{A}^{g,\mathrm{Id},g}_{\alpha,\frac{|p|^{2}}{2}};F)$\,. Thus
$\mathcal{E}_{loc}(\mathcal{A}^{g,\mathrm{Id},\gamma}_{\alpha,\mathcal{M}};F)\subset
\mathcal{E}_{loc}(\mathcal{A}^{g,\mathrm{Id},g}_{\alpha,\frac{|p|^{2}}{2}})$
and the reverse inclusion is due to the maximal hypoellipticity of $\mathcal{A}^{g,\mathrm{Id},g}_{\alpha,\frac{|p|^{2}}{2}}$\,.
\item With the maximal hypoellipticity result stated in
  Lemma~\ref{le:pertGKFP} for
  $\mathcal{A}^{g,\mathrm{Id},\gamma}_{\alpha,\frac{|p|^{2}}{2}}$ when
  $\delta_{\alpha,R}\leq \delta_{\alpha,g}$, \eqref{eq:btA} implies
  $\chi(\fkh)s\in W^{2/3,2}(X;F)$ and $\Delta_{p}^{g}\chi(\fkh)s\in
  L^{2}(X;F)$\,. This proves
  $\mathcal{E}_{loc}(\mathcal{A}^{g,\mathrm{Id},\gamma}_{\alpha,\mathcal{M}};F)\subset
  W^{2/3,2}_{loc}(X;F)\cap \mathcal{E}_{loc}(\Delta_{p}^{g},F)$\,.
\end{itemize}
\noindent\textbf{v)} Assume $s\in
\mathcal{E}_{loc}(\mathcal{A}^{g,\kappa,\gamma}_{\alpha,\mathcal{M}})$
with $\supp s\subset \pi_{X}^{-1}(U)$\,. We assume $\fkf\big|_{U}=U\times
\cz^{d_{f}}$ with the trivial connexion, which is not restrictive
because $\nabla^{\fkf}$ is flat, and we write
$s=s_{I}^{J}(q,p)e^{I}\hat{e}_{J}$ with $s_{I}^{J}(q,p)\in
L^{2}_{loc}(\pi_{X}^{-1}(U);\cz^{d_{f}})$\,. According to the
discussion around \eqref{eq:LGKFPtriv} we can replace the connexion
$\nabla^{F}$ by a trivial connexion simply by taking
$\tilde{\Gamma}_{ij}^{k}(q)=0$\,. The condition $s\in
\mathcal{E}_{loc}(\mathcal{A}^{g,\kappa,\gamma}_{\alpha,\mathcal{M}})$
simply means 
$$
[\alpha
(g\kappa)^{ij}(q)p_{j}e_{i}-\frac{\Delta_{p}^{\gamma}}{2}]s_{I}^{J}+(\mathcal{M}s)_{I}^{J}\in L^{2}_{loc}(\pi_{X}^{-1}(U);\cz^{d_{f}})\,.
$$
The change of variable $(q,p)\mapsto (q,\kappa^{-1}(q)p)$ gives
$dQ=dq$\,, $dP=(d\kappa^{-1}(q))dq p +dp$  and
$$
\frac{\partial}{\partial Q^{i}}=\frac{\partial}{\partial q^{i}}+
A_{ij}^{k}(q)p_{k}\frac{\partial}{\partial p_{j}}\quad,\quad
\frac{\partial}{\partial
  P}={}^{t}\kappa(q)^{-1}\frac{\partial}{\partial p}\,.
$$
By recalling $e_{i}=\frac{\partial}{\partial
  q^{i}}+\Gamma_{ij}^{k}(q)p_{k}\frac{\partial}{\partial p_{j}}$\,,  $\tilde{s}=s_{I}^{J}(q,\kappa^{-1}(q)p)e^{I}\hat{e}_{J}$
belongs to
$\mathcal{E}_{loc}(\mathcal{A}^{g,\mathrm{Id},\gamma'}_{\alpha,\mathcal{M}'};F\big|_{\pi_{X}^{-1}(U)})$
with $\supp \tilde{s}\subset \pi_{X}^{-1}(U)$ where
$\gamma'=\kappa(q)^{-1}\gamma {}^{t}\kappa(q)^{-1}$ satisfies
$$
\|\gamma'\|_{W^{1,\infty}}\leq (1+R)^{2}R\quad,\quad
\|\gamma'-g\|_{L^{\infty}}\leq 3)(1+R)^{2}\delta_{\alpha,R}\,.
$$
Taking $\delta_{\alpha,R}>0$ small enough such that
$3(1+R)^{2}\delta_{\alpha,R}\leq \delta_{\alpha,g}$ and the analysis of
\textbf{i)} with $\kappa=\mathrm{Id}$ implies that $\tilde{s}\in
\mathcal{E}_{loc}(\mathcal{A}^{g,\mathrm{Id},g}_{\alpha,0};F\big|_{\pi_{X}^{-1}(U)})$\,. But
this is equivalent to 
$$
\tilde{s}_{I}^{J}\quad\text{and}\quad [\alpha
g^{ij}(q)p_{j}e_{i}-\frac{\Delta_{p}^{g}}{2}]\tilde{s}_{I}^{J}\in L^{2}_{loc}(\pi_{X}^{-1};\cz^{d_{f}})
$$
for all $I,J\subset\left\{1,\ldots,d\right\}$\,, with
$\tilde{s}_{I}^{J}(q,p)=s_{I}^{J}(q,\kappa(q)^{-1}p)$\,. The reverse
way comes from the maximal hypoellipticity of $\mathcal{A}^{g,\mathrm{Id},g}_{\alpha,\frac{|p|^{2}}{2}}$\,.\\
\noindent\textbf{End of i)} Applying \textbf{v)} to all $\chi_{j}(q)s$
which is supported in a chart open set,  with Lemma~\ref{le:ElocAloc},
says that $s\in
\mathcal{E}_{loc}(\mathcal{A}^{g,\kappa,\gamma}_{\alpha,\mathcal{M}};F)$
if and only if $\kappa_{\chi}^{-1}s\in
\mathcal{E}_{loc}(\mathcal{A}^{g,\mathrm{Id},g}_{\alpha,0};F)$\,. This
proves
$\mathcal{E}_{loc}(\mathcal{A}^{g,\kappa,\gamma}_{\alpha,\mathcal{M}};F)=\kappa_{\chi}\mathcal{E}_{loc}(\mathcal{A}^{g,\mathrm{Id},g}_{\alpha,0};F)$
and the embedding in $W^{2/3,2}_{loc}(X;F)\cap
\mathcal{E}_{loc}(\eta_{p}^{g};F)$ is a consequence of Lemma~\ref{le:kappachi}.

\noindent\textbf{ii)} When $\kappa=\mathrm{Id}$\,, the commutation
$[\mathcal{A}^{g,\mathrm{Id},g}_{\alpha,\frac{|p|^{2}}{2}},\chi(\fkh)]=\left[-\frac{\Delta_{p}^{g}}{2},\chi(\fkh)\right]$
with
$\mathcal{E}_{loc}(\mathcal{A}^{g,\mathrm{Id},\gamma}_{\alpha,\mathcal{M}})=\mathcal{E}_{loc}(\mathcal{A}^{g,\mathrm{Id},g}_{\alpha,\frac{|p|^{2}}{2}};F)\subset
\mathcal{E}_{loc}(\Delta_{p}^{g};F)$ implies that $s\in
\mathcal{E}_{loc}(\mathcal{A}^{g,\mathrm{Id},\gamma}_{\alpha,\mathcal{M}};F)$
if an only if $\chi(\fkh)s\in L^{2}(X;F)$ and
$\mathcal{A}^{g,\mathrm{Id},g}_{\alpha,\frac{|p|^{2}}{2}}\in
L^{2}(X,F)$\,, which is $\chi(\fkh)s\in
D(\mathcal{A}^{g,\mathrm{Id},g}_{\alpha,\frac{|p|^{2}}{2}})$\,. It is
clear that the topology of
$\mathcal{E}_{loc}(\mathcal{A}^{g,\mathrm{Id},\gamma}_{\alpha,\mathcal{M}})$
is equivalently given by the family of seminorms
$\tilde{p}_{\chi}(s)=\|\chi(\fkh)s\|_{L^{2}}+\|\mathcal{A}^{g,\mathrm{Id},g}_{\alpha,\frac{|p|^{2}}{2}}\chi(\fkh)s\|_{L^{2}}$
and the sequence $s_{n}=\chi(\frac{\fkh}{n+1})s$ converges to $s\in
\mathcal{E}_{loc}(\mathcal{A}^{g,\mathrm{Id},\gamma}_{\alpha,\mathcal{M}})$
for this topology. Once the problem is reduced to a compactly
supported $s\in
\mathcal{E}_{loc}(\mathcal{A}^{g,\mathrm{Id},\gamma}_{\alpha,\mathcal{M}})$
we use the properties of the GKFP operator  $\mathcal{A}^{g,\mathrm{Id},g}_{\alpha,\frac{|p|^{2}}{2}}$\,.
We know in this case that $\mathcal{C}^{\infty}_{0}(X;F)$ is dense in
$D(\mathcal{A})^{g,\mathrm{Id},g}_{\alpha,\frac{|p|^{2}}{2}}$\,. Because
$$
\mathcal{C}^{\infty}_{0}(X;F)\subset
\mathcal{C}^{\infty}_{0}(\overline{X}_{-};F)\cap
\mathcal{C}^{\infty}_{0}(X;F)\cap \mathcal{C}^{0}(X;F)\subset
D(\mathcal{A}^{g,\mathrm{Id},g}_{\alpha,\frac{|p|^{2}}{2}})
$$
where the last embedding is a simple application of the jump formula
for  the first order derivative  $p_{1}\partial_{q^{1}}$ transverse to
$X'$\,. The two results hold for $\kappa=\mathrm{Id}$\,. For a general
$\kappa$ we use
$\mathcal{E}_{loc}(\mathcal{A}^{g,\kappa,\gamma}_{\alpha,\mathcal{M}})=\kappa_{\chi}\mathcal{E}_{loc}(\mathcal{A}^{g,\mathrm{Id},g}_{\alpha,\frac{|p|^{2}}{2}})$
and the fact that $\kappa_{\chi}$ is a isomorphim of
$\mathcal{C}^{\infty}_{0}(\overline{X}_{-};F)\cap
\mathcal{C}^{\infty}_{0}(\overline{X}_{+};F)\cap \mathcal{C}^{0}(X;F)$\,.\\ 
\noindent\textbf{iii)} A section $s\in
\mathcal{E}_{loc}(\mathcal{A}^{g,\kappa,\gamma}_{\alpha,\mathcal{M}},
F)$ iff $\chi(\fkh)\kappa_{\chi}s\in
D(\mathcal{A}^{g,\mathrm{Id},g}_{\alpha,\frac{|p|^{2}}{2}})$ when
$\delta_{\alpha,R}>0$ is chosen small enough. But
the maximal subelliptic estimate \eqref{eq:maxhyp} with $\mu=0$ ensures
$D(\mathcal{A}^{g,\mathrm{Id},g}_{\alpha,\frac{|p|^{2}}{2}})=D(\mathcal{A}^{g,\mathrm{Id},g}_{\alpha_{1},\frac{|p|^{2}}{2}})$
for any $\alpha,\alpha_{1}\in \rz^{*}$\,.

\noindent\textbf{iv)} As a differential operator
$\mathcal{A}^{g,\kappa,\gamma}_{\alpha,\mathcal{M}}$ is the formal adjoint of the LGKFP
operator  $\mathcal{A}^{g,\kappa',\gamma}_{-\alpha,\mathcal{M}'}$\,. So for $s\in
L^{2}_{loc}(X;F)$ the condition of \textbf{iv)} with test functions
$s'\in \mathcal{C}^{\infty}_{0}(X;F)$ 
is nothing but the weak formulation of $\mathcal{A}^{g,\kappa,\gamma}_{\alpha,\mathcal{M}}s\in
L^{2}_{loc}(X;F)$\,. Therefore the condition of \textbf{iv)} with
$s'\in \mathcal{C}^{\infty}_{0}(X;F)$ is equivalent to 
$s\in
\mathcal{E}_{loc}(\mathcal{A}^{g,\kappa,\gamma}_{\alpha,\mathcal{M}})$\,.
By assuming
$$
\left|\langle
  \mathcal{A}^{g,\kappa',\gamma}_{-\alpha,\mathcal{M}'}s'\,,\,s\rangle\right|\leq C_{K}\|s'\|_{L^{2}}
$$
for all $s'\in \mathcal{C}^{\infty}_{0}(X;F)$ with $\supp s'\subset
K$\,, the question is whether it holds for all $s'\in
\mathcal{C}^{\infty}_{0}(\overline{X}_{-};F)\cap
\mathcal{C}^{\infty}_{0}(\overline{X}_{+};F)\cap
\mathcal{C}^{0}(X;F)=:D$\,. We know $s\in
\mathcal{E}_{loc}(\mathcal{A}^{g,\kappa,\gamma'}_{\alpha,\mathcal{M}})\subset
\mathcal{E}_{loc}(\Delta_{p}^{g},F)$ while $D\subset
W^{1,2}_{comp}(X;F)$\,. Because
$\mathcal{A}^{g,\kappa',\gamma}_{-\alpha,\mathcal{M}'}$ contains only
first order derivatives in the variable $q$\,, any sequence $s_{n}'\in
\mathcal{C}^{\infty}_{0}(X;F)$ converging to $s'\in W_{comp}^{1,2}(X;F)$ with
a fixed compact support $K'\supset K$ will satisfy
$$
\lim_{n\to \infty}\langle
\mathcal{A}^{g,\kappa',\gamma}_{-\alpha,\mathcal{M}'}s_{n}'\,,\,s\rangle=\langle
\mathcal{A}^{g,\kappa',\gamma}_{-\alpha,\mathcal{M}'}s'\,,\,s\rangle
$$
By taking the limit in $\left|\langle
  \mathcal{A}^{g,\kappa',\gamma}_{-\alpha,\mathcal{M}'}s'_{n}\,,\,s\rangle\right|\leq
C_{K'}\|s'_{n}\|_{L^{2}}$\,, this proves
$$
\forall s'\in D\,, \supp s'\subset K\,, \quad |\langle
\mathcal{A}^{g,\kappa',\gamma'}_{-\alpha,\mathcal{M}'}s'\,,s\rangle|\leq C_{K'}\|s'\|_{L^{2}}\,.
$$
The last equality for a compactly supported $s'\in
\mathcal{E}_{loc}(\mathcal{A}^{g,\kappa',\gamma}_{-\alpha,\mathcal{M}'})$
is a consequence of \textbf{ii)}\,.
\end{proof}
\begin{remark}
  Actually we could have used in the proof the local maximal hypoellipticity
  of the scalar operator
  $g^{ij}(q)p_{i}\partial_{q^{j}}-\frac{\Delta_{p}}{2}$ instead of Lebeau's
  global result, which is actually derived from this local result via
  the dyadic partition of unity in $p$\,. Our writing is more
  straightorward {\red for our purpose. We had however to use
a reduction to  the scalar case via Lemma~\ref{le:ElocBX}-\textbf{v)}.}
\end{remark}
The previous result is concerned with the case without boundary with the smooth
vector bundle $\pi_{F}:F\to X$\,. It relies on the control
of terms which contain $\partial_{p_{j}}$-derivatives by the main part. We used the maximal
hypoellipticity because the $\mathcal{W}^{2/3}_{loc}(X;E)$-regularity
will be required later. It could have been done by using intregration
by parts with $2\langle u\,,\, (C+\mathcal{O}) u\rangle\geq
\sum_{j=1}^{d}\|\partial_{p_{j}}u\|_{L^{2}}^{2}+\|p_{j}u\|_{L^{2}}^{2}$\,. None of those
techniques are relevant for the case $\Omega=X_{\mp}$ or
($\Omega=X$ and $F$ replaced by $\hat{F}_{g}$) as long as boundary or
interface conditions  are not specified. \\
On one side there is a natural way to define the
$\mathcal{E}_{loc}(\mathcal{A}^{\hat{g},\mathrm{Id},\hat{g}}_{\alpha,\mathcal{M}}, \hat{F}_{g})$ by using the
isometry $\hat{\Psi}^{g,g_{0}}$ of diagram \eqref{eq:psiXgg} and this
is where the perturbative terms $\kappa-\mathrm{Id}$ and $\gamma-g$
enter in the game.  On the other side it is possible to write a trace
theorem for elements of
$\mathcal{E}_{loc}(\mathcal{A}^{g,\mathrm{Id},g}_{\alpha,\mathcal{M}}F\big|_{\overline{X}_{\mp}})\cap
\mathcal{E}_{loc}(\nabla^{F}_{\frac{\partial}{\partial p_{1}}},
F\big|_{\overline{X}_{\mp}})$ by following the approach presented in
\cite{Nie}. We check below that the two different approaches are
actually coherent and that the additional required regularity for
$\nabla^{F}_{\frac{\partial}{\partial p_{1}}}$ when $\Omega=X_{\mp}$
is actually provided by the symmetrization technique.
\begin{lemma}
\label{le:ElocBXpm}
Consider the case $\Omega=X_{\mp}$\,, where all the data for the
vector bundle $F\big|_{\overline{\Omega}}$ are
$\mathcal{C}^{\infty}(\overline{\Omega})$\,. Let $\mathcal{A}^{g,\mathrm{Id},g}_{\alpha,\mathcal{M}}$ be a LGKFP
operator with $\kappa=\mathrm{Id}$\,, $\gamma=g$ and $\alpha\in
\rz^{*}$
 and let $\mathcal{E}_{loc}(A^{g,\mathrm{Id},g}_{\alpha,\mathcal{M}}, F\big|_{\overline{\Omega}})$ be
given as in Definition~\ref{de:ElocB}. We assume additionally that the
coefficients $\mathcal{M}_{j},\mathcal{M}_{0}$ of
$\mathcal{M}=\mathcal{M}_{j}(q,p)\nabla^{F}_{\frac{\partial}{\partial
  p_{j}}}+\mathcal{M}_{0}(q,p)$ belong to
$\mathcal{C}^{\infty}(\overline{\Omega}; L(F,F))$\,.\\ 
Any element of $\mathcal{E}_{loc}(\mathcal{A}^{g,\mathrm{Id},g}_{\alpha,\mathcal{M}},F\big|_{\overline{\Omega}})\cap
\mathcal{E}_{loc}(\nabla^{F}_{\frac{\partial}{\partial
    p_{1}}},F\big|_{\overline{\Omega}})$ admits a trace along $X'=\partial
\Omega$\,.
More precisely for any $\chi\in \mathcal{C}^{\infty}_{0}(\rz;\rz)$ the
map 
$s\to \chi(\fkh)s\big|_{X'}$ is well defined an continuous from
$\mathcal{E}_{loc}(\mathcal{A}^{g,\mathrm{Id},g}_{\alpha,\mathcal{M}},F\big|_{\overline{\Omega}})\cap
\mathcal{E}_{loc}(\nabla^{F}_{\frac{\partial}{\partial p_{1}}},F\big|_{\overline{\Omega}})$
to $L^{2}(\rz,|p_{1}|dp_{1}; \red \mathcal{D}^{-2}_{T^{*}Q'})${\red where
$\mathcal{D}^{-2}_{T^{*}Q'}$ is a $W^{-2,2}$-space defined on $T^{*}Q'$\,}.\\
When   $\mathcal{A}^{g,\mathrm{Id},g}_{-\alpha,\mathcal{M}'}$ is the formal adjoint
of $\mathcal{A}^{g,\mathrm{Id},g}_{\alpha,\mathcal{M}}$ the integration by parts
$$
\langle s\,,\, \mathcal{A}^{g,\mathrm{Id},g}_{\alpha,\mathcal{M}}s'\rangle-\langle
\mathcal{A}^{g,\mathrm{Id},g}_{-\alpha,\mathcal{M}'}s\,,\, s'\rangle=\pm \int_{X'}\langle s\,,\,
s'\rangle_{g^{F}}~p_{1}|dp_{1}dq'dp'|\,.
$$
holds for all $s\in \mathcal{C}^{\infty}_{0}(\overline{X_{\mp}};F)$ and
$s'\in \mathcal{E}_{loc}(\mathcal{A}^{g,\mathrm{Id},g}_{\alpha,\mathcal{M}},F\big|_{\overline{X}_{\mp}})\cap
\mathcal{E}_{loc}(\nabla^{F}_{\frac{\partial}{\partial p_{1}}},
F\big|_{\overline{X}_{\mp}})$\,.
\end{lemma}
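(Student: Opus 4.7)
The argument is local near $X'$. Fix $\chi\in\mathcal{C}^{\infty}_{0}(\rz;\rz)$ and a partition of unity $\sum_{k}\chi_{k}(q)\equiv 1$ subordinate to collar charts in which $g^{TQ}=(dq^{1})^{2}+m^{TQ'}(q^{1},q')$, and set $u=\chi_{k}(q)\chi(\fkh)s$. The commutator $[\mathcal{A}^{g,\mathrm{Id},g}_{\alpha,\mathcal{M}},\chi_{k}(q)\chi(\fkh)]$ consists of zero-order multiplications and a first-order vertical term supported where $|p|\le C$ via $\chi'(\fkh)$. The latter expands as $\chi'(\fkh)g^{ij}p_{i}\nabla^{F}_{\hat{e}^{j}}$ which, by integration by parts against $\chi'(\fkh)p_{i}$, reduces to operators bounded on $L^{2}_{loc}$ plus a single term $\chi'(\fkh)\nabla^{F}_{\hat{e}^{1}}s$ controlled by the hypothesis. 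Hence $u$, $\mathcal{A}u$, and $\nabla^{F}_{\partial/\partial p_{1}}u$ all lie in $L^{2}_{comp}$, and we reduce to compactly supported sections on $(-\varepsilon,0]_{q^{1}}\times\rz^{d-1}_{q'}\times\rz^{d}_{p}$.

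From $\mathcal{A}u\in L^{2}$ one then isolates the transverse transport term:
\[
\alpha p_{1}\,\partial_{q^{1}}u \;=\; \mathcal{A}u+\tfrac{1}{2}\Delta_{p}^{g}u-\alpha g^{i'j'}(q)p_{j'}\partial_{q^{i'}}u-\mathcal{B}u,
\]
with $\mathcal{B}$ collecting the Christoffel, curvature and $\mathcal{M}$-contributions (all of order $\le 1$). Viewing the right-hand side as a function of $q^{1}$ with values in a distribution space in $(q',p_{1},p')$, it belongs to $L^{2}((-\varepsilon,0]_{q^{1}};\mathcal{D}^{-2}_{T^{*}Q'})$ where $\mathcal{D}^{-2}_{T^{*}Q'}$ is a $W^{-2,2}$-type space on $T^{*}Q'$ accounting for the two $\partial_{p'}^{2}$ derivatives in $\Delta_{p}^{g}$ and the one tangential derivative $\partial_{q^{i'}}$; the remaining $\partial_{p_{1}}^{2}$ part of $\Delta_{p}^{g}$ is absorbed by the extra hypothesis $\nabla^{F}_{\partial/\partial p_{1}}u\in L^{2}$, which costs only one derivative in $p_{1}$. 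Thus $p_{1}\partial_{q^{1}}u$ lies in a space in which the trace at $q^{1}=0$ of the weighted quantity $p_{1}|u|^{2}$ is meaningful.

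The main obstacle is that the natural computation
\[
\int_{X'}p_{1}|u|^{2}\,dv_{X'}=2\Real\int\langle u,p_{1}\partial_{q^{1}}u\rangle\,dq\,dp
\]
requires interpreting $u$ in the antidual of $\mathcal{D}^{-2}_{T^{*}Q'}$, which is not granted by the hypotheses. I would therefore argue by duality: for smooth $\sigma\in\mathcal{C}^{\infty}_{0}(\overline{X}_{\mp};F)$, classical Stokes gives
\[
\langle\sigma,\mathcal{A}u\rangle-\langle\mathcal{A}^{*}\sigma,u\rangle=\pm\int_{X'}\langle\sigma,u\rangle_{g^{F}}\,p_{1}\,dv_{X'}
\]
when $u$ is smooth. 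The left-hand side is a sesquilinear form in $(\sigma,s)$ for $s\in\mathcal{E}_{loc}(\mathcal{A},F|_{\overline{X}_{\mp}})\cap\mathcal{E}_{loc}(\nabla^{F}_{\partial/\partial p_{1}},F|_{\overline{X}_{\mp}})$, continuous when the trace $\sigma|_{X'}$ is measured in the weighted norm $L^{2}(\rz_{p_{1}},|p_{1}|dp_{1};\mathcal{D}^{2}_{T^{*}Q'})$, with operator norm bounded by local $L^{2}$-norms of $s$, $\mathcal{A}s$ and $\nabla^{F}_{\partial/\partial p_{1}}s$. Riesz identification in the antidual $L^{2}(\rz_{p_{1}},|p_{1}|dp_{1};\mathcal{D}^{-2}_{T^{*}Q'})$ defines $\chi(\fkh)s|_{X'}$ with the stated continuity, and the final integration-by-parts formula is then the very definition, extended from smooth compactly supported $s'$ to the general $s'$ in the hypothesis by a convolution regularization in $q^{1}$ that preserves the $\overline{X}_{\mp}$-support.
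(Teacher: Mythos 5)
Your overall reduction — localizing in $q$ via a partition of unity, cutting off with $\chi(\fkh)$, and viewing $\alpha p_{1}\partial_{q^{1}}u$ as the transverse transport term to isolate — is the same framework the paper uses, which (after trivializing the connection and reducing to the scalar components $s_{I}^{J}$) arrives at the one-dimensional Kramers–Fokker–Planck problem in $(q^{1},p_{1})$ with values in $\fkF=W^{-2,2}(T^{*}Q';\cz^{d_{\fkf}})$. The paper then invokes the half-space trace theorem for this KFP model operator from \cite{Nie}-Chapter~2 (and \cite{Nie}-Proposition~2.10 for the integration by parts), which is where the weighted $L^{2}(|p_{1}|dp_{1};\fkF)$ regularity of the trace and the degeneracy at $p_{1}=0$ are dealt with.

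Your proposal has a genuine gap at precisely that point. The ``duality + Riesz'' step asserts that the left-hand side $\ell(\sigma)=\langle\sigma,\mathcal{A}s\rangle-\langle\mathcal{A}^{*}\sigma,s\rangle$ is a continuous functional of the weighted trace $\sigma|_{X'}\in L^{2}(\rz_{p_{1}},|p_{1}|dp_{1};\mathcal{D}^{2}_{T^{*}Q'})$. The obvious continuity is only with respect to interior norms of $\sigma$ (say an $H^{2}$-type norm that controls $\mathcal{A}^{*}\sigma$). To descend to a bound in the weighted trace norm you would need a continuous extension operator from $L^{2}(|p_{1}|dp_{1};\mathcal{D}^{2})$ to a space on which $\ell$ is controlled, and no such extension is available: the weight $|p_{1}|$ degenerates at $p_{1}=0$, so the weighted trace space admits functions with no $H^{2}$-extension near $p_{1}=0$. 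Establishing the sharp weighted trace is exactly the non-trivial content of the half-space analysis, and your argument begs the question by assuming the boundedness it is supposed to prove. The same objection applies to the final convolution-regularization step: a Friedrichs mollification in $q^{1}$ does not commute with the $q^{1}$-dependent coefficients of $\mathcal{A}$, and controlling the commutator uniformly up to the boundary while preserving the $|p_{1}|$-weighted bound on the boundary integral is again the content of the KFP half-space theorem, not a routine approximation. You also understate the vertical second-order term: $\Delta^{g}_{p}$ costs two $p_{1}$-derivatives, while the hypothesis $\nabla^{F}_{\partial/\partial p_{1}}s\in L^{2}_{loc}$ provides one; the paper resolves the balance via the harmonic-oscillator weight $(1+\mathcal{O}_{1})^{\pm 1/2}$, a device your proposal does not reproduce. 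To close the argument you should either reproduce the half-space transport estimate of \cite{Nie}-Chapter~2 (with the $(1+\mathcal{O}_{1})$-weighted spaces) or explicitly cite it as the black-box input, as the paper does.
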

\begin{proof}
  We focus on $\Omega=\overline{X}_{-}$\,, $\partial X_{-}=X'$ while
  the other case $\Omega=\overline{X}_{+}$ is symmetric.
With a partition of unity $\sum_{j=1}^{J}\chi_{j}(q)\equiv 1$ on
$\overline{Q}_{-}$\,, {\red Lemma~\ref{le:ElocAloc}
gives} the equivalence
$$
\left(s\in \mathcal{E}_{loc}(\mathcal{A}^{g,\mathrm{id},g}_{\alpha,\mathcal{M}},F\big|_{\overline{X}_{-}})\right)
\Leftrightarrow
\left(\forall j\in \left\{1,\ldots,J\right\}\,, \chi_{j}s\in \mathcal{E}_{loc}(\mathcal{A}^{g,\mathrm{Id},g}_{\alpha,\mathcal{M}},F\big|_{\overline{X}_{-}})\right)\,,
$$
while the same equivalence is obvious when $\mathcal{A}^{g,\mathrm{Id},g}_{\alpha,\mathcal{M}}$ is
replaced by $\nabla^{F}_{\frac{\partial}{\partial p_{1}}}$\,.\\
Since the existence of trace is a local problem, we may assume that
$s=s_{I}^{J}e^{I}\hat{e}_{J}\in
\mathcal{E}_{loc}(\mathcal{A}^{g,\mathrm{Id},g}_{\alpha,\mathcal{M}},F\big|_{\pi_{X}^{-1}(U)})$ is supported in
$\pi_{X}^{-1}(U)$\,, $U$ open chart set in $\overline{X}_{-}$
surrounding $q_{0}\in \partial X_{-}=X'$\,, and replace the connection $\nabla^{F}$ by a
connection $\tilde{\nabla}$ which is trivial in the frame
$(e^{i},\hat{e}_{j})$ in $\pi_{X}^{-1}(U)$ with $\fkf\big|_{U}\simeq
U\times\cz^{d_{\fkf}}$\,. We did not, and we actually cannot, get
rid of the term $\mathcal{M}$ and we obtain for all $I,J\subset\left\{1,\ldots,d\right\}$
$$
s_{I}^{J}\in L^{2}_{loc}(\pi_{X}^{-1}(U);\cz^{d_{\fkf}})\quad\text{and}\quad
\left(g^{ij}(q)p_{j}e_{i}-\frac{g_{ij}(q)\partial_{p_{i}}\partial_{p_{j}}}{2}\right)s_{I}^{J}\in W^{-1,2}_{loc}(\pi_{X}^{-1}(U);\cz^{d_{\fkf}})\,,
$$
{\red The local coordinates may be chosen so that 
$g^{TQ}=(dq^{1})^{2}\oplus^{\perp}m(q^{1})$\, while we recall:}
$$
e_{i}=\frac{\partial}{\partial
  q^{i}}-\Gamma_{ij}^{k}(q)p_{k}\frac{\partial}{\partial p_{j}}\,.
$$
Meanwhile the condition $s\in
\mathcal{E}_{loc}(\nabla^{F}_{\frac{\partial}{\partial p_{1}}},F)$
says
$$
\frac{\partial s_{I}^{J}}{\partial p_{1}}\in
L^{2}_{loc}(\pi_{X}^{-1}U;\cz^{d})\quad\text{and}\quad
(1+\mathcal{O}_{1})^{1/2}s_{I}^{J}\in L^{2}_{loc}(\pi_{X}^{-1}(U),F)\,,
$$
where $\mathcal{O}_{1}=\frac{-\Delta_{p_{1}}+p_{1}^{2}-1}{2}$ is the
vertical one dimensional harmonic hamiltonian in the variable $p_{1}$\,.\\
By introducing an arbitrary cut-off $\chi(\fkh)$\,
$\chi\in \mathcal{C}^{\infty}_{0}(\rz;\rz)$\,,  and setting
$\tilde{s}_{I}^{J}=\chi(\fkh)s_{I}^{J}$\,,  we end with the essentially
scalar problem
\begin{eqnarray}
\label{eq:ts1}
  && \hspace{-1.5cm}
\tilde{s}_{I}^{J}\in
     (1+\mathcal{O}_{1})^{-1/2}L^{2}(\pi_{X}^{-1}(U);\cz^{d})\subset
  (1+\mathcal{O}_{1})^{-1/2}L^{2}(\rz_{-}\times \rz,
     |dp_{1}dq^{1}|;\fkF)\,,\\
\label{eq:ts2}
&&\hspace{-1.5cm} (1+\mathcal{O}_{1})^{-1}\left(p_{1}\partial
   _{q^{1}}+\frac{-\Delta_{p_{1}}+p_{1}^{2}+1}{2}\right)\tilde{s}_{I}^{J}\in 
 (1+\mathcal{O}_{1})^{-1/2}L^{2}(\rz_{-}\times \rz,
   |dp_{1}dq^{1}|;\fkF)\,,\\
\label{eq:ts3}
\text{with}&& \fkF=W^{-{\red 2},2}(T^{*}Q';\cz^{d_{\fkf}})\,.
\end{eqnarray}
{\red
Since we work with a compactly supported $\tilde{s}$\,, any global definition
$W^{\mu,2}(T^{*}Q';\cz^{d_{\fkf}})$ can be chosen.}\\
This is exactly  the situation studied in \cite{Nie}-Chap~2) with the
for the $\fkF$-valued, $\fkF$ a Hilbert space, section on
$T^{*}\rz_{-}$\,  for which a trace at $q^{1}=0$ is defined as
$$
\tilde{s}_{I}^{J}\big|_{q^{1}=0}\in L^{2}(\rz, |p_{1}||dp_{1}|;\fkF)\,.
$$
We deduce
$$
\tilde{s}\big|_{q^{1}\red=0}\in L^{2}(\rz,|p_{1}||dp_{1}|;
\red \mathcal{D}^{-2}_{T^{*}Q'})$$
{\red which is  the trace result if we chose $\|\omega\|^{2}_{\mathcal{D}^{-2}_{T^{*}Q'}}=\sum_{j,I,J}|(\chi_{j}(q')\omega)_{I}^{J}|^{2}_{W^{-2,2}(T^{*}Q';\cz^{df})}$.}\\
The integration by part is the standard one for $s,s'\in
\mathcal{C}^{\infty}_{0}(\overline{X}_{-};F)$  {\red where the boundary term
comes from $p_{1}\partial_{q^{1}}$\,. For a 
general $s\in \mathcal{E}_{loc}(\mathcal{A}^{g,\mathrm{Id},g}_{\alpha,\mathcal{M}},F\big|_{\overline{\Omega}})\cap
\mathcal{E}_{loc}(\nabla^{F}_{\frac{\partial}{\partial
    p_{1}}},F\big|_{\overline{\Omega}})$\,,  we replace by $\tilde{s}=\chi(\fkh)s$
with $\tilde{\chi}\equiv 1$ in a neighborhood of $\supp s'$ and
$\tilde{s}$ satisfies \eqref{eq:ts1}\eqref{eq:ts2}\eqref{eq:ts2} while
$s'\in \mathcal{C}^{\infty}_{0}(\overline{X}_{-};F)$ implies 
${s'}_{I}^{J}\in \mathcal{S}((-\infty,0]\times
\rz_{p_{1}};W^{2,2}(T^{*}Q';\cz^{d_{\fkf}})$\,. It thus suffices to apply
\cite{Nie}-Proposition~2.10 while replacing the
$W^{-2,2}(T^{*}Q')$
scalar product by the $W^{2}-W^{-2}$ duality product.}
\end{proof} 
We aim at providing a good domain definition for  Bismut's
hypoelliptic Laplacian $\hat{B}^{\phi_{b}}_{\fkh}$ acting on sections
of the piecewise $\mathcal{C}^{\infty}$ and continuous vector bundle
$\hat{F}_{g}$ associated with the metric
$\hat{g}^{TQ}=1_{Q_{-}(q)}g_{-}^{TQ}+1_{Q_{+}}(q)g_{+}^{TQ}$\,.  This
means that as a differential operator $\hat{B}^{\phi_{b}}_{\fkh}$ is
defined like $B^{\phi_{b}}_{\fkh}$ on $X_{-}$ and $X_{+}$ with the
metric $g^{F}_{-}$ on $X_{-}$ and the metric $g_{+}^{F}$ on $X_{+}$
and accordingly the energy $\fkh$ replaced by
$\hat{\fkh}=\frac{\hat{g}^{ij}(q)p_{i}p_{j}}{2}$\,. The continuity of
sections of $\hat{F}_{g}$ is expressed in the frame
$(e,\hat{e})=1_{X_{\mp}}(e_{\mp},\hat{e}_{\mp})$ with the
identification $e_{+}^{i}\big|_{\partial
  X_{+}}=e_{-}^{i}\big|_{\partial X_{-}}$ and
$\hat{e}^{j}_{+}\big|\partial_{X_{+}}=\hat{e}^{j}_{-}\big|_{\partial X_{-}}$\,.
More
generally we may consider LGKFP operators $\mathcal{A}^{\hat{g},\mathrm{Id},\hat{g}}_{\alpha,\mathcal{M}}$defined on $X_{-}\cup X_{+}$
associated with the metric $\hat{g}^{TQ}$ with the suitable interface
condition along $X'=\partial X_{-}=\partial X_{+}$\,. As a
differential operator on $X_{-}\cup X_{+}$ it is written
$$
\mathcal{A}^{\hat{g},\mathrm{Id},\hat{g}}_{\alpha,\mathcal{M}}=\alpha\nabla_{Y_{\hat{\fkh}}}^{F,\hat{g}}-\frac{\hat{g}^{ij}(q)\partial_{p_{i}}\partial_{p_{j}}}{2}+\mathcal{M}\,,
$$
where the coefficients $\mathcal{M}_{j}$\,, $j\in
\left\{0,1,\ldots,d\right\}$ of the perturbation
$\mathcal{M}=\mathcal{M}_{j}\nabla^{F,\hat{g}}_{\frac{\partial}{\partial
  p_{j}}}+\mathcal{M}_{0}$ belong to
$\mathcal{C}^{\infty}(\overline{X}_{\mp};L(F))$ and therefore to 
$L^{\infty}_{loc}(X;L(\hat{F}_{g}))$\,.
With the
coordinates $(\tilde{q},\tilde{p})$ of Definition~\ref{de:tildeqp} the
energy $\hat{h}$ satisfies
$$
2\hat{h}=\tilde{p}_{1}^{2}+m^{i'j'}(0,\tilde{q}')\tilde{p}_{i'}\tilde{p}_{j'}\,,
$$ 
while formulas, $\tilde{p'}=\psi(q^{1},q')p'$\,, \eqref{eq:tv1}\eqref{eq:tvv}\eqref{eq:tvh} imply
\begin{eqnarray}
\label{eq:Yhtilde}
  &&
     Y_{\hat{\fkh}}=\tilde{p}_{1}e_{1}+m^{i'k'}(-|\tilde{q}^{1}|,\tilde{q}')(\psi^{-1}(\tilde{q}^{1},\tilde{q}'))_{k'}^{j'}\tilde{p}_{j'}e_{i'}\\
\label{eq:eitilde}
\text{with}&& e_{1}=\frac{\partial}{\partial\tilde{q}^{1}}\quad,\quad
              e_{i'}=\frac{\partial}{\partial
              \tilde{q}^{i'}}+M_{i'j}^{k}(\tilde{q})\tilde{p}_{k}
\frac{\partial}{\partial  \tilde{p}_{j}}\\
\label{eq:ejtilde}
&&\hat{e}^{1}=\frac{\partial}{\partial \tilde{p}_{1}}\quad,\quad
   \hat{e}^{j'}=\psi^{j'}_{k'}(\tilde{q})\frac{\partial}{\partial \tilde{p}_{k'}}
\end{eqnarray}
where the coefficients $M_{i'j}^{k}$\,, $\psi_{k'}^{j'}$ and $m^{i'j'}(-|\tilde{q}^{1}|,\tilde{q})=\hat{g}^{i'j'}(\tilde{q})$ are $\mathcal{C}^{\infty}$ on
$\overline{Q}_{\mp}$ with the additional property 
$$
\hat{g}^{ij}(0,\tilde{q}')=g^{ij}_{0}(0,\tilde{q}')\quad,\quad \psi^{k'}_{j'}(0,\tilde{q'})=\delta^{k'}_{j'}\,.
$$ 
By using the isometry
$\hat{\Psi}^{g,g_{0}}:F\to \hat{F}_{g}$ which induces an isomorphism
$(\hat{\Psi}^{g,g_{0}}_{X})_{*}:L^{2}(X_{(-\varepsilon,\varepsilon)};F\big|_{X_{(-\varepsilon,\varepsilon)}},\hat{g}^{F}_{0})\to
L^{2}(X_{-\varepsilon,\varepsilon};F\big|_{X_{(-\varepsilon,\varepsilon)}},\hat{g}^{F})$ according to
Proposition~\ref{pr:L2glob}-e)\,, we deduce that
$$
(\hat{\Psi}^{g,g_{0}}_{X})_{*}^{-1}\mathcal{A}^{\hat{g},\mathrm{Id},\hat{g}}_{\alpha,\mathcal{M}}(\hat{\Psi}^{g,g_{0}}_{X^{*}})_{*}=\mathcal{A}_{\alpha,\mathcal{M}'}^{g_{0},\kappa,\gamma}
$$
is a LGKFP operator for the metric
$g_{0}^{TQ}=(dq^{1})^{2}+m_{i'j'}(0,q')dq^{i'}dq^{j'}$\,, for which we
recall that $\hat{F}_{g_{0}}=F$ is a smooth vector bundle.
More precisely the perturbations $\kappa$ and $\gamma$ satisfy
\begin{itemize}
\item $\kappa\in
  \mathcal{C}^{\infty}(\overline{Q}_{\mp};L(T^{*}Q)\big|_{Q_{\mp}})$\,,
  $\gamma\in \mathcal{C}^{\infty}(\overline{Q}_{\mp};T^{*}Q\odot
  T^{*}Q\big|_{Q_{\mp}})$\,,
\item $\kappa\in \mathcal{C}^{0}(Q;L(T^{*}Q))$\,, $\gamma\in
  \mathcal{C}^{0}(Q;T^{*}Q\odot T^{*}Q)$\,,
\item $\kappa\big|_{Q'}=\mathrm{Id}$\,, $\gamma\big|_{Q'}=g_{0}\big|_{Q'}$\,.
\end{itemize}
For trace problems along $X'=\pi_{X}^{-1}(Q')$\,, 
restricting the analysis to $X_{(-\varepsilon,\varepsilon)}=\pi_{X}^{-1}(Q_{(-\varepsilon,\varepsilon)})$
with $\varepsilon>0$ small enough is possible by using a finite
partition of unity $\sum_{j=1}^{J}\chi_{j}(q)\equiv 1$
Lemma~\ref{le:ElocBX}-\textbf{v)}\,. But on
$X_{(-\varepsilon,\varepsilon)}$ those coefficients $\kappa$ and
$\gamma$ satisfy
$\|\kappa\|_{W^{1,\infty}}+\|\gamma\|_{W^{1,\infty}}\leq R_{g}$\,,
independent of $\varepsilon>0$ and
$$
\|\kappa-\mathrm{Id}\|_{L^{\infty}}+\|\gamma-g_{0}\|_{L^{\infty}}=\mathcal{O}(\varepsilon)\,.
$$ 
This leads to the following natural
definition, {\red after recalling the definition of $\kappa_{\chi}$ in
Lemma~\ref{le:kappachi} and the identification of Lemma~\ref{le:ElocBX}-\textbf{i)}.}
\begin{definition}
\label{de:tildeE}
Let $\alpha\in
\rz^{*}$ and let $\mathcal{A}^{\hat{g},\mathrm{Id},\hat{g}}_{\alpha,\mathcal{M}}$ a LGKFP operator for the
metric $\hat{g}$\,, the space
$\mathcal{E}_{loc}(\mathcal{A}^{\hat{g},\mathrm{Id},\hat{g}}_{\alpha,\mathcal{M}},\hat{F}_{g}\big|_{X_{(-\varepsilon,\varepsilon)}})$
equals 
$$
(\hat{\Psi}^{g,g_{0}}_{X})_{*}\mathcal{E}_{loc}(\mathcal{A}^{g_{0},\kappa,\gamma}_{\alpha,\mathcal{M}'},F\big|_{X_{(-\varepsilon,\varepsilon)}})
=
(\hat{\Psi}^{g,g_{0}}_{X})_{*}\mathcal{E}_{loc}{\red \kappa_{\chi}}(\mathcal{A}^{g_{0},\mathrm{Id},g_{0}}_{\alpha,0},F\big|_{X_{(-\varepsilon,\varepsilon)}})
$$
for $\varepsilon<\varepsilon_{g,\alpha}$ and
$\varepsilon_{g,\alpha}>0$ small enough.
\end{definition}

The following statement specifies the relationship between this
definition and the previous trace results.

\begin{proposition}
\label{pr:traceAhat}
Let $\alpha\in \rz^{*}$ and
let $\mathcal{A}^{\hat{g},\mathrm{Id},\hat{g}}_{\alpha,\mathcal{M}}$ be  a LGKFP operator for the
metric $\hat{g}$\,. A section $s\in L^{2}_{loc}(X;F)$ belongs to
$\mathcal{E}_{loc}(\mathcal{A}^{\hat{g},\mathrm{Id},\hat{g}}_{\alpha,\mathcal{M}},\hat{F}_{g}\big|_{X_{(-\varepsilon,\varepsilon)}})$
with $\varepsilon<\varepsilon_{g,\alpha}$\,,
$\varepsilon_{g,\alpha}>0$ small enough, iff
one of the following condition is satisfied:
\begin{description}
\item[i)] $(\hat{\Psi}^{g,g_{0}}_{X})_{*}^{-1}s\in
  {\red \kappa_{\chi}}\mathcal{E}_{loc}(\mathcal{A}^{g_{0},\mathrm{Id},g_{0}}_{\alpha,0},F\big|_{(-\varepsilon,\varepsilon)})$\,,
  which implies
  $s\in
  W^{2/3,2}_{loc}(X_{(-\varepsilon,\varepsilon)};\hat{F}_{g}\big|_{X_{(-\varepsilon,\varepsilon)}})$
  and $s\in \mathcal{E}_{loc}(\Delta_{p}^{\hat{g}};\hat{F}_{g}\big|_{X_{(-\varepsilon,\varepsilon)}})$\,;
\item[ii)] the restrictions $s_{\mp}=s\big|_{X_{\mp}}$ belong
  respectively  to
  $$
\mathcal{E}_{loc}(\mathcal{A}^{g_{\mp},\mathrm{Id},g_{\mp}}_{\alpha,\mathcal{M}},
  F\big|_{X_{\mp [0,\varepsilon)}})\cap
  \mathcal{E}_{loc}(\nabla^{g_{\mp}}_{\frac{\partial}{\partial
      p_{1}}}, F\big|_{\overline{X}_{\mp[0,\varepsilon)}})
$$
 and the traces
  $s_{-}\big|_{\partial X_{-}}$ and $s_{+}\big|_{\partial X_{+}}$ of
  Lemma~\ref{le:ElocBXpm} coincide after the identification
  $(e_{-},\hat{e}_{-})=(e_{+},\hat{e}_{+})$ along $X'=\partial
  X_{-}=\partial X_{+}$\,.
\item[iii)] If $\mathcal{A}^{\hat{g},\mathrm{Id},\hat{g}}_{-\alpha,\mathcal{M}'}$ denotes the formal
  adjoint of $\mathcal{A}^{\hat{g},\mathrm{Id},\hat{g}}_{\alpha,\mathcal{M}}$ (defined on the open
  set $X_{-}\cup X_{+}$)\,,  for all $s'\in \mathcal{C}_{0,g}(\hat{F}_{g}\big|_{X_{(-\varepsilon,\varepsilon)}})$\,, the equality 
$$
\langle s'\,,\, \mathcal{A}^{\hat{g},\mathrm{Id}\hat{g}}_{\alpha,\mathcal{M}}s\rangle=\langle
\mathcal{A}^{\hat{g};\mathrm{Id},\hat{g}}_{-\alpha,\mathcal{M}'}s'\,,\, s\rangle
$$
holds with for any compact set $K\subset X$\,, the existence of a
constant $C_{K}>0$ such that
$$
\forall s'\in \mathcal{C}_{0,g}(\hat{F}_{g}\big|_{X_{(-\varepsilon,\varepsilon)}})\,,\supp s'\subset K\,,\quad
|\langle \mathcal{A}^{\hat{g},\mathrm{Id},\hat{g}}_{-\alpha,\mathcal{M}'}s' \,,\, s\rangle|\leq C_{K}\|s'\|_{L^{2}}\,.
$$ 
\end{description}
\end{proposition}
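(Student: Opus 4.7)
The plan is to deduce the three characterizations from Lemma~\ref{le:ElocBX} and Lemma~\ref{le:ElocBXpm} by transporting everything to the smooth side via the piecewise $\mathcal{C}^{\infty}$ and continuous vector bundle isometry $(\widehat{\Psi}_{X}^{g,g_{0}})_{*}$ of Definition~\ref{de:hPsigg0}. First I would verify that the conjugate operator $(\widehat{\Psi}_{X}^{g,g_{0}})_{*}^{-1}\mathcal{A}^{\hat{g},\mathrm{Id},\hat{g}}_{\alpha,\mathcal{M}}(\widehat{\Psi}_{X}^{g,g_{0}})_{*}$ is indeed an LGKFP operator $\mathcal{A}^{g_{0},\kappa,\gamma}_{\alpha,\mathcal{M}'}$ for the smooth metric $g_{0}$ on $Q_{(-\varepsilon,\varepsilon)}$, with $\kappa\in \mathcal{C}^{\infty}(\overline{Q}_{\mp};L(T^{*}Q))\cap \mathcal{C}^{0}(Q;L(T^{*}Q))$, $\gamma$ analogous, satisfying $\|\kappa-\mathrm{Id}\|_{L^{\infty}}+\|\gamma-g_{0}\|_{L^{\infty}}=\mathcal{O}(\varepsilon)$ since $\kappa|_{Q'}=\mathrm{Id}$ and $\gamma|_{Q'}=g_{0}|_{Q'}$. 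For $\varepsilon<\varepsilon_{g,\alpha}$ small enough this falls within the perturbative regime of Definition~\ref{de:LGKFP}. Then \textbf{i)} is precisely Definition~\ref{de:tildeE} combined with Lemma~\ref{le:ElocBX}-\textbf{i)}, and the regularity $s\in W^{2/3,2}_{loc}\cap \mathcal{E}_{loc}(\Delta^{\hat{g}}_{p})$ in $\hat{F}_{g}$ follows because $(\widehat{\Psi}_{X}^{g,g_{0}})_{*}$ and its inverse preserve local $W^{\mu,2}$-regularity for $\mu\in[0,1]$ by Proposition~\ref{pr:indepWmu}-\textbf{d)}, while transporting $\Delta^{g_{0}}_{p}$ produces $\Delta^{\hat{g}}_{p}$ up to first-order vertical terms already controlled by $W^{2/3,2}_{loc}$.

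For \textbf{i)}$\Rightarrow$\textbf{ii)}, I would first restrict to each side: on $X_{\mp}$ the operator $\mathcal{A}^{\hat{g},\mathrm{Id},\hat{g}}_{\alpha,\mathcal{M}}$ literally equals $\mathcal{A}^{g_{\mp},\mathrm{Id},g_{\mp}}_{\alpha,\mathcal{M}}$, so $s_{\mp}\in \mathcal{E}_{loc}(\mathcal{A}^{g_{\mp},\mathrm{Id},g_{\mp}}_{\alpha,\mathcal{M}}, F|_{\overline{X}_{\mp}})$ is immediate. The extra vertical regularity $\nabla^{g_{\mp}}_{\partial/\partial p_{1}}s_{\mp}\in L^{2}_{loc}$ is obtained from the embedding into $\mathcal{E}_{loc}(\Delta^{\hat{g}}_{p})$ combined with $s\in L^{2}_{loc}$: standard elliptic regularity in the vertical variables (restricted to each side) upgrades $s_{\mp}$ to $W^{2,2}_{loc}$ in $p$. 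The matching of the two traces in $L^{2}(\rz,|p_{1}|dp_{1};\mathcal{D}^{-2}_{T^{*}Q'})$ provided by Lemma~\ref{le:ElocBXpm} then follows from the continuity of $s$ as a section of $\hat{F}_{g}$, which is precisely the identification $(e_{-},\hat{e}_{-})|_{\partial X_{-}}=(e_{+},\hat{e}_{+})|_{\partial X_{+}}$ forcing $s_{-}|_{\partial X_{-}}=s_{+}|_{\partial X_{+}}$ after frame identification.

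The converse \textbf{ii)}$\Rightarrow$\textbf{i)} is the main obstacle: given the piecewise data and trace matching, I must check that gluing $s_{-}$ and $s_{+}$ yields a section $s$ such that $\mathcal{A}^{\hat{g},\mathrm{Id},\hat{g}}_{\alpha,\mathcal{M}}s$ has no distributional singular part supported on $X'$. This is handled by the integration-by-parts of Lemma~\ref{le:ElocBXpm}: for any $s'\in \mathcal{C}^{\infty}_{0}(X;F)\subset \mathcal{C}_{0,g}(\hat{F}_{g})$, split the pairing over $X_{-}$ and $X_{+}$; the two boundary integrals on $X'$ are $\pm\int_{X'}\langle s_{\mp},s'\rangle_{g^{F}}\,p_{1}|dp_{1}dq'dp'|$ with opposite signs (outward normal orientations), and they cancel exactly because the two traces coincide in $L^{2}(\rz,|p_{1}|dp_{1};\mathcal{D}^{-2}_{T^{*}Q'})$ against $s'\in W^{2,2}$. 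Hence $\mathcal{A}^{\hat{g},\mathrm{Id},\hat{g}}_{\alpha,\mathcal{M}}s\in L^{2}_{loc}$ in the sense of $\hat{F}_{g}$, which after transport by $(\widehat{\Psi}_{X}^{g,g_{0}})_{*}^{-1}$ gives \textbf{i)}.

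Finally \textbf{iii)} is the weak formulation characterization. Its equivalence with \textbf{i)} follows from Lemma~\ref{le:ElocBX}-\textbf{iv)} applied to $\mathcal{A}^{g_{0},\kappa,\gamma}_{\alpha,\mathcal{M}'}$ after pullback by $(\widehat{\Psi}_{X}^{g,g_{0}})_{*}^{-1}$, using that this isometry identifies $\mathcal{C}_{0,g}(\hat{F}_{g}|_{X_{(-\varepsilon,\varepsilon)}})$ with exactly the test space $\mathcal{C}^{\infty}_{0}(\overline{X}_{-};F)\cap \mathcal{C}^{\infty}_{0}(\overline{X}_{+};F)\cap \mathcal{C}^{0}(X;F)$ of Lemma~\ref{le:ElocBX}-\textbf{iv)}. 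The only subtlety is that the formal adjoint $\mathcal{A}^{\hat{g},\mathrm{Id},\hat{g}}_{-\alpha,\mathcal{M}'}$ is defined only on $X_{-}\sqcup X_{+}$, but since test sections in $\mathcal{C}_{0,g}(\hat{F}_{g})$ are piecewise $\mathcal{C}^{\infty}$, the pairing is meaningful and no interface term appears.
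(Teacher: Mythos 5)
Your broad strategy matches the paper's: transport everything to the smooth metric $g_{0}$ via $(\widehat{\Psi}^{g,g_{0}}_{X})_{*}$, then invoke Lemma~\ref{le:ElocBX} and Lemma~\ref{le:ElocBXpm}. The implication chain also works (you collapse the paper's $\textbf{ii)}\Rightarrow\textbf{iii)}\Rightarrow\textbf{i)}$ into $\textbf{ii)}\Rightarrow\textbf{i)}$ and treat $\textbf{i)}\Leftrightarrow\textbf{iii)}$ separately, which is fine). However, there is a genuine gap and two imprecisions.

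The genuine gap sits in the $\textbf{iii)}\Rightarrow\textbf{i)}$ step (and, since your $\textbf{ii)}\Rightarrow\textbf{i)}$ quietly invokes the same transport, it is duplicated there). You pull the weak/adjoint characterization back through $(\widehat{\Psi}^{g,g_{0}}_{X})_{*}^{-1}$ and apply Lemma~\ref{le:ElocBX}-\textbf{iv)} to $\mathcal{A}^{g_{0},\kappa,\gamma}_{\alpha,\mathcal{M}'}$ as though the formal adjoint transports by naked conjugation. But $(\widehat{\Psi}^{g,g_{0}}_{X})_{*}:L^{2}(X;F,\hat{g}_{0}^{F})\to L^{2}(X;\hat{F}_{g},\hat{g}^{F})$ is only an isomorphism, not a unitary: the symplectic volume in the coordinates $(\tilde{q},\tilde{p})$ is $dv_{X}=|\det\psi^{-1}(\tilde{q})|\,|d\tilde{q}d\tilde{p}|$ (see \eqref{eq:dvxtilde}), so the pullback of the $L^{2}(\hat{g}^{F})$ pairing is not the $L^{2}(g_{0}^{F})$ pairing and the conjugated operator is not the $g_{0}$-formal adjoint. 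The paper fixes this by passing to the unitary $\tilde{\Psi}^{g,g_{0}}=|\det\psi^{-1}(q)|^{1/2}(\widehat{\Psi}^{g,g_{0}}_{X})_{*}$ and noting that the extra Jacobian factor, being Lipschitz in $q$ and $p$-independent, only modifies the admissible perturbation $\mathcal{M}$. Without this correction, the identification of the adjoint class of operators is unjustified.

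Two smaller points. For the vertical regularity in \textbf{i)}, you claim the discrepancy between $\Delta_{p}^{g_{0}}$ and $\Delta_{p}^{\hat{g}}$ consists of first-order vertical terms "already controlled by $W^{2/3,2}_{loc}$"; this is not true, since $W^{2/3,2}_{loc}$ does not control $\partial_{p}$. What controls those terms is the membership $(\widehat{\Psi}^{g,g_{0}}_{X})_{*}^{-1}s\in\mathcal{E}_{loc}(\Delta_{p}^{g_{0}};F)$ coming from Lemma~\ref{le:ElocBX}-\textbf{i)}, via vertical elliptic interpolation. And in $\textbf{i)}\Rightarrow\textbf{ii)}$, the matching of one-sided traces does not follow from any "continuity of $s$ as a section of $\hat{F}_{g}$" (your $s$ is only $L^{2}_{loc}$); it follows from $s\in W^{2/3,2}_{loc}(X;\hat{F}_{g})$, since membership in $W^{\mu,2}_{loc}(X;\hat{F}_{g})$ for $\mu\in\big(\tfrac12,\tfrac32\big)$ is defined by requiring the equality $s_{-}\big|_{\partial X_{-}}=s_{+}\big|_{\partial X_{+}}$ in $\hat{F}_{g}\big|_{X'}$; this is exactly the characterization the paper recalls.
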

\begin{proof}
The  statement \textbf{i)} is essentially the definition of
$\mathcal{E}_{loc}(\mathcal{A}^{\hat{g},\mathrm{Id},\hat{g}}_{\alpha,\mathcal{M}},\hat{F}_{g}\big|_{X_{(-\varepsilon,\varepsilon)}})$ with the
additional information $\red s\in
W^{2/3,2}_{loc}(X_{(-\varepsilon,\varepsilon)};\hat{F}^{g}\big|_{X_{(-\varepsilon,\varepsilon)}})\cap
\mathcal{E}_{loc}(\Delta_{p}^{\hat{g}};\hat{F}^{g}\big|_{X_{(-\varepsilon,\varepsilon)}})$\,.\\
{\red
By Lemma~\ref{le:ElocBX}-\textbf{i)}\,,
$(\hat{\Psi}^{g,g_{0}}_{X})_{*}^{-1}s\in \red \kappa_{\chi}\mathcal{E}_{loc}(A^{g_{0},\mathrm{Id},g_{0}}_{\alpha,0},)$
implies 
$$
(\hat{\Psi}^{g,g_{0}}_{X})_{*}^{-1}s\in
W^{2/3,2}_{loc}(X_{(-\varepsilon,\varepsilon)};F\big|_{X_{(-\varepsilon,\varepsilon)}})\cap
\mathcal{E}_{loc}(\Delta_{p}^{g_{0}};F\big|_{X_{(-\varepsilon,\varepsilon)}})\,.
$$
Since $(\hat{\Psi}^{g,g_{0}}_{X})_{*}:
\mathcal{W}^{2/3}_{loc}(X;F)\to
\mathcal{W}_{loc}^{2/3}(X;\hat{F}_{g})$  is a continuous isomorphism
by the local version of Proposition~\ref{pr:hatPsiW} with $\mathcal{W}^{2/3}_{loc}=W^{2/3}_{loc}$,
this implies $s\in
W^{2/3,2}_{loc}(X;\hat{F}_{g})$\,. Remember that $u\in
\mathcal{W}^{2/3}_{loc}(X;\hat{F}_{g})$ means
$u_{\mp}=u\big|_{X_{\mp}}\in
\mathcal{W}^{2/3}_{loc}(\overline{X}_{\mp[0,\varepsilon)};F\big|_{\mp[0,\varepsilon)})$ with the equality of
the traces $u_{-}\big|_{\partial X_{-}}=u_{+}\big|_{\partial X_{+}}$
(always with $(e_{-},\hat{e}_{-})=(e_{+},\hat{e}_{+})$ along
$X'$)\,. The   vertical regularity $s\in
\mathcal{E}_{loc}(\Delta_{p}^{\hat{g}};F\big|_{X_{(-\varepsilon,\varepsilon)}})$
is even simpler.}\\
\noindent\textbf{i) implies ii):}
The previous characterization including the
$\mathcal{W}^{2/3}_{loc}(X_{(-\varepsilon,\varepsilon)};\hat{F}_{g}\big|_{X_{(-\varepsilon,\varepsilon)}})$ regularity clearly implies
$s_{\mp}=s\big|_{X_{\mp[0,\varepsilon)}}\in \mathcal{E}_{loc}(\mathcal{A}^{g_{\mp},\mathrm{\Id},g_{\mp}}_{\alpha,\mathcal{M}},
F\big|_{\overline{X}_{\mp[0,\varepsilon)}})\cap
\mathcal{E}_{loc}(\nabla^{F,g_{\mp}}_{\frac{\partial}{\partial
    p_{1}}},F\big|_{X_{\mp[0,\varepsilon)}})$ and the equality of the traces
$s_{-}\big|_{\partial X_{-}}=s_{+}\big|_{\partial
  X_{+}}=s\big|_{\partial X'}\in L^{2}_{loc}(X';F)$\,. This ends the
proof of
$\mathbf{i)}\Rightarrow\mathbf{ii)}$\,.\\
By assuming \textbf{ii)} the integration by part of Lemma~\ref{le:ElocBXpm},
where the sum of boundary terms along $\partial X_{-}=X'=\partial
X_{+}$ vanishes, implies \textbf{iii)}\,.\\
\noindent\textbf{iii) implies i):} Let $\mathcal{A}^{\hat{g},\mathrm{Id},\hat{g}}_{-\alpha,\mathcal{M}'}$
be the formal adjoint of $\mathcal{A}^{\hat{g},\mathrm{Id},\hat{g}}_{\alpha,\mathcal{M}}$ defined on
$X_{(-\varepsilon,0)}\cup X_{(0,+\varepsilon)}$\,. Although
$\hat{\Psi}^{g,g_{0}}_{X}:(F,\hat{g}_{0}^{F})\to (\hat{F}_{g},\hat{g}^{F})$ is an
isometry, the isomorphism
$(\hat{\Psi}^{g,g_{0}}_{X})_{*} : L^{2}(X;F,\hat{g}_{0})\to
L^{2}(X;F,\hat{g})$ is not unitary because
$dv_{X}=|\det(\psi^{-1}(\tilde{q}))||d\tilde{q}d\tilde{p}|$ according
to \eqref{eq:dvxtilde}. However it can be made unitary by multiplying
by the piecewise $\mathcal{C}^{\infty}$ and continuous function of
$q$\,, $|\det(\psi^{-1}(q))|^{1/2}$\,. When
$(\hat{\Psi}^{g,g_{0}}_{X})_{*}$ is replaced by the unitary map
$\tilde{\Psi}^{g,g_{0}}=|\det(\psi^{-1}(q))|^{1/2}(\hat{\Psi}^{g,g_{0}}_{X})_{*}$\,,
it 
simply modifies the admissible perturbation term $\mathcal{M}$ in the
action 
by conjugation on LGKFP-operators. Therefore the operator
$\mathcal{A}^{\hat{g},\mathrm{Id},\hat{g}}_{\alpha,\mathcal{M}}$ and its formal adjoint
$\mathcal{A}^{\hat{g},\mathrm{Id},\hat{g}}_{-\alpha,\mathcal{M}'}$ can be written
$$
\mathcal{A}^{\hat{g},\mathrm{Id},\hat{g}}_{\alpha,\mathcal{M}}=\tilde{\Psi}^{g,g_{0}}\mathcal{A}^{g_{0},\kappa,\gamma}_{\alpha,\mathcal{M}_{1}}(\tilde{\Psi}^{g,g_{0}})^{-1}\quad,\quad \mathcal{A}^{\hat{g},\mathrm{Id},\hat{g}}_{-\alpha,\mathcal{M}'}=\tilde{\Psi}^{g,g_{0}}\mathcal{A}^{g_{0},\kappa',\gamma}_{-\alpha,\mathcal{M}_{1}'}(\tilde{\Psi}^{g,g_{0}})^{-1}\,,
$$
where $\mathcal{A}^{g_{0},\kappa',\gamma}_{-\alpha,\mathcal{M}_{1}'}$ is the formal
adjoint of $\mathcal{A}^{g_{0},\kappa,\gamma}_{\alpha,\mathcal{M}_{1}}$ in
$X_{-}\cup X_{+}$\,. {\red The result is just a consequence of
Lemma~\ref{le:ElocBX}-\textbf{iv)} if we notice that
$\hat{\Psi}^{g,,g_{0}}$ sends 
$\mathcal{C}^{\infty}_{0}(X_{(-\varepsilon,0]};F)\cap
\mathcal{C}^{\infty}_{0}(X_{[0,\varepsilon)};F)\cap
\mathcal{C}^{0}(X_{(-\varepsilon,\varepsilon)};F)$ to $\mathcal{C}_{0}^{g}(\hat{F}\big|_{X_{(-\varepsilon,\varepsilon)}})$\,.} 
\end{proof}
{\red We now use
$\mathcal{E}_{loc}(\mathcal{A}^{g_{0},\kappa,\gamma}_{\alpha,0},F)=\mathcal{E}_{loc}(\mathcal{A}^{g_{0},\kappa,\gamma}_{-\alpha,0};F)$
stated in Lemma~\ref{le:ElocBX}-iii) when  
$$
\|\kappa-\mathrm{Id}\|_{L^{\infty}}+\|\gamma-g\|_{L^{\infty}}< \min(\delta_{R,\alpha,g},\delta_{R,-\alpha,g})\,,
$$
and the fact that the
formal adjoint in $L^{2}(X_{\mp};F)$ of $\nabla^{F}_{Y_{\hat{\fkh}}}$ is
$-\nabla^{F'}_{Y_{\hat\fkh}}=-\nabla^{F}_{Y_{\hat\fkh}}-\omega(\hat{f},g^{\hat{f}})(\pi_{X,*}Y_{\hat{\fkh}})$
in order to prove an integration by part. When we work globally this
singular framework, this
will provide a priori upper bound of $\|\mathcal{O}^{1/2}s\|_{L^{2}}$
before proving subelliptic estimates.}
\begin{proposition}
\label{pr:IPPloc}
For a  LGKFP operator $\mathcal{A}^{\hat{g},\mathrm{Id},\hat{g}}_{\alpha,\mathcal{M}}$ the space
$\mathcal{E}_{loc}(\mathcal{A}^{\hat{g},\mathrm{Id},\hat{g}}_{\alpha,\mathcal{M}},\hat{F}_{g}\big|_{X_{(-\varepsilon,\varepsilon)}})$
equals $\mathcal{E}_{loc}(\mathcal{A}^{\hat{g},\mathrm{Id},\hat{g}}_{\alpha_{1},\mathcal{M}'},\hat{F}_{g}\big|_{X_{(-\varepsilon,\varepsilon)}})$ for
any other $\alpha_{1}\in \rz^{*}$\,,  any other admissible local
perturbation $\mathcal{M}'$ as soon as
$\varepsilon<\varepsilon_{\alpha,\alpha_{1},g}$ with
$\varepsilon_{\alpha,\alpha_{1},g}>0$ small enough.
\,.\\
With
$\mathcal{M}=\mathcal{M}_{j}(q,p)\nabla^{X,\hat{g}}_{\frac{\partial}{\partial
  p_{j}}}+\mathcal{M}_{0}(q,p)$\,, and the adjoints
$\mathcal{M}_{j}^{*}$ of $\mathcal{M}_{j}$\,,
the integration  part formula
\begin{eqnarray*}
  \Real\langle s\,,\, \mathcal{A}^{\hat{g},\mathrm{Id},\hat{g}}_{\alpha,\mathcal{M}}s\rangle
&=& \frac{1}{2}\Real \langle s\,, -\Delta^{\hat{g}}_{p} s\rangle
+\Real\langle s\,,\,
    \mathcal{M}_{j}\nabla^{F,\hat{g}}_{\frac{\partial}{\partial
    p_{j}}}s \rangle +\Real\langle s\,,\, \mathcal{M}_{0}s\rangle\\
&&\hspace{6cm}
-\frac{\alpha}{2}\Real\langle s\,,\, \omega(\hat{\fkf},\hat{g}^{\fkf})(\pi_{X,*}Y_{\hat{\fkh}})s \rangle
\\
&\geq &\sum_{j=1}^{d}\frac{1}{2}\left[\|\nabla^{F,\hat{g}}_{\frac{\partial}{\partial p_{j}}}s\|^{2}_{L^{2}}
-{\red  (C_{g}\|s\|_{L^{2}}+2\|\mathcal{M}_{j}^{*}s\|_{L^{2}})}\|\nabla^{F,\hat{g}}_{\frac{\partial}{\partial
        p_{j}}}s\|_{L^{2}}\right]+\Real\langle s\,,\, \mathcal{M}_{0}s\rangle
\\
&&\hspace{6cm}-\frac{\alpha}{2}\Real\langle s\,,\, \omega(\hat{\fkf},\hat{g}^{\fkf})(\pi_{X,*}Y_{\hat{\fkh}})s \rangle\,,
\end{eqnarray*}
holds true for any compactly supported section $s\in
\mathcal{E}_{loc}(\mathcal{A}^{\hat{g},\mathrm{Id},\hat{g}}_{\alpha,\mathcal{M}},\hat{F}_{g}|_{X_{(-\varepsilon,\varepsilon)}})$
and $\varepsilon\in (0,\varepsilon_{\alpha,g})$\,,
$\varepsilon_{\alpha,g}>0$ small enough.
\end{proposition}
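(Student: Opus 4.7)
\medskip

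\noindent\textbf{Proof plan.}
The first claim will follow by going back to the smooth side via the isometry $\widehat{\Psi}_{X}^{g,g_{0}}$ of Definition~\ref{de:hPsigg0}. By Definition~\ref{de:tildeE} and the identification $\mathcal{E}_{loc}(\mathcal{A}^{g_{0},\kappa,\gamma}_{\alpha,\mathcal{M}_{1}},F)=\kappa_{\chi}\mathcal{E}_{loc}(\mathcal{A}^{g_{0},\mathrm{Id},g_{0}}_{\alpha,0},F)$ provided by Lemma~\ref{le:ElocBX}-\textbf{i)}, the space on the left is the image under $(\widehat{\Psi}_{X}^{g,g_{0}})_{*}$ of $\kappa_{\chi}\mathcal{E}_{loc}(\mathcal{A}^{g_{0},\mathrm{Id},g_{0}}_{\alpha,0},F\big|_{X_{(-\varepsilon,\varepsilon)}})$. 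The conjugate operator on the smooth side is a LGKFP operator $\mathcal{A}^{g_{0},\kappa,\gamma}_{\alpha,\mathcal{M}_{1}}$ with $\|\kappa-\mathrm{Id}\|_{L^{\infty}}+\|\gamma-g_{0}\|_{L^{\infty}}=\mathcal{O}(\varepsilon)$, so Lemma~\ref{le:ElocBX}-\textbf{iii)} applies as soon as $\varepsilon<\varepsilon_{\alpha,\alpha_{1},g}$ and yields the $\alpha$-independence (and $\mathcal{M}$-independence) of the space on the smooth side. Transporting back via $(\widehat{\Psi}_{X}^{g,g_{0}})_{*}$ gives the equality for the LGKFP operator on $\hat{F}_{g}$.

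For the integration by parts, I first establish it for $s\in \mathcal{C}_{0,g}(\hat{F}_{g}\big|_{X_{(-\varepsilon,\varepsilon)}})$, which is smooth on each closed half $\overline{X}_{\mp}$ and continuous across $X'$. Splitting $\mathcal{A}^{\hat{g},\mathrm{Id},\hat{g}}_{\alpha,\mathcal{M}}=\alpha\nabla^{F,\hat{g}}_{Y_{\hat{\fkh}}}-\tfrac{1}{2}\Delta_{p}^{\hat{g}}+\mathcal{M}$, the Laplacian $-\tfrac{1}{2}\Delta_{p}^{\hat{g}}=-\tfrac{1}{2}\hat{g}^{ij}(q)\partial_{p_{i}}\partial_{p_{j}}$ is integrated by parts in the vertical variable: because the fiber is all of $\rz^{d}$ there are no boundary terms, and one gets
\begin{equation*}
\Real\langle s\,,\,-\tfrac{1}{2}\Delta_{p}^{\hat{g}}s\rangle
=\tfrac{1}{2}\hat{g}^{ij}\Real\langle \nabla^{F,\hat{g}}_{\partial_{p_{i}}}s\,,\,\nabla^{F,\hat{g}}_{\partial_{p_{j}}}s\rangle
+\tfrac{1}{2}\hat{g}^{ij}\Real\langle c_{i}s\,,\,\nabla^{F,\hat{g}}_{\partial_{p_{j}}}s\rangle,
\end{equation*}
where $c_{i}\in L^{\infty}(X;L(F))$ comes from differentiating the weight $\langle p\rangle_{q}^{N_{V}-N_{H}}$ in $g^{F}$; this second term is bounded by $C_{g}\|s\|\sum_{j}\|\nabla^{F,\hat{g}}_{\partial_{p_{j}}}s\|$. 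For the transport piece, the formal adjoint of $\nabla^{F,\hat{g}}_{Y_{\hat{\fkh}}}$ on each side is $-\nabla^{F,\hat{g}}_{Y_{\hat{\fkh}}}+\omega(\hat{\fkf},\hat{g}^{\fkf})(\pi_{X,*}Y_{\hat{\fkh}})$ since $Y_{\hat{\fkh}}$ is divergence-free for the symplectic volume; applying Lemma~\ref{le:ElocBXpm} on each side produces boundary terms $\pm\tfrac{\alpha}{2}\int_{X'}|s|_{g^{F}}^{2}\,p_{1}\,dv_{X'}$, which cancel between $X_{-}$ and $X_{+}$ thanks to the continuity of $s$ across $X'$ (the outward normals are opposite). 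This yields exactly the claimed identity on $\mathcal{C}_{0,g}(\hat{F}_{g}\big|_{X_{(-\varepsilon,\varepsilon)}})$, and the inequality follows from Cauchy--Schwarz on the $\mathcal{M}_{j}\nabla^{F,\hat{g}}_{\partial_{p_{j}}}s$ term (using $\|\mathcal{M}_{j}^{*}s\|$) together with the lower bound on the vertical Dirichlet form.

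It remains to extend the identity from $\mathcal{C}_{0,g}(\hat{F}_{g}\big|_{X_{(-\varepsilon,\varepsilon)}})$ to any compactly supported $s\in\mathcal{E}_{loc}(\mathcal{A}^{\hat{g},\mathrm{Id},\hat{g}}_{\alpha,\mathcal{M}},\hat{F}_{g}\big|_{X_{(-\varepsilon,\varepsilon)}})$ by density. Transporting via $(\widehat{\Psi}_{X}^{g,g_{0}})_{*}$ and writing $\mathcal{A}^{\hat{g},\mathrm{Id},\hat{g}}_{\alpha,\mathcal{M}}$ as a perturbation of $\mathcal{A}^{g_{0},\mathrm{Id},g_{0}}_{\alpha,|p|^{2}/2}$, Lemma~\ref{le:pertGKFP} shows that $D(\mathcal{A}^{g_{0},\mathrm{Id},g_{0}}_{\alpha,|p|^{2}/2})\subset \mathcal{W}^{2/3}(X;F)$ with $\mathcal{C}_{0}^{\infty}(X;F)$ dense in the graph norm (as recalled in Subsection~\ref{sec:hyposmooth}), hence in particular controlling $\|\nabla^{F}_{\partial p_{j}}s^{n}\|_{L^{2}}$. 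Transporting back gives a sequence $s^{n}\in \mathcal{C}_{0,g}(\hat{F}_{g}\big|_{X_{(-\varepsilon,\varepsilon)}})$ (after composing with $\kappa_{\chi}$ which by Lemma~\ref{le:kappachi} preserves this class) such that $s^{n}\to s$, $\nabla^{F,\hat{g}}_{\partial_{p_{j}}}s^{n}\to\nabla^{F,\hat{g}}_{\partial_{p_{j}}}s$ and $\mathcal{A}^{\hat{g},\mathrm{Id},\hat{g}}_{\alpha,\mathcal{M}}s^{n}\to\mathcal{A}^{\hat{g},\mathrm{Id},\hat{g}}_{\alpha,\mathcal{M}}s$ in $L^{2}$; passing to the limit in the already-established identity for the $s^{n}$ concludes the proof. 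The main obstacle is precisely this last step: one must ensure that the vertical first derivatives converge in $L^{2}$, not merely the graph norm of $\mathcal{A}$, which is why the $\mathcal{W}^{2/3}$-regularity of Proposition~\ref{pr:traceAhat}-\textbf{i)} (equivalently Lemma~\ref{le:pertGKFP}) is essential.
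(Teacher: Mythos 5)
Your proposal is essentially correct and your argument for the first claim (conjugating to the smooth side via $\widehat{\Psi}_{X}^{g,g_{0}}$ and $\kappa_{\chi}$, then invoking Lemma~\ref{le:ElocBX}-\textbf{iii)}) coincides with the paper's. For the integration by parts, however, you take a genuinely different route from the paper.

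You prove the identity first on the class $\mathcal{C}_{0,g}(\hat{F}_{g}\big|_{X_{(-\varepsilon,\varepsilon)}})$ by direct integration by parts, splitting into the vertical Dirichlet form (no boundary terms in $p$) and the transport piece (boundary terms over $X'$ cancel between $X_{-}$ and $X_{+}$ thanks to the continuity of $s$ and the opposite outward normals), and then extend by density. The paper instead sidesteps the density argument entirely by applying Lemma~\ref{le:ElocBX}-\textbf{iv)} with $s'=s$: since $\mathcal{E}_{loc}(\mathcal{A}^{\hat g,\mathrm{Id},\hat g}_{\alpha,0};F)=\mathcal{E}_{loc}(\mathcal{A}^{\hat g,\mathrm{Id},\hat g}_{-\alpha,\mathcal{M}'};F)$ (by part \textbf{iii)}), the abstract duality
$\langle s\,,\,\mathcal{A}^{\hat g,\mathrm{Id},\hat g}_{\alpha,0}s\rangle=\langle\mathcal{A}^{\hat g,\mathrm{Id},\hat g}_{-\alpha,\mathcal{M}'}s\,,\,s\rangle$
holds already for compactly supported $s\in\mathcal{E}_{loc}$. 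Taking the real part then produces $\tfrac12\langle s,(\mathcal{A}+\mathcal{A}')s\rangle$, in which the transport term cancels and only the vertical Laplacian plus the $\omega$-correction survive; no approximation is needed. This is shorter and avoids the subtle point you flag at the end.

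On that last point, your justification is slightly imprecise: what guarantees that your approximating sequence $s^{n}\to s$ in the graph norm of $\mathcal{A}$ also satisfies $\nabla^{F,\hat g}_{\partial p_{j}}s^{n}\to\nabla^{F,\hat g}_{\partial p_{j}}s$ in $L^{2}$ is \emph{not} the $\mathcal{W}^{2/3}$-regularity per se. In Lebeau's scale a vertical derivative has order $1$, so $\mathcal{W}^{2/3}$-control gives less than one full vertical derivative. What actually closes the argument is the $\mathcal{O}$-control from Lemma~\ref{le:pertGKFP}, i.e.\ $\|\mathcal{O}u\|_{L^{2}}\lesssim\|(C+\mathcal{A})u\|_{L^{2}}$, which gives two vertical derivatives and hence $\|\nabla^{F}_{\partial p_{j}}u\|_{L^{2}}\lesssim\|(C+\mathcal{A})u\|^{1/2}_{L^{2}}\|u\|^{1/2}_{L^{2}}$ by interpolation or a direct Cauchy--Schwarz on $\Real\langle u,(C+\mathcal{O})u\rangle$. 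You do cite Lemma~\ref{le:pertGKFP}, so the ingredient is available, but the appeal to ``$\mathcal{W}^{2/3}$-regularity'' is not the relevant estimate; the paper's route avoids having to make this precise at all.
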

\begin{remark}
  Note that the term $-\frac{\alpha}{2}\Real\langle s\,,\,
  \omega(\hat{\fkf},\hat{g}^{\fkf})(\pi_{X,*}Y_{\hat{\fkh}})s \rangle$
  is due to the fact that we used the flat and possibly non unitary
  connection $\nabla^{\fkf}$ on $\pi_{\fkf}:\fkf\to Q$\,. {\red The term
  $-C_{g}\|s\|_{L^{2}}\|\nabla_{\frac{\partial}{\partial
      p}}s\|_{L^{2}}$ comes from the fact that the adjoint of
  $\nabla^{F}_{\frac{\partial}{\partial p_{j}}}$ equals
  $-\nabla^{F}_{\frac{\partial}{\partial p_{j}}}+R_{j}$ with $R_{j}\in
  \mathcal{L}(L^{2}(X;F))$ when we use the weighted metric 
$\langle p\rangle_{q}^{-N_{H}+N_{V}}\pi_{X}^{*}(g^{\Lambda
  T^{*}Q}\otimes \Lambda TQ)$ on $E$\,.}
\end{remark}
\begin{proof}
  The first result is simply a consequence of
  $$
\red
\mathcal{E}_{loc}(\mathcal{A}^{\hat{g},\mathrm{Id},\hat{g}}_{\alpha,\mathcal{M}},\hat{F}_{g})=(\hat{\Psi}^{g,g_{0}}_{X})_{*}\kappa_{\chi}\mathcal{E}_{loc}(\mathcal{A}^{g_{0},\mathrm{Id},g_{0}}_{\alpha,0},F)=(\hat{\Psi}^{g,g_{0}}_{X})_{*}\kappa_{\chi}\mathcal{E}_{loc}(\mathcal{A}^{g_{0},\mathrm{Id},g_{0}}_{\alpha_{1},0},F)
$$
  for any $\alpha_{1}\in\rz^{*}$\,.\\
{\red The formal adjoint of
$\mathcal{A}^{\hat{g},\mathrm{Id},\hat{g}}_{\alpha,0}=\alpha\nabla^{F}_{Y_{\hat{\fkh}}}+\frac{-\Delta_{p}^{\hat{g}}}{2}$
equals
$$
\mathcal{A}^{\hat{g},\mathrm{Id},\hat{g}}_{-\alpha,\mathcal{M}'}
=-\alpha \nabla^{F}_{Y_{\hat{\fkh}}}+\frac{-\Delta_{p}}{2}+\mathcal{M}'\,,
$$
where $\mathcal{M}'$ gathers
$$-\alpha \omega(\hat{f},\hat{g}^{f})(\pi_{X,*}Y_{\hat{\fkh}})=-\alpha g^{ij}(-|q^{1}|,q')p_{j}\omega(\hat{\fkf},\hat{g}^{\fkf})(\frac{\partial}{\partial
  q^{i}})\times
$$
and other terms coming from the fact that the adjoint of
$\nabla^{F}_{\frac{\partial}{\partial p_{j}}}$ is
$-\nabla^{F}_{\frac{\partial}{\partial p_{j}}}+R_{j}$\,.
By using the map
$\hat{\Psi}^{g,g_{0}}=|\det(\psi^{-1}(q))|^{1/2}\hat{\Psi}^{g,g_{0}}_{X}$\,,
introduced in the proof of Proposition~\ref{pr:traceAhat} and which is
unitary from $L^{2}(X; F,g_{0})$ to $L^{2}(X;F,\hat{g})$\,, we get
$$
(\hat{\Psi}^{g,g_{0}})^{-1}\mathcal{A}^{\hat{g},\mathrm{Id},\hat{g}}_{\alpha,0}\hat{\Psi}^{g,g_{0}}=\mathcal{A}^{g_{0},\kappa,\gamma}_{\alpha,
\mathcal{M}_{1}}\quad\text{and}\quad
(\hat{\Psi}^{g,g_{0}})^{-1}\mathcal{A}^{\hat{g},\mathrm{Id},\hat{g}}_{-\alpha,\mathcal{M}'}\hat{\Psi}^{g,g_{0}}=\mathcal{A}^{g_{0},\kappa,\gamma}_{-\alpha,
\mathcal{M}_{1}'}\,,
$$
where $=\mathcal{A}^{g_{0},\kappa,\gamma}_{-\alpha,
\mathcal{M}_{1}'}$ is the formal adjoint of
$\mathcal{A}^{g_{0},\kappa,\gamma}_{\alpha,\mathcal{M}_{1}}$\,.
From Lemma~\ref{le:ElocBX}-\textbf{iv)}\,, we deduce that for any
compactly supported $s\in
\mathcal{E}_{loc}(\mathcal{A}^{\hat{g},\mathrm{Id},\hat{g}}_{\alpha,0};F)$
$$
\langle s\,,\,
\mathcal{A}^{\hat{g},\mathrm{Id},\hat{g}}_{\alpha,0}s\rangle=\langle \mathcal{A}^{\hat{g},\mathrm{Id},\hat{g}}_{-\alpha,\mathcal{M}'}s\,,\,s\rangle
$$
and 
$$
\Real \langle s\,,\, \mathcal{A}^{\hat{g},\mathrm{Id},\hat{g}}_{\alpha,0}s\rangle
=\frac{1}{2}\langle s\,,\,
(\mathcal{A}^{\hat{g},\mathrm{Id},\hat{g}}_{\alpha,0}+\mathcal{A}^{\hat{g},\mathrm{Id},\hat{g}}s\rangle
= \frac{1}{2}\Real \langle s\,, -\Delta^{\hat{g}}_{p} s\rangle
-\frac{\alpha}{2}\Real\langle s\,,\, \omega(\hat{\fkf},\hat{g}^{\fkf})(\pi_{X,*}Y_{\hat{\fkh}})s \rangle\,,
$$
and this ends the proof.}
\end{proof}
\subsection{Boundary conditions and closed realizations of the
  hypoelliptic Laplacian}
\label{sec:domainB}
As a differential operator $\hat{B}^{\phi_{b}}_{\hat{\fkh}}$ is
defined as the Bismut hypoelliptic Laplacian on the open set
$X_{-}\cup X_{+}=X\setminus X'$
for the metric
$\hat{g}=\hat{g}^{TQ}=1_{Q_{-}}g_{-}^{TQ}+1_{Q_{+}}g_{+}^{TQ}$\,, $b\in
\rz^{*}$ and
 the energy $\hat{\fkh}(q,p)=\frac{\hat{g}^{ij}(q)p_{i}p_{j}}{2}$\,.
Using the fact that $2b^{2}B^{\phi_{b}}_{\fkh}$ is a local
geometric Kramers-Fokker-Planck operator with $\alpha=-b$, we can define its closed
realization in $L^{2}(X;\hat{F}_{g})$\,. By mimicking the symmetry
argument used for $\overline{d}_{g,\fkh}$\,,
 and $\overline{d}^{\phi_{b}}_{g,\fkh}$\,, one deduces
 boundary conditions and a closed realization of 
$B^{\phi_{b}}_{\fkh}$ in  $L^{2}(X_{-};F)$\,. Additional
properties for both operators are specified afterwards.
\begin{proposition}
\label{pr:domhatB} 
Let $\hat{B}^{\phi_{b}}_{\fkh}$ be the Bismut hypoelliptic Laplacian
defined as a differential operator on $X_{-}\cup X_{+}=X\setminus X'$
for the metric
$\hat{g}=\hat{g}^{TQ}=1_{Q_{-}}g_{-}^{TQ}+1_{Q_{+}}g_{+}^{TQ}$\,, $b\in
\rz^{*}$ and
 the energy $\hat{\fkh}(q,p)=\frac{\hat{g}^{ij}(q)p_{i}p_{j}}{2}$\,.
In $L^{2}(X;F)$ it is defined with the domain
$$
D(\hat{B}^{\phi_{b}}_{\fkh})=\left\{s\in
  L^{2}(X;F)\cap \mathcal{E}_{loc}(\hat{B}^{\phi_{b}}_{\fkh},\hat{F}_{g}\big|_{X_{(-\varepsilon,\varepsilon)}})\,,\quad
  B^{\hat\phi}_{\alpha\hat{\mathcal{H}}}s\in L^{2}(X;\hat{F}_{g})
\right\}\,,
$$
where $\mathcal{E}_{loc}(\hat{B}^{\phi_{b}}_{\fkh},\hat{F}_{g}\big|_{X_{(-\varepsilon,\varepsilon)}})$ is
given by Definition~\ref{de:tildeE} with
$\varepsilon<\varepsilon_{g,b}$\,, $\varepsilon_{g,b}>0$ small
enough.\\
The operator
$(\hat{B}^{\phi_{b}}_{\fkh},D(\hat{B}^{\phi_{b}}_{\fkh}))$ satisfies
the following properties:
\begin{description}
\item[a)] 
With
$D(\hat{B}^{\phi_{b}}_{\fkh})\subset
\mathcal{E}_{loc}(\hat{B}^{\phi_{b}}_{\fkh},\hat{F}_{g})\subset
\mathcal{W}^{2/3}_{loc}(X;\hat{F}_{g})\red =W^{2/3,2}_{loc}(X;\hat{F}_{g})$\,, any element $s\in
D(\hat{B}^{\phi_{b}}_{\fkh})$ admits a trace in
$L^{2}_{loc}(X';\red \hat{F}^{g}\big|_{X'})$\,.
\item[b)]
The operator
$(\hat{B}^{\phi_{b}}_{\fkh},D(\hat{B}^{\phi_{b}}_{\fkh}))$
is closed with a dense domain.
\item[c)]
There exists a constant $C_{b,g}>0$ such that
the
inequality
$$
\Real \langle s\,,\,
(C_{b,g}+\hat{B}^{\phi_{b}}_{\fkh})s\rangle \geq
\frac{1}{4b^{2}}\langle s\,,\, (1+\mathcal{O})s\rangle
$$
holds for all $s\in D(\hat{B}^{\fkh}_{g})$\,. 
\item[d)]
The space $\mathcal{C}_{0,g}(\hat{F}_{g})$ is dense in
$D(\hat{B}^{\phi_{b}}_{\fkh})$ endowed with its graph norm.
\item[e)]
The ${}^{t}\phi_{b}=\phi_{-b}$ left-adjoint of
$\hat{B}^{\phi_{b}}_{\fkh}$ is nothing but
$(\hat{B}^{\phi_{-b}}_{\fkh}, D(\hat{B}^{\phi_{-b}}_{\fkh}))$\,.
\item[f)]
The operator $\hat{B}^{\phi_{b}}_{\fkh}$ commutes with
$\Sigma_{\nu}$ and 
\begin{eqnarray*}
 && D(\hat{B}^{\phi_{b}}_{\fkh})=[L^{2}_{ev}(X;E)\cap
  D(\hat{B}^{\phi_{b}}_{\fkh})]\oplus[L^{2}_{odd}(X;E)\cap
 D(\hat{B}^{\phi_{b}}_{\fkh})]\\
&&
\hat{B}^{\phi_{b}}_{\fkh}: L_{ev~odd}^{2}(X;E)\cap
   D(\hat{B}^{\phi_{b}}_{\fkh})\to L_{ev~odd}^{2}(X;E)\,.
\end{eqnarray*}
\end{description}
\end{proposition}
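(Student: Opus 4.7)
The plan is to establish the six properties by reducing, as much as possible, to known results on closed manifolds via the symmetrization procedure encoded in the isomorphism $(\widehat{\Psi}^{g,g_{0}}_{X})_{*}$ of Proposition~\ref{pr:hatPsiW}, and to the local trace theory of Subsection~\ref{sec:tracelocalB}. A key initial observation is that $2b^{2}\hat{B}^{\phi_{b}}_{\fkh}$ is, via the Weitzenbock formula~\eqref{eq:weitzenbock}, a LGKFP operator for the metric $\hat{g}$ in the sense of Definition~\ref{de:LGKFP} (with $\alpha=-b$, $\kappa=\mathrm{Id}$, $\gamma=\hat{g}$, and a locally admissible perturbation $\mathcal{M}$ gathering the curvature, $\omega(\nabla^{\fkf},\hat{g}^{\fkf})$ and the weight conjugation terms).

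For \textbf{a)} I would invoke Lemma~\ref{le:ElocBX}-i) through the definition of $\mathcal{E}_{loc}(\hat{B}^{\phi_{b}}_{\fkh},\hat{F}_{g}\big|_{X_{(-\varepsilon,\varepsilon)}})$ to get the $W^{2/3,2}_{loc}$-regularity near $X'$, and use the usual trace theorem to get a trace in $L^{2}_{loc}(X';\hat{F}_{g}\big|_{X'})$. For \textbf{f)} I would simply check that the coefficients of $\hat{B}^{\phi_{b}}_{\fkh}$ (written in the frame $(e,\hat{e})$ relative to $\hat{g}$) are invariant under the symmetry $\Sigma_{\nu}$, since $\hat{g}$, $\hat{\fkh}$ and the matching conditions defining $\hat{F}_{g}$ all satisfy this invariance by construction; the splitting of the domain is then automatic because $\Sigma_{\nu}^{2}=\mathrm{Id}$ is a unitary involution on $L^{2}(X;F)$ that preserves $D(\hat{B}^{\phi_{b}}_{\fkh})$.

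For \textbf{c)} I would first combine the a priori lower bound of Proposition~\ref{pr:IPPloc}, applied to compactly supported elements of $D(\hat{B}^{\phi_{b}}_{\fkh})$ near $X'$, with the fact that away from $X'$ everything reduces to the smooth maximal hypoelliptic estimate~\eqref{eq:maxhyp} of Bismut--Lebeau for GKFP operators. A partition of unity $\sum_{j}\chi_{j}^{2}(q)\equiv 1$ subordinate to a covering of $Q$, with some $\chi_{j}$ supported in $Q_{(-\varepsilon,\varepsilon)}$ and the others supported away from $Q'$, lets one glue the local IPP bound $\tfrac{1}{2}\|\nabla^{F,\hat{g}}_{\partial/\partial p}s\|_{L^{2}}^{2}$ near $X'$ with the global maximal hypoelliptic $\mathcal{O}$-bound away from it. The lower order error from $[\hat{B}^{\phi_{b}}_{\fkh},\chi_{j}]$ and the curvature terms in $\mathcal{M}$ are absorbed by the $\mathcal{O}$-control through the constant $C_{b,g}$. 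For \textbf{b)}, closedness is then a formal consequence of the characterization of $D(\hat{B}^{\phi_{b}}_{\fkh})$ via $\chi(\hat{\fkh})s$ and the closedness inherited, via $(\widehat{\Psi}^{g,g_{0}}_{X})_{*}$, from the closed maximal realization of a LGKFP operator discussed around Lemma~\ref{le:pertGKFP}; density of the domain follows from d).

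For \textbf{d)} I would use a two-step approximation: first cut off in $\hat{\fkh}$ by $\chi_{n}=\chi(\hat{\fkh}/(n+1))$, and observe that $[\hat{B}^{\phi_{b}}_{\fkh},\chi_{n}]$ is a first-order operator in $\partial/\partial p_{j}$ with coefficients of size $O(n^{-1/4})\langle p\rangle^{*}$, controlled by the accretivity estimate c), to reduce to compactly supported sections. Then I would transport by $(\widehat{\Psi}^{g,g_{0}}_{X})_{*}^{-1}\kappa_{\chi}^{-1}$ to reduce to $D(\mathcal{A}^{g_{0},\mathrm{Id},g_{0}}_{-b,\frac{|p|^{2}}{2}})$, for which $\mathcal{C}^{\infty}_{0}(X;F)$ is a core by Lemma~\ref{le:pertGKFP} (and the standard smooth GKFP theory); the image under $\kappa_{\chi}(\widehat{\Psi}^{g,g_{0}}_{X})_{*}$ of $\mathcal{C}^{\infty}_{0}(X;F)$ lies in $\mathcal{C}_{0,g}(\hat{F}_{g})$ by continuity of these maps between piecewise smooth/continuous categories. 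Finally \textbf{e)} follows from combining Bismut's formal identity $(B^{\phi_{b}}_{\fkh})^{{}^{t}\phi_{b}}=B^{\phi_{-b}}_{\fkh}$ recalled in Subsection~\ref{sec:hyposmooth} with Proposition~\ref{pr:adjlr}, once one has checked on the core $\mathcal{C}_{0,g}(\hat{F}_{g})$ that the boundary terms in Lemma~\ref{le:ElocBXpm} cancel between $X_{-}$ and $X_{+}$ thanks to the matching $(e_{-},\hat{e}_{-})\big|_{X'}=(e_{+},\hat{e}_{+})\big|_{X'}$ in $\hat{F}_{g}$ and the invariance of $dv_{X'}$ and $p_{1}|dp_{1}dq'dp'|$ under $\Sigma$. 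The main obstacle is \textbf{c)}: correctly balancing the contribution of the singular interface $X'$ (where $\hat{g}$ fails to be $\mathcal{C}^{1}$) against the $\mathcal{O}$ lower bound, i.e.\ making sure that the non-smooth curvature-type terms hidden in $\mathcal{M}$ are indeed locally admissible in the sense of Definition~\ref{de:LGKFP} and are absorbed by adjusting $C_{b,g}$, which is why one must invoke the local IPP of Proposition~\ref{pr:IPPloc} rather than a single global maximal estimate.
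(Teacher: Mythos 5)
Your plan is correct and follows essentially the same strategy as the paper: for a) you invoke the $W^{2/3,2}_{loc}$ regularity from Lemma~\ref{le:ElocBX}/Proposition~\ref{pr:traceAhat}; for c) you combine the dyadic cut-off in $p$ and a partition of unity in $q$ with the integration by parts of Proposition~\ref{pr:IPPloc}; for d) you truncate in $\hat{\fkh}$ and transport via $\kappa_{\chi}(\widehat{\Psi}^{g,g_0}_{X})_{*}$ to the GKFP core statement of Lemma~\ref{le:ElocBX}-\textbf{ii)}; and for f) you use the $\Sigma_{\nu}$-equivariance of all the data together with the invariance of the core.

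Two places diverge mildly from the paper and deserve mention. For b), the paper proves closedness through the weak-adjoint characterization of Proposition~\ref{pr:traceAhat}-\textbf{iii)}: if $u_n\to u$ and $\hat{B}^{\phi_b}_{\fkh}u_n\to v$, then pairing against test sections in $\mathcal{C}_{0,g}(\hat{F}_g)$ and passing to the limit identifies $u\in\mathcal{E}_{loc}$ directly. Your transport argument via $(\widehat{\Psi}^{g,g_0}_X)_*$ is plausible, but since this map is defined only on the collar $X_{(-\varepsilon,\varepsilon)}$, one still has to patch the transported local closedness with the interior smooth case; the paper's duality argument is global and avoids that patching. For e), you propose to check cancellation of the boundary integrals of Lemma~\ref{le:ElocBXpm}; the paper instead tests against $s'\in\mathcal{C}^{\infty}_0(X_-\cup X_+;F)$, so no boundary term ever arises, and then upgrades to the core $\mathcal{C}_{0,g}(\hat{F}_g)$ by density. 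Both work, but testing in the interior is slicker. Finally, a minor quantitative slip: the commutator $[\hat{B}^{\phi_b}_{\fkh},\chi_n(\hat{\fkh})]$ in d) has size $O(n^{-1})$ against $\nabla^F_{\partial/\partial p_j}s$ (as in the paper's estimate $\|[\Delta_p^{\hat g},\chi_n]s\|\leq n^{-1}\|s\|\,\|\nabla_p^Fs\|$), not $O(n^{-1/4})$; your version is still sufficient but unnecessarily pessimistic.
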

\begin{proof}
\noindent\textbf{a)}  The definition of
  $D(\hat{B}^{\phi_{b}}_{\fkh})\subset
  \mathcal{E}_{loc}(\hat{B}^{\phi_{b}}_{\fkh},\hat{F}_{g}\big|_{X_{(-\varepsilon,\varepsilon)}})$\,,
  while the graph norm of $s\in D(\hat{B}^{\phi_{b}}_{\fkh})$ is nothing but
  $\|s\|_{L^{2}}+\|\hat{B}^{\phi_{b}}_{\fkh}s\|_{L^{2}}$\,,
  combined with Proposition~\ref{pr:traceAhat} which says
  $\mathcal{E}_{loc}(\hat{B}^{\phi_{b}}_{\fkh},\hat{F}_{g}\big|_{X_{(-\varepsilon,\varepsilon)}})\subset
  W^{2/3,2}_{loc}(X;\hat{F}_{g})$ implies that $s\mapsto s\big|_{X'}$ is
  continuous from $D(\hat{B}^{\phi_{b}}_{\fkh})$ to
  $L^{2}_{loc}(X';E)$ (with the identification
  $(e,\hat{e})=1_{\overline{X}_{\mp}}(e_{\mp},\hat{e}_{\mp})$)\,. Therefore
  any element $s\in D(\hat{B}^{\phi_{b}}_{\fkh})$
  admits a trace in $L^{2}_{loc}(X';\red{\hat{F}_{g}}\big|_{X'})$\,.\\
\noindent\textbf{b)} Let us check that
$(\hat{B}^{\phi_{b}}_{\fkh},D(\hat{B}^{\phi_{b}}_{\fkh}))$
is closed. According to Definition~\ref{de:tildeE},  $2b^{2}(\hat{B}^{\phi_{b}}_{\fkh})$ is a LGKFP
operator
$\mathcal{A}^{\hat{g},\mathrm{Id},\hat{g}}_{\alpha,\mathcal{M}}$
with  $\alpha=-b$  with a formal adjoint
$\mathcal{A}^{\hat{g},\mathrm{Id},\hat{g}}_{b,\mathcal{M}'}$\,.
For a sequence $(u_{n})_{n\in\nz}$ in
$D(\hat{B}^{\phi_{b}}_{\fkh})$ such that
$\lim_{n\to\infty}\|u_{n}-u\|_{L^{2}}=0$ and $\lim_{n\to
  \infty}\|\hat{B}^{\phi_{b}}_{\fkh}u_{n}-v\|_{L^{2}}=0$\,,
Proposition~\ref{pr:traceAhat}-iii) after a partition of unity in
$q$\,, leads to 
$$
\forall s'\in 
\mathcal{C}_{0,g}(X;\hat{F}_{g})\,,\quad
\langle s'\,,\,
\hat{B}^{\phi_{b}}_{\fkh}u_{n}\rangle=\langle
(2b^{2})^{-1}\mathcal{A}^{\hat{g},\mathrm{Id},\hat{g}}_{+b,\mathcal{M}'}s'\,,\, u_{n}\rangle\,.
$$
The right-hand side converges to $\langle (2b^{2})^{-1}\mathcal{A}^{\hat{g},\mathrm{Id},\hat{g}}_{+b,\fkh}s'\,,\,u\rangle$ while the left-hand side
converges to $\langle s'\,,\, v\rangle$\,. We deduce
\begin{eqnarray*}
  &&\forall s'\in \mathcal{C}_{0,g}(\overline{X}_{-};\hat{F}_{g})\,,\quad
\langle s'\,,\, v\rangle=\langle
(2b^{2})^{-1}\mathcal{A}^{\hat{g},\mathrm{Id},\hat{g}}_{+b,\mathcal{M}'}s'\,,\, u\rangle\,,\\
&&
\forall s'\in \mathcal{C}_{0,g}(X;\hat{F}_{g })\,,\quad
|\langle \mathcal{A}^{\hat{g},\mathrm{Id},\hat{g}}_{+b,\mathcal{M}'}s'\,, u\rangle|\leq 
\|v\|_{L^{2}}\|s'\|_{L^{2}}\,,
\end{eqnarray*}
and Proposition~\ref{pr:traceAhat}-iii) implies $u\in
\mathcal{E}_{loc}(\hat{B}^{\phi_{b}}_{\fkh},\hat{F}_{g})$
and $\hat{B}^{\phi_{g}}_{\fkh}u=v$ in
$L^{2}_{loc}(X;F)$ while $v\in L^{2}(X;F)$\,. This proves that
$(\hat{B}^{\phi_{b}}_{\fkh},
D(\hat{B}^{\phi_{b}}_{\fkh}))$ is closed.\\
\textbf{c)}
 For a finite partition of unity
$\sum_{k=1}^{K}\theta_{k}^{2}(q)\equiv 1$ we have:
$$
\hat{B}^{\phi_{b}}_{\fkh}=\sum_{k=1}^{K}\theta_{k}(q)\hat{B}^{\phi_{b}}_{\fkh}\theta_{k}(q)
$$
and we reduce the problem to $\supp~s\subset X_{(-\varepsilon,\varepsilon)}$\,.\\
Consider the dyadic partition of unity $\sum_{k=0}^{\infty}\chi_{k}^{2}(t)=\chi_{0}^{2}(t)+\sum_{k=1}^{\infty}\chi^{2}(\frac{t}{2^{2k}})\equiv
 1$ on $\rz$
 with $\chi_{0}\in \mathcal{C}^{\infty}_{0}(\rz)$ $\chi_{0}\equiv
 1$ in a neighborhood of $0$ a $\chi\in
 \mathcal{C}^{\infty}(]r_{1},r_{2}[)$\,. Because
$
\hat{B}^{\phi_{b}}_{\fkh}$ is a GKFP operator we get
for $s\in D(\hat{B}^{\phi_{b}}_{\fkh})$:
\begin{eqnarray*}
  &&
\hat{B}^{\phi_{b}}_{\fkh}
-\sum_{k=0}^{\infty}\chi_{k}(\hat{\fkh})
\hat{B}^{\phi_{b}}_{\fkh}
\chi_{k}(\hat{\fkh})
=
-\frac{1}{4b^{2}}\sum_{k=0}^{\infty}\hat{g}_{ij}(q)(\partial_{p_{i}}\chi_{k}(\frac{\hat{\fkh}}{2^{2k}}))(\partial_{p_{j}}\chi_{k}(\frac{\hat{\fkh}}{2^{2k}}))\,,
\\
\text{with}&&\partial_{p_{\ell}}[\chi_{k}(\hat{\fkh})]=\nabla_{p}\big[\chi\big(\frac{\hat{g}^{ij}(q)p_{j}p_{j}}{2^{2k}}\big)\big]
              =\frac{\hat{g}^{\ell
              j}(q)p_{\ell}}{2^{2k}}\chi'(\frac{\hat{\fkh}}{2^{2k}})\quad\text{for}~k\geq
              1\,,\\
&&\frac{1}{2}\hat{g}_{ij}(q)(\partial_{p_{i}}\chi_{k}(\frac{\hat{\fkh}}{2^{2k}}))(\partial_{p_{j}}\chi_{k}(\frac{\hat{\fkh}}{2^{2k}}))=\frac{\hat{\fkh}}{2^{4k}}
|\chi'(\frac{\hat{\fkh}}{2^{2k}})|^{2k}=
\mathcal{O}(2^{-2k})=\mathcal{O}(\langle
              p\rangle_{q}^{-2})\,.
\end{eqnarray*}
Hence there exists $C^{1}_{b,g}>0$ such that
$$
\Real \langle s\,,\,
B^{\hat{\phi}}_{\fkh}s\rangle
\geq \left[\sum_{k=0}^{\infty}\Real \langle \chi_{k}(\hat{\fkh})s\,,\, \hat{B}^{\phi_{b}}_{\fkh}\chi_{k}(\fkh)s\rangle\right]-C^{1}_{b,g}\|s\|^{2}\,.
$$
The operator 
$2b^{2}\hat{B}^{\phi_{b}}_{\fkh}$ is a LGKFP operator for the metric
$\hat{g}$ according to Definition~\ref{de:tildeE}, we can use
Proposition~\ref{pr:IPPloc} with
$$
\mathcal{M}_{j}(q,p)=\mathcal{M}_{0,j}(q,p)\quad,\quad \mathcal{M}_{0}(q,p)=\mathcal{M}_{0}^{j}(q,p)p_{j}+\mathcal{M}_{0,0}(q,p)+\frac{\hat{g}^{ij}(q)p_{j}p_{j}}{4b^{2}}
$$
and $\mathcal{M}_{0,j}, \mathcal{M}_{0}^{j},\mathcal{M}_{0,0}$ are
uniformly bounded\,.
Proposition~\ref{pr:IPPloc} applied
 to the compactly supported  $s_{k}=\chi_{k}(\fkh)s$  of
$\mathcal{E}_{loc}(\hat{B}^{\phi_{b}}_{\fkh},\hat{F}_{g})$ leads to 
\begin{eqnarray*}
\Real \langle s_{k}\,,\,
\hat{B}^{\phi_{b}}_{\fkh}s_{k}\rangle
&\geq& 
\sum_{j=1}^{d}\frac{1}{4b^{2}}\left[
  \|\nabla^{F}_{\frac{\partial}{\partial
      p_{j}}}s_{k}\|_{L^{2}}^{2}+\|p_{j}s_{k}\|^{2}_{L^{2}}\right]
\\
&&\hspace{3cm}-C_{b,g}^{2}\|s_{k}\|_{L^{2}}\left[\|\nabla^{F}_{\frac{\partial}{\partial
      p_{j}}}s_{k}\|_{L^{2}}+\|p_{j}s_{k}\|_{L^{2}}+\|s_{k}\|_{L^{2}}\right]\\
&\geq & 
\frac{1}{6b^{2}}
\sum_{j=1}^{d}\left[
  \|\nabla^{F}_{\frac{\partial}{\partial
      p_{j}}}s_{k}\|^{2}_{L^{2}}+\|p_{j}s_{k}\|^{2}_{L^{2}}\right]
-C^{3}_{b,g}\|s_{k}\|^{2}_{L^{2}}\,,
\end{eqnarray*}
for any $\delta>0$ and some
$C_{b,g}^{2},C_{b,g}^{3}>0$\,. By putting all together and absorbing
in a similar way the error terms,  there is a constant
$C_{b,g}>0$ such that
$$
\Real \langle s\,,\,
(C_{b,g}+\hat{B}^{\phi_{b}}_{\fkh})s\rangle \geq
\frac{1}{8b^{2}}\left[\sum_{j=1}^{d}\|\nabla^{F}_{\frac{\partial}{\partial
    p_{j}}}s\|_{L^{2}}^{2}+\|p_{j}s\|_{L^{2}}^{2}+\|s\|_{L^{2}}^{2}\right]\geq
\frac{1}{4b^{2}}\langle s\,,\,(1+\mathcal{O})s\rangle\,.
$$
\noindent\textbf{d)}
Let us first consider the effect of a truncation on $s\in
D(\hat{B}^{\phi_{b}}_{\fkh})$: Take $\chi\in
\mathcal{C}^{\infty}_{0}(\rz;\rz)$\,, $\chi\equiv 1$ in a neighborhood of
$0$ and set $\chi_{n}(t)=\chi(t/n)$\,. When $s\in
D(\hat{B}^{\phi_{b}}_{\fkh})$\,,
$\chi_{n}(\hat{\fkh})s\in
D(\hat{B}^{\phi_{b}}_{\fkh})$ while
\begin{eqnarray*}
  &&
     \hat{B}^{\phi_{b}}_{\fkh}\chi_{n}(\hat{\fkh})s=\chi_{n}(\hat{\fkh})\hat{B}^{\phi_{b}}_{\fkh}-\frac{1}{2}[\Delta^{\hat{g}}_{p},\chi_{n}(\hat{\fkh})]s\\
&&
\|[\Delta^{\hat{g}}_{p},\chi_{n}(\hat{\fkh})]s\|\leq
\frac{\|s\|_{L^{2}}}{n}\left[\sum_{k=1}^{d}\|\nabla^{F}_{\frac{\partial}{\partial
      p_{k}}}s\|_{L^{2}}\right]\leq 
C_{b,g} \frac{\Real\langle s\,,\,
  (C_{b,g}+\hat{B}^{\phi_{b}}_{\fkh})s\rangle}{n}
\end{eqnarray*}
implies that $\chi_{n}(\hat{\fkh})s$ converges to $s$ in
$D(\hat{B}^{\phi_{b}}_{\fkh})$ endowed with its graph
norm.
Now for $s_{N}=\chi_{N}(\fkh)s$ the problem is reduced to the
approximation of the
compactly supported element of
$\mathcal{E}_{loc}(\hat{B}^{\phi_{b}}_{\fkh},\hat{F}_{g})$ by elements
of $\mathcal{C}_{0,g}(\hat{F}_{g})$\,. By using a partition of unity
in $q$\,, $\sum_{k=1}^{K}\theta_{k}(q)\equiv 1$ with 
$$
\left[\hat{B}^{\phi_{b}}_{\fkh},\theta_{k}(q)\right]=g^{ij}(q)p_{j}\frac{\partial
\theta_{k}}{\partial
q^{i}}
$$
the problem is reduced to a compactly supported element $s_{N}$ of
 $$
\mathcal{E}_{loc}(\hat{B}^{\phi_{b}}_{\fkh},\hat{F}_{g}\big|_{X_{(-\varepsilon,\varepsilon)}})=(\hat{\Psi}^{g,g_{0}}_{X})_{*}\mathcal{E}_{loc}(\mathcal{A}^{g_{0},\kappa,\gamma}_{-b,\mathcal{M}'},F\big|_{X_{(-\varepsilon,\varepsilon)}})\,.
$$
{\red
Then the approximation of a compactly supported element $s_{N}\in
\mathcal{E}_{loc}(\hat{B}^{\phi_{b}}_{\fkh},\hat{F}_{g}\big|_{X_{(-\varepsilon,\varepsilon)}})$
results from Lemma~\ref{le:ElocBX}-\textbf{ii)}\,.}\\
\noindent\textbf{e)} By construction the  isomorphism 
$J_{b,g}:L^{2}(X;F)\to
L^{2}(X,F)$ given by 
$$
\forall s'\in L^{2}(X;F)\,,\quad 
\langle J_{b,g}s\,,\, s'\rangle_{L^{2}}=\langle s\,,\,s'\rangle_{\phi_{-b}}
$$
and its inverse
preserve $\mathcal{C}_{0,g}(\hat{F}_{g})$ which is a core for
$(\hat{B}^{\phi_{b}}_{\fkh},D(\hat{B}^{\phi_{b}}_{\fkh})$ and its
$L^{2}$-adjoint $(B^{\phi_{b}}_{\fkh})^{*}$\,.
The $L^{2}$-adjoint is the closure of the operator defined on
$\mathcal{C}_{0,g}(\hat{F}_{g})$ by
$$
\forall s,s'\in \mathcal{C}_{0,g}(\hat{F}_{g})\,,\quad \langle s\,,\,
\hat{B}^{\phi_{b}}_{\fkh}s'\rangle=\langle
(\hat{B}^{\phi_{b}}_{\fkh})^{*}s\,,\, s' \rangle\,.
$$
This gives
$$
\forall s,s'\in \mathcal{C}_{0,g}(\hat{F}_{g})\,,\quad
\langle J_{b,g}^{-1}s\,,\,
\hat{B}^{\phi_{b}}_{\fkh}s'\rangle_{\phi_{-b}}= 
\langle J_{b,g}^{-1}(\hat{B}^{\phi_{b}}_{\fkh})^{*}s\,,\,s'\rangle_{\phi_{-b}}\,.
$$
We deduce that the $\phi_{-b}$ left-adjoint, $(\hat{B}^{\phi_{b}}_{\fkh})^{\phi_{-b}}$ of
$\hat{B}^{\phi_{b}}_{\fkh}$ satisfies
$$
\forall s\in \mathcal{C}_{0,g}(\hat{F}_{g})\,,\quad 
(\hat{B}^{\phi_{b}}_{\fkh})^{\phi_{-b}}s=J_{b,g}^{-1}(\hat{B}^{\phi_{b}}_{\fkh})^{*}J_{b,g}s\,.
$$
Taking a test function $s'\in \mathcal{C}^{\infty}_{0}(X_{-}\cup
X_{+};F)$ allows to make the integration by part for $s\in
\mathcal{C}_{0,g}(\hat{F}_{g})$\,,
$$
\langle s\,,\, \hat{B}^{\phi_{b}}s'\rangle_{\phi_{-b}}
=\langle s\,,\,
(d^{\phi_{b}}_{\hat{\fkh}}d_{\hat{\fkh}}+d_{\hat{\fkh}}d^{\phi_{b}}_{\hat{\fkh}})s'\rangle_{\phi_{-b}}
=
\langle (d^{\phi_{-b}}_{\hat{\fkh}})d_{\hat{\fkh}}+d_{\hat{\fkh}}d^{\phi_{-b}}_{\hat{\fkh}})s\,,\,s'\rangle_{\phi_{-b}}
$$
without any boundary term. This gives 
$$
(\hat{B}^{\phi_{b}}_{\fkh})^{\phi_{-b}}s=J_{b,g}^{-1}(\hat{B}^{\phi_{b}}_{\fkh})^{*}J_{b,g}s=\hat{B}^{\phi_{-b}}_{\fkh}s
\quad\text{in}~\mathcal{D}'(X_{-}\cup X_{+},F)
$$
while the left-hand side belongs to $L^{2}(X;F)$\,. With $s\in
\mathcal{C}_{0,g}(\hat{F}_{g})\subset D(\hat{B}^{\phi_{-b}}_{\fkh})$ this gives
$$
(\hat{B}^{\phi_{b}}_{\fkh})^{\phi_{-b}}s=\hat{B}^{\phi_{-b}}_{\fkh}s\,,
$$
and, because $\mathcal{C}_{0,g}(\hat{F}_{g})$ is a core for both
operators,  the $\phi_{-b}$ left-adjoint of
$(\hat{B}^{\phi_{b}}_{\fkh}, D(\hat{B}^{\phi_{b}}_{\fkh}))$ equals 
$(\hat{B}^{\phi_{-b}}_{\fkh},D(\hat{B}^{\phi_{-b}}_{\fkh}))$\,.\\
\noindent\textbf{f)}
By construction $\hat{B}^{\phi_{b}}_{\fkh}$ commutes with $\Sigma_{\nu}$
as a differential operator on $X_{-}\cup X_{+}=X\setminus X'$\,, that
is in $\mathcal{D}'(X_{-}\cup X_{+};F)$\,. Meanwhile
$\mathcal{C}_{0,g}(\hat{F}_{g})$ is left invariant by
$\Sigma_{\nu}$\,. This proves
$$
\forall s\in \mathcal{C}_{0,g}(\hat{F}_{g})\,,\quad \Sigma_{\nu}\hat{B}^{\phi_{b}}_{\fkh}s=\hat{B}^{\phi_{b}}_{\fkh}\Sigma_{\nu}s\,.
$$
Since $\mathcal{C}_{0,g}(\hat{F}_{g})$ is a core for
$(\hat{B}^{\phi_{b}}_{\fkh},D(\hat{B}^{\phi_{b}}_{\fkh}))$\,, the
equality holds for all $s\in D(\hat{B}^{\phi_{b}}_{\fkh})$\,.
\end{proof}

\begin{definition}
\label{de:barBg}
In $L^{2}(X_{-};F\big|_{X_{-}})$\,, the operator $\overline{B}^{\phi}_{g,\alpha\mathcal{H}}$ is
defined  with the domain
$$
D(\overline{B}^{\phi}_{g,\alpha\mathcal{H}})
=\left\{s\in L^{2}(X_{-};F\big|_{X_{-}})\,,\quad s_{ev}\in D(\hat{B}^{\phi_{b}}_{\fkh})\right\}\,.
$$
\end{definition}
\begin{theorem}
\label{th:domainB} For $b\in \rz^{*}$ the operator 
$(\overline{B}^{\phi_{b}}_{\fkh},D(\overline{B}^{\phi_{b}}_{\fkh}))$ satisfies
the following properties
\begin{description}
\item[a)] It is closed and a section $s\in
  D(\overline{B}^{\phi_{b}}_{\fkh})$ has trace  $s\big|_{X'}\in
  L^{2}_{loc}(X';F\big|_{X'})$\,, $X'=\partial X_{-}$\,,
 such that $\hat{S}_{\nu}s\big|_{X'}=s\big|_{X'}$ where $\hat{S}_{\nu}$
is  defined by \eqref{eq:dehSnu1}.
\item[b)] The space 
$$
\mathcal{C}^{\infty}_{0}(\overline{X}_{-};F)\cap D(\overline{B}^{\phi_{b}}_{\fkh})=
\left\{s\in \mathcal{C}^{\infty}_{0}(\overline{X}_{-};F)\,,\quad \hat{S}_{\nu}s\big|_{X}=s\big|_{X'}\right\}
$$
is dense in $D(\overline{B}^{\phi_{b}}_{\fkh})$ endowed
with its graph norm.
\item[c)] The $\phi_{-b}$ left-adjoint of
  $(\overline{B}^{\phi_{b}}_{\fkh},
  D(\overline{B}^{\phi_{b}}_{\fkh}))$ equals
  $(\overline{B}^{\phi_{-b}}_{\fkh},
  D(\overline{B}^{\phi_{-b}}_{\fkh}))$\,.
\item[d)] There  exists a constant $C_{b,g}>0$ such that
$$
\Real \langle s\,,\, (C_{b,g}+\overline{B}^{\phi_{b}}_{\fkh})s\rangle
\geq \frac{1}{4b^{2}}\langle s\,,\, (1+\mathcal{O})s\rangle
$$
holds for all $s\in D(\overline{B}^{\phi_{b}}_{\fkh})$\,.
\item[e)] The operator $(C_{b,g}+\overline{B}^{\phi_{b}}_{\fkh},
  D(\overline{B}^{\phi_{b}}_{\fkh}))$ is maximal accretive and 
 estimate
$$
\left.
  \begin{array}[c]{l}
\sum_{j=1}^{d}[\|\nabla^{F}_{\frac{\partial}{\partial
    p_{j}}}s\|_{L^{2}}+
\|p_{j}s\|_{L^{2}}]\\
+\|s\|_{\mathcal{W}^{1/3}}+\|\langle
p\rangle_{q}^{-1}s\big|_{X'}\|_{L^{2}(X',|p_{1}|dv_{X'})}\\
+\langle \lambda\rangle^{1/4}\|s\|
  \end{array}
\right\}
  \leq
  C'_{b,g}\|(\overline{B}^{\phi_{b}}_{\fkh}+C'_{b,g}+i\lambda)s\|_{L^{2}}\quad\,,\quad
$$
holds for some $C'_{b,g}>0$\,, all $\lambda\in \rz$ and all $s\in
D(\overline{B}^{\phi_{b}}_{\fkh})$\,. In particular
$(\overline{B}^{\phi_{b}}_{\fkh}, D(\overline{B}^{\phi_{b}}_{\fkh}))$
has a compact resolvent.
\item[f)] Finally the domain $D(\overline{B}^{\phi_{b}}_{\fkh})$
  equals
$$
D(\overline{B}^{\phi_{b}}_{\fkh})=
\left\{s\in L^{2}(X_{-};F)\,,\quad
  \nabla^{F}_{\frac{\partial}{\partial p_{1}}}s  \,,\,  B^{\phi_{b}}_{\fkh}s\in
  L^{2}(X_{-};F)\quad,\quad
\hat{S}_{\nu}s\big|_{X'}=s\big|_{X'}\right\}\,.
$$
\end{description}
\end{theorem}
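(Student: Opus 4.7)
The strategy is to transfer all properties from the already-established $(\hat{B}^{\phi_{b}}_{\fkh},D(\hat{B}^{\phi_{b}}_{\fkh}))$ on $L^{2}(X;\hat{F}_{g})$ via the unitary identification $s\mapsto s_{ev}/\sqrt{2}$ between $L^{2}(X_{-};F)$ and $L^{2}_{ev}(X;\hat{F}_{g})$. Because $\hat{B}^{\phi_{b}}_{\fkh}$ preserves parity (Proposition~\ref{pr:domhatB}-f), the restriction $\hat{B}^{\phi_{b}}_{\fkh}\big|_{L^{2}_{ev}}$ is itself closed, densely defined, and unitarily equivalent to $\overline{B}^{\phi_{b}}_{\fkh}$. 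For (a) the closedness is then immediate; the trace $s\big|_{X'}\in L^{2}_{loc}(X';F\big|_{X'})$ exists by Proposition~\ref{pr:domhatB}-a applied to $s_{ev}$, and the relation $\hat{S}_{\nu}s\big|_{X'}=s\big|_{X'}$ follows from Proposition~\ref{pr:partrace}-b, which is exactly the trace version of $\Sigma_{\nu}s_{ev}=\nu s_{ev}$ on $X'$. For (b) one intersects the core $\mathcal{C}_{0,g}(\hat{F}_{g})$ of Proposition~\ref{pr:domhatB}-d with $L^{2}_{ev}(X;\hat{F}_{g})$; averaging $(u+\Sigma_{\nu}u)/2$ shows that even cut-off smooth sections are dense, and their restriction to $X_{-}$ gives exactly $\mathcal{C}^{\infty}_{0}(\overline{X}_{-};F)\cap D(\overline{B}^{\phi_{b}}_{\fkh})$.

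For (c), the $\phi_{-b}$ left-adjoint is computed by conjugating the $L^{2}$-adjoint with $J_{b,g}^{-1}$ as in Proposition~\ref{pr:domhatB}-e; since the whole construction is compatible with $\Sigma_{\nu}$ (because $\Sigma_{\nu}$ commutes with $\phi_{b}$ up to the same sign), the even-restricted adjoint of $\hat{B}^{\phi_{b}}_{\fkh}$ is $\hat{B}^{\phi_{-b}}_{\fkh}$ restricted to even sections, which reads $\overline{B}^{\phi_{-b}}_{\fkh}$ on $L^{2}(X_{-};F)$. Statement (d) is a direct transfer of Proposition~\ref{pr:domhatB}-c, using that $\|s_{ev}\|_{L^{2}(X)}^{2}=2\|s\|_{L^{2}(X_{-})}^{2}$ and the same factor for $\langle s_{ev},(1+\mathcal{O})s_{ev}\rangle$.

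The main content is (e). The sectorial estimate in (d) together with accretivity of $C_{b,g}+\overline{B}^{\phi_{b}}_{\fkh}$ gives maximal accretivity once surjectivity of $C_{b,g}+\overline{B}^{\phi_{b}}_{\fkh}+\lambda_{0}$ is shown for some large $\lambda_{0}$, and this surjectivity follows from that of the even restriction of $\hat{B}^{\phi_{b}}_{\fkh}$ (Proposition~\ref{pr:domhatB}). The $\mathcal{W}^{1/3}$ estimate and the factor $\langle\lambda\rangle^{1/4}$ are obtained by importing the subelliptic/pseudospectral estimates of \cite{Nie} for the scalar principal part: after the dyadic partition $\sum\chi_{k}^{2}(\hat{\fkh})\equiv 1$ already used in the proof of Proposition~\ref{pr:domhatB}-c, each localized piece $\chi_{k}(\hat{\fkh})s_{ev}$ satisfies the GKFP parameter-dependent estimate \eqref{eq:maxhyp} with $\mu=0$, and the curvature error terms are absorbed using Proposition~\ref{pr:IPPloc}. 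The boundary trace estimate $\|\langle p\rangle_{q}^{-1}s\big|_{X'}\|_{L^{2}(X',|p_{1}|dv_{X'})}$ is the subtle point: I would derive it by applying the one-dimensional trace inequality of \cite{Nie}-Chap~2 (in the form encountered in Lemma~\ref{le:ElocBXpm}) to the scalar model $p_{1}\partial_{q^{1}}+(-\Delta_{p_{1}}+p_{1}^{2}+1)/2$ that localizes $\hat{B}^{\phi_{b}}_{\fkh}$ after the isomorphism $\hat{\Psi}^{g,g_{0}}_{X}$ and the dyadic decomposition, then rescaling in $p$ to get the weight $\langle p\rangle_{q}^{-1}$ and $|p_{1}|dv_{X'}$ on the boundary side. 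Compactness of the resolvent then follows from $\mathcal{W}^{1/3}\hookrightarrow L^{2}$ via Rellich on $\mathcal{W}^{\mu}$-spaces combined with the confinement from $\|p_{j}s\|_{L^{2}}\leq C\|(C_{b,g}+\overline{B}^{\phi_{b}}_{\fkh})s\|_{L^{2}}$.

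For (f), one direction is clear: $s\in D(\overline{B}^{\phi_{b}}_{\fkh})$ implies $\nabla^{F}_{\partial/\partial p_{1}}s,B^{\phi_{b}}_{\fkh}s\in L^{2}(X_{-};F)$ (from (e)) and $\hat{S}_{\nu}s\big|_{X'}=s\big|_{X'}$ (from (a)). For the reverse inclusion, given such $s$, Lemma~\ref{le:ElocBXpm} provides a trace $s\big|_{X'}$ of $s$ and analogously of $\Sigma_{\nu}s$ extended to $X_{+}$; the identity $\hat{S}_{\nu}s\big|_{X'}=s\big|_{X'}$ is exactly the matching condition making $s_{ev}$ satisfy the interface equality from Proposition~\ref{pr:traceAhat}-ii, hence $s_{ev}\in\mathcal{E}_{loc}(\hat{B}^{\phi_{b}}_{\fkh},\hat{F}_{g}\big|_{X_{(-\varepsilon,\varepsilon)}})$; since moreover $\hat{B}^{\phi_{b}}_{\fkh}s_{ev}\in L^{2}(X;\hat{F}_{g})$, we conclude $s_{ev}\in D(\hat{B}^{\phi_{b}}_{\fkh})$ and thus $s\in D(\overline{B}^{\phi_{b}}_{\fkh})$. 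The hard point here is precisely the equivalent characterization via Proposition~\ref{pr:traceAhat}-ii, which requires the $\nabla^{F}_{\partial/\partial p_{1}}s\in L^{2}$ hypothesis to match Lemma~\ref{le:ElocBXpm}'s assumption \textemdash{} this is why the vertical derivative in $p_{1}$ explicitly appears in the statement of (f).
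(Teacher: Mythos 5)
Your proposal follows the paper's overall strategy -- transfer a), b), c), d) via the unitary $s\mapsto s_{ev}/\sqrt{2}$, reduce e) to the scalar boundary KFP analysis, and prove f) through the trace characterization of $\mathcal{E}_{loc}$ for LGKFP operators -- and for a), b), c), d), f) your arguments essentially reproduce the paper's own. For f) you invoke Proposition~\ref{pr:traceAhat}-\textbf{ii)} directly, while the paper re-derives the needed equivalence via the jump formula; this is equivalent and arguably cleaner.

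The gap is in e). You claim that maximal accretivity of $C_{b,g}+\overline{B}^{\phi_{b}}_{\fkh}$ is obtained because ``surjectivity \ldots follows from that of the even restriction of $\hat{B}^{\phi_{b}}_{\fkh}$ (Proposition~\ref{pr:domhatB})''. But Proposition~\ref{pr:domhatB} establishes only closedness, density of the core, the quadratic lower bound c), the $\phi_{-b}$-adjoint identity, and parity -- not maximal accretivity or surjectivity. In fact the paper's logical direction is the opposite: Corollary~\ref{cor:sub}, which is where maximal accretivity of the doubled operator appears, is deduced \emph{from} Theorem~\ref{th:domainB}-e), not the other way around. So your argument is circular at this point. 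The surjectivity (hence maximal accretivity), the subelliptic gain $\mathcal{W}^{1/3}$, the pseudospectral factor $\langle\lambda\rangle^{1/4}$, and the quantitative boundary-trace bound $\|\langle p\rangle_{q}^{-1}s|_{X'}\|_{L^{2}(X',|p_{1}|dv_{X'})}$ must all be imported from \cite{Nie}-Theorem~1.1 (the boundary-value problem for scalar Kramers-Fokker-Planck operators) after the reduction to the scalar principal part, as the paper does. Two related imprecisions: you apply \eqref{eq:maxhyp} to the dyadic pieces $\chi_{k}(\hat{\fkh})s_{ev}$, but \eqref{eq:maxhyp} is Lebeau's \emph{interior} maximal hypoellipticity for closed manifolds, it does not hold for pieces touching $X'$, and the dyadic partition is in $p$ so it does not even separate the boundary region; and Lemma~\ref{le:ElocBXpm} only produces a qualitative trace in $L^{2}(\rz,|p_{1}|dp_{1};\mathcal{D}^{-2}_{T^{*}Q'})$, not the quantitative $L^{2}(X',|p_{1}|dv_{X'})$ estimate with the operator norm on the right -- that sharper statement is the content of \cite{Nie}-Theorem~1.1.
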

\begin{proof}
  The statements \textbf{a),b),c),d)} are straightforward
  consequences of Proposition~\ref{pr:domhatB} and
  Definition~\ref{de:barBg} after recalling that $L^{2}(X_{-};F)\ni
  s\mapsto 2^{-1/2}s_{ev}\in L^{2}_{ev}(X;F)$ is unitary.\\
The results of \textbf{e)} are deduced from the results of \cite{Nie}
for scalar Kramers-Fokker-Planck operators. After localization in $q$
after partition of unity and a possible change of connection, one can
write
$$
B^{\phi_{b}}_{\fkh}(s_{I}^{J}e^{I}\hat{e}_{J})
= \left([g^{ij}(q)p_{j}e_{i}+\mathcal{O}]s_{I}^{J}\right)e_{I}\hat{e}^{J}+\mathcal{M}(s_{I}^{J}e^{I}\hat{e}_{J})
$$
where $\mathcal{M}$ is a global admissible perturbation (see Definition~\ref{de:GKFP}), such that
$$
\|\mathcal{M} s\|_{L^{2}}\leq C\Real \langle s\,,\, B^{\phi_{b}}_{\fkh}s\rangle\,.
$$
Meanwhile the boundary  conditions written
$$
s_{I}^{J}(0,q',p_{1},p')=\nu(-1)^{|\left\{1\right\}\cap
  I|+|\left\{1\right\}\cap J|}s_{I}^{J}(0,q',-p_{1},p')
$$
where $\nu$ can be replaced by $\pm 1$ are the ones considered in
\cite{Nie}-Theorem~1.1. Finally note that the a priori condition, 
$\nabla^{F}_{\frac{\partial}{\partial p_{j}}}s_{I}^{J}\,,
p_{j}s_{I}^{J} \in L^{2}(X_{-};\cz^{d_{\fkf}})$ is actually provided
by the integration by part \textbf{d)}. The subelliptic estimates and
the maximal accretivity proved in \cite{Nie} for scalar
Kramers-Fokker-Planck operators, which is stable under
admissible global perturbations $\mathcal{M}$ while adapting the
constants in the inequalities, are thus valid for
$(\hat{B}^{\phi_{b}}_{\fkh}, D(B^{\phi_{b}}_{\fkh}))$\,.\\
\noindent\textbf{f)} Clearly 
$$
D(\overline{B}^{\phi_{b}}_{\fkh})\subset 
\left\{s\in L^{2}(X_{-};F)\,,\quad
  \nabla^{F}_{\frac{\partial}{\partial p_{1}}}s  \,,\,  B^{\phi_{b}}_{\fkh}s\in
  L^{2}(X_{-};F)\quad,\quad
\hat{S}_{\nu}s\big|_{X'}=s\big|_{X'}\right\}\,.
$$
For the reverse inclusion, the condition 
$s\in \mathcal{E}_{loc}(B^{\phi_{b}}_{\fkh},F\big|_{X_{(-\varepsilon,0]}})\cap
\mathcal{E}_{loc}(\nabla^{F}_{\frac{\partial}{\partial p_{1}}},
F\big|_{X_{(-\varepsilon,0]}})$ and Lemma~\ref{le:ElocBXpm} imply that $s$ admits a
trace along $X'=\partial X_{-}$\,. 
{\red The condition
$\hat{S}_{\nu}s\big|_{X'}=s\big|_{X'}$ makes sense and this ensures that
$s_{ev}$ admits a trace in $L^{2}_{loc}(X';\hat{F}_{g}\big|_{X'})$\,. 
 By using  tha map $\hat{\Psi}^{g,g_{0}}_{X}$\,, we deduce that
 $\omega_{ev}=(\Psi^{g,g_{0}}_{X})s_{ev}$ admits a trace in $L^{2}_{loc}(X';F)$ while
$$
(\hat{\Psi}^{g,g_{0}}_{X})_{*}^{-1}\hat{B}^{\phi_{b}}_{\fkh}(\hat{\Psi}^{g,g_{0}}_{X})_{*}=\mathcal{A}^{g_{0},\kappa,\gamma}_{-b,\mathcal{M}'}\,.
$$
Because $\mathcal{A}^{g_{0},\kappa,\gamma}_{-b,\mathcal{M}'}$ is
contains only one derivative $\frac{\partial}{\partial q^{1}}$ with
while all the coefficients are $\mathcal{C}^{\infty}$ above
$\overline{X}_{-}$ and $\overline{X}_{+}$\,, the jump formula says
$$
\mathcal{A}^{g_{0},\mathrm{Id},g_{0}}_{-b,0}\omega_{ev}=\mathcal{A}^{g,\mathrm{Id},g_{0}}_{-b,0}\omega_{ev}\big|_{X\setminus
X'} \quad\text{in}~\mathcal{D}'(X;F)\,,
$$
while the right-hand side belongs to
$L^{2}(X_{(-\varepsilon,\varepsilon)};F\big|_{X_{(-\varepsilon,\varepsilon)}})$\,. By
going back to $s_{ev}$\,, this implies 
 $s_{ev}\in D(\hat{B}^{\phi_{b}}_{\fkh})$\,.
}
\end{proof}
The result of Theorem~\ref{th:domainB}-\textbf{e)} can be translated
for the operator 
$\hat{B}^{\phi_{b}}_{\fkh}$ after recalling
$$
D(\hat{B}^{\phi_{b}}_{\fkh})=L^{2}_{ev}(X;E)\cap
D(B^{\hat{\phi}})\oplus L^{2}_{odd}(X;E)\cap D(\hat{B}^{\phi_{b}}_{\fkh})
$$
and writing $s=s_{ev}+s_{odd}$. Although only $s_{ev}$ has been
treated,  
$s_{odd}$ actually enters in the same framework after
replacing  the unitary involution $\nu$ of $\fkf|_{Q'}$
with $-\nu$\,.
\begin{corollary}
\label{cor:sub} The results of Theorem~\ref{th:domainB}-\textbf{e)} hold,
mutatis mutandis, for the operator
$(\hat{B}^{\phi_{b}}_{\fkh}, D(\hat{B}^{\phi_{b}}_{\fkh}))$\,.
\end{corollary}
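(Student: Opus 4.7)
The plan is to reduce the statement to Theorem~\ref{th:domainB}-\textbf{e)} applied twice, on the even and odd parts of $s$ with respect to $\Sigma_{\nu}$, after noting that the odd part is handled by the same theorem applied with $\nu$ replaced by $-\nu$.

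More precisely, I would first invoke Proposition~\ref{pr:domhatB}-\textbf{f)}, which gives the orthogonal splitting
\begin{equation*}
D(\hat{B}^{\phi_{b}}_{\fkh})=\bigl[L^{2}_{ev}(X;F)\cap D(\hat{B}^{\phi_{b}}_{\fkh})\bigr]\oplus\bigl[L^{2}_{odd}(X;F)\cap D(\hat{B}^{\phi_{b}}_{\fkh})\bigr],
\end{equation*}
together with the fact that $\hat{B}^{\phi_{b}}_{\fkh}$ preserves the parity and commutes with $\Sigma_{\nu}$, so that if $s=s_{ev}+s_{odd}$ then $\hat{B}^{\phi_{b}}_{\fkh}s=\hat{B}^{\phi_{b}}_{\fkh}s_{ev}+\hat{B}^{\phi_{b}}_{\fkh}s_{odd}$ is an $L^{2}$-orthogonal decomposition, and similarly for $(C_{b,g}+\hat{B}^{\phi_{b}}_{\fkh}-i\lambda)s$.

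For the even part, the unitary isomorphism $L^{2}(X_{-};F)\ni s\mapsto 2^{-1/2}s_{ev}\in L^{2}_{ev}(X;F)$ sends $D(\overline{B}^{\phi_{b}}_{g,\fkh})$ onto $L^{2}_{ev}(X;F)\cap D(\hat{B}^{\phi_{b}}_{\fkh})$ by Definition~\ref{de:barBg}, and intertwines the two operators. Hence the subelliptic estimates of Theorem~\ref{th:domainB}-\textbf{e)} for $\overline{B}^{\phi_{b}}_{g,\fkh}$ transfer verbatim to $s_{ev}$, including the boundary trace term $\|\langle p\rangle_{q}^{-1}s_{ev}\big|_{X'}\|_{L^{2}(X',|p_{1}|dv_{X'})}$ which by construction controls the trace of $s$ on $X'$ via $s_{ev}|_{X'}$. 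For the odd part, I would observe that replacing the flat unitary involution $\nu$ on $\fkf\big|_{Q'}$ by $-\nu$ exchanges $L^{2}_{ev}$ and $L^{2}_{odd}$ (as pointed out just after Definition~\ref{de:evodloc}), while keeping $\hat{B}^{\phi_{b}}_{\fkh}$ and its domain definition formally unchanged as a differential operator on $X_{-}\cup X_{+}$. Applying Theorem~\ref{th:domainB}-\textbf{e)} with $-\nu$ then yields the same subelliptic estimate for $s_{odd}$.

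Finally I would sum the two estimates using orthogonality (Pythagoras in $L^{2}$ and for $\mathcal{W}^{1/3}$, plus subadditivity of the trace term) to obtain the required bound for $s=s_{ev}+s_{odd}$ with a new constant $C''_{b,g}$ depending on $C'_{b,g}$ and $C'_{b,g,-\nu}$. Maximal accretivity of $C_{b,g}+\hat{B}^{\phi_{b}}_{\fkh}$ follows by the same decomposition: accretivity is inherited from Proposition~\ref{pr:domhatB}-\textbf{c)}, and the range condition $\mathrm{Ran}(1+C_{b,g}+\hat{B}^{\phi_{b}}_{\fkh})=L^{2}(X;F)$ follows from its two parity-restricted counterparts, each given by the maximal accretivity stated in Theorem~\ref{th:domainB}-\textbf{e)} (applied with $\nu$ and with $-\nu$). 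The compactness of the resolvent is immediate from the embedding $\mathcal{W}^{1/3}(X;\hat{F}_{g})\cap \{\|p_{j}s\|_{L^{2}}<\infty\}\hookrightarrow L^{2}(X;\hat{F}_{g})$ being compact. The only minor point requiring care is that the trace term in the estimate, written in \textbf{e)} with $s\big|_{X'}$, be interpreted on $\hat{F}_{g}\big|_{X'}$ via the identification $(e_{-},\hat{e}_{-})|_{X'}=(e_{+},\hat{e}_{+})|_{X'}$; this is automatic from the construction of $\hat{F}_{g}$ in Subsection~\ref{sec:dbcot} and from the fact that $s_{ev}|_{X'}$ and $s_{odd}|_{X'}$ together determine $s|_{X'}$.
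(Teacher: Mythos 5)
Your proposal is correct and follows exactly the paper's argument, which appears as the one-paragraph remark immediately before the Corollary: decompose $s$ into even and odd parts via the $\Sigma_{\nu}$-orthogonal splitting of Proposition~\ref{pr:domhatB}-\textbf{f)}, handle the even part by the unitary conjugation $s\mapsto 2^{-1/2}s_{ev}$ to $\overline{B}^{\phi_{b}}_{\fkh}$ covered by Theorem~\ref{th:domainB}-\textbf{e)}, and reduce the odd part to the same framework by replacing $\nu$ with $-\nu$. You have merely spelled out the bookkeeping (orthogonality, summation of estimates, assembly of the trace term, and the range argument for maximal accretivity) that the paper leaves implicit.
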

The summary about ``cuspidal semigroups'' (terminology introduced in
\cite{Nie}) in Subsection~\ref{sec:hyposmooth} applies now to
$(\overline{B}^{\phi_{b}}_{\fkh},D(\overline{B}^{\phi_{b}}_{\fkh}))$
and to $(\hat{B}^{\phi_{b}}_{\fkh},D(\hat{B}^{\phi_{b}}_{\fkh}))$ with
the exponent $r=1/4$\,.
\begin{corollary}
\label{cor:semigroup}
 For $b\in \rz^{*}$ and $C_{b,g}>0$ large
enough, the operators
$(A=C_{b,g}+\overline{B}^{\phi_{b}}_{\fkh},
D(\overline{B}^{\phi_{b}}_{\fkh}))$ and
$(A=C_{b,g}+\hat{B}^{\phi_{b}}_{\fkh},D(\hat{B}^{\phi_{b}}_{\fkh}))$
are maximal accretive and their resolvent are compact.\\
Their spectrum is contained in 
$$
\left\{z\in \cz\,, \Real z \geq C_{b,g}^{-1}\langle \Imag z\rangle^{1/4}\right\}\,.
$$
If $\gamma_{b,g}$ is the contour $\left\{z\in\cz\,, \Real
  z=\frac{1}{2 C_{b,g}}\langle \Imag z\rangle^{1/4}\right\}$ oriented from $+i\infty$
to $-i\infty$\,, the semigroups are given by
$$
\forall t>0\,,   e^{-tA}=\frac{1}{2i\pi}\int_{\gamma_{b,g}}\frac{e^{-tz}}{z-A}~dz\,.
$$
Those semigroups satisfy in particular $e^{-tA}\in D(A^{N})$ for any $t>0$\,.
\end{corollary}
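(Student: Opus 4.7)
The plan is to read the corollary as the standard ``cuspidal semigroup'' package applied to the two operators, since the only substantial analytic inputs---maximal accretivity with dense domain and the pseudospectral subelliptic estimate $\langle\lambda\rangle^{1/4}\|s\|_{L^{2}}\leq C'_{b,g}\|(A-i\lambda)s\|_{L^{2}}$ for $A=C_{b,g}+\overline{B}^{\phi_{b}}_{\fkh}$, resp.\ $A=C_{b,g}+\hat{B}^{\phi_{b}}_{\fkh}$---are already granted by Theorem~\ref{th:domainB}-\textbf{e)} and Corollary~\ref{cor:sub}. From here I would follow the abstract scheme of \cite{HerNi}\cite{HeNi}\cite{Nie}.

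First I would establish the spectral localization. For $z=\alpha+i\beta$ with $\alpha\leq (2C_{b,g})^{-1}\langle\beta\rangle^{1/4}$, one combines the accretivity $\Real\langle s,(A-\alpha)s\rangle\geq -\alpha\|s\|^{2}$ with the subelliptic estimate applied to $A-i\beta$: a Pythagorean split of $\|(z-A)s\|_{L^{2}}^{2}$ into real and imaginary parts gives
\begin{equation*}
\|(z-A)s\|_{L^{2}}\geq (2C_{b,g})^{-1}\langle\beta\rangle^{1/4}\|s\|_{L^{2}}.
\end{equation*}
A priori injectivity plus density of the range of $\alpha+A$ for $\alpha>0$ (by maximal accretivity) upgrades this to bijectivity on the complement of $\{\Real z\geq C_{b,g}^{-1}\langle\Imag z\rangle^{1/4}\}$, with the resolvent bound $\|(z-A)^{-1}\|\leq 2C_{b,g}\langle\Imag z\rangle^{-1/4}$ on $\gamma_{b,g}$ and to its left.

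Next, compactness of $A^{-1}$: the estimate of Theorem~\ref{th:domainB}-\textbf{e)} yields
$\|s\|_{\mathcal{W}^{1/3}}+\|(1+\mathcal{O})^{1/2}s\|_{L^{2}}\lesssim \|As\|_{L^{2}}$, so $A^{-1}$ is continuous from $L^{2}$ into the space $\mathcal{W}^{1/3}\cap\{s:\langle p\rangle_{q}s\in L^{2}\}$. Because $Q$ is compact, the weight $\langle p\rangle_{q}^{-1}$ together with local $W^{1/3,2}$-regularity (Proposition~\ref{pr:indepWmu}-\textbf{d)}) provides a compact embedding into $L^{2}$ by a standard Rellich argument (decompose with a dyadic partition in $p$ and use compactness of $W^{1/3,2}_{comp}\hookrightarrow L^{2}_{comp}$ on each piece, the weighted tail being controlled by the factor $\langle p\rangle_{q}^{-1}$).

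Finally, the semigroup and its regularizing property. Maximal accretivity produces a contractive $C_{0}$-semigroup $e^{-tA}$; the cuspidal resolvent bound allows one to deform in the Dunford calculus the usual vertical contour to $\gamma_{b,g}$, producing the stated integral formula. On $\gamma_{b,g}$ one has $|z|\leq C(1+|\Imag z|)$ and $|e^{-tz}|=\exp(-t\Real z)\leq \exp\bigl(-\tfrac{t}{2C_{b,g}}\langle\Imag z\rangle^{1/4}\bigr)$, so for every $N\in\nz$ the formally differentiated integrand
\begin{equation*}
z^{N}e^{-tz}(z-A)^{-1}
\end{equation*}
has operator norm dominated by $C(1+|\Imag z|)^{N-1/4}\exp\bigl(-\tfrac{t}{2C_{b,g}}\langle\Imag z\rangle^{1/4}\bigr)$, which is integrable along $\gamma_{b,g}$ for every $t>0$. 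This both proves the integral representation and identifies a bounded extension of $A^{N}e^{-tA}$, hence $e^{-tA}L^{2}\subset D(A^{N})$. The principal technical obstacle, among otherwise routine steps, is the compactness of the resolvent, where the non-compactness of the fibre $T^{*}_{q}Q$ forces one to rely on the weighted $\mathcal{O}$-control rather than on the $\mathcal{W}^{1/3}$-regularity alone.
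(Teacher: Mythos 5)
Your proof is correct and takes the same route the paper gestures at: the paper's own ``proof'' is the one-sentence remark just before the corollary, namely that the cuspidal-semigroup machinery of \cite{HerNi}\cite{HeNi}\cite{Nie} applies with exponent $r=1/4$, the two inputs (maximal accretivity, pseudospectral bound $\langle\lambda\rangle^{1/4}\|s\|\leq C\|(A-i\lambda)s\|$) being supplied by Theorem~\ref{th:domainB}-e) and Corollary~\ref{cor:sub}. You simply fill in the details that the references carry. Two small remarks. First, the phrase ``Pythagorean split'' is not quite right since $A$ is not self-adjoint and the cross term does not cancel; what you actually use (and what your displayed inequality reflects) is the triangle inequality $\|(A-z)s\|\geq\|(A-i\beta)s\|-\alpha\|s\|$ combined with the subelliptic estimate for $\alpha\leq(2C_{b,g})^{-1}\langle\beta\rangle^{1/4}$, together with plain accretivity for $\alpha<0$. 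Second, compactness of the resolvent is already asserted in Theorem~\ref{th:domainB}-e) and Corollary~\ref{cor:sub}, so it needs no fresh proof in this corollary and should not be singled out as the principal obstacle; re-deriving it as you do via the $\mathcal{W}^{1/3}$-and-$(1+\mathcal{O})^{1/2}$ control is harmless but redundant.
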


\subsection{Bootstrapped regularity for the powers of the 
 resolvent and the semigroup}
\label{sec:bootstrap}
The  $\mathcal{W}^{1/3}(X_{-};\hat{F})$ (resp. $\mathcal{W}^{1/3}(X;\hat{F}_{g})$)
 global regularity estimate of  ~\ref{th:domainB}-\textbf{e)} (resp. Corollary~\ref{cor:sub}) for $s\in D(\overline{B}_{\fkh})$ (resp. $s\in
D(\hat{B}^{\phi_{b}}_{\fkh})$) does not correspond to the
 maximal hypoellipticity
result, $s\in \mathcal{W}^{2/3}(X;F)$ obtained by Lebeau in
\cite{Leb2} in the smooth case without boundary.
 As pointed out in
\cite{Nie} it is related to the extrinsic curvature of $\partial Q$
and it is not yet known whether it can be improved. 
However the estimates of Theorem~\ref{th:domainB} and
Corollary~\ref{cor:sub} suffice to get higher regularity estimates for
high enough powers of the resolvent and subsequently for the semigroup.\\
We start with weighted estimates which do not use any other regularity
properties than the one stated in Theorem~\ref{th:domainB} and
Corollary~\ref{cor:sub}.
\begin{lemma}
\label{le:weightB} Let $b\in \rz^{*}$ and set
$A=\overline{B}^{\phi_{b}}_{\fkh}$ or
$A=\hat{B}^{\phi_{b}}_{\fkh}$\,.
For any $n\in\nz$\,, there exists a constant $C_{n,b,g}>0$ such that
\begin{eqnarray*}
  && \langle p\rangle_{q}^{m+1}(C+A)^{-1}\langle
     p\rangle_{q}^{-m}\quad,\quad (1+\mathcal{O})^{1/2}\langle
     p\rangle_{q}^{m}(C+A)^{-1}\langle p\rangle_{q}^{-m}\\
  \text{and}&&\langle p\rangle_{q}^{m}(C+A)^{-m}\quad
               (1+\mathcal{O})^{1/2}\langle p\rangle_{q}^{m}(C+A)^{-m-1}
\end{eqnarray*}
are bounded for all $m$\,, $0\leq m\leq n$ and all $C\geq C_{n,b,g}$\,.\\
Finally for any $t>0$ and any $n\in\nz$\,, $(1+\mathcal{O})^{1/2}
\langle
p\rangle_{q}^{n}e^{-tA}$ is a bounded operator.
\end{lemma}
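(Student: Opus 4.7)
The plan is to argue by induction on $m\in\nz$, using the maximal subelliptic estimate
$\|(1+\mathcal{O})^{1/2}u\|_{L^{2}}\leq C_{b,g}'\|(C+A)u\|_{L^{2}}$ for $u\in D(A)$ supplied by Theorem~\ref{th:domainB}-e) and Corollary~\ref{cor:sub}, together with the elementary comparison $\|\langle p\rangle_{q}u\|_{L^{2}}\leq C\|(1+\mathcal{O})^{1/2}u\|_{L^{2}}$ which follows from the definition $\mathcal{O}=\frac{-\Delta_{p}+|p|_{q}^{2}+2N_{V}-d}{2}$. The base case $m=0$ of items (1) and (2) is then immediate upon applying these two inequalities to $u=(C+A)^{-1}f$.

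For the inductive step I would exploit the identity $(C+A)(\langle p\rangle_{q}^{m}v)=\langle p\rangle_{q}^{m}(C+A)v+[A,\langle p\rangle_{q}^{m}]v$. Using the Weitzenbock formula~\eqref{eq:weitzenbock} together with the key observations $e_{i}\langle p\rangle_{q}=0$ and $\partial_{p_{j}}\langle p\rangle_{q}^{s}=s\,g^{jk}(q)p_{k}\langle p\rangle_{q}^{s-2}$, one checks that only the vertical Laplacian $-\Delta^{V}$ and the first-order $\omega(\nabla^{\fkf},g^{\fkf})\nabla^{F}_{\hat e}$ terms contribute nontrivially, so that $[A,\langle p\rangle_{q}^{m}]$ is a first-order vertical differential operator with coefficients of order $\langle p\rangle_{q}^{m-1}$ and satisfies
\[
\|[A,\langle p\rangle_{q}^{m}]v\|_{L^{2}}\leq C_{m}\bigl(\|(1+\mathcal{O})^{1/2}\langle p\rangle_{q}^{m-1}v\|_{L^{2}}+\|\langle p\rangle_{q}^{m-1}v\|_{L^{2}}\bigr).
\]
Taking $v=(C+A)^{-1}g$ for $g$ with $\langle p\rangle_{q}^{m}g\in L^{2}$ and invoking the inductive hypothesis at level $m-1$, the right-hand side is dominated by $C_{m-1}\|\langle p\rangle_{q}^{m}g\|_{L^{2}}$. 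Feeding this into the subelliptic estimate applied to $\langle p\rangle_{q}^{m}v$ yields the $m$-th level of items (1) and (2).

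The main obstacle will be justifying that $\langle p\rangle_{q}^{m}v$ lies in $D(A)$, which is needed before the subelliptic estimate can be used. I plan to handle this by a cutoff approximation: set $\phi_{R,m}(x)=\chi(\langle p\rangle_{q}/R)\langle p\rangle_{q}^{m}$ with $\chi\in\mathcal{C}^{\infty}_{0}([0,\infty);[0,1])$ equal to $1$ near the origin. Because $\phi_{R,m}$ depends on $p$ only through $|p|_{q}^{2}$ it commutes with $\hat{S}_{\nu}$, hence preserves the interface/boundary condition characterizing $D(A)$; combined with $v\in D(A)\subset\mathcal{E}_{loc}(\nabla^{F}_{\partial/\partial p_{1}},F\big|_{\overline{X}_{-}})$ from Theorem~\ref{th:domainB}-f), this gives $\phi_{R,m}v\in D(A)$. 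The estimates $|\partial_{p_{j}}^{k}\phi_{R,m}|\lesssim\langle p\rangle_{q}^{m-k}$ are uniform in $R$, so the commutator bound above applies to $[A,\phi_{R,m}]v$ with $R$-independent constants. Running the argument with $\phi_{R,m}$ in place of $\langle p\rangle_{q}^{m}$ produces an $R$-uniform bound, and Fatou's lemma on letting $R\to\infty$ gives the estimate with $\langle p\rangle_{q}^{m}$.

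Items (3) and (4) will follow by composition: write $\langle p\rangle_{q}^{m}(C+A)^{-m}=\prod_{k=1}^{m}\bigl(\langle p\rangle_{q}^{k}(C+A)^{-1}\langle p\rangle_{q}^{-(k-1)}\bigr)$ and bound each factor via item (1), then prefix with item (2) at the top to obtain item (4). The final semigroup bound will come from the cuspidal character of the semigroup recorded in Corollary~\ref{cor:semigroup}: since $(C+A)^{n+1}e^{-tA}$ is a bounded operator for every $t>0$ and every $n\in\nz$, the factorisation $(1+\mathcal{O})^{1/2}\langle p\rangle_{q}^{n}e^{-tA}=\bigl[(1+\mathcal{O})^{1/2}\langle p\rangle_{q}^{n}(C+A)^{-(n+1)}\bigr]\cdot(C+A)^{n+1}e^{-tA}$ combined with item (4) concludes the proof.
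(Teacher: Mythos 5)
Your proposal is correct, but it takes a genuinely different route from the paper's proof, which handles each weight $m$ directly with no induction. The paper conjugates $C+A$ by $\langle p\rangle_{q}^{\pm m}$, reorganises the commutator into the form $\mathcal{M}_{m,j}\nabla_{\partial p_{j}}+\mathcal{M}_{m}$ with $\mathcal{M}$-coefficients that are GKFP symbols of order $0$ (in the sense of Definition~\ref{de:GKFP}), and then uses the integration-by-parts lower bound of Theorem~\ref{th:domainB}-\textbf{d)} to show this perturbation is relatively bounded with infinitesimal bound relative to $A$; maximal accretivity of $C+A+\mathcal{M}$ for $C$ large then yields the weighted estimate in one shot, with the domain question settled by density of the core $\mathcal{C}^{\infty}_{0}(\overline{X}_{-};F)\cap D(A)$. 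You instead set up an induction on $m$ (the commutator $[A,\langle p\rangle_{q}^{m}]$ being first-order vertical with coefficients one power of $\langle p\rangle_{q}$ below the main term, hence controlled by the step-$(m-1)$ norm), read off the relevant commutator structure explicitly from the Weitzenbock formula \eqref{eq:weitzenbock} rather than through the abstract symbol class, and handle the domain issue by radial cutoffs $\phi_{R,m}$ that preserve $D(A)$ because they commute with $\hat{S}_{\nu}$. Both of these work; the paper's perturbation argument avoids the induction entirely (the infinitesimal bound means no appeal to the previous level is needed), whereas your route trades that for a cleaner conceptual reduction to a commutator of lower order plus the already-established subelliptic estimate. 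One small technical remark on your cutoff step: since $(1+\mathcal{O})^{1/2}$ is nonlocal, passing $R\to\infty$ is better phrased via weak-$L^{2}$ compactness of the uniformly bounded family $(1+\mathcal{O})^{1/2}\phi_{R,m}v$ and identification of the weak limit on test sections, rather than Fatou's lemma applied to a single integrand. The composition for items (3)--(4) and the factorisation of the semigroup through $(C+A)^{-(n+1)}$ coincide exactly with the paper's.
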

\begin{proof}
We focus on the case $A=\overline{B}^{\phi_{b}}_{\fkh}$ and the case
$A=\hat{B}^{\phi_{b}}_{\fkh}$ can be recovered by symmetrization like Corollary~\ref{cor:sub}.\\
With $1+\mathcal{O}\geq \frac{\langle p\rangle_{q}^{2}}{2}$ and
$$
\langle p\rangle_{q}^{n}(C+A)^{-n}=\prod_{k=0}^{n-1} \langle
p\rangle_{q}^{n-k}(C+A)^{-1}\langle p\rangle_{q}^{-n+1+k}
$$
all the results are consequences of  the boundedness of 
 $(1+\mathcal{O})^{1/2}\langle
p\rangle_{q}^{m}(C+A)^{-1}\langle p\rangle_{q}^{-m}$ for $C\geq
\tilde{C}_{m,b,g}\geq \tilde{C}_{m-1,b,g}$\,.\\
Take $u\in \mathcal{C}^{\infty}_{0}(\overline{X}_{-};F)\cap
D(\overline{B}^{\phi_{b}}_{\fkh})$\,,  and write
with $(C+A)u=f$:
$$
(C+A)\langle p\rangle_{q}^{-m}u=\langle p\rangle_{q}^{-m}f
+[-\frac{\Delta_{p}}{2},\langle p\rangle_{q}^{-m}]u\,.
$$
It  becomes
$$
\left(C+A+\mathcal{M}_{m,j}(q,p)\nabla_{\partial p_{j}}+\mathcal{M}_{m}\right)
\langle p\rangle_{q}^{-m}=\langle p\rangle_{q}^{-m}f
$$
where $\mathcal{M}_{m,j}$\,, $\mathcal{M}_{m}$ are symbols of order
$0$ according to Definition~\ref{de:GKFP}. With the integration by
part of Theorem~\ref{th:domainB}-\textbf{d)}\,,  the operator 
$\mathcal{M}_{m}=\mathcal{M}_{m,j}(q,p)\nabla_{\partial
  p_{j}}+\mathcal{M}_{m}$ is a relatively bounded perturbation of $A$ with
infinitesimal bound. Therefore   for
$C\geq \tilde{C}_{m,b,g}\leq \tilde{C}_{m-1,b,g}$ with $\tilde{C}_{m,b,g}$large enough, $(C+A+\mathcal{M})$ is
maximal  accretive and we get the uniform bound 
$$
\|\langle p\rangle_{q}^{-m}f\|=\|(C+A+\mathcal{M})\langle p\rangle_{q}^{-m}u\|\geq 
\frac{1}{2}\|(C+A)\langle
p\rangle_{q}^{-m}u\|
\geq C_{m}^{-1}\|(1+\mathcal{O})^{1/2}\langle
p\rangle_{q}^{-m}u\|
$$
Approaching any $u\in \langle p\rangle_{q}^{m}D(A)$ by elements
$u_{n}$ in
$\mathcal{C}^{\infty}_{0}(\overline{X}_{-};F)\cap D(A)$
  proves the boundedness of $(1+\mathcal{O})^{1/2}\langle
  p\rangle^{m}(C+A)^{-1}\langle p\rangle^{-m}$ for $C\geq \tilde{C}_{m,b,g}$\,.\\
The final statement about the semigroup is a consequence of 
$$
(1+\mathcal{O})^{1/2}\langle p\rangle_{q}^{n}e^{-tA}=(1+\mathcal{O})^{1/2}\langle p\rangle_{q}^{n}(C_{n,\alpha}+A)^{-n-1}(C_{n,\alpha}+A)^{n+1}e^{-tA}\,.
$$
\end{proof}
We will use Lebeau's maximal hypoellipticity estimate
\eqref{eq:maxhyp} with various values of ${\red \mu}\in[0,1]$\,.
We already used the fact that
$\hat{B}^{\phi_{b}}_{\fkh}=\mathcal{A}^{\hat{g},\mathrm{Id},\hat{g}}_{-b,\mathcal{M}}$ or more precisely
$(\hat{\Psi}^{g,g_{0}}_{X})_{*}^{-1}\hat{B}^{\phi_{b}}_{\fkh}(\hat{\Psi}^{g,g_{0}}_{X})_{*}=\mathcal{A}^{g_{0},\kappa,\gamma}_{-b,\mathcal{M}'}$
is a LGKFP operator in  order to the existence of traces and local
regularity properties in Propositions~\ref{pr:traceAhat} and
\ref{pr:IPPloc}. Let us look globally at those
transformations. Remember that  $(\hat{\Psi}_{X}^{g,g_{0}})_{*}$ provides a
continuous isomorphism from
$\mathcal{W}^{\red\mu}(X_{(-\varepsilon,\varepsilon)};F\big|_{X_{(-\varepsilon,\varepsilon)}},g_{0}){\red=\mathcal{W}^{\mu}(X;F)}$
to
$\mathcal{W}^{\red\mu}(X_{(-\varepsilon,\varepsilon)};\hat{F}_{g},\hat{g})$
for $\red\mu\in [-1,1]$ while Lemma~\ref{le:kappachi} provides for a
given partition of unity $\sum_{j=1}^{J}\chi_{j}(q)\equiv 1$ the map
$\kappa_{\chi}$ which is an automorphism of $\mathcal{W}^{\mu}(X;F)$
for $\mu\in [-1,1]$\,.
\begin{itemize}
\item Firstly the vector field $Y_{\hat{\fkh}}$ remains expressed with the
  vector fiels $(e_{i})_{1\leq i\leq d}$ which differ from the vector
  fields $(f_{i})_{1\leq i\leq}$ associated with the metric
  $g_{0}^{TQ}$\,. From \eqref{eq:eitilde} we can write
  \begin{eqnarray*}
e_{i'}&=&\frac{\partial}{\partial
  \tilde{q}^{i'}}+M_{i'j}^{k}(\tilde{q})\tilde{p}_{k}\frac{\partial}{\partial
  \tilde{p}_{j}}\\
&=&\underbrace{\frac{\partial}{\partial
  q'^{i}}+\Gamma_{i',j'}^{k'}(0,\tilde{q}')\tilde{p}_{k}\frac{\partial}{\partial\tilde{p}_{j'}}}_{\tilde{f}_{i'}}
+\underbrace{[M_{i'j}^{k}(\tilde{q})-(1-\delta_{1j})(1-\delta_{1k})\Gamma^{k}_{i'j}(0,\tilde{q})]\tilde{p}_{k}\frac{\partial}{\partial
  \tilde{p}_{j}}}_{\tilde{R}_{i'}}\,,
  \end{eqnarray*}
with
$(\hat{\Psi}^{(g,g_{0})}_{X})_{*}^{-1}(\tilde{f}_{i})(\hat{\Psi}^{g,g_{0}}_{X})_{*}=f_{i}$\,,
$(\hat{\Psi}^{(g,g_{0})}_{X})_{*}^{-1}(\tilde{R}_{i'})(\hat{\Psi}^{g,g_{0}}_{X})_{*}=R_{i'}$
and $R_{i}$ behaves like $p\times \frac{\partial}{\partial p}$\,.
Hence we deduce that
$$
(\hat{\Psi}^{(g,g_{0})}_{X})_{*}^{-1}(Y_{\hat{\fkh}})(\hat{\Psi}^{g,g_{0}}_{X})_{*}
=p_{1}\frac{\partial}{\partial q^{1}}+(g_{0}\kappa)^{i'j'}p_{j'}f_{i'} + (g_{0}\kappa)^{i'j'}p_{j'}R_{i'}
$$
By conjugating with the map $\red\kappa_{\chi}$ of
Lemma~\ref{le:kappachi} one obtains similarly
$$
{\red\kappa_{\chi}}(\hat{\Psi}^{(g,g_{0})}_{X})_{*}^{-1}(Y_{\hat{\fkh}})(\hat{\Psi}^{g,g_{0}}_{X})_{*}{\red\kappa_{\chi}^{-1}}
=Y_{\fkh_{0}}+R
$$
where $\fkh_{0}=\frac{g_{0}^{ij}(q)p_{i}p_{j}}{2}$ and $R$ is an
operator  with terms like
$M_{i}^{jk}(q)p_{j}p_{k}\frac{\partial}{\partial p_{k}}$\,.
\item The Levi-Civita connections for the metrics $g^{TQ}$ and
  $g_{0}^{TQ}$ differ but we already noticed that such a change of
  connection adds a perturbative term
  $\mathcal{M}^{j}(q)p_{j}+\mathcal{M}_{0}$ when expressed in the
  basis $(e,\hat{e})$\,. The change of frame to $(f,\hat{f})$  will
  lead to a  perturbative term  $\mathcal{M}^{jk}(q)p_{j}p_{k}+\mathcal{M}^{j}(q)p_{j}+\mathcal{M}^{0}(q)$\,.
\item The conjugation by $\kappa_{\chi}$ changes the term
  $\Delta_{p}^{\gamma}$ into $\Delta^{\gamma'}_{p}$ with
  $\gamma'=\kappa^{-1}\gamma{}^{t}\kappa^{-1}$\,.
\item All the coefficients belong to
  $\mathcal{C}^{\infty}(\overline{X}_{\pm};\rz)$ and may be
  discontinuous, except the metric $\gamma'$ which coincides with
  $g_{0}^{TQ}$ along $Q'$ on both sides. 
\end{itemize}
Hence we can write
\begin{equation}
  \label{eq:KFPpertglob}
{\red\kappa_{\chi}}(\hat{\Psi}^{g,g_{0}}_{X})_{*}^{-1}\hat{B}^{\phi_{b}}_{\fkh}(\hat{\Psi}^{g,g_{0}}_{X})_{*}{\red\kappa_{\chi}^{-1}}
=
\underbrace{-b\nabla^{F,g_{0}}_{Y_{\fkh_{0}}}+\mathcal{O}^{g_{0}}}_{\mathcal{A}_{-b,0}^{g_{0}}}
+\frac{\Delta^{g_{0}}_{p}-\Delta^{\gamma'}_{p}}{2}
+\langle p\rangle_{g_{0},q}^{2}\mathcal{M}
\end{equation}
with
$$
\mathcal{M}=[\mathcal{M}^{0}_{j}(q,p)+1_{\rz_{+}}(q^{1})\mathcal{M}^{0}_{+,j}(q,p)]
\frac{\partial}{\partial p_{j}}+[\mathcal{M}^{0}(q,p)+1_{\rz_{+}}(q^{1})\mathcal{M}^{0}_{+}(q,p)]\,,
$$
where
$\mathcal{M}^{0}_{j},\mathcal{M}^{0}_{+,j},\mathcal{M}^{0},\mathcal{M}^{0}_{+}$
are symbols of order $0$  on $X_{(-\varepsilon,\varepsilon)}$ for the
metric $g_{0}^{TQ}$\,, and all the superscript $^{g_{0}}$ recall  that
the GKFP operator $\mathcal{A}_{-b,0}^{g_{0}}$ and the connection
$\nabla^{F,g_{0}}$ are the ones associated with the metric
$g_{0}^{TQ}$\,.\\
If $\mathcal{A}^{g_{0}}_{-b,0}$ is a GKFP operator in the smooth case,
the additional term
$$
\frac{\Delta_{p}^{g_{0}}-\Delta^{\gamma'}_{p}}{2}+\langle p\rangle_{g_{0},q}^{2}\mathcal{M}
$$
is not an admissible perturbation according to
Definition~\ref{de:GKFP}. The term
$\frac{\Delta_{p}^{g_{0}}-\Delta^{\gamma'}_{p}}{2}$ will be absorbed
by a relative boundedness with bound less than $1$ argument. 
For $\langle p\rangle_{q}^{2}\mathcal{M}$ we have two difficulties:
\begin{itemize}
\item The weight $\langle p\rangle^{2}_{g_{0},q}$ is too high but this
  will be handled via Lemma~\ref{le:weightB}.
\item The discontinuity along $X'$ prevent for high regularity
  estimates. This will be handled by using the one dimensional product
  rule for Sobolev spaces
\begin{equation}
  \label{eq:Sobprod1}
\left(
  \begin{array}[c]{l}
    \varphi_{1}\in W^{s_{1},2}(\rz)\quad,\quad \varphi_{2}\in
    W^{s_{2},2}(\rz)\\
s_{1},s_{2}\geq s_{3}\\
s_{1}+s_{2}>s_{3}+\frac{1}{2}
  \end{array}
\right)
\Rightarrow (\varphi_{1}\varphi_{2}\in W^{s_{3},2}(\rz))\,,
\end{equation}
while noticing $1_{\rz_{+}}(q^{1})\in W^{1/2-\delta,2}_{loc}(\rz)$ for
all $\delta>0$\,.
\end{itemize}
\begin{lemma}
\label{le:regpowers} Consider the operator
$P=\mathcal{A}^{g_{0}}_{\alpha,0}+\frac{\Delta_{p}^{g_{0}}-\Delta^{\gamma'}}{2}+\langle
p\rangle_{g_{0},q}^{2}\mathcal{M}$ in $X_{(-\varepsilon,\varepsilon)}$
for $\varepsilon>0$ small enough. If $u\in
\mathcal{W}^{\mu}(X_{(-\varepsilon,\varepsilon)};F\big|_{X_{(-\varepsilon,\varepsilon)}})$ satisfies $\supp
u\subset X_{(-\varepsilon/2,\varepsilon/2)}$ and $(1+\mathcal{O}^{g_{0}})^{1/2}\langle
p\rangle^{m+2}u\in
\mathcal{W}^{\mu}(X_{(-\varepsilon,\varepsilon)};F\big|_{X_{(-\varepsilon,\varepsilon)}})$
and $\langle p\rangle_{g_{0},q}^{m}(Pu)\in \mathcal{W}^{\mu}(X;F)$ 
for $\mu\in [0,1/2)$ and $m\in \nz$\,, then $\langle p\rangle_{g_{0},q}^{m}u\in
\mathcal{W}^{\mu'}(X_{(-\varepsilon,\varepsilon)};F\big|_{X_{(-\varepsilon,\varepsilon)}})$
for all $\mu'\in [0,\mu+2/3)$ with
$$
\|\langle p\rangle_{g_{0},q}^{m}u\|_{\mathcal{W}^{\mu'}}\leq
C_{b,g_{0},\gamma',\mathcal{M},\mu,\mu',m}\left[
\|(1+\mathcal{O}^{g_{0}})^{1/2}\langle
p\rangle_{g_{0},q}^{m+2}u\|_{\mathcal{W}^{\mu}}+\|\langle p\rangle_{g_{0},q}^{m}(Pu)\|_{\mathcal{W}^{\mu}}\right]\,.
$$
\end{lemma}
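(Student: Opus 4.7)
The strategy is to view the equation $Pu$ as a perturbation of the smooth GKFP equation $(C+\mathcal{A}^{g_0}_{\alpha,0})v = F$ for the weighted section $v = \langle p\rangle_{g_0,q}^m u$, and then to apply Lebeau's maximal hypoelliptic estimate \eqref{eq:maxhyp} on $\mathcal{A}^{g_0}_{\alpha,0}$ to obtain the $2/3$ gain. Multiplying $Pu$ by $\langle p\rangle^m$ and commuting weights, I rewrite
\[
(C+\mathcal{A}^{g_0}_{\alpha,0})v = \langle p\rangle^m(Pu) + [\mathcal{A}^{g_0}_{\alpha,0},\langle p\rangle^m]u - \langle p\rangle^m\tfrac{1}{2}(\Delta_p^{g_0}-\Delta_p^{\gamma'})u - \langle p\rangle^{m+2}\mathcal{M}u + Cv,
\]
and the game is to show that the right-hand side $F$ belongs to $\mathcal{W}^\mu(X_{(-\varepsilon,\varepsilon)};F)$ with norm controlled by the sum appearing on the right-hand side of the claimed inequality. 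Since $v$ has compact support in $X_{(-\varepsilon/2,\varepsilon/2)}$, extending $g_0^{TQ}$ smoothly to a closed compact manifold makes \eqref{eq:maxhyp} directly applicable and yields $v\in\mathcal{W}^{\mu+2/3}$ as soon as $F\in\mathcal{W}^\mu$.

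The commutator and the smooth elliptic perturbation are controlled as follows. Because $e_i\langle p\rangle_{g_0,q}^m = 0$, the commutator $[\nabla^F_{Y_{\fkh_0}},\langle p\rangle^m]$ vanishes, while $[\mathcal{O}^{g_0},\langle p\rangle^m]$ produces only terms of the type $\langle p\rangle^{m-1}p_\ell\nabla^F_{\partial_{p_j}}$ and multiplication by lower powers of $\langle p\rangle$; each is estimated in $\mathcal{W}^\mu$ through the hypothesis $(1+\mathcal{O}^{g_0})^{1/2}\langle p\rangle^{m+2}u\in\mathcal{W}^\mu$ after inserting a compensating factor $\langle p\rangle^{-2}$ or $\langle p\rangle^{-1}$. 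The source $\langle p\rangle^m(Pu)$ and the term $Cv$ are in $\mathcal{W}^\mu$ directly (for $Cv$, multiply $\langle p\rangle^{m+2}u\in\mathcal{W}^\mu$ by the bounded factor $\langle p\rangle^{-2}$). For the perturbation $\langle p\rangle^m\tfrac{1}{2}(\Delta_p^{g_0}-\Delta_p^{\gamma'})u$, the coefficient $a^{ij}=g_0^{ij}-\gamma'^{ij}$ is piecewise $\mathcal{C}^\infty$, continuous on $X$, and vanishes on $Q'$, so $|a^{ij}|=\mathcal{O}(|q^1|)=\mathcal{O}(\varepsilon)$ on $\supp u$; after commuting $\langle p\rangle^m$ past $\partial_{p_i}\partial_{p_j}$, this becomes $\tfrac{1}{2}a^{ij}(q)\partial_{p_i}\partial_{p_j}v$ plus lower order, a relatively bounded perturbation of $\mathcal{O}^{g_0}$ with operator norm $\mathcal{O}(\varepsilon)$ that is absorbed into the left-hand side of \eqref{eq:maxhyp} once $\varepsilon$ is fixed small enough (depending on $b,g_0,\gamma',\mathcal{M},\mu,\mu',m$).

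The main obstacle is the term $\langle p\rangle^{m+2}\mathcal{M}u$, which I split along the decomposition $\mathcal{M}=\mathcal{M}_{\mathrm{cont}}+1_{\rz_+}(q^1)\mathcal{M}_+$, both pieces being of the admissible form $\mathcal{M}^0_j(q,p)\partial_{p_j}+\mathcal{M}^0(q,p)$ with bounded symbols. For $\mathcal{M}_{\mathrm{cont}}$ I pass $\langle p\rangle^{m+2}$ through the smooth symbol and write the result as $\mathcal{M}^0_j\partial_{p_j}(\langle p\rangle^{m+2}u)+\mathcal{M}^0\langle p\rangle^{m+2}u$ plus lower-order residues, all of which lie in $\mathcal{W}^\mu$ because the hypothesis $(1+\mathcal{O}^{g_0})^{1/2}\langle p\rangle^{m+2}u\in\mathcal{W}^\mu$ controls both $\partial_{p_j}(\langle p\rangle^{m+2}u)$ and $\langle p\rangle^{m+2}u$ in $\mathcal{W}^\mu$. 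The hard step is the discontinuous contribution $1_{\rz_+}(q^1)\langle p\rangle^{m+2}\mathcal{M}_+u$: here I invoke the Sobolev product rule \eqref{eq:Sobprod1} in the single $q^1$ variable, exploiting the fact that $1_{\rz_+}(q^1)\in W^{1/2-\delta,2}_{loc}(\rz_{q^1})$ for every $\delta>0$. The constraint $\mu\in[0,1/2)$ is precisely what guarantees that the tensorized multiplication by $1_{\rz_+}(q^1)$ remains bounded on the relevant pieces of the Lebeau scale, while the endpoint loss of the product rule accounts for the strict inequality $\mu'<\mu+2/3$. Summing all the contributions yields $F\in\mathcal{W}^\mu$ with the desired norm bound, and \eqref{eq:maxhyp} then delivers $\langle p\rangle^m u = v\in\mathcal{W}^{\mu'}$ for every $\mu'<\mu+2/3$. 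The discontinuity of $\mathcal{M}$ along $X'$ is the one place where the smooth GKFP machinery truly breaks down, and handling it via \eqref{eq:Sobprod1} is the technical heart of the argument.
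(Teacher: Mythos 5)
Your proof is correct and follows essentially the same approach as the paper's: multiply through by the weight $\langle p\rangle^m$, rewrite the equation so that Lebeau's maximal subelliptic estimate \eqref{eq:maxhyp} for the smooth GKFP operator $\mathcal{A}^{g_0}_{\alpha,0}$ can be applied, control the discontinuous perturbation $1_{\rz_+}(q^1)\mathcal{M}_+$ via the Sobolev product rule \eqref{eq:Sobprod1} (losing an arbitrarily small $\delta$ in regularity, which is exactly why $\mu'<\mu+2/3$ is strict), and absorb the small elliptic perturbation $\frac{\Delta_p^{g_0}-\Delta_p^{\gamma'}}{2}$ using the $\mathcal{O}(\varepsilon)$ smallness of $g_0^{ij}-\gamma'^{ij}$. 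The only cosmetic difference is that the paper folds the weight commutator and the term $\langle p\rangle^{m+2}\mathcal{M}u$ into a single new admissible perturbation $\langle p\rangle^2\mathcal{M}'(\langle p\rangle^m u)$ before estimating, whereas you estimate the commutator and the $\mathcal{M}$-term separately; the hypothesis $(1+\mathcal{O}^{g_0})^{1/2}\langle p\rangle^{m+2}u\in\mathcal{W}^\mu$ is used in the same way in both.
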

\begin{proof}
Write simply $\langle p\rangle=\langle p\rangle_{g_{0},q}$\,,
$\mathcal{O}=\mathcal{O}^{g_{0}}$ and compute
$$
P(\langle p\rangle^{m}u)=\langle
p\rangle^{m}(Pu)+\left[-\frac{\Delta_{b}^{\gamma'}}{2}+\langle p\rangle^{2}\mathcal{M},\langle
p\rangle^{m}\right]\langle p\rangle^{-m-2}\langle p\rangle^{m+2}u\,.
$$
This gives
$$
\left[\mathcal{A}^{g_{0}}_{-b,0}+\frac{\Delta_{g}^{g_{0}}-\Delta_{p}^{\gamma}}{2}\right](\langle p\rangle^{m}u)=\langle p
\rangle^{m}(Pu)
+\langle p\rangle^{2}\mathcal{M}'(\langle p\rangle^{m}u)
$$
where $\mathcal{M}'$ has the same structure as $\mathcal{M}$:
\begin{eqnarray*}
\mathcal{M}'&=&[\mathcal{M}^{0\prime}_{j}(q,p)+1_{\rz_{+}}(q^{1})\mathcal{M}^{0\prime}_{+,j}(q,p)]
\frac{\partial}{\partial
  p_{j}}+[\mathcal{M}^{0\prime}(q,p)+1_{\rz_{+}}(q^{1})\mathcal{M}^{0\prime}_{+}(q,p)]\\
&=&\mathcal{M}'_{-}+1_{\rz_{+}}(q^{1})\mathcal{M}_{+}'\,.
\end{eqnarray*}
But we know 
$$
\|\langle p\rangle^{2}\mathcal{M}_{\mp}'(\langle p\rangle^{m})u\|_{\mathcal{W}^{\mu}}\leq
C_{b,g_{0},\gamma',\mathcal{M},m}\|(1+\mathcal{O})^{1/2}\langle p\rangle^{m+2}u\|_{\mathcal{W}^{\mu}}\,.
$$
For $\mu=0$ this gives
$$
\|\langle p\rangle^{2}\mathcal{M}'\langle
p\rangle^{m}u\|_{\mathcal{W}^{0}}
\leq C'_{b,g_{0},\mathcal{M},m}\|(1+\mathcal{O})^{1/2}\langle p\rangle^{m+2}u\|_{\mathcal{W}^{0}}\,,
$$
while for $\mu\in ]0,1/2[$\,, $\mu''<\mu$\,, $1_{\rz_{+}}\in
W^{1/2-\delta_{\mu,\mu''},2}(\rz)$\,,  $\delta_{\mu,\mu''}\leq 1/2-\mu$\,, $\mu-\delta_{\mu,\mu''}>\mu''$\,,
the multiplication rule \eqref{eq:Sobprod1} leads to 
$$
\|\langle p\rangle^{2}\mathcal{M}'\langle
p\rangle^{m}u\|_{\mathcal{W}^{\mu''}}\leq
C_{b,g_{0},\gamma',\mathcal{M},m,\mu,\mu''}\|(1+\mathcal{O})^{1/2}\langle p\rangle^{m}u\|_{\mathcal{W}^{\mu}}\,.
$$
In all cases $\langle p\rangle^{m}u\in
\mathcal{W}^{\mu}(X_{(-\varepsilon,\varepsilon)},F\big|_{X_{(-\varepsilon,\varepsilon)}})$
with $\supp u\subset X_{(-\varepsilon/2,\varepsilon/2)}$ solves
$$
\left[\mathcal{A}^{g_{0}}_{-b,0}+\frac{\Delta_{g}^{g_{0}}-\Delta_{p}^{\gamma}}{2}\right](\langle p\rangle^{m}u)=\hat{f}
$$
with 
$$
\|\hat{f}\|_{\mathcal{W}^{\mu''}}\leq C_{b,g,\gamma',\mathcal{M},m,\mu,\mu''}\left[
\|(1+\mathcal{O})^{1/2}\langle
p\rangle^{m+2}u\|_{\mathcal{W}^{\mu}}+\|\langle p\rangle^{m}(Pu)\|_{\mathcal{W}^{\mu}}
\right]
$$
for all $\mu''<\mu$\,. 
But the maximal subelliptic result of Lebeau in \cite{Leb2}, recalled
in \eqref{eq:maxhyp} implies
$$
\|\mathcal{O}(\langle p\rangle^{m}u)\|_{\mathcal{W}^{\mu''}}+\|\langle
p\rangle^{m}u\|_{\mathcal{W}^{\mu''+2/3}}
\leq C_{b,g_{0},s''}\left[\|\mathcal{A}^{g_{0}}_{-b,0}(\langle p\rangle^{m}u)\|_{\mathcal{W}^{\mu''}}+\|\langle p\rangle^{m}u\|_{\mathcal{W}^{\mu''}}\right]
$$
while 
$$
\|\frac{\Delta_{p}^{\gamma'}-\Delta^{g_{0}}_{p}}{2}(\langle
p\rangle^{m}u)\|_{\mathcal{W}^{\mu''}}\leq
C_{g_{0},\gamma'}\varepsilon\|\mathcal{O}(\langle p\rangle^{m}u)\|_{\mathcal{W}^{\mu''}}\,.
$$
Taking $\varepsilon>0$ small enough implies $\langle p\rangle^{m}u\in
\mathcal{W}^{\mu''+2/3}(X_{(-\varepsilon,\varepsilon)};F\big|_{X_{(-\varepsilon,\varepsilon)}})$
for all $\mu''<\mu$ with the inequality written for $\mu'=\mu''+2/3$\,.
\end{proof}
By using a partition of unity in $q$\,, while the result for $u$ with
$\supp u\subset X_{\rz\setminus(-\varepsilon/2,\varepsilon/2)}$ comes
from the maximal subelliptic estimate \eqref{eq:maxhyp}, we deduce the
\begin{lemma}
\label{le:regpowers2} Let $(\hat{B}^{\phi_{b}}_{\fkh},
D(\hat{B}^{\phi_{b}}_{\fkh}))$ be the Bismut hypoelliptic Laplacian
studied in Proposition~\ref{pr:domhatB} for $b\in \rz^{*}$\,. If $u\in
\mathcal{W}^{\mu}(X;\hat{F}_{g})\cap D(\hat{B}^{\phi_{b}}_{\fkh})$ satisfies 
$(1+\mathcal{O}^{g})^{1/2}\langle
p\rangle_{q}^{m+2}u\in
\mathcal{W}^{\mu}(X;F)$
and $\langle p\rangle^{m}_{q}(\hat{B}^{\phi_{b}}_{\fkh}u)\in \mathcal{W}^{\mu}(X;\hat{F}_{g})$ 
for $\mu\in [0,1/2)$ and $m\in \nz$\,, then $\langle p\rangle^{m}u\in
\mathcal{W}^{\min(\mu',1)}(X_{(-\varepsilon,\varepsilon)};F\big|_{X_{(-\varepsilon,\varepsilon)}})$
for all $\mu'\in [0,\mu+2/3)$  with 
$$
 \|\langle p\rangle_{q}^{m}u\|_{\mathcal{W}^{\min(1,\mu')}}\leq
 C_{b,g,\mu,\mu'}
\left[
\|(1+\mathcal{O}^{g})^{1/2} \langle p\rangle_{q}^{m+2}u\|_{\mathcal{W}^{\mu}}
 +\|\langle p\rangle_{q}^{m}(\hat{B}^{\phi_{b}}_{\fkh}u)\|_{\mathcal{W}^{\mu}}
\right]\,.
$$
In particular
$(1+\mathcal{O}^{g})^{1/2}\langle p\rangle_{q}^{m}u\in
\mathcal{W}^{\mu''}(X;\hat{F}_{g})$ for all $\mu''\in [0,\mu+\frac{1}{6})$\,.
\end{lemma}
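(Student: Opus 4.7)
The plan is to combine Lemma~\ref{le:regpowers} (valid in a neighborhood of $X'$) with Lebeau's smooth maximal hypoellipticity estimate \eqref{eq:maxhyp} (applicable away from $X'$) by means of a $q$-partition of unity. Pick $\chi_{0}\in\mathcal{C}^{\infty}_{0}(Q_{(-\varepsilon/2,\varepsilon/2)};[0,1])$ with $\chi_{0}\equiv 1$ on $Q_{(-\varepsilon/4,\varepsilon/4)}$, set $\chi_{1}=1-\chi_{0}$, and split $u=\chi_{0}u+\chi_{1}u$. Because $\chi_{j}$ depends only on $q$, the commutator $[\hat{B}^{\phi_{b}}_{\fkh},\chi_{j}]$ reduces to multiplication by the $p$-linear function $-\frac{1}{2b}Y_{\hat{\fkh}}(\chi_{j})(q,p)$, so $\|\langle p\rangle_{q}^{m}[\hat{B}^{\phi_{b}}_{\fkh},\chi_{j}]u\|_{\mathcal{W}^{\mu}}\lesssim\|\langle p\rangle_{q}^{m+1}u\|_{\mathcal{W}^{\mu}}\lesssim\|(1+\mathcal{O})^{1/2}\langle p\rangle_{q}^{m+2}u\|_{\mathcal{W}^{\mu}}$ and is absorbed into the hypothesis.

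For $\chi_{1}u$, the support lies in $\overline{X}_{-}\cup\overline{X}_{+}$ at positive distance from $X'$, where $\hat{g}=g_{\mp}^{TQ}$ is smooth. Extending $g_{\mp}$ smoothly from $\overline{Q}_{\mp}$ to a metric $\tilde{g}_{\mp}$ on the closed compact manifold $Q$, the operator $\hat{B}^{\phi_{b}}_{\fkh}$ agrees on $\pi_{X}^{-1}(\supp\chi_{1})$ with a genuine smooth Bismut hypoelliptic Laplacian on $T^{*}(Q,\tilde{g}_{\mp})$, to which Lebeau's estimate \eqref{eq:maxhyp} applies; this yields $\langle p\rangle_{q}^{m}\chi_{1}u\in\mathcal{W}^{\mu+2/3}(X;\hat{F}_{g})$ with the required bound, after handling commutators with $\chi_{1}$ and with the power of $\langle p\rangle$ by the hypothesis as above. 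For $\chi_{0}u$, supported in $X_{(-\varepsilon/2,\varepsilon/2)}$, conjugate by $\kappa_{\chi}\circ(\hat{\Psi}^{g,g_{0}}_{X})_{*}^{-1}$, which by Proposition~\ref{pr:hatPsiW} and Lemma~\ref{le:kappachi} is a continuous isomorphism between $\mathcal{W}^{\mu}(X_{(-\varepsilon,\varepsilon)};\hat{F}_{g}|_{X_{(-\varepsilon,\varepsilon)}})$ and $\mathcal{W}^{\mu}(X_{(-\varepsilon,\varepsilon)};F|_{X_{(-\varepsilon,\varepsilon)}})$ for $\mu\in[0,1]$ and sends $\langle p\rangle_{\hat{g},q}$ to a weight uniformly equivalent to $\langle p\rangle_{g_{0},q}$ on $X_{(-\varepsilon,\varepsilon)}$. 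By the identity \eqref{eq:KFPpertglob}, the conjugated operator has exactly the form of $P$ in Lemma~\ref{le:regpowers} with $\alpha=-b$, so that lemma produces $\langle p\rangle_{q}^{m}\chi_{0}u\in\mathcal{W}^{\mu'}(X_{(-\varepsilon,\varepsilon)};F|_{X_{(-\varepsilon,\varepsilon)}})$ for every $\mu'<\mu+2/3$. The cap $\min(\mu',1)$ in the statement simply reflects that $\mathcal{W}^{\mu}(X;\hat{F}_{g})$ is only defined for $\mu\in[0,1]\setminus\{1/2\}$.

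For the ``in particular'' assertion, $(1+\mathcal{O})^{1/2}$ is of order $1/2$ in the Lebeau scale (since $\mathcal{O}$ is of order $1$, built from $-\Delta_{p}$ and $|p|_{q}^{2}$), hence maps $\mathcal{W}^{\sigma}$ into $\mathcal{W}^{\sigma-1/2}$; combined with the main estimate this gives $(1+\mathcal{O})^{1/2}\langle p\rangle_{q}^{m}u\in\mathcal{W}^{\mu''}(X;\hat{F}_{g})$ for every $\mu''<\mu+1/6$, a range safely inside $[0,2/3)$ so that the transport back from $F$ to $\hat{F}_{g}$ via $(\hat{\Psi}^{g,g_{0}}_{X})_{*}\kappa_{\chi}^{-1}$ is legitimate. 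The essential difficulty — the jump of Christoffel symbols across $X'$, which injects the discontinuous coefficient $1_{\rz_{+}}(q^{1})\mathcal{M}^{0}_{+,j}\partial_{p_{j}}$ of \eqref{eq:KFPpertglob} — has already been absorbed inside Lemma~\ref{le:regpowers} via the one-dimensional Sobolev product rule \eqref{eq:Sobprod1}; what remains here is only the partition-of-unity bookkeeping and checking that the commutator terms are dominated by the weighted hypothesis.
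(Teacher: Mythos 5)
Your proof is correct and follows essentially the same strategy as the paper's (very terse) proof: a partition of unity $\chi_0+\chi_1\equiv 1$ localizing near and away from $X'$, Lemma~\ref{le:regpowers} for the piece supported near $X'$ after conjugation by $\kappa_{\chi}\circ(\hat{\Psi}^{g,g_0}_X)_*^{-1}$, the interior Lebeau estimate \eqref{eq:maxhyp} for the piece supported away from $X'$, and absorption of the commutator $[\hat{B}^{\phi_b}_{\fkh},\chi_j]$ by the weighted hypothesis. One small remark: your explanation of the cap $\min(1,\mu')$ — that $\mathcal{W}^{\mu}(X;\hat{F}_g)$ is only defined for $\mu\in[0,1]$ — is not quite the same as the paper's, which attributes it to the fact that the transport maps $(\hat{\Psi}^{g,g_0}_X)_*$ and $\kappa_{\chi}$ are $\mathcal{W}^{\mu}$-isomorphisms only for $\mu\in[-1,1]$; the latter is the operative reason, since the conclusion is stated in the bundle $F$ (where $\mathcal{W}^{\mu}$ is defined for all $\mu$), and without the transport step nothing would limit Lemma~\ref{le:regpowers}'s output. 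The two explanations are of course two faces of the same phenomenon, so this does not affect the validity of your argument.
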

\begin{proof}
As said before it is a consequence of Lemma~\ref{le:regpowers} and the
interior maximal subelliptic estimate \eqref{eq:maxhyp} after using a
partition of unity in $q$\,.\\
  The only thing to be recalled is the fact that
  $(\hat{\Psi}^{g,g_{0}}_{X})_{*}$ and $K$ are isomorphism of
  $\mathcal{W}^{\mu}$-spaces only for $\mu\in [-1,1]$\,. This explains the
  exponent $\min(1,\mu')$\,.
\end{proof}
\begin{proposition}
\label{pr:bootstr}
For $b\in\rz^{*}$ and $C\geq C_{b,g}>0$\,, $C_{b,g}$ large enough, the
operators $(\hat{B}^{\phi_{b}}_{\fkh},D(\hat{B}^{\phi_{b}}_{\fkh}))$
of Proposition~\ref{pr:domhatB} and the operator
$(\overline{B}^{\phi_{b}}_{\fkh},D(\overline{B}^{\phi_{b}}_{\fkh}))$
of Definition~\ref{de:barBg}  the maps
\begin{eqnarray*}
  &&
     (C+\hat{B}^{\phi_{b}}_{\fkh})^{-9}:L^{2}(X;F)\to
     \mathcal{W}^{1}(X;\hat{F}_{g})\,,\\
&& 
(C+\hat{B}^{\phi_{b}}_{\fkh})^{-9}:\mathcal{W}^{-1}(X;\hat{F}_{g})\to
   L^{2}(X;F)\,,\\
&& (C+\overline{B}^{\phi_{b}}_{\fkh})^{-9}:
   L^{2}(X_{-};F)\to \mathcal{W}^{1}(\overline{X}_{-};F)
\end{eqnarray*}
are all bounded.
\end{proposition}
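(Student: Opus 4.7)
The plan is to iterate Lemma~\ref{le:regpowers2} starting from the weighted $L^{2}$-estimates of Lemma~\ref{le:weightB} applied with $n=9$. Fix $f\in L^{2}(X;F)$, set $w_{k}=(C+\hat{B}^{\phi_{b}}_{\fkh})^{k-9}f$ for $k=0,\ldots,9$, so that $w_{0}=(C+\hat{B}^{\phi_{b}}_{\fkh})^{-9}f$ is the target section and $\hat{B}^{\phi_{b}}_{\fkh}w_{k}=w_{k+1}-Cw_{k}$. For $C\geq C_{9,b,g}$, Lemma~\ref{le:weightB} provides the base case $\|\langle p\rangle_{q}^{m}w_{k}\|_{L^{2}}\leq C\|f\|_{L^{2}}$ whenever $m+k\leq 9$ and $\|(1+\mathcal{O})^{1/2}\langle p\rangle_{q}^{m}w_{k}\|_{L^{2}}\leq C\|f\|_{L^{2}}$ whenever $m+k\leq 8$.

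I would then feed this into Lemma~\ref{le:regpowers2}: the hypothesis on $\langle p\rangle_{q}^{m}\hat{B}^{\phi_{b}}_{\fkh}w_{k}$ decouples via $\hat{B}^{\phi_{b}}_{\fkh}w_{k}=w_{k+1}-Cw_{k}$ into $\mathcal{W}^{\mu}$-bounds on $\langle p\rangle_{q}^{m}w_{k}$ and $\langle p\rangle_{q}^{m}w_{k+1}$, while the hypothesis on $(1+\mathcal{O})^{1/2}\langle p\rangle_{q}^{m+2}w_{k}$ merely asks for the same level at weight $m+2$. Thus one application of Lemma~\ref{le:regpowers2} at an input level $\mu\in[0,1/2)$ gains $2/3-\varepsilon$ on the plain $\langle p\rangle_{q}^{m}u$-side but only $1/6-\varepsilon$ on the $(1+\mathcal{O})^{1/2}\langle p\rangle_{q}^{m}u$-side, the latter being what must be fed back as input to the next iteration; each step consumes two units of $\langle p\rangle_{q}$-weight and needs bounds on both $w_{k}$ and $w_{k+1}$ at the current level.

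Starting from $\mu_{0}=0$, I run four successive iterations on $w_{0}$. The admissible input levels evolve as $\mu_{1}<1/6$, $\mu_{2}<1/3$, $\mu_{3}<1/2$, and in the fourth step, choosing $\mu_{3}\in(1/3,1/2)$ so that $\mu_{3}+2/3>1$, the cap $\min(1,\mu')$ in Lemma~\ref{le:regpowers2} yields $w_{0}\in\mathcal{W}^{1}(X;\hat{F}_{g})$. The induction has a triangular shape: iteration $i$ on $w_{0}$ at weight $0$ rests upon iteration $i-1$ on $w_{0}$ and $w_{1}$ at weights $\leq 2$, hence by descent on iteration $i-j$ on $w_{0},\ldots,w_{j}$ at weights $\leq 2j$, and ultimately on the base case for $w_{0},\ldots,w_{4}$ at weights $\leq 8$, which is exactly what Lemma~\ref{le:weightB} with $n=9$ provides.

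The second bound $(C+\hat{B}^{\phi_{b}}_{\fkh})^{-9}\colon\mathcal{W}^{-1}(X;\hat{F}_{g})\to L^{2}(X;F)$ I obtain by duality: the $L^{2}$-adjoint of $\hat{B}^{\phi_{b}}_{\fkh}$ is again of LGKFP type (with $\alpha=-b$ replaced by $b$ and an admissible zeroth-order correction), so the argument above produces $(C+(\hat{B}^{\phi_{b}}_{\fkh})^{*})^{-9}\colon L^{2}\to\mathcal{W}^{1}$, which by transposition gives the claim. Finally the third bound for $\overline{B}^{\phi_{b}}_{\fkh}$ on $\overline{X}_{-}$ follows from the first through the isometry $L^{2}(X_{-};F)\ni s\mapsto 2^{-1/2}s_{ev}\in L^{2}_{ev}(X;\hat{F}_{g})$ of Proposition~\ref{pr:domhatB}-f) and Definition~\ref{de:barBg}, which intertwines the two operators, by restricting the $\mathcal{W}^{1}(X;\hat{F}_{g})$-conclusion to $\overline{X}_{-}$. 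The main obstacle is the tight weight bookkeeping: the $(1+\mathcal{O})^{1/2}$-side gains only $1/6$ per iteration, forcing four iterations and hence base-case weights up to $8$, which is exactly why the exponent $9$ appears in the statement.
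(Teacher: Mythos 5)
Your argument is correct and follows essentially the same route as the paper: weighted $L^{2}$-bounds from Lemma~\ref{le:weightB}, four passes of Lemma~\ref{le:regpowers2} gaining $1/6$ per pass on the $(1+\mathcal{O})^{1/2}$-side until the plain-side cap $\min(1,\mu+2/3)$ saturates, then duality for the $\mathcal{W}^{-1}\to L^{2}$ bound and parity/restriction for $\overline{B}^{\phi_{b}}_{\fkh}$. The one place where you over-engineer is the ``triangular'' descent on $w_{0},\dots,w_{4}$. Because the \emph{plain} $\langle p\rangle^{m}u$-side already gains $2/3-\varepsilon>1/2$ in a single application, the secondary chain does not cascade: one pass of Lemma~\ref{le:regpowers2} on the pair $(w_{1},w_{2})$ at level $0$ gives $\langle p\rangle^{m-1}w_{1}\in\mathcal{W}^{<2/3}$, and this single output dominates every level $<1/2$ that the $w_{0}$-iterations ever need from $w_{1}$. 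The paper therefore uses only the three iterates $u=w_{0}$, $f=w_{1}$, $f_{1}=w_{2}$ and applies Lemma~\ref{le:regpowers2} once to $(f,f_{1})$ and four times to $(u,f)$; this is what actually sets the exponent $9=m+3$ with $m=6$. Your scheme as stated, if read as requiring base-case weights for $w_{0},\dots,w_{4}$ symmetrically up to $8$, would overshoot what Lemma~\ref{le:weightB} supplies (e.g.\ $\langle p\rangle^{8}w_{4}$ is not controlled), but since only the $w_{0}$-column actually descends four levels, the count still closes and the conclusion is unaffected.
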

\begin{proof}
For $m\in \nz$ to be fixed later we call
$u=(C+\hat{B}^{\phi_{b}}_{\fkh})^{-m-3}u_{0}$ and
$f=(C+\hat{B}^{\phi_{b}}_{\fkh})^{-m-2}$ and
$f_{1}=(C+\hat{B}^{\phi_{b}}_{\fkh})^{-m-1}u_{0}$ for $u_{0}\in
L^{2}(X;F)$ so that $u, f\in D(\hat{B}^{\phi_{b}}_{\fkh})$ satisfy
$$
(C+\hat{B}^{\phi_{b}}_{\fkh})u=f\quad,\quad (C+\hat{B}^{\phi_{b}}_{\fkh})f=f_{1}\,.
$$
From Lemma~\ref{le:weightB} we know
$$
(1+\mathcal{O}^{g})^{1/2}\langle p\rangle_{q}^{m}u\in
\mathcal{W}^{0}(X;\hat{F}_{g})\quad,\quad
\langle p\rangle_{q}^{m}f\in \mathcal{W}^{0}(X;\hat{F}_{g})\,.
$$
But Lemma~\ref{le:regpowers2} applied to the pair $(u,f)$ then implies
\begin{equation}
  \label{eq:weu}
(1+\mathcal{O}^{g})^{1/2}\langle p\rangle^{m'+2}u\in
\mathcal{W}^{\mu'}(X;\hat{F}_{g})
\end{equation}
for $\mu'\in [0,1/6)$ and $m'=m-2$\,.
Applied to the pair $(f,f_{1})$ with $m$ replaced by $m-1$ it says
$$
\langle p\rangle_{q}^{m-1}f \in
\mathcal{W}^{\min(1)\mu'')}(X;\hat{F}_{g})~\text{for}~\mu''\in [0,2/3)\,,
$$
and this implies
\begin{equation}
  \label{eq:wef}
  \langle p\rangle_{q}^{m'}f\in \mathcal{W}^{\mu'}(X;F)\,.
\end{equation}
for $m'=m-2$ and $\mu'\in [0,1/6)$\,.
From \eqref{eq:weu}\eqref{eq:wef} with $\mu'\in [0,1/6)$\,, we can
apply again Lemma~\ref{le:regpowers2} with any $\mu\in [0,1/6)$ with
$m$ replaced by $m-2$\,. This leads to \eqref{eq:weu}\eqref{eq:wef}
with $\mu'\in [0,1/3)$ and $m'=m-4$\,. By doing it once more we obtain
\eqref{eq:weu}\eqref{eq:wef} for any $\mu'\in [0,1/2)$ and
$m'=m-6$\,. Applying Lemma~\ref{le:regpowers2} a last time gives
$$
\langle p\rangle_{q}^{m-6}u\in \mathcal{W}^{1}(X;\hat{F}_{g})\,.
$$
Taking $m=6$ proves that
$$
(C+\hat{B}^{\phi_{b}}_{\fkh})^{-6-3}: L^{2}(X;F)\to \mathcal{W}^{1}(X;\hat{F}_{g})
$$
is continuous.\\
The continuity of $(C+\hat{B}^{\phi_{b}}_{\fkh})^{-9}:\mathcal{W}^{-1}(X;\hat{F}_{g})\to
L^{2}(X;F)$ is deduced by duality after recalling that the
$L^{2}$-adjoint $(\hat{B}^{\phi_{b}}_{\fkh})^{*}$ is a GKFP operator
with the same properties as $\hat{B}^{\phi_{b}}_{\fkh}$\,.\\
The property for $\overline{B}^{\phi_{b}}_{\fkh}$ is deduced from the
fact that $(C+\hat{B}^{\phi_{b}}_{\fkh})^{-1}:L^{2}_{ev}(X;F)\to L^{2}_{ev}(X;F)$\,.
\end{proof}
\begin{corollary}
\label{cor:semid} For $b\in \rz^{*}$\,, $k\in\nz$ 
and $t>0$ the following
operators are well defined and continuous:
\begin{eqnarray*}
  && \hat{d}_{g,\fkh}(\hat{B}^{\phi_{b}}_{\fkh})^{k}e^{-t\hat{B}^{\phi_{b}}_{\fkh}}: L^{2}(X;F)\to
     L^{2}(X;F)\,,\\
&& \hat{d}^{\phi_{b}}_{g,\fkh}
(\hat{B}^{\phi_{b}}_{\fkh})^{k}e^{-t\hat{B}^{\phi_{b}}_{\fkh}}: L^{2}(X;F)\to  L^{2}(X;F)\,,\\
&&
 \overline{d}_{g,\fkh}(\overline{B}^{\phi_{b}}_{\fkh})^{k}e^{-t \overline{B}^{\phi_{b}}_{\fkh}}: L^{2}(X_{-};F)\to L^{2}(X_{-};F)\,,\\
 &&
 \overline{d}^{\phi_{b}}_{g,\fkh}(\overline{B}^{\phi_{b}}_{\fkh})^{k}e^{-t \overline{B}^{\phi_{b}}_{\fkh}}: L^{2}(X_{-};F)\to L^{2}(X_{-};F)\,.
\end{eqnarray*}
By taking the $\phi_{\pm b}$ left-adjoints the reverse products,
initially defined on $D(\hat{d}_{g,\fkh})$\,,
$D(\hat{d}^{\phi_{b}}_{g,\fkh})$\,,
$D(\overline{d}^{\phi_{b}}_{g,\fkh})$ or
$D(\overline{d}^{\phi_{b}}_{g,\fkh})$\,, extend as
bounded operators in $L^{2}(X;F)$ and $L^{2}(X_{-};F)$\,.
\end{corollary}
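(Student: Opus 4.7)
The plan is to factor the operator as a product of two bounded pieces. First I would invoke Corollary~\ref{cor:semigroup}: writing $A:=C_{b,g}+\hat{B}^{\phi_b}_{\fkh}$, the contour representation
\begin{equation*}
A^{N}e^{-tA}=\frac{1}{2i\pi}\int_{\gamma_{b,g}}\frac{z^{N}e^{-tz}}{z-A}\,dz
\end{equation*}
is absolutely convergent in operator norm on $L^2(X;F)$ for $t>0$, because on $\gamma_{b,g}$ the real part of $z$ is bounded below by $\tfrac{1}{2C_{b,g}}\langle \mathrm{Im}\,z\rangle^{1/4}$, so $|e^{-tz}|$ beats the polynomial $|z|^N$. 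This gives that $A^N e^{-tA}$ is bounded on $L^2$ for every $N\in\mathbb{N}$, and binomial expansion of $(\hat{B}^{\phi_b}_{\fkh})^k=(A-C_{b,g})^k$ then yields boundedness of $(C_{b,g}+\hat{B}^{\phi_b}_{\fkh})^{9}(\hat{B}^{\phi_b}_{\fkh})^{k}e^{-t\hat{B}^{\phi_b}_{\fkh}}$ on $L^2(X;F)$ for every $k\in\mathbb{N}$ and every $t>0$. In particular $\mathrm{Ran}\,e^{-t\hat{B}^{\phi_b}_{\fkh}}\subset D((\hat{B}^{\phi_b}_{\fkh})^\infty)\subset D((\hat{B}^{\phi_b}_{\fkh})^9)$.

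Next I would combine this with Proposition~\ref{pr:bootstr}, which supplies $(C_{b,g}+\hat{B}^{\phi_b}_{\fkh})^{-9}\colon L^2(X;F)\to \mathcal{W}^1(X;\hat{F}_g)$, together with the observation that $\hat{d}_{g,\fkh}\colon \mathcal{W}^1(X;\hat{F}_g)\to L^2(X;F)$ is bounded. To see the latter: elements of $\mathcal{W}^1(X;\hat{F}_g)$ are continuous across $X'$ in the bundle $\hat{F}_g$ and hence automatically satisfy the partial-trace interface condition defining $D(\hat{d}_{g,\fkh})$ in Definition~\ref{de:dhatE}; moreover the $\mathcal{W}^1$-norm controls one horizontal derivative, one weighted vertical derivative, and the pointwise weight $\|\langle p\rangle_q^{\,2}s\|_{L^2}$, which in turn dominates $\|d\hat{\fkh}\wedge s\|_{L^2}=O(\|\langle p\rangle_q^{3/2}s\|_{L^2})$, so $\hat{d}_{g,\fkh}=\hat{d}_g+d\hat{\fkh}\wedge$ maps $\mathcal{W}^1(X;\hat{F}_g)$ into $L^2(X;F)$. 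The identical argument will work for $\hat{d}^{\phi_b}_{g,\fkh}$, whose interface condition in Proposition~\ref{pr:phiadj} is likewise implied by continuity across $X'$ and whose conjugating factor $e^{-\lambda_0/b}(\cdots)e^{\lambda_0/b}$ has polynomial effect in $p$ of order at most $\langle p\rangle_q^{2}$. The factorization
\begin{equation*}
\hat{d}_{g,\fkh}(\hat{B}^{\phi_b}_{\fkh})^k e^{-t\hat{B}^{\phi_b}_{\fkh}}
=\bigl[\hat{d}_{g,\fkh}(C_{b,g}+\hat{B}^{\phi_b}_{\fkh})^{-9}\bigr]\,\bigl[(C_{b,g}+\hat{B}^{\phi_b}_{\fkh})^{9}(\hat{B}^{\phi_b}_{\fkh})^{k} e^{-t\hat{B}^{\phi_b}_{\fkh}}\bigr]
\end{equation*}
is then pointwise valid on $L^2(X;F)$, since the range of the right factor lies in $D((\hat{B}^{\phi_b}_{\fkh})^9)=\mathrm{Ran}\,(C_{b,g}+\hat{B}^{\phi_b}_{\fkh})^{-9}$, and exhibits the first two operators of the corollary as compositions of bounded maps.

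The operators on $\overline{X}_-$ then transfer via the unitary identification $s\mapsto s_{ev}/\sqrt{2}$ from $L^2(X_-;F)$ to $L^2_{ev}(X;\hat{F}_g)$, which intertwines $\overline{B}^{\phi_b}_{\fkh}$, $\overline{d}_{g,\fkh}$, $\overline{d}^{\phi_b}_{g,\fkh}$ with the even-parity restrictions of $\hat{B}^{\phi_b}_{\fkh}$, $\hat{d}_{g,\fkh}$, $\hat{d}^{\phi_b}_{g,\fkh}$ (Proposition~\ref{pr:domhatB}-f)). For the reverse products, I would invoke Proposition~\ref{pr:domhatB}-e) and Proposition~\ref{pr:phiadj} to identify the $\phi_{-b}$ left-adjoints $(\hat{B}^{\phi_b}_{\fkh})^{\phi_{-b}}=\hat{B}^{\phi_{-b}}_{\fkh}$ and $(\hat{d}_{g,\fkh})^{\phi_{-b}}=\hat{d}^{\phi_{-b}}_{g,\fkh}$; applying the forward result with $b$ replaced by $-b$ and taking $\phi_{-b}$-adjoints will produce bounded operators on $L^2$ extending the formal products $e^{-t\hat{B}^{\phi_b}_{\fkh}}(\hat{B}^{\phi_b}_{\fkh})^k\hat{d}_{g,\fkh}$ and its three variants. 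The hard part will be the clean identification of the $\phi_{-b}$-adjoint of the forward bounded operator with the $L^2$-closure of the formal reverse product on the stated cores $D(\hat{d}_{g,\fkh})$, $D(\hat{d}^{\phi_b}_{g,\fkh})$, and their $\overline{X}_-$-analogues: this is standard Hilbert-space bookkeeping, since the two operators agree on the dense subspace obtained by pairing the semigroup image against a core, but the verification must be written carefully.
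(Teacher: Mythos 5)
Your proof is correct and follows essentially the same route as the paper for the forward operators: the cuspidal-semigroup estimate (Corollary~\ref{cor:semigroup}) gives $(C+A)^{k+9}e^{-tA}$ bounded, Proposition~\ref{pr:bootstr} gives $(C+A)^{-9}\colon L^2\to\mathcal{W}^1(X;\hat F_g)$, and the inclusion $\mathcal{W}^1(X;\hat F_g)\subset D(\hat d_{g,\fkh})\cap D(\hat d^{\phi_b}_{g,\fkh})$ closes the factorization; the $\overline{X}_-$ versions then follow by parity. For the reverse products, however, the paper sidesteps the ``hard part'' you flag (identifying the $\phi_{-b}$-adjoint with the closure of the formal reverse product): instead of taking adjoints, it uses the other factorization $A^k e^{-tA}=A^k(C+A)^{-k}(C+A)^{k+9}e^{-tA}(C+A)^{-9}$ with $(C+A)^{-9}$ on the right, and combines the duality half of Proposition~\ref{pr:bootstr}, namely $(C+A)^{-9}\colon\mathcal{W}^{-1}(X;\hat F_g)\to L^2(X;F)$, with the observation that $\hat d_{g,\fkh}$ and $\hat d^{\phi_b}_{g,\fkh}$ are continuous from $L^2(X;F)$ into $\mathcal{W}^{-1}(X;\hat F_g)$. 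This is more direct and avoids the bookkeeping you anticipated; both routes are valid.
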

\begin{proof}
 It suffices to write for $A=\hat{B}^{\phi_{b}}_{\fkh}$ 
  $A^{k}e^{-tA}=(C+A)^{-9}A^{k}(C+A)^{-k}(C+A)^{k+9}e^{-tA}=A^{k}(C+A)^{-k}(C+A)^{k+9}e^{-tA}(C+A)^{-9}$
  owing to Corollary~\ref{cor:semigroup},
  and to notice that 
$$
\mathcal{W}^{1}(X;\hat{F}_{g})
\subset D(\hat{d}_{\alpha\hat{\mathcal{H}}})\cap D(\hat{d}^{\hat{\phi}}_{\alpha\hat{\mathcal{H}}})\,,
$$
while $\hat{d}_{\alpha\hat{\mathcal{H}}}$ and
$\hat{d}^{\hat\phi}_{\alpha\hat{\mathcal{H}}}$ are continuous from
$L^{2}(X;E)$ to $\mathcal{W}^{-1}(X;\hat{F}_{g})$\,.\\
The result for $\overline{B}^{\phi_{b}}_{\fkh}$\,,
$\overline{d}_{g,\fkh}$\,, $\overline{d}^{\phi_{b}}_{g,\fkh}$ are again
deduced from the general construction with the parity with respect to $\Sigma_{\nu}$\,.
\end{proof}
\subsection{Commutation property}
\label{sec:commutppty}
We now prove the commutation of the differential and Bismut
codifferential with the resolvent of the hypoelliptic Laplacian.
\begin{proposition}
\label{pr:commut}
Let
$(\overline{B}^{\phi_{b}}_{\fkh},D(\overline{B}^{\phi_{b}}_{\fkh}))$
be the operator of Definition~\ref{de:barBg} and
Theorem~\ref{th:domainB} for $b\in\rz^{*}$\,. Let
$(\overline{d}_{g,\fkh},D(\overline{d}_{g,\fkh})$
and
$(\overline{d}^{\phi_{b}}_{g,\fkh},D(\overline{d}^{\phi_{b}}_{g,\fkh}))$ 
be the closed realizations of the differential and Bismut
codifferential studied in Proposition~\ref{pr:domaindH} and
Proposition~\ref{pr:phiadj}.\\
The semigroup $(e^{-t\overline{B}^{\phi_{b}}_{\fkh}})_{t\geq 0}$ preserves
on $D(\overline{d}_{g,\fkh})$  (resp. $D(\overline{d}^{\phi_{b}}_{g,\fkh})$) with 
\begin{eqnarray*}
  &&\forall s\in D(\overline{d}_{g,\fkh})\,, \quad
     \overline{d}_{g,\fkh}e^{-t\overline{B}^{\phi_{b}}_{\fkh}}s=e^{-t\overline{B}^{\phi_{b}}_{\fkh}}
\overline{d}_{g,\fkh}s\\
   \text{resp.}&&\forall s\in 
 D(\overline{d}^{\phi_{b}}_{\fkh})\,, \quad
      \overline{d}^{\phi_{b}}_{g,\fkh}
 e^{-t\overline{B}^{\phi_{b}}_{\fkh}}s=
 e^{-t\overline{B}^{\phi_{b}}_{\fkh}}
 \overline{d}^{\phi_{b}}_{g,\fkh}s\,,
\end{eqnarray*}
for all $t\geq 0$\,.\\
Hence for any $z\in \cz\setminus\textrm{Spec}(\overline{B}^{\phi_{b}}_{\fkh})$ the
following holds
\begin{eqnarray*}
  &&\forall  s\in D(\overline{d}_{g,\fkh})\,,\quad
     (z-B^{\phi_{b}}_{g,\fkh})^{-1}s\in
     D(\overline{d}_{g,\fkh})\\
&&\quad \text{and}\quad
\overline{d}_{g,\fkh}(z-\overline{B}^{\phi_{b}}_{\fkh})^{-1}s=
(z-\overline{B}^{\phi_{b}}_{\fkh})^{-1}\overline{d}_{g,\fkh}s\,,\\
\text{resp.}&&
\forall  s\in D(\overline{d}^{\phi_{b}}_{g,\fkh})\,,\quad
     (z-\overline{B}^{\phi_{b}}_{\fkh})^{-1}s\in
     D(\overline{d}^{\phi_{b}}_{g,\fkh})\\
&&\quad \text{and}\quad
\overline{d}^{\phi_{b}}_{g,\fkh}(z-\overline{B}^{\phi_{b}}_{\fkh})^{-1}s=(z-\overline{B}^{\phi_{b}}_{\fkh})^{-1}\overline{d}^{\phi_{b}}_{g,\fkh}s\,.
\end{eqnarray*}
\end{proposition}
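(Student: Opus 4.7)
The plan is to follow the strategy indicated in the introduction: first prove the semigroup commutation $\overline{d}_{g,\fkh}e^{-t\overline{B}^{\phi_{b}}_{g,\fkh}}=e^{-t\overline{B}^{\phi_{b}}_{g,\fkh}}\overline{d}_{g,\fkh}$ on a dense and regular subspace, then transfer the identity to the resolvent via the Dunford--Taylor representation of Corollary~\ref{cor:semigroup}. The natural core to work with is $\mathcal{D}_{g,\nabla^{\fkf}}$, which by Proposition~\ref{pr:domaindH} is dense in $D(\overline{d}_{g,\fkh})$ and satisfies $\overline{d}_{g,\fkh}\mathcal{D}_{g,\nabla^{\fkf}}\subset \mathcal{C}_{g}$. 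Two compatibilities between the various closed realizations have to be recorded. First, $\mathcal{C}_{g}\subset D(\overline{B}^{\phi_{b}}_{g,\fkh})$: elements of $\mathcal{C}_{g}$ are exactly those sections of $L^{2}(X_{-};F)$ whose even extension lies in $\mathcal{C}_{0,g}(\hat{F}_{g})\cap L^{2}_{ev}$, hence smooth on each side of $X'$, continuous across it, and satisfying the full-trace condition $\hat{S}_{\nu}s|_{X'}=s|_{X'}$ of Theorem~\ref{th:domainB}-f). Second, any $s\in \mathcal{W}^{1}(\overline{X}_{-};F)\cap D(\overline{B}^{\phi_{b}}_{g,\fkh})$ satisfies $\hat{S}_{\nu}s|_{X'}=s|_{X'}$, and since $\hat{S}_{\nu}$ commutes both with $\mathbf{i}_{e_{1}}e^{1}\wedge$ and with $\hat{e}_{1}\wedge \mathbf{i}_{\hat{e}^{1}}$, this yields the partial-trace conditions defining $D(\overline{d}_{g,\fkh})$ and $D(\overline{d}^{\phi_{b}}_{g,\fkh})$. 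Hence
\[
\mathcal{W}^{1}(\overline{X}_{-};F)\cap D(\overline{B}^{\phi_{b}}_{g,\fkh})\subset D(\overline{d}_{g,\fkh})\cap D(\overline{d}^{\phi_{b}}_{g,\fkh}).
\]

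Set $A=C_{b}+\overline{B}^{\phi_{b}}_{g,\fkh}$, maximal accretive by Theorem~\ref{th:domainB}-e). For $s\in \mathcal{D}_{g,\nabla^{\fkf}}$ introduce $u(t)=e^{-tA}s$ and $v(t)=e^{-tA}\overline{d}_{g,\fkh}s$; the second is a genuine $C^{0}$ trajectory in $L^{2}(X_{-};F)$ because $\overline{d}_{g,\fkh}s\in \mathcal{C}_{g}\subset D(A)$. By Corollary~\ref{cor:semigroup}, $u(t)\in D(A^{n})$ for all $n\in \nz$ and all $t>0$; the bootstrap of Proposition~\ref{pr:bootstr}, applied through the factorization $e^{-tA}s=(C+A)^{-9}[(C+A)^{9}e^{-tA}s]$, then places $u(t)$ in $\mathcal{W}^{1}(\overline{X}_{-};F)\cap D(A)$, and the inclusion recorded above gives $u(t)\in D(\overline{d}_{g,\fkh})$. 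Thus $t\mapsto \overline{d}_{g,\fkh}u(t)$ is well-defined, bounded in $L^{2}$ uniformly on compact subsets of $(0,\infty)$ by Corollary~\ref{cor:semid}. The pointwise differential identity $d_{\fkh}B^{\phi_{b}}_{\fkh}=B^{\phi_{b}}_{\fkh}d_{\fkh}$ together with the absence of any boundary contribution (thanks to the trace compatibilities) produces the $L^{2}$-equality $\overline{d}_{g,\fkh}Au(t)=A\overline{d}_{g,\fkh}u(t)$ for $t>0$. Differentiation in $t$ gives $\tfrac{d}{dt}\overline{d}_{g,\fkh}u(t)=-A\overline{d}_{g,\fkh}u(t)$, while $v$ satisfies $v'=-Av$ with matching initial value $v(0)=\overline{d}_{g,\fkh}u(0)$. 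Uniqueness of the Cauchy problem for the $C^{0}$-semigroup generated by $-A$ forces $\overline{d}_{g,\fkh}u(t)=v(t)$ for all $t\geq 0$. Density of $\mathcal{D}_{g,\nabla^{\fkf}}$ in $D(\overline{d}_{g,\fkh})$ combined with the closedness of $\overline{d}_{g,\fkh}$ and the uniform bound of Corollary~\ref{cor:semid} then extends the identity to all $s\in D(\overline{d}_{g,\fkh})$.

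The resolvent commutation follows from the Dunford--Taylor representation of Corollary~\ref{cor:semigroup}: for $z$ in a left half-plane the resolvent $(z-A)^{-1}$ is an absolutely convergent Laplace-type integral of $e^{-tA}$ against $e^{-tz}$, and the closedness of $\overline{d}_{g,\fkh}$ together with the uniform $L^{2}$-boundedness of $\overline{d}_{g,\fkh}e^{-tA}=e^{-tA}\overline{d}_{g,\fkh}$ permits interchanging $\overline{d}_{g,\fkh}$ with the integral. Analyticity of both sides in $z$ away from the spectrum extends the identity to all $z\in \cz\setminus \mathrm{Spec}(\overline{B}^{\phi_{b}}_{g,\fkh})$. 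The statement for Bismut's codifferential is obtained by running the same argument with the core $e^{\lambda_{0}/b}\mathcal{D}_{g,\nabla^{\fkf'}}$ of $D(\overline{d}^{\phi_{b}}_{g,\fkh})$ provided by Proposition~\ref{pr:phiadj} and with the second partial-trace compatibility recorded in the first paragraph.

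The main obstacle is the rigorous justification of the $L^{2}$-equality $\overline{d}_{g,\fkh}Au(t)=A\overline{d}_{g,\fkh}u(t)$: the two closed realizations carry different boundary conditions on $X'$, and while the identity $d_{\fkh}B^{\phi_{b}}_{\fkh}=B^{\phi_{b}}_{\fkh}d_{\fkh}$ of differential operators is classical in the interior, its upgrade to an $L^{2}$-identity requires sufficient regularity of $u(t)$ so that all traces needed in an integration by parts exist and so that the would-be boundary term they would generate vanishes. Exactly this is delivered by the bootstrap of Proposition~\ref{pr:bootstr} combined with the trace theorems of Subsection~\ref{sec:tracelocalB} and the boundary identity $\hat{S}_{\nu}u(t)|_{X'}=u(t)|_{X'}$ coming from $u(t)\in D(A)$ via Theorem~\ref{th:domainB}-f).
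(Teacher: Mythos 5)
Your overall strategy matches the paper's: establish the semigroup commutation on a regular dense set using the bootstrap of Proposition~\ref{pr:bootstr} and Corollaries~\ref{cor:semigroup}--\ref{cor:semid}, then deduce the resolvent identity by the Laplace representation and analytic continuation. However, the argument has a genuine gap exactly at the point you yourself flag as the ``main obstacle,'' and the resolution you propose does not close it.

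Your scheme requires two facts before the Cauchy-problem uniqueness argument can run: that $Au(t)\in D(\overline{d}_{g,\fkh})$, and that $\overline{d}_{g,\fkh}u(t)\in D(A)$ with $A\overline{d}_{g,\fkh}u(t)=\overline{d}_{g,\fkh}Au(t)$. You try to obtain both from the $\mathcal{W}^{1}$ regularity of $u(t)$, the trace results, and the full-trace identity $\hat S_{\nu}u(t)|_{X'}=u(t)|_{X'}$. But $\mathcal{W}^{1}$ is the ceiling of the bootstrap (Proposition~\ref{pr:bootstr} stops at $\mathcal{W}^{1}$ precisely because of the curvature of the boundary), and the membership $\overline{d}_{g,\fkh}u(t)\in D(A)$ requires, by Theorem~\ref{th:domainB}-f), that $\nabla^{F}_{\partial/\partial p_{1}}(d_{\fkh}u(t))\in L^{2}$ together with a trace condition on $d_{\fkh}u(t)$. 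These involve mixed horizontal--vertical second derivatives of $u(t)$ and the value of a \emph{transverse} derivative of $u(t)$ along $X'$, neither of which is controlled by $u(t)\in\mathcal{W}^{1}(\overline{X}_{-};F)$ and the full-trace identity for $u(t)$ alone. In short, the statement ``the bootstrap plus the trace theorems plus $\hat S_{\nu}u(t)|_{X'}=u(t)|_{X'}$ deliver this'' is asserted but not proved, and it is not clear that it is true at the level of regularity actually available.

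The paper gets around this difficulty by a weak (dual) argument that avoids asking for any regularity of $d_{\fkh}u(t)$ beyond $L^{2}$. Lemma~\ref{le:commut} first establishes the commutation identity $\hat B^{\phi_{b}}_{\fkh}\hat d_{g,\fkh}\omega=\hat d_{g,\fkh}\hat B^{\phi_{b}}_{\fkh}\omega$ for $\omega$ in the smooth core $\hat{\mathcal{D}}_{g,\nabla^{\fkf}}$ (resp.\ $e^{\lambda_{0}/b}\sigma_{b}\hat{\mathcal{D}}'_{g,\nabla^{\fkf'}}$) by testing against $\mathcal{C}_{0,g}(\hat F'_{g})$ and passing to the limit across $X'$, and the proof of the proposition then pairs $\hat d_{g,\fkh}\hat B^{\phi_{b}}_{\fkh}s_{t}$ against test sections $\omega'$ in a core of $\hat B^{\phi_{-b}}_{\fkh}$ with the $\phi_{-b}$ duality and uses the adjoint identities of Propositions~\ref{pr:phiadj} and~\ref{pr:domhatB}-e) to obtain the uniform estimate $|\langle \hat B^{\phi_{-b}}_{\fkh}\omega',\hat d_{g,\fkh}s_{t}\rangle_{\phi_{-b}}|\le C_{t}\|\omega'\|_{L^{2}}$, which is exactly the weak characterization of $\hat d_{g,\fkh}s_{t}\in D((\hat B^{\phi_{-b}}_{\fkh})^{{}^{t}\phi_{-b}})=D(\hat B^{\phi_{b}}_{\fkh})$. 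This $\phi_{\pm b}$-adjoint step, together with the careful identification of which smooth core ($\hat{\mathcal{D}}_{g,\nabla^{\fkf}}$ for $d$, the $e^{\lambda_{0}/b}\sigma_{b}$-twisted one for $d^{\phi_{b}}$) is mapped into $\mathcal{C}_{0,g}(\hat F_{g})$, is the substance that is missing from your write-up; without it the claimed $L^{2}$-identity $\overline{d}_{g,\fkh}Au(t)=A\overline{d}_{g,\fkh}u(t)$ remains unjustified.
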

Although we seek properties of
$\overline{B}^{\phi_{b}}_{\fkh}$\,, it is again more
convenient to work with $\hat{B}^{\phi_{b}}_{\fkh}$ and
then to translate the results via the parity w.r.t $\Sigma_{\nu}$\,.
We start with a lemma.
\begin{lemma}
  \label{le:commut} Let $\mathcal{C}_{0,g}(\hat{F}_{g})$\,,
  $\hat{\mathcal{D}}_{g,\nabla^{\fkf}}$ and 
  $e^{\frac{\lambda_{0}}{b}}\sigma_{b}\hat{\mathcal{D}}'_{g,\nabla^{\fkf'}}$ be the
  spaces introduced respectively in
  Definition~\ref{de:Cg}, Proposition~\ref{pr:domaindH} and
  Proposition~\ref{pr:phiadj}.  For all $\omega\in \hat{\mathcal{D}}_{g,\nabla^{\fkf}}$ (resp $\omega\in e^{\frac{\lambda_{0}}{b}}\sigma_{b}\hat{\mathcal{D}}'_{g,\nabla^{\fkf'}}$) the
  following properties hold:
  \begin{eqnarray*}
    && \omega \in D(\hat{d}_{g,\fkh})\cap
       D(\hat{B}^{\phi_{b}}_{\fkh})\quad
\hat{d}_{g,\fkh}\omega \in D(\hat{B}^{\phi_{b}}_{\fkh})\quad
\hat{B}^{\phi_{b}}_{\fkh}\omega\in D(\hat{d}_{g,\fkh})
\\
\text{and}&&
\hat{B}^{\phi_{b}}_{\fkh}\hat{d}_{g,\fkh}\omega=\hat{d}_{g,\fkh}\hat{B}^{\phi_{b}}_{\fkh}\omega\,,\\
\text{resp.}&&
\omega \in D(\hat{d}^{\phi_{b}}_{g,\fkh})\cap
       D(\hat{B}^{\phi_{b}}_{\fkh})\quad
       \hat{d}^{\phi_{b}}_{g,\fkh}\omega \in
       D(\hat{B}^{\phi_{b}}_{\fkh})\quad
       \hat{B}^{\phi_{b}}_{\fkh}\omega\in
       D(\hat{d}^{\phi_{b}}_{g,\fkh})\\
\text{and}&&
\hat{B}^{\phi_{b}}_{g,\fkh}\hat{d}^{\phi_{b}}_{\fkh}\omega=\hat{d}^{\phi_{b}}_{g,\fkh}\hat{B}^{\phi_{b}}_{g,\fkh}\omega\,.
  \end{eqnarray*}
\end{lemma}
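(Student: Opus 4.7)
The strategy is to exploit the $\mathcal{C}^{\infty}$-structure of the doubled manifold $M_{g}$ defined in Subsection~\ref{sec:diffhatE}, transported to $\hat{F}_{g}$ via the piecewise $\mathcal{C}^{\infty}$ and continuous isomorphism $\tilde{S}_{1,*}$ of Lemma~\ref{le:Mg}. Write $\omega=\tilde{S}_{1,*}\tilde{\omega}$ with $\tilde{\omega}\in \mathcal{C}^{\infty}_{0}(M_{g};\Lambda T^{*}M_{g}\otimes \pi_{M_{g}}^{*}(\fkf))$. First I verify domain membership: $\omega\in \hat{\mathcal{D}}_{g,\nabla^{\fkf}}\subset \mathcal{C}_{0,g}(\hat{F}_{g})\subset D(\hat{d}_{g,\fkh})$ by Proposition~\ref{pr:domaindH}, while $\omega\in D(\hat{B}^{\phi_{b}}_{\fkh})$ is obtained from Proposition~\ref{pr:traceAhat}-(ii) since the restrictions $\omega|_{X_{\mp}}$ are compactly supported and $\mathcal{C}^{\infty}$ up to the boundary, and the full continuity of $\tilde{\omega}$ across $X'$ in the $M_{g}$-sense translates under $\tilde{S}_{1,*}$ into the equality of traces in $\hat{F}_{g}|_{X'}$ required by (ii).

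For the mapping properties, I exploit the fact that $\hat{\fkh}(\tilde{q},\tilde{p})=\tfrac{1}{2}(\tilde{p}_{1}^{2}+m^{i'j'}(0,\tilde{q}')\tilde{p}_{i'}\tilde{p}_{j'})$ is $\mathcal{C}^{\infty}$ on $M_{g}$, so the deformed differential $d^{\nabla^{\fkf}}_{M_{g},\hat{\fkh}}=d^{\nabla^{\fkf}}_{M_{g}}+d\hat{\fkh}\wedge$ is a first-order operator with $\mathcal{C}^{\infty}(M_{g})$ coefficients and preserves $\mathcal{C}^{\infty}_{0}(M_{g};\Lambda T^{*}M_{g}\otimes \pi_{M_{g}}^{*}(\fkf))$. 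Pulled back by $\tilde{S}_{1,*}$ this gives $\hat{d}_{g,\fkh}\hat{\mathcal{D}}_{g,\nabla^{\fkf}}\subset \hat{\mathcal{D}}_{g,\nabla^{\fkf}}$, so Step~1 applied to $\hat{d}_{g,\fkh}\omega$ yields $\hat{d}_{g,\fkh}\omega\in D(\hat{B}^{\phi_{b}}_{\fkh})$. For $\hat{B}^{\phi_{b}}_{\fkh}\omega$, the $L^{2}$-membership is immediate from compact support and from $\hat{B}^{\phi_{b}}_{\fkh}$ having $\mathcal{C}^{\infty}(\overline{X}_{\mp})$ coefficients. The \emph{main obstacle} is to verify that $\hat{B}^{\phi_{b}}_{\fkh}\omega$ lies in $\mathcal{E}_{loc}(\hat{d}_{g},\hat{F}_{g})$: by Proposition~\ref{pr:partialtrace}-(e) this amounts to the equality of the tangential traces $\mathbf{i}_{e_{\pm,1}}e^{1}_{\pm}\wedge(\hat{B}^{\phi_{b}}_{\fkh}\omega)_{\pm}|_{\partial X_{\pm}}$ in $\mathcal{D}'(X';\hat{F}_{g}|_{X'})$, despite the fact that $\hat{B}^{\phi_{b}}_{\fkh}$ has coefficients which are only piecewise $\mathcal{C}^{\infty}$, with transverse jumps of $\partial_{q^{1}}\hat{g}^{TQ}$ across $X'$ when the second fundamental form of $Q'$ does not vanish.

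To resolve this matching I argue by duality. The classical commutation of Bismut operators in the smooth interior gives
$$
\hat{d}_{g,\fkh}\bigl(\hat{B}^{\phi_{b}}_{\fkh}\omega\bigr)=\hat{B}^{\phi_{b}}_{\fkh}\bigl(\hat{d}_{g,\fkh}\omega\bigr)\quad\text{in}~\mathcal{D}'(X_{-}\sqcup X_{+};\hat{F}_{g}),
$$
and the right-hand side belongs to $L^{2}(X;F)$ by the preceding step. Testing against $s'\in \mathcal{C}_{0,g}(\hat{F}'_{g})$ and using Proposition~\ref{pr:partialtrace2}-(d) with the integration by parts of Proposition~\ref{pr:IPPloc} applied separately to $X_{-}$ and $X_{+}$, the candidate distributional identity will hold globally on $X$ provided the boundary contributions at $X'$ cancel. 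This cancellation follows from (i) the matching tangential trace of $\omega$ dictated by its $M_{g}$-smoothness, and (ii) the observation that the discontinuity of $\partial_{\tilde{q}^{1}}\hat{g}^{ij}$ only contributes to the $e^{1}$-component of $\hat{B}^{\phi_{b}}_{\fkh}\omega$, which is annihilated by $\mathbf{i}_{e_{1}}e^{1}\wedge$. This establishes $\hat{B}^{\phi_{b}}_{\fkh}\omega\in D(\hat{d}_{g,\fkh})$ together with the identity $\hat{d}_{g,\fkh}\hat{B}^{\phi_{b}}_{\fkh}\omega=\hat{B}^{\phi_{b}}_{\fkh}\hat{d}_{g,\fkh}\omega$ in $L^{2}(X;F)$.

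Finally, the analogous statement on $e^{\lambda_{0}/b}\sigma_{b}\hat{\mathcal{D}}'_{g,\nabla^{\fkf'}}$ for $\hat{d}^{\phi_{b}}_{g,\fkh}$ follows by conjugation: by Lemma~\ref{le:phibdd}, $\hat{d}^{\phi_{b}}_{g,\fkh}=e^{\lambda_{0}/b}\sigma_{b}\,\tilde{\hat{d}}_{g,\fkh}\,\sigma_{b}^{-1}e^{-\lambda_{0}/b}$, while Proposition~\ref{pr:domhatB}-(e) identifies the $\phi_{-b}$-left-adjoint of $\hat{B}^{\phi_{b}}_{\fkh}$ with $\hat{B}^{\phi_{-b}}_{\fkh}$. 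Running the same three-step argument on the smooth $M_{g}$-structure of $\hat{F}'_{g}$ (associated with the antidual flat connection $\nabla^{\fkf'}$) to produce the commutation of $\tilde{\hat{d}}_{g,\fkh}$ with $\hat{B}^{\phi_{-b}}_{\fkh}$ on $\hat{\mathcal{D}}'_{g,\nabla^{\fkf'}}$, and then conjugating by $e^{\lambda_{0}/b}\sigma_{b}$, delivers the required identity for $\hat{d}^{\phi_{b}}_{g,\fkh}$ on $e^{\lambda_{0}/b}\sigma_{b}\hat{\mathcal{D}}'_{g,\nabla^{\fkf'}}$.
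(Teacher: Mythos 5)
Your domain checks and the reduction to the commutation on $X_{-}\sqcup X_{+}$ are fine, but the central step—establishing that $\hat{B}^{\phi_{b}}_{\fkh}\omega\in D(\hat{d}_{g,\fkh})$—rests on an assertion you do not prove and that is far from obvious, namely your claim (ii) that ``the discontinuity of $\partial_{\tilde q^{1}}\hat g^{ij}$ only contributes to the $e^{1}$-component of $\hat B^{\phi_{b}}_{\fkh}\omega$, which is annihilated by $\mathbf{i}_{e_{1}}e^{1}\wedge$.'' The Weitzenb\"ock formula \eqref{eq:weitzenbock} contains the curvature term $\frac{1}{2}\langle R^{TQ}(e_{i},e_{j})e_{k},e_{\ell}\rangle e^{i}e^{j}\mathbf{i}_{\hat e^{k}\hat e^{\ell}}$, the Lie derivative $\mathcal{L}_{Y_{\fkh}}$, and the $\omega(\nabla^{\fkf},g^{\fkf})$-terms, all of whose coefficients jump across $X'$ when the second fundamental form of $Q'$ is nontrivial; the jumps in $R^{TQ}$ spread over all index combinations $(i,j,k,\ell)$ and are not confined to $1\in I$. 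The authors explicitly warn against this kind of coefficient-level bookkeeping in Subsection~\ref{sec:commentsdomd}, pointing out that it ``may involve the differentiation of the Christoffel symbols $\Gamma_{ij}^{k}(q)$ which is irrelevant'' and ``is not so simple.'' To make your route rigorous you would have to compute the jump of $\mathbf{i}_{e_{1}}e^{1}\wedge(\hat B^{\phi_{b}}_{\fkh}\omega)\big|_{\partial X_{\mp}}$ from the Weitzenb\"ock formula and show it vanishes; that verification is missing.

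The paper's proof sidesteps this boundary analysis entirely by a sharp cut-off argument: for $t'\in\mathcal{C}_{0,g}(\hat F'_{g})$ it writes $\langle t',\hat B^{\phi_{b}}_{\fkh}\hat d_{g,\fkh}\omega\rangle=\lim_{\varepsilon\to 0}\langle 1_{\rz\setminus[-\varepsilon,\varepsilon]}(q^{1})\,t',\hat B^{\phi_{b}}_{\fkh}d_{\hat\fkh}\omega\rangle$, applies the interior commutation on $|q^{1}|>\varepsilon$, and passes to the limit using that the restrictions $d_{\hat\fkh}(\hat B^{\phi_{b}}_{\fkh}\omega)\big|_{X_{\mp}}$ are $L^{2}_{comp}$; no boundary term ever appears. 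This yields the bound $|\langle t',d_{\hat\fkh}(\hat B^{\phi_{b}}_{\fkh}\omega)\big|_{X\setminus X'}\rangle|\leq C\|t'\|_{L^{2}}$ directly, and the weak characterization of $D(\hat d_{g,\fkh})$ (tested against $M_{g}$-smooth sections in $\mathcal{C}_{0,g}(\hat F'_{g})$) concludes. For the second statement the paper also takes a shorter route than yours: rather than re-running the whole argument on $\hat F'_{g}$ and conjugating, it simply reuses the first identity and the $\phi_{-b}$-duality, using that $\hat d^{\phi_{b}}_{g,\fkh}$ is the $\phi_{-b}$ right-adjoint of $\hat d_{g,\fkh}$ and that $\hat{\mathcal{D}}_{g,\nabla^{\fkf}}$ is a core for $\hat d_{g,\fkh}$; your symmetrized route would also work once the first part is repaired, but it duplicates effort.
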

\begin{proof}
  By definition $\hat{\mathcal{D}}_{g,\nabla^{\fkh}}\subset
  \mathcal{C}_{0,g}(\hat{F}_{g})\subset
  D(\hat{d}_{g,\fkh})\cap D(\hat{B}^{\phi_{b}}_{\fkh})$ 
while Proposition~\ref{pr:domaindH} ensures
  $\hat{d}_{g,\fkh}\omega=d_{\hat{\fkh}}\omega  \in
  \mathcal{C}_{0,g}(\hat{F}_{g})\subset D(\hat{B}^{\phi_{b}}_{\fkh})$
  when $\omega\in \hat{D}_{g,\nabla^{\fkh}}$\,.
Additionally as differential operators on $X_{-}\cup X_{+}=X\setminus X'$\,, we know
$$
d_{\hat\fkh}\hat{B}^{\phi_{b}}_{\fkh}=d_{\hat\fkh}(d_{\hat\fkh}d^{\phi_{b}}_{\hat{\fkh}}+d^{\phi_{b}}_{\hat{\fkh}}d_{\hat\fkh})
=\hat{B}^{\phi_{b}}_{\fkh}d_{\hat{\fkh}}\,. 
$$
The question to be answered is $\hat{B}^{\phi_{b}}_{\fkh}\omega \in
D(\hat{d}_{g,\fkh})$ when $\omega\in
\hat{\mathcal{D}}_{g,\nabla^{\fkf}}$\,.  For $\omega\in
\hat{\mathcal{D}}_{g,\nabla^{\fkf}}$\,, write
$$
\langle t'\,,\,
\underbrace{\hat{B}^{\phi_{b}}_{\fkh}\hat{d}_{g,\fkh}\omega}_{\in L^{2}(X;F)}\rangle
=\lim_{\varepsilon \to 0}\langle 1_{\rz\setminus[-\varepsilon,
  \varepsilon]}(q^{1})t'\,,\, \hat{B}^{\phi_{b}}_{\fkh}d_{\hat{\fkh}}\omega\rangle
=\langle t'\,,\, d_{\hat{\fkh}}\hat{B}^{\phi_{b}}_{\fkh}\omega\big|_{X\setminus X'}\rangle
$$
for $t'\in \mathcal{C}_{0,g}(X;\hat{F}'_{g})$\,, by  using the duality
product
$$
\langle t\,,\, s\rangle=\int_{X}\langle t\,,\, s\rangle_{F'_{x},F_{x}}~dv_{X}(x)\,.
$$ 
It  implies 
$$
\forall t'\in \mathcal{C}_{0,g}(X;\hat{F}_{g})\,,\quad \left|\langle
  t'\,,\,
  d_{\hat{\fkh}}(\hat{B}^{\phi_{b}}_{\fkh}\omega)\big|_{X\setminus X'} \rangle\right|\leq C\|t'\|_{L^{2}}\,.
$$
In particular when we take $t'\in
\mathcal{C}^{\infty}_{0}(X_{\mp};F')$\,, it says that 
$u_{\mp}=\hat{B}^{\phi_{b}}_{\fkh}\omega\big|_{X_{\mp}}$ belongs to
$\mathcal{E}_{loc}(d_{\hat{\fkh}};F)$ and $j_{\partial X_{\mp}}u_{\mp}$ is
well defined.\\
By working in $M_{g}$ as we did in the proof of  Proposition
 and taking
test sections in
$\mathcal{C}^{\infty}_{0}(M_{g,(-\varepsilon,\varepsilon)};\Lambda
T^{*}M_{g}\otimes \pi_{X}^{*}(\fkf'))$\, which make a subset of
$\mathcal{C}_{0,g}(\hat{F}_{g}')$\,, we  finally obtain that
$u=\hat{B}^{\phi_{b}}_{\fkh}\omega =1_{X_{-}}u_{-}+1_{X_{+}}u_{+}$
belongs to $D(\hat{d}_{g,\fkh})$\,.\\
When $\omega\in
e^{\frac{\lambda_{0}}{b}}\sigma_{b}\hat{\mathcal{D}}'_{g,\nabla^{\fkf'}}$
we know $\hat{d}^{\phi_{b}}_{g,\fkh}\omega \in
\mathcal{C}_{0,g}(\hat{F}_{g})\subset
D(\hat{B}^{\phi_{b}}_{\fkh})$ and we want to prove
$\hat{B}^{\phi_{b}}_{\fkh}\omega \in D(\hat{d}^{\phi_{b}}_{g,\fkh})$\,. By
taking $\omega'\in \hat{D}_{g,\nabla^{\fkf}}$ we write
$$
\langle \omega'\,,\,
\hat{B}^{\phi_{b}}_{\fkh}\hat{d}^{\phi_{b}}_{g,\fkh}\omega\rangle_{\phi_{-b}}
=\langle
\hat{d}_{g,\fkh}\hat{B}^{\phi_{-b}}_{\fkh}\omega'\,,\,\omega\rangle_{\phi_{-b}}
=\langle \hat{B}^{\phi_{-b}}_{g}\hat{d}_{g,\fkh}\omega'\,,\,
\omega\rangle_{\phi_{-b}}
=\langle \hat{d}_{g,\fkh}\omega'\,,\, \hat{B}^{\phi_{b}}_{\fkh}\omega\rangle_{\phi_{-b}}
$$
where all the identity make sense for the closed operators  by the
first result.
But the left-hand side implies that
$v=\hat{B}^{\phi_{b}}_{\fkh}\omega\in L^{2}(X;\hat{F}_{g})$ satisfies 
$$
\forall \omega'\in \hat{\mathcal{D}}_{g,\nabla^{\fkf}}\,,\quad
\left|\langle \hat{d}_{g,\fkh}\omega'\,,\, v\rangle_{\phi_{-b}}\right|\leq C\|\omega'\|_{L^{2}}\,.
$$
But since $\hat{\mathcal{D}}_{g,\nabla^{\fkf}}$ is a core for $\hat{d}_{g,\fkh}$
and the $\phi_{-b}$ right-adjoint of $\hat{d}_{g,\fkh}$ is
$\hat{d}^{\phi_{b}}_{g,\fkh}$ we deduce $v=\hat{B}^{\phi_{b}}_{\fkh}\in
D(\hat{d}^{\phi_{b}}_{g,\fkh})$ and 
$$
\hat{B}^{\phi_{b}}\hat{d}^{\phi_{b}}_{g,\fkh}\omega=\hat{d}^{\phi_{b}}_{g,\fkh}\hat{B}^{\phi_{b}}_{\fkh}\omega
$$
\end{proof}
\begin{proof}[Proof of Proposition~\ref{pr:commut}]
We work with $\hat{B}^{\phi_{b}}_{\fkh}$\,,
$\hat{d}_{g,\fkh}$ and
$\hat{d}^{\phi_{b}}_{g,\fkh}$\,.
For $s_{0}\in L^{2}(X;F)$\,, Corollary~\ref{cor:semid} says that
$\hat{d}_{g,\fkh}e^{-t\hat{B}^{\phi_{b}}_{\fkh}}s_{0}=\hat{d}_{g,\fkh}s_{t}$ is a
$\mathcal{C}^{1}(]0,+\infty[; L^{2}(X;F))$ function with 
$$
\forall t>0\,,
\frac{d}{dt}[\hat{d}_{g,\fkh}e^{-t\hat{B}^{\phi_{b}}_{\fkh}}s_{0}]=
\hat{d}_{g,\fkh}\hat{B}^{\phi_{b}}_{\fkh}e^{-t\hat{B}^{\phi_{b}}_{\fkh}}s_{0}
=\hat{d}_{g,\fkh}\hat{B}^{\phi_{b}}_{\fkh}
s_{t}\,.
$$
For $\omega\in e^{\frac{\lambda_{0}}{-b}}\sigma_{-b}\hat{\mathcal{D}}'_{g,\nabla^{\fkf'}}$ and $t>0$\,,
Lemma~\ref{le:commut} allows to write
$$
\langle \omega'\,,\,
\hat{d}_{g,\fkh}\hat{B}^{\phi_{b}}_{\fkh}s_{t}\rangle_{\phi_{-b}}=
\langle \hat{B}^{\phi_{-b}}\hat{d}^{\phi_{-b}}_{g,\fkh}\omega'\,,\,
 s_{t}\rangle_{\phi_{-b}}
=\langle
\hat{d}^{\phi_{-b}}_{g,\fkh}\hat{B}^{\phi_{-b}}_{\fkh}\omega'\,,\,
s_{t}\rangle_{\phi_{b}}
=\langle \hat{B}^{\phi_{-b}}_{\fkh}\omega'\,,\, \hat{d}_{g,\fkh}s_{t}\rangle_{\phi_{-b}}\,.
$$
This implies in particular 
$$
\forall \omega'\in e^{\frac{\lambda_{0}}{-b}}\sigma_{-b}\hat{\mathcal{D}}'_{g,\nabla^{\fkf'}}\,,\quad
\left| \langle
  \hat{B}^{\phi_{-b}}_{\fkh}\omega'\,,\,
  \hat{d}_{g,\fkh}s_{t}\rangle\right|\leq C_{t}\|\omega'\|_{L^{2}}\,.
$$
But $\hat{\mathcal{D}'}_{g,\nabla^{\fkf'}}$ is dense in
$\mathcal{C}_{0,g}(\hat{F}'_{g})$ while
$e^{\frac{\lambda_{0}}{-b}}\sigma_{-b}$ is continuous from
$\mathcal{C}_{0,g}(\hat{F'}_{g})$ to $\mathcal{C}_{0,g}(\hat{F}_{g})$
and $\mathcal{C}_{0,g}(\hat{F}_{g})$ is continuously and densely
embedded in $D(\hat{B}^{\phi_{-b}}_{\fkh})$\,. Since
$\hat{B}^{\phi_{b}}_{\fkh}$ is the $\phi_{-b}$ right-adjoint of
$\hat{B}^{\phi_{-b}}_{\fkh}$ we deduce
$$
\forall t>0\,,\quad \hat{d}_{g,\fkh}s_{t}\in
D(\hat{B}^{\phi_{b}}_{\fkh})\quad \text{and}\quad
\frac{d}{dt}\hat{d}_{g,\fkh}s_{t}=\hat{B}^{\phi_{b}}_{\fkh}\hat{d}_{g,\fkh}s_{t}\,.
$$
Since for $t_{0}>0$\,, $\hat{d}_{g,\fkh}s_{t_{0}}\in
L^{2}(X;F)$ by Corollary~\ref{cor:semid},  this implies
$$
\forall t>t_{0}>0\,,\quad
\hat{d}_{g,\fkh}s_{t}=e^{-(t-t_{0})\hat{B}^{\phi_{b}}_{\fkh}}\hat{d}_{g,\fkh}s_{t_{0}}\,,
$$
or
$$
\forall t>t_{0}>0\,,\quad
\hat{d}_{g,\fkh}e^{-t\hat{B}^{\phi_{b}}_{\fkh}}s_{0}=e^{-(t-t_{0})\hat{B}^{\phi_{b}}_{\fkh}}\hat{d}_{g,\fkh}(e^{-t_{0}\hat{B}^{\phi_{b}}_{\fkh}}s_{{0}})\,.
$$
Let us assume now $s_{0}\in D(\hat{d}_{g,\fkh})$  and
take the limit as $t_{0}\to 0^{+}$\,. From $\lim_{t_{0}\to
  0^{+}}e^{-t_{0}\hat{B}^{\phi_{b}}_{\fkh}}s_{0}=s_{0}$
in $L^{2}(X;F)$\,, Corollary~\ref{cor:semid} yields
$$
\lim_{t_{0}\to 0^{+}}e^{-(t-t_{0})\hat{B}^{\phi_{b}}_{\fkh}}\hat{d}_{g,\fkh}(e^{-t_{0}\hat{B}^{\phi_{b}}_{\fkh}}s_{{0}})=e^{-t\hat{B}^{\phi_{b}}_{\fkh}}
\hat{d}_{g,\fkh}s_{0}
$$
We have proved that for $s_{0}\in
D(\hat{d}_{g,\fkh})$ 
$$
\forall t>0\,, \quad \hat{d}_{g,\fkh}
e^{-t\hat{B}^{\phi_{b}}_{\fkh}}s_{0}=
e^{-t\hat{B}^{\phi_{b}}_{\fkh}}
\hat{d}_{g,\fkh}s_{0}\,,
$$
while the result is obvious for $t=0$\,.\\
Because $e^{-t\hat{B}^{\phi_{b}}_{\fkh}}\hat{d}^{\phi_{b}}_{g,\fkh}$
and $\hat{d}^{\phi_{b}}_{g,\fkh}e^{-t\hat{B}^{\phi_{b}}_{\fkh}}$ are
the $\phi_{b}$ left-adjoints respectively of
$\hat{d}_{g,\fkh}e^{-t\hat{B}^{\phi_{-b}}_{\fkh}}$ and
$e^{-t\hat{B}^{\phi_{-b}}_{\fkh}}\hat{d}_{g,\fkh}$\,, the same result
holds when $\hat{d}_{g,\fkh}$ is replaced with
$\hat{d}^{\phi_{b}}_{g,\fkh}$\,.\\

Finally the commutation with the resolvent are proved after writing
for $\Real z\leq -C$ with $C>0$ large enough
$$
(\hat{B}^{\phi_{b}}_{\fkh}-z)^{-1}=\int_{0}^{+\infty}e^{-t(\hat{B}^{\phi_{b}}_{\fkh}-z)}~dt\,,
$$
and by analytic continuation  for the extension to any $z\in
\cz\setminus \mathrm{Spec}(\hat{B}^{\phi_{b}}_{\fkh})$\,.
\end{proof}

\subsection{$PT$-symmetry}
\label{sec:PTsymm}
While working with the metric $\pi_{X}^{*}(g^{\Lambda T^{*}Q}\otimes
g^{\Lambda TQ}\otimes g^{\fkf})$ without the weight $\langle
p\rangle_{q}^{N_{V}-N_{H}}$\,, Bismut in \cite{Bis05} establishes an
important property for the spectral analysis of the hypoelliptic
Laplacian: the $PT$-symmetry or more precisely a formal version of it.
Let us first recall Bismut's construction in the smooth case
and then we will show how
this can be adapted easily to our case  with interface or
boundary conditions. 
Remember
$$
\phi_{b}=\begin{pmatrix}
  g^{TQ}&-b\mathrm{Id}\\
b\mathrm{Id}&0
\end{pmatrix}
=
\begin{pmatrix}
  g&-b\mathrm{Id}\\
b\mathrm{Id}&0
\end{pmatrix}
\quad,\quad
\phi_{b}^{-1}=
\begin{pmatrix}
  0&\frac{\mathrm{Id}}{b}\\
-\frac{\mathrm{Id}}{b}&\frac{1}{b^{2}}g^{T^{*}Q}
\end{pmatrix}
=
\begin{pmatrix}
  0&\frac{\mathrm{Id}}{b}\\
-\frac{\mathrm{Id}}{b}&\frac{1}{b^{2}}g^{-1}
\end{pmatrix}\,.
$$
The tangent bundle is endowed with the new metric
\begin{equation}
  \label{eq:fkgb}
\mathfrak{g}_{b}^{TX}=
\begin{pmatrix}
  g^{TQ}&b\mathrm{Id}\\
 b\mathrm{Id}&2b^{2}g^{T^{*}Q}
\end{pmatrix}
=
\begin{pmatrix}
  g&b\mathrm{Id}\\
 b\mathrm{Id}&2b^{2}g^{-1}
\end{pmatrix}
\end{equation}
or by calling $\mathfrak{p}U$ the vertical component of $U$
$$
\langle U,U\rangle_{\mathfrak{g}_{b}^{TX}}=
\langle \pi_{X,*}U,\pi_{X,*}U\rangle_{g^{TQ}}
+2b\langle \pi_{X,*}U\,,\, \mathfrak{p}U\rangle_{TQ,T^{*}Q}
+
2b^{2}\langle \mathfrak{p}U\,,\, \mathfrak{p}U\rangle_{g^{T^{*}Q}}\,.
$$
Meanwhile the dual metric is given by 
\begin{equation}
  \label{eq:fkgbm1}
\mathfrak{g}_{b}^{T^{*}Q}=
\begin{pmatrix}
  2g^{-1}&-\frac{\mathrm{Id}}{b}\\
-\frac{\mathrm{Id}}{b}&\frac{g}{b^{2}} 
\end{pmatrix}\,.
\end{equation}
This defines a new metric
$\mathfrak{g}_{b}^{\fkf}=\mathfrak{g}_{b}^{\Lambda T^{*}X}\otimes \pi_{X}^{*}(g^{\fkf})$ which
is uniformly equivalent to $\pi_{X}^{*}(g^{\Lambda T^{*}Q}\otimes
g^{\Lambda TQ}\otimes g^{\fkf})$\,. In particular the sesquilinear
form 
\begin{equation}
  \label{eq:dualfkgb}
\langle s\,\, s\rangle_{\mathfrak{g}_{b}^{\fkf}}=\int_{X}
\mathfrak{g}_{b}^{\fkf}(s\,,\,s')~dv_{X}(x)
\end{equation}
is continuous on $L^{2}(X;F, \pi_{X}^{*}(g^{\Lambda T^{*}Q}\otimes
g^{\Lambda TQ}\otimes g^{\fkf})$ and it is neither continuous nor
everywhere defined on $L^{2}(X;F,g^{F})$\,. However it is well defined
on $\mathcal{C}^{\infty}_{0}(X;F)$  which is dense in all the
considered $L^{2}$-spaces, in the smooth case.
An additional modification is used by introducing the maps $F_{b}:TX\to TX$ and its transpose
$\tilde{F}_{b}:T^{*}X\to T^{*}X$ given by the matrix
\begin{equation}
  \label{eq:FbtFb}
F_{b}=
\begin{pmatrix}
  \mathrm{Id}_{TQ}&2bg^{-1}\\
0&-\mathrm{Id}_{T^{*}Q}
\end{pmatrix}
\quad,\quad \tilde{F}_{b}=
\begin{pmatrix}
  \mathrm{Id}_{T^{*}Q}& 0\\
2bg^{-1}&-\mathrm{Id}_{TQ}
\end{pmatrix}\,.
\end{equation}
We need also the maps $H:TX\to TX$ and its transpose
$\tilde{H}:T^{*}X\to T^{*}X$ and $r:X\to X$ given by
$$
H=
\begin{pmatrix}
  \mathrm{Id}_{TQ}&0\\
0&-\mathrm{Id}_{T^{*}Q}
\end{pmatrix}
\quad,\quad
\tilde{H}=
\begin{pmatrix}
  \mathrm{Id}_{T^{*}Q}&0\\
0&-\mathrm{Id}_{TQ}
\end{pmatrix}\quad,\quad r(q,p)=(q,-p)\,.
$$
After tensorization of $\tilde{F}_{b}$\,, $\tilde{H}$\,, the map
$u_{b}:\mathcal{C}^{\infty}_{0}(X;F)\to \mathcal{C}^{\infty}_{0}(X;F)$
is defined by
\begin{equation}
  \label{eq:ub}
u_{b}s(q,p)=(\tilde{F}_{b}s)(q,-p)=r_{*}\tilde{H}\tilde{F}_{b}s\,.
\end{equation}
Then the hermitian form $\mathfrak{H}_{b}$ is defined on
$\mathcal{C}^{\infty}_{0}(X;F)$ by
\begin{equation}
  \label{eq:fkHb}
\langle s\,,\,s'\rangle_{\mathfrak{H}_{b}}=\langle u_{b}s\,,\,s'\rangle_{\mathfrak{g}_{b}^{\fkf}}\,.
\end{equation}
From
$r_{*}H(\mathfrak{g}_{b}^{\fkf})^{-1}=(\mathfrak{g}_{b}^{\fkf})^{-1}r_{*}\tilde{H}$\,,
and the relation
$$
H(\mathfrak{g}_{b}^{\fkf})^{-1}\tilde{F}_{b}=
\begin{pmatrix}
  \mathrm{Id}&0\\
0&-\mathrm{Id}
\end{pmatrix}
\begin{pmatrix}
  2g^{-1}&-\frac{\mathrm{Id}}{b}
\\-\frac{\mathrm{Id}}{b}&\frac{g}{b^{2}}
\end{pmatrix}
\begin{pmatrix}
\mathrm{Id}&0\\
2bg^{-1}&-\mathrm{Id}
\end{pmatrix}
=
\begin{pmatrix}
  0&\frac{\mathrm{Id}}{b}\\
-\frac{\mathrm{Id}}{b}&\frac{g}{b^{2}}
\end{pmatrix}
=\phi_{b}^{-1}
$$
we deduce
$$
\langle s\,,\, s'\rangle_{\mathfrak{H}_{b}}=\langle s\,,r_{*}s'\rangle_{\phi_{b}}\,.
$$
Finally since $d_{\fkh}r_{*}=r_{*}d_{\fkh}$\,, the
$\mathfrak{H}_{b}$ formal adjoint of $d_{\fkh}$ is
$d^{\phi_{b}}_{\fkh}$ and since $\mathfrak{H}_{b}$ is hermitian, 
$(d_{\fkh}+d_{\fkh}^{\phi_{b}})$ and its square $B^{\phi_{b}}_{\fkh}$
are formally self-ajoint for $\mathfrak{H}_{b}$\,.\\
The piecewise $\mathcal{C}^{\infty}$ and continuous version for the
vector bundle $\hat{F}_{g}$ for the metric $\hat{g}^{TQ}$ is defined
as follows.
\begin{definition}
\label{de:hatPT} The  map $\hat{u}_{b}$ and hermitian form $\hat{\mathfrak{H}}_{g}$ on
$\mathcal{C}_{0,g}(\hat{F}_{g})$ are given by the same formula as
\eqref{eq:ub}\eqref{eq:fkHb} after replacing $g^{TQ}$ by
$\hat{g}^{TQ}=1_{\overline{Q}_{-}}g_{-}^{TQ}+1_{Q_{+}}g_{+}^{TQ}$ in \eqref{eq:fkgb}\eqref{eq:fkgbm1}\eqref{eq:dualfkgb}\eqref{eq:FbtFb}.
\end{definition}
By construction $\hat{u}_{b}:\mathcal{C}_{0,b}(\hat{F}_{g})\to
\mathcal{C}_{0,b}(\hat{F}_{b})$ and it preserves the parity with respect
to $\Sigma_{\nu}$\,, while $\hat{\mathfrak{H}}_{g}$ well defined on
$\mathcal{C}_{0,g}(\hat{F}_{b})$  and the direct sum
$\mathcal{C}_{0,g,ev}(\hat{F}_{g})\oplus
\mathcal{C}_{0,g,odd}(\hat{F}_{g})$ is $\hat{\mathfrak{H}}_{g}$
orthogonal.
We deduce the following proposition.
\begin{proposition}
  \label{pr:PTsym} The hermitian form $\hat{\mathfrak{H}}_{g}$ is
  well-defined and continuous on $\langle
  p\rangle_{q}^{-d/2}L^{2}(X;F)$\,.\\
The identity  
\begin{equation}
  \label{eq:almostPT}
\langle \hat{B}^{\phi_{b}}_{\fkh}s\,,\,
s'\rangle_{\hat{\mathfrak{H}}_{b}}=
\langle s\,,\, \hat{B}^{\phi_{b}}_{\fkh}s'\rangle_{\hat{\mathfrak{H}}_{b}}
\end{equation}
holds for all 
$s,s'\in \langle
p\rangle_{q}^{-d/2}D(\hat{B}^{\phi_{b}}_{\fkh})\subset
D(\hat{B}^{\phi_{b}}_{\fkh})\cap \langle p\rangle_{q}^{-d/2}L^{2}(X;F)$\,.\\
As a consequence $\mathrm{Spec}(\hat{B}^{\phi_{b}}_{\fkh})$ is symmetric
with respect to the real axis.\\
Finally the same results hold for
$(\overline{B}^{\phi_{b}}_{\fkh},D(\overline{B}^{\phi_{b}}_{\fkh}))$
after using the restriction $\mathfrak{H}_{b}$ of
$\hat{\mathfrak{H}}_{b}$ to $F|_{X_{-}}$\,. 
\end{proposition}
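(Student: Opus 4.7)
The plan is as follows. To establish the continuity of $\hat{\mathfrak{H}}_{b}$ on $\langle p\rangle_{q}^{-d/2}L^{2}(X;F)$, I would use the identity $\langle s,s'\rangle_{\hat{\mathfrak{H}}_{b}}=\langle s,r_{*}s'\rangle_{\phi_{b}}$, which extends verbatim to the piecewise setting since it is a pointwise fiberwise linear-algebra relation, separately smooth on $\overline{X}_{\mp}$ and compatible with the continuous gluing of $\hat{F}_{g}$. The continuity then follows from the pointwise bound $|s|_{\hat{\mathfrak{g}}_{b}^{F}}\leq C_{b}\langle p\rangle_{q}^{d/2}|s|_{g^{F}}$, which is immediate from the block structure of $\hat{\mathfrak{g}}_{b}^{T^{*}X}$ in \eqref{eq:fkgbm1} (whose entries are independent of $p$ up to $b$-dependent constants) combined with the weight $\langle p\rangle_{q}^{-N_{H}+N_{V}}$ carried by $g^{F}$: on a $\Lambda^{k,l}T^{*}X$ component the ratio of norms is $\langle p\rangle_{q}^{(k-l)/2}$ with $k-l\leq d$. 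Cauchy--Schwarz in $\hat{\mathfrak{g}}_{b}^{F}$ then yields the announced continuity, while the hermitian character of $\hat{\mathfrak{H}}_{b}$ is inherited from Bismut's pointwise computation, transferred to the piecewise $\mathcal{C}^{\infty}$ and continuous frame $(e,\hat{e})=1_{X_{-}}(e_{-},\hat{e}_{-})+1_{X_{+}}(e_{+},\hat{e}_{+})$.

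For the formal identity $\langle \hat{B}^{\phi_{b}}_{\fkh}s,s'\rangle_{\hat{\mathfrak{H}}_{b}}=\langle s,\hat{B}^{\phi_{b}}_{\fkh}s'\rangle_{\hat{\mathfrak{H}}_{b}}$, I would first prove it on the dense subset $\hat{\mathcal{D}}_{g,\nabla^{\fkf}}\cap(e^{\lambda_{0}/b}\sigma_{b}\hat{\mathcal{D}}'_{g,\nabla^{\fkf'}})$ of $\mathcal{C}_{0,g}(\hat{F}_{g})$, where both $d_{\hat{\fkh}}s$ and $d^{\phi_{b}}_{\hat{\fkh}}s$ again belong to $\mathcal{C}_{0,g}(\hat{F}_{g})$ by Propositions~\ref{pr:domaindH}--\ref{pr:phiadj}, so that $\hat{B}^{\phi_{b}}_{\fkh}s$ is well-defined and in $L^{2}$. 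Writing $\langle \cdot,\cdot\rangle_{\hat{\mathfrak{H}}_{b}}=\langle\cdot,r_{*}\cdot\rangle_{\phi_{b}}$, the commutation $r_{*}d_{\hat{\fkh}}=d_{\hat{\fkh}}r_{*}$ (which holds because $\hat{\fkh}$ is even in $p$ and $r^{*}d=dr^{*}$), and the $\phi_{b}$-adjoint relation of Proposition~\ref{pr:phiadj}, yield after a short computation that $d^{\phi_{b}}_{\hat{\fkh}}$ is the formal $\hat{\mathfrak{H}}_{b}$-adjoint of $d_{\hat{\fkh}}$; squaring then gives the formal $\hat{\mathfrak{H}}_{b}$-symmetry of $\hat{B}^{\phi_{b}}_{\fkh}$. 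The new ingredient compared with Bismut's smooth case is that the integration by parts on $\overline{X}_{-}\sqcup\overline{X}_{+}$ now produces two interface contributions along $X'$ with opposite outward normals; combined with the sign reversal of $p_{1}$ induced by $r$ (compare \eqref{eq:Sigmati}) and with the continuity condition defining $\hat{F}_{g}$, these contributions cancel exactly and no boundary term remains.

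The extension to $\langle p\rangle_{q}^{-d/2}D(\hat{B}^{\phi_{b}}_{\fkh})$ is then a density argument. For $s,s'$ in this space, the cut-offs $\chi_{n}(\hat{\fkh})s$ introduced in Proposition~\ref{pr:domhatB}-\textbf{d)} converge simultaneously in the graph norm of $\hat{B}^{\phi_{b}}_{\fkh}$ and in $\langle p\rangle_{q}^{-d/2}L^{2}(X;F)$ (by dominated convergence, since $\langle p\rangle_{q}^{d/2}s\in L^{2}$), and a $q$-localization plus smoothing argument as in Proposition~\ref{pr:domhatB}-\textbf{d)} brings the approximants into the dense core used above. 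The identity then passes to the limit thanks to the continuity established in the first paragraph. The symmetry of the spectrum follows from the operator intertwining $(\hat{B}^{\phi_{b}}_{\fkh})^{*}\,\hat{u}_{b}r_{*}=\hat{u}_{b}r_{*}\,\hat{B}^{\phi_{b}}_{\fkh}$ extracted from the hermitian identity, where $\hat{u}_{b}r_{*}$ is a topological isomorphism between appropriate weighted $L^{2}$-spaces. Since the $L^{2}$-adjoint of a closed operator with compact resolvent has spectrum $\overline{\mathrm{Spec}(\hat{B}^{\phi_{b}}_{\fkh})}$, this intertwining forces $\mathrm{Spec}(\hat{B}^{\phi_{b}}_{\fkh})=\overline{\mathrm{Spec}(\hat{B}^{\phi_{b}}_{\fkh})}$. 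The transfer to $(\overline{B}^{\phi_{b}}_{\fkh},D(\overline{B}^{\phi_{b}}_{\fkh}))$ is immediate since $\hat{\mathfrak{H}}_{b}$, $\hat{u}_{b}$, $r_{*}$ and $\hat{B}^{\phi_{b}}_{\fkh}$ all commute with $\Sigma_{\nu}$, hence restrict to $L^{2}_{ev}(X;\hat{F}_{g})\simeq L^{2}(X_{-};F)$, where they induce the announced structure for $\overline{B}^{\phi_{b}}_{\fkh}$.

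The main obstacle will be the careful bookkeeping at the interface $X'$: the orientation reversal between the two outward normals, the sign from $r$ on $p_{1}$, the piecewise $\mathcal{C}^{\infty}$ and continuous character of $\hat{g}$ and $\phi_{b}$ in the hat setting, and the continuity condition of $\hat{F}_{g}$ must be shown to conspire into exact cancellation of the interface integrals produced by integration by parts. The discontinuous Christoffel symbols of $\hat{g}$ across $X'$ prevent one from combining the two integrations by parts into a single smooth computation and force us to rely precisely on the bridging structure provided by the continuity of sections of $\hat{F}_{g}$ in order to identify the traces from $\overline{X}_{-}$ and $\overline{X}_{+}$.
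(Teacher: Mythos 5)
Your first two paragraphs follow essentially the same route as the paper: the continuity of $\hat{\mathfrak{H}}_{b}$ on $\langle p\rangle_{q}^{-d/2}L^{2}(X;F)$ is obtained from the pointwise comparison of $\hat{\mathfrak{g}}_{b}^{F}$ with $g^{F}$ (the paper factors this through the intermediate space $L^{2}(X;F,\pi_{X}^{*}(g^{\Lambda T^{*}Q}\otimes g^{\Lambda TQ}\otimes g^{\fkf}))$ rather than the explicit $(k,l)$-grading count, but the content is the same), and the identity \eqref{eq:almostPT} is established on a dense core of piecewise smooth compactly supported sections and then extended by density, the key point being the relative boundedness of the commutator of $\hat{B}^{\phi_{b}}_{\fkh}$ with $\langle p\rangle_{q}^{d/2}$. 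Your insistence on working inside the intersection $\hat{\mathcal{D}}_{g,\nabla^{\fkf}}\cap e^{\lambda_{0}/b}\sigma_{b}\hat{\mathcal{D}}'_{g,\nabla^{\fkf'}}$ is more elaborate than what the paper actually invokes (it simply uses $\mathcal{C}_{0,g}(\hat{F}_{g})\subset D(\hat{B}^{\phi_{b}}_{\fkh})$ together with the fiberwise matrix identity $H(\hat{\mathfrak{g}}_{b}^{\fkf})^{-1}\tilde{F}_{b}=\hat{\phi}_{b}^{-1}$), but it is a defensible way to control the interface terms.

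The genuine gap is in your argument for the spectral symmetry. You claim that the intertwining relation $(\hat{B}^{\phi_{b}}_{\fkh})^{*}\hat{u}_{b}r_{*}=\hat{u}_{b}r_{*}\hat{B}^{\phi_{b}}_{\fkh}$ ``forces'' $\mathrm{Spec}(\hat{B}^{\phi_{b}}_{\fkh})=\overline{\mathrm{Spec}(\hat{B}^{\phi_{b}}_{\fkh})}$ because $\hat{u}_{b}r_{*}$ is a topological isomorphism ``between appropriate weighted $L^{2}$-spaces.'' But $\hat{u}_{b}r_{*}$ is precisely \emph{not} a bounded invertible operator on $L^{2}(X;F,g^{F})$ (as the paper points out, $\hat{\mathfrak{H}}_{b}$ is neither everywhere defined nor continuous there), so the standard similarity argument that two closed operators intertwined by a Hilbert-space isomorphism have complex-conjugate spectra does not apply as stated. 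What is needed, and what the paper does, is to exploit discreteness of the spectrum (compact resolvent), the Fredholm alternative for the maximal accretive operator $C_{b}+\hat{B}^{\phi_{b}}_{\fkh}$, and the weight bootstrap of Lemma~\ref{le:weightB}: a genuine eigenvector $s_{\lambda}$ automatically belongs to $\langle p\rangle_{q}^{-n}D(\hat{B}^{\phi_{b}}_{\fkh})$ for every $n$, so \eqref{eq:almostPT} applies to $s_{\lambda}$ against all test sections, giving $\langle s_{\lambda},(\hat{B}^{\phi_{b}}_{\fkh}-\overline{\lambda})s'\rangle_{\hat{\mathfrak{H}}_{b}}=0$ for all $s'$; combined with the non-degeneracy of $\hat{\mathfrak{H}}_{b}$ on the weighted spaces and density of the core, this shows $\hat{B}^{\phi_{b}}_{\fkh}-\overline{\lambda}$ is not surjective, hence $\overline{\lambda}\in\mathrm{Spec}(\hat{B}^{\phi_{b}}_{\fkh})$. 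Without the eigenvector decay provided by Lemma~\ref{le:weightB}, your sketch leaves the crucial step unjustified.
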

\begin{proof}
  The map $\langle p\rangle_{q}^{-d/2}$ is continuous from
  $L^{2}(X;F,g^{F})$ to $L^{2}(X;F,\pi_{X}^{*}(g^{\Lambda T^{*}Q}\otimes
  g^{\Lambda TQ}\otimes g^{\fkf}))$ while $\mathfrak{H}_{b}$ is
  continuous on $L^{2}(X;F,\pi_{X}^{*}(g^{\Lambda T^{*}Q}\otimes
  g^{\Lambda TQ}\otimes g^{\fkf})$\,. All those spaces contain
  $\mathcal{C}_{0,g}(X;\hat{F}_{g})$ as a dense subspace.\\
The set $\mathcal{C}_{0,g}(X;\hat{F}_{g})$ is dense in $\langle
p\rangle^{-d/2}D(\hat{B}^{\phi_{b}}_{\fkh})$\,. The continuous
embedding of  $\langle
p\rangle_{q}^{-d/2}D(\hat{B}^{\phi_{b}}_{\fkh})$ comes from the fact
that $\langle p\rangle^{d/2}\hat{B}^{\phi_{b}}_{\fkh}\langle
p\rangle_{q}^{-d/2}$ is a relatively bounded perturbation of
$(\hat{B}^{\phi_{b}}_{\fkh},D(\hat{B}^{\phi_{b}}_{\fkh})$ with
infinitesimal bound\,. The identity \eqref{eq:almostPT} is valid for
$s,s'\in \mathcal{C}_{0,g}(\hat{F}_{g})$ and by density extends to
$s,s'\in \langle p\rangle_{q}^{-d/2}D(\hat{B}^{\phi_{b}}_{\fkh})$\,.\\
Let us consider the spectral problem. We know that
$\mathrm{Spec}(\hat{B}^{\phi_{b}}_{\fkh})$ is discrete. Additionally
because $(C_{b}+\hat{B}^{\phi_{b}}_{\fkh})$ is maximal accretive we
know that for all $z\in \cz$\,,
$(\hat{B}^{\phi_{b}}_{\fkh}-z)(1+C_{b}+\hat{B}^{\phi_{b}}_{\fkh})^{-1}$
is a Fredholm operator with index $0$ and $\lambda\in
\mathrm{Spec}(\hat{B}^{\phi_{b}}_{\fkh})$ iff
$\ker(\hat{B}^{\phi_{b}}-\lambda)\neq \left\{0\right\}$\,.
 When $\lambda$
is an eigenvalue of $\hat{B}^{\phi_{b}}_{\fkh}$ there exists
$s_{\lambda}\in D(\hat{B}^{\phi_{b}}_{\fkh})$ such that
$$
(C+\hat{B}^{\phi_{b}}_{\fkh})s_{\lambda}=(C+\lambda)s_{\lambda}\,.
$$
By chosing $C>0$ such that $(C+\hat{B}^{\phi_{b}}_{\fkh})$ is
invertible,  we obtain $s_{\lambda}\in
(C+\hat{B}^{\phi_{b}}_{\fkh})^{-n}L^{2}(X;F)\subset \langle
p\rangle_{q}^{-n}D(\hat{B}^{\phi_{b}}_{\fkh})$ for any $n\in\nz$ by
Lemma~\ref{le:weightB}. Hence we can use \eqref{eq:almostPT} which
implies
$$
\forall s'\in \mathcal{C}_{0,g}(\hat{F}_{g})\,,\quad 
\langle s_{\lambda}\,, (\hat{B}^{\phi_{b}}-\overline{\lambda})s'\rangle_{\hat{\mathfrak{H}_{b}}}=0\,.
$$
But $\mathfrak{H}_{g}$ is also continuous on  $\langle
p\rangle^{-d}L^{2}(X;F)\times L^{2}(X;F)$ while $s_{\lambda}\in
\langle p\rangle^{-d}L^{2}(X;F)$ and the density of
$\mathcal{C}_{0,g}(\hat{F}_{g})$ in
$D(\hat{B}^{\phi_{b}}_{\fkh})$\,. Finally as a non zero element of
$\langle p\rangle^{-d}L^{2}(X;F)$ there exists $s''_{\lambda}\in
L^{2}(X;F)$ such that $\langle s_{\lambda}\,,\,
s''_{\lambda}\rangle_{\hat{\mathfrak{H}}_{b}}\neq 0$\,. Therefore
$(\hat{B}^{\phi_{b}}_{\fkh}-\overline{\lambda})$ is not onto
 and
$\overline{\lambda}\in \mathrm{Spec}(\hat{B}^{\phi_{b}}_{\fkh})$\,.
\end{proof}
\begin{remark}
The symmetry with respect to the real axis of
$\mathrm{Spec}(\hat{B}^{\phi_{b}}_{\fkh})$ and
$\mathrm{Spec}{\overline{B}^{\phi_{b}}_{\fkh}}$ are not the only
issues of Proposition~\ref{pr:PTsym}. Actually the relation
\eqref{eq:almostPT} is a crucial point while studying the spectrum in
a neighborhood of $0$ in various
asymptotic regimes e.g. $b\to 0^{+}$ (see 
\cite{She}\cite{BiLe}\cite{HHS}). In particular those asymptotic
regimes correspond to cases where the spectral spaces concentrate
asymptotically to the kernel of the  scalar vertical harmonic
oscillator hamiltonian, multiples of a scaled gaussian function in
$p$\,, on which the restricted hermitian form $\mathfrak{H}_{g}$ is
positive definite. This property with \eqref{eq:almostPT} helps to reduce the
asymptotic spectral analysis of the hypoelliptic Laplacian on
$X=T^{*}Q$ to the the more standard asymptotic spectral analysis of
the Hodge or Witten Laplacian on $Q$\,. We refer the reader to
\cite{Nie14}-Proposition~15.2 for a short abstract version of those arguments.
\end{remark}

\noindent\textbf{Acknowledgements:} The authors thank Michael Hitrik for early
discussions about this work.\\
They aknowledge the support of the
following french
ANR-projects:
\begin{itemize}
\item Francis Nier:  QUAMPROCS ANR-19-CE40-0010;
\item  Shu Shen: ADYCT, ANR-20-CE40-0017\,.
\end{itemize}
Although the COVID crisis restrained collaborative works,
they initially obtained the support of the CNRS IEA-project 00137. 

\end{document}